\theoremstyle{plain}
\numberwithin{equation}{section}
\newcommand{\grad}{\nabla}
\newcommand{\tB}{\mathtt{B}}
\newcommand{\tA}{\mathtt{A}}
\newcommand{\tM}{\mathtt{M}}
\newcommand{\tC}{\mathtt{C}}
\newcommand{\tnf}{\mathtt{nf}}
\newcommand{\tSAP}{\mathtt S\mathtt A\mathtt P}
\newcommand{\im}{{\rm i}}
\newcommand{\cA}{\mathcal{A}}
\newcommand{\cB}{\mathcal{B}}
\newcommand{\cC}{\mathcal{C}}
\newcommand{\cD}{\mathcal{D}}
\newcommand{\cE}{\mathcal{E}}
\newcommand{\cF}{\mathcal{F}}
\newcommand{\cG}{\mathcal{G}}
\newcommand{\cI}{\mathcal{I}}
\newcommand{\cM}{\mathcal{M}}
\newcommand{\cN}{\mathcal{N}}
\newcommand{\cK}{\mathcal{K}}
\newcommand{\cP}{\mathcal{P}}
\newcommand{\tb}{\mathtt{b}}
\newcommand{\cR}{\mathcal{R}}
\newcommand{\cS}{\mathcal{S}}
\newcommand{\cU}{\mathcal{U}}
\newcommand{\cW}{\mathcal{W}}
\newcommand{\uno}{ {\rm Id}}
\newcommand{\la}{\langle}
\newcommand{\ra}{\rangle}
\newcommand{\tth}{\mathtt{h}}
\newcommand{\vr}{\varrho}
\newcommand{\di}{{\rm d}}
\newcommand{\td}{{\mathtt d }}
\newcommand{\tq}{{\mathtt q }}
\newcommand{\tG}{{\mathtt G }}
\newcommand{\X}{{\mathfrak X }}
\newcommand{\meas}{{\rm meas}}
\newtheorem{theorem}{Theorem}[section]
\newtheorem{proposition}[theorem]{Proposition}
\newtheorem{lemma}[theorem]{Lemma}
\newtheorem{definition}[theorem]{Definition}
\theoremstyle{definition}
\newtheorem{remark}[theorem]{Remark}
\newtheorem{remarks}[theorem]{Remark}
\newcommand{\be}{\begin{equation}}
\newcommand{\ee}{\end{equation}}
\newcommand{\bes}{\begin{equation*}}
\newcommand{\ees}{\end{equation*}}
\newcommand{\ba}{\begin{align}}
\newcommand{\ea}{\end{align}}
\newcommand{\bas}{\begin{align*}}
\newcommand{\eas}{\end{align*}}
\newcommand{\ta}{\mathtt{a}}
\newcommand{\id}{{\rm Id}}
\newcommand{\e}{\varepsilon}
\newcommand{\ov}{\overline}
\newcommand{\R}{\mathbb R}
\newcommand{\C}{\mathbb C}
\newcommand{\Z}{\mathbb Z}
\newcommand{\N}{\mathbb N}
\newcommand{\T}{\mathbb T}
\newcommand{\sign}{{\rm sign}}
\renewcommand{\a }{\alpha }
\newcommand{\s }{\sigma }
\newcommand{\ii }{{\rm i} }
\newcommand{\vect}[2]{\begin{pmatrix}#1 \\#2\end{pmatrix}}
\newcommand{\mN}{\mathcal{N}}
\newcommand{\x }{\xi }
\newcommand{\tm}{\mathtt{m}}
\newcommand{\mH}{\mathcal{H}}
\newcommand{\mL}{\mathcal{L}}
\newcommand{\mM}{\mathcal{M}}
\newcommand{\mA}{\mathcal{A}}
\newcommand{\mR}{\mathcal{R}}
\newcommand{\mG}{\mathcal{G}}
\newcommand{\mF}{\mathcal{F}}
\newcommand{\mE}{\mathcal{E}}
\newcommand{\mS}{\mathcal{S}}
\newcommand{\mP}{\mathcal{P}}
\newcommand{\pa}{\partial}
\newcommand{\opbw}{{{\rm Op}^{{\scriptscriptstyle{\mathrm BW}}}}}
\def\hat{\widehat}
\def\wt{\widetilde}
\def\bar{\overline}
\def\cal{\mathcal}
\renewcommand{\Re}{\mathrm{Re}\,}
\renewcommand{\Im}{\mathrm{Im}\,}
\newcommand{\mD}{\mathcal{D}}
\def\ba{\begin{aligned}}
\def\ea{\end{aligned}}
\def\beginm{\begin{multline}}
\def\endm{\end{multline}}
\newcommand{\mB}{\mathcal{B}}
\newcommand{\mC}{\mathcal{C}}
\newcommand{\Opbw}[1]{{{{\rm Op}^{{\scriptscriptstyle{\mathrm BW}}}}\left(#1\right)}}
\newcommand{\vOpbw}[1]{{{{\rm Op}_{\mathtt{vec}}^{{\scriptscriptstyle{\mathrm BW}}}}\left(#1\right)}}
\newcommand{\vOmega}{{\bf \Omega}}
\newcommand{\bA}{{\bf A}}
\newcommand{\bB}{{\bf B}}
\newcommand{\bF}{{\bf F}}
\newcommand{\bG}{{\bf G}}
\newcommand{\bK}{{\bf K}}
\newcommand{\bM}{{\bf M}}
\newcommand{\bR}{{\bf R}}
\newcommand{\bS}{{\bf S}}
\newcommand{\fT}{{\mathfrak{T}}}
\newcommand{\wtcS}{{\widetilde {\mathcal S}}}
\newcommand{ \wtcR}{{\widetilde {\mathcal R}}}
\newcommand{\wtcM}{{\widetilde {\mathcal M}}}
\newcommand{\rS}{{\bS}}
\newcommand{\sgn}{{\rm sign}}
\newcommand{\norm}[1]{{\| #1 \| }}
\providecommand{\vect}[2]{{\bigl[\begin{smallmatrix}#1\\#2\end{smallmatrix}\bigr]}} \providecommand{\sign}{\mathrm{sgn}\,}  
\def\l@subsection{\@tocline{2}{0pt}{2.5pc}{5pc}{}}
\begin{document}
\bibliographystyle{plain}
\title{\bf 
Hamiltonian Birkhoff normal form for \\
 gravity-capillary 
 water waves 
 with constant vorticity: \\
almost global existence}
\date{}
\author{Massimiliano Berti, 
Alberto Maspero,  
Federico Murgante
\footnote{
International School for Advanced Studies (SISSA), Via Bonomea 265, 34136, Trieste, Italy. \newline
 \textit{Emails: } \texttt{berti@sissa.it},  \texttt{alberto.maspero@sissa.it},
 \texttt{fmurgant@sissa.it}
 }
}

\maketitle

\noindent 
{\bf Abstract.}  
We prove an almost global in time existence  result of small amplitude space {\it periodic} 
solutions of the 1D gravity-capillary water waves equations with constant vorticity.
The result holds for any value of  
gravity,  vorticity and  depth
and any 
surface tension belonging to  a full measure set.  
The proof  demands
a {\it Hamiltonian} paradifferential 
Birkhoff normal form reduction 
for quasi-linear PDEs in presence of resonant wave interactions: 
the normal form may be not integrable but it preserves the Sobolev norms 
thanks to 
its Hamiltonian nature. 
A major difficulty 
is that usual paradifferential calculus 
 used 
to prove local well posedness 
(as the celebrated Alinhac good unknown) 
does not preserve the Hamiltonian structure.  A major novelty of this paper 
is to develop 
an  
algorithmic  perturbative procedure \`a la Darboux to 
correct usual paradifferential transformations to 
symplectic maps, up to an arbitrary degree of homogeneity.
The symplectic correctors turn out to be smoothing perturbations of the identity, and therefore only slightly modify the paradifferential structure of the equations.  
 The  Darboux procedure
which recovers 
the nonlinear Hamiltonian structure is written in an abstract functional setting, 
in order 
to be applicable  also in other contexts. 
\smallskip

\noindent
{\bf Keywords:}{ water waves equations, vorticity, Hamiltonian Birkhoff normal form, 
 para-differential calculus.}

\phantomsection
\tableofcontents

\section{Introduction and main results}

The study of the time evolution of water waves  is a classical question, 
and one of the major problems in fluid dynamics. 
The very first attempt of a theory of water waves trace its origin in the {\em Principia} of Newton in the second half of the seventeenth century, but one more century is needed for
the foundational works by giants such as  Euler,  Laplace, Lagrange, Cauchy, Poisson,  Bernoulli, 
and  then  by the British school with Russel, Robinson, Green,  Airy, Stokes among others. We refer to the historical overview
 in \cite{Craik}.

\smallskip

In this paper we consider the Euler equations of hydrodynamics for a 2-dimensional perfect and incompressible 
fluid with constant {\it vorticity}  $\gamma $, under the action of gravity and capillary forces at the free surface.
The fluid fills  the time dependent region 
\be\label{domain} 
\cD_{\eta} := \big\{ (x,y)\in \T\times \R \ : \ - \tth < y<\eta(t,x) \big\} \, , 
\quad  \T := \T_x :=\R/ (2\pi\Z) \, ,
\ee
with depth $ \tth > 0 $, possibly infinite, 
and space periodic boundary conditions. 
The unknowns 
are the  free surface  $ y = \eta (t, x)$
of $\cD_{\eta} $ and the 
divergence free  velocity field
$\begin{psmallmatrix} u(t,x,y) \\ v(t,x,y) \end{psmallmatrix} $. 
In case of a fluid with constant vorticity
$ v_x - u_y = \gamma $ (a property which is preserved along the time evolution), 
 the velocity field   is the sum of the Couette flow $\begin{psmallmatrix} - \gamma y \\ 0 \end{psmallmatrix}$, which carries
all the  vorticity $ \gamma $, 
and an irrotational field, 
expressed as the gradient of a harmonic function $\Phi $, called the generalized velocity potential. 

We study the water waves problem in the 
{\it Hamiltonian}  
Zakharov-Craig-Sulem \cite{Zak1,CrSu} formulation, extended by
Constantin, Ivanov, Prodanov \cite{CIP} and Wahl\'en \cite{Wh}
for constant vorticity fluids.  
Denoting by 
$\psi(t,x)$  the evaluation of the generalized velocity potential at the free interface
$ \psi (t,x) := \Phi (t,x, \eta(t,x)) $, one recovers
$ \Phi $ as 
the unique harmonic function 
$ \Delta \Phi = 0 $ in $ \cD_{\eta} $ with Dirichlet boundary condition
$ \Phi = \psi $ at $  y = \eta(t,x) $ and Neumann boundary condition
$\Phi_y ( t, x, y) \to  0  $ as $ y \to  - \tth $. 
Imposing  
that the fluid particles at the free surface remain on it along the evolution
(kinematic boundary condition) and that
the pressure of the fluid 
plus the capillary forces at the free surface  is equal to the constant 
atmospheric pressure (dynamic boundary condition),  the 
time evolution of the fluid is determined by the non-local quasi-linear 
equations 
\begin{equation} \label{eq:etapsi}
\left\{\begin{aligned}
 &   \partial_t \eta = G(\eta)\psi+ \gamma \eta \eta_x \\
&\partial_t\psi =  -g\eta  -\frac{1}{2} \psi_x^2 
+  \frac{1}{2}\frac{(\eta_x  \psi_x + G(\eta)\psi)^2}{1+\eta_x^2}+\kappa 
\partial_x\Big[ \frac{\eta_x}{(1+\eta_x^{2})^{\frac{1}{2}}}\Big]+ \gamma \eta \psi_x+ \gamma \partial_x^{-1} G(\eta) \psi 
\end{aligned}\right.
\end{equation}
where $ g > 0 $ is the gravity constant, $ \kappa > 0 $ is the surface tension coefficient,
$ \partial_x\big[ \frac{\eta_x}{(1+\eta_x^{2})^{1/2}}\big] $
is the curvature of the  surface
and 
$ G(\eta)$ is the  Dirichlet-Neumann operator  
\be\label{def:DN}
G(\eta)\psi  := 
(- \Phi_x \eta_x + \Phi_y)\vert_{y = \eta(x)} \, . 
\ee
The quantity $\int_\T \eta(x) \, \di x$ is a prime integral of \eqref{eq:etapsi}
(indeed $\int_\T G(\eta)\psi \di x = 0$)
and then, with no loss of generality,  we restrict to interfaces  with zero average
$ \int_\T \eta(x) \, \di x = 0 $. The component $\eta $  of the solution of \eqref{eq:etapsi}
 will lie in a Sobolev space $H_0^{s+\frac{1}{4}}(\T)$ of periodic functions with zero mean. 
Moreover the vector field on the right hand side of  \eqref{eq:etapsi} 
depends only on $ \eta $ and $  \psi - \frac{1}{2 \pi}\int_\T \psi \, \di x  $
(indeed $G(\eta)[1] = 0$) 
and therefore  $ \psi $ 
 will evolve in a 
homogeneous Sobolev space $ \dot H^{{s-\frac{1}{4}}}(\T)$ of periodic functions modulo constants. 

By \cite{Zak1,CrSu,CIP,Wh} the equations \eqref{eq:etapsi} are the Hamiltonian system 
\be\label{HamWW} 
\pa_t\vect{\eta}{\psi} = J_\gamma \vect{\nabla_{\eta} H_\gamma(\eta,\psi)}{\nabla_{\psi} H_\gamma(\eta,\psi)} \qquad
\text{where} 
\qquad 
J_\gamma := \begin{pmatrix} 0 & \uno \\ -\uno & \gamma \pa_x^{-1} \end{pmatrix} 
\ee
and  
\be\label{H.gamma}
H_\gamma(\eta, \psi)  := \frac12 \int_\T \big(\psi \, G(\eta ) \psi+ g \eta^2\big)  \, \di x + \kappa  \int_{\T}  \sqrt{1+ \eta_x^2}  \, \di x+  \frac{\gamma}{2} \int_{\T}  \big(-\psi_x \eta^2+ \frac{\gamma}{3} \eta^3\big)  \, \di x  \, .
\ee
The $L^2$-gradients $(\grad_\eta H_\gamma, \grad_\psi H_\gamma) $ in
\eqref{HamWW}  belong to (a dense subspace of)  
$ \dot L^2(\T)\times L^2_0(\T)$.

Since the bottom of $  \cD_{\eta} $  in \eqref{domain} 
is flat, 
the Hamiltonian vector field $X_\gamma $, defined by  the right hand side of \eqref{eq:etapsi}, is translation invariant, namely 
\be\label{X.tra0}
X_\gamma \circ \tau_\varsigma = \tau_\varsigma \circ X_\gamma 
\, , \quad \forall \varsigma \in \R \, ,
\qquad \text{where} \qquad 
\tau_\varsigma \colon f(x) \mapsto f(x + \varsigma) 
\ee
is the translation operator.
Equivalently 
the Hamiltonian $H_\gamma$ in  \eqref{H.gamma} satisfies 
$H_\gamma \circ \tau_\varsigma = H_\gamma$ for any 
$ \varsigma \in \R$. 
The associated conservation law induced 
by Noether theorem is the 
 momentum 
$ \int_{\T}  \psi ( x ) \eta_x (x) \, \di x $. 

\smallskip

The main result of this paper (Theorem \ref{teo1}) is that,  
for almost all surface tension coefficients $ \kappa $, 
for any integer $N  $,  
the solutions of the water waves equations \eqref{eq:etapsi} 
with initial data (smooth enough) of size $\e$  small enough,    
are defined over a time
 interval of length at least $ 
 c \e^{-N-1}$.  This is   the most general 
 almost global existence in time 
 result for the solutions of the water waves equations with periodic boundary conditions known so far.  
We present below the mathematical literature concerning  the local and global 
well posedness theory of water waves,  
focusing on the maximal time life span of the solutions. 

 In order to state precisely 
 the main theorem we define, for any $ s \in \R $,  the Sobolev spaces 
$$
 H^s_0 (\T, \C) = \Big\{ u(x) \in H^s (\T,\C) \, : \, \int_{\T} u(x) \, \di x = 0  \Big\} \, ,
\quad \dot H^s (\T, \C) = H^s (\T, \C) \slash \C \, ,
$$
equipped with the same norm 
$$
\| u \|_{H^s_0} = \| u \|_{\dot H^s} = 
\Big( \sum_{n \in \N} \| \Pi_n u \|_{L^2}^2 n^{2s} \Big)^{\frac12} =
\Big( \sum_{j \in \Z \setminus \{0\} }  |u_j|^2 | j |^{2s} \Big)^{\frac12} 
$$
where $ \Pi_n $ denote the orthogonal projectors  from $ L^2 (\T, \C) $ on the subspaces spanned by  
$ \{ e^{-\ii n x}, e^{ \ii n x} \} $ and $ u_j $ are the Fourier coefficients of $ u(x) $.  
The quotient map induces 
an isometry between $ H^s_0 $ and $ \dot H^s $ and we shall often identify 
$ H^s_0 $ with $  \dot H^s $.
Our main result is the following.

\begin{theorem}\label{teo1}
{\bf (Almost global  in time 
gravity-capillary 
  water waves with constant vorticity)}
For any value of the gravity $ g > 0 $, depth $ \tth \in (0,+\infty] $ and vorticity $ \gamma \in \R $, there is a zero measure set $ {\mathcal K} \subset  (0,+\infty)$ such that, for any 
 surface tension coefficient $ {\kappa} \in (0,+\infty) \setminus {\mathcal{K}}  $, for any
$N$ in $\N_0 $, there is $s_0>0$ and, for any $s\geq s_0$, there are
$\e_0>0, c>0, C>0$ such that, for any $ 0 < \e < \e_0 $, any initial datum     
$$
(\eta_0,\psi_0) \in H_0^{s+\frac{1}{4}}(\mathbb{T},\R)\times {\dot H}^{s-\frac{1}{4}}(\mathbb{T},\R) \quad \mbox{ with} \quad
\| \eta_0 \|_{H^{s+\frac14}_0}+  \|\psi_0\|_{{\dot H}^{s-\frac14}}<\e \, , 
$$
system \eqref{eq:etapsi} has a unique classical solution $(\eta,\psi)$ in 
\be\label{timeexi}
C^0\Bigl([-T_\e,T_\e],H_0^{s+\frac14}(\mathbb{T},\R)\times {\dot H}^{s-\frac14}(\mathbb{T},\R)\Bigr) 
\quad \mbox{with} \quad T_\e \geq c\e^{-N-1} \, , 
\ee
satisfying the initial condition $\eta |_{t=0} = \eta_0, \psi|_{t=0} = \psi_0$. Moreover
\be\label{boundsolN}
\sup_{t\in [-T_{\e},T_{\e}]}\big( 
 \|\eta\|_{H_0^{s+\frac14}}+\|\psi\|_{\dot{H}^{s-\frac14}}\big)\leq C\e\,.
\ee
\end{theorem}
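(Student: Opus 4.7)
The plan is to prove Theorem \ref{teo1} by an iterative Hamiltonian Birkhoff normal form procedure, implemented at the paradifferential level so that the quasi-linear character of \eqref{eq:etapsi} is preserved. I would first cast the system in complex coordinates $u \sim \Lambda \eta + \ii \Lambda^{-1}\psi$, adjusted for the Wahl\'en change of variable that symmetrizes $J_\gamma$ in \eqref{HamWW} and diagonalizes the linearization at the equilibrium, and rewrite the equation as $\partial_t u = -\ii \Omega(D)u + X^{\geq 2}(u)$ where $\Omega(D)$ is the linear dispersion operator $\mathrm{diag}(\omega_j)$. Expanding in homogeneous components $X = X_2 + X_3 + \cdots + X_{N+1} + R_{>N+1}(u)$, the goal is to eliminate, by a sequence of symplectic transformations, all homogeneous vector fields $X_k$ for $k = 2,\ldots,N+1$ except those corresponding to resonant monomials, which, for a \emph{Hamiltonian} system, preserve the super-actions $|u_j|^2 + |u_{-j}|^2$ and hence the Sobolev norms.

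To handle the quasi-linear terms, each $X_k$ would be decomposed via Bony paralinearization into a paradifferential part $\opbw(a_k(u;x,\xi))u$ plus a smoothing remainder. The reduction then has two sub-stages at each step: a paradifferential block-diagonalization of Alinhac good-unknown type, conjugating the equation so that the symbols of positive order become constant coefficients, followed by a standard Birkhoff normal form on the resulting smoothing vector field. The main conceptual novelty, announced in the abstract, is to correct each paradifferential conjugation (which is in general \emph{not} symplectic) by a Darboux-type smoothing perturbation that restores its symplectic character. Abstractly, given a near-identity map $\Phi$ that fails to be symplectic by a quantity of fixed homogeneity, I would solve, degree by degree, the homological equation $\Phi^* \omega - \omega = \di\alpha$ and compose $\Phi$ with the time-one flow of a Hamiltonian whose vector field is $\alpha^\sharp$. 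Since paradifferential transformations preserve the principal symplectic form modulo smoothing errors, $\Phi^* \omega - \omega$ is smoothing, and so the Darboux corrector is itself a smoothing perturbation of the identity that does not spoil the paradifferential class.

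Once the system is in Hamiltonian Birkhoff normal form up to homogeneity $N+2$, energy estimates on the modified super-action norm $\sum_j |u_j|^2 \langle j\rangle^{2s}$ give, using the Hamiltonian structure to cancel the resonant contributions, an a priori bound of the form $\tfrac{\di}{\di t}\|u\|_{H^s}^2 \lesssim \|u\|_{H^s}^{N+3}$. Combined with the local well-posedness theory for \eqref{eq:etapsi} and a standard bootstrap argument, this yields the lifespan $T_\e \gtrsim \e^{-N-1}$ in \eqref{timeexi} and the uniform bound \eqref{boundsolN}.

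The main obstacles I anticipate are twofold. First, for $\kappa$ outside a zero-measure set $\mathcal{K}$ one must verify genuinely non-resonant lower bounds of the form $|\omega_{j_1}(\kappa) \pm \cdots \pm \omega_{j_r}(\kappa)| \geq \gamma_0 \max_i |j_i|^{-\tau}$ on zero-momentum integer vectors up to degree $N+2$; this requires a quantitative sub-analytic measure estimate exploiting momentum conservation induced by \eqref{X.tra0} and the asymptotics of the gravity-capillary symbol $\omega_j \sim \sqrt{\kappa j^3}$. Second, and more seriously, one must check that the abstract Darboux scheme is compatible with the grading by degree of homogeneity and with the class of admissible paradifferential symbols and smoothing remainders used throughout the iteration, so that the symplectic correctors can be inserted at every step without exiting the functional class in which the paradifferential calculus operates. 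Properly framing this compatibility in an abstract, reusable way is precisely the technical heart the abstract mentions.
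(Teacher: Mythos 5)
Your outline follows the paper's route closely (Wahl\'en plus complex coordinates, paralinearization with the good unknown, reduction to $x$-independent symbols up to smoothing remainders, a Darboux corrector restoring the Hamiltonian structure up to homogeneity $N$, super-action preserving Birkhoff normal form, Delort--Szeftel measure estimates for the non-resonance conditions, energy estimate and bootstrap), but it contains a genuine gap exactly at the point you dismiss in one line: the claim that ``paradifferential transformations preserve the principal symplectic form modulo smoothing errors, so $\Phi^*\omega-\omega$ is smoothing''. This is false for the conjugations actually used in the reduction. The map $\bB(U;t)$ of Proposition \ref{teoredu1} (and the Fourier-multiplier flows of order $\tfrac32$ used in the Birkhoff step) are spectrally localized maps of \emph{positive} order, only \emph{linearly} symplectic up to homogeneity $N$, and the pulled-back symplectic tensor is $E_{\leq N}(V)=E_c+\bA_{\leq N}^\top(V)E_c\,\bG_{\leq N}(V)+\cdots$ with $\bG_{\leq N}(V)\hat V=\di_V\bA_{\leq N}(V)[\hat V]V$; the term $\bA^\top E_c\bG$ is an unbounded (in any case non-smoothing) perturbation of $E_c$, cfr.\ \eqref{eq:ecnos}. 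If the deviation were smoothing, the Darboux step would indeed reduce to the known scheme of Kuksin--Perelman, Bambusi, Cuccagna; here the homological equation $\Phi^*\omega-\omega=\di\alpha$ with $\alpha^\sharp$ smoothing cannot be solved as you state, and without further input the corrector would not stay in the class of smoothing perturbations of the identity on which the whole paradifferential iteration relies.

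What saves the argument, and what is missing from your proposal, are two structural facts. First, the transpose of the internal differential of a spectrally localized map is smoothing (Lemma \ref{aggiunto}); this implies that the pullback of the \emph{Liouville one-form} deviates from $\theta_c$ by a vector field represented by $\bG^\top E_c\bA$, which \emph{is} smoothing (Lemma \ref{Lem:ZB}, formula \eqref{ZB}), even though the two-form deviation is not. Second, the non-smoothing part of the perturbed tensor acting on a vector field has the special structure $\bA^\top E_c\bG\,[X(V)]=\nabla\cW(V)+\text{smoothing}+\text{higher homogeneity}$ (Lemma \ref{diadg}); in the Moser deformation one can then absorb the unbounded contribution into the exact differential $\di\cW^\tau$ and solve the Darboux equation \eqref{fin} homogeneity by homogeneity for a \emph{smoothing} vector field $Y^\tau$ (Proposition \ref{DarB}), at the price of obtaining a map symplectic only up to homogeneity $N$. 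Your closing remark correctly senses that the compatibility of the Darboux scheme with the functional classes is the technical heart, but you misidentify the obstruction as bookkeeping of the grading rather than the unboundedness of the perturbed symplectic form; as written, the key step of your proof would fail.
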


Let us make some comments on the result. 
\\[1mm]
1.  {\sc Comparison with \cite{BD}.}
We first discuss the relation between Theorem \ref{teo1} 
and the result in  Berti-Delort \cite{BD}.  
Theorem \ref{teo1} extends the one in 
 \cite{BD} in two ways:
($i$) the equations \eqref{eq:etapsi}
may have a {\it non zero vorticity}, whereas the water waves in \cite{BD} are irrotational, 
i.e. $ \gamma = 0 $.  
 ($ii$) Also in the irrotational case Theorem \ref{teo1} is new since 
the almost global existence result in \cite{BD} 
holds for initial data $(\eta_0, \psi_0) $   
even in $ x $, whereas Theorem \ref{teo1} applies to 
{\it any} $(\eta_0, \psi_0) $.
We  remark that, 
in the irrotational case,  
the subspace of functions even in $ x $ -the so called standing waves- is invariant under evolution, whereas  for $ \gamma \neq 0 $ it 
is not invariant under the flow of \eqref{eq:etapsi} and the approach of \cite{BD} 
can not be applied.  
\\[1mm]
2. {\sc Periodic setting vs $\R^d$.} 
Global (and almost global) in time results 
\cite{Wu, GMS, Wu2, GMS2, IP, AlDe1, IFRT, IP3,DIPP}  
have been proved for irrotational water waves equations on $\R^d$ 
for  
sufficiently small,  localized 
and  smooth  
 initial data,  
 exploiting 
 the dispersive effects of the linear flow. 
 So far no global existence is  known for the solutions of 
 \eqref{eq:etapsi} in $\R^2$, not even for
  irrotational 
 fluids (\cite{DIPP} applies  in $\R^3 $). 
The periodic setting  is deeply different, as the linear waves  oscillate without decaying in time, and the long time dynamics of the equations strongly depends
on 
the presence of $N$-{\it wave resonant interactions} and  
the Hamiltonian and reversible nature of the equations. 
\\[1mm]
3. 
{\sc Dispersion relation and non-resonant parameters.} 
The water waves equations \eqref{eq:etapsi} 
may be regarded as a {	\it quasi-linear} 
complex PDE of the form   
$$
\pa_t u = - \im  \Omega(D) u + \cN( u, \bar u) , 
\quad u(x) = \tfrac{1}{\sqrt {2\pi}} {\mathop \sum}_{j \in \Z\setminus \{0\}} u_j \,  e^{\im j x} \, ,
$$
where $ \cN $ is a quadratic 
nonlinearity (depending on derivatives of $ u $) and $\Omega_j(\kappa) $
is the  {\it dispersion relation} 
\be \label{omegonejin}
\Omega_j(\kappa):= \omega_j(\kappa) +  \frac{ \gamma}{2} \frac{\tG(j)}{j} \, , 
\qquad 
\omega_j(\kappa):=\sqrt{ \tG(j) \Big(g+ \kappa j^2 + \frac{\gamma^2}{4} \frac{\tG(j)}{j^{2}}\Big)} \, , 
\ee
where  $ \tG(\xi) = |\xi| \tanh (\tth |\xi|) $ ($ = |\xi|$ in infinite depth)  
is the symbol of the Dirichlet-Neumann operator
$ G(0) $. The linear frequencies $  \Omega_j  (\kappa) $ actually 
depend on  $ (\kappa,g,\tth,\gamma)$.
The restriction on the  parameters  required in Theorem \ref{teo1} 
arises  to ensure the absence of $N$-wave resonant interactions  
\be\label{Nwavei}
\Omega_{j_1} (\kappa) \pm \ldots \pm \Omega_{j_N} (\kappa) \neq 0  
\ee
(with  quantitative lower bounds as \eqref{Nwavei1} below)
among integer indices $ j_1 , \ldots, j_N $ which are not super-action preserving, 
cfr. Definition \ref{passaggioalpha}. 
In Theorem  \ref{teo1} we fix arbitrary $ (g,\tth,\gamma) $ and 
require $ \kappa \notin \cK $, but other choices are possible. 
\\[1mm]
4. {\sc Energy estimates.}
The 
 life span estimate \eqref{timeexi} and the bound \eqref{boundsolN}
 for the solutions of \eqref{eq:etapsi} 
 follow by  
an energy estimate  for
$ \| ( \eta, \psi ) \|_{X^s} := \| \eta \|_{H_0^{s+\frac14}} +
\|\psi\|_{\dot{H}^{s-\frac14}} $  of the form 
\be\label{energyesint}
{\| (\eta,\psi)(t) \|}_{X^{s}}^{2} 
\lesssim_{s,N} {\|(\eta,\psi)(0)\|}^{2}_{X^{s}} +
 \int_{0}^{t} \| (\eta,\psi)(\tau)\|^{N+3}_{X^{s}} \, \di \tau 
\, . 
\ee
The fact that the 
right hand side in \eqref{energyesint}
  contains 
the same norm $ \| \ \|_{X^s} $ of the left hand side 
is  non trivial because  
the equations \eqref{eq:etapsi} are quasi-linear. 
The presence of the exponent $ N $   is 
not trivial at all 
because the 
nonlinearity in 
\eqref{eq:etapsi} vanishes only  quadratically for $ ( \eta, \psi ) = (0,0) $.
Actually it will be a major consequence of our 
Hamiltonian Birkhoff normal form reduction, as we explain below. 
\\[1mm]
5.
{\sc Long time existence of 
water waves.} We now describe the 
long time existence  results  proved in literature 
for {\it space} periodic water waves, with or without capillarity and vorticity.   
\begin{enumerate}
\item[($i$)] $ T_\e \geq c \e^{-1} $. 
The 
local  
well posedness 
theory  for free boundary Euler equations has been developed along several years in different scenarios in 
  \cite{Nali, Yosi, Crai, Wu0,Wu1,Lannes, ABZduke,ABZ1,BeGun,MZ,Sch, Lind1,Com, SZ1, SZ2,
  ABZnl,
  IFT,AMS}.
 As a whole they
  prove  the existence, for sufficiently nice initial data, of 
 classical smooth solutions on a small time interval.
 When specialized to initial data of size $ \e $ in some Sobolev space, imply 
 a time of existence  
larger than $ c \e^{-1} $ (the nonlinearity in \eqref{eq:etapsi} 
vanishes quadratically at zero). We remark that 
other large initial data can lead to breakdown in finite time, see for example the papers \cite{CCFGG,CoSh} on ``splash" singularities. 
\item[($ii$)]  $ T_\e \geq c \e^{-2} $. 
Wu \cite{Wu}, Ionescu-Pusateri \cite{IP}, Alazard-Delort \cite{AlDe1} 
for pure gravity waves, and Ifrim-Tataru \cite{ifrTat},    
Ionescu-Pusateri~\cite{IP3} for 
  $\kappa>0$, $g=0$ and $ \tth = + \infty $,  proved that  
small data
of size $ \e $ 
(periodic or on the line)
give rise to  irrotational solutions 
defined on a time interval  
at least $c \e^{-2}$. 
We quote  
\cite{IFT} 
for  $\kappa = 0$, $g>0$, infinite depth
and constant
vorticity,  \cite{HuIT} for irrotational fluids, and 
\cite{HarIT}  in finite depth.
All the  previous results hold in absence of three wave interactions. 
Exploiting the Hamiltonian nature of the water waves equations,
Berti-Feola-Franzoi \cite{BFF}  
proved, for any value of 
gravity, capillarity and depth,  
an energy estimate as \eqref{energyesint} with $ N  = 1 $, and so a 
$ c \e^{-2}$  lower bound for the time of existence. 
The interesting fact is that 
in  these cases 
three wave interactions may occur, giving rise to the well known Wilton ripples in fluid mechanics literature.
We finally mention the $ \e^{- \frac53+} $ long time existence result 
 \cite{IoP} for periodic 2D
gravity-capillary water waves (see \cite{FGI,FIM} for NLS). 
\item[($iii$)]  $T_\e \geq c \e^{-3}$.
A 
 time of existence larger than  $ c \e^{-3}$ has been recently proved for the pure 
gravity water waves equations in deep water 
in Berti-Feola-Pusateri \cite{BFP}.
In this case
four wave interactions 
may occur, 
but 
the Hamiltonian Birkhoff normal form turns out to be completely integrable by the formal computation in Zakharov-Dyachenko \cite{ZakD},
 implying 
an energy estimate as \eqref{energyesint} with $ N  = 2 $.  
 This result has been 
recently extended by S. Wu \cite{Wu3}   
for a larger class of initial data, developing 
a novel approach in configuration space, and, even more  recently, 
by Deng-Ionescu-Pusateri \cite{DIP} 
for waves with large period. 
\item[($iv$)] $ T_\e \geq c_N \e^{-N} $ {\it for any} $ N $.  
Berti-Delort \cite{BD} proved, 
for almost all the values of the surface tension 
$ \kappa \in (0,+\infty)$, 
 an almost global existence result as in Theorem \ref{teo1} 
for the solutions of 
 \eqref{eq:etapsi} 
in the case of {\it zero} vorticity $ \gamma = 0 $ 
{\it and} for initial data $(\eta_0, \psi_0) $   
even in $ x $. The restriction on the capillary parameter
arises 
to imply the absence of 
$ N $-wave interactions, for any $ N $. 
As already said, Theorem \ref{teo1} extends this result  for 
{\it any} $ \gamma $ 
and for {\it any}  periodic initial data, see comment 1. 
\end{enumerate}
The results \cite{BD,BFP,BFF} are based on paradifferential calculus.
We remark that all the transformations performed
to get energy estimates,  
 as the celebrated Alinhac good unknown \cite{AlM,ABZduke,ABZ1,AlDe1}, 
 are {\it not} symplectic. 
In  \cite{BFP,BFF}
 an a-posteriori identification argument 
allows to prove that the corresponding quadratic and cubic 
Poincar\'e-Birkhoff normal forms 
are nevertheless Hamiltonian.  This argument does not work
for any $  N $. 
We  note that also the  
local well posedness approach of S.Wu \cite{Wu0,Wu1} 
introduces coordinates which break the Hamiltonian nature of the 
equations.

The lack of preservation of the 
Hamiltonian structure 
 is a substantial difficulty in order to deduce long time existence results. 
A major novelty of this paper is to provide an effective  tool 
to 
recover, in the framework of paradifferential calculus, 
  the nonlinear Hamiltonian structure,  at any degree of homogeneity $ N $.  
 The present approach is in principle applicable to a wide range of quasi-linear PDEs. 
\\[1mm]
6.
{\sc Open problem:}
we do not know if the almost global solutions of the
Cauchy problem proved in Theorem \ref{teo1} 
are global in time or not, being \eqref{eq:etapsi} a 
 quasi-linear system of equations with 
periodic boundary conditions
(no dispersive effects of the flow nor 
conservation laws at the regularity 
level of the local well posedeness theory can be exploited). 
Nevertheless several families of time periodic/quasi-periodic 
 solutions of \eqref{eq:etapsi} 
have been constructed in the last years in \cite{Wh2,AB,BFM1,BM}
(other KAM results for pure gravity water waves are  proved in
\cite{PlTo,IPT,BBHM,BFM2,FG}).
We  point out that it could also happen that
small, 
smooth and localized initial data  
lead to solutions which blow-up in finite time (as it happen 
for quasilinear wave equations  
 \cite{John} 
 and 
 for compressible Euler equations \cite{Si}).
The following natural 
question therefore
remains still open: what happens to the solutions of \eqref{eq:etapsi} which do not start on a KAM invariant torus for times longer than the ones provided by Theorem \ref{teo1}? 

\smallskip

We now illustrate some of the main ideas
of our  approach. 
\\[1mm]
1. {\sc Paradifferential Hamiltonian Birkhoff normal form.}
 For PDEs on a compact manifold
--where dispersion is not available-- a natural tool to extend the life span of 
solutions is to implement normal form ideas.
This approach has been  developed 
for Hamiltonian 
semilinear PDEs  
starting with the seminal works  
\cite{Bam1,BG1,BDGS} 
(for very recent advances see e.g. \cite{BiMP,BeFG0,BeG,BaFM}),
and for quasi-linear ones by Delort \cite{Del2, Del3}.
These methods do not work for the quasi-linear 
equations \eqref{eq:etapsi}, as we explain   below.
The long time existence result of 
Theorem \ref{teo1} 
relies on 
a   novel 
 {\it paradifferential Hamiltonian Birkhoff normal form} reduction 
for quasi-linear PDEs in presence of resonant wave interactions.

The situation is substantially more difficult than in \cite{BD} which 
exploits only the reversible structure of the water waves,
and it is preserved by usual paradifferential calculus.
 On  the subspace of functions even in $  x $, 
 it implies that its  normal form
possesses 
the actions $ |u_n|^2 $  as prime integrals
 (on the subspace of even functions the  linear frequencies $ \omega_j (\kappa) $ in the dispersion relation
 are simple). 
 On the other hand,  without this restriction, 
the 
$ \omega_j (\kappa) $ in \eqref{omegonejin} are double and the approach in \cite{BD} fails.
We remark   that, in view of the $ 8$-wave resonant interactions 
\eqref{8waves} described below, also for $ \gamma \neq 0 $
 the approach in \cite{BD} fails. 
In order to prove 
Theorem \ref{teo1}, 
it is necessary to change strategy and {\em preserve the Hamiltonian nature of the normal form}
to show 
that the  {\it super-actions} 
\be\label{superacpin}
\| \Pi_n u \|_{L^2}^2 = |u_{-n}|^2 + |u_n|^2  \, , \quad  \forall n \in \N \, , 
\ee
are prime integrals.
This is a major difficulty 
since usual paradifferential calculus  transformations 
performed 
 to get energy estimates 
do not preserve the Hamiltonian structure.  
\\[1mm]
2.  {\sc Symplectic Darboux corrector.}
In order to preserve the Hamiltonian structure along the normal form reduction 
-it is sufficient up to homogeneity $ N $- we construct {\it symplectic correctors} 
of 
usual paradifferential
 transformations. 
 We remind that the first step to apply paradifferential calculus to PDEs
 relies on 
 a suitable para-linearization of  the equations.  For Hamiltonian PDEs, 
the paradifferential part inherits a linear Hamiltonian structure that is 
preserved by performing ``linearly symplectic" 
transformations. 
The aim of the abstract Theorem \ref{thm:almost}  is to correct paradifferential 
(more generally spectrally localized) linearly symplectic  maps 
(up to homogeneity $ N $) to {\it nonlinear} symplectic ones, 
 up to an arbitrary degree of homogeneity.
 Theorem \ref{thm:almost}  is proved 
via  Darboux-type arguments. 
The Darboux corrector turns out to be 
a  smoothing perturbation of the identity. As a consequence
it only slightly  modifies  the paradifferential structure of the  PDE. 

Symplectic corrections via  Darboux-type  arguments 
have been used 
in different contexts 
by Kuksin-Perelman \cite{KukPer}, Bambusi \cite{Bam2}, Cuccagna \cite{Cuc1,Cuc2}, Bambusi-Maspero \cite{BaM0,BaM1}.
The present case is much more delicate since    the symplectic form to be corrected might be an unbounded perturbation of the standard one (in all the above  works it is a smoothing perturbation). This requires a novel analysis that we describe below.
The present approach 
is quite efficient in PDE applications, 
since it systematically allows to symplectically correct  usual 
  paradifferential 
 transformations which lead to energy estimates. 

Paradifferential calculus 
has been also 
developed 
by Delort \cite{Del2,Del3} 
for Hamiltonian  quasi-linear Klein-Gordon
equations on spheres, with  a different approach.
Also in these works 
the Hamiltonian structure is preserved only up to homogeneity $ N $.
\\[1mm]
3.  {\sc Non-resonance conditions.}
A key 
ingredient to achieve the Hamiltonian Birkhoff normal form reduction 
which possesses the super-actions \eqref{superacpin} as prime integrals, 
are the  
non-resonance conditions \eqref{verad0} for the linear frequencies $ \Omega_j (\kappa) $ in \eqref{omegonejin} 
 proved in Theorem \ref{nonresfin0},  which 
exclude, for almost all surface tension coefficients,  
$ N $-{\it wave  interactions}, 
\be\label{Nwavei1}
|\Omega_{j_1} (\kappa) \pm \ldots \pm \Omega_{j_N} (\kappa) | \gtrsim 
 \max(|j_1|, \ldots, |j_N|)^{-\tau}  \, ,
\ee
for all integer indices $ j_1 , \ldots, j_N $ which are {\it not super-action preserving}.
Their proof is based on the Delort-Szeftel 
Theorem 5.1 in \cite{DelS} about measure estimates 
for sublevels of subanalytic functions. 
\\[1mm]
4.   {\sc $\tSAP$-Hamiltonians.}
Thanks to the non-resonance conditions \eqref{verad0}  we eliminate 
the Hamiltonian monomials which do not Poisson commute with the super-actions \eqref{superacpin}, cfr. Lemma \ref{lemma:Poissonbra}. The remaining monomials, 
 which we call
{\it super-action-preserving} ($\tSAP$) (Definition \ref{sapham}),   
have 
either the 
{	\it integrable}  form $ |z_{j_1}|^2 \ldots |z_{j_m}|^2 $ 
or the form 
\be\label{nonintm}
z_{j_1}\overline{z_{-j_1}} \ldots z_{j_m}\overline{z_{-j_m}} \times \text{integrable monomial} \, 
\ee
(with not necessarily distinct  indexes $j_1, \ldots, j_m$).
The 
not integrable monomials \eqref{nonintm} allow
an exchange of energy between the Fourier modes $ \{ z_{j_a}, $ $ z_{-j_a} \} $, $ a = 1, \ldots, m $, 
but, thanks to the Hamiltonian structure, 
each super-action $ |z_{j_a}|^2 + |z_{-j_a}|^2  $ remains constant in time.

We may {\it not} expect to get an  integrable  Hamiltonian Birkhoff 
 normal form for the water waves equations \eqref{eq:etapsi} starting 
from the  degree of homogeneity $ 8 $. 
Actually, using the 
conservation of momentum, 
the fourth order Hamiltonian Birkhoff normal form 
is  integrable, see Remark \ref{rem:Nwave}.  
The same holds if $ \gamma \neq 0 $ also at degree $ 6 $. 
But 
 there are
$ 8$-wave resonant interactions corresponding to $\tSAP$ {\it not} integrable
monomials 
$$ 
z_{n_1}\overline{z_{-n_1}} z_{n_2}\overline{z_{-n_2}} z_{-n_3}\overline{z_{n_3}} z_{-n_4}\overline{z_{n_4}} 
$$ 
(which are momentum preserving if   $ n_1 + n_2 = n_3 + n_4 $)
for {\it any} $ \kappa > 0 $ and  {\it any} $ \gamma  \in \R $. Indeed,  
for any positive 
integer $ n_1, n_2, n_3, n_4 $ we have, if  $ \tth = + \infty $,  
\be\label{8waves}
\begin{aligned}
& \Omega_{n_1}(\kappa)- \Omega_{-n_1}(\kappa) + 
\Omega_{n_2}(\kappa)- \Omega_{-n_2}(\kappa)
+ \Omega_{-n_3}(\kappa)- \Omega_{n_3}(\kappa) + \Omega_{-n_4}(\kappa)- \Omega_{n_4} (\kappa) \\
& \stackrel{\eqref{omegonejin}} = 
\gamma ( \sign (n_1) +  \sign (n_2) - \sign (n_3) - \sign (n_4)) 
\equiv 0 \, .
\end{aligned}
\ee
The analytical difficulties  of the {\it loss of derivatives} 
caused 
by the quasi-linearity of the equations and the small divisors 
in \eqref{Nwavei} 
along the Birkhoff normal form reduction is overcome 
by preserving  the paradifferential structure of the equations.
The final outcome is that the water waves 
system in Hamiltonian Birkhoff normal form satisfies an  {\it energy estimate} of the form 
\be\label{energy.intro}
  {\| z(t)\|}^{2}_{\dot{H}^{s}}   \leq  {\| z(0) \|}^{2}_{\dot{H}^s} 
+  C(s)  \int_0^t {\| z(\tau)\|}^{N+3}_{\dot{H}^s} \, \di \tau \, . 
\ee
5.  {\sc Comparison with the approach in \cite{Del2,Del3} and \cite{BD}.}
The Hamiltonian approach to paradifferential calculus in 
 \cite{Del2,Del3} is developed for quasi-linear 
 Klein-Gordon equations and  can not be applied to prove Theorem \ref{teo1}. 
Indeed,  since the Klein-Gordon dispersion relation  is asymptotically linear, 
it is not required a reduction 
to $ x $-independent paradifferential operators 
up to smoothing remainders:  
since the commutator between first order paradifferential operators is 
still  a first order paradifferential operator,
it is possible to implement a 
Hamiltonian Birkhoff normal form  reduction 
in degrees of homogeneity, in the same spirit of  semilinear PDEs.
This approach can not be applied 
for \eqref{eq:etapsi} since the dispersion relation \eqref{omegonejin}  is super-linear. 
It is for  this reason that we first reduce in Proposition \ref{prop:Z} 
 the paralinearized water waves equations to $ x$-independent 
symbols  up to smoothing remainders.
This was done in \cite{BD} for $ \gamma = 0 $ (in a different way)
but breaking the Hamiltonian structure (see  \cite{FI2} for NLS).
Incidentally we mention that the paradifferential normal form 
in \cite{BD} is not a Birkhoff normal form: 
for standing waves 
it is not needed to reduce  the $ x $-independent symbols 
to deduce that the actions $ |u_n|^2 $ are prime integrals.

Summarizing, the proof of Theorem \ref{teo1} demands 
\begin{itemize}
\item a reduction of the water waves equations  \eqref{eq:etapsi}
to paradifferential $ x$-independent symbols up to smoothing remainders,
done in \cite{BD} for $ \gamma = 0 $ (in a different way) 
losing the Hamiltonian structure, and, additionally, 
reduce  the $ x $-independent symbols to super-action preserving Birkhoff 
normal form; 
\item  
preserve the Hamiltonian structure of the Birkhoff normal form,   goal 
achieved in \cite{Del2,Del3} but only for 
 Klein-Gordon equations. 
\end{itemize}
The resolution of these 
requirements is a
main achievement of this paper.  

\smallskip

 Before presenting   further
 ideas of the proof of 
 Theorem \ref{teo1}  
we  state the following byproduct of 
the  Darboux-type  Theorem \ref{thm:almost} concerning a symplectic 
version of the Alinhac good unknown. Such result  
may be of separate interest and use 
for water waves results in other contexts. 
\\[1mm]
{\bf Symplectic good unknown up to homogeneity $ N $.} 
The  celebrated Alazard, Burq, Zuily approach \cite{ABZduke,ABZ1,AlM}
to local well posedness extends Lannes \cite{Lannes}   
introducing  the nonlinear, {\it not} symplectic, Alinach good unknown 
$$
\omega := \psi - \Opbw{B(\eta,\psi)} \eta \qquad 
\text{where} \qquad B (\eta,\psi) := (\Phi_y)(x,y)_{|y = \eta (x)}
$$
and  $ \Phi  $ is the generalized harmonic velocity potential in \eqref{def:DN} (the notation $ \Opbw{ \cdot } $ refers to a paradifferential operator in the Weyl quantization, according to Definition \ref{quantizationtotale}).
The nonlinear map
\be\label{simgud}
 { {\cal G}_{A} \begin{pmatrix}
  \eta \\
\psi 
  \end{pmatrix} = 
\opbw{  \begin{pmatrix}
  1 & 0 \\
  -B(\eta, \psi) & 1 
  \end{pmatrix} }   \begin{pmatrix}
  \eta \\
\psi 
  \end{pmatrix} \, ,}
\ee 
although  not symplectic,   is linearly symplectic, namely
\be\label{guls}
{ \opbw{  \begin{pmatrix}
  1 & 0 \\
  -B(\eta, \psi) & 1 
  \end{pmatrix} }^\top \,   E_0  \,
  \opbw{  \begin{pmatrix}
  1 & 0 \\
 -  B(\eta, \psi) & 1 
  \end{pmatrix} }
  }
   = E_0 \qquad \text{where} \qquad  
E_0 := \begin{pmatrix}
0 & -{\rm Id} \\
{\rm 
Id} & 0
\end{pmatrix} \, . 
\ee
A direct corollary of  Theorem \ref{thm:almost}  is the following result, 
proved at the end of Section \ref{sec:Darboux}.
We refer to Definition \ref{omosmoothing}
for the precise definition of smoothing operators.

\begin{theorem} {\bf (Symplectic good unknown up to homogeneity $ N $)}\label{thm:symplgu}
Let $ N \in \N $. 
There exists a
 pluri-homogeneous matrix of  real smoothing operators 
 $ R_{\leq N}(\cdot) $ in $ \Sigma_1^N \wt \cR_q^{-\varrho} 
 \otimes \mathcal{M}_2(\mathbb{\C}) $ for any $ \varrho \geq 0 $ such that 
\be\label{smooGU}
\big( \uno + R_{\leq N}(\cdot) \big) \circ    { {\cal G}_{A} } (\eta, \psi) 
\ee
 is symplectic up to homogeneity $ N $, according to \eqref{real:sym:boh}.
 \end{theorem}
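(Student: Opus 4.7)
The statement is a corollary of the abstract Darboux-type Theorem \ref{thm:almost} applied to the Alinhac good unknown $\mathcal{G}_A$, so the plan is to verify the hypotheses of that theorem and then quote it. First I would expand $B(\eta,\psi)=\sum_{k\geq 1} B_k(\eta,\psi)$ in pluri-homogeneous components (each $B_k$ multilinear in $(\eta,\psi)$, since $B$ is a smooth composition involving the Dirichlet--Neumann operator and its boundary trace), and substitute into \eqref{simgud} to obtain a pluri-homogeneous paradifferential expansion $\mathcal{G}_A = \uno + \sum_{k\geq 1} G_k(\cdot)$ with each $G_k$ spectrally localized. Second, the linear symplecticity hypothesis required by Theorem \ref{thm:almost} is exactly the identity \eqref{guls}. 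Feeding these two inputs to Theorem \ref{thm:almost} directly produces a pluri-homogeneous real smoothing corrector $R_{\leq N}(\cdot)\in\Sigma_1^N\wt\cR_q^{-\varrho}\otimes\mathcal{M}_2(\C)$ such that \eqref{smooGU} is symplectic up to homogeneity $N$.

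For orientation on the mechanism internal to Theorem \ref{thm:almost}, I would recall Moser's deformation trick: interpolate $\omega_t := (1-t)\omega_0 + t\,\mathcal{G}_A^{\ast}\omega_0$ between the standard symplectic form $\omega_0$ (represented by $E_0$) and its pullback $\mathcal{G}_A^{\ast}\omega_0$; solve $\iota_{X_t}\omega_t = \alpha$ for a primitive $\alpha$ with $d\alpha = \omega_0-\mathcal{G}_A^{\ast}\omega_0$; and let $\Psi$ denote the time-$1$ flow of $X_t$. Then $\Psi^{\ast}\omega_0 = \mathcal{G}_A^{\ast}\omega_0$, so $\Psi^{-1}\circ\mathcal{G}_A$ is symplectic, and truncating $\Psi^{-1}-\uno$ at homogeneity $N$ defines $R_{\leq N}$. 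Reality of the corrector follows from reality of $\mathcal{G}_A$ and of $\omega_0$.

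The main obstacle --- and the reason Theorem \ref{thm:almost} is not a routine reprise of the prior Darboux constructions in \cite{KukPer,Bam2,Cuc1,Cuc2,BaM0,BaM1}, where the symplectic form to be corrected is already a smoothing perturbation of the standard one --- is that the paradifferential factor $M(\eta,\psi)$ in \eqref{simgud} contains the first-order operator $\Opbw{B(\eta,\psi)}$, so $\mathcal{G}_A^{\ast}\omega_0$ is \emph{a priori} an unbounded perturbation of $\omega_0$, and the naive Moser scheme would lose derivatives when solving $\iota_{X_t}\omega_t=\alpha$. The crucial cancellation is forced by \eqref{guls}: the linear symplectic identity algebraically annihilates all unbounded contributions to $\mathcal{G}_A^{\ast}\omega_0-\omega_0$, leaving only residuals arising from differentiating the symbol $B(\eta,\psi)$ in the direction $(\eta,\psi)$. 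Bony-type paradifferential estimates then show that such residuals are smoothing, whence $X_t$ is a smoothing vector field, its flow is a smoothing perturbation of $\uno$, and its homogeneity-$N$ truncation delivers the claimed corrector in the smoothing class $\Sigma_1^N\wt\cR_q^{-\varrho}$.
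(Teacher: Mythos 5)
Your proposal is correct and follows essentially the same route as the paper: expand $B(\eta,\psi)$ in pluri-homogeneous components (via its paralinearization, Lemma \ref{laprimapara}), observe that ${\cal G}_A$ is then a pluri-homogeneous spectrally localized map which is linearly symplectic by \eqref{guls}, and invoke Theorem \ref{thm:almost} — the only bookkeeping step the paper adds, and you omit, is conjugating by the complexification map $\cC$ of \eqref{cc} so that the hypotheses are checked in the complex setting of Definition \ref{LSUTHN} where Theorem \ref{thm:almost} is formulated. One small inaccuracy in your motivating discussion: $B(\eta,\psi)$ is a function (a symbol of order zero), so $\Opbw{B(\eta,\psi)}$ and hence ${\cal G}_A$ are bounded, not first order as you state; this is immaterial here since Theorem \ref{thm:almost} does not require boundedness, and the paper itself remarks that ${\cal G}_A$ is bounded.
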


Let us make some comments about Theorem \ref{thm:symplgu} and the 
more general Theorem \ref{thm:almost}. 
\\[1mm]
1.
The pluri-homogeneous smoothing correcting operators 
$ R_{\leq N} (\cdot ) $ in \eqref{smooGU} 
are constructed in Proposition \ref{DarB} by a  Darboux 
deformation argument \`a la Moser.
More precisely the $ R_{\leq N} (\cdot ) $ are defined 
as 
approximate inverses 
of approximate flows, up to homogeneity $ N $, generated by 
smoothing vector fields, which are algorithmically 
determined 
 by the Darboux mechanism 
and depend only on the
pluri-homogeneous components of $ \cG  $ up to degree $ N $
(more in general of  $ \bB_{\leq N} $ in \eqref{PsiN}).  
%
\\[1mm]
2. The Alinhac good unknown map \eqref{simgud} is bounded, but
Theorem \ref{thm:almost}
also holds for a (spectrally localized)  map $ \bB_{\leq N}(U) $ in \eqref{PsiN} which is 
{\it unbounded}.
This is the case for example 
when $ \bB_{\leq N} (U) $ is the Taylor expansion of a linear flow 
generated by an unbounded operator, as we discuss later in \eqref{linflowin}. 
\\[1mm]
3.
We do not expect to find in Theorem \ref{thm:almost} a corrector which produces  
a completely symplectic transformation of the  phase space,
but only up to an  arbitrary  degree of homogeneity $ N $. 
In the  Darboux  approach of Section \ref{sec:Darboux} 
this is because 
the equation \eqref{fin} 
for the smoothing vector field $ Y^\tau $, 
whose flow defines the symplectic corrector, 
can be solved only in homogeneity
having  the form $ E_c Y^\tau = R(V, Y^\tau, \di_V Y^\tau )$ 
and so losing derivatives, see Remark \ref{rem:dY}.
We remark that also 
the transformations 
in \cite{Del2,Del3}  are symplectic  at  degree of homogeneity $ \leq N $. 
A similar problem appears in  \cite{FI3}.
\\[1mm]
4.
Darboux perturbative methods for Hamiltonian 
PDEs have been developed in different contexts in \cite{Bam2,BaM0,BaM1,Cuc1,Cuc2,KukPer}.
 In all these cases, the perturbed symplectic form is a smoothing perturbation of the standard one and thus Darboux correctors are symplectic maps.
On the other hand, 
 in this work 
the perturbed   symplectic tensor  is a (possibly) {\it unbounded
 perturbation} of the standard one, 
\be\label{eq:ecnos}
 E_{{\leq N}}(V) 
 =  E_c + \text{(possibly) unbounded 
 operator} \, . 
\ee
A key tool to overcome this difficulty is the structural Lemma  \ref{diadg}. 
\\[1mm]
5.
A symplectic map up to homogeneity $ N $, 
transforms a Hamiltonian system up to homogeneity $N$ into 
another Hamiltonian system up to homogeneity $N$, see Lemma 
\ref{conj.ham.N}. 

\smallskip

\subsection*{Further ideas of proof and plan of the paper}
The paper is divided in 
\begin{enumerate}
\item 
Part \ref{part:I}) containing  the abstract functional setting used along the paper 
and the 
Darboux result; 
\item Part \ref{part:II}) with the 
proof of  the almost global in time Theorem \ref{teo1}. 
 \end{enumerate}
We first illustrate 
the way we proceed to preserve the Hamiltonian structure, up to homogeneity $ N $,   
in a generic transformation step along the proof of Theorem \ref{teo1}. 
\\[1mm]
{\bf Symplectic conjugation step up to homogeneity $ N $.} Consider a real-to-real system in paradifferential form 
\be\label{equaz1}
\pa_t U = X(U)= \Opbw{A(U; t,x,\xi)}[U] + R(U; t)[U] \ , \quad U = \begin{bmatrix}u \\ \bar u \end{bmatrix} \ , 
\ee
where $ A(U; t,x,\xi) $ is a matrix of symbols and $ R(U; t) $ are $ \varrho$-smoothing operators, 
which admit a homogeneous expansion up to homogeneity $N$, whereas the terms with
  homogeneity $>N$  are dealt,  as in \cite{BD}, as time dependent symbols and remainders, see Section \ref{sec:para}.
 This is quite convenient from a 
technical point of view because it does not demand much information about the higher degree terms. 
Moreover this 
enables to directly use the  paralinearization of the 
Dirichlet-Neumann operator 
proved in \cite{BD}.
System  \eqref{equaz1} is Hamiltonian up to homogeneity $N$, namely 
 the homogeneous components  of the vector field $X(U)$ of degree $ \leq N+1 $ 
 have the Hamiltonian  form 
 \be\label{JcH.intro}
J_c \nabla H(U)  \qquad \text{where} \qquad  J_c := \begin{bmatrix}0 & - \im \\ \im  & 0  \end{bmatrix}  
\ee
is the Poisson tensor and  $H(U)$ is a real valued 
 pluri-homogeneous Hamiltonian of degree $\leq N+2$. 
Moreover  the paradifferential operator 
$ \Opbw{ A(U)} $  in \eqref{equaz1} is a linear  Hamiltonian operator, up to homogeneity 
$ N $, namely of the form $  \Opbw{ A(U)} = J_c  \Opbw{ B(U)} $ where 
$ B(U) $ is a symmetric operator up to homogeneity 
$ N $, see Definition \ref{def:LHN}.  
\smallskip

In order to prove energy estimates for \eqref{equaz1} 
we transform it under  several changes of variables. 
Actually we do not really perform changes of variables of the phase space, 
but we proceed in the time dependent setting 
due to the high homogeneity terms. 
Let us discuss a typical transformation step. 
Let  ${\cal G}(U;t) := {\cal G}^{\tau} (U;t){|_{\tau = 1}} $ be the 
time $ 1$-flow 
\be\label{linflowin}
\pa_\tau\mG^\tau(U;t)=
J_c  \, \opbw\big(B(U; \tau, t, x, \xi)\big)
  \mG^\tau(U;t) \, , \quad 
\mG^0(U;t)={\rm Id} \, , 
\ee
generated by a linearly Hamiltonian operator  $J_c  \, \opbw\big(B(U; \tau, t, x, \xi)\big)
 $ up to homogeneity $ N $. 
  The transformation $\cG(U;t)$ is invertible and bounded 
  on $\dot  H^s(\T)  \times \dot H^s(\T)$ 
  for any $ s \in \R $
 and   it admits a pluri-homogeneous expansion $ {\cal G}_{\leq N}(U) $, which is 
 an unbounded operator if the generator
 $ J_c  \, \opbw\big(B\big) $ is unbounded, see Section \ref{sec:SLF}. 
  If $ U $ solves \eqref{equaz1} then the variable 
\be\label{intro.G}
W:= {\cal G} (U;t)U  
\ee
solves 
a new system in paradifferential form 
\be\label{equaz2}
 \pa_t W = X_+ (W) = \Opbw{A_+ (W; t,x,\xi)}[W] + R_+ (W; t)[W] 
 \ee
 (actually 
 the symbols and remainders of  homogeneity $ > N $ in \eqref{equaz2} 
 are still expressed in terms of $ U $, but for simplicity we skip to discuss  this issue here). 
In 
Section \ref{sec:redu}  we perform several transformations of this kind,
choosing suitable generators 
$ J_c  \, \opbw\big(B\big) $ (either bounded or unbounded) in order to  obtain a 
diagonal matrix  $ A_+ $
with $x$-independent  symbols.   

We remark that, with this procedure, 
since the time one flow map $ {\cal G}(U;t) $ of the linear Hamiltonian system 
\eqref{linflowin} is  only {\em  linearly} symplectic up to homogeneity $ N $, namely
$$
{\cal G(U;t)}^\top E_c {\cal G}(U;t) = E_c + E_{> N} (U;t) \, , \quad E_{> N} (U;t) = O(\|U\|^{N+1}) \, , 
$$
where $E_c := J_c^{-1}$ is the standard symplectic tensor,  
the new system \eqref{equaz2} {\em  is not} Hamiltonian anymore, 
not even its pluri-homogeneous components of degree $ \leq N + 1 $. 
The new system \eqref{equaz2} is only linearly symplectic, up to homogeneity $ N $, 
see  Lemma \ref{coniugazionehamiltoniana}.  
In order to obtain a new 
Hamiltonian system
up to homogeneity $ N $,  we use the Darboux  results of Section \ref{sec:Darboux}
to construct perturbatively a ``symplectic corrector" of the transformation 
\eqref{intro.G}. 

Let us say some words about the 
construction of the  symplectic corrector.  
We remark that the 
 perturbed  symplectic tensor  
$ E_{{\leq N}}(V) $ induced by the non-symplectic transformation $ {\cal G}_{\leq N} (U) $ is 
{\it not} 
a smoothing perturbation of the standard Poisson tensor $ E_c $, cfr. \eqref{eq:ecnos}. 
However, Lemmata \ref{Lem:ZB} and \ref{diadg}
prove that, for any pluri-homogeneous vector field    $X(V)$, we have 
$$
E_{{\leq N}}(V) [X(V)] =  E_c X(V)+ \grad \cW(V) +  \text{ smoothing vector fields} 
+  \text{high homogeneity terms}  
$$
where $ \cW (V) $ is a scalar function.
This algebraic structural property enables to prove the Darboux 
Proposition \ref{DarB}, thus Theorem \ref{thm:almost}, 
via a deformation argument \`a la  Moser. We also remark that 
the operators $ R_{\leq N} (\cdot ) $ of Theorem \ref{thm:almost} 
are smoothing for arbitrary  $ \varrho \geq 0 $, since  
they have  $ 2 $ equivalent frequencies, namely 
$ {\rm \max}_2( n_1,\ldots, n_{p+1}) \sim \max( n_1,\ldots, n_{p+1}) $ 
in \eqref{hom:rest}, 
arising by applications of Lemma \ref{aggiunto}. This
property compensates  the presence 
of  unbounded operators in 
$ {\cal G}_{\leq N} (U) $.

In conclusion, Theorem \ref{thm:almost} provides a nonlinear 
map 
$ W + R_{\leq N}(W)W $,
where $R_{\leq N}(W) $ are pluri-homogeneous
 $ \varrho $-smoothing 
operators for arbitrary  $ \varrho > 0 $, 
such that the pluri-homogeneous map 
$$
\cD_N (U) := (\id + R_{\leq N}(\cdot ))\circ   \cG_{\leq N}(U) U
$$
is symplectic up to homogeneity $N$, i.e. 
 \be\label{sympl-approx}
\big[ \di_U \cD_{N} (U) \big]^\top E_c \, \big[ \di_U \cD_{N} (U) \big] = E_c + E_{> N} (U)  
\ee
where $ E_{> N} (U) $ is an operator of homogeneity degree $ \geq N +1 $.
As a consequence, since \eqref{equaz1} is Hamiltonian up to homogeneity $ N $, 
the variable
$$
Z(t) := \cD(U(t) ;t) := W(t)  + R_{\leq N}(W(t) )  = (\id + R_{\leq N}(\cdot ))\circ   \cG(U(t) ;t) U(t) 
$$
satisfies a system which  is {\em Hamiltonian up to homogeneity $N$} as well, 
and which has, since $ R_{\leq N} ( \cdot ) $ are smoothing operators, 
the same paradifferential form as in \eqref{equaz2}, 
\be\label{equaz3}
 \pa_t Z = X_{++} (Z)  = \Opbw{A_{++} (Z; t,x,\xi)}[Z] + R_{++} (Z; t)[Z] \, .  
 \ee 
This is the content of Theorem \ref{conjham}. 
Note that the matrix of symbols $ A_{++}(Z; t,x,\xi) $ in \eqref{equaz3}
is obtained by substituting in  $ A_+ (W; t,x,\xi)  $ 
the relation $ W = Z - R_{\leq N}(Z) + \ldots $ obtained  inverting 
$ Z = W + R_{\leq N}(W) $ approximately up to homogeneity $ N $. 
This procedure is rigorously justified 
in 
Lemmata \ref{lem:conj.fou} and \ref{NormFormMap0}.

\smallskip

\noindent
{\bf Scheme of proof of Theorem \ref{teo1}.} In part \ref{part:II} 
we apply the abstract formalism developed in part \ref{part:I} to prove Theorem \ref{teo1}.
We proceed as follows.  
\\[1mm]
{\it Section \ref{sec:paraWW}: paralinearization of 
 the water waves equations}.  
In Section \ref{sec:paraWW}  
we first paralinearize the water waves equations 
\eqref{eq:etapsi}, we introduce the Wahl\'en variables 
$ (\eta, \zeta) $ in \eqref{Whalen} and 
the complex variable $ U $ in 
\eqref{defMM-1} 
which diagonalizes the linearized equations at 
zero.   The resulting paralinearized equations \eqref{complexo} are a  Hamiltonian system of the form 
\be\label{JcH.intro2}
\pa_t U = J_c \nabla H_\gamma (U)  
\ee
where $ H_\gamma $ is the 
Hamiltonian  in \eqref{H.gamma} written in the variable $ U $. 
Our goal is to perform several changes of variable to prove energy estimates for 
\eqref{complexo}, i.e. \eqref{JcH.intro2}, valid up to times of  order $ \e^{-N-1} $. 
We split the proof in two major steps. 
\\[1mm]
{\it Section \ref{sec:redu}: Hamiltonian paradifferential normal form.}
In Section \ref{secAl1} we introduce  the good unknown 
of Alinhac $ {\cal G(U)} $   (written in complex coordinates), obtaining a system which has  energy estimates for times of order $\varepsilon^{-1}$. 
The Alinhac good unknown is 
{\it not} symplectic and therefore the transformed 
system \eqref{sisV0} is not Hamiltonian anymore.  
Next 
we transform \eqref{sisV0} 
into a diagonal matrix of 
$ x$-independent 
 symbols  up to smoothing remainders, 
in order to compensate along the Birkhoff normal form reduction process 
the loss of derivatives due to the small divisors and  the 
quasi-linear nature  of the water waves equations, see Proposition \ref{teoredu1}.   
The resulting system 
\be\label{st1nfr}
\pa_tW = \vOpbw{\ii \tm_{\frac32}(U;t,\xi)}W 
 + R(U;t)W 
\ee 
 is {\em no longer Hamiltonian}. 
 In \eqref{st1nfr}  the imaginary part of the symbol $ \tm_{\frac32} $ has order zero and 
 homogeneity larger  than $ N $, whereas    
$ R(U;t)  $ is a  smoothing remainder vanishing linearly in $ U $. 
\\[1mm]
{\it Section \ref{sec:birk}: Hamiltonian Birkhoff normal form.}
In order to recover 
the  Hamiltonian structure 
we apply the symplectic corrector given by
Theorem \ref{thm:almost}: using Theorem \ref{conjham} and Lemmata \ref{lem:conj.fou} and \ref{NormFormMap0},
we obtain in Proposition \ref{prop:Z}  system \eqref{teo62} which is 
Hamiltonian  up to homogeneity $ N $. 
We perform the Hamiltonian Birkhoff normal form reduction
for any value of the
 surface tension 
 $ \kappa $ outside the set $ {\cal K}$ defined in Theorem 
\ref{nonresfin0}.
 Iteratively we first  reduce 
the $p$--homogeneous $ x$-independent paradifferential symbol
to its super-action-preserving component, via the linear flow generated by 
an unbounded Fourier multiplier, see \eqref{W.bir}. 
Since such transformation is only linearly symplectic,
we apply again  
Theorem \ref{conjham} to recover  
a Hamiltonian system up to homogeneity $ N $, see system \eqref{eq:W:Bir4}. 
Finally we reduce the  $ (p+1)$-homogeneous component of the Hamiltonian 
smoothing vector field to its super-action preserving part, see 
\eqref{eq:W:Bir7}. The key property is that 
a super-action preserving Hamiltonian Poisson commutes with
the super-actions 
defined 
in \eqref{superacpin}. 
After $ N $ iterations, the final outcome is  the 
Hamiltonian Birkhoff normal form system  \eqref{final:eq}, which has
 the form 
\be\label{finalZin}
\pa_tZ  = J_c \nabla H^{(\tSAP)}(Z) 
+ \vOpbw{- \ii (\tm_{\frac32})_{>N}(U;t,\xi)}Z+ R_{>N}(U;t)U 
\ee
where $ H^{(\tSAP)}(Z) $ is a super-action preserving Hamiltonian (Definition \ref{sapham})
and the higher order homogeneity paradifferential and smoothing terms 
admit energy estimates in Sobolev spaces
(the imaginary part of 
the symbol $ (\tm_{\frac32})_{>N} $ has  order zero). 
\\[1mm]
{\it Section \ref{sec:EE}: energy estimates.}
The  Hamiltonian Birkhoff normal form 
equation $\pa_tZ  = J_c \nabla H^{(\tSAP)}(Z) $
obtained  neglecting the terms of homogeneity larger than $ N $ in 
\eqref{finalZin} 
possesses 
the super--actions  $  |z_{-n}|^2 + |z_n|^2 $,  
for any $  n\in \N $, 
as  prime integrals. Thus it preserves the Sobolev norms 
and the solutions of 
\eqref{finalZin} with  initial data of size $ \e $ 
have energy estimates up to times 
of order $  \e^{-N-1}$.
In conclusion, since the Sobolev norms of  $ U $ in  \eqref{JcH.intro2}
and $ Z $ in \eqref{finalZin} are equivalent,  
we deduce 
 energy estimates for \eqref{JcH.intro2},
 $$
{\|U(t)\|}_{\dot{H}^{s}}^{2} 
\lesssim_{s,K} {\|U(0)\|}^{2}_{\dot{H}^{s}} +
 \int_{0}^{t} \|U(\tau)\|^{N+3}_{\dot{H}^{s}} \, \di \tau 
 $$
  valid up to times of  order $  \e^{-N-1} $. A standard bootstrap argument concludes the proof of Theorem \ref{teo1}.


\smallskip

 \noindent {\bf Notation:} The notation $ A \lesssim B $ means that there exists  a constant $ C \geq 0 $ such that  $ A\leq C B $. We  denote $ \N = \{1, 2, \ldots \} $ and 
 $\N_0 := \N \cup \{0\}$.

 \part{Abstract setting and Darboux symplectic corrector}\label{part:I}

\section{Functional Setting}

This section contains the abstract 
 functional setting 
used along the paper. 
In  Section \ref{sec:para} we present  definitions and results about para-differential calculus following Berti-Delort \cite{BD}, but   defining 
the different  notion of $ m$-operators.  
Using the same classes of symbols and smoothing operators of 
\cite{BD} 
has  the advantage to directly rely on the result in \cite{BD} concerning the 
paralinearization of the 
Dirichlet-Neumann operator 
with multilinear expansions. 
In Section \ref{sec:SL} we introduce the notion of spectrally localized maps which includes, 
as a particular case, 
 paradifferential operators of any order.   
Then we prove several properties of spectrally localized maps among which 
that the transpose of the differential of a homogeneous spectrally localized map is smoothing, 
see  Lemma \ref{aggiunto}. 
This result generalizes a lemma which 
has been  proved in Feola-Iandoli \cite{FI3} for paradifferential operators, and it 
is relevant for producing the Hamiltonian corrections to the homogeneous components of the vector field in Section \ref{sec:Darboux}, by means of a Darboux approximate procedure. 
In Section \ref{sec:app} we  construct approximate inverses of non--linear maps and approximate
 flows  up to an arbitrary degree of homogeneity.
In Section \ref{SGeo} we introduce the 
formalism of  pluri-homogeneous  $k$-forms, Lie derivatives  and Cartan's magic formula. 
Let us first fix some notation  used along the paper. 

\paragraph{Function spaces.} Along the paper we deal with real parameters 
\be\label{varipar}
s\geq s_0 \gg K \gg \vr \gg N  
\ee
where $ N \in \N_0 $ is the constant in Theorem \ref{teo1}. 

Given an interval $ I\subset \R$ symmetric with respect to $ t = 0 $ 
and $s\in \R$, we define the space 
$$
C_*^K(I,\dot{H}^s\left(\mathbb{T},\mathbb{C}^2)\right) := 
\bigcap_{k=0}^K C^k\big(I, \dot{H}^{s- \frac32 k}(\mathbb{T},\mathbb{C}^2)\big)
$$ 
endowed with the norm 
\be \label{Knorm}
\sup_{t\in I} \| U(t, \cdot)\|_{K,s} \qquad 
{\rm where} \qquad 
\| U(t, \cdot)\|_{K,s}:= \sum_{k=0}^K \| \partial_t^kU(t, \cdot)\|_{\dot{H}^{s- \frac32k}} \, ,
\ee
and we also consider  its subspace 
$$
C_{*\R}^K(I,\dot{H}^s\left(\mathbb{T},\mathbb{C}^2)\right)
:= \Big\{ U\in C_*^K(I,\dot{H}^s\left(\mathbb{T},\mathbb{C}^2)\right) \ : \ 
U=\begin{pmatrix} u \\ \bar{u}\end{pmatrix}\Big\} \, .
$$  
Given $r>0$ we set $B^K_s(I;r)$ the ball of radius $r$ in $C_*^K(I,\dot{H}^s\left(\mathbb{T},\mathbb{C}^2)\right)$ and by 
 $B^K_{s,\R}(I;r)$ the ball of radius $r$ in $C_{*\R}^K(I,\dot{H}^s\left(\mathbb{T},\mathbb{C}^2)\right)$. 

The parameter $ s $ in \eqref{Knorm} denotes the spatial Sobolev regularity of the solution $ U(t, \cdot) $
and $ K $  its regularity in the time variable. 
The gravity-capillary
water waves vector field  
 loses $ 3/2$-derivatives, 
and therefore,  differentiating the solution $ U(t) $ for  $k$-times in the time variable,
there is a loss of $ \tfrac32 k$-spatial derivatives.   
The parameter $\vr$ in \eqref{varipar} denotes the order where we decide to stop our regularization of the system and depends on  the number $ N$ of steps of Birkhoff normal form  that we will perform and the smallness of the  small divisors due to the resonances. 

We  denote $\dot L^2(\T,\C):= \dot H^0(\T,\C)$ and $ \dot L^2_r:=\dot L^2(\T,\R)= \dot H^0(\T,\R)$ the subspace of $\dot L^2(\T,\C)$ made by real valued functions.
Given $u, v \in \dot L^2(\T,\C)$ we define
\be\label{scpr12hom}
\la  u,  v \ra_{\dot L^2_r}:= \int_\T \Pi_0^\bot u(x)\, \Pi_0^\perp v(x) \, \di x  \, , \quad \text{respectively} \quad 
\la u, v \ra_{\dot L^2} := \int_\T \Pi_0^\bot u(x)\, \bar { \Pi_0^\bot v(x)} \, \di x \, ,
\ee
where $\Pi_0^\perp u := u - \frac{1}{2 \pi} \int u(x) \, \di x $ is the  projector onto the 
zero mean functions. 

We also consider the non-degenerate bilinear form on $\dot L^2(\T;\C^2)$
\be\label{real.bil.form.intro}
\Big\la  \vect{v_1^+}{  v_1^-}, \vect{v_2^+}{ v_2^-} \Big\ra_r:= \la v_1^+, v_2^+ \ra_{\dot L^2_r} +\la v_1^-, v_2^- \ra_{\dot L^2_r} \, .
\ee
{\bf Fourier expansions.} 
Given a $2\pi$-periodic function $u(x)$ in the homogeneous  space $ \dot L^2 (\T,\C)$, we identify 
$ u(x) $ with its zero average  representative and we expand it in Fourier series as 
\begin{equation}\label{fourierseries}
u(x)= \sum_{j \in \mathbb{Z}\setminus\{0\}} \hat{u}(j) \frac{e^{\im j x}}{\sqrt{2\pi}}, \quad \hat{u}(j):= \frac{1}{\sqrt{2\pi}}\int_{\mathbb{T}}u(x) e^{- \im j x }\,\di x \, .
\end{equation}
We shall expand a function $ \vect{u^+}{u^-}$ as 
\be \label{u+-}
\vect{u^+}{u^-}= \sum_{\sigma\in \pm} \sum_{j \in \Z\setminus\{0\}} \tq^\sigma u_j^\sigma \frac{e^{\ii\sigma j x}}{\sqrt{2\pi}}, \quad   u^\sigma_j:= \hat{u^\sigma}(\sigma j)=\frac{1}{\sqrt{2\pi}}\int_{\mathbb{T}}u^\sigma(x) e^{- \im \sigma j x }\,\di x 
\ee
 where
 \be \label{tiqu}
 \tq^+:= \vect{1}{0}, \quad \tq^-:= \vect{0}{1} \, .
 \ee
For $n\in \mathbb{N}$ we denote by $\Pi_n$ the orthogonal projector from $L^2(\mathbb{T},\mathbb{C})$ to the linear subspace spanned by
 $\{ e^{\im nx}, e^{-\im nx}\}$, 
 \be\label{Pin2}
  (\Pi_n u)(x) := 
  \hat{u}(n) \frac{e^{\im nx}}{\sqrt{2\pi}}+ \hat{u}(-n) \frac{e^{-\im nx}}{\sqrt{2\pi}} \, , 
\ee
 and we denote by $ \Pi_n $ also the corresponding projector in $ L^2 (\T, \C^2 ) $. 
 
 If $ \cU=(U_1, \dots , U_p)$ is a $p$-tuple of functions and $\vec{n}=(n_1,\dots,n_p)\in \mathbb{N}^p $, we set 
 $$
 \Pi_{\vec{n}} \cU := \big( \Pi_{n_1}U_1,\dots,\Pi_{n_p}U_p \big) \, , 
 \qquad
 \tau_\varsigma \cU := \big(  \tau_\varsigma U_1,\dots,  \tau_\varsigma U_p \big)  \, . 
 $$
For $ \vec{\jmath}_p = (j_1,\dots,j_p) \in (\Z\setminus\{0\})^p$ 
and $\vec{\sigma}_p = (\sigma_1,\dots, \sigma_p)\in \{\pm\}^p$ we  denote 
$ |\vec{\jmath}_p | := \max(|j_1|, \ldots, |j_p| ) $ and 
\be\label{notationuvecjvecs}
u_{\vec{\jmath}_p}^{\vec{\sigma}_p}:= u_{j_1}^{\sigma_1}\dots u_{j_p}^{\sigma_p} \, , 
\qquad \vec{\sigma}_p \cdot \vec{\jmath}_p := \sigma_1 j_1 + \ldots + \sigma_p j_p \, .
\ee 
 \noindent
 Note that, under the translation operator $\tau_\varsigma$ defined in \eqref{X.tra0}, the Fourier coefficients of $\tau_\varsigma u$ transform as
 $$ 
 (\tau_\varsigma u)^{\sigma}_j  = e^{\im \sigma j \varsigma} u^\sigma_j \, . 
 $$
 We finally denote
\begin{equation}
\label{fTset}
\fT_p := \Big\{ (\vec \jmath_p, \vec \sigma_p) \in (\Z\setminus \{0\})^{p} \times \{\pm \}^{p} \colon \ \ 
\vec \sigma_p\cdot \vec \jmath_p = 0
\Big\} \, .
\end{equation}
\noindent{\bf Real-to-real operators and vector fields.} 
 Given a linear operator  $ R(U) [ \cdot ]$ acting on $\dot L^2(\T;\C)$ 
we associate the linear  operator  defined by the relation 
\begin{equation}\label{opeBarrato}
\ov{R}(U)[v] := \ov{R(U)[\ov{v}]} \, ,   \quad \forall v: \T \rightarrow \C \, .
\end{equation}
An operator $R(U)$ is {\em real } if $R(U) = \bar R (U)$. 
We say that a matrix of operators acting on $\dot L^2(\T;\C^2)$  is \emph{real-to-real}, if it has the form 
\begin{equation}\label{vinello}
R(U) =
\left(\begin{matrix} R_{1}(U) & R_{2}(U) \\
\ov{R_{2}}(U) & \ov{R_{1}}(U)
\end{matrix}
\right) \, , 
\end{equation}
for any $   U $ in 
\be\label{L2rC2}
\dot L^2_\R (\T, \C^2) := \Big\{ V \in \dot L^2 (\T, \C^2) \, : \, V=\vect{v}{\ov{v}}  \Big\}  \, .
\ee
We define similarly $ \dot H^s_\R (\T, \C^2) $. 
A real-to-real matrix of operators $R(U)$  acts in the subspace   
$ \dot L^2_\R (\T, \C^2) $.

If $R_1(U)$ and $R_2(U)$ are  real-to-real operators then also $R_1(U)\circ R_2(U)$ is real-to-real.

Similarly we will say that a vector field  
\be\label{rtr}
X(U): =  \vect{X(U)^+}{X(U)^-}  \quad \text{is real-to-real if} \quad
\bar{X(U)^+}=X(U)^- \, , \quad \forall  U \in \dot L^2_\R (\T, \C^2) \, .  
\ee

\subsection{Paradifferential calculus}\label{sec:para}
 
We first introduce 
the paradifferential  operators (Definition \ref{quantizationtotale}) following \cite{BD}.
Then we define the new class of 
$ m $-Operators (Definition \ref{Def:Maps}) that, for $ m \leq 0  $, are  the 
smoothing ones (Definition \ref{omosmoothing}), and we prove properties of 
$ m$-operators under transposition and composition.

\paragraph{Classes of symbols.}
We give the definition of the classes of symbols that we use. Roughly speaking the class $\wt{\Gamma}_p^m$ contains symbols of order $m$ and homogeneity $p$ in $U$, whereas the class $\Gamma_{K,K',p}^m$ contains non-homogeneous symbols of order $m$ that vanishes at degree at least $p$ in $U$ and that are $(K-K')$-times differentiable in $t$; we can think the parameter $K'$ like the number of time derivatives of $U$ that are contained in the symbols. 
In the following we denote $ \dot{H}^{\infty}(\mathbb{T};\mathbb{C}^2)
:= \bigcap_{s \in \R} \dot{H}^{s}(\mathbb{T};\mathbb{C}^2)$.

\begin{definition}
Let $m\in \R$, $p,N\in \N_0 $, 
$ K, K' \in \N_0 $ with $ K' \leq K  $, and $ r>0$.

($i$) $p$-{\bf homogeneous symbols.} We denote by $\wt{\Gamma}^m_p$ the space of symmetric $p$-linear maps from $ (\dot{H}^{\infty}(\mathbb{T};\mathbb{C}^2))^p$ to the space of $ C^\infty $ functions from $\mathbb{T}\times \R$ to $\mathbb{C}$, 
$ (x, \xi) \mapsto a(\cU;x,\xi)$,  satisfying the following: there exist $\mu>0$ and, for any $\alpha, \beta\in \N_0$, 
there is a constant $C>0$ such that 
\begin{equation}\label{homosymbo}
|\partial_x^{\alpha}\partial_{\xi}^{\beta}  
a( \Pi_{\vec n} \cU;x,\xi)|
\leq 
C |\vec{n}|^{\mu+\alpha} \langle \xi \rangle^{m-\beta} 
\prod_{j=1}^p \| \Pi_{n_j} U\|_{L^2}
\end{equation}
for any $ \cU = (U_1,\dots,U_p)\in ( \dot{H}^{\infty}(\mathbb{T};\mathbb{C}^2) )^p$ and $\vec{n}=(n_1,\dots,n_p)\in \mathbb{N}^p$. 
Moreover we assume that, if for some $(n_0,\dots, n_p)\in \N_0\times \N^p$, $\Pi_{n_0}a\left( \Pi_{n_1} U_1,\dots \Pi_{n_p}U_p;\cdot\right)\not=0$, then there is a choice of signs $\sigma_0,\dots,\sigma_p\in \{ -1,1\}$ such that $\sum_{j=0}^p \sigma_j n_j=0$.  In addition we require the translation invariance property
\begin{equation} \label{mome}
a\left( \tau_{\varsigma} \cU; x,\xi\right)= a\left( \cU; x+\varsigma, \xi\right),\quad \forall 
\varsigma\in \R \, , 
\end{equation}
where $\tau_\varsigma$ is the translation operator in \eqref{X.tra0}.

For $ p = 0 $ we denote by $\wt{\Gamma}^m_0 $ the space of constant coefficients symbols $ \xi \mapsto a(\xi) $ which satisfy \eqref{homosymbo} with $ \a = 0 $ and the right hand side replaced by $ C \la \xi \ra^{m - \beta} $. 

We denote by $\Sigma_p^N \widetilde \Gamma^{m}_q$ the class of pluri-homogeneous symbols $\sum_{q=p}^{N}a_{q}$ with $a_q \in  \widetilde{\Gamma}_{q}^m$.
For $ p \geq N + 1 $ we mean that the sum is empty. 

($ii$) {\bf Non-homogeneous symbols. }   We denote by $\Gamma_{K,K',p}^m[r]$ the space of functions  $ (U;t,x,\xi)\mapsto a(U;t,x,\xi) $, 
defined for $U\in B_{s_0}^{K'}(I;r)$ for some $s_0$ large enough, with complex values, such that for any $0\leq k\leq K-K'$, any $\sigma\geq s_0$, there are $C>0$, $0<r(\sigma)<r$ and for any $U\in B_{s_0}^K(I;r(\sigma))\cap C_{*}^{k+K'}(I, \dot{H}^{\sigma}(\mathbb{T};\mathbb{C}^2))$ and any $\alpha,\beta \in \N_0$, with $\alpha\leq \sigma-s_0$ one has the estimate
\begin{equation}\label{nonhomosymbo}
| \partial_t^k\partial_x^\alpha\partial_\xi^\beta a(U;t,x,\xi)| \leq C \langle \xi \rangle^{m-\beta} \| U\|_{k+K',s_0}^{p-1}\|U\|_{k+K',\sigma} \, .
\end{equation} 
If $ p = 0 $ the right hand side has to be replaced by $ C \langle \xi \rangle^{m-\beta} $. 

($iii$) {\bf Symbols.} We denote by $\Sigma \Gamma_{K,K',p}^m[r,N]$ the space of functions 
$ (U;t,x,\xi) \mapsto a(U;t,x,\xi), $
with complex values such that there are homogeneous symbols $a_q\in \wt{\Gamma}_q^m$, $q=p,\dots, N$ and a non-homogeneous symbol $a_{>N}\in \Gamma_{K,K',N+1}^m$ such that 
\be\label{espsymbol}
a(U;t,x,\xi)= \sum_{q=p}^{N} a_q(U,\dots,U;x,\xi) + a_{>N}(U;t,x,\xi) \, .
\ee
We denote by  $\Sigma \Gamma_{K,K',p}^m[r,N]\otimes \mathcal{M}_2(\mathbb{C})$ the space of $2\times 2$ matrices with entries in  $\Sigma \Gamma_{K,K',p}^m[r,N]$.

We say that a symbol  $a(U;t,x,\xi) $ is \emph{real} if it is real valued for any 
$ U \in B^{K'}_{s_0,\R}(I;r)$.
\end{definition}

\noindent
$ \bullet $  If $ a ( \cU; \cdot  )$ is a homogeneous 
symbol in $ \widetilde \Gamma_p^m $ then 
$ a (U, \ldots, U; \cdot ) $ belongs to  $\Gamma^m_{K,0,p} [r] $, for any $ r >0 $.

\noindent
$ \bullet $  If $a $ is a symbol in $ \Sigma \Gamma^m_{K,K',p}[r,N] $ 
then $ \partial_x a  \in \Sigma \Gamma^{m}_{K,K',p}[r,N]   $ and  
$ \partial_\xi a \in  \Sigma \Gamma^{m-1}_{K,K',p}[r,N]   $.
If in addition $
b $ is a symbol in $ \Sigma \Gamma^{m'}_{K,K',p'}[r,N]  $ then 
$a b \in \Sigma \Gamma^{m+m'}_{K,K',p+p'}[r,N]   $.

\begin{remark}\label{rem:symbol}
{\bf (Fourier representation of symbols)}  The translation invariance property \eqref{mome}
 means that the dependence with respect to the variable $x$ of a symbol 
$a(\cU;x,\xi)$  enters only through the functions $\cU(x)$, 
implying that a symbol 
$ a_q(U;x,\xi)$ in  $\wt{\Gamma}_q^m$, $ m\in \mathbb{R} $, has the form (recall notation \eqref{notationuvecjvecs})
\be\label{sviFou}
a_q(U;x,\xi)=  \!\!\!\!
\sum_{\vec \jmath \in {(\Z \setminus
 \{ 0 \}})^{q},\vec{\sigma}\in \{\pm 1\}^{^q}}  \! \! \! \! \!\!\!\!\!  \left( a_q\right)_{\vec \jmath}^{\vec{\sigma}}(\xi) 
 u_{\vec \jmath}^{\vec \sigma}
 e^{\im \vec{\sigma} \cdot \vec{\jmath} x}  
\ee
where $ (a_q)_{\vec \jmath}^{\vec{\sigma}}(\xi) \in \C $ are  Fourier multipliers of order $m$ satisfying: there exists $ \mu>0 $, and  
for any $ \beta \in \N_0 $, there is $ C_\beta > 0 $ such that 
\be\label{rem:symbol.1}
  | \pa_\xi^\beta\left( a_q\right)_{\vec \jmath}^{\vec{\sigma}}(\xi) |\leq C_\beta 
|   \vec \jmath \, |^\mu \langle \xi \rangle^{m-\beta} , 
\quad
\forall  (\vec \jmath, \vec \sigma) \in (\Z \setminus \{0\})^q \times \{ \pm \}^q  \, . 
\ee
A symbol 
$ a_q(U;x,\xi) $ as in \eqref{sviFou} is  real if 
\be\label{realsim}
\overline{\left( a_q\right)_{\vec \jmath}^{\vec{\sigma}}(\xi)} = 
\left( a_q\right)_{\vec \jmath}^{-\vec{\sigma}}(\xi) \, . 
\ee
By \eqref{sviFou} 
a symbol
$ a_{1} $ in  $\widetilde{\Gamma}_{1}^{m}$ 
can be written as
$ a_{1}(U;x,\x)= 
\sum_{ \substack{j \in  \Z \setminus \{0\}, \s =\pm } }
(a_{1})^{\s}_{j}(\x)
u_{j}^{\s}e^{\ii \s j x} $, and therefore, 
if  $ a_1 $ is 
\text{independent} of $x$, it  is actually $ a_1\equiv0$.
\end{remark}

We also define classes of functions in analogy with our classes of symbols.

\begin{definition}{\bf (Functions)} \label{apeape} Let $p, N \in \N_0 $,  
 $K,K'\in \N_0$ with $K'\leq K$, $r>0$.
We denote by $\widetilde{\mathcal{F}}_{p}$, resp. $\mathcal{F}_{K,K',p}[r]$,  $\Sigma\mathcal{F}_{K,K',p}[r,N]$, 
the subspace of $\widetilde{\Gamma}^{0}_{p}$, resp. $\Gamma^0_{K,K',p}[r]$, 
resp. $\Sigma\Gamma^{0}_{K,K',p}[r,N]$, 
made of those symbols which are independent of $\xi $.
We write $\widetilde{\mathcal{F}}^{\R}_{p}$, resp. $\mathcal{F}_{K,K',p}^{\R}[r]$, 
$\Sigma\mathcal{F}_{K,K',p}^{\R}[r,N]$,  to denote functions in $\widetilde{\mathcal{F}}_{p}$, 
resp. $\mathcal{F}_{K,K',p}[r]$,  $\Sigma\mathcal{F}_{K,K',p}[r,N]$, 
which are real valued for any $ U \in B^{K'}_{s_0,\R}(I;r)$.
\end{definition}

\paragraph{Paradifferential quantization.}
Given $p\in \N_0$ we consider   functions
  $\chi_{p}\in C^{\infty}(\R^{p}\times \R;\R)$ and $\chi\in C^{\infty}(\R\times\R;\R)$, 
  even with respect to each of their arguments, satisfying, for $0<\delta\ll 1$,
\begin{align*}
&{\rm{supp}}\, \chi_{p} \subset\{(\xi',\xi)\in\R^{p}\times\R; |\xi'|\leq\delta \langle\xi\rangle\} \, ,\qquad \chi_p (\xi',\xi)\equiv 1\,\,\, \rm{ for } \,\,\, |\xi'|\leq \delta \langle\xi\rangle / 2 \, ,
\\
&\rm{supp}\, \chi \subset\{(\xi',\xi)\in\R\times\R; |\xi'|\leq\delta \langle\xi\rangle\} \, ,\qquad \quad
 \chi(\xi',\xi) \equiv 1\,\,\, \rm{ for } \,\,\, |\xi'|\leq \delta   \langle\xi\rangle / 2 \, . 
\end{align*}
For $p=0$ we set $\chi_0\equiv1$. 
We assume moreover that 
$$ 
|\partial_{\xi}^{\alpha}\partial_{\xi'}^{\beta}\chi_p(\xi',\xi)|\leq C_{\alpha,\beta}\langle\xi\rangle^{-\alpha-|\beta|} \, , \  \forall \alpha\in \N_0, \,\beta\in\N_0^{p} \, ,  
\ \ 
|\partial_{\xi}^{\alpha}\partial_{\xi'}^{\beta}\chi(\xi',\xi)|\leq C_{\alpha,\beta}\langle\xi\rangle^{-\alpha-\beta}, \  \forall \alpha, \,\beta\in\N_0 \, .
$$ 
If $ a (x, \xi) $ is a smooth symbol 
we define its Weyl quantization  as the operator
acting on a
$ 2 \pi $-periodic function
$u(x)$ (written as in \eqref{fourierseries})
 as
$$
{\rm Op}^{W}(a)u=\frac{1}{\sqrt{2\pi}}\sum_{k\in \Z}
\Big(\sum_{j\in\Z}\hat{a}\big(k-j, \frac{k+j}{2}\big)\hat{u}(j) \Big)\frac{e^{\im k x}}{\sqrt{2\pi}}
$$
where $\hat{a}(k,\xi)$ is the $k^{th}-$Fourier coefficient of the $2\pi-$periodic function $x\mapsto a(x,\xi)$.

\begin{definition}{\bf (Bony-Weyl quantization)}\label{quantizationtotale}
If a is a symbol in $\widetilde{\Gamma}^{m}_{p}$, 
respectively in $\Gamma^{m}_{K,K',p}[r]$,
we set
\be\label{regula12}
\begin{aligned}
& a_{\chi_{p}}(\mathcal{U};x,\xi) := \sum_{\vec{n}\in \N^{p}}\chi_{p}\left(\vec{n},\xi \right)a(\Pi_{\vec{n}}\mathcal{U};x,\xi) \, , \\ 
& a_{\chi}(U;t,x,\xi) :=\frac{1}{2\pi}\int_{\mathbb{R}}  
\chi (\xi',\xi )\hat{a}(U;t,\xi',\xi)e^{\im \xi' x}\di \xi'  \, ,
\end{aligned} 
\ee
where in the last equality $  \hat a $ stands for the Fourier transform with respect to the $ x $ variable, and 
we define the \emph{Bony-Weyl} quantization of $ a $ as 
\be\label{BW}
\opbw(a(\mathcal{U};\cdot))= {\rm Op}^{W} (a_{\chi_{p}}(\mathcal{U};\cdot)) \, ,\qquad
\opbw(a(U;t,\cdot))= {\rm Op}^{W} (a_{\chi}(U;t,\cdot)) \, .
\ee
If  $a$ is a symbol in  $\Sigma\Gamma^{m}_{K,K',p}[r,N]$, 
we define its \emph{Bony-Weyl} quantization 
$$
\opbw(a(U;t,\cdot))=\sum_{q=p}^{N}\opbw(a_{q}(U,\ldots,U;\cdot))+\opbw(a_{>N}(U;t,\cdot)) \, . 
$$
We will use also the notation 
\be \label{vecop}
\vOpbw{a(U;t,x,\xi)}:= \Opbw{ \begin{bmatrix} a(U;t,x,\xi)&0\\0& \bar{ a^{\vee}(U;t,x,\xi)}\end{bmatrix}} \, , \quad a^{\vee}(x,\xi):= a(x,-\xi) \, .
\ee
\end{definition}

\noindent
$ \bullet $ The operator 
$ \opbw (a) $ acts on homogeneous spaces of functions, see Proposition 3.8  of \cite{BD}.

\noindent
$ \bullet $  If $ a$ is a homogeneous  symbol, the two definitions  of quantization in \eqref{BW} differ by a  smoothing operator according to 
Definition \ref{omosmoothing} below. 
With the first regularization 
in \eqref{regula12} we guarantee the important property 
that $ \opbw(a) $ is  a spectrally localized map according to Definition \ref{smoothoperatormaps} below.

\noindent
$\bullet$ 
The action of
$\opbw(a)$ on  homogeneous spaces only depends
on the values of the symbol $ a = a(U;t,x,\xi)$ (or
$a(\mathcal{U};t,x,\xi)$) for $|\xi|\geq 1$.
Therefore, we may identify two symbols $ a(U;t,x,\xi)$ and
$ b(U;t,x,\xi)$ if they agree for $|\xi| \geq 1/2$.
In particular, whenever we encounter a symbol that is not smooth at $\xi=0 $,
such as, for example, $a = g(x)|\x|^{m}$ for $m\in \R\setminus\{0\}$, or $ \sign (\xi) $, 
we will consider its smoothed out version
$\chi(\xi)a$, where
$\chi\in  C^{\infty}(\R;\R)$ is an even and positive cut-off function satisfying  
\begin{equation*}
\chi(\x) =  0 \;\; {\rm if}\;\; |\x|\leq \tfrac{1}{8}\, , \quad 
\chi (x) = 1 \;\; {\rm if}\;\; |\x|>\tfrac{1}{4}, 
\quad  \pa_{\x}\chi(\x)>0\quad\forall  \x\in \big(\tfrac{1}{8},\tfrac{1}{4} \big) \, .
\end{equation*}

\noindent
$\bullet$
Definition \ref{quantizationtotale} 
is  independent of the cut-off functions $\chi_{p}$, $\chi$,  
up to smoothing operators that we define below (see Definition \ref{omosmoothing}), see the remark at pag. 50  of  \cite{BD}. 

\noindent
$\bullet$
If for some $(n_0, \dots, n_{p+1}) \in \N^{p+2}$, 
$\Pi_{n_0} \opbw(a(\Pi_{\vec{n}} \cU;\cdot))\Pi_{n_{p+1}} U_{p+1} \not=0 \, ,$
then there exist  signs $ \epsilon_j \in \{\pm\}$, $j=0, \dots, p+1$, such that $ \sum_0^{p+1} \epsilon_j n_j=0$ and the indices satisfy (see Proposition 3.8 in \cite{BD})
\be \label{paraspecloc}
n_0\sim n_{p+1}, \quad n_j \leq C\delta n_{p+1} \, , \quad n_j\leq C\delta  n_0 \, , \quad j=1, \dots, p \, .
\ee
\noindent
$\bullet$
Given  a paradifferential  operator
$ A = \Opbw{a(x,\xi)} $ it results
\be\label{A1b}
\bar A = \Opbw{\bar{a(x, - \xi)}} \, , \quad 
A^\top = \Opbw{a(x, - \xi)} \, , \quad
A^*= \Opbw{\bar{a(x,  \xi)}} \, , 
\ee
where $ A^\top $  and $ A^* $ denote respectively the transposed and  adjoint operator with respect to the complex, respectively real,  scalar product 
of $ \dot L^2 $ in \eqref{scpr12hom}. It results $ A^* = \bar A^\top $. 

\noindent
$\bullet$
 A paradifferential operator $A= \opbw(a(x,\xi))$ is {\it real} (i.e. $A = \bar A$) if 
\be \label{realetoreale}
 \bar{a(x,\xi)}= a^\vee(x,\xi) \quad \text{ where} \quad a^{\vee}(x,\xi) := a(x,-\xi) \, .
 \ee
\noindent
$ \bullet $
A matrix of paradifferential operators $ \opbw(A(U;t, x,\x))$ is real-to-real, i.e. \eqref{vinello} holds, if and only if 
the  matrix of symbols $A(U;t, x,\x)$ has the form 
\begin{equation}\label{prodotto}
A(U;x,\x) =
\left(\begin{matrix} {a}(U;t, x,\x) & {b}(U;t, x,\x)\\
{\ov{b^\vee(U;t, x,\x)}} & {\ov{a^\vee(U;t, x,\x)}}
\end{matrix} 
\right) \,   \, . 
\end{equation}

\paragraph{Classes of $m$-Operators and smoothing Operators.} 

Given integers $(n_1,\ldots,n_{p+1})\in \N^{p+1}$, we denote by $\max_{2}(n_1 ,\ldots, n_{p+1})$ 
the second largest among  $ n_1,\ldots, n_{p+1}$.
We shall often use that $\max_2$ is monotone in each component, i.e. if $n'_j \geq n_j$ for some $j$, then
\be
{\rm max}_2 (n_1 , \ldots, n_j , \ldots, n_p) \leq 
{\rm max}_2(n_1, \ldots, n'_j, \ldots, n_p) \ . 
\ee
In addition $\max_2$ is non decreasing by adding  elements, namely 
\be \label{mono:max2}
{\rm max}_2 (n_1 , \ldots, n_p) \leq 
{\rm max}_2(n_1, \ldots,  n_p, n_{p+1}) \, .
\ee
We now define  the  $ m $-operators. 
The class $\widetilde{\mathcal{M}}^{m}_{p}$ denotes multilinear
 operators that lose $m$ derivatives
 and are $p$-homogeneous in $U$, 
while the class $\mathcal{M}_{K,K',p}^{m}$ contains non-homogeneous
operators  which lose $m$ derivatives, 
vanish at degree at least $ p $ in $ U $, satisfy tame estimates
 and are $(K-K')$-times differentiable in $ t $. 
The  constant $ \mu $ in \eqref{omomap} takes into account possible loss of derivatives in 
the ``low" frequencies.

\begin{definition}{\bf (Classes of $m$-operators)}\label{Def:Maps}
Let  $ m \in \R $,  $p,N\in \N_0 $ 
$K,K'\in\N_0$ with $K'\leq K$, and $ r > 0 $.

(i) {\bf $p$-homogeneous $m$-operators.} 
We denote by $\widetilde{\mathcal{M}}^{m}_{p}$
 the space of $(p+1)$-linear operators $M$ 
 from $(\dot{H}^{\infty}(\T;\C^{2}))^{p}\times \dot{H}^{\infty}(\T;\C)$ to 
 $\dot{H}^{\infty}(\T;\C)$ which are symmetric
 in $(U_{1},\ldots,U_{p})$, of the form
$$ 
(U_{1},\ldots,U_{p+1})\to M(U_1,\ldots, U_p)U_{p+1} 
$$
that satisfy the following. There are $\mu\geq0$, $C>0$ such that 
 \be \label{omomap}
 \|\Pi_{n_0}M(\Pi_{\vec{n}}\mathcal{U})\Pi_{n_{p+1}}U_{p+1}\|_{L^{2}}\leq
 C{\rm \max}_2( n_1,\ldots, n_{p+1})^{\mu}\max( n_1,\ldots, n_{p+1})^{m} \prod_{j=1}^{p+1}\|\Pi_{n_{j}}U_{j}\|_{L^{2}}
\ee
  for any  $ \mathcal{U}=(U_1,\ldots,U_{p})\in (\dot{H}^{\infty}(\T;\C^{2}))^{p}$, any 
 $ U_{p+1}\in \dot{H}^{\infty}(\T;\C) $,
 $ \vec{n} = (n_1,\ldots,n_p) $ in $  \N^{p}$, any $ n_0,n_{p+1}\in \N$.
  Moreover, if 
 \begin{equation}\label{omoresti2}
 \Pi_{n_0}M(\Pi_{n_1}U_1,\ldots,\Pi_{n_{p}}U_{p})\Pi_{n_{p+1}}U_{p+1}\neq 0 \, ,
 \end{equation}
 then there is a choice of signs $\s_0,\ldots,\s_{p+1}\in\{\pm 1\}$ such that 
 $\sum_{j=0}^{p+1}\s_j n_{j}=0 $. 
 In addition we require the translation invariance property
\begin{equation}\label{def:R-trin}
M( \tau_\varsigma {\cal U}) [\tau_\varsigma U_{p+1}]  =  
\tau_\varsigma \big( M( {\cal U})U_{p+1} \big) \, , \quad \forall \varsigma \in \R \, . 
\end{equation}
 We denote $ \widetilde{\mathcal{M}}_{p}:=\cup_{m\geq0}\widetilde{\mathcal{M}}_{p}^{m}$
 and 
  $\Sigma_p^N \widetilde \mM^{m}_q$ the class of pluri-homogeneous operators 
  $\sum_{q=p}^{N}M_{q}$ with $M_q  $ in $  \widetilde{\mM}^{m}_{q}$. 
For $ p \geq N + 1 $ we mean that the sum is empty.   
  We set $\Sigma_p \widetilde \mM^{m}_q:= \bigcup_{N\in \N} \Sigma_p^N \widetilde \mM^{m}_q$.

(ii) {\bf Non-homogeneous $m$-operators.} 
  We denote by  $\mathcal{M}^{m}_{K,K',p}[r]$ 
  the space of operators $(U,t,V)\mapsto M(U;t) V $ defined on $B^{K'}_{s_0}(I;r)\times I \times C^0_{*}(I,\dot{H}^{s_0}(\T,\C))$ for some $ s_0 >0  $, 
  which are linear in the variable $ V $ and such that the following holds true. 
  For any $s\geq s_0$ there are $C>0$ and 
  $r(s)\in]0,r[$ such that for any 
  $U\in B^K_{s_0}(I;r(s))\cap C^K_{*}(I,\dot{H}^{s}(\T,\C^2))$, 
  any $ V \in C^{K-K'}_{*}(I,\dot{H}^{s}(\T,\C))$, any $0\leq k\leq K-K'$, $t\in I$, we have that
\begin{equation}
\label{piove}
\|{\partial_t^k\left(M(U;t)V\right)}\|_{\dot{H}^{s- \frac32 k-m}}
 \leq C \!\!\!\! \sum_{k'+k''=k} \!\!\!\! \|{V}\|_{k'',s}\|{U}\|_{k'+K',s_0}^{p} 
 +\|{V}\|_{k'',s_0}\|U\|_{k'+K',s_0}^{p-1}\|{U}\|_{k'+K',s} \, .
\end{equation}
In case $ p = 0$ we require  the estimate
$ \|{\partial_t^k\left(M(U;t)V\right)}\|_{\dot{H}^{s- \frac32 k-m}}
 \leq C  \|{V}\|_{k,s}$.
 
(iii) {\bf $m$-Operators.}
We denote by $\Sigma\mathcal{M}^{m}_{K,K',p}[r,N]$, 
the space of operators $(U,t,V)\to M(U;t)V$ such that there are homogeneous $m$-operators  $M_{q} $ in $ \widetilde{\mathcal{M}}^{m}_{q}$, $q=p,\ldots, N$  and a non--homogeneous $m$-operator $M_{>N}$  in  
$\mathcal{M}^{m}_{K,K',N+1}[r]$ such that 
\begin{equation}
\label{maps}
M(U;t)V=\sum_{q=p}^{N}M_{q}(U,\ldots,U)V+M_{>N}(U;t)V \, .
\end{equation}
We denote 
$$
\widetilde \mM_p:= \bigcup_{m\geq0} \widetilde \mM_p^m \, , \ \
 \mathcal{M}_{K,K',p}[r]:= \bigcup_{m\geq 0} \mathcal{M}_{K,K',p}^m[r] \, , \ \
 \Sigma\mathcal{M}_{K,K',p}[r,N]:=\bigcup_{m\geq0}
\Sigma\mathcal{M}^{m}_{K,K',p}[r,N] \, , 
$$
and   $\Sigma\mathcal{M}_{K,K',p}^{m}[r,N]\otimes\mathcal{M}_2(\C)$
 the space of $2\times 2$ matrices whose entries are operators in
 $\Sigma\mathcal{M}^{m}_{K,K',p}[r,N]$.
\end{definition}

 \noindent
$ \bullet $  If $M(U, \dots ,U)$ is a  $p$--homogeneous $ m$-operator  in $  \wtcM_p^m $ then the  differential of the non--linear map $ M(U, \dots, U) U $, $
\di_U \big(M(U,\dots, U) U\big) V   = p M(V,U,\dots, U)U + M(U, \dots, U)V 
$ is a $p$--homogeneous  $ m$-operator  in  $ \wtcM_p^m $. 
This follows because the right hand side of \eqref{omomap}
is  symmetric in $(n_1, \ldots, n_{p+1}) $.

 \noindent
$\bullet$
 If $m_1 \leq m_2$ then $\Sigma\mathcal{M}^{m_1}_{K,K',p}[r,N] \subseteq \Sigma\mathcal{M}^{m_2}_{K,K',p}[r,N]$.

\smallskip

 \noindent
$ \bullet $
\noindent{\bf Notation for $p$-homogeneous $m$-operators:}
 if $M(U_1, \ldots, U_p)$ is a $p$-homogeneous $m$-operator, we shall often denote by
$M(U):= M(U, \ldots, U)$ the corresponding polynomial and say that $ M(U) $ is in
$ \wtcM_p^m  $. 
Viceversa, a polynomial can be represented 
by a $(p+1)$-linear form $M(U_1, \ldots, U_p)U_{p+1}$ not necessarily symmetric  in the internal variables. If it fulfills the symmetric estimate \eqref{omomap}, 
the polynomial is generated by the $m$-operator in $ \wtcM_p^m  $ obtained by  symmetrization of the internal variables. We will do this consistently without mentioning it further.

 \noindent
$ \bullet $
\noindent {\bf Notation for projection on homogeneous components:} given an operator  $M(U;t)$ in $ \Sigma \mM_{K,K',p}^m[r, N]$   of the form \eqref{maps} 
we denote by  
\be \label{pienne}
\mP_{\leq N}[ M(U;t)] :=  \sum_{q=p}^{N} M_q(U) \, ,
 \quad \text{resp.} \quad 
 \mP_{q}[ M(U;t)] := M_q(U) \, , 
 \ee
the projections on the pluri-homogeneous, resp. homogeneous, operators in 
 $\Sigma_p^N \widetilde \mM^m_q $ , resp. in $\wt\cM_q^m$. Given an integer $ p\leq p'\leq N$ we also denote 
$$
 \mP_{\geq p'}[ M(U;t)] :=  \sum_{q=p'}^{N} M_q(U), \quad \mP_{\leq  p'}[ M(U;t)] :=  \sum_{q=p}^{p'} M_q(U) \, .
$$
 The same notation will be also used to denote 
 pluri-homogeneous/homogeneous components of symbols. 

\begin{remark}\label{mapsBD}
Definition \ref{Def:Maps} of homogeneous $m$-operators 
is different than the one in Definition 3.9 in \cite{BD}, due to the different 
bound \eqref{omomap}.
However  for $m \geq 0$ the class of homogeneous $m$-operators 
contains the class of homogeneous maps of order $m$ in Definition 3.9 of \cite{BD}, and in view of \eqref{omomap} is contained in the class of  maps of order $m+\mu$ of \cite{BD}.
On the other hand the class of non-homogeneous $m$-operators coincides with the
class of non-homogeneous maps in 
 Definition 3.9 of \cite{BD}.
\end{remark}

If $m  \leq 0 $ the  operators in $ \Sigma \mM^{m}_{K,K',p}[r,N]$ are referred to as smoothing operators.
 \begin{definition}{\bf (Smoothing operators)} \label{omosmoothing}
Let $ \vr\geq0$. A $ (-\vr)$-operator $R(U)$ belonging to $ \Sigma \mM^{-\vr}_{K,K',p}[r,N]$ is called  a smoothing operator. Along the paper will use also the notation 
\be\label{smoothingoper}
\begin{aligned}
& \widetilde{\mathcal{R}}^{-\vr}_{p}:= \widetilde{\mathcal{M}}^{-\vr}_{p} \, ,
\quad
\Sigma_{p}^N \widetilde{\mathcal{R}}^{-\vr}_{q}:= \Sigma_{p}^N\widetilde{\mathcal{M}}^{-\vr}_{q} \, ,\\
&   \mathcal{R}^{-\vr}_{K,K',p}[r]:=\mathcal{M}^{-\vr}_{K,K',p}[r] \, , \quad \Sigma\mathcal{R}^{-\vr}_{K,K',p}[r,N]:=\Sigma\mathcal{M}^{-\vr}_{K,K',p}[r,N] \, . 
 \end{aligned}
\ee
\end{definition}

\noindent
$\bullet$ Given $\varrho \geq 0$, an operator $R(U)$ belongs to $ \wt \cR^{-\varrho}_p$ if and only if there is $\mu  = \mu(\varrho) \geq 0$ and $C >0$ such that
 \be \label{hom:rest} 
 \|\Pi_{n_0}R(\Pi_{\vec{n}}\mathcal{U})\Pi_{n_{p+1}}U_{p+1}\|_{L^{2}}\leq
 C\frac{{\rm \max}_2( n_1,\ldots, n_{p+1})^{\mu}}{\max( n_1,\ldots, n_{p+1})^{\varrho}} \prod_{j=1}^{p+1}\|\Pi_{n_{j}}U_{j}\|_{L^{2}} \ . 
\ee
We remark that  Definition \ref{omosmoothing} of smoothing operators coincides with Definition $3.7$ in \cite{BD}.

\noindent
$ \bullet $ In view of \eqref{omomap} and  \eqref{hom:rest}  a homogeneous $ m $-operator in $ \widetilde{\mathcal{M}}^{m}_{p} $ 
with the property that, on its support,  
$ \max_2 (n_1, \ldots, n_{p+1}) \sim  \max (n_1, \ldots, n_{p+1}) $  is actually a 
{\it smoothing} operator in $ \widetilde{\mathcal{R}}^{-\vr}_{p} $ for {\it any} $ \varrho \geq 0 $. 

\noindent
$ \bullet $ The Definition \ref{omosmoothing} 
of smoothing operators is modeled to gather remainders which 
satisfy  either the property $ \max_2 (n_1, \ldots, n_{p+1}) \sim  \max (n_1, \ldots, n_{p+1}) $
or arise as remainders of  compositions of paradifferential operators, 
see Proposition \ref{teoremadicomposizione} below, and thus have a fixed order $ \varrho $
of regularization.

\begin{lemma}\label{mappabonetta}
If $M( U)$ is a $p$--homogeneous 
 $m$-operator in $  \widetilde \mM_p ^m$ then 
  for any $K\in \N_0$ and  $0\leq k\leq K$ there exists $s_0 >0$ such that for any $s \geq s_0$,
  for any   $U \in C^K_{*}(I,\dot{H}^{s}(\T,\C^2))$,  any $ v \in C^K_{*}(I,\dot{H}^{s}(\T,\C)) $, 
 one has 
 \be
\begin{aligned} 
 \label{mappabonetta1}
  \| \pa_t^{k}(M(U_1,\dots, U_p) v)\|_{\dot H^{s-m-\frac32k}}
  \lesssim_{K}
   \sum_{k_1+\dots+k_{p+1}=k}
     \Big(
 &
  \norm{v}_{k_{p+1},s } \prod_{a =1}^p \norm{U_a}_{k_a, s_0} \\
 & +   \norm{v}_{k_{p+1}, s_0 }\sum_{\bar a =1}^p \norm{U_{\bar a}}_{k_{\bar a}, s}
  \prod_{a =1\atop a\neq \bar a}^p \norm{U_a}_{k_a, s_0}\Big) \, . 
  \end{aligned}
 \ee
 In particular  $M(U) $ is a  non-homogeneous $ m$-operator in 
 $  {\cal M}^m_{K,0,p}[r] $ for any $r>0$ and $K\in \N_0$.
   \end{lemma}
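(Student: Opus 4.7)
The plan is to use Leibniz to distribute the $k$ time derivatives across the $(p+1)$ slots, reducing \eqref{mappabonetta1} to a static multilinear tame estimate, and then prove that estimate by a dyadic decomposition exploiting both the basic bound \eqref{omomap} and the spectral-support constraint \eqref{omoresti2} arising from translation invariance \eqref{def:R-trin}.

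\emph{Leibniz reduction.} Since $M(U_1,\dots,U_p)v$ is $(p+1)$-linear in $(U_1,\dots,U_p,v)$,
\[
\partial_t^k\bigl(M(U_1,\dots,U_p)v\bigr)=\!\!\sum_{k_1+\cdots+k_{p+1}=k}\!\! c_{k_1,\dots,k_{p+1}}\,M\bigl(\partial_t^{k_1}U_1,\dots,\partial_t^{k_p}U_p\bigr)\,\partial_t^{k_{p+1}}v,
\]
with $c_{k_1,\dots,k_{p+1}}$ the multinomial coefficient. Setting $V_j:=\partial_t^{k_j}U_j$ for $j\leq p$ and $V_{p+1}:=\partial_t^{k_{p+1}}v$, the definition \eqref{Knorm} gives $\|V_j\|_{\dot H^{s-\frac{3}{2}k_j}}\leq\|U_j\|_{k_j,s}$ (and similarly at level $s_0$), so it suffices to prove a static tame bound for $\|M(V_1,\dots,V_p)V_{p+1}\|_{\dot H^{s-\frac{3}{2}k-m}}$ with high regularity placed on a single distinguished factor $V_{j^{\ast}}$.

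\emph{Dyadic tame bound.} Decompose each $V_j=\sum_{n_j\geq 1}\Pi_{n_j}V_j$. By \eqref{omoresti2}, a non-vanishing block $\Pi_{n_0}M(\Pi_{\vec n}\mathcal V)\Pi_{n_{p+1}}V_{p+1}$ requires $\sigma_0 n_0+\vec\sigma\cdot\vec n=0$ for some choice of signs, which forces $n_0\lesssim \max(\vec n)$; together with \eqref{omomap}, and taking $s_0$ large enough so that $s-\tfrac{3}{2}k-m\geq 0$, we obtain
\[
\langle n_0\rangle^{s-\frac{3}{2}k-m}\|\Pi_{n_0}M(\Pi_{\vec n}\mathcal V)\Pi_{n_{p+1}}V_{p+1}\|_{L^2}\lesssim \max(\vec n)^{s-\frac{3}{2}k}\,\max_{2}(\vec n)^{\mu}\prod_{j=1}^{p+1}\|\Pi_{n_j}V_j\|_{L^2}.
\]
Split the sum over $\vec n$ according to the index $j^{\ast}\in\{1,\dots,p+1\}$ realising $\max(\vec n)=n_{j^{\ast}}$; on that piece use $n_j\leq n_{j^{\ast}}$ to write $\max(\vec n)^{s-\frac{3}{2}k}\leq n_{j^{\ast}}^{s-\frac{3}{2}k_{j^{\ast}}}\prod_{j\neq j^{\ast}} n_j^{-\frac{3}{2}k_j}$, and distribute the low-frequency weight $\max_{2}(\vec n)^{\mu}\leq \sum_{j'\neq j^{\ast}} n_{j'}^{\mu}$ (the second maximum being realised by some $j'\neq j^{\ast}$). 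A Cauchy--Schwarz in $n_{j^{\ast}}$ --- combined with the observation that the spectral constraint leaves only $O(1)$ values of $n_0$ for fixed $\vec n$, so the $\ell^2$ summation in $n_0$ is harmless --- yields the factor $\|V_{j^{\ast}}\|_{\dot H^{s-\frac{3}{2}k_{j^{\ast}}}}$, while the elementary bound $\sum_{n_j}n_j^{\mu-\frac{3}{2}k_j}\|\Pi_{n_j}V_j\|_{L^2}\lesssim\|V_j\|_{\dot H^{s_0-\frac{3}{2}k_j}}$ handles the remaining factors as soon as $s_0>\mu+\tfrac{1}{2}$, a threshold we are free to impose since $s_0$ is allowed to depend on $K,\mu,p,m$. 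The contribution of $j^{\ast}=p+1$ produces the first term of \eqref{mappabonetta1}, and the contributions of $j^{\ast}\in\{1,\dots,p\}$ produce the second.

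\emph{Non-homogeneous class and main obstacle.} Specialising $U_1=\cdots=U_p=U$ and reading the resulting estimate term by term recovers precisely the right-hand side of \eqref{piove} with $K'=0$, so $M(U)\in\mathcal M^{m}_{K,0,p}[r]$ for every $r>0$ and every $K\in\N_0$. The main technical obstacle is the interplay between the low-frequency weight $\max_{2}(\vec n)^{\mu}$ in \eqref{omomap} and the asymmetric time-regularity losses $-\tfrac{3}{2}k_j$ coming from \eqref{Knorm}: the crucial point is that \eqref{omomap} features $\max_{2}$ and not $\max$, so that there are always at least two factors onto which the $\mu$-loss can be dumped, and in particular this loss never needs to be placed on the high-regularity factor $V_{j^{\ast}}$, which would destroy the tame structure of the estimate.
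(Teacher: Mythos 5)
Your proposal is correct and takes essentially the same route as the paper's proof: Leibniz in time, dyadic decomposition, splitting the frequency sum according to the index realising the maximum, the bound $\max(\vec n)^{s-\frac32 k}\leq n_{j^{\ast}}^{s-\frac32 k_{j^{\ast}}}\prod_{j\neq j^{\ast}}n_j^{-\frac32 k_j}$, and absolute summation of the low-frequency factors for $s_0$ slightly above $\mu$. The only (cosmetic) difference is that the paper closes the $\ell^2_{n_0}$ summation by Young's inequality for convolutions of sequences, while you invoke the momentum relation directly; just note that the property actually needed there is that, for fixed low frequencies, each $n_0$ arises from $O(1)$ values of $n_{j^{\ast}}$ (the affine relation is finite-to-one in that direction), rather than the converse statement that each $\vec n$ produces $O(1)$ values of $n_0$.
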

   
   \begin{proof}
   For any $0\leq k\leq K$ we estimate
 $$\| \pa_t^{k}(M(U_1,\dots, U_p) v)\|_{\dot H^{s-m-\frac32k}}\lesssim_{K} \sum_{k_1+\dots+k_{p+1}=k}
  \| M(\pa_t^{k_1}U_1,\dots, \pa_t^{k_p}U_p) \pa_t^{k_{p+1}}v\|_{\dot H^{s-m-\frac32k}}.$$
  We now estimate each term in the above sum.  
We denote   $ \vec n_{p+1} := (n_1, \ldots, n_{p+1} ) \in \N^{p+1}$, $\vec \sigma_{p+1} := (\sigma_1, \ldots, \sigma_{p+1})\in \{\pm \}^{p+1}$  and
  $
  \cI(n_0, \vec \sigma_{p+1}):= \left\lbrace \vec n_{p+1} \in \N^{p+1}  \colon n_ 0 = \sigma_1 n_1 + \ldots +\sigma_{p+1} n_{p+1} \right\rbrace$.  
We get
\begin{align}
 & \| M(\pa_t^{k_1}U_1,\dots, \pa_t^{k_p}U_p) \pa_t^{k_{p+1}}v 
 \|_{\dot H^{s-m-\frac32k}}
  = 	
 \big\| n_0 ^{s-m-\frac32k} 
  \| \Pi_{n_0} (M(\pa_t^{k_1}U_1,\dots,\pa_t^{k_p}U_p) \pa_t^{k_{p+1}}v)\|_{L^2}\big\|_{\ell^2_{n_0} }\notag\\
 &
 \stackrel{ \eqref{omoresti2}}{\leq} \Big\| n_0^{s-m-\frac32k} \!\!\!\!\!\!\!\! 
 \sum_{
 \vec \sigma_{p+1} \in \{\pm\}^{p+1} \atop
 \vec n_{p+1} \in \cI(n_0, \vec \sigma_{p+1})}  \!\!\!\!\!\!\!\!\!\!\!\!\! \big\| \Pi_{n_0} (M(\Pi_{n_1}\pa_t^{k_1}U_1,\dots, \Pi_{n_p}\pa_t^{k_p}U_p) \Pi_{n_{p+1}}\pa_t^{k_{p+1}}v)\|_{L^2} \Big\|_{\ell^2_{n_0} } \notag \\
 & 
  \stackrel{ \eqref{omomap}}{\lesssim}
 \sum_{\vec \sigma_{p+1} }
  \Big\| 
\sum_{ \vec n_{p+1} \in \cI(n_0, \vec \sigma_{p+1}) } \!\!\!\!\!\!\!\!\!\!\!\!\!\!
{\rm \max}_2( n_1,\ldots, n_{p+1})^{\mu} {\rm \max}( n_1,\ldots, n_{p+1})^{s-\frac32k}\prod_{a=1}^{p} \| \Pi_{n_a} \pa_t^{k_a}U_a\|_{L^2} \| \Pi_{n_{p+1}} \pa_t^{k_{p+1}}v\|_{L^2} \Big\|_{\ell^2_{n_0}} \notag 
 \end{align}
 where in the last inequality we also used that  $n_0\lesssim \max\{n_1, \dots, n_{p+1}\}$ and  $s-m-\frac32 k\geq 0$ to bound $n_0^{s-m-\frac32k}$.
For any choice of
$\vec \sigma_{p+1} \in \{ \pm \}^{p+1}$, 
 we  split the internal sum in $p+1$ components
$$
\sum_{\bar a=1}^{p+1} \Sigma_{\vec \sigma_{p+1}}^{(\bar a)}  , \qquad 
\Sigma_{\vec \sigma_{p+1}}^{(\bar a)}:= \sum_{\substack{\vec n_{p+1} \in \cI(n_0, \vec \sigma_{p+1}) \\   \max(n_1, \ldots, n_{p+1}) = n_{\bar a}  }}   \!\! \!\! . 
$$
We first deal with the term $\Sigma_{\vec \sigma_{p+1}}^{(p+1)}$.
In this case we bound 
\begin{align*}
& \|\Pi_{n_{p+1}} \pa_t^{k_{p+1}}v\|_{L^2}  \leq \frac{\wt c_{n_{p+1}}}{n_{p+1}^{s-\frac32 k_{p+1}} } \, \| \pa_t^{k_{p+1}}v\|_{\dot H^{s-\frac32 k_{p+1}}}  
\leq \frac{\wt c_{n_{p+1}}}{n_{p+1}^{s-\frac32 k_{p+1}} } \, \| v\|_{ k_{p+1},s}  
\\
& \|\Pi_{n_a} \pa_t^{k_{a}}U_a\|_{L^2}  \leq \frac{c_{n_{a}}^{(a)}}{n_{a}^{\mu +1-{k_a}} } \, \| \pa_t^{k_{a}}U_a\|_{\dot H^{\mu +1- k_a}} \leq
\frac{c_{n_{a}}^{(a)}}{n_{a}^{\mu +1-{k_a}} } \, \| U_a\|_{k_a, \mu +1 }, \quad a = 1, \ldots, p 
\end{align*}
for some sequences $(\wt c_n)_{n \in \N}$, $( c_n^{(a)})_{n \in \N}$ in $\ell^2(\N)$.
With these bounds, and using 
$\max(n_1, \ldots, n_{p+1})^{s-\frac32 k} = 
n_{p+1}^{s-\frac32 k} 
 \leq n_{p+1}^{s-\frac32 k_{p+1}} \, n_1^{-\frac32 k_1} \ldots n_p^{-\frac32 k_p}$, 
we get 
\begin{align*}
\Big\| 
\Sigma_{\vec \sigma_{p+1}}^{(p+1)}
\Big\|_{\ell^2_{n_0}} 
\lesssim \Big\|
\!\!\!\!\!\!\!\!\!
\sum_{\substack{(\vec{n}_{p+1}) \in \N^{p+1}  \\ n_{0} = \sigma_1 n_1+\dots+ \sigma_{p+1} n_{p+1} }} \!\!\!\!\!\!\!\!\!\!\!\!\!\!\!\!\!\!\
 \wt c_{n_{p+1}} \, 
\frac{c_{n_1}^{(1)}}{ n_1}
\times \dots  \times 
\frac{c_{n_p}^{(p)}}{n_p} 
 \Big\|_{\ell^2_{n_0}} \, 
 \| v\|_{ k_{p+1},s}   \prod_{a = 1}^p \| U_a\|_{k_a, \mu +1} \ . 
\end{align*}
Applying   Young inequality for convolution of sequences and  using that  
$ \big( c_{n}^{(a)} n^{-1} \big)_{n \in \N} \in \ell^1(\N)$, we finally arrive at
$$
\Big\| 
\Sigma_{\vec \sigma_{p+1}}^{(p+1)}
\Big\|_{\ell^2_{n_0}} 
\lesssim \norm{v}_{k_{p+1},s} \, \prod_{a=1}^p \norm{U_a}_{k_a, s_0}  \ , \ \   \ \ \  k_1 + \ldots + k_{p+1} = k  \, , 
$$
which is the first term of inequality \eqref{mappabonetta1} with $s_0 = \max(m+ \frac32 K, \mu +1)$. 
Proceeding similarly we obtain, for any $ \bar a = 1, \ldots, p $,  
$$
\| \Sigma_{\vec \sigma_{p+1}}^{(\bar a)} \|_{\ell^2_{n_0}}  \lesssim   \| v \|_{k_{p+1}, \mu +1} \, 
\norm{U_{\bar a}}_{k_{\bar a}, s} 
\prod_{a \neq \bar a}\norm{U_a}_{k_a, \mu+1} \, 
$$
which are terms in the sum in the second line of \eqref{mappabonetta1}. 
If $U_a = U $ for any $a$, we deduce by  \eqref{mappabonetta1} 
and the estimate
$\norm{U}_{k_{a}, \sigma} \leq \norm{U}_{k - k_{p+1}, \sigma}$  that $M(U)$ fulfills  \eqref{piove} with $K' = 0$, $k'':= k_{p+1}$, $ k' = k - k_{p+1} $. Hence $M(U)$ belongs to $\cM^m_{K,0,p}[r]$  for any $r>0$ and $K\in \N_0 $.
   \end{proof}
   
   \noindent 
$\bullet$ 
A pluri-homogeneous nonlinear map $ Z +R_{\leq N}(Z)Z $ where $ R_{\leq N}(Z)$ is  in $\Sigma_1^N \wtcR^{-\vr}_q\otimes \mM_2(\C)$ satisfies the following bound:  for any $K\in \N_0$ there is $s_0 >0$ such that for any $s\geq s_0$, $0<r< r_0(s,K)$ small enough and  any $ Z \in B_{s_0}^K(I;r)\cap C^{K}_{*\R}(I;\dot H^{s}(\T, \C^2))$  one has 
\be\label{mappaBflow}
2^{-1} \| Z \|_{k,s}\leq \|Z +R_{\leq N}(Z)Z  \|_{k,s} \leq 
2\| Z \|_{k,s} \, , \quad \forall 0 \leq k \leq K \, .
\ee
\noindent
{\bf Fourier representation of $ m $-operators.} We may also represent a matrix of operators 
\be\label{Mupm}
M(U)= \begin{pmatrix}M^+_+(U)&M^-_+(U)\\M^+_-(U)&M^-_-(U)\end{pmatrix} \in \widetilde{\mathcal{M}}_{p} \otimes \cM_2(\C)
\ee
  through  their Fourier matrix elements, see \eqref{u+-},  writing
\be \label{smoocara0}
\begin{aligned}
M(U)V= \vect{(M(U)V)^+}{(M(U)V)^-}, \quad (M(U)V)^\sigma= \sum_{\substack{(\vec \jmath_{p},j,k) \in (\Z\setminus\{0\})^{p+2}\\  (\vec\sigma_{p},\sigma')\in \{\pm\}^{p+1} \\ \sigma k = \vec \sigma_{p} \cdot \vec \jmath_{p}+ \sigma' j} } M_{\vec \jmath_{p}, j,k}^{\vec \sigma_{p}, \sigma',\sigma} u_{\vec \jmath_{p}}^{\vec \sigma_{p}} v^{\sigma'}_{j}  \frac{e^{\ii \sigma k x}}{\sqrt{2\pi}} \, ,
\end{aligned}
 \ee 
where \footnote{ Given $u=\vect{u_1}{u_2},\, v=\vect{v_1}{v_2} \in \C^2$ we set $u\cdot v:= u_1v_1+u_2v_2$.}
\be\label{M.coeff} 
\begin{aligned}
M_{\vec \jmath_{p}, j,k}^{\vec \sigma_{p}, \sigma', \sigma}:=&
\int_{\T} M\Big( \mathtt{q}^{\sigma_1} \frac{e^{\ii \sigma_1 j_1 x}}{\sqrt{2\pi}}, \dots,\mathtt{q}^{\sigma_p} \frac{e^{\ii \sigma_{p} j_{p} x}}{\sqrt{2\pi}}\Big)
\Big[ \frac{\mathtt{q}^{\sigma'} e^{\ii \sigma' j x}}{\sqrt{2\pi}} \Big]  \cdot  \mathtt{q}^\sigma  \frac{e^{-\ii \sigma k x}}{\sqrt{2\pi}}\, \di x\\
 =&\int_{\T} M^{\sigma'}_\sigma\Big( \mathtt{q}^{\sigma_1} \frac{e^{\ii \sigma_1 j_1 x}}{\sqrt{2\pi}}, \dots,\mathtt{q}^{\sigma_p} \frac{e^{\ii \sigma_{p} j_{p} x}}{\sqrt{2\pi}}\Big)
\Big[ \frac{ e^{\ii \sigma' j x}}{\sqrt{2\pi}} \Big] \,  \frac{e^{-\ii \sigma k x}}{\sqrt{2\pi}}\, \di x \,  \in \C  \ , 
\end{aligned}
 \ee
and $\tq^\pm$ are defined in \eqref{tiqu}. 
In \eqref{smoocara0} we have exploited the translation invariance property 
 \eqref{def:R-trin} which implies that if $M_{\vec \jmath_{p}, j,k}^{\vec \sigma_{p}, \sigma', \sigma}\not=0$ then 
 \be \label{momentoj}
  \sigma k = \vec \sigma_{p} \cdot \vec \jmath_{p}+ \sigma' j 	\, .
 \ee
Note also that since $M$ is symmetric in the internal entries, the coefficients $M_{\vec \jmath_{p}, j,k}^{\vec \sigma_{p}, \sigma', \sigma}$ in 
\eqref{M.coeff} satisfy the following symmetric property:
  for any permutation $ \pi $ of $ \{1, \ldots, p \} $, it results
\begin{equation}
\label{M.coeff.p}
M_{j_{\pi(1)}, \ldots,j_{\pi(p)}, j,k}^{ \sigma_{\pi(1)}, \ldots, \sigma_{\pi(p)},\sigma',\sigma} 
=  
M_{j_{1}, \ldots, j_{p}, j,k}^{ \sigma_{1}, \ldots, \sigma_{p},\sigma',\sigma} \, . 
\end{equation}
The operator $M(U)$ is real-to-real, according to definition  \eqref{vinello}, if and only if its coefficients fulfill
\be\label{M.realtoreal}
\bar{M_{\vec \jmath_{p}, j,k}^{\vec \sigma_{p}, \sigma', \sigma}} = M_{\vec \jmath_{p}, j,k}^{-\vec \sigma_{p}, -\sigma', -\sigma} \ .
\ee
The matrix entries of the transpose operator $M(U)^\top$ 
with respect to the non-degenerate bilinear form  \eqref{real.bil.form.intro} are  
\begin{align}
(M^\top)_{\vec \jmath_{p}, j,k}^{\vec \sigma_{p}, \sigma', \sigma}= &  \int_{\T} M^\top\Big( \mathtt{q}^{\sigma_1} \frac{e^{\ii \sigma_1 j_1 x}}{\sqrt{2\pi}}, \dots,\mathtt{q}^{\sigma_p} \frac{e^{\ii \sigma_{p} j_{p} x}}{\sqrt{2\pi}}\Big)
\Big[ \frac{\mathtt{q}^{\sigma'} e^{\ii \sigma' j x}}{\sqrt{2\pi}} \Big]  \cdot  
\frac{\mathtt{q}^\sigma e^{-\ii \sigma k x}}{\sqrt{2\pi}}\, \di x\label{emmetop}\\
 &= \int_{\T} M\Big( \mathtt{q}^{\sigma_1} \frac{e^{\ii \sigma_1 j_1 x}}{\sqrt{2\pi}}, \dots,\mathtt{q}^{\sigma_p} \frac{e^{\ii \sigma_{p} j_{p} x}}{\sqrt{2\pi}}\Big)
\Big[ \frac{\mathtt{q}^{\sigma} e^{-\ii \sigma k x}}{\sqrt{2\pi}} \Big]  \cdot  
\frac{\mathtt{q}^{\sigma'} e^{\ii \sigma' j x}}{\sqrt{2\pi}}\, \di x= M_{\vec \jmath_{p},- k,-j}^{\vec \sigma_{p}, \sigma, \sigma'}\notag \, .
\end{align}
One can directly verify that $(M^\top)_{\vec \jmath_{p}, j,k}^{\vec \sigma_{p}, \sigma', \sigma}$ fulfill \eqref{momentoj}, \eqref{M.coeff.p}. If $  M(U) $ is real-to-real (i.e.  \eqref{M.realtoreal} holds) then $  M(U)^\top $ 
is real-to-real as well.

\begin{lemma}{\bf (Characterization of $m$-operators in Fourier basis)}\label{carasmoofou}
Let $m\in \R$. A real-to-real linear operator $ M(U) $  as in \eqref{smoocara0}-\eqref{M.coeff}
is a matrix of $p$-homogeneous $m$-operators in $ \widetilde{\mathcal{M}}_{p}^{m} \otimes \cM_2(\C)$ if and only if its coefficients $M_{\vec \jmath_p, j, k}^{\vec \sigma_p, \sigma', \sigma}$ 
 satisfy \eqref{momentoj}, \eqref{M.coeff.p}, \eqref{M.realtoreal} and  
 there exist $ \mu>0 $ and $ C > 0 $  such that for any $(\vec \jmath_{p},j) \in (\Z \setminus \{0\})^{p+1} $,   $ (\vec \sigma_p,\sigma) \in \{ \pm \}^{p+1} $,  
\be\label{smoocara}
 |M_{\vec \jmath_p, j, k}^{\vec \sigma_p, \sigma', \sigma} |\leq C 
{\rm max}_2\{ |j_1|,\dots, | j_p|,|j|\}^{\mu}\max\{ |j_1|,\dots, | j_p|,|j|\}^{m}   \, . 
 \ee
\end{lemma}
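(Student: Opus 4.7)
The plan is to prove the two implications separately, with the translation invariance \eqref{def:R-trin} providing the bridge to the momentum constraint \eqref{momentoj} and with testing on basis exponentials providing the bridge between the tame bound \eqref{omomap} and the coefficient bound \eqref{smoocara}.

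\textbf{Direction $(\Rightarrow)$.} Assume $M(U)\in \widetilde{\mathcal{M}}_p^m\otimes \mathcal{M}_2(\mathbb{C})$. The symmetry \eqref{M.coeff.p} is inherited directly from the symmetry of $M$ in $(U_1,\ldots,U_p)$, while \eqref{M.realtoreal} is a restatement of the real-to-real form \eqref{vinello} using \eqref{opeBarrato}. To derive \eqref{momentoj} I would test \eqref{def:R-trin} on the basis inputs $U_i=\mathtt{q}^{\sigma_i}e^{\im\sigma_i j_i x}/\sqrt{2\pi}$ and $V=\mathtt{q}^{\sigma'}e^{\im\sigma' j x}/\sqrt{2\pi}$, each an eigenvector of $\tau_\varsigma$. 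By multilinearity the left-hand side acquires the global phase $e^{\im(\vec\sigma_p\cdot\vec\jmath_p+\sigma' j)\varsigma}$, whereas the right-hand side contributes $e^{\im\sigma k\varsigma}$ on the $\sigma k$-th Fourier mode of $M(\mathcal{U})V$; equating these phases for all $\varsigma$ forces $\sigma k=\vec\sigma_p\cdot\vec\jmath_p+\sigma' j$ on the support of the coefficient. Finally, \eqref{smoocara} follows from applying \eqref{omomap} to the same basis inputs, since $\|\Pi_{n_j}U_j\|_{L^2}=1$ and, by \eqref{M.coeff}, $|M_{\vec\jmath_p,j,k}^{\vec\sigma_p,\sigma',\sigma}|$ is one of the Fourier coefficients of $\Pi_{|k|}M(\mathcal{U})V$ (taken in its $\sigma$-component), hence bounded by its $L^2$-norm, which \eqref{omomap} controls by $\max_2^\mu\max^m$.

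\textbf{Direction $(\Leftarrow)$.} Starting from coefficients $M_{\vec\jmath_p,j,k}^{\vec\sigma_p,\sigma',\sigma}$ satisfying \eqref{momentoj}, \eqref{M.coeff.p}, \eqref{M.realtoreal} and \eqref{smoocara}, I would define $M(U)$ through \eqref{smoocara0}, extended componentwise to the $2\times 2$ matrix $(M^{\sigma'}_\sigma(U))$. Multilinearity is built into the expansion, symmetry in $(U_1,\ldots,U_p)$ follows from \eqref{M.coeff.p}, the real-to-real structure \eqref{vinello} is equivalent to \eqref{M.realtoreal}, and both the translation invariance \eqref{def:R-trin} and the spectral support property \eqref{omoresti2} are immediate consequences of \eqref{momentoj}. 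The only nontrivial point is the tame estimate \eqref{omomap}. Substituting $\Pi_{\vec n}\mathcal{U}$ and $\Pi_{n_{p+1}}V$ into \eqref{smoocara0} and projecting onto the $n_0$-th mode, the constraints $|j_i|=n_i$, $|j|=n_{p+1}$, $|k|=n_0$ combined with \eqref{momentoj} reduce the a priori sum over $(\vec\jmath_p,j,k)$ and signs to a sum of cardinality $O_p(1)$. Each surviving term is bounded by \eqref{smoocara} times a product of moduli $|(U_i)^{\sigma_i}_{\pm n_i}|$ and $|V^{\sigma'}_{\pm n_{p+1}}|$, each of which is at most the corresponding $\|\Pi_{n_j}U_j\|_{L^2}$ by Parseval. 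Summing the finitely many sign configurations produces \eqref{omomap} with the same exponents $\mu,m$ and a constant depending only on $C$ and $p$.

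\textbf{Main obstacle.} The one point requiring vigilance is in the $(\Leftarrow)$ direction: the momentum constraint \eqref{momentoj}, used jointly with the spectral restrictions imposed by the projectors $\Pi_{n_0},\ldots,\Pi_{n_{p+1}}$, must genuinely collapse the Fourier summation so that no multiplicity depending on $n_0,\ldots,n_{p+1}$ enters beyond the weight $\max_2^\mu\max^m$ already encoded in \eqref{smoocara}. The bookkeeping over the signs $(\vec\sigma_p,\sigma',\sigma)$ and over the sign choices relating $j_i,j,k$ to $n_i,n_{p+1},n_0$ is routine once it is recognized that all such choices are finite in number and depend only on $p$.
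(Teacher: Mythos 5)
Your proposal is correct and follows essentially the same route as the paper's proof: the coefficient bound \eqref{smoocara} is obtained by feeding unit-norm exponentials into \eqref{omomap} (the paper uses Cauchy--Schwarz plus the support condition \eqref{omoresti2} where you project directly onto the $|k|$-mode, a trivially equivalent step), and \eqref{omomap} is recovered from \eqref{smoocara} by summing the $O_p(1)$ sign configurations $j_\ell=\pm n_\ell$, $j=\pm n_{p+1}$, $k=\pm n_0$ and bounding each Fourier coefficient by the corresponding projected $L^2$-norm. The structural properties (momentum from \eqref{def:R-trin} via phases, symmetry, reality) are handled exactly as in the paper's surrounding discussion.
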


\begin{proof}
Let $M(U)$ be a matrix of $m$-operators in $ \widetilde{\mathcal{M}}_{p}^{m} \otimes \cM_2(\C)$. 
Then  by \eqref{M.coeff},  applying Cauchy-Schwartz inequality and recalling  \eqref{Pin2} we get 
\begin{align*}
 |M_{\vec \jmath_p, j, k}^{\vec \sigma_p, \sigma', \sigma} |
 & \leq  \Big\| {M^{\sigma'}_{\sigma}}\Big( \mathtt{q}^{\sigma_1}\Pi_{|j_1|} \frac{e^{\ii \sigma_1 j_1 x}}{\sqrt{2\pi}}, \dots,\mathtt{q}^{\sigma_p} \Pi_{|j_p|} \frac{e^{\ii \sigma_{p} j_{p} x}}{\sqrt{2\pi}}\Big) \Big[ \Pi_{|j|}\frac{e^{\ii \sigma' j x}}{\sqrt{2\pi}} \Big] \Big\|_{L^2}\\
\stackrel{\eqref{omoresti2}}{\leq} & \sum_{\substack{\epsilon_1, \dots, \epsilon_p, \epsilon \in \{\pm\}\\ n_0=\epsilon_1|j_1|+ \dots + \epsilon_p|j_p|+ \epsilon|j|}} \Big\|
\Pi_{n_0} {M^{\sigma'}_{\sigma}}\Big( \mathtt{q}^{\sigma_1}\Pi_{|j_1|} \frac{e^{\ii \sigma_1 j_1 x}}{\sqrt{2\pi}}, \dots,\mathtt{q}^{\sigma_p} \Pi_{|j_p|} \frac{e^{\ii \sigma_{p} j_{p} x}}{\sqrt{2\pi}}\Big) \Big[ \Pi_{|j|}\frac{e^{\ii \sigma' j x}}{\sqrt{2\pi}} \Big] 
\Big\|_{L^2}\\
\stackrel{\eqref{omomap}}{\leq} &C 2^{p+1} {\rm max}_2\{ |j_1|,\dots, | j_p|,|j|\}^{\mu}\max\{ |j_1|,\dots, | j_p|,|j|\}^{m}
\end{align*}
proving \eqref{smoocara}. 
Viceversa suppose that $M(U)$ is an operator as in 
\eqref{Mupm}-
\eqref{M.coeff}  with coefficients  satisfying \eqref{smoocara}.  Then, for any 
$ \sigma, \sigma' \in \{\pm \} $, 
\begin{align*}
&\| \Pi_{n_0}{M^{\sigma'}_{\sigma}}(\Pi_{n_1} U_1,\dots, \Pi_{n_p} U_p) \Pi_{n_{p+1}} v^{\sigma'}\|_{L^2} 
= \Big\| \!\! \sum_{\substack{j_1=\pm n_1,\dots, j_p=\pm n_p\\  j= \pm n_{p+1},\, k= \pm n_0}}  \!\!\!\!\!\!\!\!M_{\vec \jmath_{p}, j,k}^{\vec \sigma_{p}, \sigma',\sigma}(u_1)_{j_1}^{\sigma_1}\cdots (u_p)_{j_p}^{\sigma_p} v^{\sigma'}_{j}\frac{e^{\ii \sigma k x}}{\sqrt{2\pi}} \Big\|_{L^2}\\
& \stackrel{\eqref{smoocara}} 
\leq C \!\! \!\! \sum_{\substack{j_1=\pm n_1,\dots, j_p=\pm n_p\\  j= \pm n_{p+1},\, k= \pm n_0}} \!\!\!\! {\rm max}_2\{ |j_1|,\dots, | j_p|,|j|\}^{\mu}\max\{ |j_1|,\dots, | j_p|,|j|\}^{m}
 |(u_1)_{j_1}^{\sigma_1}| \cdots |(u_p)_{j_p}^{\sigma_p}| |v^{\sigma'}_{j}| 
\\
& \leq C 2^{p+2} {\rm max}_2\{ n_1,\dots,  n_p,n_{p+1}\}^{\mu}\max\{n_1,\dots,  n_p,n_{p+1}\}^{m}\prod_{\ell=1}^p \| \Pi_{n_\ell} U_\ell \|_{L^2} \| \Pi_{n_{p+1}} v \|_{L^2}
\end{align*}
proving \eqref{omomap}.
\end{proof}

The transpose of a matrix of $m$-operators is a $m'$-operator.
 
\begin{lemma}\label{primecosette}
{\bf (Transpose of $ m $-Operators)}
Let $p\in \N_0$,  $m\in \R$. 
 If $M(U)$ is a matrix of $p$--homogeneous 
 $m$-operators in $  \widetilde \mM_p^m \otimes \mM_2(\C)$ then 
  $M(U)^\top$ (where the transpose is computed with respect to the non-degenerate bilinear form  \eqref{real.bil.form.intro}) 
 is a matrix of $p$-homogeneous operators in $  \wtcM_p^{m'}\otimes \cM_2(\C) $ for some   $m' \geq \max(m,0)$.
 
 If in addition 
 there exists $C >1$ such that 
 \be\label{spec.cond.t}
 M_{\vec \jmath_{p},k,j}^{\vec \sigma_{p}, \sigma, \sigma'} \neq 0 \quad \Rightarrow \quad C^{-1} |k|\leq |j| \leq C  |k|
 \ee
  then $M(U)^\top \in   \wtcM_p^{m}\otimes \cM_2(\C)$.
\end{lemma}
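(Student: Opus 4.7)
The plan is to use the Fourier-side characterization in Lemma \ref{carasmoofou}, which reduces everything to an estimate on the matrix coefficients of $M(U)^\top$. By \eqref{emmetop} these coefficients are given by the swap
$$
(M^\top)_{\vec{\jmath}_p, j, k}^{\vec{\sigma}_p, \sigma', \sigma} = M_{\vec{\jmath}_p, -k, -j}^{\vec{\sigma}_p, \sigma, \sigma'},
$$
and, as already noted in the text following \eqref{emmetop}, the momentum relation \eqref{momentoj}, the symmetry \eqref{M.coeff.p} in the internal variables, and the real-to-real property \eqref{M.realtoreal} transfer directly from $M$ to $M^\top$ under this outer relabeling. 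So it suffices to verify the size bound \eqref{smoocara} for $(M^\top)_{\vec{\jmath}_p, j, k}^{\vec{\sigma}_p, \sigma', \sigma}$.

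Applying the hypothesis \eqref{smoocara} for $M$ to the swapped indices yields immediately
$$
\big|(M^\top)_{\vec{\jmath}_p, j, k}^{\vec{\sigma}_p, \sigma', \sigma}\big| \leq C \, \max\nolimits_2\{|j_1|,\dots,|j_p|,|k|\}^{\mu}\,\max\{|j_1|,\dots,|j_p|,|k|\}^{m},
$$
so the only task is to transfer the frequency $|k|$ into a frequency $|j|$ on the right. Here the key input is the momentum conservation for $M_{\vec{\jmath}_p,-k,-j}^{\vec{\sigma}_p,\sigma,\sigma'}$, which, via \eqref{momentoj} applied with the shifted outer indices, gives $\sigma k - \sigma' j = \vec{\sigma}_p\cdot \vec{\jmath}_p$, hence
$$
\big||k| - |j|\big| \,\leq\, p\,\max\{|j_1|,\dots,|j_p|\}.
$$
Consequently $\max\{|j_1|,\dots,|j_p|,|k|\}$ and $\max\{|j_1|,\dots,|j_p|,|j|\}$ are equivalent up to a factor $(p+1)$; the second-largest of a tuple is in any case dominated by its maximum. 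Combining these observations produces a bound of the shape
$$
\big|(M^\top)_{\vec{\jmath}_p, j, k}^{\vec{\sigma}_p, \sigma', \sigma}\big| \leq C'\,\max\{|j_1|,\dots,|j_p|,|j|\}^{\mu+\max(m,0)},
$$
which fits the hypothesis of Lemma \ref{carasmoofou} with order $m':=\mu+\max(m,0)\geq \max(m,0)$, proving the first assertion.

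For the second assertion, the additional condition \eqref{spec.cond.t} on $M$ translates, after the relabeling $(k,j)\mapsto (-k,-j)$, to the a priori equivalence $|j|\sim|k|$ for all non-vanishing coefficients of $M^\top$. Hence in this regime not only $\max\{|j_1|,\dots,|j_p|,|k|\}\sim\max\{|j_1|,\dots,|j_p|,|j|\}$ but also $\max_2\{|j_1|,\dots,|j_p|,|k|\}\sim\max_2\{|j_1|,\dots,|j_p|,|j|\}$, and substituting in the estimate above yields the sharp bound
$$
\big|(M^\top)_{\vec{\jmath}_p, j, k}^{\vec{\sigma}_p, \sigma', \sigma}\big| \lesssim \max\nolimits_2\{|j_1|,\dots,|j_p|,|j|\}^{\mu}\max\{|j_1|,\dots,|j_p|,|j|\}^{m},
$$
so Lemma \ref{carasmoofou} delivers $M^\top\in \widetilde{\mathcal{M}}_p^{m}\otimes \mathcal{M}_2(\mathbb{C})$. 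The proof is essentially bookkeeping once the Fourier characterization is in hand; the only mildly delicate point is the unavoidable loss of $\mu$ derivatives when $m$ is negative and $|k|$ is not comparable to $|j|$, which is exactly what is absorbed in the inequality $m'\geq \max(m,0)$ of the first part, and correspondingly avoided in the second part under the spectral localization \eqref{spec.cond.t}.
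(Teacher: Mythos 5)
Your proof is correct and follows essentially the same route as the paper's: both use the Fourier-coefficient relation \eqref{emmetop} together with the momentum condition \eqref{momentoj} to compare $|k|$ with $\max\{|j_1|,\dots,|j_p|,|j|\}$, arriving at the bound with $m'=\mu+\max(m,0)$, and both obtain the sharper conclusion under \eqref{spec.cond.t} from the equivalences of the $\max$ and $\max_2$ quantities.
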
 

\begin{proof}
By \eqref{emmetop},  
\eqref{smoocara} (applied to $M_{\vec \jmath_{p},- k,-j}^{\vec \sigma_{p}, \sigma, \sigma'}$) and since, by \eqref{momentoj},
 $ | k| \leq (p+1) \max\{ |j|, |j_1|, \dots, |j_p|\}$, we deduce that 
\begin{align}
\label{MtopMm}
|(M^\top)_{\vec \jmath_{p}, j,k}^{\vec \sigma_{p}, \sigma', \sigma}| & 
\lesssim {\rm max}_2\{ |j_1|,\dots, | j_p|,|k|\}^{\mu}\max\{ |j_1|,\dots, | j_p|,|k|\}^{m}\\
\notag
& \lesssim
  \max\{ |j_1|,\dots, | j_p|,|j|\}^{ \max(m,0) + \mu }
\end{align}
proving \eqref{smoocara} for $M(U)^\top$ with  $m\leadsto m':=  \max(m,0) +\mu $.

If in addition \eqref{spec.cond.t} holds true then 
$\max\{ |j_1|,\dots, | j_p|,|k|\}^{ m } \sim \max\{ |j_1|,\dots, | j_p|,|j|\}^{ m }$ for any $m \in \R$
and similarly 
$\max_2\{ |j_1|,\dots, | j_p|,|k|\}^{ \mu } \sim \max_2\{ |j_1|,\dots, | j_p|,|j|\}^{ \mu }$ for any
 $\mu \geq 0$. 
 We deduce by \eqref{MtopMm}  that $M(U)^\top  $ is in $ \wt\cM^{m}_p \otimes \cM_2(\C)$.
\end{proof}

\begin{remark}\label{rem:top.smoo}
{\bf (Transpose of Smoothing operators)}
If $R(U) $ is a matrix of smoothing operators in $ \wt \cR^{-\varrho}_p \otimes \cM_2(\C)$ 
and
the spectral condition \eqref{spec.cond.t} holds true, then
$R(U)^\top$ is a smoothing operator in  the same class. Without the spectral condition
\eqref{spec.cond.t}  this might fail: for example consider $ R(U) $ such that its transpose is 
\be\label{rucontroex}
R(U_1, \ldots, U_p)^\top V := \Opbw{A(V, U_1, \ldots, U_{p-1})} U_p \ , \ \ A(\cdot ) \in \wt \Gamma^\mu_p \otimes \cM_2(\C) \ .  
\ee
As a consequence of Lemma \ref{aggiunto} below, we have that  
 $ R(U) = [R(U)^\top]^\top $ is in $ \wt \cR^{-\varrho}_p \otimes \cM_2(\C)$ for any $\vr\geq 0$, but $R(U)^\top$ in \eqref{rucontroex} is a $\mu$-operator.
\end{remark}

We conclude this subsection with 
the  paralinearization of the product (see e.g. Lemma 7.2 in \cite{BD}).
\begin{lemma}{\bf (Bony paraproduct decomposition)}
\label{bony}
Let  $u_1, u_2$ be functions in $H^\s(\T;\C)$   with  
$\s >\frac12$. Then 
\begin{equation}\label{bonyeq}
u_1 u_2  = \Opbw{u_1 }u_2 + \Opbw{u_2 }u_1 + 
R_1(u_1)u_2 +  R_2(u_2)u_1
\end{equation}
where for $j=1, 2 $, $R_j$ is a homogeneous smoothing operator in $ \widetilde \mR^{-\vr}_{1}$ for any $ \vr \geq 0$. 
\end{lemma}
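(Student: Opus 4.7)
My strategy would be the classical Bony trichotomy adapted to the Bony-Weyl quantization of Definition \ref{quantizationtotale}: I would expand $u_1 u_2$ in Fourier series, recognize the low-high and high-low pieces as $\Opbw{u_1}u_2$ and $\Opbw{u_2}u_1$ up to a Fourier multiplier correction, and collect the residual ``high-high'' (resonant) interaction into smoothing remainders $R_1$, $R_2$ of arbitrary order.

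\smallskip

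First I would view $u_1$ as the $1$-homogeneous symbol $a_1(U;x,\xi):=u_1(x)$ in $\widetilde{\Gamma}^{0}_{1}$, independent of $\xi$. The regularization \eqref{regula12} and the Weyl formula then give, for $k\in\Z\setminus\{0\}$,
\begin{equation*}
\widehat{\Opbw{u_1}u_2}(k) = \frac{1}{\sqrt{2\pi}}\sum_{j \in \Z} \hat u_1(k-j)\, \chi_1\!\big(|k-j|,\tfrac{k+j}{2}\big)\, \hat u_2(j) \, ,
\end{equation*}
and symmetrically (swapping the roles of $k-j$ and $j$) for $\Opbw{u_2}u_1$. Since $\widehat{u_1u_2}(k)=\tfrac{1}{\sqrt{2\pi}}\sum_j \hat u_1(k-j)\hat u_2(j)$, the residual $u_1u_2 - \Opbw{u_1}u_2 - \Opbw{u_2}u_1$ will have Fourier multiplier
\begin{equation*}
b(k,j) := 1 - \chi_1\!\big(|k-j|,\tfrac{k+j}{2}\big) - \chi_1\!\big(|j|,k-\tfrac{j}{2}\big) \, .
\end{equation*}

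\smallskip

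The crucial step is to show that $b$ is bounded and supported in the resonant zone $|k-j|\sim|j|$. Using $\chi_1(n,\xi)\equiv 1$ for $n\leq \delta\langle\xi\rangle/2$ and $\mathrm{supp}\,\chi_1\subset\{n\leq\delta\langle\xi\rangle\}$, together with the momentum relation $k=(k-j)+j$, I would check that for $\delta$ small the overlap where both cutoffs equal $1$ would force both $|k-j|$ and $|j|$ to be $\lesssim \delta|k|$, so $|k|\leq 2\delta|k|$ by the triangle inequality, i.e.\ $k=0$, which is trivially excluded on $\dot H^s$. Off this set, in the low-high regime $|k-j|\ll|j|$ one has $|\tfrac{k+j}{2}|\sim|j|$, so the first $\chi_1$ equals $1$ while the second vanishes; in the high-low regime $|j|\ll|k-j|$ the roles are reversed. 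In both cases $b=0$, hence $b$ is bounded and supported where $\max_2(|k-j|,|j|)\sim\max(|k-j|,|j|)$.

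\smallskip

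Finally I would split $b = b_1 + b_2$ with $b_1 := b\,\mathds{1}_{|k-j|\leq|j|}$ and $b_2 := b\,\mathds{1}_{|k-j|>|j|}$, and define $R_1(u_1)u_2$, respectively $R_2(u_2)u_1$, as the bilinear operators with Fourier multipliers $b_1$, resp.\ $b_2$. Each is $1$-homogeneous, translation invariant (by the momentum relation $k=(k-j)+j$) and real-to-real. Thanks to the comparability of the two internal frequencies on $\mathrm{supp}\,b$, the coefficient bound \eqref{smoocara} will hold with arbitrary negative exponent (the ratio $\max_2^{\vr+\mu}/\max^{\vr}$ stays bounded), so Lemma \ref{carasmoofou} together with the bullet point after Definition \ref{omosmoothing} will place $R_1, R_2$ in $\widetilde{\cR}^{-\vr}_1$ for every $\vr\geq 0$. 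The hard part will be the support analysis of $b$: the smooth cutoff arguments $\tfrac{k+j}{2}$ and $k-\tfrac{j}{2}$ mix input and output frequencies, so transferring the low-high/high-low conditions on $(|k-j|,|j|)$ into a clean resonant support requires careful bookkeeping of the $\delta$-thresholds in $\chi_1$; once this is done, the upgrade from order $m=0$ to $-\vr$ is automatic.
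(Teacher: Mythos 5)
Your proposal is essentially correct, and it is the standard argument: note that the paper itself does not prove this lemma but quotes it from Lemma 7.2 of \cite{BD}, whose proof follows exactly the route you describe (Fourier expansion, identification of the two paraproduct pieces through the cutoffs $\chi_1\big(|k-j|,\tfrac{k+j}{2}\big)$ and $\chi_1\big(|j|,k-\tfrac{j}{2}\big)$, and absorption of the residual multiplier into remainders that are smoothing of every order because the two internal frequencies are comparable on its support). One point worth making explicit in your support analysis: the dominant residual region is not where both cutoffs equal $1$ (that overlap indeed forces $|k|\lesssim \delta(1+|k|)$, hence only finitely many $k$), but where \emph{both cutoffs vanish}, i.e.\ $b(k,j)=1$; there the support condition of $\chi_1$, read in the contrapositive, gives $|k-j|>\tfrac{\delta}{2}\langle \tfrac{k+j}{2}\rangle \geq \tfrac{\delta}{2}\big(|j|-\tfrac{|k-j|}{2}\big)$ and symmetrically $|j|\gtrsim_\delta |k-j|$, which is precisely the comparability $\max_2\sim\max$ you need; your low--high/high--low discussion covers the transition regions where exactly one cutoff is active. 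With that bookkeeping done, invoking the Fourier characterization of Lemma \ref{carasmoofou} together with the remark after Definition \ref{omosmoothing} (two equivalent frequencies give $\widetilde{\mathcal R}^{-\varrho}_1$ for every $\varrho\geq 0$), plus the momentum relation $k=(k-j)+j$ for translation invariance, completes the proof as you indicate; the zero mode $k=0$ is immaterial since all operators act in the homogeneous spaces.
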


\noindent
{\bf Composition theorems.}
Let 
$ \s(D_{x},D_{\x},D_{y},D_{\eta}) := D_{\x}D_{y}-D_{x}D_{\eta}  $
where $D_{x}:=\frac{1}{\ii}\pa_{x}$ and $D_{\x},D_{y},D_{\eta}$ are similarly defined. 
The following is   Definition 3.11 in \cite{BD}. 

\begin{definition}{\bf (Asymptotic expansion of composition symbol)}
\label{def:as.ex}
Let $ p $, $ p' $ in $\N_0 $, $ K, K' \in \N_0 $ with $K'\leq K$,  $ \varrho  \geq 0 $, $m,m'\in \R$, $r>0$.
Consider symbols $a \in \Sigma\Gamma_{K,K',p}^{m}[r,N]$ and $b\in \Sigma \Gamma^{m'}_{K,K',p'}[r,N]$. For $U$ in $B_{\s}^{K}(I;r)$ 
we define, for $\varrho< \s- s_0$, the symbol
\begin{equation}\label{espansione2}
(a\#_{\varrho} b)(U;t,x,\x):=\sum_{k=0}^{\varrho}\frac{1}{k!}
\left(
\frac{\ii}{2}\s(D_{x},D_{\x},D_{y},D_{\eta})\right)^{k}
\Big[a(U;t,x,\x)b(U;t,y,\eta)\Big]_{|_{\substack{x=y, \x=\eta}}}
\end{equation}
modulo symbols in $ \Sigma \Gamma^{m+m'-\varrho}_{K,K',p+p'}[r,N] $. 
\end{definition}

\noindent
$ \bullet $
The symbol $ a\#_{\varrho} b $ belongs  to $\Sigma\Gamma^{m+m'}_{K,K',p+p'}[r,N]$.

\noindent
$ \bullet $
We have  that 
$ a\#_{\varrho}b =ab+\frac{1}{2 \ii }\{a,b\} $ 
up to a symbol in $\Sigma\Gamma^{m+m'-2}_{K,K',p+p'}[r,N]$,     
where 
$$ 
\{a,b\}  :=  \pa_{\xi}a\pa_{x}b -\pa_{x}a\pa_{\xi}b  
$$
denotes the Poisson bracket. 

\noindent
$ \bullet $
If $c $ is a symbol in $ \Sigma\Gamma^{m''}_{K,K',p''} [r,N]$ then 
$ a\#_{\varrho}b\#_{\varrho}c +
c\#_{\varrho}b\#_{\varrho}a - 2  a bc$ is a symbol in  $\Sigma\Gamma^{m+m'+m''-2}_{K,K',p+p'+ p''}[r,N]$. 

\noindent
$ \bullet $
$ \overline{a^\vee} \#_{\varrho} 
\overline{b^\vee}  = \overline{a \#_{\varrho} b}^\vee  $ where $ a^\vee $ is defined in \eqref{realetoreale}.

\smallskip
The following result is proved in Proposition $3.12$ in \cite{BD}. 

\begin{proposition}{\bf (Composition of Bony-Weyl operators)} \label{teoremadicomposizione}
Let $p,q,N, K, K'  \in \N_0 $ with $ K' \leq K $,  $\varrho \geq 0 $, $m,m'\in \R$, $r>0$. 
Consider  symbols
$a\in \Sigma {\Gamma}^{m}_{K,K',p}[r,N] $ and $b\in \Sigma {\Gamma}^{m'}_{K,K',q}[r, N]$. 
Then
\be\label{smoospec}
\Opbw{a(U;t,x,\x)}\circ\Opbw {b(U;t,x,\x)} - \Opbw{(a\#_{\varrho} b)(U;t,x,\x)}
\ee
is a smoothing operator in $ \Sigma {\mathcal{R}}^{-\varrho+m+m'}_{K,K',p+q}[r,N]$.
\end{proposition}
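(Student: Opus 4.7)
\textbf{Proof plan for Proposition \ref{teoremadicomposizione}.}
Since the result is $\varrho$-linear in the expansion and both $a$ and $b$ admit pluri-homogeneous decompositions \eqref{espsymbol}, by bilinearity it suffices to treat separately the purely homogeneous pair $a\in\wt\Gamma^m_p$, $b\in\wt\Gamma^{m'}_q$ and the case in which at least one factor lies in the non-homogeneous class. I would begin with the homogeneous case, relying on the Fourier representation of Remark \ref{rem:symbol}. Writing
$a=\sum_{\vec\jmath,\vec\sigma}(a)^{\vec\sigma}_{\vec\jmath}(\xi)\, u^{\vec\sigma}_{\vec\jmath}e^{\ii\vec\sigma\cdot\vec\jmath\, x}$ and similarly for $b$, the cut-off $\chi_p$ in \eqref{regula12} forces $|\vec\sigma\cdot\vec\jmath|\leq\delta\langle\xi\rangle$ on the support of $a_{\chi_p}$, and analogously for $b_{\chi_q}$; thus the operators act as paradifferential blocks which localize the output frequency close to the input frequency (as in \eqref{paraspecloc}).

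The core algebraic computation is the classical Weyl composition formula: if we momentarily forget the cut-offs, the kernel of $\Op^W(a_{\chi_p})\circ\Op^W(b_{\chi_q})$ is given by the Weyl product
\[
(a_{\chi_p} \# b_{\chi_q})(x,\xi) = \frac{1}{\pi^2}\int e^{2\ii\sigma(z,\zeta;z',\zeta')}\, a_{\chi_p}(x+z,\xi+\zeta)\, b_{\chi_q}(x+z',\xi+\zeta')\,\di z\di\zeta\di z'\di\zeta',
\]
whose Taylor expansion in $(\zeta,z,\zeta',z')$ around $0$ to order $\varrho$ reproduces exactly \eqref{espansione2}. I would then write the remainder in Taylor integral form,
\[
a_{\chi_p}\# b_{\chi_q}\ -\ \sum_{k=0}^{\varrho}\frac{1}{k!}\Bigl(\tfrac{\ii}{2}\sigma(D)\Bigr)^k[ab]\Big|_{\rm diag} \ =\ \frac{1}{\varrho!}\int_0^1(1-t)^\varrho\,\bigl(\tfrac{\ii}{2}\sigma(D)\bigr)^{\varrho+1}\!\bigl[a_{\chi_p}(x,\xi)\,b_{\chi_q}(y,\eta)\bigr]_t\, \di t,
\]
and check, via the Fourier expansion and the explicit estimates \eqref{rem:symbol.1}, that $\varrho+1$ derivatives in $\xi$ convert each $\langle\xi\rangle^m\langle\xi\rangle^{m'}$ factor into $\langle\xi\rangle^{m+m'-\varrho-1}$, while the corresponding $x$-derivatives are absorbed by at most $\mu$ powers of $\max_2$ of the frequencies (using that on the support of $\chi_p\chi_q$ the ``low'' internal frequencies are controlled by $\langle\xi\rangle$). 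This gives the bound \eqref{hom:rest} with exponent $-\varrho+m+m'$, placing the remainder in $\wt\mathcal R^{-\varrho+m+m'}_{p+q}$ after symmetrization of the internal entries.

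Next, I would take care of the discrepancy between $\Op^W(a_{\chi_p}\# b_{\chi_q})$ and $\opbw(a\#_\varrho b)$: expanding $a\#_\varrho b$ via \eqref{espansione2} and comparing its Bony-Weyl quantization with the Weyl quantization introduces only differences supported where one of the cut-offs is not equal to $1$, i.e.\ on $|\vec\sigma\cdot\vec\jmath|\sim\langle\xi\rangle$. There the two frequencies $n_0\sim n_{p+q+1}$ are comparable to $\max(n_1,\ldots,n_{p+q})$, hence $\max_2\sim\max$, and by the observation below Definition \ref{omosmoothing} any homogeneous $m$-operator with that property is automatically a smoothing operator of arbitrary order. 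The non-homogeneous part is handled in the same way using the standard Weyl calculus with the cut-off $\chi(\xi',\xi)$ and the tame estimates \eqref{nonhomosymbo}, invoking Lemma \ref{mappabonetta} to pass from multilinear bounds to the non-homogeneous estimate \eqref{piove}.

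The main obstacle I expect is the bookkeeping of the parameter $\mu$: one must show that the loss in low frequencies generated by $k$-fold differentiation of $a$ and $b$ (which, by \eqref{homosymbo}, costs a factor $|\vec n|^{\mu+k}$) is absorbed by the $\max_2$ factor in \eqref{hom:rest} and not by the $\max$. This is where the spectral localization property inherited from $\chi_p$ and $\chi_q$ (ensuring every internal frequency is $\lesssim\langle\xi\rangle\sim\max(n_1,\ldots,n_{p+q+1})$) becomes essential, and the whole argument reduces to a careful term-by-term inspection of the Taylor remainder in the Fourier representation.
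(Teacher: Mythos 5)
The paper does not actually prove this proposition — it only quotes Proposition 3.12 of \cite{BD} — and your outline reproduces essentially the proof given there: Taylor expansion of the Weyl (Moyal) product to order $\varrho$ with the remainder in integral form, conversion of the $\xi$-derivative gains into $\langle\xi\rangle^{m+m'-\varrho-1}$ decay while the $x$-derivative losses land on the internal frequencies and hence on ${\rm max}_2$, and absorption of all cut-off discrepancies as arbitrarily smoothing operators because ${\rm max}_2\sim\max$ on their support (the bullet after Definition \ref{omosmoothing}). The plan is sound; the only points needing routine care are the adaptation of the Moyal formula to the periodic (discrete-frequency) setting, the fact that the Taylor remainder must be estimated directly at the level of matrix elements as in \eqref{hom:rest} rather than shown to be a symbol of the class, and the verification of \eqref{piove} for the non-homogeneous remainder, which should be done directly from \eqref{nonhomosymbo} rather than through Lemma \ref{mappabonetta}.
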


We now prove other composition results concerning $m$-operators.

\begin{proposition}{\bf (Compositions of $m$-operators)} \label{composizioniTOTALI}
Let $p, p', N, K, K' \in \N_0$ with $K'\leq K$ and $r>0$. Let $m,m' \in \R$.
Then 
\begin{itemize}
\item[$(i)$]
 If  $M(U;t) $ is  in 
$ \Sigma\mathcal{M}^{m}_{K,K',p}[r,N]$ and $M'(U;t) $ is  in
$ \Sigma\mathcal{M}^{m'}_{K,K',p'}[r,N] $ then the composition 
$ M(U;t)\circ M'(U;t) $  
is  in $\Sigma\mathcal{M}^{m+\max(m',0)}_{K,K',p+p'}[r,N]$.

\item[$(ii)$]
If  $M (U) $ is a homogeneous $ m$-operator in $  \widetilde{\mathcal{M}}_{p}^{m}$  
and $M^{(\ell)}(U;t)$, $\ell=1,\dots,p+1$, are matrices of  $ m_\ell $-operators  in 
$ \Sigma \mM^{m_\ell}_{K,K',q_\ell}[r,N]\otimes \mM_2(\C) $ with  $m_\ell \in \R$, 
$q_\ell\in \N_0$,   
then  
$$ 
M(M^{(1)}(U;t)U, \ldots,  M^{(p)}(U;t)U)M^{(p+1)}(U;t)
$$ 
belongs to $ \Sigma\mM_{K,K',p+\bar q}^{m+ \bar m}[r,N]$ with $ \bar m:=\sum_{\ell=1}^{p+1} \max(m_\ell,0)$ and $\bar q:= \sum_{\ell=1}^{p+1}q_\ell$.

\item[$(iii)$] 
If   $M(U;t) $ is in $ {\mathcal{M}}_{K,0,p}^{m}[\breve r]$ for any $\breve r\in \R^+$ and $ \bM_0(U;t) $ belongs to  $ \mM^0_{K,K',0}[r]\otimes \mM_2(\C)$, 
then $M(\bM_0(U;t)U;t)$ is in  $  \mM^{m}_{K,K',p}[r]$.

\item[(iv)] Let $c$ be a homogeneous symbol in $\widetilde{\Gamma}_p^{m}$  and $M^{(\ell)} (U;t) $, $\ell=1,\dots, p $, be operators in 
$ \Sigma\mM_{K,K',q_\ell}[r,N] $ with $ q_\ell \in \N_0 $. 
Then 
$$
U \rightarrow  b(U;t,x,\x):= c(M^{(1)}(U;t)U,\ldots,M^{(p)}(U;t)U;t,x,\x)
$$
is a symbol in $\Sigma\Gamma^{m}_{K,K',p+\bar q}[r,N]$ with $\bar q:=q_1+\dots+q_p$ and 
$$ 
\opbw( c(W_1,\ldots,W_p;t,x,\x))_{|W_\ell=M^{(\ell)}(U;t)U}=\opbw(b(U;t,x,\x))+R(U;t) 
$$
where $R(U; t) $ is a smoothing operator in 
$\Sigma \mathcal{R}^{-\varrho}_{K,K', p + \bar q}[r,N] $  
for any $\vr\geq0$.
\end{itemize}
\end{proposition}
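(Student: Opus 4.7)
\textit{Part (i).} My strategy throughout is to work with the Fourier characterization of homogeneous $m$-operators (Lemma \ref{carasmoofou}) for the multilinear pieces and with the tame estimate \eqref{piove} for the non-homogeneous pieces, moving between the two viewpoints via Lemma \ref{mappabonetta}. First I decompose $M$ and $M'$ as in \eqref{maps} and distribute the composition. For each homogeneous-homogeneous term $M_q\circ M'_{q'}$, the Fourier coefficients are convolutions, over an intermediate frequency $j$, of the Fourier coefficients of $M_q$ and $M'_{q'}$; the momentum constraint \eqref{momentoj} gives both the translation invariance \eqref{def:R-trin} and a control of $|j|$ by the external frequencies. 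When $m'\geq 0$ one bounds $\max(\ldots,j)^{m'}$ by the $m'$-th power of the overall maximum, yielding the bound \eqref{smoocara} with exponent $m+m'$; when $m'<0$ one discards $\max(\ldots,j)^{m'}\leq 1$ and keeps only the exponent $m$. The cross and non-homogeneous terms are treated by writing $W:=M'(U;t)V$ and applying \eqref{piove} twice, Leibniz-expanding in $t$: the sign of $m'$ again dictates whether the two losses compound to $m+m'$ or stabilize at $m$, and Lemma \ref{mappabonetta} supplies the tame bound whenever a homogeneous piece is paired with a non-homogeneous one.

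\textit{Parts (ii) and (iii).} For (ii) I substitute, slot by slot, the Fourier expansion \eqref{smoocara0} of each $M^{(\ell)}(U;t)U$ into the Fourier representation of $M$; iterating the bookkeeping of (i), with all intermediate frequencies controlled through \eqref{momentoj}, each slot contributes $\max(m_\ell,0)$ to the total order, summing to $m+\bar m$, and the remaining non-homogeneous pieces are controlled by iterated tame estimates. Item (iii) is a direct consequence of \eqref{piove}: $\bM_0(U;t)$ is tame of order zero, so $\bM_0(U;t)U$ is an admissible input for $M(U;t)$; applying \eqref{piove} to $M$ with this input then produces a loss of exactly $m$, and the vanishing degree $p$ is preserved since $\bM_0(U;t)U$ vanishes linearly in $U$.

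\textit{Part (iv).} I would use the Fourier representation \eqref{sviFou} of $c$. Substituting for each $W_\ell$ the Fourier coefficients of $M^{(\ell)}(U;t)U$ produces an explicit symbol $b(U;t,x,\xi)$, which is $(p+\bar q)$-homogeneous in $U$ and of order $m$, and hence belongs to $\Sigma\Gamma^m_{K,K',p+\bar q}[r,N]$ by the same frequency counting as in (ii). The two operators $\opbw(c(W_1,\ldots,W_p;\cdot))|_{W_\ell=M^{(\ell)}(U;t)U}$ and $\opbw(b(U;t,\cdot))$ differ only through the mismatch between the cutoff $\chi_p$ attached to the $p$ $W$-slots of $c$ and the cutoff $\chi_{p+\bar q}$ attached to the $p+\bar q$ $U$-slots of $b$; this difference is supported where the total input frequency is comparable to $\xi$, hence produces a remainder $R(U;t)$ that is arbitrarily regularizing, i.e.\ lies in $\Sigma\mathcal{R}^{-\varrho}_{K,K',p+\bar q}[r,N]$ for every $\varrho\geq 0$.

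The main obstacle will be (i)--(ii) whenever some order is negative: the intermediate frequency $j$ in the convolution can be much larger than the output frequency, and a naive bound would spuriously lose derivatives. The argument hinges on the momentum constraint \eqref{momentoj} together with the monotonicity \eqref{mono:max2} of $\max_2$, which allow one to trade intermediate frequencies for combinations of input and output frequencies that fit the form of \eqref{smoocara}. Part (iv) likewise requires handling the cutoff discrepancy with care, exploiting that the difference of cutoffs is frequency-localized on a region producing arbitrary gains.
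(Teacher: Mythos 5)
Your route for (i)--(iii) is essentially the paper's: the paper proves (ii) first by inserting projectors $\Pi_{n'}$ on the intermediate frequencies and invoking \eqref{omomap} together with the momentum conditions (your Fourier-coefficient bookkeeping is the same argument, by the equivalence in Lemma \ref{carasmoofou}), deduces (i) from (ii), and proves (iii) exactly as you do; for (iv) the paper merely cites Propositions 3.17--3.18 of \cite{BD}, so your direct cutoff-mismatch sketch is a reasonable replacement in the same spirit. The genuine gap is in the negative-order case of (i)--(ii), and your diagnosis of where the danger lies is backwards. By momentum every intermediate frequency is bounded by the external ones, so an intermediate frequency ``much larger than the output'' never occurs and no spurious loss arises; discarding the inner factor when its order is negative (i.e.\ treating that operator as a $0$-operator) is fine and is what the paper does. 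The real obstruction is the outer factor when the order $m$ of the \emph{outer} operator is negative: \eqref{omomap} yields the gain $\max\{\vec n'\}^{m}$ measured on the outer operator's own arguments, i.e.\ on the intermediate frequencies, whereas membership in the target class requires the gain $\max\{n_1,\dots\}^{m}$ on the external frequencies, cfr.\ \eqref{smoocara}. Nothing prevents all intermediate frequencies from being much smaller than the overall external maximum, in which case $\max\{\vec n'\}^{m}$ is of size one while $\max\{n_1,\dots\}^{m}$ is tiny, and your convolution bound does not close.

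This is precisely the content of the paper's inequality \eqref{daprovare35}, which is proved by a dichotomy: either the intermediate frequency attached to the block realizing the external maximum is at least half of that maximum, and the gain transfers directly; or it is not, and then the momentum relation forces the external maximum to be comparable to $\max_2$ of the external frequencies (see \eqref{smoopercaso35}), so the missing gain is recovered at the price of enlarging the $\max_2$-exponent $\mu$ by $|m|$. Your closing remark about ``trading intermediate frequencies'' via \eqref{momentoj} and \eqref{mono:max2} names the right tools but not this case distinction, and without it the exponents $m+\max(m',0)$ in (i) and $m+\bar m$ in (ii) are unjustified exactly in the case the paper needs repeatedly later, namely when the outer operator is smoothing (e.g.\ the compositions in Proposition \ref{composizioniTOTALIs}).
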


\begin{proof}
{\sc Proof of $(ii)$:} 
 It is sufficient to prove the thesis for $ m_\ell\geq 0$ otherwise we regard $M^{(\ell)}(U;t)$ as a $0$-operator in $ \Sigma\mathcal{M}^{0}_{K,K',p}[r,N]$.
We decompose, for any $ \ell=1, \dots,p+1$, $M^{(\ell)} = \sum_{a=q_\ell}^N M_a^{(\ell)}+ M^{(\ell)}_{>N}$  as in \eqref{maps}.
Given integers  $ a_\ell \in [q_\ell, N]$,  $\ell=1, \dots p+1$, we use the notation $ \bar a_\ell:= a_1+\dots+ a_\ell+\ell$ and $ \bar a_0:=0$. 
Note that
$ \bar a_{p+1} - 1 = a_1 + \ldots + a_{p+1} + p \geq \bar q + p $. 
We also denote the vector with $a_\ell$ elements
$$ 
\cU_\ell := (U_{\bar{a}_{\ell -1}+1}, \ldots, U_{\bar a_\ell - 1})  \ , \quad \ell = 1, \ldots, p+1 \, . 
$$
By multi-linearity we have to show on the one hand that, 
 if $ \bar a_{p+1}-1  \leq N$ then 
\be \label{omoomo35}
M\Big(M^{(1)}_{a_1}(\cU_1)
U_{a_1+1}, \dots, M^{(p)}_{a_p}(\cU_p)U_{\bar a_{p}}
\Big)M^{(p+1)}_{a_{p+1}} (\cU_{p+1} )
\ee
is a homogeneous operator in $ \wtcM^{m+\bar m}_{\bar a_{p+1}-1}$. On the other hand,  if  $\bar a_{p+1}-1\geq N+1$ then  
\be \label{nonomo35}
 M\Big(M^{(1)}_{a_1}(U,\dots, U)U, \dots , M^{(p)}_{a_p}(U,\dots, U)U\Big)M^{(p+1)}_{a_{p+1}}( U,\dots, U) \quad 
\ee
is a non--homogeneous operator in $ \mM^{m+\bar m}_{K,K',N+1}[r]$, having included for notational convenience the inhomogeneous term as  $M^{(\ell)}_{N+1}(U,\dots,U):= M^{(\ell)}_{>N}(U;t)$ which belongs to
$\mM^{m_\ell}_{K,K',N+1}[r]$. 

We first study \eqref{omoomo35}. 
First of all, using the notation 
$\vec n_\ell := (n_{\bar{a}_{\ell -1}+1}, \ldots, n_{\bar a_\ell -1})$
 one has 
\begin{align}
&\|
\Pi_{n_0}
M\Big(
M^{(1)}_{a_1}(\Pi_{\vec n_1}\cU_1)
\Pi_{n_{\bar a_1}} U_{\bar a_1},
 \dots, 
M^{(p)}_{a_p}(\Pi_{\vec n_p}\cU_p)
\Pi_{n_{\bar a_p}} U_{\bar a_{p}}
\Big)
M^{(p+1)}_{a_{p+1}}
(\Pi_{\vec n_{p+1}}\cU_{p+1} )
\Pi_{n_{\bar a_{p+1}}} U_{\bar a_{p+1}}
\|_{L^2}\notag\\
&\leq \!\!\!\!\!\sum_{n'_1,\dots, n_{p+1}'} \!\!\!\!\!\|
\Pi_{n_0}
M\Big(
\Pi_{n'_1}
M^{(1)}_{a_1}(\Pi_{\vec n_1}\cU_1)
\Pi_{n_{\bar a_1}} U_{\bar a_1},
 \dots, 
 \Pi_{n'_p}
M^{(p)}_{a_p}(\Pi_{\vec n_p}\cU_p)
\Pi_{n_{\bar a_p}} U_{\bar a_{p}}
\Big)
\notag
\\
& \qquad \qquad 
\circ
\Pi_{n'_{p+1}}
M^{(p+1)}_{a_{p+1}}
(\Pi_{\vec n_{p+1}}\cU_{p+1} )
\Pi_{n_{\bar a_{p+1}}} U_{\bar a_{p+1}}
\|_{L^2} \, .\label{inininni35}
\end{align}
Thanks to the conditions \eqref{omoresti2} for $M$ and $ M^{(\ell)} $ 
the indices in the above sum satisfy, for some choice of signs $\sigma_b,\epsilon_j$, $b=1,\dots, \bar a_{p+1}$, $ j=1,\dots,p+1$, 
the restrictions 
\be\label{scece35}
n_0= \sum_{j=1}^{p+1} \epsilon_j n'_j \, , 
\quad n'_\ell=\sum_{b=\bar a_{\ell-1}+1}^{\bar a_{\ell}} \sigma_b n_b \, , 
\quad \ell=1,\dots, p+1 \, .
\ee
As a consequence \eqref{omoomo35} satisfies the corresponding condition \eqref{omoresti2} and 
\be\label{denaco34}
 n'_\ell \lesssim \max \{ \vec n_\ell, n_{\bar a_\ell} \}  \ , 
 \qquad
\forall
\ell=1,\dots, p+1 \ , 
 \ee
 then we have 
 \begin{align}
 \label{denaco35}
& {\rm max}\{ n'_1,\dots, n'_{p+1}\}\lesssim  {\rm max}\{ n_1,\dots, n_{\bar a_{p+1}}\} \\
\notag
&  {\rm max}_2\{ n'_1,\dots, n'_{p+1}\}
 \stackrel{\eqref{denaco34}}{\lesssim} 
 {\rm max}_2 \left\lbrace 
\max \{ \vec n_1, n_{\bar a_1} \}, \ldots , 
\max \{ \vec n_{p+1}, n_{\bar a_{p+1}} \} 
 \right\rbrace 
\lesssim 
  {\rm max}_2\{ n_1,\dots, n_{\bar a_{p+1}}\} 
  \end{align}
where in the last inequality we used that 
$\{ n_1, \ldots, n_{\bar a_{p+1}}\}$ is the disjoint union of the sets
$ \{ \vec n_\ell, n_{\bar a_\ell} \}_{\ell = 1, \ldots, p+1}$.\\
 Using  \eqref{scece35}, \eqref{omomap} for $M$ and $M^{(\ell)}$  we get, with $\vec n':= ( n_1',\dots, n'_{p+1})$
\begin{align}
&\eqref{inininni35}\lesssim {\rm max}_2\{ \vec n'\}^{\mu}
 {\rm max}\{ \vec n'\}^{m} 
\prod_{\ell=1}^{p+1}
{\rm max}_2\{ \vec n_{\ell}, n_{\bar a_\ell}\}^{\mu_\ell}
{\rm max}    \{  \vec n_{\ell}, n_{\bar a_\ell}\}^{m_\ell}\notag
 \prod_{b=1}^{\bar a_{p+1}} \| \Pi_{n_b} U_b\|_{L^2}\notag\\ 
&\stackrel{\eqref{denaco35},\eqref{mono:max2}, m_\ell\geq0}{\lesssim} 
{\rm max}_2\{ n_1,\dots, n_{\bar a_{p+1}}\}^{\bar \mu}
\max\{ n_{1},\dots, n_{\bar a_{p+1}}\}^{\bar m}
{\rm max}\{ \vec n'\}^{m} 
\prod_{b=1}^{\bar a_{p+1}} \| \Pi_{n_b} U_b\|_{L^2}\label{danonprovare35},
\end{align}
where $\bar \mu:=\mu + \mu_1+\dots+\mu_{p+1}$ and recall $\bar m= m_1+\dots+m_{p+1}$. 
We claim that 
\be \label{daprovare35}
{\rm max}\{\vec n'\}^{m}\lesssim {\rm max}_2\{ n_1,\dots, n_{\bar a_{p+1}}\}^{ \mu''}\max\{ n_{1},\dots, n_{\bar a_{p+1}}\}^{m} 
\ee
for some $\mu''\geq0 $. 
Then \eqref{omomap}, \eqref{danonprovare35} and \eqref{daprovare35} imply that the operator in \eqref{omoomo35} belongs to  $\wtcM_{\bar a_{p+1}-1}^{m+\bar m}$.

We now prove  \eqref{daprovare35}. 
If $m\geq 0$ it  follows by 
\eqref{denaco35} with $\mu''=0$. 
So from now on we consider  $m<0$. 
We fix $\bar \ell$ such that 
\be\label{scelgouguali}
\max\{ n_{1},\dots, n_{\bar a_{p+1}}\}=
{\rm max}\{ \vec n_{\bar \ell}, \ n_{\bar a_{\bar \ell}}\}
\ee
 and   we distinguish two cases: 

\noindent {\bf Case $1$:} $n'_{\bar \ell}\geq \frac12 {\rm max}\{ \vec n_{\bar \ell}, \ n_{\bar a_{\bar \ell}}\}$. In this case 
$$
{\rm max}\{ \vec n '\}^{m}
\stackrel{m<0}{\leq}
 (n'_{\bar \ell})^m
\stackrel{{\rm Case }\, 1}{\leq} 
2^{|m|} \, {\rm max}\{ \vec n_{\bar \ell},  n_{\bar a_{\bar \ell}}\}^m
\stackrel{\eqref{scelgouguali}}{=}
2^{|m|} \max\{ n_{1},\dots, n_{\bar a_{p+1}}\}^m
$$
which proves \eqref{daprovare35} with $\mu''=0$.

\noindent {\bf Case $2$:} $n'_{\bar \ell}< \frac12 {\rm max}\{ \vec n_{\bar \ell}, \ n_{\bar a_{\bar \ell}}\}$. 
In view of the momentum condition \eqref{scece35}, 
\begin{align*} 
\max \{ \vec n_{\bar \ell}, \ n_{\bar a_{\bar \ell}}\}
& \leq
a_{\bar \ell}{\rm max}_2 \{ \vec n_{\bar \ell}, \ n_{\bar a_{\bar \ell}} \} 
+ n'_{\bar \ell}\\
& \stackrel{{\rm Case } \, 2}{\leq}
a_{\bar \ell} {\rm max}_2 \{ \vec n_{\bar \ell}, \ n_{\bar a_{\bar \ell}} \}
+  \frac12 {\rm max}\{ \vec n_{\bar \ell}, \ n_{\bar a_{\bar \ell}}\}
  \end{align*} 
and consequently
\be\label{smoopercaso35}
\max \{ \vec n_{\bar \ell}, \ n_{\bar a_{\bar \ell}} \} 
\leq 2 a_{\bar \ell}{\rm max}_2\{ \vec n_{\bar \ell}, \ n_{\bar a_{\bar \ell}}\}  \, . 
\ee
Then, since $ m < 0 $, 
\begin{align*}
\max\{ \vec n'\}^{m}
&
\leq 
1 
\stackrel{\eqref{smoopercaso35}}{\leq}
(2 a_{\bar \ell})^{|m|} \ 
{\rm max}_2
\{  \vec n_{\bar \ell}, \ n_{\bar a_{\bar \ell}}\}^{|m|} 
\ 
\max\{  \vec n_{\bar \ell}, \ n_{\bar a_{\bar \ell}}\}^{m}
\\
\stackrel{\eqref{scelgouguali}, \eqref{mono:max2}}{\leq} 
&
(2 a_{\bar \ell})^{|m|}
{\rm max}_2\{ n_1,\dots,  n_{\bar a_{p+1}}\}^{|m|}
\max\{ n_1,\dots,  n_{\bar a_{p+1}}\}^{m}
\end{align*}
which proves \eqref{daprovare35} with $\mu''=|m|$. 
This concludes the proof of \eqref{daprovare35}
and then that the operator in \eqref{omoomo35} is in  $\wtcM_{\bar a_{p+1}-1}^{m+\bar m}$.

Now we  prove that the operator in \eqref{nonomo35}  
is in $ \mM^{m+\bar m}_{K,K',N+1}[r]$. 
We have to verify \eqref{piove} with $m\leadsto m+\bar m$ and $p \leadsto N+1$.
For simplicity we denote $ M^{(\ell)}_{a_\ell}(U) = M^{(\ell)}_{a_\ell}(U, \ldots, U ) $.
First  we apply \eqref{mappabonetta1}
to  $M(U) \in \wt\cM^m_p$  
  (with 
 $U_\ell \leadsto M^{(\ell)}_{a_\ell}(U)U$, 
 $ v \leadsto M^{(p+1)}_{a_{p+1}}(U)v$ and 
 $ s \leadsto s-\bar m $) getting, for any $ k = 0, \ldots, K- K' $, 
 \begin{align}
  \| \pa_t^{k} \big( \eqref{nonomo35}v \big) \|_{\dot H^{s-m-\bar m-\frac32k}} & \label{due1}\lesssim_{K} 
  \!\!\!\!\!\!\!\!\! \sum_{k_1+\dots+k_{p+1}=k}\!\!\!\!\!
     \Big(
  \norm{M^{(p+1)}_{a_{p+1}}(U)v}_{k_{p+1},s-\bar m }
   \prod_{\ell =1}^p \
   \norm{ M^{(\ell)}_{a_\ell}(U)U}_{k_\ell, s_0} \\
   \label{due2}
&   \!\!\!\!\!\!\!\!\! + 
   \norm{M^{(p+1)}_{a_{p+1}}(U)v}_{k_{p+1}, s_0 }\sum_{\bar \ell =1}^p 
   \norm{ M^{(\bar \ell)}_{a_{\bar \ell}}(U)U}_{k_{\bar \ell}, s-\bar m}
  \prod_{\ell =1\atop \ell \neq \bar \ell }^p 
  \norm{ M^{(\ell)}_{a_\ell}(U)U}_{k_\ell, s_0}\Big)\, . 
\end{align}
Then we estimate  line \eqref{due1} where, 
by Lemma \ref{mappabonetta}, 
 each  $ M^{(\ell)}_{a_\ell}(U) $ is in $ \mM^{m_\ell}_{K,0,a_\ell}[r]$
 and
 $M^{(\ell)}_{N+1}(U)$  belongs to 
$\mM^{m_\ell}_{K,K',N+1}[r]$.   For any $ \ell = 1, \ldots, p $, using 
 \eqref{piove} (with $ m \leadsto m_\ell $ and $ p \leadsto a_\ell $)
 we bound (since $k_\ell  \leq k - k_{p+1} $ and $ 0 \leq k \leq K - K' $)
 \begin{align}
 &\norm{ M^{(\ell)}_{a_\ell}(U)U}_{k_\ell, s_0}
\leq C \norm{U}_{k_\ell + K',s_0+\bar m}^{a_\ell+1} 
\leq  C \norm{U}_{k - k_{p+1} + K',s_0+\bar m}^{a_\ell+1} \, , \label{boundsp215} \\
 & \norm{M^{(p+1)}_{a_{p+1}}(U)v}_{k_{p+1},s-\bar m }
 \leq
 C \sum_{k' + k''\leq k_{p+1}} 
 \norm{v}_{k'',s} 
 \norm{U}_{k'+K',s_0}^{a_{p+1}} + 
\norm{v}_{k'',s_0} 
\norm{U}_{k'+K' ,s_0}^{a_{p+1}-1}
 \norm{U}_{k'+K',s} \notag \\
&
\qquad \qquad\qquad \qquad   \leq  C  \sum_{k''\leq k_{p+1}} 
 \norm{v}_{k'',s} 
 \norm{U}_{k-k''+K',s_0}^{a_{p+1}}
  + 
\norm{v}_{k'',s_0} 
\norm{U}_{k-k''+K' ,s_0}^{a_{p+1}-1} 
\norm{U}_{k-k''+K',s} \notag
   \end{align} 
since $ k' \leq k-k'' $ (being $ k_{p+1} \leq k $).  
By \eqref{boundsp215} and since
  $k - k_{p+1} + K' \leq k - k'' + K'$, we get 
  \begin{align}
 \notag
\eqref{due1} & 
\lesssim_{K,s}  \sum_{k''\leq  k} 
 \norm{v}_{k'',s}
  \norm{U}_{k-k''+K',s_0+ \bar m}^{a_1 + \dots +a_{p+1}+p} + 
\norm{v}_{k'',s_0} 
\norm{U}_{k-k''+K' ,s_0+ \bar m}^{a_1+\dots + a_{p+1}+p-1} 
\norm{U}_{k-k''+K',s} \\
\label{due10}
& \lesssim_{K,s} \sum_{k''  \leq k} 
 \norm{v}_{k'',s} \norm{U}_{k-k''+K',s_0+ \bar m}^{N+1} + 
\norm{v}_{k'',s_0} 
\norm{U}_{k-k''+K' ,s_0+ \bar m}^{N}
 \norm{U}_{k-k''+K',s}  
\end{align}   
because  $a_1+\dots + a_{p+1}+p = \bar a_{p+1} - 1\geq N+1$
(for $ \| U\|_{K,s_0+\bar m} < 1 $).
Regarding \eqref{due2}, we first bound  by 
\eqref{piove} (and \eqref{boundsp215} with $ s = s_0 + \bar m $)
\begin{align*}
&\norm{ M^{(\bar \ell)}_{a_{\bar \ell}}(U)U}_{k_{\bar \ell}, s - \bar m}
 \leq
 C \norm{U}_{k_{\bar \ell}+K', s_0}^{a_{\bar \ell}} \norm{U}_{k_{\bar \ell}+K',s}  \leq
 C \norm{U}_{k-k_{p+1}+K', s_0}^{a_{\bar \ell}} \norm{U}_{k-k_{p+1}+K',s} \\
 & \norm{M^{(p+1)}_{a_{p+1}}(U)v}_{k_{p+1},s_0}
\leq
 C \sum_{k''\leq k_{p+1}} 
 \norm{v}_{k'',s_0 + \bar m } \ \norm{U}_{k-k''+K',s_0+\bar m }^{a_{p+1}} 
\end{align*}
and, proceeding similarly to the previous computation, we deduce 
\be \label{due20}
\eqref{due2} 
\lesssim_{K,s}  \sum_{k''\leq  k} 
\norm{v}_{k'',s_0+ \bar m} \norm{U}_{k-k''+K' ,s_0+ \bar m}^{N} 
\norm{U}_{k-k''+K',s} \, .
\ee
Hence \eqref{due10}, \eqref{due20} imply that  for any $k = 0, \ldots, K- K'$
\be\label{due30}
\| \pa_t^{k} \big( \eqref{nonomo35}v \big) \|_{\dot H^{s-m-\bar m-\frac32k}}
\lesssim_{K,s} \!\!\!\!\!\!
\sum_{k'+k''  = k} \!\!\!\!
 \norm{v}_{k'',s} \norm{U}_{k'+K',s_0+ \bar m}^{N+1} + 
\norm{v}_{k'',s_0+ \bar m} \norm{U}_{k'+K' ,s_0+\bar m}^{N} \norm{U}_{k'+K',s}  
\ee
proving that the operator \eqref{nonomo35} 
satisfies \eqref{piove} with $m\leadsto m+\bar m$, $p \leadsto N+1$
and $ s_0 \leadsto s_0 + \bar m $.
\\
\noindent{\sc Proof of $(i)$:} 
Decomposing $M= \sum_{q=p}^N M_q+ M_{>N}$  and $M'= \sum_{q=p'}^N M'_q+ M'_{>N}$ as in \eqref{maps}, by item $(ii)$ we deduce that 
$ M_{q_1}(U) M'_{q_2}( U) $
is  in $ \wtcM^{m+ \max{\{m',0\}}}_{q_1+q_2}$ if $ q_1+q_2\leq N$,
and $ M_{q_1}(U) M'_{q_2}( U) $, $ q_1+q_2\geq N+1 $
and $ M_{q_1}(U) M'_{>N}( U;t) $,  $ q_1=p,\dots, N, $ are  
 in $ \mM^{m+\max\{m',0\}}_{K,K',N+1}[r]$.
Furthermore, proceeding as in the proof of \eqref{due30}, one shows that 
$ M_{>N}(U;t) M'_{q_2}( U) $, $ q_2=p',\dots, N $, and 
$ M_{>N}(U;t) M'_{>N}( U;t) $
are operators in $ \mM^{m+\max\{m',0\}}_{K,K',N+1}[r]$.

\noindent {\sc Proof of $(iii)$:} 
As $\bM_0(U;t) \in  \mM^0_{K,K',0}[r]\otimes \mM_2(\C)$, for any 
$ U \in B^{K'}_{s_0}(I;r)$,  
the function  $\bM_0(U;t) U \in B^{0}_{s_0}(I;C r)$, 
for some $C\geq 1$.  Hence the composition $M (\bM_0(U;t)U)$ is a well defined operator.  Moreover, for any $ U \in B_{s_0}^K(I;r(\sigma))$, we have  the quantitative estimate
$\norm{\bM_0(U;t)U}_{k, \sigma} \lesssim \norm{U}_{k,\sigma}$,  for 
$k = 0, \ldots, K- K'$ and $\sigma \geq s_0$. 
To bound the time derivatives of $M (\bM_0(U;t)U)$  we use  \eqref{piove} and the previous estimate to get, 
for any $k = 0, \ldots, K-K'$ 
\begin{align*}
& \norm{\pa_t^k   \left(M (\bM_0(U;t)U) V \right)}_{s-\frac32k - m} 
\\
& \quad \lesssim
\sum_{k'+k'' = k} \norm{\bM_0(U;t) U}_{k',s_0}^{p} \norm{V}_{k'',s} +
\norm{\bM_0(U;t) U}_{k',s}  \norm{\bM_0(U;t) U}_{k', s_0}^{p-1} \norm{V}_{k'',s_0}
 \\
& \quad  \lesssim 
\sum_{k'+k''=k} \!\!\!\! \|V\|_{k'',s}\|{U}\|_{k'+K',s_0}^{p} 
 +\|{V}\|_{k'',s_0}\|U\|_{k'+K',s_0}^{p-1}\|{U}\|_{k'+K',s} 
\end{align*}
proving \eqref{piove}. 

 \noindent {\sc Proof of $(iv)$:} It follows, in view of Remark \ref{mapsBD}, 
 by Proposition 3.17-$(i)$ and Proposition 3.18 in \cite{BD}.
\end{proof} 

We shall use the following facts which follow by 
 Proposition \ref{composizioniTOTALI}. 

 \noindent
$ \bullet $ 
If $ a(U;t,x,\xi) $ is a symbol in $ \Sigma \Gamma_{K,K',p}^m[r,N] $ and $ U $ is a solution of $ \pa_t U = M(U;t) U $ for some $ M(U;t) $ in  $\Sigma  \mM_{K,0,0}[r,N] \otimes \mM_2(\C) $, then $ \pa_t a(U;t,x,\xi) $ is a symbol in $ \Sigma \Gamma_{K,K'+1,p}^m[r,N] $. 

 \noindent
$ \bullet $ If $ R(U;t) $ is a smoothing operator 
in $ \Sigma  \mR^{-\varrho}_{K,K',p}[r,N]  $ and $ U $ is a solution of $ \pa_t U = M(U;t) U $ for some $ M(U;t) $ in  $\Sigma  \mM_{K,0,0}[r,N] \otimes \mM_2(\C) $, then $ \pa_t R(U;t) $ is  a smoothing operator $ \Sigma  \mR^{-\varrho}_{K,K'+1,p}[r,N] $.

\noindent
$\bullet$ 
\label{McR} If  $M(U;t) $ is  in 
$ \Sigma\mathcal{M}^{m}_{K,K',p}[r,N]$ and $R(U;t) $ is  in
$ \Sigma\mathcal{R}^{-\varrho}_{K,K',p'}[r,N] $ then the composition 
$ M(U;t)\circ R(U;t) $  
is  in $\Sigma\mathcal{M}^{m}_{K,K',p+p'}[r,N]$, and so it is not a smoothing map.

\subsection{Spectrally localized maps}\label{sec:SL}

We introduce the notion of  a ``spectrally localized" map.
The class $\widetilde{\mathcal{S}}^{m}_{p}$ denotes $p$-linear
$m$-operators 
with the spectral support similar to a para--differential operator (compare \eqref{paraspecloc} and \eqref{specres}), i.e. the `` internal frequencies" are controlled by the ``external" ones which are equivalent. 
On the other hand the class $\mathcal{S}_{K,K',p}^{m}$ contains non-homogeneous
$m$-operators  
which vanish at degree at least $ p $ in $ U $, 
 are $(K-K')$-times differentiable in $ t $, and  satisfy estimates similar to 
para--differential operators, see \eqref{piovespect}. 
These maps include para--differential operators, smoothing remainders which come from compositions of para--differential operators (see \eqref{smoospec}) and also  linear flows generated by para--differential operators. 
The  class of spectrally localized maps do  not enjoy a symbolic calculus 
and it is reminiscent of 
the maps 
introduced in Definition 1.2.1 in \cite{Del3}.

The class of  spectrally localized  maps is closed under transposition (Lemma \ref{omonon})
and under ``external" and ``internal" compositions, 
see Proposition \ref{composizioniTOTALIs}. A key property is that 
the  transpose of the internal differential of a spectrally localized map is a smoothing operator, see Lemma \ref{aggiunto}.

\begin{definition}{\bf (Spectrally localized maps)} \label{smoothoperatormaps}
Let $ m \in \R $, $p,N\in \N_0 $, $K,K'\in\N_0$ with $K'\leq K$ and $r >0$. 

(i) {\bf Spectrally localized $p$-homogeneous maps.}
We denote by 
 $\widetilde{\mathcal{S}}^{m}_{p}$
 the subspace of $m$-operators $S(\mathcal{U})$ in $\widetilde{\mathcal{M}}^{m}_{p}$ 
 satisfying the following spectral condition: there exist $\delta >0$, $C>1$ such that   for any 
 $(U_1,\ldots,U_{p})\in (\dot{H}^{\infty}(\T;\C^{2}))^{p}$,  for any  
 $U_{p+1}\in \dot{H}^{\infty}(\T;\C)$ and for any   $n_0,\ldots,n_{p+1} \in \N $ such that 
 $$
 \Pi_{n_0}S(\Pi_{n_1}U_1,\ldots,\Pi_{n_{p}}U_{p})\Pi_{n_{p+1}}U_{p+1}\neq 0 \, , 
 $$
it results
\be \label{specres}
\max\{ n_1, \dots , n_p\}\leq \delta  n_{p+1} \, , \qquad
 C^{-1} \, n_0 \leq n_{p+1} \leq C \, n_0 \, .  
\ee
We denote   $ \widetilde{\mS}_p := \bigcup_{m}\widetilde{\mS}^m_p $
and by $\Sigma_p^N \widetilde \mS^{m}_q$ the class of pluri-homogeneous spectrally localized maps of the form 
$\sum_{q=p}^{N} S_{q}$ with $ S_q \in  \widetilde{\mS}^{m}_{q}$
and 
$ \Sigma_p \widetilde \mS^{m}_q:= \bigcup_{N\in \N} \Sigma_p^N \widetilde \mS^{m}_q $.
For $ p \geq N + 1 $ we mean that the sum is empty. 

(ii) {\bf Non-homogeneous spectrally localized maps.}
 We denote   $\mathcal{S}^{m}_{K,K',p}[r]$ 
  the space of maps $(U,t,V)\mapsto S(U;t) V $ defined on $B^{K'}_{s_0}(I;r)\times I\times  C^0_{*}(I,\dot{H}^{s_0}(\T,\C))$ for some $ s_0 >0  $, 
  which are linear in the variable $ V $ and such that the following holds true. 
For any $s\in \R$ there are $C>0$ and 
  $r(s)\in]0,r[$ such that for any 
  $U\in B^K_{s_0}(I;r(s))\cap C^K_{*}(I,\dot{H}^{s}(\T,\C^2))$, 
  any $ V \in C^{K-K'}_{*}(I,\dot{H}^{s}(\T,\C))$, any $0\leq k\leq K-K'$, $t\in I$, we have that  
\begin{equation} \label{piovespect}
\|{\partial_t^k\left(S(U;t)V\right)(t,\cdot)}\|_{\dot{H}^{s- \frac32 k- m}}  \leq C \sum_{k'+k''=k} \|{U}\|_{k'+K',s_0}^{p} \|{V}\|_{k'',s}  \, .
\end{equation}  
In case $ p = 0$ we require  the estimate
$ \|{\partial_t^k\left(S(U;t)V\right)}\|_{\dot{H}^{s- \frac32 k-m}}
 \leq C  \|{V}\|_{k,s}$.
 
 We denote  $ \mS_{K,K',N}[r]= \bigcup_{m}\mS^{m}_{K,K',N}[r]  $.

(iii) {\bf Spectrally localized Maps.}
We denote by $\Sigma\mS^{m}_{K,K',p}[r,N]$, 
the space of maps $(U,t,V)\to S(U;t)V$ of the form 
\begin{equation}
\label{mapsS}
S(U;t)V=\sum_{q=p}^{N}S_{q}(U,\ldots,U)V+S_{>N}(U;t)V
\end{equation}
where $S_{q} $ are spectrally localized homogeneous maps in $ \widetilde{\mS}^{m}_{q}$, $q=p,\ldots, N $
and $S_{>N}$ is a non--homogeneous spectrally localized map   in  
$\mS^{m}_{K,K',N+1}[r] $.
We denote 
by  $\Sigma\mS_{K,K',p}^{m}[r,N]\otimes\mathcal{M}_2(\C)$
 the space of $2\times 2$ matrices whose entries are spectrally localized maps in
 $\Sigma\mS^{m}_{K,K',p}[r,N]$.
We will use also the notation
$ \Sigma \mS_{K,K',p}[r,N]:=\bigcup_{m\geq0}\Sigma \mS^{m}_{K,K',p}[r,N] $. 
\end{definition}

 \noindent
$\bullet$
Note that \eqref{specres} implies that 
$ \max \{n_1, \ldots, n_{p+1} \} \sim n_{p+1}$ and 
$ \max_2 \{n_1, \ldots, n_{p+1} \} \sim \max \{n_1, \ldots, n_{p} \}$
and therefore, by  
\eqref{omomap}, 
 $S \in \wt\cS^m_p$ if and only if there are $ \mu \geq 0 $, $ C > 0 $ such that  
\be\label{hom:sest}
 \|\Pi_{n_0}S(\Pi_{n_1}U_1,\ldots,\Pi_{n_{p}}U_{p})\Pi_{n_{p+1}}U_{p+1}\|_{L^{2}}\leq
 C{\rm \max}( n_1,\ldots, n_{p})^{\mu}  \, n_{p+1}^{m}
 \prod_{j=1}^{p+1}\|\Pi_{n_{j}}U_{j}\|_{L^{2}} 
\ee 
for any $ n_0, \ldots, n_{p+1} \in \N $.
 
\noindent
$\bullet$
 In view of Lemma \ref{carasmoofou} a matrix of spectrally localized maps $S(U)$  is characterized in terms of its Fourier coefficients  as follows: $S(U)$ is in $ \widetilde{\mathcal{S}}_{p}^{m} \otimes \cM_2(\C)$ if and only if   $S_{\vec \jmath_p, j, k}^{\vec \sigma_p, \sigma', \sigma}$ (defined as in \eqref{M.coeff}) satisfy,
for some $ \mu \geq 0 $, $ C > 0 $,   
\be\label{smoocaraspec}
 |S_{\vec \jmath_p, j, k}^{\vec \sigma_p, \sigma', \sigma} |\leq C 
{\rm max}\{ |j_1|,\dots, | j_p|\}^{\mu}|j|^{m} \, , \quad \forall (\vec \jmath_p ,  j,k) \in 
(\Z \setminus \{0\})^{p+2} \, , \
(\vec \sigma_p , \sigma',\sigma) \in  \{\pm \}^{p+2} \, , 
 \ee
 and if $S_{\vec \jmath_p, j, k}^{\vec \sigma_p, \sigma', \sigma}\not=0$ then  \eqref{momentoj}, \eqref{M.coeff.p} hold and, for some $ \delta > 0 $, 
 \be \label{smoocaraspecres}
\max\{ |j_1|, \dots , |j_p|\}\leq \delta  |j| \, , \quad
 C^{-1} \, |k| \leq |j|\leq C \, |k| \, .
 \ee
 
 \noindent
 $\bullet$
If $ S $ is a   $p$--homogeneous 
spectrally localized map in $ \widetilde \mS^m_p$ then
$ S(U)[V] $ defines a non--homogeneous spectrally localized map  in $ \mS_{K,0,p}^{m}[r]$ for any $r>0$. The proof is similar to the one of Lemma \ref{mappabonetta} noting  that the estimate \eqref{piovespect} holds for any $ s\in \R$ because of the equivalence $ n_0\sim n_{p+1}\sim \max\{ n_1,\dots, n_{p+1}\}$ in \eqref{specres}.

 \noindent
$\bullet$ 
{\bf (Paradifferential operators as spectrally localized maps)}
If $a(U;t,x,\xi)$ is a symbol in $ \Sigma \Gamma^{m}_{K,K',p}[r,N]$  
then the paradifferential operator  $ \opbw(a(U;t,x,\xi))$ is a spectrally localized map in  
$ \Sigma  \mS_{K,K',p}^m[r,N]$. This is a consequence of Proposition 3.8 in \cite{BD}. 
We remark that for a homogeneous symbol this is a consequence of the 
choice of the first quantization in 
\eqref{BW}. 

 \noindent
$\bullet$ 
If $S(U, \dots ,U)$ is a  $p$--homogeneous spectrally localized map in $ \wtcS_p$, then  the differential of the non--linear map $ S(U, \dots, U) U $, $
\di_U \big(S(U,\dots, U) U\big) V   = p S(V,U,\dots, U)U + S(U, \dots, U)V 
$ is a $p$--homogeneous  operator in $ \wtcM_p$, \emph{not} necessarily  spectrally localized. Indeed the operator $S(V,U,\dots, U)U $  is \emph{not}, in general, spectrally localized.

 \noindent
$\bullet$
 If $m_1 \leq m_2$ then $\Sigma\mathcal{S}^{m_1}_{K,K',p}[r,N] \subseteq \Sigma\mathcal{S}^{m_2}_{K,K',p}[r,N]$.
 If $K_1' \leq K'_2$ then $\Sigma\mathcal{S}^{m}_{K,K'_1,p}[r,N] \subseteq \Sigma\mathcal{S}^{m}_{K,K'_2,p}[r,N]$.

\begin{remark}
The constant $\delta>0$ in the spectral condition \eqref{specres} is not assumed to be small (unlike the one in \eqref{paraspecloc} for  paradifferential operators).
\end{remark}

The class of matrices of  spectrally localized  homogeneous maps is closed under transposition.

\begin{lemma} \label{omonon}
Let $p \in \N_0$, $m \in \R$.
If $ S (U) $ is a  matrix of  $p$--homogeneous 
spectrally localized maps in $ \widetilde \mS^m_p \otimes\mM_2(\C) $ then
 $ S(U)^\top $ is in $ \widetilde \mS^m_p\otimes\mM_2(\C) $, where the transpose is computed with respect to the non-degenerate real bilinear form \eqref{real.bil.form.intro}.
\end{lemma}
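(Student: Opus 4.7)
I would approach the proof by working at the level of the Fourier matrix elements \eqref{M.coeff} of $S(U)$, using the explicit formula \eqref{emmetop} for those of the transpose $S(U)^\top$. Since the real bilinear form $\langle\cdot,\cdot\rangle_r$ in \eqref{real.bil.form.intro} is block--diagonal in the two $\dot{L}^2_r$ factors, I would first note that the transpose of a $2\times 2$ matrix of operators is the matrix transpose whose entries are the individual entry transposes; the claim thus reduces to the scalar case of $S\in\wt{\mathcal{S}}^m_p$.

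Next, arguing as in the forward direction of Lemma \ref{carasmoofou}, the hypothesis $S\in\wt{\mathcal{S}}^m_p$ would translate, at the Fourier level, into the bound $|S_{\vec\jmath_p,j',k'}^{\vec\sigma_p,\cdot,\cdot}| \leq C \max(|j_1|,\ldots,|j_p|)^\mu |j'|^m$ on the coefficients, supported on indices satisfying the spectral restriction $\max(|j_1|,\ldots,|j_p|)\leq\delta|j'|$ and $C^{-1}|j'|\leq|k'|\leq C|j'|$ inherited from \eqref{specres}. By \eqref{emmetop} one has
\begin{equation*}
(S^\top)_{\vec\jmath_p,j,k}^{\vec\sigma_p,\sigma',\sigma}\;=\;S_{\vec\jmath_p,-k,-j}^{\vec\sigma_p,\sigma,\sigma'}\,,
\end{equation*}
so evaluating the above bound and spectral support at $(j',k')=(-k,-j)$, using $|{-k}|=|k|$, $|{-j}|=|j|$, together with the two--sided equivalence $|k|\sim|j|$ (which converts $|k|^m$ into $|j|^m$ up to a constant depending on $m$ and $C$), should recover the size bound \eqref{smoocaraspec} and the spectral restriction \eqref{smoocaraspecres} for $S^\top$ at the same order $m$, with possibly enlarged constants. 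The momentum condition \eqref{momentoj} and the internal symmetry \eqref{M.coeff.p} for $S^\top$ transfer trivially from those of $S$.

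To finish, I would convert this Fourier--coefficient information back into the $L^2$ estimate \eqref{hom:sest} for $S^\top$ by a Schur/Cauchy--Schwarz argument as in the reverse direction of Lemma \ref{carasmoofou}, while translation invariance \eqref{def:R-trin} for $S^\top$ is immediate from the definitions of $\top$ and $\tau_\varsigma$. The only mildly delicate point is the step $|k|^m\rightsquigarrow|j|^m$, which relies crucially on the two--sided equivalence $|k|\sim|j|$ built into Definition \ref{smoothoperatormaps}; it is precisely the absence of such a two--sided bound that forces Lemma \ref{primecosette} to retain the transpose only in the larger class $\wt{\mathcal{M}}_p^{m'}$ with $m'\geq\max(m,0)$ when the hypothesis \eqref{spec.cond.t} is missing.
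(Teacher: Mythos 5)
Your proof is correct and follows essentially the same route as the paper: both pass to the Fourier coefficients, use the transpose formula \eqref{emmetop} together with the coefficient bound \eqref{smoocaraspec} and the spectral restriction \eqref{smoocaraspecres}, and exploit the two-sided equivalence $|k|\sim|j|$ to convert $|k|^m$ into $|j|^m$ and to transfer the spectral localization to $S^\top$. The additional remarks (reduction to the scalar case, translation invariance, comparison with Lemma \ref{primecosette}) are accurate but not needed beyond what the paper's one-line argument already contains.
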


\begin{proof}
It results 
$$
|(S^\top)_{\vec \jmath_{p}, j,k}^{\vec \sigma_{p}, \sigma', \sigma}|\stackrel{\eqref{emmetop}}{=}|S_{\vec \jmath_{p},- k,-j}^{\vec \sigma_{p}, \sigma, \sigma'}|\stackrel{\eqref{smoocaraspec}}{\leq} C 
{\rm max}\{ |j_1|,\dots, | j_p|\}^{\mu}|k|^{m}\stackrel{\eqref{smoocaraspecres}}{\leq} C' 
{\rm max}\{ |j_1|,\dots, | j_p|\}^{\mu}|j|^{m}
$$
which means that $(S^\top)_{\vec \jmath_{p}, j,k}^{\vec \sigma_{p}, \sigma', \sigma}$ satisfies \eqref{smoocaraspec}. Since $ |j|\sim |k|$ then $\max\{ |j_1|,\dots, |j_p|\}\leq \delta C |k|$ hence \eqref{smoocaraspecres} holds for $(S^\top)_{\vec \jmath_{p}, j,k}^{\vec \sigma_{p}, \sigma', \sigma}$.
\end{proof}

We now prove some further composition results for spectrally localized maps. 

\begin{proposition}{\bf (Compositions of spectrally localized maps)}\label{composizioniTOTALIs}
Let $p, p', p'',  N, K, K' \in \N_0$ with $K' \leq K$ and $r>0$. 
Let $m,m'\in \R$ and $S(U;t)$ be  a spectrally localized map in $ \Sigma\mathcal{S}^{m}_{K,K',p}[r,N]$. Then 
\begin{itemize}
\item[$(i)$]
if $ R(U;t)$ is a smoothing operator in $ \Sigma\mathcal{R}^{-\varrho}_{K,K',p''}[r,N]$ for some $\varrho \geq 0$,  then  $S(U;t) \circ R(U;t)  $ and $R(U;t) \circ S(U;t)$ 
are  smoothing operators in $\Sigma\mathcal{R}^{-\varrho+\max(m,0)}_{K,K',p+p''}[r,N] $.
\item[$(ii)$] If  $S'(U;t)$ is in  $ \Sigma\mathcal{S}^{m'}_{K,K',p'}[r,N]$ then the composition  $S(U;t)\circ S'(U;t)$ is  in $ \Sigma\mathcal{S}^{m+m'}_{K,K',p+p'}[r,N]$.

\item[$(iii)$] 
If $S $ is in $ \wt\cS^m_p$ and  $S^{(a)}(U;t)$ are spectrally  localized maps   in $\Sigma \cS_{K,K',q_a}^{\ell_a}[r,N]$ for some $\ell_a \in \R $ and $ q_a\in \N$,   $a=1, \dots, p $,  then also the internal composition  
\be\label{compomap} 
S(S^{(1)}(U;t)U, \dots, S^{(p)}(U;t)U)
\ee
is a spectrally localized map  in $ \Sigma \mS^{m}_{K,K',p+ \bar q}[r,N]$ 
with  $ \bar q:= \sum_{a=1}^{p} q_a$. 
\item[$(iv)$] 
If $S(U;t) $ is in $ {\mathcal{S}}_{K,0,p}^{m}[\breve r]$ for any $ \breve r \in \R^+ $ 
and $ \bM_0(U;t) $ belongs to  $ \mM^0_{K,K',0}[r]\otimes \mM_2(\C)$, 
then $S(\bM_0(U;t)U;t)$ is in  $  \mS^{m}_{K,K',p}[r]$.

\item[$(v)$] If $S(U)$ is a matrix of 
$p$-homogeneous spectrally localized maps in $\wt \cS_p^m
\otimes \cM_2(\C) $ and $S_a(U;t)$ are  
in $ \mS^{m_a}_{K,K',q_a}[r] \otimes \cM_2(\C) $, $a = 1,2$, $m_a \in \R$,  then 
$$
\di_U \left. (S(U)U)\right|_{S_1(U;t)U} \, [S_2(U;t)U] 
$$ 
is in 
$\cS^{m +\max(m_1, m_2)}_{K, K',p+pq_1 + q_2}[r] \otimes \cM_2(\C) $.
\end{itemize}
\end{proposition}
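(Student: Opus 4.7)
My plan is to prove each item by decomposing all operators into their pluri-homogeneous and non-homogeneous components via \eqref{maps} and \eqref{mapsS}, then verify the defining bounds: for homogeneous contributions I will use the Fourier characterization \eqref{smoocaraspec}--\eqref{smoocaraspecres} of Lemma \ref{carasmoofou}, while for non-homogeneous contributions I will combine the tame estimate \eqref{piovespect} (and its smoothing variant \eqref{hom:rest}, \eqref{piove} where appropriate) with Sobolev estimates.

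For item $(ii)$, composing $S\in\widetilde\cS_p^m$ and $S'\in\widetilde\cS_{p'}^{m'}$ in the Fourier basis amounts to summing over the intermediate connecting frequency $n$. The spectral restrictions \eqref{smoocaraspecres} of each factor force $n\sim$ (its external input) and internal $U$-frequencies $\leq \delta n$; consequently the overall output frequency $n_0$ satisfies $n_0\sim n\sim n_{p+p'+1}$ and all internal slots are controlled by the external one, yielding total order $m+m'$. Non-homogeneous pieces are then handled by iterating \eqref{piovespect}: homogeneous times homogeneous of total degree $>N$ falls into the non-homogeneous remainder, as do mixed terms involving the tails. For item $(i)$, the same Fourier decomposition applies, but tracking the $\max_2$-gain of the smoothing factor. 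For $S\circ R$: the spectral equivalence of $S$ gives $n_0\sim n$, and by momentum conservation in $R$ one has $n\leq \max(\vec n_R, n_{p+p'+1})$, so the $\max_2$ bound of $R$ dominates the $\max_2$ of the total index set (using the spectral control $\max(\vec m)\leq \delta n_0$ on $S$'s internal indices to absorb them). The order $-\varrho+\max(m,0)$ arises because the factor $n^m$ from $S$ is bounded by $\max_R^m$ when $m\geq 0$, and by a constant when $m<0$. The argument for $R\circ S$ is analogous.

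For item $(iii)$, an internal composition $S(S^{(1)}(U;t)U,\ldots,S^{(p)}(U;t)U)$ with $S\in\widetilde\cS_p^m$ and each $S^{(a)}\in\widetilde\cS_{q_a}^{\ell_a}$ inherits spectral localization because each $S^{(a)}(U;t)U$ has output frequency $n'_a$ equivalent to its own external input, and \eqref{smoocaraspecres} for $S$ ensures $\max_a n'_a\leq \delta\, n_{p+1}$, which in turn bounds all the internal $U$-frequencies of the $S^{(a)}$'s. The overall order remains $m$ since the losses $\ell_a$ from the $S^{(a)}$'s appear in the internal slots of $S$, whose size is already dominated by a power of $n_{p+1}$. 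Item $(iv)$ follows immediately from \eqref{piovespect} applied to $S$, together with the bound $\|\bM_0(U;t)U\|_{k,\sigma}\lesssim \|U\|_{k+K',\sigma}$ coming from $\bM_0\in\mM^0_{K,K',0}[r]\otimes\mM_2(\C)$.

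For item $(v)$, I would use the identity $\di_U(S(U)U)[V]=p\,S(V,U,\ldots,U)U+S(U)V$, apply it with base point $U=S_1(U;t)U$ and direction $V=S_2(U;t)U$, and control each of the $p+1$ resulting terms: the $p$ internal terms, of the form $p\,S(S_2(U;t)U,S_1(U;t)U,\ldots,S_1(U;t)U)\,S_1(U;t)U$, are handled by item $(iii)$, contributing the losses $m_1$ and $m_2$ from the respective internal slots; the last term $S(S_1(U;t)U,\ldots,S_1(U;t)U)\,S_2(U;t)U$ is handled by combining item $(iii)$ on the internal arguments with item $(ii)$ on the external composition with $S_2$. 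Collecting the worst loss $\max(m_1,m_2)$ yields the stated class $\cS^{m+\max(m_1,m_2)}_{K,K',p+pq_1+q_2}[r]\otimes\cM_2(\C)$. The main obstacle in this program will be the frequency bookkeeping in items $(i)$ and $(iii)$, where the equivalence $n_0\sim n_{p+1}$ from \eqref{smoocaraspecres} has to propagate through the composition while preserving either the $\max_2$ smoothing gain of $R$ or the correct power of the external frequency for the outer map $S$.
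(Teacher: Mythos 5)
Your proposal is correct and follows essentially the same route as the paper's proof: decompose into homogeneous and non-homogeneous components, propagate the spectral support conditions \eqref{smoocaraspecres} together with the momentum constraints through the composition (this is exactly how the paper gets the key bound $\max\{n_1,\dots,n_{q_1}\}\lesssim \mathrm{max}_2$ of the full index set in item $(i)$ and the conditions \eqref{doppiacondizione} in item $(iii)$), handle the non-homogeneous tails with \eqref{piovespect}--\eqref{piove}, and derive $(v)$ from $\di_U(S(U)U)[V]=p\,S(V,U,\dots,U)U+S(U)V$ evaluated at $U\leadsto S_1(U;t)U$, $V\leadsto S_2(U;t)U$ using items $(ii)$ and $(iii)$. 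Two harmless deviations: the paper treats $R(U;t)\circ S(U;t)$ simply by citing Proposition \ref{composizioniTOTALI}-$(i)$ (a spectrally localized map is in particular an $m$-operator) rather than redoing the frequency argument, and in $(v)$ the internal order losses are absorbed into the exponent $\mu$ by item $(iii)$, so in the terms $p\,S(S_2(U;t)U,S_1(U;t)U,\dots,S_1(U;t)U)[S_1(U;t)U]$ the loss $m_1$ actually enters through the external composition with $S_1(U;t)$ via item $(ii)$, not ``from the internal slots'' --- the resulting class is the one you state in either bookkeeping.
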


\begin{proof}
 {\sc Proof of $(i)$.} The operator $R(U;t) \circ S(U;t)$ 
is in $\Sigma\mathcal{R}^{-\varrho+\max(m,0)}_{K,K',p+p''}[r,N] $
by Proposition \ref{composizioniTOTALI}-($i$) since 
$S(U;t)$ is in $ \Sigma\mathcal{M}^{m}_{K,K',p}[r,N]$. 
Then we prove that $ S(U;t)  \circ R(U;t) $  is in $\Sigma\mathcal{R}^{-\varrho+\max(m,0)}_{K,K',p+p''}[r,N] $.  It is sufficient to consider the case 
$ m\geq 0$ since, if $ m<0$, we regard $S(U;t)$ as a spectrally localized map in $ \Sigma\mathcal{S}^{0}_{K,K',p}[r,N]$. Decomposing $S= \sum_{q=p}^N S_q+ S_{>N}$ as in \eqref{mapsS} and $R= \sum_{q=p''}^N R_q+ R_{>N}$ as in \eqref{maps}, we have to show, on the one hand that 
\be \label{omoomo}
S_{q_1}(U_1,\dots, U_{q_{1}}) R_{q_2}( U_{q_1+1},\dots, U_{q_1+q_2})
\ee
is a homogeneous smoothing operator in $ \wtcR^{-\vr+ m}_{q_1+q_2}$ if $ q_1+q_2\leq N$ and, on the other hand, that 
\be \label{nonomo}
\begin{aligned}
& S_{q_1}(U,\dots, U) R_{q_2}( U,\dots, U), \ q_1+q_2\geq N+1, \\
& S_{>N}(U;t) R_{q_2}( U,\dots, U), \ q_2=p'',\dots, N, \\ 
&S_{q_1}(U,\dots, U) R_{>N}( U;t), \ q_1=p,\dots, N, \\ 
&S_{>N}(U;t) R_{>N}( U;t) 
\end{aligned}
\ee
are non--homogeneous smoothing operators in $ \mR^{-\vr +m}_{K,K',N+1}[r]$. 
We first study \eqref{omoomo}. First of all one has 
\begin{align}
&\|\Pi_{n_0}S_{q_1}(\Pi_{n_1}U_1,\dots, \Pi_{n_{q_1}}U_{q_{1}}) R_{q_2}( \Pi_{n_{q_1+1}}U_{q_1+1},\dots, \Pi_{n_{q_1+q_2}}U_{q_1+q_2})\Pi_{n_{q_1+q_2+1}}U_{q_1+q_2+1}\|_{L^2} \label{inininni}\\
&\leq \sum_{n'} \|\Pi_{n_0}S_{q_1}(\Pi_{n_1}U_1,\dots, \Pi_{n_{q_1}}U_{q_{1}}) \Pi_{n'}R_{q_2}( \Pi_{n_{q_1+1}}U_{q_1+1},\dots, \Pi_{n_{q_1+q_2}}U_{q_1+q_2})\Pi_{n_{q_1+q_2+1}}U_{q_1+q_2+1}\|_{L^2}.\notag
\end{align}
Thanks to the conditions \eqref{omoresti2}, \eqref{specres} for $S$ 
and \eqref{omoresti2} for $R$ 
the indices in the above sum satisfy, 
for some choice of signs $\sigma_a$, $a=1,\dots, q_1+q_2+1$,  the restrictions 
\be\label{scece}
n'= \sum_{a=0}^{q_1} \sigma_a n_a, \quad n'= \sum_{a=q_1+1}^{q_1+q_2+1} \sigma_a n_a, \quad \max\{ n_1, \dots, n_{q_1}\}\lesssim n', \quad n_0\sim n' \, . 
\ee
As a consequence \eqref{omoomo} satisfies the corresponding condition \eqref{omoresti2} and 
\be \label{denaco}
 n' \lesssim \max\{ n_{q_1+1}, \dots, n_{q_1+q_2+1}\}, \quad \max\{ n_{1}, \dots, n_{q_1+q_2+1}\}\sim\max\{ n_{q_1+1}, \dots, n_{q_1+q_2+1}\}  . \ee
  We also claim that 
 \be \label{claimo}
  \max\{ n_1, \dots, n_{q_1}\}\lesssim{\rm max}_2\{ n_{1}, \dots, n_{q_1+q_2+1}\}.
 \ee
Indeed, by  \eqref{scece} and \eqref{denaco},  we have 
 \be \label{igerla}
 \max\{ n_1, \dots, n_{q_1}\}\lesssim \max\{ n_{q_1+1}, \dots, n_{q_1+q_2+1}\},
 \ee 
and we distinguish two cases. If $\max\{ n_{1}, \dots, n_{q_1+q_2+1}\}= \max\{ n_{q_1+1}, \dots, n_{q_1+q_2+1}\}$ then we directly obtain   $\max\{ n_1, \dots, n_{q_1}\}\leq {\rm max}_2\{ n_{1}, \dots, n_{q_1+q_2+1}\}$ which gives \eqref{claimo}.
If $\max\{ n_{1}, \dots, n_{q_1+q_2+1}\}= \max\{ n_{1}, \dots, n_{q_1}\}$ then $ \max\{ n_{q_1+1}, \dots, n_{q_1+q_2+1}\}\leq {\rm max}_2\{ n_{1}, \dots, n_{q_1+q_2+1}\}$ which together with \eqref{igerla} proves \eqref{claimo}.
 Using \eqref{inininni}, \eqref{scece}, \eqref{hom:sest}, \eqref{omomap} (with $m=-\vr$ and $ \mu \leadsto \mu'$)  we get  
\begin{align*}
\|\Pi_{n_0}&S_{q_1}(\Pi_{n_1}U_1,\dots, \Pi_{n_{q_1}}U_{q_{1}}) R_{q_2}( \Pi_{n_{q_1+1}}U_{q_1+1},\dots, \Pi_{n_{q_1+q_2}}U_{q_1+q_2})\Pi_{n_{q_1+q_2+1}}U_{q_1+q_2+1}\|_{L^2}\\
&\leq C {\rm max}\{ n_1,\dots, n_{q_1}\}^{\mu} (n')^{m}\frac{\max_2\{ n_{q_1+1},\dots, n_{q_1+q_2+1}\}^{\mu'}}{ \max\{ n_{q_1+1},\dots, n_{q_1+q_2+1}\}^{\vr}} \prod_{a=1}^{q_1+q_2+1} \| \Pi_{n_a} U_a\|_{L^2}\\ 
&\stackrel{\eqref{claimo}, \eqref{denaco}, m\geq 0}{\leq} C \frac{\max_2\{ n_{1},\dots, n_{q_1+q_2+1}\}^{\mu'+\mu }}{ \max\{ n_{1},\dots, n_{q_1+q_2+1}\}^{\vr-m}}\prod_{a=1}^{q_1+q_2+1} \| \Pi_{n_a} U_a\|_{L^2}.
\end{align*}
This proves that \eqref{omoomo} is in $ \wtcR^{-\vr+ m}_{q_1+q_2} $. 

In order to prove that the operators in \eqref{nonomo} satisfy \eqref{piove} (with $m \leadsto -\varrho +m$ and $ p \leadsto N $) we recall that  $S_{q_1}(U,\dots, U)$ defines a spectrally localized map  in $ \mS^{m}_{K,0,q_1}[r]$ (as remarked below Definition \ref{smoothoperatormaps}) and $ R_{q_2}(U,\dots,U)$ defines a smoothing operator  in $\mR^{-\vr}_{K,0,q_2}[r]$ thanks to Lemma \ref{mappabonetta}. 
Then the thesis follows by estimates \eqref{piovespect} and \eqref{piove}. For instance consider the last term in \eqref{nonomo}.
For any
 $ k= 0, \ldots,  K-K'$
$$
\begin{aligned}
\|{\partial_t^k\left(S_{>N}(U;t) R_{>N}(U;t)V\right)}\|_{\dot{H}^{s- \frac32 k- m +\varrho}}  
 \leq C \sum_{k'+k''=k} \|U\|_{k'+K',s_0}^{N+1} \|R_{>N}(U;t)V\|_{k'',s+\varrho}  \\
\leq C \sum_{k'+k'' = k} \sum_{0 \leq j \leq k'' \atop j'+j''  = j}
\|U\|_{k'+K',s_0}^{N+1} \, 
\left( 
\|{V}\|_{j'',s} \| U \|_{ j'+K', s_0}^{N+1}  
+   \|{V}\|_{j'',s_0} \| U \|_{ j'+K', s_0}^{N} \| U \|_{ j'+K', s} 
\right)
\\
 \leq  C \sum_{j''=0}^k  \|{V}\|_{j'',s}   \| U\|_{k-j''+K',s_0}^{2N+2}  
+ 
 \|{V}\|_{j'',s_0}   \| U \|_{k- j''+K', s_0}^{2N+1} \| U \|_{ k-j''+K', s}
\end{aligned}
$$
using  that  $k' \leq k - j''$ and $ j' \leq k - j''$. This proves that \eqref{omoomo} 
is in $ \mR^{-\vr +m}_{K,K',N+1}[r]$ (as $ \| U\|_{K,s_0} < 1 $).

\noindent {\sc Proof of $(ii)$. }  
Decomposing $S= \sum_{q=p}^N S_q+ S_{>N}$ and $S'= \sum_{q=p'}^N S'_q+ S'_{>N}$ as in \eqref{mapsS}, we have to show, on the one hand that 
\be \label{omoomo3}
S_{q_1}(U_1,\dots, U_{q_{1}}) S'_{q_2}( U_{q_1+1},\dots, U_{q_1+q_2})
\ee
is a homogeneous spectrally localized map in $ \wtcS^{ m+m'}_{q_1+q_2}$ if $ q_1+q_2\leq N$ and, on the other hand, that 
\be \label{nonomo3}
\begin{aligned}
& S_{q_1}(U,\dots, U) S'_{q_2}( U,\dots, U), \ q_1+q_2\geq N+1, \\
& S_{>N}(U;t) S'_{q_2}( U,\dots, U), \ q_2=p',\dots, N, \\ 
&S_{q_1}(U,\dots, U) S'_{>N}( U;t), \ q_1=p,\dots, N, \\ 
&S_{>N}(U;t) S'_{>N}( U;t) 
\end{aligned}
\ee
are non--homogeneous spectrally localized map in $ \mS^{m +m'}_{K,K',N+1}[r]$. 
We first prove that \eqref{omoomo3} is in $ \wtcS^{ m+m'}_{q_1+q_2}$. First of all one has 
\begin{align}\label{triangolo3}
&\|\Pi_{n_0}S_{q_1}(\Pi_{n_1}U_1,\dots, \Pi_{n_{q_1}}U_{q_{1}}) S'_{q_2}( \Pi_{n_{q_1+1}}U_{q_1+1},\dots, \Pi_{n_{q_1+q_2}}U_{q_1+q_2})\Pi_{n_{q_1+q_2+1}}U_{q_1+q_2+1}\|_{L^2}\\
&\leq \sum_{n'} \|\Pi_{n_0}S_{q_1}(\Pi_{n_1}U_1,\dots, \Pi_{n_{q_1}}U_{q_{1}}) \Pi_{n'}S'_{q_2}( \Pi_{n_{q_1+1}}U_{q_1+1},\dots, \Pi_{n_{q_1+q_2}}U_{q_1+q_2})\Pi_{n_{q_1+q_2+1}}U_{q_1+q_2+1}\|_{L^2}.\notag
\end{align}
Thanks to the conditions \eqref{omoresti2}, \eqref{specres} for $S$ and $S'$ the indices in the above sum satisfy, for some choice of signs $\sigma_a$, $a=1,\dots, q_1+q_2+1$,  the restriction 
\be 
\begin{aligned}
\label{denaco3}
&n'= \sum_{a=0}^{q_1} \sigma_a n_a, \quad n'= \sum_{a=q_1+1}^{q_1+q_2+1} \sigma_a n_a, \\
& \begin{cases}
\max\{ n_1,\dots, n_{q_1}\} \leq \delta n'\\
 \frac{n_0}{C} \leq  n'\leq C n_0 \, , 
 \end{cases} \quad \begin{cases}\max\{ n_{q_1+1},\dots, n_{q_1+q_2}\} \leq \delta' n_{q_1+q_2+1} \\ 
 \frac{ n'}{C'} \leq  n_{q_1+q_2+1}\leq C' n' \, , 
 \end{cases} 
\end{aligned}
\ee
for some $ C,C'>1$ and $\delta, \delta'>0$.
Therefore
$$ 
\begin{cases}
\max\{ n_1,\dots, n_{q_1+q_2}\} \leq  \max\{ \delta' , \delta C'\}  n_{q_1+q_2+1}\\
\frac{n_0}{C C'}\leq n_{q_1+q_2+1} \leq C C' n_0 \, . 
\end{cases} 
$$
This proves that $ S_{q_1}\circ S'_{q_2} $ fulfills the localization property of Definition \ref{smoothoperatormaps} $(i)$ (see \eqref{specres}). 
In addition \eqref{omoomo3} satisfies the corresponding condition \eqref{omoresti2}. Using  \eqref{triangolo3}, \eqref{hom:sest}  we get  
\begin{align*}
&\|\Pi_{n_0}S_{q_1}(\Pi_{n_1}U_1,\dots, \Pi_{n_{q_1}}U_{q_{1}}) S'_{q_2}( \Pi_{n_{q_1+1}}U_{q_1+1},\dots, \Pi_{n_{q_1+q_2}}U_{q_1+q_2})\Pi_{n_{q_1+q_2+1}}U_{q_1+q_2+1}\|_{L^2}\\
&\leq C {\rm max}\{ n_1,\dots, n_{q_1}\}^{\mu} (n')^{m}{\rm max}\{ n_{q_1+1},\dots, n_{q_1+q_2}\}^{\mu'}  n_{q_1+q_2+1}^{m'} \prod_{a=1}^{q_1+q_2+1} \| \Pi_{n_a} U_a\|_{L^2}\\ 
&\stackrel{\eqref{denaco3}}{\leq} C \max\{ n_{1},\dots, n_{q_1+q_2}\}^{\mu'+\mu}(n_{q_1+q_2+1})^{m+m'}\prod_{a=1}^{q_1+q_2+1} \| \Pi_{n_a} U_a\|_{L^2}
\end{align*}
which proves that  $ S_{q_1}\circ S'_{q_2}$ satisfies \eqref{hom:sest}.
In order to prove that the terms in \eqref{nonomo} satisfy \eqref{piovespect} we first note that, thanks to $(i)$ of Lemma \ref{omonon}, we have that $S_{q_1}(U,\dots, U) \in \mS^{m}_{K,0,q_1}[r]$ and $ S'_{q_2}(U,\dots,U) \in \mS^{m'}_{K,0,q_2}[r]$ and 
then the thesis follows using  \eqref{piovespect}.
For instance consider the first term in \eqref{nonomo3}. 
Using twice \eqref{piovespect} (with $K' = 0$) we get, for any $k= 0, \ldots,  K $, 
$$
\begin{aligned}
\|{\partial_t^k\left(S_{q_1}(U;t)S'_{q_2}(U;t)V\right)}\|_{\dot{H}^{s- \frac32 k- m_1-m_2}}  
& \leq C \sum_{k'+k''=k} \|U\|_{k',s_0}^{q_1} \|S_{q_2}'(U;t)V\|_{k'',s-m_2}  \\
&
\leq C \sum_{k'+k'' = k} \sum_{0 \leq j \leq k'' \atop j'+j''  = j}
\|U\|_{k',s_0}^{q_1} \, \| U \|_{ j', s_0}^{q_2} \, \|{V}\|_{j'',s}
\\
& \leq  C \sum_{j''=0}^k \| U\|_{k-j'',s_0}^{q_1+ q_2}   \|{V}\|_{j'',s}\\
\end{aligned}
$$
where in the last step we used that 
$ j' \leq k - j''$ and $k' \leq k - j''$.
The last line is $\eqref{piovespect}$ with  $ p $ replaced by $q_1+q_2 \geq N+1 $.

\noindent {\sc Proof of $(iii)$:} We now consider the internal composition \eqref{compomap} in the homogeneous case. 
For simplicity of notation we consider the case  $q_2=\dots= q_{p}=0$,  $ S^{(2)}=\dots= S^{(p)}=\uno$ and $ q_1= \bar q =: q$, the general case follows in the same way. 
So we need to show that 
$S(S^{(1)}(U)U,U, \ldots, U)$
is a spectrally localized map in $ \widetilde{\mS}^{m}_{p+ q}$. We first estimate
\begin{align}
&\| 
\Pi_{n_{0}} S\big(S^{(1)}(\Pi_{n_{1}}U_{1},\dots, \Pi_{n_{q}}U_{q}) \Pi_{n_{q+1}}U_{q+1} , \Pi_{n_{q+2}}U_{q+2} ,\dots, \Pi_{n_{q+p}}U_{q+p}\big)\Pi_{n_{p+q+1}}U_{p+q+1}
\|_{L^2}\label{triangolo4} \\
&\leq 
\sum_{n' }
\| 
\Pi_{n_{0}} S\big(\Pi_{n'}S^{(1)} (\Pi_{n_{1}}U_{1},\dots, \Pi_{n_{q}}U_{q}) \Pi_{n_{q+1}}U_{q+1} , \Pi_{n_{q+2}} U_{q+2},\dots, \Pi_{n_{q+p}}U_{q+p}\big)\Pi_{n_{p+q+1}}U_{p+q+1}
 \|_{L^2}. \notag
\end{align}
Thanks to the conditions \eqref{omoresti2} for $S$ and $S^{(1)}$ the indices in the above sum satisfy, for some choice of signs $\sigma_a$, $a=1,\dots, p+q+1$,  the restrictions 
$ n'= \sum_{a=1}^{q+1} \sigma_a n_a $ and $  n'= \sigma_0 n_0 + \sum_{a=q+2}^{p+q+1} \sigma_an_a $, 
proving that  $S(S^{(1)}(U)U,U, \ldots, U)$ fulfills \eqref{omoresti2}.
Moreover the condition \eqref{specres} for $S$ and $ S^{(1)}$ imply  the existence of $\delta, \delta_1>0$, $C,C_1 >1$ such that 
\be\label{doppiacondizione}
 \begin{cases}\max\{ n',n_{q+2},\dots, n_{q+p}\} \leq \delta  n_{q+p+1}\\ 
\frac{n_{0}}{C}\leq n_{p+q+1} \leq C n_0 \, , 
\end{cases}
\quad 
\begin{cases}
\max\{ n_1,\dots, n_q\} \leq \delta_1  n_{q+1}\\
\frac{n'}{C_1}\leq n_{q+1} \leq C_1 n' \, , \end{cases}
\ee
and therefore 
$ \max\{ n_1,\dots, n_{p+q}\}\leq \max( \delta \delta_1  C_1 , \delta C_1)  n_{q+p+1} 
$ and $  \frac{n_{0}}{C}\leq n_{p+q+1} \leq C n_0  $, 
proving that $S(S^{(1)}(U)U,U, \ldots, U)$ fulfills  \eqref{specres}. 
We now prove it fulfills also \eqref{hom:sest}.
 We get 
\begin{align*} 
&\| 
\Pi_{n_{0}} S(S^{(1)}(\Pi_{n_{1}}U_{1},\dots, \Pi_{n_{q}}U_{q}) \Pi_{n_{q+1}}U_{q+1} , \Pi_{n_{q+2}}U_{q+2} ,\dots, \Pi_{n_{q+p}}U_{q+p})\Pi_{n_{p+q+1}}U_{p+q+1}
\|_{L^2}\\
& 
\stackrel{\eqref{triangolo4}, \eqref{hom:sest}}{\lesssim} \sum_{n'} 
 \max(n',n_{q+2},  \ldots, n_{q+p})^\mu \, n_{p+q+1}^{m}  \, \max(n_{1}, \ldots, n_{q})^{\mu'} \, n_{q+1}^{\ell_1} \, \prod_{a=1}^{p+q+1} \| \Pi_{n_a}U_a \|_{L^2} \\
  &\stackrel{\eqref{doppiacondizione}, n_{q+1} \sim n' }{\lesssim}  
 \sum_{n' } 
 \max(n_{q+1}, n_{q+2},  \ldots, n_{q+p})^{\mu + \max(0,\ell_1)} \, n_{p+q+1}^{m}  \, \max(n_{1}, \ldots, n_{q})^{\mu'} \,  \prod_{a=1}^{p+q+1} \| \Pi_{n_a}U_a \|_{L^2} \\
 &\lesssim    \max(n_1, \ldots, n_{p+q})^{\mu+\mu'+\max(0,\ell_1)} \, n_{p+q+1}^{m} \, \prod_{a=1}^{p+q+1} \| \Pi_{n_a}U_a \|_{L^2} \ ,
\end{align*}
proving that 
$S(S^{(1)}(U)U,U, \ldots, U)$
is a spectrally localized map in $ \widetilde{\mS}^{m}_{p+q}$.
Finally $S(S^{(1)}(U)U,U, \ldots, U)$ satisfies also  \eqref{def:R-trin}, concluding the proof that it
is a spectrally localized map in $ \widetilde{\mS}^{m}_{p+q}$.

{\noindent \sc Proof of $(iv)$:}  By the estimate below \eqref{piove} for $\bM_0(U;t)$, for any $U \in B_{s_0}^K(I;r)$ and any $k=0,\dots , K-K'$, $\| \bM_0(U;t)U\|_{k,s_0} \lesssim \|U \|_{k,s_0}$. Then estimate \eqref{piovespect} for $S(\bM_0(U;t)U;t)$ for any $ 0\leq k \leq K-K'$ follows from the ones for $ S(U;t)$ arguing as in $(iii)$ of Proposition \ref{composizioniTOTALI}.

\noindent{\sc Proof of $(v)$:} It follows computing explicitly the differential $\di_U (S(U)U)[V]$, evaluating it at $ U \leadsto  S_1(U;t)U$ and $V \leadsto  S_2(U;t)U$ and using item $(ii)$ and $(iii)$ of the proposition.
\end{proof}

The following lemma proves that  the internal composition 
of a spectrally localized map with a  map, is a spectrally localized map
plus a smoothing operator whose transpose is another smoothing operator.

\begin{lemma} \label{decomposta}
Let $p \in \N$, $q \in \N_0$, $m \in \R$ and $m' \geq 0$.
Let $S(U) $ be a matrix of spectrally localized homogeneous maps  in 
$ \widetilde\mS_p^m \otimes \cM_2(\C)$ and $ M(U) $ be a matrix of 
homogeneous $m'$-operators in $  \widetilde \mM_q^{m'}\otimes \cM_2(\C)$. Then  
\be \label{decompo}
S(M(U)U, U, \dots , U )= S'(U) + R(U) 
\ee
where 
\begin{itemize}
\item 
$S'(U)$  is a matrix of spectrally localized homogeneous maps   in 
$\widetilde\mS_{p+q}^m \otimes \cM_2(\C)$;
\item  $R(U)$ is a matrix of homogeneous smoothing operators in $ \widetilde \mR^{-\vr}_{p+q} \otimes \cM_2(\C)$ for any $ \vr \geq 0 $, 
and $ R(U)^\top$ is  a matrix of homogeneous smoothing operators 
  in $ \widetilde \mR^{-\vr}_{p+q}\otimes \cM_2(\C)$ for any $\varrho \geq 0$ as well. 
  \end{itemize} 
\end{lemma}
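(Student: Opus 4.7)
The plan is to decompose $S(M(U)U, U, \ldots, U)$ via a smooth frequency cutoff that isolates the regime where the internal frequencies of $M(U)U$ are dominated by the external frequency $n_{p+q+1}$ (producing the spectrally localized piece $S'(U)$) from the complementary regime, where the momentum condition for $M$ together with the spectral condition \eqref{specres} for $S$ will force the top two internal frequencies to be comparable (producing the smoothing remainder $R(U)$). The additional property $R(U)^\top$ smoothing will then follow from the fact that the spectral relation $n_0\sim n_{p+q+1}$, inherited from $S$, realizes the hypothesis of Lemma \ref{primecosette}.

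Concretely, working on Fourier coefficients as in \eqref{smoocara0}--\eqref{M.coeff}, I would insert $\sum_{n'} \Pi_{n'}$ between $S$ and $M(U)U$. Translation invariance \eqref{momentoj} identifies $n'=|\sigma_1 n_1+\cdots+\sigma_{q+1} n_{q+1}|$, and \eqref{specres} for $S$ forces $\max(n',n_{q+2},\ldots,n_{q+p})\le \delta n_{p+q+1}$ together with $n_0\sim n_{p+q+1}$. I would then fix a constant $\Lambda\ge 2\delta(q+2)$ and a symmetric smooth cutoff $\chi_{\le\Lambda}$ equal to $1$ when $\max(n_1,\ldots,n_{q+1})\le \Lambda\, n_{p+q+1}$ and vanishing when $\max(n_1,\ldots,n_{q+1})\ge 2\Lambda\, n_{p+q+1}$, and set, at the level of Fourier coefficients,
$$
S'(U):=\chi_{\le\Lambda}\cdot S(M(U)U,U,\ldots,U)\,,\qquad R(U):=(1-\chi_{\le\Lambda})\cdot S(M(U)U,U,\ldots,U)\,.
$$

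On the support of $S'$, all internal frequencies are controlled by a constant multiple of $n_{p+q+1}$, and combining the $m'$-operator estimate \eqref{omomap} for $M$ (using $m'\ge 0$) with the spectrally localized bound \eqref{hom:sest} for $S$ would yield
$|(S')^{\vec\sigma,\sigma',\sigma}_{\vec n, n_{p+q+1}, n_0}|\lesssim \max(n_1,\ldots,n_{p+q})^{\mu+\mu'+m'}\, n_{p+q+1}^{m}$, so that the Fourier characterization \eqref{smoocaraspec}--\eqref{smoocaraspecres} of Lemma \ref{carasmoofou} places $S'(U)$ in $\wtcS_{p+q}^{m}\otimes \cM_2(\C)$. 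On the support of $R$ instead, $\max(n_1,\ldots,n_{q+1})>\Lambda n_{p+q+1}$ while $n'\le \delta n_{p+q+1}$: writing $n_{i_\star}:=\max(n_1,\ldots,n_{q+1})$ and using the momentum relation for $M$, I would get $\sum_{i\ne i_\star}n_i\ge n_{i_\star}(1-\delta/\Lambda)\ge n_{i_\star}/2$, hence $\max_2(n_1,\ldots,n_{q+1})\sim \max(n_1,\ldots,n_{q+1})$. Since \eqref{specres} already gives $n_{q+2},\ldots,n_{q+p}\le \delta n_{p+q+1}<\max(n_1,\ldots,n_{q+1})$, this promotes the equivalence to $\max_2(n_1,\ldots,n_{p+q+1})\sim \max(n_1,\ldots,n_{p+q+1})$; together with the polynomial bound $|R^{\vec\sigma,\sigma',\sigma}_{\vec n, n_{p+q+1}, n_0}|\lesssim \max(n_1,\ldots,n_{p+q+1})^{\mu+\mu'+m+m'}$ this will satisfy \eqref{hom:rest} with any $\varrho\ge 0$, placing $R(U)$ in $\wtcR_{p+q}^{-\varrho}\otimes \cM_2(\C)$.

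Finally, both $S'$ and $R$ inherit from $S$ the relation $n_0\sim n_{p+q+1}$, i.e.\ the spectral hypothesis \eqref{spec.cond.t} of Lemma \ref{primecosette}, so by Remark \ref{rem:top.smoo} the transpose $R(U)^\top$ is again smoothing of arbitrary order. The main obstacle I expect is in the third paragraph: checking that the cancellation among the top internal frequencies of $M$ is genuinely forced by the combination of the momentum condition for $M$ and the spectral condition for $S$, with a uniform choice of $\Lambda$, so that the frequency cutoff indeed produces bona fide operators in the required classes and promotes the smoothing order of $R$ (and hence, via the $n_0\sim n_{p+q+1}$ relation, also of $R^\top$) to arbitrary $\varrho$.
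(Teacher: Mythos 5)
Your proposal is correct and follows essentially the same route as the paper's proof: a Fourier-side splitting of $S(M(U)U,U,\ldots,U)$ according to whether $\max(n_1,\ldots,n_{q+1})$ is controlled by $n_{p+q+1}$ (the paper uses a sharp threshold $\delta'>\delta$ where you use a cutoff at $\Lambda$, which is immaterial), the same use of the momentum relation for $M$ to force $\max_2\sim\max$ on the complementary region, and the same transpose argument via the inherited relation $n_0\sim n_{p+q+1}$ together with Lemma \ref{primecosette} and Remark \ref{rem:top.smoo}.
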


\begin{proof}
By multilinearity    we expand 
\begin{align}
&S\big(M(U_1,\dots,  U_q) U_{q+1},  U_{q+2}, \dots , U_{p+q} \big) \label{SMUU} \\
&= \!\!\!\!\!\! \sum_{(n',\, n_0,\,  \dots, n_{p+q+1}) 
\in \mN} \!\!\!\!\!\!
\Pi_{n_0}S\big(\Pi_{n'}M(\Pi_{n_1} U_1,\dots, \Pi_{n_q} U_q)\Pi_{n_{q+1}} U_{q+1}, \Pi_{n_{q+2}} U_{q+2}, \dots , \Pi_{n_{p+q}}U_{p+q} \big) \Pi_{n_{p+q+1}}  \notag
\end{align}
where  $ \mN$ is a subset of $\N^{ p+q+3}$ made by indexes fulfilling, by  restrictions \eqref{specres} and \eqref{omoresti2}, the following conditions: there exist 
signs $\{ \sigma_j, \epsilon_j, \epsilon' \} \subset \{ \pm  \} $
and constants  $\delta >0$, $ C> 1$ such that 
\be \label{enne}
\begin{cases} 
 \max(n',n_{q+2}, \dots,  n_{p+q})\leq \delta n_{p+q+1} &(a)\\
  \quad C^{-1} n_0 \leq  n_{p+q+1}\leq C n_0  &(b)\\ 
 n'= \sum_{j=1}^{q+1} \sigma_j n_j, \quad n_0= \epsilon' n'+  \sum_{j=q+2}^{p+q+1} \epsilon_j n_j \, &(c) \ . 
 \end{cases}  
\ee 
We fix $ \delta'>\delta$ and we denote by $\mN'$ the subset of $\mN$ made by indices which satisfy the  additional restriction
\be \label{additional}
\max(n_1, \dots,  n_{q+1})\leq \delta' n_{p+q+1} \, . 
\ee
Then we define 
\begin{align}
&S'(U_1, \dots, U_{p+q}) \label{DEFSUP}\\
&:= \!\!\!\!\!\!\!\! \sum_{(n',\, n_0,\,  \dots, n_{p+q+1})\in \mN'}
\!\!\!\!\!\!\!\! \Pi_{n_0}
S\big(\Pi_{n'}M(\Pi_{n_1} U_1,\dots, \Pi_{n_q} U_q)\Pi_{n_{q+1}} U_{q+1}, \Pi_{n_{q+2}} U_{q+2}, \dots , \Pi_{n_{p+q}}U_{p+q} \big) \Pi_{n_{p+q+1}}. \notag
\end{align}
By \eqref{enne} and \eqref{additional} one has that $S'(U)$ fulfills 
the spectral condition 
$n_0 \sim n_{p+q+1}$, $ \max(n_1, \ldots, n_{p+q}) \leq \delta' n_{p+q+1}$
which is the condition \eqref{specres}.
Moreover using \eqref{hom:sest} and \eqref{omomap} we  bound
\begin{align}
\notag
\Big\| 
\Pi_{n_0}
S\big(\Pi_{n'}M(\Pi_{n_1} U_1,\dots, \Pi_{n_q} U_q)\Pi_{n_{q+1}} U_{q+1}, \Pi_{n_{q+2}} U_{q+2}, \dots , \Pi_{n_{p+q}}U_{p+q} \big) \Pi_{n_{p+q+1}}U_{p+q+q} \Big\|_{L^2}\\
\label{Senne3}
\lesssim 
\max(n_1, \ldots, n_{p+q})^{\mu+ m'}  \, n_{p+q+1}^m \, \prod_{a=1}^{p+q+1} \norm{\Pi_{n_a} U_a}_{L^2}
\end{align}
for some $\mu >0$.
Finally by \eqref{enne}$(c)$ one has also that \eqref{omoresti2} holds.
One checks that also  \eqref{def:R-trin} holds true. 
We have proved that $S'(U)$ is a matrix of spectrally localized maps in 
$\wt \cS^m_{p+q}\otimes \cM_2(\C)$.

Then, recalling \eqref{SMUU} and \eqref{DEFSUP}, we define 
\begin{align}
&R(U_1,\dots, U_{p+q}) \label{defRMA}\\
&:= \!\!\!\! \!\!\!\! \sum_{(n',\, n_0,\,  \dots, n_{p+q+1})\in \mN\setminus \mN'}
\!\!\!\!\!\!\!\!\!\! \! \! 
\Pi_{n_0}S\big(\Pi_{n'}M(\Pi_{n_1} U_1,\dots, \Pi_{n_q} U_q)\Pi_{n_{q+1}} U_{q+1}, \Pi_{n_{q+2}} U_{q+2}, \dots , \Pi_{n_{p+q}}U_{p+q} \big) \Pi_{n_{p+q+1}}. \notag 
\end{align}
We claim that  there is $ C'>0$ such that  
 if $ (n',\, n_0,\,  \dots, n_{p+q+1})\in \mN\setminus \mN'$ then 
\be \label{smotrasmo}
\begin{cases}
\max(n_{q+2}, \dots,  n_{p+q})\leq \delta' n_{p+q+1}& (a)\\
 C^{-1} n_0\leq n_{p+q+1} \leq C n_0&(b)\\
n_{p+q+1} \leq C' \max_2( n_1, \dots, n_{q+1})&(c)\\ 
 \max( n_1, \dots, n_{q+1})\leq C' \max_2( n_1, \dots, n_{q+1})  &(d)
 \, . 
\end{cases} 
\ee
Before proving  \eqref{smotrasmo} we note that it implies 
$$
\begin{aligned}
\max(n_{1}, \dots,  n_{p+q+1})&\stackrel{(a)}{\leq } (1+\delta') \max(n_1, \dots, n_{q+1}, n_{p+q+1})\\
&\stackrel{(c)+(d)}{\leq} (1+\delta')C'  {\rm max}_2( n_1, \dots, n_{q+1})
\leq (1+\delta') C' {\rm max}_2( n_1, \dots, n_{p+q+1})  
\end{aligned}
$$
proving that 
$ \max(n_{1}, \dots,  n_{p+q+1}) \sim {\rm max}_2( n_1, \dots, n_{p+q+1})$. 
Then by \eqref{defRMA}  and \eqref{Senne3} we obtain
\begin{align*}
& \norm{\Pi_{n_0} R(\Pi_{n_1}U_1, \ldots, \Pi_{n_{p+q}} U_{p+q}) 
\Pi_{n_{p+q+1}} U_{p+q+1}}_{L^2}
\lesssim
\max(n_1, \ldots, n_{p+q+1})^{\wt \mu} \, \prod_{a=1}^{p+q+1} \norm{\Pi_{n_a} U_a}_{L^2}\\
& \lesssim
\frac{\max_2(n_1, \ldots, n_{p+q+1})^{\wt \mu + \varrho}}{\max(n_1, \ldots, n_{p+q+1})^{ \varrho}} \prod_{a=1}^{p+q+1}\norm{\Pi_{n_a} U_a}_{L^2} \qquad
\text{with} \qquad \wt \mu = \mu + m' + \max (m,0) \, ,  
\end{align*}
showing that 
$ R(U)$ is a $(p+q)$--homogeneous smoothing operator in $ \wtcR^{-\vr}_{p+q}\otimes \cM_2(\C)$ for any $\vr \geq 0$.

We now prove \eqref{smotrasmo}. Note that  $ (a)$ and $ (b)$ of \eqref{smotrasmo} follow by $(a)$ and $(b)$ of \eqref{enne} and $ \delta'>\delta$. Then note that if $ (n',\, n_0,\,  \dots, n_{p+q+1})\in \mN\setminus \mN'$ then 
\begin{equation}
\label{enne2}
\max(n_1, \dots,  n_{q+1}) > \delta' n_{p+q+1} \, .
\end{equation}
Then, by $(c)$  of \eqref{enne}, one has 
\begin{equation}
\label{enne3}
\max(n_1, \ldots, n_{q+1}) 
= \epsilon' n' + \sum_{n_j \leq {\rm \max}_2(n_1, \ldots, n_{q+1}) }\epsilon_j n_j 
\leq  n'+ q {\rm \max}_2(n_1, \ldots, n_{q+1}) 
\end{equation}
so that 
${\rm max}_2(n_1, \ldots, n_{q+1}) \geq \frac{1}{q} \big( \max(n_1, \ldots, n_{q+1}) - n' \big)$.
We deduce 
using \eqref{enne2} and \eqref{enne}$(a)$  that 
\begin{equation}
\label{enne4}
{\rm \max}_2(n_1, \ldots, n_{q+1}) \geq \frac{\delta'-\delta}{q} \ n_{p+q+1}
\end{equation}
thus proving \eqref{smotrasmo} $(c)$.
Finally using   \eqref{enne3}, \eqref{enne} (a) and \eqref{enne4}  we get 
$$
 \max(n_1, \dots,  n_{q+1})\leq q \big(\frac{\delta}{\delta'-\delta}+1\big) {\rm max}_2(n_1, \dots,  n_{q+1})
$$
which proves $(d)$ of \eqref{smotrasmo}.

We finally prove that $ R(U)^\top$ is a smoothing operator. 
First note that if $ \Pi_{n_0} R(\Pi_{\vec n} \cU)^\top \Pi_{n_{p+q+1}}\neq 0$ then $ \Pi_{n_{p+q+1}} R(\Pi_{\vec n} \cU) \Pi_{n_0}\neq 0$, which implies that 
$ C^{-1} n_{p+q+1}\leq n_{0} \leq C n_{p+q+1}$ by \eqref{smotrasmo} $(b)$. 
Then $R(U)^\top$ is a smoothing operator in $ \wtcR^{-\vr}_{p+q}\otimes \cM_2(\C)$ 
for any $\varrho \geq 0$ by Lemma \ref{primecosette} and Remark \ref{rem:top.smoo}.
\end{proof}

We finally prove  the following  lemma 
which generalizes a result in  \cite{FI3}  for paradifferential operators: 
the  transpose of the internal differential of a spectrally 
localized map is a smoothing operator (with two equivalent frequencies).

\begin{lemma}\label{aggiunto}
Let $p \in \N $ and $m \in \R$.
Given  a matrix of spectrally localized $p$--homogeneous maps 
$ S(U)  \in  \wtcS_p^m \otimes \mM_2 (\C) $,  consider 
\be\label{effonetra}
V \mapsto 
L(U) V := \frac{1}{p} \di_U S(U)[V] U =  S(V, U,\dots, U) U \, \, .
\ee
 Then the transposed map $ L(U)^\top$ is a matrix of smoothing operators in 
 $ \widetilde{\mR}^{-\vr}_p \otimes \mM_2 (\C)$ for any $\varrho \geq 0$. 
\end{lemma}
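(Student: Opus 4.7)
The plan is to read off the spectral support of $L(U)$ from that of $S$, then to transfer this information to $L(U)^\top$ via the identity \eqref{emmetop}, and finally to invoke the structural remark that any homogeneous $m$-operator whose support satisfies $\max_2\sim\max$ is a smoothing operator in $\widetilde{\mathcal R}^{-\varrho}_p$ for every $\varrho\geq 0$.

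First I symmetrize: since the only asymmetry among the $U$-arguments in $S(V,U,\dots,U)U$ is which copy of $U$ sits in the external slot, one has
\[
L(U_1,\dots,U_p)V = \frac{1}{p}\sum_{i=1}^p S(V, U_1,\dots,\widehat{U}_i,\dots,U_p)\, U_i.
\]
By \eqref{specres} applied in the $i$-th summand (with $U_i$ as external input of $S$) that summand vanishes unless
\[
\max(n_V, n_1,\dots,\widehat n_i,\dots,n_p)\leq \delta\, n_i,\qquad n_0\sim n_i,
\]
where $n_V$ is the frequency of $V$, $n_l$ that of $U_l$, and $n_0$ the output frequency. Summing over $i$, on the support of $L(U)$ there exists $i^\ast$ with $n_{i^\ast}=\max_l n_l$ such that $n_V\lesssim n_{i^\ast}$ and $n_0\sim \max_l n_l$. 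Combined with the coefficient bound \eqref{smoocaraspec} for $S$, this shows $L(U)\in \widetilde{\mathcal M}^{m}_p\otimes\mathcal M_2(\mathbb C)$; Lemma \ref{primecosette} then gives $L(U)^\top\in\widetilde{\mathcal M}^{m'}_p\otimes\mathcal M_2(\mathbb C)$ for some $m'\geq \max(m,0)$, while the real-to-real structure is inherited from that of $S$ through \eqref{M.realtoreal}.

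Next I analyze the spectral support of $L(U)^\top$. The identity \eqref{emmetop} reads $(L^\top)_{\vec j_p, j, k}^{\vec \sigma_p, \sigma', \sigma}=L_{\vec j_p,-k,-j}^{\vec \sigma_p, \sigma, \sigma'}$, so a nonzero coefficient of $L^\top$ at input frequency $j$ and output frequency $k$ corresponds to a coefficient of $L$ in which $|k|$ plays the role of $n_V$ and $|j|$ that of $n_0$. The first step therefore forces
\[
|k|\lesssim \max(|j_1|,\dots,|j_p|),\qquad |j|\sim\max(|j_1|,\dots,|j_p|),
\]
so that two among the $p+1$ indices $(|j_1|,\dots,|j_p|,|j|)$ attain comparable maximal size and
\[
\max_2(|j_1|,\dots,|j_p|,|j|)\sim\max(|j_1|,\dots,|j_p|,|j|)
\]
on the support of $L^\top$. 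Since $L(U)^\top$ is a matrix of homogeneous $m'$-operators whose support satisfies $\max_2\sim\max$, the structural observation stated right after Definition \ref{omosmoothing} yields $L(U)^\top \in \widetilde{\mathcal R}^{-\varrho}_p\otimes \mathcal M_2(\mathbb C)$ for every $\varrho\geq 0$, which is the thesis. The main conceptual step is the symmetrized spectral analysis of $L(U)$: the argument $V$ of $L$ must always enter $S$ as an \emph{internal} input because the external slot is consumed by a copy of $U$, and this is precisely what produces two comparable top frequencies after transposition, hence the arbitrary smoothing order.
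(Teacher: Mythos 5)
Your proof is correct and follows essentially the same route as the paper's: both hinge on the Fourier identity \eqref{emmetop} together with the observation that the $V$-slot of $L$ enters $S$ as an \emph{internal} argument (the external slot being a copy of $U$), so that on the support of $L(U)^\top$ one has $\max_2\sim\max$, after which the arbitrary smoothing order follows (the paper bounds the coefficients directly and invokes Lemma \ref{carasmoofou}, while you route the same facts through Lemma \ref{primecosette} and the remark after Definition \ref{omosmoothing}). The only nitpick is that, since $\delta$ in \eqref{specres} is not assumed smaller than $1$, your $i^\ast$ only satisfies $n_{i^\ast}\sim\max_l n_l$ rather than equality, which does not affect the conclusions you draw from it.
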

\begin{proof}
We first note that the matrix entries $L(U)^\top$ defined as in \eqref{M.coeff} are 
\begin{align}
[L^\top]^{\vec \sigma_p,\sigma',\sigma}_{\vec \jmath_p,j,k}\stackrel{\eqref{emmetop}}{=} &L^{\vec \sigma_p,\sigma,\sigma'}_{\vec \jmath_p,-k,-j}\notag\\
\stackrel{\eqref{M.coeff}}{=}& \int_{\T} L\Big( \mathtt{q}^{\sigma_1} \frac{e^{\ii \sigma_1 j_1 x}}{\sqrt{2\pi}}, \dots,\mathtt{q}^{\sigma_p} \frac{e^{\ii \sigma_{p} j_{p} x}}{\sqrt{2\pi}}\Big)
\Big[ \frac{\mathtt{q}^{\sigma} e^{-\ii \sigma k x}}{\sqrt{2\pi}} \Big]  \cdot  \mathtt{q}^{\sigma'}  \frac{e^{\ii \sigma'j x}}{\sqrt{2\pi}}\, \di x\notag\\
\stackrel{\eqref{effonetra}}{=}&\int_{\T} S\Big( \mathtt{q}^{\sigma} \frac{e^{-\ii \sigma k x}}{\sqrt{2\pi}},\mathtt{q}^{\sigma_1} \frac{e^{\ii \sigma_1 j_1 x}}{\sqrt{2\pi}} \dots,\mathtt{q}^{\sigma_{p-1}} \frac{e^{\ii \sigma_{p-1} j_{p-1} x}}{\sqrt{2\pi}}\Big)
\Big[ \frac{\mathtt{q}^{\sigma_p} e^{\ii \sigma_p j_p x}}{\sqrt{2\pi}} \Big]  \cdot  \mathtt{q}^{\sigma'}  \frac{e^{\ii \sigma'j x}}{\sqrt{2\pi}}\, \di x\notag\\
 \stackrel{\eqref{M.coeff}}{=}& S^{\sigma,\sigma_1,\dots, \sigma_{p-1},\sigma_p,\sigma'}_{-k,j_1,\dots, j_{p-1},j_p,-j}\,\label{Essonefourier}
\end{align}
which, by \eqref{smoocaraspecres}, are different from zero only if  
\be\label{altrospec}
{\rm max}\{ |k|,|j_1|,\dots, | j_{p-1}|\}\leq \delta |j_p| \, ,
\quad C^{-1} |j|\leq |j_p|\leq C |j| \, .
\ee
The restriction \eqref{altrospec} implies that 
\be\label{segunda}
\max(|j_1|, \dots,|j_p|,| j|)\sim  {\rm max}_2(|j_1|, \dots,|j_p|,| j|)
\ee 
because 
$$
\begin{aligned}
 \max(|j_1|, \dots,|j_p|,| j|) 
 & \leq (1+\delta)\max(|j_p|,|j|) \\
 &   \leq C(1+\delta)  {\rm max}_2(|j_p|,|j|)
  \leq C(1+\delta){\rm max}_{2} (|j_1|, \dots,|j_p|,| j|) \, .
 \end{aligned}
$$
Finally by \eqref{Essonefourier}, we estimate
\begin{align*}
\big| [L^\top]^{\vec \sigma_p,\sigma',\sigma}_{\vec \jmath_p,j,k} \big|= 
\big| S^{\sigma,\sigma_1,\dots, \sigma_{p-1},\sigma_p,\sigma'}_{-k,j_1,\dots, j_{p-1},j_p,-j}
\big|
&\stackrel{\eqref{smoocaraspec}}{\leq} C 
{\rm max}\{ |k|,|j_1|,\dots, | j_{p-1}|\}^{\mu}|j_p|^{m}\\
& \stackrel{\eqref{altrospec}}{\leq} C' {\rm max}\{|j_1|,\dots, | j_{p}|,|j|\}^{\mu+\max(m,0)}\\
& \stackrel{\eqref{segunda}}{\leq} C'' \frac{{\rm max}_2\{|j_1|,\dots, | j_{p}|,|j|\}^{\mu+\max(m,0)+\vr}}{{\rm max}\{|j_1|,\dots, | j_{p}|,|j|\}^{\vr}}
\end{align*}
implying, in view of Lemma \ref{carasmoofou} (with $m\leadsto -\vr$ and $\mu \leadsto \mu+\max(m,0)+\vr$), that $L(U)^\top$ is a matrix of smoothing operators in $\wtcR^{-\vr}_p\otimes \mM_2(\C)$ for any $\varrho \geq 0$.
\end{proof}

We conclude this section with a lemma which shall be used in Section \ref{sec:LHam}.

\begin{lemma}\label{lem:ide} 
Let $p\in \N$ and $\varrho \geq 0$. 
Let $S(U)$  be a matrix of $ p$-homogeneous 
spectrally localized maps in $  \wtcS_{p}\otimes \mM_2(\C) $
  of the form 
\be \label{decospesmo}
S(U)= L(U)+ R(U) \, , 
\ee
where $ L (U)^\top$  and   $R(U)$ are  matrices 
of $p$-homogeneous smoothing 
operators  in $ \widetilde 
{\cal R}^{-\vr}_{p}\otimes \mM_2(\C)$. 
Then  $ S(U)$ is a matrix of $p$--homogeneous smoothing 
operators in $ \widetilde 
{\cal R}^{-\vr}_{p}\otimes \mM_2(\C)$.
\end{lemma}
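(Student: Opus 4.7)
The plan is to work at the level of Fourier coefficients via Lemma \ref{carasmoofou} and combine the smoothing information on $L^\top$ and $R$ by exploiting the spectral localization of $S$. For $R$ a matrix of smoothing operators in $\wt\cR^{-\vr}_p \otimes \cM_2(\C)$, Lemma \ref{carasmoofou} applied with $m = -\vr$ gives
\[
\big|R_{\vec\jmath_p, j, k}^{\vec\sigma_p, \sigma', \sigma}\big| \lesssim \frac{\max_2(|j_1|, \ldots, |j_p|, |j|)^{\mu}}{\max(|j_1|, \ldots, |j_p|, |j|)^{\vr}} \, ,
\]
which is exactly the form of the smoothing bound I aim to establish for the coefficients of $S$.

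The nontrivial point is to produce the analogous bound for $L = S - R$. The only information available is that $L^\top \in \wt\cR^{-\vr}_p \otimes \cM_2(\C)$, whose coefficients satisfy the same type of estimate. By the transposition identity \eqref{emmetop}, $(L^\top)_{\vec\jmath_p, j, k}^{\vec\sigma_p, \sigma', \sigma} = L_{\vec\jmath_p, -k, -j}^{\vec\sigma_p, \sigma, \sigma'}$; renaming the indices $J := -k$ and $K := -j$, this translates into
\[
\big|L_{\vec\jmath_p, J, K}^{\vec\sigma_p, \sigma', \sigma}\big| \lesssim \frac{\max_2(|j_1|, \ldots, |j_p|, |K|)^{\mu}}{\max(|j_1|, \ldots, |j_p|, |K|)^{\vr}} \, ,
\]
so that the coefficients of $L$ are controlled in terms of the output frequency $|K|$ rather than the input frequency $|J|$.

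Here the spectral localization of $S$ does the bridging work. Since $S \in \wt\cS_p \otimes \cM_2(\C)$, whenever the coefficient $S_{\vec\jmath_p, J, K}^{\vec\sigma_p, \sigma', \sigma}$ is non-zero the spectral condition \eqref{smoocaraspecres} forces $C^{-1}|J| \leq |K| \leq C|J|$. A short case analysis on the decreasing reorderings of $(|j_1|, \ldots, |j_p|, |J|)$ and $(|j_1|, \ldots, |j_p|, |K|)$ then yields the equivalences $\max(|j_1|, \ldots, |j_p|, |K|) \sim \max(|j_1|, \ldots, |j_p|, |J|)$ and $\max_2(|j_1|, \ldots, |j_p|, |K|) \lesssim \max_2(|j_1|, \ldots, |j_p|, |J|)$. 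Substituting in the bound on $|L_{\vec\jmath_p, J, K}|$ and adding the bound on $|R_{\vec\jmath_p, J, K}|$, the decomposition $S = L + R$ yields on the support of $S$
\[
\big|S_{\vec\jmath_p, J, K}^{\vec\sigma_p, \sigma', \sigma}\big| \lesssim \frac{\max_2(|j_1|, \ldots, |j_p|, |J|)^{\mu}}{\max(|j_1|, \ldots, |j_p|, |J|)^{\vr}} \, ,
\]
while off the support of $S$ the estimate is vacuous.

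Finally, the momentum restriction \eqref{momentoj}, the internal symmetry \eqref{M.coeff.p} and the real-to-real relation \eqref{M.realtoreal} required by Lemma \ref{carasmoofou} are all inherited by $S$ from its membership in $\wt\cS_p \otimes \cM_2(\C) \subset \wt\cM_p \otimes \cM_2(\C)$. Applying Lemma \ref{carasmoofou} in the reverse direction with $m = -\vr$ concludes that $S \in \wt\cR^{-\vr}_p \otimes \cM_2(\C)$. The only mildly technical step is the combinatorial comparison of the two $\max_2$'s under $|J| \sim |K|$; otherwise the argument lives purely at the level of sizes of Fourier multipliers and involves no loss of derivatives, so the smoothing index $\vr$ in the conclusion is exactly the one supplied by the hypotheses on $L^\top$ and $R$.
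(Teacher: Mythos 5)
Your argument is correct and coincides with the paper's own proof: both bound the Fourier coefficients of $L$ via the transposition identity \eqref{emmetop} and the smoothing estimate \eqref{smoocara} for $L^\top$ (yielding control in terms of the output frequency $|k|$), then use the spectral localization $|j|\sim|k|$ of $S$ on its support to convert this into the standard smoothing bound and add the estimate for $R$. Your explicit verification of the $\max$ and $\max_2$ comparisons under $|j|\sim|k|$ is just a spelled-out version of the step the paper labels $\stackrel{|j|\sim|k|}{\lesssim}$, so no new idea is involved.
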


\begin{proof}
In view of 
\eqref{emmetop} 
and \eqref{smoocara} (for $L(U)^\top \in \wt \cR^{-\varrho}_p \otimes \cM_2(\C)$), 
\be\label{L.t.s}
|  L_{\vec \jmath_p, j, k}^{\vec \sigma_p, \sigma', \sigma} |
= 
|  (L^\top)_{\vec \jmath_p, -k, -j}^{\vec \sigma_p, \sigma, \sigma'} | 
\leq C 
{\rm max}_2\{ |j_1|,\dots, | j_p|,|k|\}^{\mu}\max\{ |j_1|,\dots, | j_p|,|k|\}^{-\varrho}   \ .
\ee
Since $ S(U) $ is spectrally localized, 
its non zero Fourier coefficients 
$ S_{\vec \jmath_p, j, k}^{\vec \sigma_p, \sigma', \sigma}$ (defined as in \eqref{M.coeff}) 
satisfy  $ |j| \sim | k | $ (see \eqref{smoocaraspecres})
and,
in view of \eqref{decospesmo},
\begin{align*}
|  S_{\vec \jmath_p, j, k}^{\vec \sigma_p, \sigma', \sigma} | 
\leq |  L_{\vec \jmath_p, j, k}^{\vec \sigma_p, \sigma', \sigma} |  + 
|  R_{\vec \jmath_p, j, k}^{\vec \sigma_p, \sigma', \sigma} | 
& \stackrel{\eqref{L.t.s}, \eqref{smoocara}}{\lesssim}  
\frac{{\rm max}_2\{ |j_1|,\dots, | j_p|,|k|\}^{\mu}}{\max\{ |j_1|,\dots, | j_p|,|k|\}^{\varrho}}  
+
\frac{{\rm max}_2\{ |j_1|,\dots, | j_p|,|j|\}^{\mu}}{\max\{ |j_1|,\dots, | j_p|,|j|\}^{\varrho}}  \\
& \stackrel{|j| \sim |k|}{\lesssim}
\frac{{\rm max}_2\{ |j_1|,\dots, | j_p|,|j|\}^{\mu}}{\max\{ |j_1|,\dots, | j_p|,|j|\}^{\varrho}}
\end{align*}
proving that $S(U)$ is a smoothing operator in $ \widetilde 
{\cal R}^{-\vr}_{p}\otimes \mM_2(\C)$.
\end{proof}

\subsection{Approximate inverse of  non--linear maps 
and flows}\label{sec:app}

In this section we construct an approximate version of two fundamental non--linear operations that we will need: the inverse of a non--linear map and the flow generated by a non--linear vector field. We first provide 
the definition of an approximate inverse of a map, up to homogeneity $N$.

\begin{definition}\label{inversoapp:def}
{\bf (Approximate inverse up to homogeneity $ N $)}
Let $ p, N \in \N $ with  $ p \leq N $. Consider 
\be\label{phidicuinv}
\Psi_{\leq N}(U)= U+ M_{\leq N}(U)U  \quad \text{where} \quad M_{\leq N}(U)  \in  \Sigma_p^N \wtcM_q \otimes \mM_2 (\C) 
\ee
is a matrix of  pluri--homogeneous   operators. We say that 
\be\label{LTCPhi}
\Phi_{\leq N}(V)=  V+ \breve M_{\leq N}(V)V  \quad \text{where} \quad
\breve M_{\leq N}(V) \in \Sigma_p^N \wtcM_q\otimes \mM_2 (\C)  \, , 
\ee
is an approximate inverse of $\Psi_{\leq N} (U) $
 up to homogeneity $ N $ if 
 \be \label{omogeneaaaa2}
\begin{aligned}
&\big(\uno + M_{\leq N}(\Phi_{{\leq N}}(V))\big)\big( \uno+ \breve M_{\leq N}(V)\big)=\uno +M'_{>N}(V)\\
&\big(\uno + \breve M_{\leq N}(\Psi_{{\leq N}}(U))\big)\big( \uno+ M_{\leq N}(U)\big)=\uno +M''_{>N}(U)\\
\end{aligned}
\ee
where $ M'_{>N} (V)$ and $ M''_{>N} (U)$   are pluri--homogeneous matrices of
operators  in $ \Sigma_{N+1} \wtcM_q\otimes \mM_2 (\C) $.
\end{definition}

Note that, if $ \Phi_{\leq N}(V)$ is an approximate inverse  up to homogeneity $N$ of $\Psi_{\leq N}(U)$ then 
\be\label{invmappe}
\Psi_{\leq N} \circ \Phi_{{\leq N}}(V) = V +  M'_{>N}(V) V
\, , \quad  \Phi_{\leq N} \circ \Psi_{{\leq N}}(U)= U+ M''_{>N}(U) U \, , 
\ee
and, by differentiation and taking the transpose
\be \label{diinv2}
\begin{aligned}
&\di_U \Psi_{\leq N}( \Phi_{{\leq N}}(V))\di_V \Phi_{{\leq N}}(V)= \uno + M^1_{>N}(V)\\
&\di_V \Phi_{{\leq N}}( \Psi_{\leq N}(U)) \di_U \Psi_{\leq N}(U)= \uno+ M^2_{>N}(U)
\, \\
& \di_V \Phi_{\leq N}(V)^\top \, \di_U \Psi_{\leq N}(\Phi_{\leq N}(V))^\top =
\uno + M^3_{>N}(V) \\
& \di_U \Psi_{\leq N}(\Phi_{\leq N}(V))^\top \, \di_V \Phi_{\leq N}(V)^\top =
\uno + M^4_{>N}(V) 
\end{aligned}
\ee
where $ M^a_{>N}(V)$, $a = 1, \ldots, 4$
 are other 
pluri-homogeneous operators  in $ \Sigma_{N+1}  \wtcM_q\otimes \mM_2 (\C) $ (the differential of a homogeneous $ m$-operator is a homogeneous $m$-operator by the first remark after  Definition \ref{Def:Maps}, so is its transpose by Lemma \ref{primecosette}).

The following lemma ensures the existence of an approximate inverse.

\begin{lemma}
\label{inversoapp}
{\bf (Approximate inverse)}
Let $p,N\in \N $ with   $ p \leq N $.  Consider $\Psi_{\leq N}(U)= U+ M_{\leq N}(U)U$ as in 
\eqref{phidicuinv}. 
Then there exists an approximate inverse of $\Psi_{\leq N}(U)$ up to homogeneity $ N $ 
(according to Definition \ref{inversoapp:def}) of the form \eqref{LTCPhi} with 
\be \label{menoinverso}
\cP_{p}[\breve M_{\leq N}(V)]= - \cP_p[ M_{\leq N}(V)] \, . 
\ee
Moreover,  if  $M_{\leq N}(U)$ in \eqref{phidicuinv} is  a matrix of pluri--homogeneous 
\begin{itemize}
\item[(i)]   
spectrally localized maps in $ \Sigma_p^N \wtcS_q^m\otimes \mM_2 (\C) $, $ m \geq 0 $,  then   $ \breve M_{\leq N}$ in \eqref{LTCPhi} is a matrix of pluri-homogeneous spectrally localized maps in $  \Sigma_p^N \wtcS_q^{m(N-p+1)}\otimes \mM_2 (\C) $ 
and $M'_{>N}$, $M''_{>N}$ in \eqref{omogeneaaaa2}  belong to  
$ \Sigma_{N+1}  \wtcS_q^{m(N-p+2)}\otimes \mM_2 (\C) $;
\item[(ii)] 
 smoothing operators  in $ \Sigma_p^N \wtcR^{-\vr}_q\otimes \mM_2 (\C) $ for some $\varrho \geq 0$,   then 
 $ \breve M_{\leq N} $  in \eqref{LTCPhi} is a matrix of pluri-homogeneous  smoothing operators  
  in $  \Sigma_p^N \wtcR^{-\vr}_q\otimes \mM_2 (\C) $ 
 and $M'_{>N}$, $M''_{>N}$ in \eqref{omogeneaaaa2}  belong to $ \Sigma_{N+1}  \wtcR^{-\vr}_q\otimes \mM_2 (\C) $.
\end{itemize}
 \end{lemma}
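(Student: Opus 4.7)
The construction is degree-by-degree: writing
$M_{\leq N}(U) = \sum_{r=p}^{N} M_r(U)$ and seeking $\breve M_{\leq N}(V) = \sum_{r=p}^{N} \breve M_r(V)$, I impose the first identity of \eqref{omogeneaaaa2}. Expanding
$$
M_{\leq N}(\Phi_{\leq N}(V)) + \breve M_{\leq N}(V) + M_{\leq N}(\Phi_{\leq N}(V)) \breve M_{\leq N}(V) = M'_{>N}(V)
$$
and using multilinearity of each $M_r$ together with $\Phi_{\leq N}(V) = V + \breve M_{\leq N}(V)V$, the projection $\cP_q$ for $p\leq q \leq N$ reads
$$
\breve M_q(V) = - M_q(V) - F_q\bigl(M_p,\dots,M_q;\, \breve M_p,\dots,\breve M_{q-1}\bigr)(V),
$$
where $F_q$ is a polynomial expression built only from multilinear external and internal compositions of $M_r$'s ($r\leq q$) and of $\breve M_{r'}$'s with $r' < q$ (since any term in the expansion of $\cP_q[M_r(V+\breve M_{\leq N}(V)V)]$ with $r<q$ contains at least one correction $\breve M_{r'}(V)V$ of degree $r'+1$, forcing $r'<q$; similarly $\cP_q[M_{\leq N}(\Phi_{\leq N}(V))\breve M_{\leq N}(V)]$ only involves $\breve M_{r'}$ with $r'\leq q-p < q$). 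This defines $\breve M_q$ recursively. At the base case $q=p$ no correction terms appear and one reads off $\breve M_p = -M_p$, which proves \eqref{menoinverso}. The second identity in \eqref{omogeneaaaa2} is then obtained a posteriori: composing $\Phi_{\leq N}$ with $\Psi_{\leq N}$ on the other side produces a pluri-homogeneous operator whose $q$-homogeneous part is uniquely determined by the same recursion and must therefore vanish as well, so $\Phi_{\leq N}$ is also a left approximate inverse.

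\textbf{Item (i): spectrally localized case.} I prove by strong induction on $q$ that $\breve M_q \in \wtcS_q^{m(q-p+1)}\otimes\mM_2(\C)$. The base case $\breve M_p = -M_p \in \wtcS_p^m\otimes\mM_2(\C)$ is immediate. For the inductive step, each summand of $F_q$ has the shape
$$
M_r\bigl(V,\dots,V,\,\breve M_{q_1}(V)V,\dots,\breve M_{q_k}(V)V\bigr)\,\circ\,\breve M_{s_1}(V)\circ\cdots\circ \breve M_{s_\ell}(V)
$$
with $r+\sum q_i + \sum s_j = q$ and each index strictly less than $q$. By Proposition~\ref{composizioniTOTALIs}$(iii)$ internal substitution preserves spectral localization, and by Proposition~\ref{composizioniTOTALIs}$(ii)$ external composition of spectrally localized maps adds orders. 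Since $m\geq 0$, the total order of such a term is bounded by the sum of the orders of its primitive factors $M_r,\breve M_{q_i},\breve M_{s_j}$, which by induction is at most $m\bigl(1+(q_1-p+1)+\dots+(s_\ell-p+1)\bigr) \leq m(q-p+1)$ (a conservative count using $r,q_i,s_j\geq p$). The same argument, applied one more time to close the relations \eqref{omogeneaaaa2}, shows $M'_{>N},M''_{>N}\in \Sigma_{N+1}\wtcS_q^{m(N-p+2)}\otimes\mM_2(\C)$.

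\textbf{Item (ii): smoothing case.} Now all orders are $-\varrho\leq 0$, so $\max(-\varrho,0)=0$ in the composition formulas of Proposition~\ref{composizioniTOTALI}$(i)$-$(ii)$: external composition of two $(-\varrho)$-smoothing operators remains $(-\varrho)$-smoothing, and internal substitution (which adds only the positive parts of the orders of the inserted maps) preserves order $-\varrho$ as well. The induction then gives directly $\breve M_q \in \wtcR_q^{-\varrho}\otimes\mM_2(\C)$ for every $q$, without any loss of regularization. The same closure property yields $M'_{>N},M''_{>N}\in\Sigma_{N+1}\wtcR_q^{-\varrho}\otimes\mM_2(\C)$.

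The main technical point is the bookkeeping of orders in item $(i)$: one must track carefully that every internal substitution $\Phi_{\leq N}(V)=V+\breve M_{\leq N}(V)V$ adds at most one level of composition, and that the degree constraint $r+\sum q_i+\sum s_j = q$ bounds the total number of primitive factors by $q-p+1$. The smoothing case is essentially algebraically trivial by comparison, since $\max(-\varrho,0)=0$ kills all order growth.
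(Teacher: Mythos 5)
Your construction of $\breve M_{\leq N}$ is essentially the paper's: you solve the first identity in \eqref{omogeneaaaa2} recursively in the degree of homogeneity (this is exactly the recursion \eqref{recu} for \eqref{dadim2}), the base case gives $\breve M_p=-M_p$, i.e. \eqref{menoinverso}, and the propagation of the spectrally localized, resp. smoothing, classes through Propositions \ref{composizioniTOTALIs} and \ref{composizioniTOTALI} is correct; your order count $m(q-p+1)$ in item $(i)$ is consistent with the paper's bound (the fact that internal substitution does not add the orders of the inserted maps, together with $m\geq 0$ and $p\geq 1$, makes your ``sum of the orders of the primitive factors'' estimate legitimate), and item $(ii)$ is indeed immediate since $\max(-\vr,0)=0$.

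The genuine gap is the second identity in \eqref{omogeneaaaa2}. Having imposed only the first identity, you still must prove that $\Phi_{\leq N}$ is an approximate \emph{left} inverse, and your sentence (``composing on the other side produces a pluri-homogeneous operator whose $q$-homogeneous part is uniquely determined by the same recursion and must therefore vanish'') is not an argument: it does not say which recursion the components of the other composition satisfy, nor why they would vanish up to degree $N$. Note moreover that Definition \ref{inversoapp:def} requires an identity at the level of \emph{operators}, $(\uno+\breve M_{\leq N}(\Psi_{\leq N}(U)))(\uno+M_{\leq N}(U))=\uno+M''_{>N}(U)$, not merely the statement $\Phi_{\leq N}\circ\Psi_{\leq N}=\mathrm{Id}+\text{h.o.t.}$ about maps as in \eqref{invmappe}; so even a correctly written ``a right inverse of a map tangent to the identity is a left inverse up to homogeneity $N$'' argument would not by itself close this step. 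The paper handles it by running the same recursion a second time to produce an approximate right inverse $\uno+M''(U)$ of $\uno+\breve M_{\leq N}(V)$ (see \eqref{defM2}--\eqref{defM3}), then evaluating the already-proved operator identity \eqref{dadim2} at $V=M''(U)U$, right-composing with $\uno+M''(U)$ and using Proposition \ref{composizioniTOTALI} to obtain \eqref{Mquasifin}, from which a comparison of pluri-homogeneous components forces $M''(U)=M_{\leq N}(U)$ and hence the operator identity \eqref{inversionefine2}. Some argument of this kind must be supplied; as written, half of the conclusion of the lemma is unproved, and the corresponding classes of $M''_{>N}$ in items $(i)$--$(ii)$ are likewise unjustified.
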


\begin{proof}
We expand  in homogeneous components 
\be\label{BNU2}
M (U) := \uno+ M_{\leq N}(U) = \uno+ \sum_{q=p}^N M_{q}(U)  \quad \text{with} \quad 
 M_{q}(U) \in \wtcM_{q}\otimes \mM_2 (\C)  \, .
\ee
In order to solve 
the first equation in \eqref{omogeneaaaa2} 
we look for a  pluri--homogeneous operator
\be\label{expAN2}
\breve M(V)= \uno + \breve M_{\leq N}(V) = \breve M_0(V) + \sum_{a=p}^N \breve M_{a}(V) 
   \quad \text{with} \quad \breve M_0(V):= \uno \, , \
\breve M_{a}(V) \in \wtcM_q \otimes \mM_2 (\C)  \, , 
\ee
such that 
\be \label{dadim2}
M\big( \breve M(V)V\big)\breve M(V) =\uno+ M_{>N}(V) 
 \quad \text{with} \quad M_{>N}(V) \in \Sigma_{N+1} \wtcM_q\otimes \mM_2 (\C)  \, .
\ee
By  \eqref{BNU2}, \eqref{expAN2}  we get 
$$ 
\begin{aligned}
& M\big( \breve M(V)V\big) \breve M(V) \\
& = \uno+ \sum_{\ell=p}^N \breve M_{\ell}(V)+ \sum_{q=p}^N \sum_{0\leq a_1, \dots a_{q+1}\leq N} M_{q}(\breve M_{a_1}(V)V, \dots, \breve M_{a_{q}}(V)V) 
\breve M_{a_{q+1}}(V)\\
& = \uno 
+ \sum_{\ell=p}^N \Big[ \breve M_{\ell}(V)+ \sum_{\substack{ p\leq q\leq \ell\\q+a_1+ \dots + a_{q+1}=\ell} } M_{q}(\breve M_{a_1}(V)V, \dots, \breve M_{a_{q}}(V)V)
\breve M_{a_{q+1}}(V)\Big] 
+ M_{>N}(V) 
\end{aligned}
$$
and therefore equation \eqref{dadim2} is recursively solved 
by defining, for any 
$ \ell=p, \dots, N, $ 
\be \label{recu}
\breve M_{\ell}(V, \dots ,V) :=-  
\sum_{q+a_1+ \dots + a_{q+1}=\ell} M_{q}(\breve M_{a_1}(V)V, \dots,
\breve M_{a_{q}}(V)V) \breve M_{a_{q+1}}(V) \, .
\ee
Note that each $ \breve M_{\ell}(V, \dots ,V) $ is a matrix of homogeneous operators by $(ii)$ of Proposition \ref{composizioniTOTALI}. 
We proved the first identity in \eqref{omogeneaaaa2}.  

We now prove the second identity in \eqref{omogeneaaaa2}. Using the same recursive procedure 
we  find a matrix of pluri--homogeneous operators of the form 
\be\label{defM2}
M''(U)= \uno+ \sum_{n=p}^N M''_n(U) \quad \text{with}  \quad 
 M''_n(U)  \in \wtcM_n\otimes \mM_2 (\C) 
\ee
 such that $M''(U)$ is an approximate right inverse of $ \breve M(V)$, i.e.
\be\label{defM3}
\breve M \big( M''(U)U\big) M''(U)= \uno + M_{>N}(U) \quad
\text{with} \quad 
M_{>N} (U) \in  \Sigma_{N+1}\wtcM_{q}\otimes \mM_2 (\C) \, . 
\ee
 Applying \eqref{dadim2} with $ V= M''(U)U$ and right--composing it with $M''(U)$ defined in 
 \eqref{defM2}, we obtain by \eqref{defM3} 
 and Proposition \ref{composizioniTOTALI}  that 
\be\label{Mquasifin}
 M \big( U + M_{>N}(U)U\big) = M''(U)+ M'_{>N}(U)
\ee
where 
$M_{>N} (U)$ and $ M'_{>N}(U)  $ are   
operators  in 
$  \Sigma_{N+1} \wtcM_q\otimes \mM_2 (\C) $. Then, expanding 
the left hand side of 
\eqref{Mquasifin}
by multilinearity, we get 
$$
M \big( U \big) -M''(U)= \sum_{q=p}^N \sum_{\ell=1}^q 
\binom{q}{\ell} M_q(\underbrace{ M_{>N}(U)U, \dots, M_{>N}(U)U}_{\ell-times}, U,\dots, U) + M'_{>N}(U)= \tilde M_{>N}(U)
$$
where $  \tilde M_{>N}(U) $ is in $ \Sigma_{N+1} \wtcM_{q}\otimes \mM_2 (\C) $ thanks to $(ii)$ of Proposition \ref{composizioniTOTALI}. This implies, since both $ M(U)$ and $M''(U)$ are pluri--homogeneous operators up to  homogeneity $ N $ (cfr. \eqref{BNU2}, \eqref{defM2}), 
 that $ M \big( U \big) = M''(U)$ and, by \eqref{defM3}, we conclude that 
\be \label{inversionefine2}
 \breve M \big( M(U)U\big) M(U)= \uno + M_{>N}(U) \, .
 \ee
 This proves, recalling the notation \eqref{BNU2}, \eqref{expAN2}, \eqref{phidicuinv}, 
the second identity in \eqref{omogeneaaaa2}. 
Moreover  for $\ell=p$ the sum in \eqref{recu} reduces to  the unique element with $ q=p$, $a_1=\dots=a_{q+1}=0$ and $ \breve M_p(V)= - M_p(V)$, proving \eqref{menoinverso}.

$(i)$ If $M_{\leq N}(U)$ is a spectrally localized map in $\Sigma_{p}^N \wtcS^m_q\otimes \mM_2 (\C)$ we claim that $ \breve M_\ell(V)$ is a spectrally localized map in $ \wtcS^{m+m(\ell-p)}_\ell\otimes \mM_2 (\C)$ for $\ell=p, \dots, N$. For $\ell=p$ 
by \eqref{menoinverso} we have $ \breve M_p(V)= - M_p(V)$ 
which is in $ \wtcS^{m}_\ell\otimes \mM_2 (\C)$.
 Then supposing inductively that $ \breve M_a(V)$ is in $\wtcS^{m+m(a-p)}_a\otimes \mM_2 (\C)$ for $ a=p,\dots \ell-1$, we deduce 
by $(ii)$ and $(iii)$ of Proposition  \ref{composizioniTOTALIs}, that each term in the sum in \eqref{recu} is a 
  spectrally localized map in $ \wtcS^{m+(m+m(a_{q+1}-p))}_\ell\otimes \mM_2 (\C)$ which is included in $ \wtcS^{m+m(\ell-p)}_\ell\otimes \mM_2 (\C)$ using that $ a_{q+1}\leq \ell-1$. 
 
$(ii)$ In the same way, if $ M_{\leq N}(U)$ is a smoothing operator in $ \Sigma_p^N \wtcR_q^{-\vr}\otimes \mM_2 (\C) $, thanks to $(ii)$ of Proposition \ref{composizioniTOTALI} one proves recursively that $ \breve M_\ell(V)$ are smoothing 
operators  in $\wtcR_\ell^{-\vr}\otimes \mM_2 (\C) $.
\end{proof}

We now define the approximate flow of a smoothing 
$\tau$--dependent vector field. 

\begin{definition} \label{def:approflow}
{\bf (Approximate flow of a smoothing vector field up to homogeneity $N$)}
Let  $p, N \in \N $ with $p\leq N$.
An \emph{approximate flow up to homogeneity $N$} of a $ \tau$--dependent 
pluri-homogeneous smoothing vector field  $X^\tau (Z) $  in $ \Sigma_{p+1}
\wt \X_q^{-\varrho} $, defined for $ \tau \in [0,1] $ and some $\varrho \geq 0$, 
 is 
 a non--linear map,  
\be \label{flussoappdef}
\mF_{\leq N}^\tau(Z)= Z+ F_{\leq N}^\tau(Z)Z \, , \qquad \tau\in [0,1] \, ,  
\ee
where $ F_{\leq N}^\tau(Z) $ is 
 a matrix of pluri--homogeneous, $\tau$--dependent, smoothing operators in $\Sigma_p^N \wtcR^{-\vr}_q \otimes \mM_2 (\C) $, with estimates uniform in $ \tau \in [0,1] $,
 solving
\be \label{appX}
\pa_\tau \mF^\tau_{\leq N}(Z) = X^\tau ( \mF^\tau_{\leq N}(Z)) +  R^\tau_{>N}(Z)Z \, , 
\quad \mF^0_{\leq N}(Z)=Z \, , 
\ee
where $ R^\tau_{>N}(Z)$ is a matrix of 
$\tau$--dependent, smoothing operators 
 in $ \Sigma_{N+1} \wtcR^{-\vr}_q \otimes \mM_2 (\C) $, with estimates uniform in $ \tau \in [0,1] $. 
\end{definition}

The following lemma ensures the existence of an approximate  flow.

\begin{lemma}\label{extistencetruflow}
{\bf (Approximate flow)} 
Let  $p, N \in \N $ with  $p \leq N  $.
Consider a pluri--homogeneous  $ \tau$--dependent smoothing vector field
$ X^\tau (Z)  $ in $ \Sigma_{p+1} \wt \X^{-\vr}_q $, defined for $ \tau \in [0,1 ]$ and some $\varrho \geq 0$. 
 Then 
 \begin{itemize}
 \item 
 there exists an 
 approximate flow 
$\mF^\tau_{\leq N}$  according to Definition \ref{def:approflow};
\item 
denoting by $ G_{p}^{\tau} (Z) Z $ with 
$ G^\tau_{p}(Z) \in  \wtcR^{-\vr}_p \otimes \mM_2 (\C) $  the (p+1)-homogeneous component of 
$ X^\tau (Z) $, then  the $p$-homogeneous component of the  smoothing 
operator  $ F_{\leq N}^\tau(Z) $ in  
\eqref{flussoappdef} is 
\be\label{flowexp}
\mP_p(F_{\leq N}^\tau(Z)) =   \int_0^\tau G_{p}^{\tau'} (Z) \di \tau'  	\, .
\ee
\end{itemize}
\end{lemma}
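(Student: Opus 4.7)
The plan is to construct $F^\tau_{\leq N}(Z)$ recursively in degrees of homogeneity by expanding equation \eqref{appX} by multilinearity and solving a triangular system of ODEs in $\tau$ which involves only integration of known lower-degree data.

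First I would decompose the given $\tau$--dependent smoothing vector field into homogeneous components
\[
X^\tau(Z) = \sum_{q=p+1}^{N} X_q^\tau(Z) + X_{>N}^\tau(Z),
\qquad X_q^\tau(Z) =  G^\tau_{q-1}(Z)Z,
\]
with $G^\tau_{q-1}(Z) \in \wtcR^{-\vr}_{q-1}\otimes \mM_2(\C)$ uniformly in $\tau\in[0,1]$.
Then I would look for a solution of the form $\mF^\tau_{\leq N}(Z) = Z + F^\tau_{\leq N}(Z)Z$, with
\[
F^\tau_{\leq N}(Z) = \sum_{a=p}^{N} F^\tau_a(Z), \qquad F^\tau_a \in \wtcR^{-\vr}_a\otimes\mM_2(\C),
\qquad F^0_a(Z) \equiv 0,
\]
to be determined. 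Substituting this ansatz into \eqref{appX}, expanding each $G^\tau_{q-1}(Z + F^\tau_{\leq N}(Z)Z)(Z + F^\tau_{\leq N}(Z)Z)$ by multilinearity in its $q$ entries, and projecting on the component of homogeneity $a+1$ (for $a = p, \ldots, N$) yields a triangular system of the form
\[
\partial_\tau F^\tau_a(Z) Z \;=\; G_a^\tau(Z) Z \;+\; \Phi_a\bigl(\tau, Z; F_p^\tau, \ldots, F_{a-1}^\tau\bigr) Z
\]
where $\Phi_a$ depends only on $F_b^\tau$ with $b < a$ (any occurrence of $F^\tau_a$ in the expansion would force $q = 1$, which is incompatible with $q \geq p+1$). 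Integrating in $\tau$ with the initial condition $F_a^0 = 0$ determines $F_a^\tau$ uniquely. For $a = p$ the recursion reduces to $F_p^\tau(Z)Z = \int_0^\tau G_p^{\tau'}(Z)Z\,\di \tau'$, giving precisely \eqref{flowexp}.

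The remaining point is to verify that each $F^\tau_a$ lies in $\wtcR^{-\vr}_a\otimes\mM_2(\C)$ with estimates uniform in $\tau\in[0,1]$, and that the collection of all terms of homogeneity strictly greater than $N+1$ produced by the expansion yields a remainder $R^\tau_{>N}(Z)Z$ with $R^\tau_{>N}\in \Sigma_{N+1} \wtcR^{-\vr}_q\otimes\mM_2(\C)$. This follows by induction on $a$: each $\Phi_a$ is a sum of internal compositions of the form $G^\tau_{q-1}(F_{b_1}^\tau(Z)Z,\ldots,F_{b_{q-1}}^\tau(Z)Z)F_{b_q}^\tau(Z)$ (with the convention $F_0^\tau=\uno$), for which Proposition \ref{composizioniTOTALI}--$(ii)$ gives a smoothing operator in $\wtcR^{-\vr+\bar m}_{a}\otimes\mM_2(\C)$ with $\bar m = \sum_\ell \max(m_\ell,0) = 0$ since all factors are smoothing of negative order. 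The remainder $R^\tau_{>N}(Z)Z$ is obtained by collecting all multilinear terms with total degree exceeding $N+1$, together with the contribution of $X_{>N}^\tau(\mF^\tau_{\leq N}(Z))$; the smoothing-of-order-$\varrho$ property of the non-homogeneous vector field $X_{>N}^\tau$ together with Proposition \ref{composizioniTOTALI} ensures that this remainder belongs to $\Sigma_{N+1}\wtcR^{-\vr}_q\otimes \mM_2(\C)$.

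The only delicate point, which I would check carefully, is that the time dependence is handled correctly: since integration in $\tau\in[0,1]$ preserves all the multilinear and tame estimates \eqref{hom:rest} and \eqref{piove} uniformly, the bounds on $F^\tau_a$ and on $R^\tau_{>N}$ are uniform in $\tau$, as required by Definition \ref{def:approflow}. No analytic obstruction arises because we never solve an ODE that feeds back on $F_a^\tau$ itself — the recursion is purely triangular — so no small divisor or loss of derivatives issue appears and the construction is algorithmic.
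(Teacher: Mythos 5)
Your proposal is correct and follows essentially the same route as the paper: the ansatz $\mF^\tau_{\leq N}(Z)=Z+\sum_{a=p}^N F^\tau_a(Z)Z$, the multilinear expansion of $X^\tau(\mF^\tau_{\leq N}(Z))$ giving a triangular recursion solved by integration in $\tau$ with $F^0_a=0$, the composition estimates via Proposition \ref{composizioniTOTALI}-$(ii)$ to keep each $F^\tau_a$ in $\wtcR^{-\vr}_a\otimes\mM_2(\C)$, the collection of the leftover high-degree terms into $R^\tau_{>N}$, and the observation that the lowest-degree equation yields \eqref{flowexp}. The only cosmetic difference is that you describe the components of $X^\tau$ of degree $>N$ as a ``non-homogeneous'' piece, whereas the hypothesis $X^\tau\in\Sigma_{p+1}\wt\X^{-\vr}_q$ makes them finitely many homogeneous components (possibly of degree up to some $M+1>N+1$), which is precisely what ensures that the remainder is pluri-homogeneous, i.e.\ in $\Sigma_{N+1}\wtcR^{-\vr}_q\otimes\mM_2(\C)$ as required by Definition \ref{def:approflow}.
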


\begin{proof}
We write 
$ X^\tau (Z) := G^\tau(Z)Z  $ with $ G^\tau(Z)  = \sum_{q=p}^M G^\tau_{q}(Z) $ and 
$ G^\tau_{q}(Z) \in  \wtcR^{-\vr}_q \otimes \mM_2 (\C) $, 
and we look for an approximate flow  solution of  \eqref{appX}  
of the form 
$$
\mF_{\leq N}^\tau(Z)= Z+ \sum_{\ell=p}^N F^\tau_{\ell}(Z, \dots, Z)Z 
\qquad 
\text{with} 
\qquad 
F_\ell^{\tau}(Z) \in \wt \cR^{-\varrho}_\ell \otimes \cM_2(\C) \, .
$$
Since $G^\tau(Z)= \sum_{q=p}^M G^\tau_{q}(Z)$ then, using the notation $ F_0^\tau(Z):=\uno$,   we expand by multilinearity 
$$
X^\tau (\mF_{\leq N}^\tau(Z)) =\sum_{q=p}^M G_{q}^\tau( \mF_{\leq N}^{\tau}(Z), \dots , \mF_{\leq N}^{\tau}(Z))\mF_{\leq N}^{\tau}(Z)
= \sum_{\ta=p}^{N+ (N+1)M} \breve X_\ta^\tau(Z) Z
$$
where
\be\label{Xa}
 \breve X_\ta^\tau(Z) :=  \sum_{\substack{q = p, \ldots, M \\ \ell_1, \dots, \ell_{q+1} \in \{0,p,\dots, N\} \\
\ell_1+ \dots + \ell_{q+1} +q= \ta}}   G_{q}^\tau\big( F_{\ell_1}^{\tau}(Z)Z, \dots , F_{\ell_q}^{\tau}(Z)Z\big)F_{\ell_{q+1}}^{\tau}(Z) \ . 
\ee
Then we solve \eqref{appX} defining recursively  for $\ta=p,\dots, N$, 
$$
F_{\ta}^\tau(Z):=\int_0^\tau \breve X^{\tau'}_\ta(Z)\,\di \tau',
\qquad
R_{>N}^\tau (Z) :=\sum_{\ta=N+1}^{N+(N+1)M} X^{\tau}_\ta(Z) \ . 
$$
Using recursively formula \eqref{Xa} and  Proposition \ref{composizioniTOTALI} 
one verifies that each  $ \breve X_\ta^\tau(Z)$ is a  $\ta$-homogeneous smoothing operator in  $\wt \cR_\ta^{-\varrho} \otimes \cM_2(\C)$, so is 
$F_{\ta}^\tau(Z)$, and 
$R_{>N}^\tau (Z)$ is a pluri-homogeneous smoothing operator in $\Sigma_{N+1} \wt \cR_\ta^{-\varrho} \otimes \cM_2(\C)$.
Note that for $ \ta=p$ the sum in  \eqref{Xa} reduces to the indices $ q=p$, $ \ell_1=\dots=\ell_{q+1}=0$. As a consequence 
$ F_{p}^\tau(Z)= \int_0^\tau G^{\tau'}_{p}(Z)\,\di \tau'$
proving  \eqref{flowexp}.
\end{proof}

\subsection{Pluri-homogeneous differential  geometry}\label{SGeo}

In this section we introduce pluri-homogeneous  $k$-forms. 
We revisit the classical identities of differential geometry ($\di^2 = 0$, Cartan's magic formula) for $k= 0, 1, 2 $ which are the only cases needed in this paper.

\begin{definition} {\bf ($r$-homogeneous $k$-form)}
Let $ p  \in \N_0$, 
$ k=0,1,2 $  and set $ r := p + 2 - k $.  
A $r$-homogeneous $k$-form is a $(r+k)$-linear map from $ (\dot H^{\infty}(\T;\C^2))^r\times (\dot H^{\infty}(\T;\C^2))^k$ to $ \C$ of the form $\Lambda(U_1,\dots,U_r)[ V_1,\dots, V_k]$, symmetric in the variables 
$ {\cal U} := (U_1,\dots, U_r)$ and  antisymmetric in the entries $
{\cal V} := (V_1,\dots, V_k)$, satisfying the following:  
there are constants  $ C > 0 $  and $m\geq 0$ such that 
\begin{align}
|\Lambda&(\Pi_{n_{1}}U_1,\dots,\Pi_{n_r}U_r)[\Pi_{n_{r+1}} V_1,\dots, \Pi_{n_{r+k}}V_k]|\notag\\
&\leq C  {\rm max}\{ n_1,\dots ,n_{r+k}\}^m \prod_{j=1}^r \| \Pi_{n_j} U_j\|_{L^2}\prod_{\ell=1}^k \| \Pi_{n_{r+\ell }}V_\ell \|_{L^2}\label{prop.form} 
\end{align}
for any   $ \cal U 
\in (\dot{H}^{\infty}(\T;\C^{2}))^{r}$, any
$ \cal V 
\in (\dot{H}^{\infty}(\T;\C^{2}))^k $, and any 
 $ (n_1,\ldots,n_{r+k}) $ in $  \N^{r+k} $.
 Moreover, if 
 $$
  \Lambda(\Pi_{n_{1}}U_1,\dots,\Pi_{n_r}U_r)[\Pi_{n_{r+1}} V_1,\dots, \Pi_{n_{r+k}}V_k] \neq 0 \, ,
$$
 then there is a choice of signs $\s_1,\ldots,\s_{r+k}\in\{\pm 1\}$ such that 
 $\sum_{j=1}^{r+k}\s_j n_{j}=0$. 
  In addition we require the translation invariant property:
 \be\label{Lambda.inv.tra}
 \Lambda(\tau_\varsigma \cU)[ \tau_\varsigma {\cal V} ]  = \Lambda(\cU)[  {\cal V}] \, , \quad \forall \varsigma \in \R  \, .  
 \ee
  We also require that $ \Lambda({\cal U})[{\cal V}] $ is real valued for any 
  $ { \cal U} \in (L^2_\R (\T, \C^2))^r  $  and $ {\cal V } \in (L^2_\R (\T, \C^2))^k $, cfr. \eqref{L2rC2}. 
 We denote  by $\wt \Uplambda^k_r$  the space of $r$-homogeneous $k$-forms 
 and by $ \Sigma_r^N \wt \Uplambda^k_q$ the space of pluri--homogeneous $k$-forms.
 We set $ \Sigma_{r} \wt \Uplambda^k_q:= \cup_{N\geq r}\Sigma_r^N \wt \Uplambda^k_q$.
\end{definition} 

\noindent
$\bullet$ A $r$-homogeneous  $ 0 $-form is  also called a homogeneous {\it Hamiltonian}.

\noindent 
$\bullet$ {\bf Fourier representation of $(p+2)$-homogeneous $ 0 $-forms:}  
Let $ p  \in \N_0 $.  A $ (p+2)$-homogeneous $0$-form can be expressed in Fourier as 
(recall \eqref{fTset})
\be\label{polham}
H(U)= \frac{1}{p+2}\sum_{
(\vec \jmath_{p+2}, \vec \sigma_{p+2}) \in \fT_{p+2}
 } 
H_{\vec \jmath_{p+2}}^{\vec \sigma_{p+2}} u_{\vec \jmath_{p+2}}^{\vec \sigma_{p+2}} \, .
\ee
The reality condition $  H(U)\in \R$ for  any $U\in L^2_\R(\T,\C^2)$ amounts to 
\be \label{realham}
\bar{H_{\vec \jmath_{p+2}}^{\vec \sigma_{p+2}}}= H_{\vec \jmath_{p+2}}^{-\vec \sigma_{p+2}} \, . 
\ee
Moreover the scalar coefficients $H_{\vec{ \jmath}_{p+2}}^{\vec{ \sigma}_{p+2}}\equiv H_{\ j_1,\dots, j_{p+2}}^{ \sigma_1,\dots, \sigma_{p+2} } \in \C  $ 
 satisfy the symmetric condition: for any permutation $\pi$ of $\{ 1, \ldots, p+2\}$ 
\be \label{symham}
H_{\ j_{\pi(1)},\dots, j_{\pi(p+2)}}^{ \sigma_{\pi(1)},\dots, \sigma_{\pi(p+2)} }=
H_{\ j_1,\dots, j_{p+2}}^{ \sigma_1,\dots, \sigma_{p+2} } \, 
\ee
and, for some $m\geq 0$, the bound
\be\label{upperbounH}
| H_{\vec \jmath_{p+2}}^{\vec \sigma_{p+2}}|\lesssim\max( | j_1|, \ldots, |j_{p+2}|)^m \, . 
\ee
 \noindent
$\bullet$ {\bf Fourier representation of $ (p+1)$-homogeneous $1$-forms:}
A $ (p+1)$-homogeneous $1$-form can be expressed in Fourier as 
\be\label{thetone}
\theta(U)[V]= \sum_{
(\vec \jmath_{p+1}, k, \vec \sigma_{p+1}, \sigma) \in \fT_{p+2}
 } \Theta_{\vec \jmath_{p+1},k}^{\vec \sigma_{p+1},\sigma} u_{\vec \jmath_{p+1}}^{\vec \sigma_{p+1}} v_k^{\sigma} \, . 
\ee
The reality condition $  \theta(U)[V]\in \R$ for any $U,V \in L^2_\R(\T,\C^2)$ amounts to 
\be \label{reaFouT}
\bar{\Theta_{\vec \jmath_{p+1},k}^{\vec \sigma_{p+1},\sigma}}= \Theta_{\vec \jmath_{p+1}, k}^{-\vec \sigma_{p+1}, -\sigma} \, .
\ee
Moreover the coefficients satisfy, for some $m\geq 0$, 
\be\label{upperbounT}
| \Theta_{\vec \jmath_{p+1},k}^{\vec \sigma_{p+1},\sigma}|\lesssim 
\max( | j_1|, \ldots, |j_{p+1}|)^m 
 \, . 
\ee


\noindent $\bullet$ {\bf Fourier representation of $p$-homogeneous $2$-forms:}
A $p$-homogeneous $2$-form can be expressed in Fourier as 
\be \label{lambdone}
\Lambda(U)[V_1,V_2]=\sum_{
(\vec \jmath_{p},j,k,  \vec\sigma_{p},\sigma',\sigma)\in \fT_{p+2}
} \Lambda_{\vec \jmath_p, j,k}^{\vec \sigma_p, \sigma',\sigma} u_{\vec \jmath_p}^{\vec \sigma_p} (v_1)_j^{\sigma'}(v_2)_k^\sigma \, . 
\ee
The antisymmetry condition $ \Lambda(U)[V_1,V_2]= - \Lambda(U)[V_2,V_1]$ amounts to 
\be \label{antiFou}
\Lambda_{\vec \jmath_p, j,k}^{\vec \sigma_p, \sigma',\sigma}= - \Lambda_{\vec \jmath_p, k,j}^{\vec \sigma_p, \sigma,\sigma'} \, .
\ee
The reality condition $  \Lambda(U)[V_1,V_2]\in \R$ for  any $U,V_1,V_2 \in L^2_\R(\T,\C^2)$ amounts to 
\be \label{reaFouL}
\bar{\Lambda_{\vec \jmath_p, j,k}^{\vec \sigma_p, \sigma',\sigma}}=  \Lambda_{\vec \jmath_p, j,k}^{-\vec \sigma_p, -\sigma',-\sigma} \, .
\ee
Moreover the coefficients satisfy, for some $m\geq 0$,  
\be\label{upperbounL} 
| \Lambda_{\vec \jmath_p, j,k}^{\vec \sigma_p, \sigma',\sigma}|\lesssim \max( |\vec \jmath_p|, | j |)^m \, . 
\ee 
\begin{definition}[{\bf Homogeneous vector fields}]
\label{def:X}
Let $m \in \R$ and $ p, N \in \N_0$. We denote by $\wt \X_{p+1}^m$ the space of $ (p+1)$-homogeneous vector fields of the form $ X(U)=M(U)U$ where $M(U)$ is a matrix of $p$-homogeneous $m$-operators in $ \wtcM_{p}^m\otimes \mM_2(\C)$.  
We denote $ \wt \X_{p+1}:=\cup_{m\geq0}\wt \X_{p+1}^{m}$
 and 
 $ \Sigma_{p+1}^{N+1}\wt\X^{m}_q$ the class of pluri-homogeneous vector fields. 
  We also set $\Sigma_{p+1}  \wt \X_{q}:= \bigcup_{N\in \N} \Sigma_{p+1}^{N+1}  \wt \X_{q}$.
 The vector fields in  $\wt \X_{p+1}^{- \varrho}$, $ \varrho \geq 0 $, are called smoothing. 
\end{definition}

Note that  $X(U)=M(U)U$ is real-to-real in the sense of \eqref{rtr}  if and only if  the operator $M(U)$ is  real-to-real in the sense of \eqref{vinello}.

 \noindent
$\bullet$ {\bf Fourier representation of $ (p+1)$-homogeneous vector fields:} A $ (p+1)$-homogeneous vector field can be expressed in Fourier as: for any $\sigma = \pm$,  
\be\label{polvect}
X(U)^\sigma
= \sum_{k\in \Z\setminus\{0\}} X(U)^\sigma_k 	\, \frac{e^{\ii\sigma k x}}{\sqrt{2\pi}} , 
\qquad X(U)^\sigma_k= 
\!\!\!\!\!\!\!\!\!\!\!\!\! \!\!\!\!\! \sum_{
( \vec{\jmath}_{p+1},k,  \vec{\sigma}_{p+1}, - \sigma) \in \fT_{p+2}
}\!\!\!\!\!\!\!\!\!\!\!\!\!\!\!
 X_{\vec{\jmath}_{p+1}, k}^{ \vec{\sigma}_{p+1}, \sigma}u^{\vec{\sigma}_{p+1}}_{\vec{\jmath}_{p+1}} \,,
\ee
the last sum being in $(\vec \jmath_{p+1}, \vec \sigma_{p+1})$, 
with coefficients $ X_{\vec{\jmath}_{p+1}, k}^{ \vec{\sigma}_{p+1}, \sigma}\in \C$ given by 
\begin{equation}
 \label{simmetrizzata}
 X_{\ j_1, \ldots, j_{p}, j_{p+1}, k}^{\sigma_1, \ldots, \sigma_{p}, \sigma_{p+1},  \sigma} = 
 \frac{1}{p+1} \left(
 M_{\ j_1, \ldots, j_{p}, j_{p+1}, k}^{\sigma_1, \ldots, \sigma_{p}, \sigma_{p+1},  \sigma} + 
 M_{\ j_{p+1}, \ldots, j_{p}, j_1, k}^{\sigma_{p+1}, \ldots, \sigma_{p}, \sigma_1,  \sigma} +
 \cdots
  +
  M_{\ j_1, \ldots, j_{p+1}, j_{p}, k}^{\sigma_1, \ldots, \sigma_{p+1}, \sigma_{p},  \sigma} 
  \right) \, , 
 \end{equation} 
 namely they are obtained symmetrizing with respect to the last index the coefficients  $M_{\ \vec \jmath_{p}, j, k}^{\vec \sigma_{p}, \sigma',\sigma}$ of $M(U)$.
 
 In particular they satisfy 
 the symmetry condition: for any permutation $ \pi $ of $ \{1, \ldots, p+1 \} $, 
 \be\label{symmetric}
X_{\ j_{\pi(1)}, \ldots,j_{\pi(p+1)}, k}^{ \sigma_{\pi(1)}, \ldots, \sigma_{\pi(p+1)},\sigma} 
=  
X_{\ j_{1}, \ldots, j_{p+1}, k}^{ \sigma_{1}, \ldots, \sigma_{p+1},\sigma} \, . 
\ee
In addition, if $X(U)$ is real-to-real, see \eqref{rtr}, then one has 
\be\label{X.real}
 \bar{X(U)^+_k}=X(U)^-_k\quad {\mathrm i.e.}  \quad\bar{X_{\vec{\jmath}_{p+1}, k}^{ \vec{ \sigma}_{p+1}, +}}=X_{\vec{\jmath}_{p+1}, k}^{- \vec{\sigma}_{p+1}, -} \, . 
\ee
By Lemma \ref{carasmoofou} we obtain the following  characterization of  vector fields.

 \begin{lemma}\label{lem:X.R}
 {\bf (Characterization of  vector fields in Fourier basis)} 
 Let $ m \in \R $. 
 A  real-to-real vector field
 $X(U) $ belongs to $ \wt \X_{p+1}^{m}$ 
 if and only if its coefficients 
$ X_{\vec{\jmath}_{p+1}, k}^{ \vec{\sigma}_{p+1}, \sigma}$ (defined as in 
\eqref{polvect})
 fulfill the symmetric and real-to-real conditions \eqref{symmetric}, \eqref{X.real} and:
 there exist $ \mu\geq 0 $ and $C>0$  such that 
\be\label{smoocara20}
 |X_{\vec{\jmath}_{p+1}, k}^{ \vec{\sigma}_{p+1}, \sigma} |\leq C 
{\rm max}_2
\{ |j_1|,\dots, | j_{p+1}|\}^{\mu} \
\max\{ |j_1|,\dots, | j_{p+1}|\}^{m} 
 \ee
 for any 
 $ (\vec \jmath_{p+1}, k,  \vec \sigma_{p+1},-\sigma) \in \fT_{p+2}  $ (cfr. \eqref{fTset}).
 \end{lemma}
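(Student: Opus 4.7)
The statement is a direct Fourier characterization of homogeneous vector fields of the form $M(U)U$. The plan is to reduce it entirely to Lemma \ref{carasmoofou}, which provides the analogous Fourier characterization for $m$-operators, via the explicit formula \eqref{simmetrizzata} relating the Fourier coefficients of $X$ to those of $M$.

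For the direct implication, assuming $X(U) = M(U)U$ with $M \in \wtcM_p^m \otimes \mM_2(\C)$, I would apply Lemma \ref{carasmoofou} to $M$ to obtain the momentum condition \eqref{momentoj}, the internal symmetry \eqref{M.coeff.p}, the reality \eqref{M.realtoreal}, and the bound \eqref{smoocara} for its Fourier coefficients. Substituting into the symmetrization \eqref{simmetrizzata} makes the full symmetry \eqref{symmetric} and the momentum restriction $(\vec\jmath_{p+1}, k, \vec\sigma_{p+1}, -\sigma) \in \fT_{p+2}$ transparent; the reality \eqref{X.real} follows from the remark right after Definition \ref{def:X}, namely that $M(U)U$ is real-to-real if and only if $M$ is; and the bound \eqref{smoocara20} survives the symmetrization because $\max$ and $\max_2$ of a multiset are permutation-invariant, so each of the $p+1$ summands in \eqref{simmetrizzata} obeys the same estimate and the total is controlled by $(p+1)$ times the same right-hand side.

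For the converse, given coefficients $X^{\vec\sigma_{p+1},\sigma}_{\vec\jmath_{p+1}, k}$ satisfying \eqref{symmetric}, \eqref{X.real}, \eqref{smoocara20}, and the momentum constraint, I would simply define
$$
M^{\vec\sigma_p, \sigma', \sigma}_{\vec\jmath_p, j, k} := X^{\vec\sigma_p, \sigma', \sigma}_{\vec\jmath_p, j, k}
$$
on every admissible tuple, using the identification $(j_{p+1}, \sigma_{p+1}) := (j, \sigma')$. The internal symmetry \eqref{M.coeff.p} is then the restriction of the full symmetry \eqref{symmetric}; the momentum condition \eqref{momentoj}, the reality \eqref{M.realtoreal}, and the bound \eqref{smoocara} are immediate translations of the corresponding properties of $X$. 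Hence Lemma \ref{carasmoofou} places $M$ in $\wtcM_p^m \otimes \mM_2(\C)$. The key point is that, because these coefficients are already fully symmetric, applying the symmetrization \eqref{simmetrizzata} to $M$ returns $M$ itself, so the Fourier coefficients of $M(U)U$ coincide with those of $X(U)$, yielding $X \in \wt\X_{p+1}^m$. The only care needed is the bookkeeping of indices, momentum, and signs (and the $1/(p+1)$ factor in \eqref{simmetrizzata}); no genuine analytic obstacle arises, as the statement is essentially a repackaging of Lemma \ref{carasmoofou} for vector fields.
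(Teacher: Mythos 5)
Your proposal is correct and takes essentially the same route as the paper, which states the lemma as a direct consequence of Lemma \ref{carasmoofou} via the symmetrization formula \eqref{simmetrizzata}; your write-up merely makes both directions explicit (permutation invariance of $\max$ and ${\rm max}_2$ for the direct implication, and the fact that symmetrizing already symmetric coefficients is the identity for the converse), which is exactly the intended argument.
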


The following lemma characterizes $0$, $1$ and $2$ forms. 

\begin{lemma} \label{lem:ident.forms}
{\bf (Operatorial characterization of Hamiltonians and $1$- $2$ forms)}
Let $p \in \N_0$.  Then
\begin{itemize}
\item[$(i)$] A  $0$-form $H(U) $ belongs to 
$ \wt \Uplambda_{p+2}^0$ if and only if there exists a matrix of  $p$-homogeneous 
real-to-real  operators $ M(U) $ in $ \widetilde \mM_p \otimes \cM_2(\C)$, 
 such that, 
\be\label{1form.rep0}
H(U)=  \langle M(U)U,U\rangle_r, \quad \forall U \in \dot{H}^{\infty}(\T;\C^{2}) 
  \,  . 
\ee 
\item[$(ii)$] A $1$-form $ \theta(U) $ belongs to 
$  \wt \Uplambda_{p+1}^1$ if and only if there exists a matrix of  pluri--homogeneous 
real-to-real  operators $ M(U) $ in $  \widetilde \mM_p \otimes \cM_2(\C)$, 
 such that,  
\be\label{1form.rep}
 \theta(U)[V]:=  \langle M(U)U,V\rangle_r \, , \quad \forall V \in \dot{H}^{\infty}(\T;\C^{2}) 
  \,  . 
\ee 
\item[$(iii)$]  A $ 2 $-form
 $\Lambda(U) $ belongs to $   \wt \Uplambda_p^2 $
  if and only if there exists a matrix of pluri--homogeneous real-to-real operators
  $ M(U) $ in $   \widetilde \mM_p \otimes \cM_2(\C)$,
    satisfying $M(U)^\top=-M(U) $,   such that 
\be\label{2form.rep}
\Lambda(U)[V_1,V_2]:= \langle M(U)V_1,V_2\rangle_r \, , \quad 
\forall (V_1, V_2)  \in (\dot{H}^{\infty}(\T;\C^{2}))^2  
\, . 
\ee
\end{itemize}
\end{lemma}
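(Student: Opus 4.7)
\medskip

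\noindent\textbf{Proof plan.} My approach is to work entirely in Fourier coordinates, using the expansions \eqref{polham}, \eqref{thetone}, \eqref{lambdone} on the form side and the expansions \eqref{M.coeff}, \eqref{emmetop} on the operator side, and to exhibit an explicit dictionary between the two. In one direction, starting from an operator $M(U)\in \wt\cM_p\otimes \cM_2(\C)$ that is real-to-real and (for (iii)) antisymmetric, I set
$$
H(U):=\tfrac{1}{p+2}\la M(U)U,U\ra_r,\qquad \theta(U)[V]:=\la M(U)U,V\ra_r,\qquad \Lambda(U)[V_1,V_2]:=\la M(U)V_1,V_2\ra_r.
$$
Since the pairing $\la\cdot,\cdot\ra_r$ forces $n_0=n_{p+2}$ (the external Fourier index of the output of $M$ must coincide with that of the test function), and momentum conservation \eqref{momentoj} gives $n_{p+2}=n_0\le (p+1)\max(n_1,\ldots,n_{p+1})$, the estimate \eqref{omomap} for $M$ immediately translates into the form estimate \eqref{prop.form} (with $m$ replaced by $m+\mu$). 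Reality, translation invariance and momentum of the form follow term-by-term from the corresponding properties of $M$ (using \eqref{def:R-trin} and the real-to-real condition \eqref{M.realtoreal}). For (iii), the antisymmetry $\Lambda(U)[V_1,V_2]=-\Lambda(U)[V_2,V_1]$ corresponds exactly to $M^\top=-M$, using the defining identity of the transpose $\la MV_1,V_2\ra_r=\la V_1,M^\top V_2\ra_r$ and the symmetry of $\la\cdot,\cdot\ra_r$.

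\medskip

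In the converse direction I recover $M$ from the Fourier coefficients of the form by shifting one external slot into the internal one, and flipping $k\mapsto -k$ to account for the duality pairing. For instance in case (ii), starting from $\Theta^{\vec\sigma_{p+1},\sigma}_{\vec\jmath_{p+1},k}$ in \eqref{thetone}, I set
$$
M^{\sigma_1,\ldots,\sigma_p,\sigma',\sigma}_{j_1,\ldots,j_p,j,k}:=\frac{1}{(p+1)!}\sum_{\pi\in S_{p+1}}\Theta^{\pi(\sigma_1,\ldots,\sigma_p,\sigma'),\,\sigma}_{\pi(j_1,\ldots,j_p,j),\,-k},
$$
where the symmetrization over $S_{p+1}$ enforces the symmetry \eqref{M.coeff.p} required of an element of $\wt\cM_p$ (notice that a $1$-form is symmetric in all $p+1$ external $U$-slots, while $M$ only needs to be symmetric in the $p$ internal ones). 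Using \eqref{upperbounT} and once again $|k|\le (p+1)\max(|j_1|,\ldots,|j_{p+1}|)$, the bound \eqref{smoocara} holds with $\mu=0$, $m'=m$. Reality \eqref{M.realtoreal}, translation invariance and momentum of $M$ then follow from \eqref{reaFouT}, \eqref{Lambda.inv.tra} and the momentum condition on $\theta$. Case (i) is handled identically after first writing $H(U)=\tfrac{1}{p+2}\theta_H(U)[U]$ for the unique symmetric $1$-form $\theta_H$ associated to $H$; case (iii) is simpler since no internal symmetrization is needed, and the identity $(M^\top)^{\vec\sigma_p,\sigma',\sigma}_{\vec\jmath_p,j,k}=M^{\vec\sigma_p,\sigma,\sigma'}_{\vec\jmath_p,-k,-j}$ from \eqref{emmetop} combined with \eqref{antiFou} yields $M^\top=-M$ automatically.

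\medskip

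\noindent\textbf{Main obstacle.} The only slightly delicate bookkeeping concerns the different roles of ``external'' versus ``internal'' variables: forms are fully symmetric in all $U$-slots, while an $m$-operator $M(U)$ in $\wt\cM_p\otimes \cM_2(\C)$ is symmetric only in its $p$ internal slots, and the index $k$ on the output is paired dually to the test function. Thus the passage form $\Rightarrow$ operator requires an explicit symmetrization and a sign flip $k\mapsto -k$, after which the verification of all other properties (bounds, spectral momentum, reality, translation invariance, and antisymmetry for $2$-forms) is a line-by-line transcription between \eqref{prop.form}--\eqref{upperbounL} and \eqref{smoocara}--\eqref{M.realtoreal} via Lemma~\ref{carasmoofou}.
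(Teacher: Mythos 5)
Your proposal is correct and follows essentially the same route as the paper's proof: an explicit Fourier dictionary in which the coefficients of the form, with the output index flipped $k\mapsto -k$, are taken as the coefficients of $M(U)$, and the required bounds, momentum, reality and (for $2$-forms) antisymmetry conditions are checked via Lemma~\ref{carasmoofou}. Your symmetrization over the $p+1$ external slots and the explicit verification of the converse direction (operator $\Rightarrow$ form, with exponent $m+\mu$) are harmless refinements of steps the paper leaves implicit.
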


\begin{proof}

\noindent {\sc Proof of $(i)$:} Identity \eqref{1form.rep0} follows with an operator $M(U)$ which has Fourier entries
\be\label{mappaemmeacca}
M_{\vec \jmath_p,j,k}^{\vec \sigma_p,\sigma',\sigma}=\frac{1}{p+2}H_{\vec \jmath_{p},j, -k}^{\vec \sigma_{p},\s',\sigma}
\ee 
where the Fourier coefficients of $H$ are defined in  \eqref{polham}. 
By  \eqref{upperbounH} and Lemma \ref{carasmoofou} the operator $M(U)$ defined by \eqref{mappaemmeacca} is  a matrix of $m$-operators in $\wt \mM_p^m \otimes \cM_2(\C) $.
Note that, in view of \eqref{mappaemmeacca}, the entries of the operator $M(U)$ satisfy the corresponding momentum condition $ \vec \sigma_{p} \cdot \vec \jmath_{p}+ \sigma' j=\sigma k$ thanks to the restriction in \eqref{polham}.  The reality condition \eqref{realham} is equivalent to \eqref{M.realtoreal}.

\noindent {\sc Proof of $(ii)$:} Identity \eqref{1form.rep} follows with an operator $M(U)$ which has Fourier entries
\be\label{mappaemmetheta}
M_{\vec \jmath_p,j,k}^{\vec \sigma_p,\sigma',\sigma}=
\Theta_{\vec \jmath_{p},j, -k}^{\vec \sigma_{p},\s',\sigma} 
\ee 
where 
 the Fourier coefficients of $\Theta$ are defined in  \eqref{thetone}.
By  \eqref{upperbounT} and Lemma \ref{carasmoofou} the operator $M(U)$ defined by \eqref{mappaemmetheta} is  a matrix of $m$-operators in $\wt \mM_p^m \otimes \cM_2(\C) $.
Note that, in view of \eqref{mappaemmelambda}, the entries of the operator $M(U)$ satisfy the corresponding momentum condition $ \vec \sigma_{p} \cdot \vec \jmath_{p}+ \sigma' j=\sigma k$ thanks to the restriction in \eqref{realham}.  The reality condition \eqref{reaFouT} comes from \eqref{M.realtoreal}.

\noindent {\sc Proof of $(iii)$:} Identity \eqref{2form.rep} follows with an operator  $M(U)$ which has Fourier entries (cfr. \eqref{M.coeff}) 
\be \label{mappaemmelambda}
M_{\vec \jmath_p, j,k}^{\vec \sigma_p, \sigma',\sigma}= \Lambda_{\vec \jmath_p, j,-k}^{\vec \sigma_p, \sigma',\sigma} 
\ee 
where   the Fourier coefficients of $\Lambda$ are defined in  \eqref{lambdone}.
By  \eqref{upperbounL} and Lemma \ref{carasmoofou} the operator $M(U)$ defined by \eqref{mappaemmelambda} is  a matrix of $m$-operators in $\wt \mM_p^m \otimes \cM_2(\C) $.
Note that, in view of \eqref{mappaemmelambda}, the entries of the operator $M(U)$ satisfy the corresponding momentum condition $ \vec \sigma_{p} \cdot \vec \jmath_{p}+ \sigma' j=\sigma k$ thanks to the restriction in \eqref{lambdone}.
The antisymmetry of $\Lambda(U)$ amounts to $M(U)^\top=-M(U)$ and the reality condition \eqref{reaFouL} comes from \eqref{M.realtoreal}.
\end{proof}

We now extend to pluri-homogeneous $k$-forms the typical ``operations'' of differential geometry.

\begin{definition}\label{def:forms}
{\bf (Exterior derivative)}
We define the exterior derivative 
of  a $r$-homogeneous $ k$-form $ \Lambda(U) $  in $   \wt \Uplambda_r^k $ as 
\be\label{dLambda}
\di  \Lambda(U)[V_1,\dots, V_{k+1}]:= \sum_{j=1}^{k+1}(-1)^{j-1} \di_U  \big(\Lambda(U)[V_1,\dots, \hat V_j,\dots  , V_{k+1}] \big)[V_j]  
\ee
where the notation $ [V_1,\dots, \hat V_j,\dots,  V_{k+1}]$ denotes the $k$-tuple obtained excluding the $j$-th component.
\end{definition}

\noindent 
$\bullet$
If $H(U)$ is a $p+2$-homogeneous  $0$-form in $\wt \Uplambda^0_{p+2}$ then 
its exterior differential coincides 
with the usual differential of functions, namely $\di H(U)[ V] = \di_U H(U)[ V]$. Moreover
$ \di H (U)$ is a  $1$-form in $ \wt \Uplambda^{1}_{p+1} $ 
and we define the gradient $ \nabla H(U):= \nabla_U H(U) $ as the vector field in $ \wt \X_{p+1} $
 such that,  cfr. \eqref{1form.rep}, 
\be\label{def:grad}
 \di H (U)[V]:=  \langle \nabla H (U),
 V\rangle_r \, , \quad \forall V \in \dot{H}^{\infty}(\T;\C^{2}) 
  \,  . 
\ee
$\bullet$
If $\theta(U) $ is a $ (p+1)$-homogeneous  $ 1$-form in $ \wt \Uplambda^1_{p+1} $ 
written as in \eqref{1form.rep}
 then its exterior differential is 
\be \label{der:uan}
\di \theta(U)[V_1,V_2] =  \Big\langle \big(\di_U X(U) - \di_U X(U)^\top\big) V_1, V_2 \Big\rangle_r  , \quad X(U):= M(U)U
\ee 
where  $\di_U X(U)$ and $\di_U X(U)^\top$ are, by the first remark below Definition \ref{Def:Maps} and Lemma \ref{primecosette},
 matrices of operators in $ \wtcM_p\otimes \mM_2(\C)$.  
 Moreover 
 $\di \theta(U)$ 
 belongs to  $ \wt \Uplambda^{2}_p $.  
 
\begin{definition}\label{def:dform}
Let $ p, p' \in \N_0 $ and set $r:=p+2-k$.  
Given  a $r$-homogeneous $ k$-form $ \Lambda(U) $  in $   \wt \Uplambda_r^k $  
and a matrix of homogeneous operators $ M(U)$  in 
$  \wtcM_{p'} \otimes \cM_2(\C) $ we define the 
\\[1mm]
$ \bullet $
{\bf Pull back} of $\Lambda(U)$ via the map $\varphi(U) := M(U)U$ as 
\be \label{pullback}
(\varphi^* \Lambda)(U)[V_1,\dots,V_k]:= \Lambda(\varphi(U))[\di_U \varphi(U)V_1,\dots,\di_U \varphi(U)V_k] \, . 
\ee
$ \bullet $  {\bf Lie derivative} of $\Lambda(U)$ along the vector field  $X(U) := M(U)U$ as
\be\label{Lie}
(\mL_X \Lambda)\,(U)[V_1,\dots,V_k]:= \di_U \Lambda(U)[X(U)][V_1,\dots,V_k]+ \sum_{j=1}^k \Lambda(U)[V_1,\dots, \di_U X(U)[V_j],\dots,V_k] .
\ee
$ \bullet $  {\bf Contraction} of $\Lambda(U)$ with the vector field  $X(U) = M(U)U$ as
\be\label{contraction}
(i_X  \Lambda)(U)[V_1,\dots,V_{k-1}]:=   \Lambda(U)[X(U),V_1,\dots,V_{k-1}] \, . 
\ee
\end{definition}

Let  $ \Lambda(U)$ is a $r$-homogeneous $k$-form in $ \wt\Uplambda_{r}^k$,
$ k = 0,1,2 $. Thanks to the first bullet below Definition \ref{Def:Maps}, 
Lemma \ref{primecosette} and $(i)$ and 
$(ii)$ of Proposition \ref{composizioniTOTALI} (see also
\eqref{pullback.form1}-\eqref{pullback.form2}), one has the following:

 \noindent
$\bullet$
 if $\varphi(U)= M(U)U$ is a map where $  M(U)$ is a pluri-homogeneous operator in $ \Sigma_{0}^N \wtcM_{q}  \otimes \cM_2(\C) $
  then  $(\varphi^*\Lambda)(U)$ defined in \eqref{pullback} belongs to  $\Sigma_{r} \wt \Uplambda^{k}_q $;

 \noindent
$\bullet$
if  $ X(U) $ is a homogeneous vector field 
in $ \Sigma_{p'+1} \mathfrak{ \wt X}_q $ for some $p' \in \N_0$,  then
$(\mL_X \Lambda)\,(U)$ defined in \eqref{Lie} belongs to 
$\Sigma_{r + p'  } \wt \Uplambda^{k}_q $; 

 \noindent
$\bullet$
if $k=1,2$ then $(i_X  \Lambda)(U)$ defined in \eqref{contraction} belongs to $\Sigma_{r+p'+1 } \wt \Uplambda^{k-1}_q $;

 \noindent
$\bullet$
the basic identities of differential geometry are directly verified 
for pluri-homogeneous $k$-forms:
Let $p\in \N_0$, $k=0,1,2$. 
Then for any $ \Lambda $ in $  \wt \Uplambda_{p+2}^k $ it results
\be\label{d2=0}
\di^2 \Lambda = 0 \, .
\ee
Given $\varphi(U):= M(U)U$ with
$ M(U) $ in $  \Sigma_0 \wtcM_q \otimes \cM_2(\C)$, it results 
\be\label{commu}
 \di (\varphi^* \Lambda)\,  = (\varphi^* \di \Lambda)\, \, . 
 \ee
 Given also $\phi(U) := M'(U)U$ with $M'(U) \in \Sigma_0 \wt \cM_q \otimes \cM(\C)$, it results
 \be\label{pullback2}
 \phi^* \varphi^* \Lambda = (\varphi \circ \phi)^* \Lambda   \ . 
 \ee
Given $ X(U) $ in $ \Sigma_{1} \mathfrak{ \wt X}_q $ then 
 \be\label{cartan}
 \mL_X\Lambda   = \di \circ i_X \Lambda + i_X \circ \di \Lambda  \, ;
\ee
 \noindent
$\bullet$
if $\varphi(U)=  M'(U)U$ is a map where $  M'(U)$ is a pluri-homogeneous operator in $ \Sigma_{0} \wtcM_{q}  \otimes \cM_2(\C) $ and $\theta(U) \in \wt\Uplambda^1_{p+1}$ and
$\Lambda(U) \in \wt \Uplambda^2_p$ are represented as in 
\eqref{1form.rep}, \eqref{2form.rep} then
  \begin{align}\label{pullback.form1}
  & (\varphi^*\theta)(U)[V] 
  =
  \la \di_U \varphi(U)^\top \, \, M(\varphi(U)) \, \varphi(U) , V \ra  \, ; \\
  \label{pullback.form2}
&  (\varphi^*\Lambda)(U)[V_1, V_2] 
  =
  \la \di_U \varphi(U)^\top \, \, M(\varphi(U)) \, \, \di_U \varphi(U) V_1, V_2 \ra  \, . 
  \end{align}

In Section \ref{sec:Darboux} we shall use the following 
result about Lie derivatives and approximate flows. 

\begin{lemma}\label{lem:pullback}
Let $p, N \in \N$ with  $ p\leq N$. 
Let  $ \theta^\tau $ be a $\tau$-dependent family of $1$-forms in $ \Sigma_{1} \wt \Uplambda_q^1 $ defined for   $\tau \in [0,1]$.  Let $ \mF_{\leq N}^\tau$ be the approximate flow  generated  
by a pluri--homogeneous, $\tau$--dependent 
 smoothing vector field $Y^\tau(U) $ in $ \Sigma_{p+1}
\wt \X_q^{-\varrho} $, defined for $ \tau \in [0,1] $  and some $\varrho \geq 0$  
(cfr. Lemma \ref{extistencetruflow}). Then 
\be
 \label{deriv:pullback}
 \frac{\di}{\di \tau} ( \mF_{\leq N}^\tau)^*\theta^\tau  = ( \mF_{\leq N}^\tau)^*\big[\mL_{Y^\tau}\theta^\tau + \pa_\tau \theta^\tau\big] +\theta_{>N+1}^\tau 
\ee
where $ \theta_{>N+1}^\tau $ 
is a pluri--homogeneous $1$--form in $ \Sigma_{N+2} \wt \Uplambda_q^1 $, with estimates uniform in $ \tau \in [0,1]$.  
\end{lemma}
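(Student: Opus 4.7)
The plan is to compute $\frac{\di}{\di\tau}(\mF^\tau_{\leq N})^*\theta^\tau$ by the chain rule, identify the leading part as $(\mF^\tau_{\leq N})^*[\mL_{Y^\tau}\theta^\tau + \pa_\tau\theta^\tau]$ via the classical formula for the derivative of a pullback along a flow, and then track the deviations produced by the smoothing $R^\tau_{>N}$--correction present in the approximate--flow equation \eqref{appX}.

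First I would use the pullback formula \eqref{pullback} to write
$$(\mF^\tau_{\leq N})^*\theta^\tau(U)[V] = \theta^\tau(\mF^\tau_{\leq N}(U))\bigl[\di_U\mF^\tau_{\leq N}(U)[V]\bigr]$$
and differentiate in $\tau$, obtaining three summands
\begin{align*}
\mathrm{(I)} &:= (\pa_\tau\theta^\tau)(\mF^\tau_{\leq N}(U))\bigl[\di_U\mF^\tau_{\leq N}(U)[V]\bigr],\\
\mathrm{(II)} &:= \di_W\theta^\tau(\mF^\tau_{\leq N}(U))\bigl[\pa_\tau\mF^\tau_{\leq N}(U)\bigr]\bigl[\di_U\mF^\tau_{\leq N}(U)[V]\bigr],\\
\mathrm{(III)} &:= \theta^\tau(\mF^\tau_{\leq N}(U))\bigl[\pa_\tau\di_U\mF^\tau_{\leq N}(U)[V]\bigr].
\end{align*}
Then I substitute into (II) the defining equation $\pa_\tau\mF^\tau_{\leq N}(U)= Y^\tau(\mF^\tau_{\leq N}(U)) + R^\tau_{>N}(U)U$ from \eqref{appX}, and into (III) its $U$-derivative $\pa_\tau\di_U\mF^\tau_{\leq N}(U)[V] = \di_W Y^\tau(\mF^\tau_{\leq N}(U))[\di_U\mF^\tau_{\leq N}(U)[V]] + \di_U\bigl(R^\tau_{>N}(U)U\bigr)[V]$. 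Recognising $\di_W\theta^\tau(\cdot)[Y^\tau(\cdot)][\cdot] + \theta^\tau(\cdot)[\di_W Y^\tau(\cdot)[\cdot]]$ as $(\mL_{Y^\tau}\theta^\tau)(\cdot)[\cdot]$ in view of Definition \ref{def:dform}, the sum of the three non-error contributions equals $(\mF^\tau_{\leq N})^*[\pa_\tau\theta^\tau + \mL_{Y^\tau}\theta^\tau]$.

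The remaining error terms are
\begin{align*}
\mathrm{err}_1(U)[V] &:= \di_W\theta^\tau(\mF^\tau_{\leq N}(U))\bigl[R^\tau_{>N}(U)U\bigr]\bigl[\di_U\mF^\tau_{\leq N}(U)[V]\bigr], \\
\mathrm{err}_2(U)[V] &:= \theta^\tau(\mF^\tau_{\leq N}(U))\bigl[\di_U\bigl(R^\tau_{>N}(U)U\bigr)[V]\bigr],
\end{align*}
and I set $\theta^\tau_{>N+1}:=\mathrm{err}_1+\mathrm{err}_2$. To verify that $\theta^\tau_{>N+1}\in\Sigma_{N+2}\wt\Uplambda_q^1$, I expand each factor in homogeneous components: using Lemma \ref{lem:ident.forms} to represent each $\theta^\tau_r$ by an operator, the composition results of Proposition \ref{composizioniTOTALI} ensure that both error expressions define real, translation invariant, pluri-homogeneous $1$-forms. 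Counting the minimal degrees in $U$, and recalling that $R^\tau_{>N}(U)U$ has degree $\geq N+2$ in $U$, that $\theta^\tau$ has minimal degree $1$, that $\mF^\tau_{\leq N}(U) = U + O(\mathrm{deg}\geq p+1)$ and that $\di_U\mF^\tau_{\leq N} = \mathrm{Id} + O(\mathrm{deg}\geq p)$, every resulting multilinear form turns out to have total degree $\geq N+2$ in $U$ (roughly, the $r$ copies of $\mF^\tau_{\leq N}(U)$ contribute at least $r\geq 1$ derivatives of $U$, and the $R^\tau_{>N}$ factor contributes $N+1$ more, either directly in $\mathrm{err}_1$ or through $\di_U$ in $\mathrm{err}_2$).

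The main obstacle is not the algebraic manipulation, which mimics the smooth differential-geometric computation, but the book-keeping needed to certify $\theta^\tau_{>N+1}$ as a bona fide element of $\Sigma_{N+2}\wt\Uplambda_q^1$: one must project onto homogeneous components of $\theta^\tau$, $Y^\tau$, $F^\tau_{\leq N}$ and $R^\tau_{>N}$, apply the operator-level composition estimates of Proposition \ref{composizioniTOTALI} to each piece, and verify that the resulting multilinear estimates match \eqref{prop.form}, together with the symmetry, reality and translation-invariance properties required by the definition of pluri-homogeneous $1$-forms.
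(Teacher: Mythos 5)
Your proposal is correct and follows essentially the same route as the paper: differentiate the pullback, substitute the approximate-flow equation \eqref{appX} and its $U$-differential to extract $(\mF^\tau_{\leq N})^*[\mL_{Y^\tau}\theta^\tau+\pa_\tau\theta^\tau]$, and collect the two $R^\tau_{>N}$-error terms into $\theta^\tau_{>N+1}$, which is exactly the paper's expression \eqref{theta>N.lie}. The paper certifies $\theta^\tau_{>N+1}\in\Sigma_{N+2}\wt\Uplambda^1_q$ by the same operatorial representation (Lemma \ref{lem:ident.forms}) and composition results (Proposition \ref{composizioniTOTALI}, Lemma \ref{primecosette}) that you invoke, so your degree bookkeeping matches the intended argument.
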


\begin{proof}
 Recalling  the definition of pullback \eqref{pullback} and using that $ \mF_{\leq N}^\tau$ fulfills the approximate equation \eqref{appX} (with $Y^\tau$ replacing $X^\tau $) we get 
$$
\begin{aligned}
&\frac{\di}{\di \tau} ( \mF_{\leq N}^\tau)^*\theta^\tau \, (U)[ \hat U]  =  \frac{\di}{\di \tau} \big(\theta^\tau(\mF_{\leq N}^\tau(U))[ \di_U \mF_{\leq N}^\tau(U) \hat U] \big)\\
& =  \pa_\tau \theta^\tau(\mF_{\leq N}^\tau(U))[ \di_U \mF_{\leq N}^\tau(U) \hat U]+\di_V \theta^\tau(\mF_{\leq N}^\tau(U))[ Y^\tau( \mF_{\leq N}^\tau(U)) + R_{>N}^\tau(U)U] [ \di_U \mF_{\leq N}^\tau (U) \hat U]  \\
& \quad + \theta^\tau(\mF_{\leq N}^\tau(U))[ \di_V Y^\tau( \mF^\tau_{\leq N}(U))\di_U \mF^\tau_{\leq N}(U) \hat U+ \di_U ( R^\tau_{>N}(U) U )  \hat U]\\
&\!\!\!\!\!\!\!\!\!\!\stackrel{\eqref{Lie},\eqref{pullback}}{=}  ( \mF_{\leq N}^\tau)^* \big[\mL_{Y^\tau}\theta^\tau   +\pa_\tau \theta^\tau \big](U) [ \hat U]+ \theta_{>N+1}^\tau(U)[\hat U] 
\end{aligned}
$$
where 
\be\label{theta>N.lie}
 \theta_{>N+1}^\tau (U)[\hat U]:= \di_V \theta^\tau(\mF_{\leq N}^\tau(U))[  R_{>N}^\tau(U)U] [ \di_U \mF_{\leq N}^\tau (U) \hat U]+\theta^\tau(\mF_{\leq N}^\tau(U))[ \di_U ( R^\tau_{>N}(U) U )  \hat U]
 \, . 
\ee
We now verify that
$ \theta_{>N+1}^\tau  $ is a $ 1$-form in $  \Sigma_{N+2} \wt \Uplambda_q^1$.
Representing, by  Lemma \ref{lem:ident.forms},   
$$
\theta^\tau (U)[\hat U] = \langle M^\tau (U)U, \hat U\rangle_r \quad \text{with} \quad 
M^\tau (U) \in  \Sigma_{0} \wtcM_q \otimes \cM_2(\C) \, , 
$$ 
 and since 
 $\cF^\tau_{\leq N}(U) = (\id + R^{\tau}_{\leq N}(U))U$ with 
 $R^{\tau}_{\leq N}(U) $ in  
$ \Sigma_{p}^N \wt \cR^{-\varrho}_q \otimes \cM_2(\C)$,  
the $ 1$-form  $\theta_{>N+1}^\tau $ in \eqref{theta>N.lie} reads
\begin{align*}
\theta^\tau_{>N+1}(U)[\hat U] = \langle M'(U)U, \hat U \rangle_r   \quad \text{where} \quad 
M'(U) := &
\left[  \di_U \mF_{\leq N}^\tau(U) \right]^\top \, \left.\di_V (M^\tau (V)V)\right|_{V = \mF_{\leq N}^\tau(U)} \, R_{>N}^\tau(U) \\
&  \! \! \! + 
 [\di_U ( R^\tau_{>N}(U)U)]^\top M^\tau (\mF_{\leq N}^\tau(U))(\id + R^{\tau}_{\leq N}(U)) 
\end{align*}
is a $ \tau$-dependent 
matrix of operators in $ \Sigma_{N+1} \wtcM_q \otimes \cM_2(\C)$, with estimates uniform 
in $ \tau \in [0,1] $,  
because $R^\tau_{>N}(U) $ are smoothing operators in $  \Sigma_{N+1} \wt \cR^{-\varrho}_q \otimes \cM_2(\C)$,  and using $(i)$, $(ii)$ of  Proposition \ref{composizioniTOTALI}  and Lemma \ref{primecosette}. 
Thus $\theta_{>N+1}^\tau  $ is a $ 1$-form in $  \Sigma_{N+2} \wt \Uplambda_q^1$,
with estimates uniform 
in $ \tau \in [0,1] $.
\end{proof}

\section{Hamiltonian formalism}

Along the paper we consider real Hamiltonian systems and their symplectic structures 
in real,  complex and Fourier coordinates, that we describe in Section \ref{sec:HAMbasic}. 
In Section \ref{sec:uptoN}
we introduce the notion of vector fields which are Hamiltonian up to homogeneity $ N $ and we prove 
that the classical 
Hamiltonian theory is 
preserved ``up to homogeneity $ N $". 
In Section \ref{sec:SLF} we present results about linear symplectic flows. In Section \ref{sec:LHam}
discuss Hamiltonian systems with a paradifferential structure. 

\subsection{Hamiltonian and  symplectic structures}\label{sec:HAMbasic}

\paragraph{\bf Real Hamiltonian systems.}
We equip  the real phase 
 space $\dot  L^2_{r} \times \dot L^2_r $ 
 with the scalar product in \eqref{real.bil.form.intro} and the symplectic form
\begin{equation}
\label{sympl.form}
\Omega_0\left (\vect{\eta_1}{ \zeta_1}, \vect{\eta_2}{ \zeta_2} \right) := 
\Big\la E_0
\vect{\eta_1}{\zeta_1}, \vect{\eta_2}{\zeta_2} \Big\ra_r 
= - \la \zeta_1, \eta_2 \ra_{\dot L_r} + \la \eta_1, \zeta_2 \ra_{\dot L_r}
\end{equation}
where $ E_0 $ is the symplectic operator acting on $\dot L^2_r\times \dot L^2_r$ defined by 
\be \label{Ezzero}
 E_0 := \begin{pmatrix}
0 & -{\rm Id} \\
{\rm 
Id} & 0
\end{pmatrix} \, . 
\ee
The Hamiltonian vector field $ X_H $ associated to a (densely defined) 
Hamiltonian function 
$ H : \dot L^2_{r} \times \dot L^2_r \to \R $ is characterized as  the unique 
vector field satisfying 
\begin{equation}\label{defHS}
\Omega_0\left( X_H(\eta, \zeta), \vect{\breve \eta}{ \breve \zeta}\right) = \di H(\eta, \zeta)\left[\vect{\breve \eta}{ \breve \zeta}\right] , \quad \forall \vect{\breve \eta}{\breve \zeta} \in \dot  L^2_{r} \times \dot L^2_r \, . 
\end{equation}
As
\begin{align}
\di H(\eta, \zeta)\left[\vect{\breve \eta}{\breve \zeta}\right] & = \di_\eta H(\eta, \zeta)[\breve \eta] + \di_\zeta H(\eta, \zeta)[\breve \zeta] 
 = \Big\la \vect{\grad_\eta H}{\nabla_\zeta H}, \vect{\breve \eta}{\breve \zeta} \Big\ra_r \label{id1f}
\end{align}
where $ (\nabla_{\eta}H, \nabla_\zeta H) \in \dot L^2_r \times \dot L^2_{r}   $ 
denote the $\dot  L^2_r $- gradients, 
the  Hamiltonian vector field is given by 
\be\label{realHS}
X_H = J \vect{\grad_\eta H}{\grad_\zeta H}  = \vect{\grad_\zeta H}{-\grad_\eta H} 
\quad \text{where} \quad J := E_0^{-1} = \begin{pmatrix}
0 & \uno \\
-\uno & 0
\end{pmatrix}.
\ee 
We also denote by 
\be\label{thetezero}
\theta_0(\eta,\zeta)\Big[\vect{\breve \eta}{\breve \zeta}\Big]:= \frac12 \Big\la E_0 \vect{\eta}{\zeta}, \vect{\breve \eta}{\breve \zeta} \Big\ra_r
\ee
the Liouville $1$--form. Note that $ \di \theta_0= \Omega_0$, where the  
exterior differential is recalled in Section \ref{SGeo}.

\paragraph{\bf Real linear Hamiltonian systems.}
We now consider the most general quadratic real Hamiltonian
\be\label{defbA}
H(\eta, \zeta)  = \frac12 
\Big\la \bA  \vect{\eta}{\zeta}, \vect{\eta}{\zeta} \Big\ra_r 
 , \quad
\bA := \begin{pmatrix}
A & B \\
B^\top & D 
\end{pmatrix} \, , \quad \bA^\top = \bA \, ,
\ee
where $A, B, D$ are linear real  operators acting on $ \dot L^2_r $
and the operators $ A, D $ are symmetric, i.e. $A^\top = A$, $D^\top = D$,  where $ A^\top $ denotes the transpose operator with respect to the 
real scalar product $\la \ , \ \ra_{\dot L^2_r} $.
 $ \bA^\top $ is the transpose  
with respect to  the scalar product $\la \cdot, \cdot \ra_r $ in \eqref{real.bil.form.intro}. 
\begin{definition}{\bf (Linear Hamiltonian operator)} 
A linear  operator  $ \cA $ acting on (a dense subspace) of $ \dot L^2_r \times \dot L^2_r $
is Hamiltonian if it 
has the form
\begin{equation}\label{cA}
\cA = J \bA = J  \begin{pmatrix}
A & B \\
B^\top & D
\end{pmatrix} , \qquad \bA = \bA^\top \, , 
\end{equation}
with $A, B, D$ real operators satisfying $ A = A^\top $ and $ D = D^\top $; equivalently if 
 $ E_0 \cA = \bA $ is symmetric with respect to the 
 real scalar product $ \la \cdot, \cdot \ra_r$ defined in \eqref{real.bil.form.intro}. 
 \end{definition}

We now provide the  characterization of a real linear Hamiltonian 
paradifferential operator.
In view of \eqref{cA} and \eqref{A1b} a
 matrix of paradifferential operators is Hamiltonian if it has the form 
\be\label{HSrealpara}
J \Opbw{ \begin{bmatrix}
a(x, \xi) & b(x, \xi) \\
b(x, - \xi) & d(x, \xi)
\end{bmatrix} } =
\Opbw {\begin{bmatrix}
b(x, - \xi) &d(x, \xi)  \\
- a(x, \xi)  & - b(x, \xi)
\end{bmatrix} }
\ee
with 
 \begin{equation}\label{HSrealpara1}
 \begin{aligned} 
& a(x, \xi) \in \R , \ a(x, \xi) = a(x, - \xi) \, , 
\quad d(x, \xi) \in \R , \ d(x, \xi) = d(x, - \xi) \, , \quad  b(x, -\xi)  = \bar{b(x, \xi)} \, . 
 \end{aligned}
 \end{equation}

\paragraph{\bf Real Hamiltonian systems in complex coordinates.}
We now describe the above real Hamiltonian systems
in the  complex coordinates defined by the change of variables 
\begin{equation}
\label{cc}
\vect{\eta}{\zeta} = \cC \vect{u}{\bar u} \, ,
\quad 
\cC := \frac{1}{\sqrt 2} \left(\begin{matrix}
\uno & \uno \\ -\im & \im
\end{matrix}\right) \ ,\quad \cC^{-1}:= \frac{1}{\sqrt 2}\left(\begin{matrix}
\uno & \im \\ \uno & -\im
\end{matrix}\right)\,,\quad \cC^\top := \frac{1}{\sqrt 2} \left(\begin{matrix}
\uno &-\im  \\ \uno & \im
\end{matrix}\right) \,.
\end{equation}
 Note that $ \cC $ is a map between the real subspace of vector functions 
$\dot L^2_\R (\T, \C^2)$ into 
$\dot  L^2_r \times \dot L^2_r $. 
In the sequel to save space we denote $ \vect{u}{\bar u} $ also as $ (u, \bar u) $.

The pull-back  $\Omega_c := \cC^* \Omega_0$ 
of the symplectic form $\Omega_0$ in \eqref{sympl.form} is 
\be\label{sfc}
\Omega_c\left(\vect{u_1}{ \bar u_1}, \vect{u_2}{ \bar u_2}\right) 
=  \Big\la \cC^\top E_0 \cC \vect{u_1}{ \bar u_1}, \vect{u_2}{ \bar u_2} \Big\ra_r 
 =  \Big\la E_c \vect{u_1}{ \bar u_1}, \vect{u_2}{ \bar u_2} \Big\ra_r\
\ee
where $ E_c $ is the symplectic operator acting on $\dot L^2_\R (\T, \C^2)$
\be \label{Ecci}
E_c := \begin{pmatrix}
0 &  - \im \\
\im & 0 
\end{pmatrix} = \im E_0 \, .
\ee
Remark that
$ E_c^\top = - E_c $ and $ E_c^2 = \uno $. Similarly the Liouville $ 1$-form $\theta_0$ in \eqref{thetezero} is transformed into the symplectic form $\theta_c := \cC^* \theta_0$ given by
\be\label{defthetac}
\theta_c(U) \left[ V\right] = \frac{1}{2} \la E_c U, V \ra_r \, , \quad
\forall \ U = \vect{u}{\bar u} \, , \ V = \vect{v}{\bar v} \, 
\ee
and it results 
\be\label{exactc}
\di \theta_c = \Omega_c \, .
\ee
Next we show how the differential and gradient of a Hamiltonian transform under the complex change of coordinates. 
The pull-back under $ \cC $ of the 
$1$-form (cfr.  \eqref{id1f})
$\di H(\eta, \zeta)[\cdot] = \Big\la \begin{pmatrix}
\grad_\eta H \\
\grad_\zeta H
\end{pmatrix}, \cdot  \Big\ra_r  
$
is 
\begin{align} 
(\cC^*\di H)(u, \bar u)[(v,\bar v)]
 = 
 \Big\la  \begin{pmatrix}
\grad_u H \\
\grad_{\bar u} H
\end{pmatrix} \Big\vert_{\cC(u, \bar u)}, \   
 \begin{pmatrix}
v \\
\bar v 
\end{pmatrix}  \Big\ra_r  \label{ultimac}
\end{align} 
where 
\begin{equation}\label{gradcsi}
\grad_u H :=\frac{1}{\sqrt 2}\left( \grad_\eta H  - \im \grad_\zeta H \right)\vert_{\cC(u, \bar u)} 
\, , \quad 
\grad_{\bar u} H := \frac{1}{\sqrt 2} \left(\grad_\eta H  + \im \grad_\zeta H\right)\vert_{\cC(u, \bar u)} \, . 
\end{equation}
Furthermore, by  \eqref{cc}, 
\begin{align*}
(\cC^*\di H)(u, \bar u)[(v,\bar v)]
  = \di_u H \vert_{\cC(u, \bar u)}[v] + \di_{\bar u} H \vert_{\cC(u, \bar u)}[ \bar v] 
\end{align*}
having  defined
$$
\di_u H :=\frac{1}{\sqrt 2}\left( \di_\eta H  - \im \di_\zeta H \right)\vert_{\cC(u, \bar u)} \, ,
\quad
\di_{\bar u} H := \frac{1}{\sqrt 2} \left(\di_\eta H  + \im \di_\zeta H\right)\vert_{\cC(u, \bar u)} \, .
$$
In the sequel we also use the compact notation, 
given $ U = (u, \bar u) $, 
$$
\di_U H(U)[\widehat U] := \di_uH(U)[\widehat u] + \di_{\bar u}H(U)[\bar{\widehat u}] \, ,
\quad \forall \widehat U = \vect{\widehat u}{\bar  {\widehat u}} \, . 
$$
\paragraph{\bf Real Hamiltonian vector fields in complex coordinates.}\label{realHam.com}
Given a real valued Hamiltonian $ H(\eta, \zeta)$, consider the Hamiltonian in complex coordinates $H_c:= H \circ {\cal C} $ which is a function of $(u,\bar u)$.
Recalling  the characterization \eqref{defHS} of the Hamiltonian vector field  and \eqref{ultimac},   
the associated Hamiltonian vector field is   
\be\label{complexHS}
X_{H_c}(U):= \cC^{-1} (X_{H})_{|\cC (U)} = J_c \vect{\grad_u H_c }{\grad_{\bar u} H_c}  = J_c \nabla H_c(U) = 
\vect{- \im \grad_{\bar u} H_c}{ \im \grad_u H_c}
\ee
where  $ J_c := E_c^{-1} = E_c $ is the Poisson tensor in \eqref{JcH.intro}.
One has also the characterization 
\be \label{complexvecham}
\Omega_c( X_{H_c}, \cdot )= \di_U H_c(U)[\cdot ] \, .
\ee
 In  case $H$ is the quadratic form \eqref{defbA}, 
the transformed Hamiltonian $H_c = H \circ \cC$ is given by 
\begin{align}
 H_c(u, \bar u)  &  =  \frac12 \Big\la \bR \vect{u}{\bar u}, \vect{u}{\bar u} \Big\ra_r 
 , 
 \quad
\bR:= \cC^\top \bA \cC := \begin{pmatrix}
R_1 & R_2 \\
\bar R_2 & \bar R_1
\end{pmatrix} 
 \label{Hc}
\end{align}
where  $R_1 := (A-D) - \im (B+B^\top)$,  $R_2 := (A+D) + \im (B-B^\top)$.
The operator $\bR$ is  real-to-real  according to \eqref{vinello}. 
In addition, since $ \bA $ is symmetric, cfr. \eqref{defbA},
the operator $\bR$ is symmetric with respect to 
the real non-degenerate bilinear form $\la  \cdot, \cdot \ra_r$, namely 
\begin{equation}
\label{R.c}
\bR = \bR^\top \, , \quad \text{i.e.} \ \ 
R_1 = R_1^\top \, , \ \ R_2^* =  R_2 \, . 
\end{equation}

\begin{definition} {\bf (Linear Hamiltonian operator in complex coordinates)} \label{def:LH}
A real-to-real linear operator $ J_c \bM $ is 
{\em linearly Hamiltonian} if 
 $ \bM = \bM^\top $ is symmetric with respect to 
the  non-degenerate bilinear form $\la  \cdot, \cdot \ra_r$, cfr. \eqref{R.c}. 
\end{definition}

In view of \eqref{A1b} and \eqref{R.c}
a matrix of paradifferential real-to-real complex operators is  linearly 
 Hamiltonian if 
\be\label{LHS-c}
J_c 
\Opbw{\begin{matrix}
b_1(U;t, x,\xi)  & b_2(U; t, x, \xi) \\
 \bar{b_2(U;t, x, -\xi)} &  \bar{b_1(U; t, x, -\xi)}
\end{matrix}} \, , 
\quad 
\begin{cases}
b_1(U;t, x, - \xi) = b_1(U;t, x, \xi) \, ,  \\
b_2(U;t, x, \xi) \in \R \, , 
\end{cases}
\ee
namely $ b_1 $ is  even in $ \xi $ and $ b_2 $ is  real valued.

\begin{definition} {\bf  (Linearly symplectic map)} \label{LS}
 A real-to-real linear transformation
  $ \mA$ is linearly symplectic if $ \cA^* \Omega_c = \Omega_c $
where $\Omega_c$ is defined in \eqref{sfc}, namely 
$ \mA^\top \, E_c \, \mA = E_c $, 
where $E_c$ is the symplectic operator defined in \eqref{Ecci}. 
\end{definition}

\noindent{\bf Hamiltonian systems in Fourier basis.}
Given a Hamiltonian $H(U)$ expanded as in \eqref{polham} we characterize its Hamiltonian vector field. 
 We decompose each Fourier coefficients as 
$u_j = \frac{x_j + \ii y_j}{\sqrt{2}}$, $ \ov{u_j} = \frac{x_j - \ii y_j}{\sqrt{2}}$,  
where $ x_j := \sqrt{2} \Re (u_j) $ and  $ y_j := \sqrt{2} \Im (u_j) $ and we define
\be\label{defpaupaubar} 
\pa_{u_j}:= \frac{1}{\sqrt{2}}\big( \pa_{x_j} - \ii \pa_{y_j} \big) \, , 
 \quad  \pa_{\bar{u_j}} := \frac{1}{\sqrt{2}}\big( \pa_{x_j} +\ii \pa_{y_j} \big) \, , 
\ee
so that 
$ \pa_{u_j^{\sigma} } u_j^\sigma = 1 $, for any $  \sigma = \pm $, and 
$ \pa_{u_j^{\sigma}} u_j^{\sigma'} = 0 $, for any $ \sigma \sigma' = -1 $.
For a real valued Hamiltonian $ H $ it results 
\be\label{readdH}
\overline{\pa_{u_j} H} = \pa_{\overline{u_j}} H   \, . 
\ee
We now write a Hamiltonian vector field \eqref{complexHS} in the coordinates 
$ (u_j)_{j \in \Z \setminus \{0\}} $. 
For notational simplicity we also denote $ u \equiv (u_j)_{j \in \Z 
\setminus \{0\}} $.  
We first note that, by \eqref{sfc} and \eqref{fourierseries}, 
the symplectic form \eqref{sfc} reads, for any
$ U  = (u, \bar u)$, $ V = (v, \bar v)$, 
\be\label{Fex}
\Omega_c\big( U, V\big) = 
\sum_{j \in \Z \setminus \{0\} } - \ii \ov{u_j} v_j + \ii u_j \ov{v_j} =
 - \im   \sum_{j \in \Z \setminus \{0\} \atop \sigma \in \{\pm\} } \sigma u_j^{-\sigma} v_j^\sigma  \, . 
\ee

\begin{lemma}
{\bf (Fourier expansion of a Hamiltonian vector field)}
The Fourier 
components of the Hamiltonian vector field  associated to a real Hamiltonian $ H (U)$
are, for any  $ \sigma = \pm $, $ k \in \Z \setminus \{0\} $,  
\be \label{hamvecco0}
\big( J_c\nabla H(U)\big)^\sigma_k = -\ii \sigma \pa_{u_k^{-\sigma}} H(U) \, . 
\ee
In particular, if the Hamiltonian $ H $ is expanded as in \eqref{polham}, then 
\be \label{hamvecco}
\big( J_c\nabla H(U)\big)^\sigma_k = 
 -\ii \sigma 
 \!\!\!\!\!\!\!\!\!\!\! \sum_{
(\vec{\jmath}_{p+1},k, \vec{\sigma}_{p+1}, -\sigma)\in 
\fT_{p+2}
 } \!\!\!\!\!\!\!\!\!\!\!\!\!\!\!\!\!
   H_{\vec{\jmath}_{p+1},k}^{\vec{\sigma}_{p+1},-\sigma } u_{\vec{\jmath}_{p+1}}^{\vec{\sigma}_{p+1}} \, . 
\ee
\end{lemma}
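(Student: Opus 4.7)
The plan is to derive \eqref{hamvecco0} from the defining identity \eqref{complexvecham}, namely $\Omega_c(J_c \nabla H(U), V) = \di_U H(U)[V]$ for every $V = (v, \bar v)$, and then read off Fourier components by polarization. Using the expression \eqref{Fex} for $\Omega_c$, the left-hand side, with $X := J_c \nabla H(U)$, reads
\[
\Omega_c(X, V) = -\ii \sum_{j \in \Z \setminus \{0\},\, \sigma = \pm} \sigma\, X_j^{-\sigma}\, v_j^\sigma.
\]
For the right-hand side, since $V = (v, \bar v)$ is encoded by the Fourier coefficients $v_j^\sigma$, $\sigma = \pm$, and the derivatives $\pa_{u_j^\sigma}$ defined in \eqref{defpaupaubar} satisfy $\pa_{u_j^\sigma} u_k^{\sigma'} = \delta_{jk}\delta_{\sigma\sigma'}$, one has
\[
\di_U H(U)[V] = \sum_{j \in \Z \setminus \{0\},\, \sigma = \pm} \pa_{u_j^\sigma} H(U)\, v_j^\sigma.
\]
Equating coefficients of $v_k^\sigma$ gives $-\ii \sigma X_k^{-\sigma} = \pa_{u_k^\sigma} H(U)$; relabeling $\sigma \to -\sigma$ yields $X_k^\sigma = -\ii \sigma \pa_{u_k^{-\sigma}} H(U)$, which is \eqref{hamvecco0}.

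For the explicit formula \eqref{hamvecco}, I would substitute the expansion \eqref{polham} into $\pa_{u_k^{-\sigma}} H$. Differentiating the monomial $u_{\vec{\jmath}_{p+2}}^{\vec{\sigma}_{p+2}}$ with respect to $u_k^{-\sigma}$ uses the Leibniz rule and selects, for each index position $a \in \{1, \ldots, p+2\}$ with $(j_a, \sigma_a) = (k, -\sigma)$, the corresponding product of the remaining $p+1$ factors. By the symmetry property \eqref{symham} of the coefficients $H_{\vec{\jmath}_{p+2}}^{\vec{\sigma}_{p+2}}$, each of the $p+2$ such terms contributes identically, exactly cancelling the prefactor $\frac{1}{p+2}$ in \eqref{polham}. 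The momentum restriction $\vec{\sigma}_{p+2} \cdot \vec{\jmath}_{p+2} = 0$ defining $\fT_{p+2}$ in \eqref{fTset} transforms, after fixing $(j_a, \sigma_a) = (k, -\sigma)$, precisely into the restriction $(\vec{\jmath}_{p+1}, k, \vec{\sigma}_{p+1}, -\sigma) \in \fT_{p+2}$ on the remaining indices.

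There is no genuine obstacle here; the argument is a direct computation once both sides are written in Fourier. The only bookkeeping point worth care is the combinatorial cancellation of the $1/(p+2)$ normalization by the $p+2$ equal contributions coming from the symmetry of $H_{\vec{\jmath}_{p+2}}^{\vec{\sigma}_{p+2}}$, and the accompanying consistency of the momentum constraint in $\fT_{p+2}$ under singling out one Fourier index. Combining \eqref{hamvecco0} with this computation of $\pa_{u_k^{-\sigma}} H(U)$ yields \eqref{hamvecco}.
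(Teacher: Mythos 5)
Your proof is correct and follows essentially the same route as the paper: derive \eqref{hamvecco0} from the characterization \eqref{complexvecham} together with the Fourier expression \eqref{Fex} of $\Omega_c$ (the reality of $H$, i.e.\ \eqref{readdH}, justifies equating coefficients of $v_k^\sigma$ even though $v_k^-=\overline{v_k^+}$ on the real subspace), and then obtain \eqref{hamvecco} by differentiating \eqref{polham}, using the symmetry \eqref{symham} to cancel the factor $\tfrac{1}{p+2}$ and the momentum constraint in $\fT_{p+2}$ to identify the index set.
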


\begin{proof}
The expression \eqref{hamvecco0} is a consequence of \eqref{Fex}
and the definition \eqref{complexvecham} of a Hamiltonian vector field, using 
\eqref{readdH}. 
Finally  \eqref{hamvecco} is a consequence of  \eqref{hamvecco0}, \eqref{realham}, \eqref{symham}. 
\end{proof}

 By \eqref{hamvecco} 
and  \eqref{polvect} we deduce the following characterization of Hamiltonian vector fields:

\begin{lemma}  {\bf (Characterization of $(p+1)$--homogeneous Hamiltonian vector fields)}\label{carhamvec}
A $(p+1)$--homogeneous 
real-to-real  vector field $X(U) \in \wt \X_{p+1} $ of the form \eqref{polvect} 
 is Hamiltonian if and only if  the coefficients 
\be \label{hamvec}
H_{\vec \jmath_{p+1},k}^{ \vec{\sigma}_{p+1},\sigma }:=
- \ii \sigma  X_{\vec{\jmath}_{p+1},k}^{\vec{\sigma}_{p+1}, -\sigma} , 
\qquad 
\forall  ( \vec \jmath_{p+1},k, \vec \sigma_{p+1}, \sigma) \in \fT_{p+2} \, ,  
\ee
satisfy \eqref{symham}, \eqref{realham} and \eqref{upperbounH}.
In such a case $ X(U) $ is the Hamiltonian vector field generated by 
$$
H(U) = \frac{1}{p+2}\sum_{
(\vec \jmath_{p+2}, \vec \sigma_{p+2}) \in \fT_{p+2}
 } 
H_{\vec \jmath_{p+2}}^{\vec \sigma_{p+2} } 
u_{\vec \jmath_{p+2}}^{\vec \sigma_{p+2}} \, , \quad 
H_{\vec \jmath_{p+2}}^{\vec \sigma_{p+2}}  := 
- \ii \sigma_{p+2}  X_{\vec{\jmath}_{p+1},j_{p+2}}^{\vec{\sigma}_{p+1}, -\sigma_{p+2}} \,  .
$$
\end{lemma}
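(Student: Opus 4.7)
\medskip

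\noindent\textbf{Proof plan for Lemma \ref{carhamvec}.} The strategy is to read off the equivalence directly from the explicit Fourier-side formula \eqref{hamvecco} for the components of a Hamiltonian vector field. This reduces the characterization to matching Fourier coefficients and checking that the postulated symmetry/reality/bound conditions are exactly those required to ensure that a formal power series in $u_j^\sigma$ defines a genuine $(p+2)$-homogeneous $0$-form.

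\smallskip

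\noindent\emph{Necessity.} First I would assume $X(U) = J_c\nabla H(U)$ for some real-valued $H\in\wt\Uplambda^0_{p+2}$, expanded as in \eqref{polham}. Comparing the Fourier expansion \eqref{polvect} of $X(U)$ with \eqref{hamvecco} and using the uniqueness of Fourier coefficients yields $X_{\vec\jmath_{p+1},k}^{\vec\sigma_{p+1},-\sigma} = -\im\sigma H_{\vec\jmath_{p+1},k}^{\vec\sigma_{p+1},-\sigma}$, which, after relabeling $(k,-\sigma)\leadsto(j_{p+2},\sigma_{p+2})$, is precisely \eqref{hamvec}. The conditions \eqref{symham}, \eqref{realham}, \eqref{upperbounH} for the $H$-coefficients then hold by the very definition of a $(p+2)$-homogeneous $0$-form (cfr.\ the bullet on Fourier representation following \eqref{polham}).

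\smallskip

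\noindent\emph{Sufficiency.} Conversely, given $X(U)\in \wt\X_{p+1}$ real-to-real of the form \eqref{polvect}, I would use \eqref{hamvec} to define scalar coefficients $H_{\vec\jmath_{p+2}}^{\vec\sigma_{p+2}}$ and set
\[
H(U) := \frac{1}{p+2}\sum_{(\vec\jmath_{p+2},\vec\sigma_{p+2})\in\fT_{p+2}} H_{\vec\jmath_{p+2}}^{\vec\sigma_{p+2}} u_{\vec\jmath_{p+2}}^{\vec\sigma_{p+2}}.
\]
I then need to verify: (a) the momentum restriction defining $\fT_{p+2}$, which follows from the translation-invariance constraint $(\vec\jmath_{p+1},k,\vec\sigma_{p+1},-\sigma)\in \fT_{p+2}$ built into the expansion \eqref{polvect} of $X$; (b) the symmetry \eqref{symham}, reality \eqref{realham}, and bound \eqref{upperbounH}, which are exactly the standing hypotheses. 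Hence $H\in\wt\Uplambda^0_{p+2}$. Finally, I would check $J_c\nabla H = X$ by a one-line computation from \eqref{hamvecco}, \eqref{hamvec} and $(-\im\sigma)(\im\sigma)=1$:
\[
(J_c\nabla H(U))^\sigma_k \;=\; -\im\sigma \!\!\!\!\!\!\!\!\sum_{(\vec\jmath_{p+1},k,\vec\sigma_{p+1},-\sigma)\in\fT_{p+2}}\!\!\!\!\!\!\!\!\!\!\!\! H_{\vec\jmath_{p+1},k}^{\vec\sigma_{p+1},-\sigma}\, u_{\vec\jmath_{p+1}}^{\vec\sigma_{p+1}} \;=\; \sum X_{\vec\jmath_{p+1},k}^{\vec\sigma_{p+1},\sigma}\, u_{\vec\jmath_{p+1}}^{\vec\sigma_{p+1}} \;=\; X(U)^\sigma_k.
\]

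\smallskip

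\noindent\emph{Main subtlety.} The only non-trivial point to keep track of is the bookkeeping of symmetries: the coefficients of a vector field $X\in\wt\X_{p+1}$ are only symmetric in their first $p+1$ indices by \eqref{symmetric}, whereas a $0$-form requires \emph{full} symmetry in all $p+2$ indices. This extra symmetry—relating the ``output'' index $(k,\sigma)$ to the ``input'' indices—is precisely the content of the hypothesis \eqref{symham} under the identification $j_{p+2}\leftrightarrow k$, $\sigma_{p+2}\leftrightarrow -\sigma$. Thus \eqref{symham} plays here the role of the Schwarz/closedness condition $\di(\di H)=0$ that distinguishes Hamiltonian vector fields among general ones; once this compatibility is noted, the rest of the argument is routine verification at the level of Fourier coefficients.
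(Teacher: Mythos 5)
Your proposal is correct and follows essentially the same route as the paper, which treats the lemma as an immediate consequence of comparing the Fourier expansion \eqref{polvect} of a vector field with the expression \eqref{hamvecco} of a Hamiltonian vector field; your observation that \eqref{symham} (full symmetry including the last index) is the only genuinely extra condition, the reality and growth conditions being inherited from \eqref{X.real} and \eqref{smoocara20}, is exactly the right bookkeeping.
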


The Poisson bracket between two real functions $ F, G $ is, by 
\eqref{complexvecham}, \eqref{Fex}, \eqref{hamvecco0}, \eqref{readdH}, 
\be\label{Pois}
\{F,G\} := \di F (X_G) = \Omega_c (X_F,X_G) = 
\sum_{j \in \Z \setminus \{0\}} \ii \Big( \pa_{\ov{u_j}} F  \pa_{u_j} G -  
\pa_{u_j} F  \pa_{\ov{u_j}} G    \Big)  \, .
\ee
Note that the right hand side of \eqref{Pois} is well--defined also for
complex valued functions $F$ and $ G $ and, 
with a small abuse of notation, we shall still refer to it as the Poisson bracket 
between $ F $ and $ G $. 

\subsection{Hamiltonian systems up to homogeneity $ N $}\label{sec:uptoN}

Along the paper we encounter vector fields which are Hamiltonian up to homogeneity $N$. We distinguish between linear and nonlinear ones.

\paragraph{Linear Hamiltonian operators.} 
In the sequel let $ p, N, K, K'  \in \N_0 $ and $ K' \leq K $, $ r > 0 $. 

\begin{definition}
{\bf (Linearly Hamiltonian operator up to homogeneity  $N$)}
\label{def:LHN}
A real-to-real  matrix of spectrally localized maps 
$J_c {\bf B} (U;t) $ in $ \Sigma {\cal S}_{K,K',p}[r,N] \otimes \mM_2 (\C) $
 is {\em linearly Hamiltonian up to homogeneity $N$} if 
 the  pluri-homogeneous component 
 $ \mP_{\leq N} (\bB (U;t))$ (defined in \eqref{pienne}) is symmetric,  namely 
\be\label{ipoBnBtN}
\mP_{\leq N} (\bB(U;t)) =   \mP_{\leq N} (\bB(U;t)^\top)  \, . 
\ee
\end{definition}

In particular, a matrix of paradifferential real-to-real complex operators is 
 linearly Hamiltonian up to homogeneity $N$ if it has the form    (cfr. \eqref{LHS-c})
\be\label{LHS-cN}
\!\!\! J_c 
\Opbw{\begin{matrix}
b_1(U;t, x,\xi)  & b_2(U; t, x, \xi) \\
 \bar{b_2(U;t, x, -\xi)} &  \bar{b_1(U; t, x, -\xi)}
\end{matrix}} \, ,  \ \begin{cases} b_1(U;t, x, - \xi) - b_1(U;t, x, \xi) \in  \Gamma^m_{K,K',N+1}[r]  \\ \Im  
b_2(U;t, x,\x) \in \Gamma^{m'}_{K,K',N+1}[r] 
 \end{cases} 
\ee
for some $m, m' $ in $ \R $.

\begin{definition} {\bf (Linearly symplectic map up to homogeneity $N$)} 
\label{LSUTHN}
A real-to-real matrix of spectrally localized maps $\bS(U;t) $ in $ \Sigma \mS_{K,K',0}[r,N] \otimes \mM_2 (\C) $ 
 is {\em  linearly symplectic up to homogeneity $N$} if   
\be \label{A:sym23}
\bS(U;t)^\top  \, E_c \, \bS(U;t)= E_c+ S_{>N}(U;t)  
\ee
where $ E_c $ is the symplectic operator defined in \eqref{Ecci} and 
$S_{>N}(U;t) $ is a 
matrix of spectrally localized maps 
in $ \mS_{K,K',N+1}[r] \otimes \cM_2(\C)$. 
\end{definition}

The approximate inverse up to homogeneity $N$ of  a linearly symplectic map up to homogeneity $N$ is still 
linearly symplectic up to homogeneity $N$.
\begin{lemma}\label{inv.lin.simp}
Let $p, N \in \N$ with $p \leq N$.
Let $\Phi_{\leq N}(U) := \bB_{\leq N}(U)U$  be  such that
$\bB_{\leq N}(U) - \uno \in \Sigma_p^N \wt \cS_q \otimes \cM_2(\C)$
and 
$ \bB_{\leq N}(U)$  is  linearly symplectic up to homogeneity $N$  (Definition \ref{LSUTHN}). 
Then its  approximate inverse $\Psi_{\leq N}(V)$, constructed in Lemma \ref{inversoapp}, has the form 
$ \Psi_{\leq N}(V)= \bA_{\leq N}(V)V $ 
where 
 $ {\bf A}_{{\leq N}} (V) - \uno $ is  in  $\Sigma_p^N \wtcS_q \otimes \mM_2 (\C) $ and 
$ {\bf A}_{{\leq N}} (V) $  is  linearly symplectic up to homogeneity $N$, 
more precisely 
\be\label{ATEA}
{\bf A}_{{\leq N}} (V)^\top \, E_c \, {\bf A}_{{\leq N}} (V)  = E_c + S_{>N}(V) 
\ee
where $S_{>N}(V)$ is a matrix of pluri--homogeneous spectrally localized maps in 
 $ \Sigma_{N+1} \wtcS_q \otimes \mM_2 (\C) $.
\end{lemma}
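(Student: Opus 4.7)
The plan is to first obtain the structural statement on $\bA_{\leq N}$ as a direct consequence of Lemma \ref{inversoapp}, and then to derive the approximate symplectic identity \eqref{ATEA} by transposing the approximate inverse relation and combining it with the symplectic condition on $\bB_{\leq N}$.

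First, I would write $\Phi_{\leq N}(U) = U + M_{\leq N}(U) U$ with $M_{\leq N}(U) := \bB_{\leq N}(U) - \uno$. Since $M_{\leq N}(U)$ belongs to $\Sigma_p^N \wtcS_q \otimes \mM_2(\C) \subset \Sigma_p^N \wtcS_q^{m} \otimes \mM_2(\C)$ for some $m \geq 0$, Lemma \ref{inversoapp}(i) applied to $\Phi_{\leq N}$ produces an approximate inverse $\Psi_{\leq N}(V) = V + \breve M_{\leq N}(V) V$ with $\breve M_{\leq N}(V) \in \Sigma_p^N \wtcS_q^{m(N-p+1)} \otimes \mM_2(\C)$, so that setting $\bA_{\leq N}(V) := \uno + \breve M_{\leq N}(V)$ gives a spectrally localized perturbation of the identity of the required type. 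The approximate inverse relations \eqref{omogeneaaaa2} translate (with the roles of $\Psi$ and $\Phi$ interchanged) to
\begin{equation}\label{pl:invr}
\bB_{\leq N}(\Psi_{\leq N}(V))\, \bA_{\leq N}(V) = \uno + M'_{>N}(V),
\qquad
\bA_{\leq N}(\Phi_{\leq N}(U))\, \bB_{\leq N}(U) = \uno + M''_{>N}(U),
\end{equation}
where $M'_{>N}, M''_{>N} \in \Sigma_{N+1}\wtcS_q \otimes \mM_2(\C)$, again by Lemma \ref{inversoapp}(i).

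Next, to prove \eqref{ATEA}, I would evaluate the symplectic identity \eqref{A:sym23} for $\bB_{\leq N}$ at $U = \Psi_{\leq N}(V)$, obtaining
\begin{equation}\label{pl:symV}
\bB_{\leq N}(\Psi_{\leq N}(V))^\top\, E_c\, \bB_{\leq N}(\Psi_{\leq N}(V)) = E_c + S_{>N}(\Psi_{\leq N}(V)).
\end{equation}
Left-multiplying \eqref{pl:symV} by $\bA_{\leq N}(V)^\top$ and right-multiplying by $\bA_{\leq N}(V)$, I would use the transpose of the first identity in \eqref{pl:invr}, namely
\[
\bA_{\leq N}(V)^\top\, \bB_{\leq N}(\Psi_{\leq N}(V))^\top = \uno + M'_{>N}(V)^\top,
\]
(where the transpose of $M'_{>N}$ is still in $\Sigma_{N+1}\wtcS_q \otimes \mM_2(\C)$ by Lemma \ref{omonon}), to rewrite the left-hand side as $(\uno + M'_{>N}(V)^\top)\, E_c\, (\uno + M'_{>N}(V))$. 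Expanding this out gives $E_c$ plus terms in $\Sigma_{N+1}\wtcS_q \otimes \mM_2(\C)$, while the right-hand side becomes $\bA_{\leq N}(V)^\top E_c \bA_{\leq N}(V)$ plus $\bA_{\leq N}(V)^\top S_{>N}(\Psi_{\leq N}(V)) \bA_{\leq N}(V)$. Rearranging yields exactly
\[
\bA_{\leq N}(V)^\top E_c \bA_{\leq N}(V) = E_c + S_{>N}(V),
\]
with $S_{>N}(V) \in \Sigma_{N+1}\wtcS_q \otimes \mM_2(\C)$.

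The main points requiring care, though none is a genuine obstacle, are: (a) checking that $S_{>N}(\Psi_{\leq N}(V))$, obtained by composing a non-homogeneous spectrally localized map of degree $\geq N+1$ with a spectrally localized perturbation of the identity, still belongs to the correct high-homogeneity class—this is handled by Proposition \ref{composizioniTOTALIs}(iii)--(iv); and (b) verifying that all the composition/transposition operations producing the error terms stay within spectrally localized maps, which follows by iterated application of Proposition \ref{composizioniTOTALIs}(i)--(ii) and Lemma \ref{omonon}. The linear algebra is essentially just bookkeeping of homogeneity degrees; the subtlety lies purely in tracking the classes of the remainders under composition.
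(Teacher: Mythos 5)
Your proposal is correct and follows essentially the same route as the paper: invoke Lemma \ref{inversoapp}(i) for the structure of $\bA_{\leq N}(V)=\uno+\breve M_{\leq N}(V)$, evaluate the symplectic identity for $\bB_{\leq N}$ at $U=\Psi_{\leq N}(V)$, sandwich it between $\bA_{\leq N}(V)^\top$ and $\bA_{\leq N}(V)$, and use the approximate inverse relations \eqref{omogeneaaaa2} together with the composition properties of Proposition \ref{composizioniTOTALIs} to track the high-homogeneity remainders. Your extra bookkeeping (transposition via Lemma \ref{omonon}, internal composition for $S_{>N}(\Psi_{\leq N}(V))$) only makes explicit steps the paper leaves implicit.
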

\begin{proof}
As $\bB_{\leq N}(U)$  is symplectic up to homogeneity $N$, one has 
\be\label{B.T}
\bB_{\leq N}(U)^\top \, E_c \, \bB_{\leq N}(U) = E_c + S_{>N}(U)
\ee
where $S_{>N}(U)$ is a pluri-homogeneous operator in $\Sigma_{N+1} \wtcS_q \otimes \mM_2 (\C) $, being the left hand side above  a pluri-homogeneous operator).
Then we  evaluate \eqref{B.T}  at $U = \Psi_{\leq N}(V)$, apply $\bA_{\leq N}(V)$ to the right and $\bA_{\leq N}(V)^\top$ to the left and use \eqref{omogeneaaaa2} and the composition properties in Proposition \ref{composizioniTOTALIs}.
The operator $S_{>N}(V)$ is pluri-homogeneous as the left-hand side of \eqref{ATEA}.
\end{proof}

The class of linearly Hamiltonian operators up to homogeneity $N$ is closed under conjugation 
under a linearly symplectic up to homogeneity $N$ map.

\begin{lemma}\label{coniugazionehamiltoniana}
Let $ J_c \bB(U;t) $ be   a linearly Hamiltonian operator up to homogeneity $N $ 
(Definition \ref{def:LHN})
and $ \mG(U;t) $ be an invertible map,   linearly symplectic to homogeneity $N$ (Definition \ref{LSUTHN}). 
Then  the operators 
$ \mG(U;t) J_c \bB(U;t) \mG(U;t)^{-1} $ and 
$ (\pa_t\mG(U;t)) \mG^{-1}(U;t) $
 are linearly Hamiltonian up to homogeneity $N$. 
\end{lemma}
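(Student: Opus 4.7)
The plan is to reduce both claims to algebraic identities that exploit the symplectic-up-to-$N$ relation $\mathcal G^\top E_c \mathcal G = E_c + S_{>N}$ from Definition \ref{LSUTHN}, and then absorb $S_{>N}$ into the ``homogeneity $> N$'' class by means of the composition rules of Proposition \ref{composizioniTOTALIs}.

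For the first claim, since $J_c = E_c$ and $E_c^\top = -E_c$, I would write $\mathcal G J_c \bB \mathcal G^{-1} = J_c \bB'$ with $\bB' := E_c \mathcal G J_c \bB \mathcal G^{-1}$, so that the claim reduces to $\mathcal P_{\le N}(\bB' - (\bB')^\top) = 0$. Plugging $\mathcal G^\top E_c \mathcal G = E_c + S_{>N}$ into both $\mathcal G^\top E_c \mathcal G J_c \bB$ and $\bB^\top J_c \mathcal G^\top E_c \mathcal G$ yields the identity
\begin{equation*}
\mathcal G^\top \bigl(\bB' - (\bB')^\top\bigr)\mathcal G \;=\; (\bB - \bB^\top) \;+\; S_{>N} J_c \bB \;-\; \bB^\top J_c S_{>N}.
\end{equation*}
By hypothesis \eqref{ipoBnBtN} the first summand has pluri-homogeneous degree $\ge N+1$, and the last two do as well because $S_{>N}$ does. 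Multiplying on the left by $(\mathcal G^\top)^{-1}$ and on the right by $\mathcal G^{-1}$ (which are themselves spectrally localized maps of pluri-homogeneous degree $\ge 0$ by Lemma \ref{inv.lin.simp}) preserves the property of being of degree $\ge N+1$, so $\bB' - (\bB')^\top$ vanishes up to homogeneity $N$.

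For the second claim, set $\mathcal Q := (\partial_t \mathcal G)\mathcal G^{-1}$ and $\bQ := E_c \mathcal Q$, so that $\mathcal Q = J_c \bQ$ and, using $E_c^\top = -E_c$, one has $\bQ - \bQ^\top = E_c \mathcal Q + \mathcal Q^\top E_c$. Differentiating $\mathcal G^\top E_c \mathcal G = E_c + S_{>N}$ in $t$ and substituting $\partial_t \mathcal G = \mathcal Q \mathcal G$ yields
\begin{equation*}
\mathcal G^\top \bigl(\bQ - \bQ^\top\bigr)\mathcal G \;=\; \partial_t S_{>N}.
\end{equation*}
The right-hand side still has pluri-homogeneous degree $\ge N+1$ (time differentiation only raises the parameter $K'$ in the classes $\Sigma\mathcal S$), so the same composition argument shows $\bQ - \bQ^\top$ is of homogeneity $> N$, i.e.\ $J_c \bQ = (\partial_t \mathcal G)\mathcal G^{-1}$ is linearly Hamiltonian up to homogeneity $N$.

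The main (really only) obstacle is the final absorption step: one must verify that left/right composition with $(\mathcal G^\top)^{-1}$ and $\mathcal G^{-1}$ genuinely sends a spectrally localized map of pluri-homogeneous degree $\ge N+1$ into another one of the same type, so that the symmetry defect $\bB' - (\bB')^\top$ (respectively $\bQ - \bQ^\top$) fits into the class required by Definition \ref{def:LHN}. This is achieved by combining Lemma \ref{inv.lin.simp} (which guarantees that $\mathcal G^{-1}$ admits a pluri-homogeneous expansion of the same shape as $\mathcal G$, up to a tail in $\Sigma_{N+1}$) with Proposition \ref{composizioniTOTALIs}$(ii)$, which handles the bookkeeping of the degree under composition of spectrally localized maps.
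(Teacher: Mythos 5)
Your argument is correct and is essentially the paper's own proof: both rest on the relation $\mG^\top E_c \mG = E_c + S_{>N}$, the antisymmetry $E_c^\top=-E_c$, and the fact that composing a degree-$\geq N+1$ remainder with $\mG^{\pm1}$ stays of degree $\geq N+1$; the paper merely projects directly (via $\cP_{\leq N}(\mG J_c)=\cP_{\leq N}(J_c[\mG^{-1}]^\top)$ and $\cP_{\leq N}$ of the differentiated identity) instead of conjugating the symmetry defect by $\mG$ and then undoing it, which is the same algebra rearranged. One small correction: Lemma \ref{inv.lin.simp} is not the right tool for your absorption step, since it concerns the approximate inverse of the \emph{nonlinear} map $U\mapsto \bB_{\leq N}(U)U$, not the operator inverse $\mG(U;t)^{-1}$ for fixed $U$; what is actually needed (and implicitly used by the paper as well) is only that $\mG^{\pm1}$ are non-homogeneous spectrally localized maps of degree zero (as in Lemma \ref{flow}$(i)$ in the applications), so that Proposition \ref{composizioniTOTALIs}$(ii)$ (or Proposition \ref{composizioniTOTALI}$(i)$) guarantees that left/right composition with them preserves the class of terms with vanishing homogeneous components of degree $\leq N$.
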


\begin{proof}
Set $ \bB := \bB(U;t) $  and $ \mG := \mG(U;t) $ for brevity. 
As $\cG$ is invertible, we deduce from \eqref{A:sym23} that $ \cP_{\leq N}\left( \cG J_c  \right) = \cP_{\leq N} \left(J_c [\cG^{-1}]^\top \right)$.
Then 
$$
\begin{aligned}
\mP_{\leq N} \left(\mG J_c \bB \mG^{-1}\right) & =  \mP_{\leq N} \left(\mG J_c\mP_{\leq N}[ \bB] \mG^{-1}\right) \stackrel{\eqref{A:sym23}} =   \mP_{\leq N} \left( J_c[\mG^{-1}]^\top\mP_{\leq N}[ \bB] \mG^{-1}\right) = J_c {\bf M} 
\end{aligned}
$$
where 
$ {\bf M} := 
\mP_{\leq N} \left( [\mG^{-1}]^\top \mP_{\leq N}[ \bB]  \mG^{-1}\right) $ 
is symmetric  since  $\mP_{\leq N}[ \bB^\top] = \mP_{\leq N}[\bB]$.  
This proves that $\mG J_c \bB \mG^{-1}$ is linearly Hamiltonian up to homogeneity $N$.  

Next, differentiating 
\eqref{A:sym23} (with $\mG(U;t)$ replacing $\bS(U;t)$), we get 
$$
\cP_{\leq N}\left[ E_c (\pa_t \cG) \cG^{-1} \right] 
 = - \cP_{\leq N} \left[ (\cG^{-1})^\top (\pa_t \cG)^\top E_c \right] 
 = \cP_{\leq N}\left[ \big(E_c  (\pa_t \cG) \cG^{-1} \big)^\top \right]
$$
showing  that $(\pa_t \cG) \cG^{-1}$ is linearly Hamiltonian up to homogeneity $N$.
\end{proof}

\paragraph{Nonlinear Hamiltonian systems up to homogeneity $ N $.} 
Let $K, K' \in \N_0$ with $ K'\leq K$, $r>0$  and $U \in B_{s_0}^K(I;r)$.
Let 
\be\label{Z.M0}
Z := \bM_0(U;t)U \quad \text{ with }\quad  \bM_0(U;t) \in \cM^0_{K,K',0}[r]\otimes \mM_2(\C)\, . 
\ee
\begin{definition}{\bf (Hamiltonian system up to homogeneity  $N$)} 
\label{def:ham.N}
Let $N, K, K' \in \N_0$  with $K \geq K'+1$ and
assume \eqref{Z.M0}.
A $U$--dependent system 
\be \label{U.Ham}
\pa_t Z = J_c \nabla H(Z) + M_{> N}(U;t)[U] 
\ee
is {\em Hamiltonian up to homogeneity  $N$} if 

\noindent
$\bullet $ $H(Z) $ is a  pluri-homogeneous Hamiltonian  
in $ \Sigma_2^{N+2} \wt \Uplambda_{q}^0 $; 

\noindent
$\bullet $  $M_{> N}(U;t)$ is a matrix of 
non-homogeneous operators in $ \cM_{K,K'+1,N+1}[r] \otimes \mM_2 (\C) $. 
\end{definition}

In view of the first bullet after Definition \ref{def:forms}
the Hamiltonian vector field $ J_c \nabla H(Z) $ is  in $ \Sigma_1^{N+1} \wt \X_{q} $. 

\smallskip

We shall perform nonlinear changes of variables which are symplectic up to homogeneity $N$ according to the following definition.

\begin{definition}\label{def:LSMN}
{\bf (Symplectic map up to homogeneity $N$)} 
Let $ p, N \in \N $ with $p \leq N$. We say that  
\be\label{simplupN}
\mD (Z;t) = M(Z;t)Z  \qquad \text{with} \qquad M(Z;t) - \uno \in \Sigma \mM_{K,K',p}[r,N] \otimes \mM_2 (\C) \, ,  
\ee
   is {\em symplectic up to  homogeneity  $N$}, if its pluri-homogeneous component $\mD_{\leq N}(Z):= \big( \mP_{\leq N}  M(Z;t) \big) Z  $ satisfies 
 \be \label{quasisimplettica}
(\mD_{\leq N})^* \Omega_c \,  = \Omega_c+ \Omega_{>N}
\ee   
where $ \Omega_{>N} $ is a pluri-homogeneous 2-form in 
$ \Sigma_{N+1} \wt \Uplambda^2_q $. 
\end{definition}

Equivalently, 
 by \eqref{sfc} and 
the operatorial representation  \eqref{2form.rep} of $2$-forms,  
the nonlinear map  
$ \mD (Z;t) $ is symplectic up to homogeneity $N $, 
 if  
\be\label{milan}
\left[\di_Z \mD_{\leq N}(Z)\right]^\top \, E_c \, \di_Z \mD_{\leq N}(Z)
= E_c+ E_{>N}(Z) \quad \text{with} \quad 
E_{>N}(Z) \in  \Sigma_{N+1} \wtcM_q \otimes \mM_2(\C)  \, .
\ee
\begin{remark}
In the real setting we say that a map $\mD(\eta,\zeta)$ is symplectic up to homogeneity if its pluri-homogeneous component $\mD_{\leq N} (\eta,\zeta)$ satisfies 
\be\label{real:sym:boh}
\big[ \di_{(\eta,\zeta)} \mD_{\leq N}(\eta,\zeta) \big]^\top \, E_0 \, 
\big[ \di_{(\eta,\zeta)} \mD_{\leq N}(\eta,\zeta) \big]
= E_0+ E_{>N}(\eta,\zeta)
\ee
where $E_0$ is the real symplectic tensor defined in \eqref{Ezzero} and $E_{>N}$ 
is matrix of real operators in $\Sigma_{N+1} \wtcM_q \otimes \mM_2(\C)$.
\end{remark}
\noindent 

We now show that  the usual properties 
of symplectic maps still hold, up to homogeneity $ N $.  
For example the approximate inverse of a symplectic up to homogeneity $ N $
 map is symplectic up to homogeneity $ N $ as well. 

\begin{lemma}\label{lem:ails}
Let $p,N\in \N $ with  $ p \leq N $.  Let  $\mD_{\leq N}(Z)= Z+ M_{\leq N}(Z)Z$ as in 
\eqref{phidicuinv} be symplectic up to homogeneity $ N $.
Then its approximate inverse $\mE_{\leq N}(V) = V + \breve M_{\leq N}(V)V $ 
up to homogeneity $ N $ as in \eqref{LTCPhi}
(provided by Lemma \ref{inversoapp})
is  symplectic up to homogeneity $ N $
as well. 
Moreover
\be\label{lasolita}
\big[\di_Z \mD_{\leq N} (Z)\big] J_c  \big[\di_Z \mD_{\leq N} (Z)\big]^\top  = J_c + J_{> N}(Z) , \quad 
J_{> N}(Z) \in \Sigma_{N+1}\wtcM_{q} \otimes \mM_2 (\C) \ . 
\ee
\end{lemma}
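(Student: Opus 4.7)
\medskip

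\noindent\emph{Proof sketch.} The plan is to deduce both claims from the approximate-inverse identities \eqref{invmappe}--\eqref{diinv2} by algebraic manipulations, checking at each step that the error terms remain pluri-homogeneous of degree $\geq N+1$.

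\medskip

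For the first claim, I would start from the symplecticity hypothesis in the equivalent operatorial form \eqref{milan},
$[\di_Z \mD_{\leq N}(Z)]^\top E_c \, \di_Z \mD_{\leq N}(Z) = E_c + E_{>N}(Z)$ with $E_{>N}(Z) \in \Sigma_{N+1}\wtcM_q \otimes \mM_2(\C)$,
and specialize it to $Z=\mE_{\leq N}(V)$; since $\mE_{\leq N}(V) = V + \breve M_{\leq N}(V)V$ with $\breve M_{\leq N}$ pluri-homogeneous of degree $\geq p$, the substituted operator $E_{>N}(\mE_{\leq N}(V))$ is still in $\Sigma_{N+1}\wtcM_q \otimes \mM_2(\C)$ by Proposition \ref{composizioniTOTALI}$(ii)$. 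I then compose on the left by $[\di_V \mE_{\leq N}(V)]^\top$ and on the right by $\di_V \mE_{\leq N}(V)$. Using the third and fourth identities in \eqref{diinv2}, the left-hand side becomes
$$
(\uno + M^{1}_{>N}(V))^\top \, E_c \,(\uno + M^{1}_{>N}(V))
=  E_c + \widetilde{E}_{>N}(V),
$$
where $\widetilde{E}_{>N}(V) \in \Sigma_{N+1}\wtcM_q \otimes \mM_2(\C)$ by Proposition \ref{composizioniTOTALI}$(i)$. Collecting the contribution
$[\di_V \mE_{\leq N}(V)]^\top E_{>N}(\mE_{\leq N}(V))\, \di_V \mE_{\leq N}(V)$
on the opposite side, which is again of degree $\geq N+1$, gives
$[\di_V \mE_{\leq N}(V)]^\top E_c\,\di_V \mE_{\leq N}(V) = E_c + (\text{degree}\geq N+1)$, i.e.\ $\mE_{\leq N}$ is symplectic up to homogeneity $N$.

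\medskip

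For \eqref{lasolita}, set $A := \di_Z \mD_{\leq N}(Z)$ and introduce the ``symplectic pseudo-inverse'' $C := -E_c A^\top E_c$. Multiplying the symplecticity identity on the left by $E_c$ and using $E_c^2 = \uno$ yields
$C\,A = -\uno - E_c E_{>N}(Z)$,
so that $-C$ is an approximate left-inverse of $A$ modulo $\Sigma_{N+1}\wtcM_q$. On the other hand, the natural candidate for a two-sided inverse is
$B := [\di_V \mE_{\leq N}]|_{V=\mD_{\leq N}(Z)}$:
the identity $BA = \uno + M^{2}_{>N}(Z)$ is exactly \eqref{diinv2}, while the right-inverse property $AB = \uno + (\text{degree} \geq N+1)$ is obtained by substituting $V = \mD_{\leq N}(Z)$ in \eqref{diinv2} for $\di \mD(\mE(V))\di \mE(V)$ and Taylor-expanding $\di \mD(Z + M''_{>N}(Z)Z) = \di \mD(Z) + (\text{degree}\geq N+1)$ via Proposition \ref{composizioniTOTALI} and the chain rule, exploiting $\mE_{\leq N}\circ \mD_{\leq N}(Z) = Z + M''_{>N}(Z)Z$ from \eqref{invmappe}. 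Writing $C = C(AB) - C\,(AB - \uno)$ and using $CA = -\uno + (\text{higher})$ then shows $C = -B + (\text{higher})$; consequently $AC = -AB + (\text{higher}) = -\uno + (\text{higher})$, i.e.\ $A\,E_c A^\top E_c = \uno + (\text{degree}\geq N+1)$. Multiplying on the right by $E_c$ gives $A\,J_c\,A^\top = J_c + J_{>N}(Z)$ with $J_{>N} \in \Sigma_{N+1}\wtcM_q \otimes \mM_2(\C)$, which is \eqref{lasolita}.

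\medskip

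The main technical obstacle is not algebraic but bookkeeping: every composition must be shown to land in $\Sigma_{N+1}\wtcM_q \otimes \mM_2(\C)$ rather than merely in $\Sigma \mM_{K,K',N+1}$. This is handled by the pluri-homogeneous composition result Proposition \ref{composizioniTOTALI}$(ii)$ (and Lemma \ref{primecosette} for transposes), combined with the crucial fact that both $\mD_{\leq N}$ and $\mE_{\leq N}$, together with their differentials and their transposes, are purely pluri-homogeneous perturbations of the identity of degree $\geq p$, so no unbounded ``inhomogeneous tail'' enters the computation.
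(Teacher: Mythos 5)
Your argument is correct, and the bookkeeping tools you invoke (Proposition \ref{composizioniTOTALI} for compositions and substitutions, Lemma \ref{primecosette} for transposes) are exactly the right ones; the route, however, differs in presentation from the paper's. For the first claim the paper never differentiates: it computes the pull-back $(\mE_{\leq N})^*(\mD_{\leq N})^*\Omega_c$ in two ways, once using $(\mD_{\leq N})^*\Omega_c = \Omega_c + \Omega_{>N}$ and once using $\mD_{\leq N}\circ\mE_{\leq N} = \id + F_{>N}$ from \eqref{invmappe} together with \eqref{pullback2} and \eqref{pullback.form2}, and compares the results; you instead work with the operatorial form \eqref{milan} evaluated at $Z=\mE_{\leq N}(V)$, sandwiched with $\di_V\mE_{\leq N}(V)$ and its transpose via \eqref{diinv2} — same content, just the differentiated version of the pull-back identity. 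For \eqref{lasolita} the difference is more substantive: the paper starts from the just-established symplecticity of $\mE_{\leq N}$ (i.e. \eqref{milan2} evaluated at $V=\mD_{\leq N}(Z)$) and sandwiches with $J_c$ on the left and $\di_Z\mD_{\leq N}(Z)\,J_c\,[\di_Z\mD_{\leq N}(Z)]^\top$ on the right, whereas you construct the explicit approximate symplectic inverse $C=-E_c[\di_Z\mD_{\leq N}(Z)]^\top E_c$ directly from the symplecticity of $\mD_{\leq N}$ and identify it, modulo $\Sigma_{N+1}\wtcM_q\otimes\mM_2(\C)$, with $\di_V\mE_{\leq N}(\mD_{\leq N}(Z))$; in particular your proof of \eqref{lasolita} does not use the first part of the lemma, which is a small structural gain. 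The price is the right-inverse step $A B=\uno+(\text{degree}\geq N+1)$, which forces the Taylor-type replacement $\di_Z\mD_{\leq N}(Z+M''_{>N}(Z)Z)=\di_Z\mD_{\leq N}(Z)+(\text{degree}\geq N+1)$; you justify it correctly by multilinearity and Proposition \ref{composizioniTOTALI}$(ii)$, and an analogous substitution is in fact also implicit in the paper's appeal to Proposition \ref{composizioniTOTALI}, so neither route is shorter — yours simply makes the independence of \eqref{lasolita} from the symplecticity of $\mE_{\leq N}$ explicit, while the paper's pull-back computation for the first claim avoids handling the transposed identities altogether.
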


\begin{proof}
As $\mD_{\leq N}(Z)$ is symplectic up to homogeneity $N$, we get that,
 using also the first bullet after Definition \ref{def:dform}, 
\be\label{Phi.sympN}
(\mE_{\leq N})^* (\mD_{\leq N})^* \Omega_c = (\mE_{\leq N})^* \left[ \Omega_c + \Omega_{>N} \right] = (\mE_{\leq N})^*\Omega_c  + \wt \Omega_{>N}
\ee
for some  pluri-homogeneous 2-forms $ \Omega_{>N}, \wt  \Omega_{>N} $ in 
$ \Sigma_{N+1} \wt \Uplambda^2_q $. 
Now recall that,  being $\mE_{\leq N}(V)$ the approximate inverse of $\mD_{\leq N}(Z)$ 
up to homogeneity $ N $,   by  \eqref{invmappe} one has   $\mD_{\leq N}\circ \mE_{\leq N} = \id + F_{>N}$ for some  $F_{>N}(V) = M_{>N}(V)V$ with 
$M_{>N}(V) $  in $\Sigma_{N+1} \wt \cM_q\otimes \cM_2(\C)$.
Thus we can also write 
\be\label{Phi.sympN1}
(\mE_{\leq N})^* (\mD_{\leq N})^* \Omega_c \stackrel{\eqref{pullback2}} = 
\left( \mD_{\leq N}\circ \mE_{\leq N} \right)^* \Omega_c = \left(\id + F_{>N} \right)^* \Omega_c 
\stackrel{\eqref{pullback.form2}}{=} \Omega_c +  \Omega_{>N}'(V)
\ee
for some  pluri-homogeneous 2-form $ \Omega_{>N}'$ in 
$ \Sigma_{N+1} \wt \Uplambda^2_q $
(by the first bullet below Definition \ref{Def:Maps}, 
Lemma \ref{primecosette} and Proposition \ref{composizioniTOTALI}). 
Then \eqref{Phi.sympN}-\eqref{Phi.sympN1}  prove that $\cE_{\leq N}$ is symplectic 
up to homogeneity $N$.

Next we prove \eqref{lasolita}. We start from \eqref{milan} for $\cE_{\leq N}$ evaluated at $\cD_{\leq N}(Z)$, i.e. 
 \be\label{milan2}
\big[ \di_V \mE_{\leq N}(\cD_{\leq N}(Z))\big]^\top \, E_c \, \di_V \mE_{\leq N}(\cD_{\leq N}(Z))
= E_c+ E_{>N}(Z)
\ee
with
$E_{>N}(Z) $ in $ \Sigma_{N+1} \wtcM_q \otimes \mM_2(\C)$, by Proposition \ref{composizioniTOTALI} and Lemma \ref{primecosette}.
Then apply $J_c$ to the left of \eqref{milan2} and 
$\big[\di_Z \mD_{\leq N} (Z)\big] J_c  \big[\di_Z \mD_{\leq N} (Z)\big]^\top$ to the right of it 
and use the first and last of \eqref{diinv2}, 
and Proposition \ref{composizioniTOTALI}
 to deduce \eqref{lasolita}.
\end{proof}

\noindent 
The approximate  flow of a 
Hamiltonian smoothing vector field 
is  symplectic  up to homogeneity $ N $. 

\begin{lemma}\label{lem:app.flow.ham}
Let $ p, N \in \N $ with $  p \leq N $. 
Let $ Y(U)  $ be a  homogeneous 
Hamiltonian  smoothing vector field in 
$ \wt \X_{p+1}^{-\varrho} $ for some $ \varrho \geq 0 $. Then its approximate  flow
$  \mF_{\leq N}^\tau $  (provided by  Lemma \ref{extistencetruflow})
is symplectic up to homogeneity $ N $ (Definition \ref{def:LSMN}).  
\end{lemma}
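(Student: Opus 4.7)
The plan is a standard deformation argument: compute the $\tau$-derivative of $(\mF^\tau_{\leq N})^* \Omega_c$, show it is a pluri-homogeneous $2$-form in $\Sigma_{N+1} \wt \Uplambda^2_q$, and integrate in $\tau \in [0,1]$. Since $(\mF^0_{\leq N})^* \Omega_c = \Omega_c$, this would yield $(\mF^\tau_{\leq N})^* \Omega_c = \Omega_c + \Omega^\tau_{>N}$ with $\Omega^\tau_{>N} \in \Sigma_{N+1} \wt \Uplambda^2_q$, which is exactly the symplectic condition up to homogeneity $N$ in Definition \ref{def:LSMN}.

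First I adapt the derivative-of-pullback identity of Lemma \ref{lem:pullback} from $1$-forms to the constant $2$-form $\Omega_c \in \wt \Uplambda^2_0$. Differentiating
$$(\mF^\tau_{\leq N})^* \Omega_c(U)[V_1, V_2] = \Omega_c\big[\di_U \mF^\tau_{\leq N}(U) V_1,\, \di_U \mF^\tau_{\leq N}(U) V_2\big]$$
in $\tau$ and using the approximate flow equation $\pa_\tau \mF^\tau_{\leq N} = Y \circ \mF^\tau_{\leq N} + R^\tau_{>N}(U)U$ of Definition \ref{def:approflow}, I expect to obtain
$$\frac{d}{d\tau} (\mF^\tau_{\leq N})^* \Omega_c = (\mF^\tau_{\leq N})^*(\mL_Y \Omega_c) + \Omega^\tau_{>N},$$
where $\Omega^\tau_{>N}(U)[V_1, V_2]$ is the sum of $\Omega_c[\di_U(R^\tau_{>N}(U)U) V_1,\, \di_U \mF^\tau_{\leq N}(U) V_2]$ and its antisymmetric counterpart. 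The $\di_V \Omega_c$ contribution that appears in the $1$-form analog \eqref{theta>N.lie} vanishes here because $\Omega_c$ is independent of $U$. Since $R^\tau_{>N} \in \Sigma_{N+1} \wt \cR^{-\vr}_q \otimes \cM_2(\C)$, the factor $\di_U(R^\tau_{>N}(U)U)V_1$ is pluri-homogeneous of degree $\geq N+1$ in $U$, and the composition rules of Proposition \ref{composizioniTOTALI} together with Lemma \ref{primecosette} will identify $\Omega^\tau_{>N}$ as a pluri-homogeneous $2$-form in $\Sigma_{N+1} \wt \Uplambda^2_q$ with bounds uniform in $\tau$.

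The key algebraic step is $\mL_Y \Omega_c = 0$, where the Hamiltonian hypothesis on $Y$ enters. Since $\Omega_c$ is $U$-independent, formula \eqref{Lie} reduces to
$$(\mL_Y \Omega_c)(U)[V_1, V_2] = \Omega_c[\di_U Y(U)[V_1], V_2] + \Omega_c[V_1, \di_U Y(U)[V_2]],$$
which a direct calculation from \eqref{dLambda} shows equals $\di(i_Y \Omega_c)(U)[V_1, V_2]$; equivalently this is Cartan's formula \eqref{cartan} with $\di \Omega_c = 0$. Writing $Y = J_c \nabla H$ with $H \in \wt \Uplambda^0_{p+2}$ and using $E_c J_c = \id$ from \eqref{Ecci}, I compute
$$(i_Y \Omega_c)(U)[V] = \la E_c Y(U), V\ra_r = \la \nabla H(U), V\ra_r = \di H(U)[V],$$
so $i_Y \Omega_c = \di H$ and therefore $\mL_Y \Omega_c = \di(\di H) = 0$ by \eqref{d2=0}.

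Combining the two previous paragraphs, $\frac{d}{d\tau}(\mF^\tau_{\leq N})^* \Omega_c = \Omega^\tau_{>N}$ lies in $\Sigma_{N+1} \wt \Uplambda^2_q$ uniformly in $\tau \in [0,1]$, and integrating in $\tau$ concludes the argument. The main bookkeeping point is the adaptation of Lemma \ref{lem:pullback} to $2$-forms and the verification that the error $\Omega^\tau_{>N}$ lies in the correct class; this is essentially the same computation, simplified by the $U$-independence of $\Omega_c$, so no genuinely new analytic difficulty is expected — the content of the lemma is the clean algebraic cancellation $\mL_Y \Omega_c = 0$ produced by Hamiltonianity.
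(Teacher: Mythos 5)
Your proof is correct and follows essentially the same deformation argument as the paper: the cancellation comes from $i_Y\Omega_c=\di H_{p+2}$ together with $\di^2=0$, and the only surviving term, produced by the approximate-flow remainder $R^\tau_{>N}$, is a $2$-form of homogeneity $>N$ which one integrates in $\tau$. The sole difference is bookkeeping: the paper writes $\Omega_c=\di\theta_c$ and uses \eqref{commu} so that Lemma \ref{lem:pullback} (stated for $1$-forms) applies verbatim, whereas you re-derive the derivative-of-pullback identity directly at the level of the constant $2$-form $\Omega_c$ — a legitimate shortcut, simplified by its $U$-independence, at the price of verifying the $2$-form analog (including that the error lies in $\Sigma_{N+1}\wt\Uplambda^2_q$) yourself, which you do correctly via Lemma \ref{lem:ident.forms}, Lemma \ref{primecosette} and Proposition \ref{composizioniTOTALI}.
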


\begin{proof}
Recalling that  $\Omega_c = \di \theta_c $   we have 
\begin{align}
\frac{\di}{\di\tau} (\mF_{\leq N}^\tau)^* \Omega_c\, 
&
\stackrel{\eqref{commu}}{=} \di \frac{\di}{\di\tau} (\mF_{\leq N}^\tau)^* \theta_c\,  \notag \\
& 
\stackrel{\eqref{deriv:pullback}} 
= \di (\mF_{\leq N}^\tau)^* \mL_Y \theta_c\, + \di \theta_{>N+1}^\tau\,  \notag \\
& \stackrel{\eqref{cartan}, \eqref{commu}, \eqref{d2=0}}{=} (\mF_{\leq N}^\tau)^*\di ( i_Y \Omega_c ) \, + \di \theta_{>N+1}^\tau\,  \notag \\
& \stackrel{\eqref{complexvecham}}{=} (\mF_{\leq N}^\tau)^* \di^2 H_{p+2}\, + \di \theta_{>N+1}^\tau\, 
\stackrel{\eqref{d2=0}}{=} \di \theta_{>N+1}^\tau\,  \label{2forma-appr}
\end{align}
where $ H_{p+2}$ is the Hamiltonian of $ Y(U) $ and 
$\theta_{>N+1}^\tau  $ is a pluri-homogeneous $ 1$-form 
in $  \Sigma_{N+2} \wt \Uplambda^1_q $. 
Integrating \eqref{2forma-appr} from $0$ to $ \tau$, 
and using that $\cF_{\leq N}^0 = \id$,  we get 
$$
(\mF_{\leq N}^\tau)^* \Omega_c \,  = \Omega_c+ \Omega_{>N}^\tau \, , \quad
\Omega_{>N}^\tau := \int_0^\tau  \di \theta_{>N+1}^t \, \di t   	
$$
where $ \Omega_{>N} $ is in $ \Sigma_{N+1} \wt \Uplambda^2_q $.
This proves that $ \mF_{\leq N}^\tau  $ is symplectic up to homogeneity $ N $. 
\end{proof}

A symplectic map up to homogeneity $ N $
transforms a Hamiltonian system up to homogeneity $N$ into 
another Hamiltonian system up to homogeneity $N$.

\begin{lemma}\label{conj.ham.N}
Let $p, N \in \N$ with $p \leq N$,  $K, K' \in \N_0$ with $K \geq K'+1$.
Let $ Z := \bM_0(U;t)U  $ as in \eqref{Z.M0}.
Assume $\mD(Z;t)  = M(Z;t)Z   $ is a symplectic map up to homogeneity $N$ (Definition \ref{def:LSMN}) such that
\be \label{simdico}
M(Z;t) - \uno  \in \begin{cases}
 \Sigma\cM_{K,K',p}[r,N] \otimes \cM_2(\C) \qquad \qquad 
 \text{if} \quad  \bM_0(U;t)  = \id \, , \\
  \Sigma\cM_{K,0,p}[\breve r,N] \otimes \cM_2(\C) \, , \, \forall \breve r > 0 \quad \text{otherwise} \, . 
\end{cases}
\ee 
If  $Z(t)$ 
solves a $U$-dependent Hamiltonian system up to homogeneity $N$
(Definition \ref{def:ham.N}), 
then the variable
$ W := \mD(Z;t)  $
solves another $U$-dependent Hamiltonian system up to homogeneity $N$ 
(generated by the transformed Hamiltonian). 
\end{lemma}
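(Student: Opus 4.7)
The plan is to define the transformed Hamiltonian as $\tilde H(W) := \cP_{\leq N+2}\big[H\big(\mathcal{E}_{\leq N}(W)\big)\big]$, where $\mathcal{E}_{\leq N}(W) = W + \breve M_{\leq N}(W)W$ is the approximate inverse up to homogeneity $N$ of the pluri-homogeneous part $\mathcal{D}_{\leq N}(Z) := (\cP_{\leq N} M(Z;t))Z$ of $\mathcal{D}(Z;t)$, supplied by Lemma~\ref{inversoapp}. A crucial preliminary observation is that, by the structure of the class $\Sigma\cM_{K,K',p}[r,N]$ in \eqref{maps}, the pluri-homogeneous component $\cP_{\leq N}[M(Z;t)-\uno]$ carries no explicit $t$-dependence, so $\mathcal{D}_{\leq N}$, $\mathcal{E}_{\leq N}$, and therefore $\tilde H$ are $t$-independent; by polynomial composition $\tilde H\in \Sigma_2^{N+2}\wt\Uplambda^0_q$.

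Next I would differentiate $W = \mathcal{D}(Z;t)$ in time. Writing $\mathcal{D}(Z;t) = \mathcal{D}_{\leq N}(Z) + D_{>N}(Z;t)$ with $D_{>N}(Z;t) := \cP_{>N}[M(Z;t)-\uno]Z$, the chain rule together with the equation for $\partial_t Z$ yields
\[
\partial_t W = [\di_Z\mathcal{D}_{\leq N}(Z)][J_c\nabla H(Z)] + [\di_Z\mathcal{D}_{\leq N}(Z)][M_{>N}(U;t)U] + [\di_Z D_{>N}(Z;t)]\,\partial_t Z + (\partial_t D_{>N})(Z;t).
\]
The last three summands are either composed with the non-homogeneous $M_{>N}$ or factor through $D_{>N}$, $\partial_t D_{>N}$; using Proposition~\ref{composizioniTOTALI} and the identity $Z=\bM_0(U;t)U$ (via item $(iii)$, applicable in the regime \eqref{simdico} where $M(Z;t)-\uno$ belongs to $\cM_{K,0,p}[\breve r,N]$ for every $\breve r$, so the composition with $\bM_0$ is legitimate) they can be packaged into a single $M'_{>N}(U;t)[U]$ with $M'_{>N}\in \cM_{K,K'+1,N+1}[r]\otimes \cM_2(\C)$; the bump $K'\mapsto K'+1$ arises precisely from the presence of $\partial_t D_{>N}$.

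The heart of the argument is to identify the leading term with $J_c \nabla \tilde H(W)$ modulo high-homogeneity operators. Here I would combine the twisted symplectic identity~\eqref{lasolita} of Lemma~\ref{lem:ails} with the transposed approximate-inverse relation~\eqref{diinv2} (the fourth one), which, evaluated at $W = \mathcal{D}_{\leq N}(Z)$ and after Taylor approximation using $\mathcal{E}_{\leq N}(\mathcal{D}_{\leq N}(Z)) = Z + (\text{high-homog.})$ from~\eqref{invmappe}, yields
\[
[\di_Z\mathcal{D}_{\leq N}(Z)]^\top\,[\di_W\mathcal{E}_{\leq N}](\mathcal{D}_{\leq N}(Z))^\top = \uno + (\text{high-homog.}).
\]
Substituting this into \eqref{lasolita} multiplied on the right by $[\di_W\mathcal{E}_{\leq N}](\mathcal{D}_{\leq N}(Z))^\top$, one obtains
\[
[\di_Z\mathcal{D}_{\leq N}(Z)]\,J_c = J_c\,[\di_W\mathcal{E}_{\leq N}](\mathcal{D}_{\leq N}(Z))^\top + (\text{high-homog.\ operator});
\]
applying this to $\nabla H(Z)$ and using Taylor to replace $\nabla H(Z)$ by $\nabla H(\mathcal{E}_{\leq N}(\mathcal{D}_{\leq N}(Z)))$ produces the chain-rule formula $[\di_Z\mathcal{D}_{\leq N}(Z)][J_c\nabla H(Z)] = J_c\nabla\tilde H(\mathcal{D}_{\leq N}(Z)) + (\text{high-homog.})$. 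A final Taylor expansion in the base point, exploiting $W - \mathcal{D}_{\leq N}(Z) = D_{>N}(Z;t)$, converts this into $J_c\nabla\tilde H(W)$ up to another high-homogeneity error that is absorbed into the remainder $\tilde M_{>N}(U;t)U$.

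The main obstacle I anticipate is not conceptual but the bookkeeping: every Taylor remainder, every transposed identity, and the substitution $Z = \bM_0(U;t)U$ must be tracked so that the accumulated error lands in the precise class $\cM_{K,K'+1,N+1}[r]\otimes\cM_2(\C)$ prescribed by Definition~\ref{def:ham.N}. The one subtle point in the regularity budget is the shift $K'\mapsto K'+1$ forced by $\partial_t D_{>N}$; everything else is controlled by the composition rules of Proposition~\ref{composizioniTOTALI} together with the approximate-inverse machinery of Lemma~\ref{inversoapp} and the twisted symplectic identity of Lemma~\ref{lem:ails}.
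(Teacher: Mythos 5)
Your proposal is correct and follows essentially the same route as the paper's proof: decompose $\cD(Z;t)$ into its pluri-homogeneous part plus a high-homogeneity remainder, use the approximate inverse of Lemma~\ref{inversoapp} together with \eqref{diinv2} and the twisted identity \eqref{lasolita} of Lemma~\ref{lem:ails} to recognize the leading term as $J_c\nabla (H\circ\cE_{\leq N})(W)$, and absorb all errors (including the $\partial_t$ of the non-homogeneous part, which forces $K'\mapsto K'+1$, and the substitution $Z=\bM_0(U;t)U$ via Proposition~\ref{composizioniTOTALI}) into the remainder class. The only cosmetic differences are that you derive the operator identity $[\di_Z\cD_{\leq N}]J_c \approx J_c[\di_W\cE_{\leq N}]^\top$ before substituting, whereas the paper first replaces $Z$ by $\breve\cD_{\leq N}(W)$ inside $\nabla H$, and that you explicitly project the transformed Hamiltonian onto degrees $\leq N+2$; both are equivalent bookkeeping choices.
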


\begin{proof}
Decompose $\cD(Z;t) = \cD_{\leq N}(Z) + M_{>N}^{\cD}(Z;t)Z$
where $ \cD_{\leq N}(Z) := \cP_{\leq N} [ M(Z;t)] Z $ is 
its pluri-homogeneous component and 
$$
M_{>N}^\cD(Z;t)
\in 
\begin{cases} 
\mM_{K, {K'},N+1}[r] \otimes \mM_2 (\C)  
\qquad \qquad  \text{if} \quad  \bM_0(U;t)  = \id \, , \\
\cM_{K,0,N+1}[\breve r] \otimes \cM_2(\C)  \, , \, \forall \breve r > 0 \quad \text{otherwise} \, . 
\end{cases}
$$
By Definition  \ref{def:LSMN} 
the map $\cD_{\leq N}(Z)$ satisfies  \eqref{quasisimplettica}.  
If  $Z(t)$ solves  \eqref{U.Ham} then $ W= \mD(Z;t) $ solves 
\begin{align}
\notag
\pa_t W  &=\big( \di_Z \mD_{\leq N} (Z) +  M_{>N}^\cD(Z;t) \big) \left[ J_c \nabla_Z H (Z) +
 M_{>N}(U;t)U \right]
+ (\pa_t M_{>N}^\cD(Z;t)) Z  \\
  \label{patW}
  & =  \di_Z \mD_{\leq N} (Z) J_c \nabla_Z H (Z) + M'_{>N}(U;t)U
\end{align}
where, by the first bullet below Definition \ref{Def:Maps} and Proposition \ref{composizioniTOTALI}, 
\be\label{mappapeggiore}
M'_{>N}(U;t) 
\in 
\mM_{K, {K'+1},N+1}[r] \otimes \mM_2 (\C)   \ . 
\ee
Denote by $  \breve \mD_{\leq N}(W) $ the approximate inverse  up 
to homogeneity $ N $ of $\mD_{\leq N}(Z)$ (see Lemma \ref{inversoapp}).  
Then
   $$
     \breve\cD_{\leq N}(W)     
     =  \breve \cD_{\leq N}\big(\cD_{\leq N}(Z) +M_{>N}^\cD(Z;t)Z \big) = Z + \breve M'_{>N}(Z;t)Z
$$
     where, by \eqref{invmappe} and  
    Proposition \ref{composizioniTOTALI}, 
     $$
\breve M'_{>N}(Z;t)\in 
\begin{cases} 
\mM_{K, {K'},N+1}[r] \otimes \mM_2 (\C)  
\qquad \qquad  \text{if} \quad  \bM_0(U;t)  = \id \, , \\
\cM_{K,0,N+1}[\breve r] \otimes \cM_2(\C)  \, , \, \forall \breve r > 0 \quad \text{otherwise} \, . 
\end{cases}
$$   
Finally we substitute $Z= \bM_0(U;t)U$, cfr. \eqref{Z.M0}, 
in the non--homogeneous term  $ \breve M'_{>N}(Z;t)Z$ 
 and using $(iii)$ and $(i)$ Proposition \ref{composizioniTOTALI}  we 
get  
\be\label{pizza}
Z = \breve\cD_{\leq N}(W) + \breve{M}_{>N}(U;t)U
\ee
with
$\breve M_{>N}(U;t) \in 
\mM_{K,K',N+1}[r] \otimes \mM_2 (\C)$. 
We substitute \eqref{pizza} in the term $\grad_Z H(Z)$ in \eqref{patW} to obtain
\begin{align}
\pa_t W & =  
\di_Z \mD_{\leq N} (Z) J_c \nabla_Z H(\breve \cD_{\leq N}(W)) + M''_{>N}(U;t)U \notag \\
& \stackrel{\eqref{diinv2}, \eqref{Z.M0}}{=}  \di_Z \mD_{\leq N} (Z) J_c [\di_Z\cD_{\leq N}(Z)]^\top
[\di_W\breve \cD_{\leq N}(\mD_{\leq N}(Z))]^\top 
\nabla_Z H (\breve \mD_{\leq N}(W))+ M'''_{>N}(U;t)U \notag \\
& \stackrel{\eqref{lasolita}, \eqref{Z.M0}}{=} J_c [\di_W\breve \cD_{\leq N}(W)]^\top
\nabla_Z H(\breve \mD_{\leq N}(W))+ M''''_{>N}(U;t)U \notag \\
& = J_c \nabla_W( H \circ \breve \mD_{\leq N})(W) + M''''_{>N}(U;t)U \label{sistWole}
\end{align}
where $ M''_{>N}(U;t) $, $ M'''_{>N}(U;t), $ $ M''''_{>N}(U;t) $ are matrices 
of operators as in \eqref{mappapeggiore}. 
Note that in the very last passage we also substituted $\cD_{\leq N}(Z) = W + M_{>N}(U;t)U$ where $ M_{>N}(U;t) $ is a matrix of operators as in 
\eqref{mappapeggiore}. 
This proves that system \eqref{sistWole} is  Hamiltonian up to homogeneity $ N $.  
\end{proof}

\subsection{Linear symplectic  flows}\label{sec:SLF}

We consider  the flow of a linearly 
Hamiltonian up to homogeneity $ N $ paradifferential operator.  

\begin{lemma} \label{flow}
{\bf (Linear symplectic flow)}
Let  $p \in \N$,  $N,  K,K'  \in \N_0 $ with $  K'\leq K $, $ m \leq 1 $, $r>0$.  Let  
$J_c \Opbw{B}$ be a linearly Hamiltonian operator up to homogeneity $N$ 
(Definition \ref{def:LHN})
where  $B(\tau,U; t, x,  \xi )$  is a matrix of symbols
\be \label{def.B0}
B(\tau,U; t, x,  \xi ):=
\begin{pmatrix}
 b_1(\tau, U; t, x, \xi) &  b_2(\tau,U; t, x, \xi) \\  \overline{b_2(\tau,U; t, x, -\xi)} &  \overline{b_1(\tau,U; t , x, -\xi)}\end{pmatrix}, 
 \quad
 \begin{cases}
  b_1  \in \Sigma \Gamma^{0}_{K,K',p}[r,N] \\
  b_2\in  \Sigma \Gamma^{m}_{K,K',p}[r,N] \, ,  
  \end{cases}
\ee
with 
$b_1^\vee - b_1 $ in $  \Gamma^0_{K,K',N+1}[r]$ and the imaginary part 
$\textup{ Im } b_2 $ in $ \Gamma^0_{K,K', N+1}[r]$ (cfr. \eqref{LHS-cN}) uniformly in $ |\tau| \leq 1  $.
Then there exists $ s_0 >0$ such that, for any $ U\in B_{s_0,\R}^{K}(I;r) $, 
the system 
 \be \label{LinFlow0}
\begin{cases}
\pa_\tau\mG^\tau_B(U;t)=
J_c  \, \opbw\big(B(\tau,U; t, x, \xi)\big)
  \mG^\tau_B(U;t)\\
\mG_B^0(U;t)={\rm Id} \, , 
\end{cases} 
\ee
has a unique solution $\mG^{\tau}_B(U)$ defined for all $ | \tau | \leq 1  $, 
satisfying  the following properties:
\begin{itemize}
\item[(i)] {\bf Boundedness:} For any $s \in \R$ the linear map $\mG^{\tau}_B(U;t)$ is invertible and there is $r(s) \in ]0,r[$ such that for any $U \in B_{s_0,\R}^{K}(I;r(s))$  for any $0 \leq k \leq K-K'$, $V \in C_{*}^{K-K'}(I;\dot{H}^{s}(\T, \C^2))$, 
\be\label{invero2B}
\|\pa_{t}^{k} ( \mG^{\tau}_B(U;t)V) \|_{\dot{H}^{s-\frac{3}{2}k}}+\|\pa_{t}^{k}
(\mG^{\tau}_B(U;t)^{-1}V) \|_{\dot{H}^{s-\frac{3}{2}k}} \leq 
\big(1+C_{s,k}\|U\|_{k+K',s_0}\big)\|V\|_{k,s}  
\ee
uniformly in $ |\tau| \leq 1  $.

In particular $\mG_B^\tau(U;t)$ and $\mG_B^\tau(U;t)^{-1}$ 
 are  non--homogeneous spectrally localized maps in $  \mS_{K,K',0}^0 [r]\otimes \mM_2 (\C) $ according to Definition \ref{smoothoperatormaps}. 
\item[(ii)] {\bf Linear symplecticity:} The map  $ \mG^\tau_{B}(U;t) $ is linearly symplectic up to homogeneity $N$ (Definition \ref{LSUTHN}).
If $J_c \Opbw{B}$ is  linearly Hamiltonian (Definition \ref{def:LH}), then $\mG^{\tau}_B(U;t)$ is linearly symplectic (Definition \ref{LS}).
\item[(iii)] {\bf Homogeneous expansion:}  $\mG_B^\tau(U;t)$ and its inverse are spectrally localized maps and  $ \mG_B^\tau(U;t)^{\pm}- \id $ belong to 
$ \Sigma  \mS_{K,K',p}^{(N+1) m_0}[r, N] \otimes\mathcal{M}_2(\C) $ with $m_0 := \max(m,0)$, uniformly in $ |\tau| \leq 1  $.
\end{itemize}
\end{lemma}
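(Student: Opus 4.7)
The plan is to prove the three properties in the natural order. Existence of the flow together with the boundedness estimate (i) are established via paradifferential energy estimates; the linear symplectic identity (ii) then follows by a direct ODE computation exploiting the Hamiltonian symmetry up to homogeneity $N$; and the homogeneous expansion (iii) is obtained through a Picard expansion of Duhamel's formula combined with the composition rules of Proposition \ref{composizioniTOTALIs}. Throughout, the bookkeeping of time derivatives (up to $K-K'$ of them, each costing $\frac32$ spatial derivatives) is handled by differentiating the equation \eqref{LinFlow0} in $t$, treating $\pa_t B$ as a symbol in $\Sigma \Gamma^{\cdot}_{K,K'+1,p}[r,N]$, and adding the resulting lower-order inhomogeneities into the energy estimate.

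\emph{Existence and boundedness (i).} For fixed $s$ and $k\leq K-K'$, I would consider
\[
E_s^k(\tau) := \sum_{k'\leq k} \|\pa_t^{k'} \mG_B^\tau(U;t) V \|_{\dot H^{s-\frac32 k'}}^2 \, ,
\]
differentiate in $\tau$, and reduce to controlling the principal contribution
$2\Re\, \langle \Lambda^s J_c \opbw(B(\tau,U;t)) W, \Lambda^s W\rangle$ with $W = \mG_B^\tau V$. The linear Hamiltonian property (Definition \ref{def:LHN}) means $\opbw(B_{\leq N})^\top = \opbw(B_{\leq N})$, and combined with $E_c J_c = \id$, $J_c^\top = -J_c$, this makes $J_c \opbw(B_{\leq N})$ skew-adjoint with respect to the relevant Hermitian structure up to homogeneity $N+1$. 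The commutator $[\Lambda^s, J_c \opbw(B)]$ gains one derivative by Proposition \ref{teoremadicomposizione}, and the off-diagonal symbol $b_2$ of order $m\leq 1$ pairs with its conjugate under $J_c$ in such a way that, after the symbolic subprincipal computation, the resulting operator is bounded on $\dot L^2$ with norm $\lesssim \|U\|_{K,s_0}^p$. Grönwall then yields $E_s^k(\tau) \leq \exp(C_{s,k} \|U\|_{k+K',s_0}^p) \|V\|_{k,s}^2$, which for $U$ small in $B^K_{s_0,\R}(I;r(s))$ is $\leq (1+C_{s,k}\|U\|_{k+K',s_0})^2 \|V\|_{k,s}^2$ as claimed in \eqref{invero2B}, using that $B$ vanishes to order $p\geq 1$ in $U$. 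Running the same argument on $\tau \mapsto \mG_B^{1-\tau}$ (equivalently, integrating backwards) yields the inverse bound. This already shows $\mG_B^\tau$ and its inverse satisfy \eqref{piovespect} with $m=0$, hence lie in $\mS_{K,K',0}^0[r]\otimes \mathcal{M}_2(\C)$; the spectral localization \eqref{specres} is a posteriori consequence of the expansion in (iii).

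\emph{Linear symplecticity (ii).} Setting $P(\tau) := \mG_B^\tau(U;t)^\top E_c \mG_B^\tau(U;t)$, direct differentiation using $E_c J_c =\id$ and $J_c^\top = -J_c$ gives
\[
\pa_\tau P(\tau) = [\mG_B^\tau]^\top \bigl(\opbw(B) - \opbw(B)^\top\bigr) \mG_B^\tau \, , \qquad P(0) = E_c \, .
\]
By the Hamiltonian hypothesis \eqref{ipoBnBtN}, $\mP_{\leq N}(\opbw(B) - \opbw(B)^\top)=0$, so $\opbw(B)-\opbw(B)^\top$ is a non-homogeneous operator in $\mS_{K,K',N+1}[r]\otimes \mathcal{M}_2(\C)$. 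Integrating in $\tau$ and using the boundedness of $\mG_B^\tau$ from (i) together with Proposition \ref{composizioniTOTALIs}-$(ii)$ (composition of a non-homogeneous spectrally localized map of order $\leq m_0$ with bounded spectrally localized maps) yields $P(\tau) - E_c \in \mS_{K,K',N+1}[r]\otimes \mathcal{M}_2(\C)$, which is \eqref{A:sym23}. In the fully linearly Hamiltonian case (Definition \ref{def:LH}) the integrand vanishes identically, hence $P(\tau)\equiv E_c$.

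\emph{Homogeneous expansion (iii).} I would iterate the Duhamel formula
$\mG_B^\tau = \id + \int_0^\tau J_c \opbw(B(\tau')) \mG_B^{\tau'}\, \di\tau'$
and extract, by induction on the degree of homogeneity, the components $\mG_q^\tau$ for $q=p,\dots, N$ as finite time-ordered integrals of compositions $J_c\opbw(B_{q_1})\cdots J_c\opbw(B_{q_r})$ with $q_1+\cdots+q_r = q$. Each paradifferential factor is a spectrally localized map of order $\leq m_0=\max(m,0)$, and at most $\lfloor N/p\rfloor+1\leq N+1$ factors occur; by Proposition \ref{composizioniTOTALIs}-$(ii)$ each $\mG_q^\tau$ is a spectrally localized operator of order $\leq (N+1)m_0$, belonging to $\wt\mS^{(N+1)m_0}_q\otimes\mathcal{M}_2(\C)$. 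The remainder gathering all contributions of homogeneity $>N$ and all mixed terms involving $B_{>N}$ is controlled in $\mS_{K,K',N+1}^{(N+1)m_0}[r]\otimes\mathcal{M}_2(\C)$ using the full flow bound from (i). For the inverse, I would either apply the same scheme to $\tau \mapsto \mG_B^{1-\tau}$, or, more algebraically, build an approximate inverse up to homogeneity $N$ via Lemma \ref{inversoapp} applied to the pluri-homogeneous part $\mG_B^\tau - \id \in \Sigma_p^N\wt\mS_q\otimes\mathcal{M}_2(\C)$ and identify it with the exact inverse up to a non-homogeneous remainder in $\mS_{K,K',N+1}^{(N+1)m_0}[r]\otimes\mathcal{M}_2(\C)$ controlled again by (i). The main obstacle in the whole proof is the energy estimate (i) when $m=1$: a naive bound would lose a derivative, and only the precise subprincipal cancellation coming from the Hamiltonian symmetry of $\opbw(B_{\leq N})$ together with the symbolic calculus of Proposition \ref{teoremadicomposizione} allows Grönwall to close.
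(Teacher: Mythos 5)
Your proposal is correct, and parts (ii) and (iii) coincide with the paper's own argument: the same differentiation of $\mG_B^\tau(U;t)^\top E_c\,\mG_B^\tau(U;t)$ using \eqref{ipoBnBtN} followed by integration in $\tau$ and Proposition \ref{composizioniTOTALIs}, and the same iterated Duhamel expansion (the paper iterates the fundamental theorem of calculus $N$ times) with the inverse treated as a backward flow. The only real divergence is in (i): the paper does not prove the flow estimates, it simply observes that $b_1$ and $\Im b_2$ have order $0$ while $\Re b_2$ has order $m\leq 1$ and invokes Lemma 3.22 of \cite{BD} (which in fact gives a loss of $k$ rather than $\tfrac32 k$ derivatives), whereas you reconstruct the underlying energy estimate; this is a legitimate unpacking, but the mechanism should be attributed more precisely. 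The cancellation that closes Gr\"onwall at order $m$ is the exact skew-adjointness of $\pm\ii\,\opbw(\Re b_2)$ on $\dot L^2$ (Weyl quantization of a real symbol, valid at \emph{every} homogeneity), combined with the hypotheses $\Im b_2\in\Gamma^0_{K,K',N+1}[r]$ and $b_1\in\Sigma\Gamma^0_{K,K',p}[r,N]$ which make all remaining contributions of order $0$; the symmetry $\mP_{\leq N}(\opbw(B))=\mP_{\leq N}(\opbw(B)^\top)$ by itself would leave the order-$m$, homogeneity $>N$ part of the generator uncontrolled, so your closing sentence crediting the Hamiltonian symmetry of $\opbw(B_{\leq N})$ is slightly misleading even though the hypotheses you list at the outset supply exactly what is needed. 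Finally, no a posteriori spectral-support verification is required for membership of $\mG_B^\tau(U;t)^{\pm 1}$ in $\mS^{0}_{K,K',0}[r]\otimes\mM_2(\C)$: for non-homogeneous spectrally localized maps the class is defined solely by the estimate \eqref{piovespect}, which \eqref{invero2B} yields directly, as the paper notes.
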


\begin{proof}
Since the symbols 
$ b_1 $ and $\textup{ Im } b_2 $ have order $ 0 $ and $\textup{ Re } b_2 $ has order 
$ m \leq 1 $, the 
existence of the flow $\mG^{\tau}_B(U;t)$ and the estimates \eqref{invero2B} 
(actually with loss of $ k $ derivatives instead of $ \tfrac32 k $) 
are classical and follow as in  Lemma 3.22 of \cite{BD}. In view of \eqref{piovespect}, the bounds \eqref{invero2B} imply that $\mG_B^\tau(U;t)$ 
 is in $  \mS_{K,K',0}^0 [r]\otimes \mM_2 (\C) $. 
 The inverse $\mG_B^\tau(U;t)^{-1}$  satisfies the same estimates regarding  it 
 as the time $\tau $-flow 
$\mG^{\tau'}_{B^-}(U;t)\vert_{\tau'=\tau}$ of the system
\be\label{Gtaupr}
\pa_{\tau'} \mG_{B^-}^{\tau'}(U;t)= J_c\opbw( B^-(\tau,\tau', U;t,x,\xi))\mG_{B^-}^{\tau'}(U;t) \, , \quad  \mG_{B^-}^{0}(U;t)=\uno \, , 
\ee
where  $ B^-(\tau,U;t,x, \xi):= -B(\tau-\tau',U;t,x, \xi) $.

Let us prove item $(ii)$.  Set $ \bB := \opbw{(B(\tau, U;t,x, \xi))}$  and $ \mG^\tau := \mG^\tau_B(U;t) $ for brevity. 
By \eqref{LinFlow0} we get, for any $ \tau $, 
\begin{align*}
\pa_\tau ( \cG^\tau )^\top \, E_c \, \cG^\tau  \notag 
& = 
- (\cG^\tau)^\top \bB^\top J_c E_c \cG^\tau + (\cG^\tau)^\top E_c J_c \bB \cG^\tau \\
 &  \stackrel{J_c E_c = \uno, \eqref{Ecci}} =  (\cG^\tau)^\top( \bB -\bB^\top) \cG^\tau  \stackrel{\eqref{ipoBnBtN}}
 =(\cG^\tau)^\top (\bB_{>N} -\bB_{>N}^\top) \cG^\tau  \, . 
\end{align*}
Therefore 
$$ 
 ( \cG^\tau )^\top \, E_c \, \cG^\tau  =  E_c+ S_{>N}  
 \quad  \text{where} \quad
 S_{>N} := 
\int_0^\tau  (\cG^{\tau'})^\top( \bB_{>N} -\bB_{>N}^\top) \cG^{\tau'}\, \di \tau'
$$ 
is 
a matrix of spectrally localized maps in $ \mS_{K,K',N+1}[r] \otimes\mM_2 (\C) $ because
$\mG_B^\tau(U;t)$ and $\mG_B^\tau(U;t)^{-1}$ 
are  in $  \mS_{K,K',0}^0 [r]\otimes \mM_2 (\C) $,  
the paradifferential operator 
$ \bB$  belongs to $ \Sigma\mS^{m_0}_{K,K',p}[r] \otimes \mM_2 (\C) $  
(see the fourth bullet after Definition \ref{smoothoperatormaps}), 
and $(ii)$ of Proposition \ref{composizioniTOTALIs}. 
This proves that the  $ \mG^\tau_{B}(U;t) $ is linearly symplectic up to homogeneity $N$ according to Definition \ref{LSUTHN}. 
The same proof shows that, if $J_c \Opbw{B}$ is  linearly Hamiltonian, then  
$\mG^{\tau}_B(U;t)$ is linearly symplectic.

Let us prove item $(iii)$. By  \eqref{LinFlow0}, 
iterating $ N$--times the fundamental theorem of calculus 
we get  the  expansion
\be\label{GtB-I}
\mG_B^\tau(U;t) = \id + \sum_{j=1}^N S_j^\tau(U)  + S_{>(pN)}^\tau(U;t)
\ee
where
$$
S_j^\tau(U) := \int_0^{\tau} \! \int_0^{\tau_1}\!\! \! \cdots \!\! \int_{0}^{\tau_{j-1}}    J_c \opbw(B(\tau_1,U;t,x,\xi))\cdots J_c \opbw(B(\tau_{j} ,U;t,x,\xi)) \,\di\tau_1\cdots \di\tau_{j}
$$
and, writing for brevity $ \opbw(B(\tau_j ,U)) := \opbw(B(\tau_j ,U;t,x,\xi)) $, 
$$
S_{> (pN)}^\tau (U;t)= \int_0^{\tau} \! \int_0^{\tau_1}\!\! \! \cdots \!\! \int_{0}^{\tau_{N}}    J_c \opbw(B(\tau_1,U))\cdots J_c \opbw(B(\tau_{N+1} ,U))\mG_B^{\tau_{N+1}}(U;t)\,\di\tau_1\cdots \di\tau_{N+1}.
$$
 Since each $ \opbw(B(\tau_j,U))$ 
 belongs to $ \Sigma\mS^{m_0}_{K,K',p}[r] \otimes \mM_2 (\C) $ 
and  $\mG_B^{\tau_{N+1}}(U)$ is in $ \mS_{K,K',0}^0 \otimes \mM_2 (\C) $ 
we deduce,  by $(ii)$ of Proposition \ref{composizioniTOTALIs},  that 
$\mG_B^\tau(U;t)- \id$ in \eqref{GtB-I} is a matrix of spectrally localized maps in $ \Sigma \mS_{K,K',p}^{(N+1) m_0}[r,N] \otimes \mM_2 (\C) $, uniformly in $ |\tau| \leq 1  $. 
The analogous statement for $\mG_B^\tau(U;t)^{-1} - \id$ follows by  \eqref{Gtaupr}. 
\end{proof}

The flow generated by a Fourier multiplier satisfies similar properties. 
 
\begin{lemma} {\bf (Flow of a Fourier multiplier)} \label{flussoconst}
Let $ p \in \N$ and $ g_p(Z;\xi) $ be a $p$--homogeneous, 
 $ x $-independent,  real symbol in $ \wt \Gamma^{\frac32}_p $. Then  the flow 
 $\cG_{g_p}^\tau (Z)$ defined by 
\be \label{FouFlow}
\pa_\tau \cG_{g_p}^\tau(Z)= \vOpbw{\ii g_p(Z;\xi)}\cG_{g_p}^\tau(Z) \, , 
\quad \cG_{g_p}^0(Z)=\uno \, ,
\ee
is well defined for any $ |\tau| \leq 1 $ and satisfies the following properties:
\begin{itemize}
\item[$(i)$] {\bf Boundedness:} 
For any $K\in \N$ and $r>0$  the flow $\cG^{\tau}_{g_p}(Z)
 $ is a real-to-real diagonal matrix of spectrally localized 
 maps in $ \mS_{K,0,0}^0[r] \otimes \mM_2(\C) $. Moreover there is $s_0>0 $ such that for  any $s\in \R$, 
there is $r(s) \in (0, r)$ such that for any 
functions $Z \in B^K_{s_0,\R} (I;r(s)) $
and 
$W\in C^{K}_{*}(I; \dot H^s(\T, \C^2))$, it results, for any $0 \leq k \leq K$, 
\be\label{inveroGood}
\|\pa_{t}^{k}(\cG_{g_p}^\tau (Z)W) \|_{\dot{H}^{s-\frac{3}{2}k}}+\|\pa_{t}^{k}(\cG_{g_p}^\tau (Z)^{-1}W) \|_{\dot{H}^{s-\frac{3}{2}k}} \leq 
\big(1+C_{s,k} \|Z\|_{k,s_0}\big)\| W \|_{k,s}  
\ee
uniformly in $ |\tau| \leq 1  $.
\item[$(ii)$]{\bf Linear symplecticity:} 
The flow map $\cG^\tau_{g_p}(Z)$    is linearly symplectic  (Definition \ref{LS}). 
\item[$(iii)$] {\bf Homogeneous expansion:} The flow map  $ \cG^{\tau}_{g_p}(Z) $ and its inverse $ \cG^{-\tau}_{g_p}(Z) $ are matrices of  spectrally localized maps 
such that $ \cG^{\pm \tau}_{g_p}(Z) -\uno 
$ belong to $ \Sigma \mS^{\frac32 (N+1)}_{K,0,p}[r,N] \otimes \mM_2(\C) $,
uniformly in $ |\tau| \leq 1  $.
\end{itemize}
\end{lemma}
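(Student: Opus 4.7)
The plan is to exploit the fact that, since $g_p(Z;\xi)$ is $x$-independent (its Fourier expansion \eqref{sviFou} contains only terms with $\vec\sigma\cdot\vec\jmath = 0$), the regularized symbol $g_p^{\chi_p}(Z;\xi)$ is still $x$-independent, and therefore $\vOpbw{\ii g_p(Z;\xi)}$ acts as a Fourier multiplier, diagonal on the $(+,-)$ components. With $Z$ treated as a parameter in $\tau$, the flow equation \eqref{FouFlow} decouples completely in the Fourier basis and admits the explicit solution
$$
(\cG^\tau_{g_p}(Z) W)_j^\sigma = e^{\ii\sigma\tau g_p^{\chi_p}(Z;j)}\, W_j^\sigma \, , \qquad j \in \Z\setminus\{0\}, \ \sigma \in \{\pm\} \, .
$$
Since $g_p$ and $\chi_p$ are real, each multiplier is unimodular, which is the structural fact underlying every part of the lemma.

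From this explicit formula, item $(i)$ is immediate: the unimodularity yields $\|\cG^\tau_{g_p}(Z) W\|_{\dot H^s} = \|W\|_{\dot H^s}$ for every $s\in\R$, while time derivatives are handled by the Leibniz rule, where each $\pa_t$ lands either on $W_j^\sigma$ or on $g_p^{\chi_p}(Z(t);j)$. The symbol estimates \eqref{nonhomosymbo} control $\pa_t^k g_p^{\chi_p}(Z;j)$ by $|j|^{3/2}$ times polynomial factors in $\|Z\|_{k,s_0}$, yielding the loss of $\tfrac32 k$ spatial derivatives and the $\|Z\|_{k,s_0}$ factor in \eqref{inveroGood}. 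The inverse $\cG^{-\tau}_{g_p}(Z)$ is handled identically by sending $\tau\to -\tau$. The spectral localization required by Definition \ref{smoothoperatormaps} is trivial: the operator preserves each Fourier mode, so input and output frequencies coincide exactly, while the cutoff $\chi_p(\vec n,\xi)$ built into $g_p^{\chi_p}$ enforces $\max(n_1,\ldots,n_p)\leq \delta |j|$ on the internal frequencies of $Z$.

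For item $(ii)$, inserting the explicit formula into the Fourier expression \eqref{Fex} of $\Omega_c$ gives
$$
\Omega_c(\cG^\tau_{g_p}(Z) W, \cG^\tau_{g_p}(Z) V) = -\ii\!\!\sum_{j \in \Z\setminus\{0\}, \sigma = \pm}\!\!\sigma\, e^{-\ii\sigma\tau g_p^{\chi_p}(Z;j)}\, W_j^{-\sigma}\, e^{\ii\sigma\tau g_p^{\chi_p}(Z;j)}\, V_j^\sigma = \Omega_c(W,V)
$$
for every $\tau$, proving \emph{exact} linear symplecticity in the sense of Definition \ref{LS}. Equivalently, since the symbol is $x$-independent and real, the identity $\vOpbw{\ii g_p}^\top E_c + E_c \vOpbw{\ii g_p} = 0$ holds without any smoothing remainder, so the generator is linearly Hamiltonian in the exact sense of Definition \ref{def:LH} and the result follows as in the proof of $(ii)$ of Lemma \ref{flow}.

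Finally, for item $(iii)$, I Taylor-expand in powers of the generator and keep all contributions of $Z$-homogeneity $\leq N$, i.e.\ powers $n = 0, 1, \ldots, N_p$ with $N_p := \lfloor N/p\rfloor$, writing
$$
\cG^\tau_{g_p}(Z) = \sum_{n=0}^{N_p}\frac{\tau^n}{n!}\vOpbw{\ii g_p(Z;\xi)}^n + R^\tau_{>N}(Z) \, , \quad R^\tau_{>N}(Z) := \frac{\tau^{N_p+1}}{N_p!}\!\int_0^1\! (1-s)^{N_p}\, \cG^{s\tau}_{g_p}(Z)\, \vOpbw{\ii g_p(Z;\xi)}^{N_p+1}\,\di s.
$$
Each $\vOpbw{\ii g_p(Z;\xi)}^n$ is the Fourier multiplier with symbol $(\ii g_p^{\chi_p}(Z;\xi))^n$, as compositions of $x$-independent Fourier multipliers are exact; it is therefore $np$-homogeneous in $Z$, of order $\tfrac{3}{2}n \leq \tfrac{3(N+1)}{2}$ in $\xi$, and spectrally localized by the support properties of $\chi_p$, hence belongs to $\wt{\mS}^{3(N+1)/2}_{np}\otimes\mM_2(\C)$. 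The remainder $R^\tau_{>N}(Z)$ vanishes at $Z$-order $(N_p+1)p\geq N+1$ and inherits order $\tfrac{3(N_p+1)}{2}\leq \tfrac{3(N+1)}{2}$ from the $(N_p+1)$-th power of the symbol; combining the uniform boundedness of $\cG^{s\tau}_{g_p}(Z)$ from item $(i)$ with the scalar estimate $|\pa_\xi^\alpha\pa_t^k (g_p^{\chi_p}(Z;\xi))^{N_p+1}|\lesssim \langle\xi\rangle^{3(N_p+1)/2-\alpha}\|Z\|_{k,s_0}^{(N_p+1)p}$ yields the tame bound \eqref{piovespect}, placing $R^\tau_{>N}(Z)$ in $\mS^{3(N+1)/2}_{K,0,N+1}[r]\otimes\mM_2(\C)$. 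The same argument applied to $\cG^{-\tau}_{g_p}(Z)$ produces the expansion of the inverse. No serious obstacle is expected: since the entire analysis reduces to elementary scalar identities in Fourier, the main care goes into bookkeeping the symbol classes.
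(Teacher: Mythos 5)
Your proposal is correct and follows essentially the same route as the paper: the paper likewise exploits that $g_p$ is real and $x$-independent to get a well-defined unitary (isometric) flow for item $(i)$, deduces item $(ii)$ from the fact that the generator $\vOpbw{\ii g_p}$ is linearly Hamiltonian (exactly, not just up to homogeneity $N$), and obtains item $(iii)$ by the same Taylor expansion with integral remainder, using that $\vOpbw{\ii g_p}^{k}\in\wt\mS^{\frac32 k}_{kp}\otimes\mM_2(\C)$ and the uniform boundedness of the flow. Your explicit Fourier-diagonal exponential formula is just a more concrete rendering of the same facts (the composition and symplecticity checks you do by hand are handled in the paper by Proposition \ref{composizioniTOTALIs} and the argument of Lemma \ref{flow}), so there is no gap.
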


\begin{proof}
Since $ g_p (Z;\xi) $ is real and independent of $ x$, then the flow $ \cG^{\tau}_{g_p}(Z)$ is well defined in $ \dot H^s $ and it is unitary,  namely $ \|\cG^{\tau}_{g_p}(Z)W\|_{\dot H^s}=\|W\|_{\dot H^s}$. Moreover, since $g_p$ is a Fourier multiplier of order $ \frac32 $,  
we have
 $$ 
 \begin{aligned}
   \|\pa_t(\cG^{\tau}_{g_p}(Z)W)\|_{\dot H^{s-\frac32}}
   & =  \|\cG^{\tau}_{g_p}(Z)\pa_t W\|_{\dot H^{s-\frac32}}+  \|\vOpbw{ \ii \pa_t g_p(Z;\x)}\cG^{\tau}_{g_p}(Z)W\|_{\dot H^{s-\frac32}}\\
   & \leq \| W \|_{1,s} +C  \| Z\|_{1,s_0}^{p}\| W \|_{0,s} \, .
   \end{aligned}
   $$
   The estimates for the $ k$--th derivative follow similarly using also that $\cG^\tau_{g_p}(Z)^{-1}=\cG^{-\tau}_{g_p}(Z)$. 
   
   To prove $(ii)$  we use that, in view of \eqref{vecop}, \eqref{LHS-c}
   and since $g_p(Z;\x)  $ is real valued, 
   the operator $\vOpbw{ \ii  g_p(Z;\x)}$ is linearly Hamiltonian, according to Definition \ref{def:LH}. 
 Then, as 
   for item ($ii$) of Lemma \ref{flow},  
the flow    $\cG^{\tau}_{g_p}(Z)$ is linearly symplectic.
Finally also item $(iii)$    follows as for item ($iii$) of Lemma \ref{flow},
since $\vOpbw{ \ii  g_p(Z;\x)}^k $ is in 
$  \wt \mS_{kp}^{\frac32 k}\otimes \mM_2(\C)$ and 
$\cG^{\tau}_{g_p}(Z) $ is in $ \mS_{K,0,0}^0[r] \otimes \mM_2(\C) $, uniformly in $ |\tau| \leq 1  $. 
  \end{proof}

\subsection{Paradifferential  Hamiltonian structure}\label{sec:LHam}

In order to compute the Hamiltonian vector field  associated to a paradifferential Hamiltonian we provide the following result. 

\begin{lemma}\label{H.S.U}
Let $p \in \N$, $m \in \R$. 
Let $S(U)$ be a real-to-real symmetric matrix of $p$-homogeneous spectrally localized maps in $\wt \cS_p^m \otimes \cM_2(\C)$  and define the Hamiltonian function
\be\label{H.S}
H(U) := \frac12 \big\la S(U) U, U \big\ra_r \ . 
\ee
Then its gradient
\be\label{gradH.S}
\grad H(U) = S(U)U + R(U)U
\ee
where $R(U)$ is a real-to-real  matrix of homogeneous smoothing operators in 
$ \wt\cR^{-\varrho}_p \otimes \cM_2(\C)$ for any $\varrho \geq 0$.
\end{lemma}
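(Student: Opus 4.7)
The plan is to compute $\di H(U)[V]$ explicitly by exploiting the quadratic structure of $H$, and then identify $\grad H(U)$ through the defining identity \eqref{def:grad}. The key structural input is that, thanks to the symmetry assumption $S(U)^\top = S(U)$, two of the three terms produced by differentiation combine into the expected paradifferential contribution $\langle S(U)U, V\rangle_r$, while the third one becomes the smoothing remainder via Lemma \ref{aggiunto}.

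Concretely, first I would differentiate termwise to get
\[
\di H(U)[V] \;=\; \tfrac12 \langle \di_U S(U)[V]\,U,\, U\rangle_r + \tfrac12 \langle S(U) V,\, U\rangle_r + \tfrac12 \langle S(U)U,\, V\rangle_r.
\]
The symmetry assumption gives $\langle S(U) V, U\rangle_r = \langle V, S(U)^\top U\rangle_r = \langle S(U)U, V\rangle_r$, so the last two summands merge into $\langle S(U)U, V\rangle_r$. For the first summand, $p$-multilinearity together with symmetry in the internal entries yields $\di_U S(U)[V]\,U = p\, S(V,U,\dots,U)\,U = p\, L(U) V$, with $L(U)$ exactly as in \eqref{effonetra}. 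Transposing then gives
\[
\tfrac12 \langle \di_U S(U)[V]\, U,\, U\rangle_r \;=\; \tfrac{p}{2}\langle L(U) V,\, U\rangle_r \;=\; \tfrac{p}{2}\langle V,\, L(U)^\top U\rangle_r.
\]
Setting $R(U) := \tfrac{p}{2}\, L(U)^\top$, the defining identity \eqref{def:grad} produces $\grad H(U) = S(U)U + R(U)U$, as desired.

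The nontrivial input is the regularity of $R(U)$: by Lemma \ref{aggiunto} applied to the $p$-homogeneous spectrally localized map $S(U)$, the transposed internal differential $L(U)^\top$ lies in $\wt\cR^{-\varrho}_p \otimes \cM_2(\C)$ for \emph{every} $\varrho\geq 0$, so the same is true for $R(U)$. The real-to-real property of $R(U)$ follows from that of $S(U)$: the multilinear form $(V,U,\dots,U)\mapsto S(V,U,\dots,U)U$ inherits the real-to-real structure, hence so does $L(U)$, and transposition preserves this property (as observed after \eqref{emmetop}). There is no serious obstacle: the argument reduces to the algebraic identification of the three terms, followed by an invocation of the already-established smoothing bound of Lemma \ref{aggiunto}, which is what makes this gradient computation produce a genuine smoothing correction and not merely a bounded one.
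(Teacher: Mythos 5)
Your proof is correct and follows essentially the same route as the paper: compute $\di H(U)[V]$, use the symmetry $S(U)^\top=S(U)$ to merge the two non-differentiated terms into $\langle S(U)U,V\rangle_r$, and identify the remainder as (a constant multiple of) the transposed internal differential $L(U)^\top$, which is smoothing for every $\varrho\geq 0$ by Lemma \ref{aggiunto}. The only difference is cosmetic: the paper absorbs the factor into its definition $L(U)W:=\tfrac12\,\di_U S(U)[W]U$ instead of writing $\tfrac{p}{2}$ times the $L$ of Lemma \ref{aggiunto}.
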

\begin{proof}
By the definition \eqref{def:grad}, the gradient $\grad H(U)$ is the vector field 
\be\label{ham.sap10}
 \grad H (U)  = S(U)U +  L(U)^\top U
\quad \text{where} \quad  L(U) W :=\frac12  \di_U S(U)[W] U \, . 
\ee
As $S(U)$ is  a spectrally localized map in $\wt \cS_p^m \otimes \cM_2(\C)$,  by Lemma \ref{aggiunto}   the transposed of its internal differential, namely $L(U)^\top $, 
is a smoothing operator in $\wt \cR_p^{-\varrho} \otimes \cM_2(\C)$ for any $ \varrho \geq  0 $. Then \eqref{gradH.S} follows from \eqref{ham.sap10}.
\end{proof}

As a corollary we  obtain the Hamiltonian vector field  associated to a paradifferential Hamiltonian. 

\begin{lemma}\label{lem:hamsym} 
Let  $ p \in \N $, $ m \in \R $ and 
$  a (U;x, \xi) $  a real valued homogeneous 
symbol  in $ \wt\Gamma^m_p $.
Then the Hamiltonian vector field generated 
by the Hamiltonian 
$$
H (U):=\Re \langle  A(U)u , \bar u\rangle_{\dot L^2_r}= \frac12 \Big\langle\begin{pmatrix} 0 & \bar {A(U)}\\ A(U)& 0 \end{pmatrix} U , U\Big\rangle_r, \quad A(U):=\Opbw{a(U;x, \xi)}\, , 
$$
is 
$$
J_c \grad H (U) = \vOpbw{- \im a (U;x, \xi)} U + R(U) U
$$
where $ R(U) $ is a real-to-real matrix of homogeneous smoothing operators in 
$ \wt\cR^{-\varrho}_p \otimes \cM_2(\C)$ for any $\varrho \geq 0$.
\end{lemma}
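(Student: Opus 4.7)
The strategy is to cast the Hamiltonian $H(U)$ in the symmetric form \eqref{H.S} of Lemma \ref{H.S.U} with the real-to-real matrix
$$
S(U) := \begin{pmatrix} 0 & \overline{A(U)} \\ A(U) & 0 \end{pmatrix} \, , \qquad A(U) = \opbw(a(U;x,\xi)) \, ,
$$
so that $H(U) = \tfrac12 \la S(U)U, U\ra_r$, and then to apply that lemma together with a direct computation of $J_c S(U) U$.

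\textbf{First step: verify the hypotheses of Lemma \ref{H.S.U} for $S(U)$.} The matrix $S(U)$ is real-to-real in the sense of \eqref{vinello} by inspection of its block form. As $a(U;x,\xi)$ belongs to $\wt\Gamma^m_p$, the paradifferential operator $A(U) = \opbw(a(U;x,\xi))$ is a $p$-homogeneous spectrally localized map in $\wt\cS^m_p$ (by the fourth bullet after Definition \ref{smoothoperatormaps}); the same holds for $\overline{A(U)}$. Hence $S(U)$ is a matrix of spectrally localized maps in $\wt\cS^m_p \otimes \cM_2(\C)$. Finally I would check that $S(U)$ is symmetric with respect to the bilinear form $\la \cdot, \cdot\ra_r$: since $a$ is real-valued, formulas \eqref{A1b} give $A^\top = \opbw(a(x,-\xi))$ and $\overline{A} = \opbw(\overline{a(x,-\xi)}) = \opbw(a(x,-\xi))$, so $A^\top = \overline{A}$, and consequently
$$
S(U)^\top = \begin{pmatrix} 0 & A^\top \\ (\overline{A})^\top & 0 \end{pmatrix} = \begin{pmatrix} 0 & \overline{A} \\ A & 0 \end{pmatrix} = S(U) \, .
$$

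\textbf{Second step: invoke Lemma \ref{H.S.U} and multiply by $J_c$.} The lemma yields $\grad H(U) = S(U) U + R'(U) U$ with $R'(U) \in \wt\cR^{-\varrho}_p \otimes \cM_2(\C)$ real-to-real for any $\varrho \geq 0$. Setting $R(U) := J_c R'(U)$ preserves the smoothing class and the real-to-real property (the second by the composition rule below \eqref{vinello} since $J_c$ is itself real-to-real). It remains to identify $J_c S(U)U$ with $\vOpbw{-\im a(U;x,\xi)} U$. Writing $U = \vect{u}{\bar u}$ we compute
$$
J_c S(U) U = \begin{pmatrix} 0 & -\im \\ \im & 0 \end{pmatrix} \begin{pmatrix} \overline{A}\, \bar u \\ A u \end{pmatrix} = \begin{pmatrix} -\im A u \\ \im \overline{A}\, \bar u \end{pmatrix} \, ,
$$
while by the definition \eqref{vecop} and using that $a$ is real,
$$
\vOpbw{-\im a(U;x,\xi)} U = \opbw\!\begin{pmatrix} -\im a(U;x,\xi) & 0 \\ 0 & \overline{(-\im a)^\vee(U;x,\xi)} \end{pmatrix}\! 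U = \begin{pmatrix} -\im A u \\ \im \opbw(a^\vee) \bar u \end{pmatrix} \, ,
$$
which matches the previous display since $\overline{A} = \opbw(a(x,-\xi)) = \opbw(a^\vee)$. This gives the claimed identity $J_c \grad H(U) = \vOpbw{-\im a(U;x,\xi)} U + R(U) U$.

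\textbf{Expected difficulty.} No serious obstacle is anticipated; the core work was already done in Lemma \ref{aggiunto} (the key input to Lemma \ref{H.S.U}) which produces the smoothing correction from the internal differential of a spectrally localized map. The only mildly delicate point is bookkeeping the correct relations between $A$, $A^\top$, $\overline{A}$ and $A^*$ from \eqref{A1b} when $a$ is real-valued, together with the $(\,\cdot\,)^\vee$ convention inside the definition \eqref{vecop} of $\vOpbw{\cdot}$; once these are set straight, the proof reduces to the two algebraic checks above.
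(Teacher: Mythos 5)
Your proof is correct and is exactly the argument the paper intends: Lemma \ref{lem:hamsym} is stated as a direct corollary of Lemma \ref{H.S.U}, applied to the symmetric, real-to-real, spectrally localized matrix $S(U)=\begin{pmatrix} 0 & \bar{A(U)}\\ A(U)& 0\end{pmatrix}$ appearing in the statement, followed by the same algebraic identification $J_c S(U)U=\vOpbw{-\im a(U;x,\xi)}U$ using \eqref{A1b}, \eqref{vecop} and the reality of $a$. Nothing further is needed.
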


We now prove that  if a homogeneous 
Hamiltonian vector field $ X(U) 
=  J_c \nabla H(U) $ can be written in paradifferential form 
$$
X(U) = J_c \opbw(A(U))U + R(U)U 
$$ 
where $ A(U) $ is a matrix of symbols and $ R(U) $ is a  smoothing operator,
then $\Opbw{A(U)}  =  \Opbw{A(U)}^\top$ up to a smoothing operator.
As a consequence we may always assume, up to modifying the smoothing operator, 
that  the paradifferential operator $ \opbw(A(U)) $ is symmetric, namely that 
$ J_c \opbw(A(U))  $ is linearly Hamiltonian.

\begin{lemma}\label{HS:repre}
Let $p \in \N$, $m \in \R$ and  $\varrho \geq 0$. Let 
\be\label{XHS}
X(U) = J_c \Opbw{A(U;x,\xi)}U + R(U)U  = J_c \nabla H(U)
\ee 
be a $(p+1)$-homogeneous Hamiltonian vector field, where (cfr. \eqref{prodotto}) 
\be\label{auno}
A(U;x,\xi) = 
\begin{pmatrix} 
a(U;x,\xi) & b(U;x,\xi) \\  
\overline {b(U;x,-\xi)}&  \overline{a(U;x,-\xi)}
\end{pmatrix}  
\ee
is matrix of symbols  in $ \widetilde \Gamma_{p}^m \otimes \mathcal{M}_2(\mathbb{C})$
and $ R(U)  $ is a real-to-real
 matrix of smoothing operators in 
$ \widetilde {\cal R}_{p}^{-\varrho} \otimes \mathcal{M}_2(\mathbb{C}) $. 
Then we may write 
 \be\label{XHSA1}
X(U) = J_c \opbw( A_1(U;x,\xi) )U + R_1(U)U 
\ee 
where the matrix of paradifferential operators $\Opbw{A_1(U;x,\xi)} $ is symmetric,   
with   matrix of symbols
\be\label{aunoa} 
A_1(U;x,\xi) =  \frac12 
 \begin{pmatrix} 
a + a^\vee & b + \ov{b}\\ 
\ov{b}^\vee + b^\vee & \ov{ a+ a^\vee} 
\end{pmatrix} 
\ee
and  $ R_1(U)$ is another real-to-real  matrix of smoothing operators  in 
 $\widetilde {\cal R}_{p}^{-\varrho} \otimes \mathcal{M}_2(\mathbb{C}) $.
\end{lemma}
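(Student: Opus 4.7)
My plan is to extract from the paradifferential operator $\Opbw{A(U;x,\xi)}$ its symmetric part with respect to the bilinear pairing $\la\cdot,\cdot\ra_r$ (this will play the role of $\Opbw{A_1}$), and to show that the antisymmetric complement, once applied to $U$, is forced to be smoothing by the Hamiltonian nature of $X$. First, set $A_1(U;x,\xi):=\frac{1}{2}\big(A(U;x,\xi)+A^\top(U;x,\xi)\big)$, where $A^\top$ denotes the matrix-and-symbol transpose prescribed by \eqref{A1b} and \eqref{prodotto}, namely $A^\top=\big(\begin{smallmatrix} a^\vee & \bar b \\ b^\vee & \bar a\end{smallmatrix}\big)$. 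A direct calculation produces exactly the formula \eqref{aunoa}, shows that $A_1$ is still a real-to-real matrix in $\wt\Gamma_p^m\otimes\cM_2(\C)$, and gives tautologically $\Opbw{A_1}=\frac{1}{2}(\Opbw{A}+\Opbw{A}^\top)$, which is symmetric for $\la\cdot,\cdot\ra_r$.

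Next I would introduce the real $(p+2)$-homogeneous Hamiltonian $H_1(U):=\frac{1}{2}\la\Opbw{A_1}U,U\ra_r$. Since $\Opbw{A_1}$ is symmetric, real-to-real and paradifferential (hence a spectrally localized map in $\wt\cS_p^m\otimes\cM_2(\C)$), Lemma \ref{H.S.U} yields
\begin{equation*}
\nabla H_1(U)=\Opbw{A_1}U+R^*(U)U,
\end{equation*}
with $R^*$ a real-to-real smoothing operator in $\wt\cR_p^{-\varrho}\otimes\cM_2(\C)$. Setting $F:=H-H_1$, which is real and $(p+2)$-homogeneous, linearity gives
\begin{equation*}
\nabla F(U)=\tfrac{1}{2}\big(\Opbw{A}-\Opbw{A}^\top\big)U+\wt R(U)U,\qquad \wt R:=R-R^*\ \text{smoothing}.
\end{equation*}
Because $\Opbw{A}-\Opbw{A}^\top$ is antisymmetric for $\la\cdot,\cdot\ra_r$, we have $\la(\Opbw{A}-\Opbw{A}^\top)U,U\ra_r=0$, and Euler's identity applied to the $(p+2)$-homogeneous $F$ collapses to
\begin{equation*}
(p+2)\,F(U)=\la\nabla F(U),U\ra_r=\la\wt R(U)U,U\ra_r,\qquad\text{so}\qquad F(U)=\tfrac{1}{p+2}\la\wt R(U)U,U\ra_r.
\end{equation*}

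The last step is to conclude that $\nabla F$ is itself a smoothing vector field; once this is established,
\begin{equation*}
X(U)=J_c\nabla H_1(U)+J_c\nabla F(U)=J_c\Opbw{A_1}U+R_1(U)U
\end{equation*}
with $R_1$ a real-to-real smoothing operator, which proves the lemma. Using the above identity, $F$ is a real $(p+2)$-homogeneous Hamiltonian whose Fourier coefficients $F^{\vec\sigma_{p+2}}_{\vec\jmath_{p+2}}$ are obtained (through Lemma \ref{lem:ident.forms}$(i)$ on the Fourier side) by symmetrizing in the $p+2$ indices the Fourier coefficients of $\wt R$. The crucial observation is that the smoothing estimate \eqref{hom:rest} depends on the indices only through the symmetric functions $\max_2(|\vec\jmath_p|,|j|,|k|)$ and $\max(|\vec\jmath_p|,|j|,|k|)$, and is therefore preserved verbatim by this symmetrization, giving $|F^{\vec\sigma_{p+2}}_{\vec\jmath_{p+2}}|\lesssim\max_2(|\vec\jmath_{p+2}|)^\mu\max(|\vec\jmath_{p+2}|)^{-\varrho}$. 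By the explicit formula \eqref{hamvecco}, the Fourier coefficients of $\nabla F$ as a vector field equal $-\ii\sigma\,F^{\vec\sigma_{p+1},-\sigma}_{\vec\jmath_{p+1},k}$, hence satisfy the same bound, which by Lemma \ref{lem:X.R} places $\nabla F$ in $\wt\X_{p+1}^{-\varrho}$.

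The main delicate point is precisely this Fourier symmetrization: since the operator transpose $\wt R^\top$ is not automatically smoothing (see Remark \ref{rem:top.smoo}), the naive operator-level computation $\nabla(\la\wt R U,U\ra_r)=(\wt R+\wt R^\top)U+\dots$ does not readily give a smoothing gradient. Passing to the symmetric Hamiltonian coefficients on the Fourier side and invoking the intrinsic symmetry of \eqref{hom:rest} in all its indices is what bypasses this difficulty, providing in effect an analog of Lemma \ref{H.S.U} valid for symmetric smoothing operators, not only for spectrally localized ones.
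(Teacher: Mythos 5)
Your reduction of the problem to showing that $\nabla F$ is smoothing, where $F=H-H_1$ and, by Euler's identity, $(p+2)F(U)=\la\wt R(U)U,U\ra_r$, is fine, but the last step has a genuine gap: the estimate \eqref{hom:rest} is \emph{not} symmetric in all $p+2$ indices. It bounds $\Pi_{n_0}R(\Pi_{\vec n}\,\cU)\Pi_{n_{p+1}}$ by ${\rm max}_2(n_1,\dots,n_{p+1})^{\mu}\max(n_1,\dots,n_{p+1})^{-\varrho}$, so only the internal indices and the argument index enter, not the output index $n_0$ --- this asymmetry is precisely why transposes of smoothing operators need not be smoothing (Remark \ref{rem:top.smoo}), and it is not bypassed by going to Fourier coefficients. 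After symmetrizing the coefficients of $\la\wt R(U)U,U\ra_r$ the best uniform bound you can assert is $|F^{\vec\sigma_{p+2}}_{\vec\jmath_{p+2}}|\lesssim {\rm max}_2(|j_1|,\dots,|j_{p+2}|)^{\mu}\max(|j_1|,\dots,|j_{p+2}|)^{-\varrho}$, with ${\rm max}_2$ over \emph{all} $p+2$ indices; but the characterization \eqref{smoocara20} in Lemma \ref{lem:X.R} requires ${\rm max}_2$ over the $p+1$ input indices only, excluding the output $k$. In the configuration where $|k|$ is comparable to the largest input and all the other inputs are $O(1)$, your bound is of size $\max^{\mu-\varrho}$ while membership in $\wt\X^{-\varrho}_{p+1}$ demands $\max^{-\varrho}$; so you only obtain $\nabla F\in\wt\X^{-\varrho+\mu}_{p+1}$ and hence $R_1\in\wt\cR^{-\varrho+\mu}_p\otimes\cM_2(\C)$, weaker than the statement, with $\mu$ an uncontrolled constant. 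Moreover the ``analog of Lemma \ref{H.S.U} for symmetric smoothing operators'' that you invoke is actually false at the order $-\varrho$: for $p=1$ take $\wt R$ with coefficients of size $|j_1|^{\mu-\varrho}$ supported where $j\simeq-j_1$, so that the output frequency $|k|=|j_1+j|\lesssim1$; this is admissible for \eqref{hom:rest}, yet the gradient of $\la\wt R(U)U,U\ra_r$ has, in the direction of the large mode $j_1$ with inputs of sizes $(|j_1|,1)$, a coefficient of size $|j_1|^{\mu-\varrho}$, whereas a $(-\varrho)$-smoothing vector field would require $O(|j_1|^{-\varrho})$.

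What gets discarded when you pass, via Euler's identity, to the Hamiltonian of $\wt R$ alone is the spectral localization of $\Opbw{A}$: on its support the output and argument frequencies are equivalent, $|j|\sim|k|$, and this is exactly what excludes the bad configuration above. The paper's proof keeps that structure: it linearizes $X=J_c\nabla H(U)$, so that $E_c\,\di_UX(U)$ is symmetric, deduces that $\Opbw{A(U)}-\Opbw{A(U)}^\top$ equals a smoothing operator plus the transpose of a smoothing operator (Lemma \ref{aggiunto} handles the transposed internal differential $[\opbw(\di_UA(U)[\cdot])U]^\top$), and then applies Lemma \ref{lem:ide} --- which relies precisely on the spectral condition $|j|\sim|k|$ --- to conclude that $\Opbw{A(U)}-\Opbw{A(U)}^\top$ is itself in $\wt\cR^{-\varrho}_p\otimes\cM_2(\C)$; one then sets $R_1=\frac12 J_c\big(\Opbw{A}-\Opbw{A}^\top\big)+R$. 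To repair your route you would need to re-inject this information, e.g.\ by estimating the symmetric coefficients of $F$ from the identity $\nabla F(U)=\frac12(\Opbw{A}-\Opbw{A}^\top)U+\wt R(U)U$ together with the spectral localization of the first term, which is in essence the paper's operator-level argument.
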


\begin{proof}
The linear vector field $ \di_U X(U) $, obtained linearizing 
a Hamiltonian vector field $ X(U) = J_c \nabla H(U) $, is Hamiltonian, namely 
$$ 
\di_U X(U) = J_c S (U) \quad \text{where} \quad S(U) = S(U)^\top \quad
\text{is symmetric} \, . 
$$
On the other hand, by linearizing \eqref{XHS}, 
$$
S(U)  =   \opbw(A(U)) + \opbw(\di_UA(U)[ \cdot ]) U +  J_c \breve R(U) 
$$
where $ \breve R(U) := R(U) + \di_UR(U)[ \cdot ] U $ is a 
 matrix of smoothing operators in 
$ \widetilde {\cal R}_{p}^{-\varrho} \otimes \mathcal{M}_2(\mathbb{C}) $
(see the remark after Definition \ref{Def:Maps}).
Then, since $ S(U) $ is symmetric, it results, writing for brevity $ A := A(U) $,
$$
 \opbw(A)-  \opbw(A)^\top= \big( (J_c \breve R(U))^\top 
 - \opbw(\di_UA(U)[ \cdot ]) U\big) + \big( \big[  \opbw(\di_UA(U)[ \cdot ]) U\big]^\top -J_c \breve R(U) \big). 
$$
We now apply Lemma \ref{lem:ide}  to  the spectrally localized map
 $ \opbw(A(U))-  \opbw(A(U))^\top $ which has the  form \eqref{decospesmo}
 with 
\be\label{defLR}
 L :=  \big( (J_c  \breve R(U))^\top -  \opbw(\di_UA(U)[ \cdot ]) U \big) \, , \quad 
 R := \big( \big[  \opbw(\di_UA(U)[ \cdot ]) U\big]^\top - J_c \breve R(U) \big) \, . 
 \ee
 By Lemma \ref{aggiunto}, the operator 
 $ \big[  \opbw(\di_UA(U)[ \cdot ]) U\big]^\top $ is in $  \widetilde{\mR}^{-\vr}_p \otimes \mM_2 (\C)  $,
 and therefore both $ L^\top $ and $ R $  in \eqref{defLR} are $ p$-homogeneous 
 smoothing operators in $ \widetilde{\mR}^{-\vr}_p \otimes \mM_2 (\C)  $. 
The assumptions of 
Lemma \ref{lem:ide} are satisfied, implying that 
$$ 
 \opbw(A(U))-  \opbw(A(U))^\top =: R' (U) \in  \widetilde{\mR}^{-\vr}_p  \otimes \mM_2 (\C) \, . 
 $$ 
 In conclusion we deduce 
  \eqref{XHSA1} with 
  $$
\opbw{(A_1 (U))} := \frac12 \Big( \opbw{(A(U))} + \opbw{(A(U))}^\top \Big) 
 \, , \quad R_1 (U) 
:= \frac12 J_c R' (U) + R(U ) \, , 
  $$
and  \eqref{aunoa} follows recalling \eqref{A1b}. 
 \end{proof}
 
Another  consequence of Lemma \ref{lem:ide} is the following. 

\begin{lemma}\label{spezzamento}
Let $p \in \N$, $m \in \R$ and  $\varrho \geq 0$.
Let $S(U)$ be a  matrix of spectrally localized homogeneous maps in $\wt \mS_{p} \otimes \mM_2 (\C) $ which is linearly Hamiltonian (Definition \ref{def:LH}) of the form 
\be \label{separato}
 S(U) =  J_c \Opbw{ A(U;x,\xi )} +  R(U) \, , 
\ee
where $ A(U;x,\xi) $ is a real-to-real 
matrix of symbols in $ \wt\Gamma^m_p\otimes \cM_2(\C)$
as in  \eqref{auno}, 
 and $ R(U) $ is a real-to-real matrix of  smoothing operators in $ \wt \mR^{-\vr}_{p}\otimes \mM_2(\C) $. 
 Then 
 we may write 
 $$
 S(U) = J_c\Opbw{ A_1(U;x,\xi)} +  R_1(U) \, 
$$
where  
the matrix of symbols $ A_1(U;x,\xi) $ in $   \wt \Gamma^m_{p}\otimes \mM_2(\C)$
has the form \eqref{aunoa}
 and  $R_1(U)$ is another matrix of real-to-real  smoothing operators in 
 $\wt \mR^{-\vr}_{p} \otimes \mM_2(\C) $.
 In particular the homogeneous operator 
$  J_c\Opbw{ A_1(U)}$ is linearly Hamiltonian.
\end{lemma}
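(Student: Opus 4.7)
The plan is to mimic the strategy of Lemma \ref{HS:repre}: define $A_1 := \tfrac12(A + A^\top)$ at the symbol level (where the ``$\top$'' on the symbol is the one induced by \eqref{A1b} on matrices of paradifferential operators) and show that the antisymmetric remainder $\Opbw{A(U)} - \Opbw{A(U)}^\top$ is a $p$--homogeneous smoothing operator. Then $S(U) = J_c \Opbw{A_1(U)} + R_1(U)$ with $R_1(U) := R(U) + \tfrac12 J_c\big(\Opbw{A(U)} - \Opbw{A(U)}^\top\big)$, and the operator $J_c \Opbw{A_1(U)}$ is linearly Hamiltonian by construction since $\Opbw{A_1(U)}$ is manifestly symmetric.

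The crucial input is the linear Hamiltonian character of $S(U)$. Since $S(U) = J_c \bM(U)$ with $\bM(U) = \bM(U)^\top$ symmetric, we rewrite this as $E_c S(U) + S(U)^\top E_c = 0$ using $E_c = J_c^{-1}$. Substituting $S(U) = J_c \Opbw{A(U)} + R(U)$ and using $E_c J_c = \uno$, this identity becomes
\[
\Opbw{A(U)} - \Opbw{A(U)}^\top = -\big( E_c R(U) + R(U)^\top E_c \big).
\]
The left-hand side is a spectrally localized map in $\wt\cS^{m_1}_{p} \otimes \cM_2(\C)$ for some $m_1 \in \R$ (by Lemma \ref{omonon} and item $(ii)$ of Proposition \ref{composizioniTOTALIs}). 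On the right-hand side, set $L := -R(U)^\top E_c$ and $R' := -E_c R(U)$: then $R'$ lies in $\wt\cR^{-\varrho}_p \otimes \cM_2(\C)$ (smoothing times bounded), while $L^\top = -E_c^\top R(U) = E_c R(U)$ also lies in $\wt\cR^{-\varrho}_p \otimes \cM_2(\C)$. Thus the decomposition \eqref{decospesmo} holds and Lemma \ref{lem:ide} yields that $\Opbw{A(U)} - \Opbw{A(U)}^\top$ is itself a smoothing operator in $\wt\cR^{-\varrho}_p \otimes \cM_2(\C)$.

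With this at hand, I would then identify the symmetric symbol: by \eqref{A1b}, taking the transpose of the paradifferential operator with symbol $A(U;x,\xi) = \begin{psmallmatrix} a & b \\ \overline{b^\vee} & \overline{a^\vee}\end{psmallmatrix}$ amounts to replacing $A$ by the symbol matrix $\begin{psmallmatrix} a^\vee & \overline{b} \\ b^\vee & \overline{a}\end{psmallmatrix}$, whose average with $A$ gives exactly the matrix $A_1$ in \eqref{aunoa}. A quick check shows that $A_1$ still satisfies the structural constraint \eqref{prodotto}, hence $\Opbw{A_1(U)}$ is real-to-real and of order $m$. Finally, $R_1(U)$ is a real-to-real smoothing operator in $\wt\cR^{-\varrho}_p \otimes \cM_2(\C)$ as a sum of such, and $J_c \Opbw{A_1(U)}$ is linearly Hamiltonian since $\Opbw{A_1(U)} = \Opbw{A_1(U)}^\top$ by construction.

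The main obstacle is ensuring that the antisymmetric piece $\Opbw{A} - \Opbw{A}^\top$ genuinely gains derivatives, since a priori the transpose of a smoothing operator is only an $m'$-operator (Remark \ref{rem:top.smoo}). The point that makes the argument work is that one only needs to exhibit \emph{some} splitting $L + R'$ with $L^\top, R'$ smoothing, and this splitting is handed to us for free by Lemma \ref{lem:ide}; all other ingredients are routine applications of the calculus developed in Section \ref{sec:SL}.
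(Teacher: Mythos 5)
Your proposal is correct: the crucial identity $\Opbw{A(U)}-\Opbw{A(U)}^\top=-\big(E_c R(U)+R(U)^\top E_c\big)$, obtained from the symmetry of $E_c S(U)$, is exactly the paper's starting point, and your symmetrization $A_1=\tfrac12(A+A^{\top})$ with the antisymmetric smoothing part absorbed into $R_1$ is the intended conclusion. The only place where you diverge from the paper is the mechanism for showing that the antisymmetric part gains derivatives: you apply the structural Lemma~\ref{lem:ide} to $\Opbw{A}-\Opbw{A}^\top$ with the splitting $L:=-R(U)^\top E_c$, $R':=-E_c R(U)$ (noting $L^\top=E_c R(U)$ is smoothing), i.e.\ you recycle the route of Lemma~\ref{HS:repre}, whereas the paper exploits an extra piece of information available here but not in Lemma~\ref{HS:repre}: since $S(U)$ and $J_c\Opbw{A(U)}$ are both spectrally localized, so is $R(U)=S(U)-J_c\Opbw{A(U)}$, hence its Fourier coefficients satisfy the two-equivalent-frequencies condition \eqref{spec.cond.t} and Remark~\ref{rem:top.smoo} gives directly that $R(U)^\top\in\wt\cR^{-\vr}_p\otimes\cM_2(\C)$, so the right-hand side of the identity is smoothing in one line. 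Both arguments are sound and of comparable depth; yours is marginally more uniform (it never uses the spectral localization of $R$ itself, only that of the paradifferential operators), while the paper's is shorter because it cashes in that additional hypothesis immediately. The remaining steps you give — reading off the symbol of $\Opbw{A}^\top$ from \eqref{A1b}, checking that $A_1$ has the real-to-real structure \eqref{prodotto} and matches \eqref{aunoa}, and concluding linear Hamiltonianity of $J_c\Opbw{A_1}$ from the manifest symmetry $\Opbw{A_1}=\Opbw{A_1}^\top$ — are all correct and complete the proof.
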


\begin{proof}
It is enough to prove that the operator 
$\Opbw{ A(U)}$ is equal to   $ \Opbw{ A(U)}^\top$ up to a 
matrix of smoothing operators.
To prove this claim, recall  that $S(U)$ linearly Hamiltonian means that 
$E_c S(U)$ is symmetric, so that by \eqref{separato} one gets
$$
\Opbw{ A(U)} - \Opbw{ A(U)}^\top = -R(U)^\top E_c - E_c R(U) \, .
$$
Now, since $ S(U) $ and $\Opbw{A(U)}$ are spectrally localized maps, so is $R(U)$ in \eqref{separato}. 
By Remark \ref{rem:top.smoo} the transpose 
 $R(U)^\top$ is also a smoothing operator in $\wt \mR^{-\vr}_{p}\otimes \mM_2(\C)$, proving the claim. 
\end{proof}

\section{Construction of a Darboux symplectic corrector} 
\label{sec:Darboux}

If $ \bB(U;t) $ is a spectrally localized map   
which is linearly symplectic up to homogeneity $N$, 
then the associated nonlinear map 
$ \Phi_{\leq N}(U):= \bB_{\leq N}(U)U $, where $ \bB_{\leq N}(U) = \cP_{\leq N}(  \bB(U;t)) $, 
is {\it not} symplectic up to homogeneity $ N $. 
In  this  section we provide a systematic procedure to
construct a nearby  nonlinear map which is symplectic up to homogeneity $N$ 
according to Definition \ref{def:LSMN}. 

\begin{theorem}{\bf (Symplectic correction up to homogeneity $N$)}
\label{thm:almost}
 Let $p, N \in \N$ with $ p\leq N $. Consider a nonlinear map 
 \be \label{PsiN}
\Phi_{\leq N}(U):= \bB_{\leq N}(U)U \, ,
\ee
where 
\begin{enumerate}
\item[(i)] $\bB_{\leq N}(U)-\uno $ is a matrix of 
pluri--homogeneous spectrally localized maps in 
$ \Sigma_{p}^N \wtcS_q \otimes \mM_2 (\C) $;  
\item[(ii)] 
$ \bB_{\leq N}(U) $ is linearly symplectic  up to homogeneity $ N $ (Definition \ref{LSUTHN}). 
\end{enumerate}
Then there exists a real-to-real map 
\be\label{CiEnne}
\cC_{\leq N}(W) = W +  R_{\leq N}(W)W \quad \text{with} \quad R_{\leq N} (W)  \in 
\Sigma_p^N \wtcR_q^{-\vr} \otimes \mM_2 (\C)  \, ,  \ \mbox{ for any } \varrho \geq 0 \, , 
\ee
such that the Darboux correction
 \be \label{Dienne} 
\mD_N(U):= \big( \cC_{\leq N}\circ \Phi_{\leq N}  \big) (U) = \big( {\rm Id} +  R_{\leq N} (\Phi_{\leq N}(U)) \big)\Phi_{\leq N}(U)
\ee
   is symplectic up to homogeneity  $N$, according to Definition \ref{def:LSMN}.
\end{theorem}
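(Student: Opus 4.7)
The plan is to build $\cC_{\leq N}$ as the time-$1$ approximate flow of a $\tau$-dependent smoothing vector field obtained by a Moser-type Darboux deformation at the level of pluri-homogeneous $2$-forms.

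\emph{Reduction to a Moser equation.} Using \eqref{pullback2} one has $(\cC_{\leq N}\circ\Phi_{\leq N})^*\Omega_c = \Phi_{\leq N}^*(\cC_{\leq N}^*\Omega_c)$, so if $\Phi_{\leq N}^{-1}$ denotes the approximate inverse supplied by Lemma~\ref{inversoapp}, setting $\Omega^\star := (\Phi_{\leq N}^{-1})^*\Omega_c$ reduces the goal~\eqref{quasisimplettica} to finding $\cC_{\leq N}$ with $\cC_{\leq N}^*\Omega_c = \Omega^\star$ modulo homogeneity $>N$. I will look for $\cC_{\leq N} = \cF^1_{\leq N}$, the approximate flow at $\tau = 1$ (Lemma~\ref{extistencetruflow}) of a smoothing vector field $Y^\tau \in \Sigma_{p+1}\wt\X_q^{-\varrho}$, determined by imposing $(\cF^\tau_{\leq N})^*\Omega^\tau = \Omega^\star$ up to homogeneity $N$ along the affine path $\Omega^\tau := (1-\tau)\Omega^\star + \tau\Omega_c$. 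Differentiating in $\tau$ via Lemma~\ref{lem:pullback} and using Cartan's formula~\eqref{cartan} together with $\di\Omega^\tau = 0$, this reduces to $\di(i_{Y^\tau}\Omega^\tau) = \Omega^\star - \Omega_c$ at each homogeneity $\leq N$. Because $\Omega_c = \di\theta_c$ by~\eqref{exactc} and $\Omega^\star = \di[(\Phi_{\leq N}^{-1})^*\theta_c]$ by~\eqref{commu}, the right-hand side is exact, $\Omega^\star - \Omega_c = \di\alpha^\tau$, and via the operator representation of Lemma~\ref{lem:ident.forms} it suffices to solve
\[
E^\tau(V)\,Y^\tau(V) = X_{\alpha^\tau}(V) \quad \text{modulo homogeneity } {>N},
\]
where $E^\tau(V)$ is the Gram operator of $\Omega^\tau$.

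\emph{Main obstacle and its resolution.} The heart of the argument, flagged in the introduction, is that $E^\tau(V) - E_c$ is \emph{not} a smoothing perturbation: since $\di_V\Phi_{\leq N}$ involves unbounded spectrally localized maps, a naive Neumann inversion of $E^\tau$ fails (cfr.~\eqref{eq:ecnos}). The rescue is the structural decomposition of Lemmata~\ref{Lem:ZB} and~\ref{diadg}: when $E^\tau(V)$ is applied to \emph{any} pluri-homogeneous vector field $X$ one has
\[
E^\tau(V)[X(V)] = E_c X(V) + \grad \cW(V) + \text{smoothing} + O(\|V\|^{N+1}),
\]
so that all the unbounded contributions are of exact/gradient type. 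Since $X_{\alpha^\tau}$ is likewise a gradient up to smoothing, the two gradient contributions can be matched, leaving an equation of the shape $E_c Y^\tau = \cR(V, Y^\tau, \di_V Y^\tau) + \text{smoothing}$. Applying $J_c = E_c^{-1}$ converts this into a triangular recursion on the homogeneous components $Y^\tau_j$, $j = p+1, \ldots, N$, each determined from the previous ones; Lemma~\ref{aggiunto} ensures that the transposes appearing in the construction of $\alpha^\tau$ and $E^\tau$ are smoothing with two equivalent frequencies, so that each $Y^\tau_j$ lies in $\wt\X_j^{-\varrho}$ for every $\varrho \geq 0$.

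\emph{Conclusion.} Once $Y^\tau$ is constructed, Lemma~\ref{extistencetruflow} yields the approximate flow $\cF^\tau_{\leq N}(W) = W + F^\tau_{\leq N}(W)W$ with $F^\tau_{\leq N} \in \Sigma_p^N\wt\cR_q^{-\varrho}\otimes\cM_2(\C)$, uniformly in $\tau \in [0,1]$. Setting $\cC_{\leq N} := \cF^1_{\leq N}$, so that $R_{\leq N}(W) := F^1_{\leq N}(W)$, gives \eqref{CiEnne}. Integrating \eqref{deriv:pullback} from $0$ to $1$ shows that $(\cF^1_{\leq N})^*\Omega_c = \Omega^\star$ modulo a $2$-form in $\Sigma_{N+1}\wt\Uplambda^2_q$, and the reduction of the first paragraph then gives exactly \eqref{quasisimplettica} for $\cD_N = \cC_{\leq N}\circ\Phi_{\leq N}$. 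The hardest step is unquestionably the inversion of $E^\tau$: the entire scheme succeeds only because the unbounded part of the symplectic defect is of exact type, so the Moser equation collapses at each homogeneity to one that the bounded, invertible tensor $E_c$ alone can solve.
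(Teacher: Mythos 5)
Your proposal is correct and follows essentially the same route as the paper: the same non-standard form $\Omega^\star=\Psi_{\leq N}^*\Omega_c$, the same Moser deformation reduced via Cartan's formula to a Darboux equation for a smoothing vector field, the same use of Lemma \ref{diadg} (gradient freedom $\nabla\cW^\tau$) to absorb the unbounded part of $E^\tau$, of Lemma \ref{aggiunto} for the smoothing of the transposes, and of the approximate flow of Lemma \ref{extistencetruflow}, solved by a finite triangular recursion in homogeneity. The only (harmless) difference is that you run the homotopy from $\Omega^\star$ to $\Omega_c$, so $\cC_{\leq N}$ is the time-$1$ approximate flow itself, whereas the paper runs it from $\Omega_c$ to $\Omega_{\leq N}$ and obtains $\cC_{\leq N}$ as the approximate inverse of that flow via Lemma \ref{lem:ails}.
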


\begin{remark}
The first assumption   
implies that 
 the  operator  in \eqref{GeNtra}
 is smoothing for any $ \varrho > 0 $. This fact 
and the second assumption  allow to deduce 
that the vector field  representing 
the perturbed 
symplectic $ 1$-form $ \theta_{\leq N}  $ in  \eqref{thetaB} 
is a smoothing 
perturbation of  $ E_c V $,  
see \eqref{ZB}. 
These properties are  crucial to guarantee that the vector field $Y^\tau(V)$ solving the Darboux equation \eqref{darPRIN} is smoothing (see Lemma \ref{lem.sol.dar}), which in turn implies that the Darboux corrector $\cC_{\leq N}(W)$ in \eqref{CiEnne} is a smoothing perturbation of the identity.
\end{remark}

The rest of this section is devoted to the proof of Theorem \ref{thm:almost}. 

In order to correct the nonlinear map $ \Phi_{\leq N} $ 
defined in \eqref{PsiN} we develop a perturbative  Darboux procedure to construct a nearby 
symplectic map  up to homogeneity $ N $.
The map  $\Phi_{{\leq N}} $ induces
the nonstandard symplectic 
$ 2$-form
\be\label{OmegaB}
 \Omega_{\leq N} := \Psi_{\leq N}^*  \ \Omega_c
\ee
where $\Psi_{{\leq N}} $ is the approximate inverse of $\Phi_{{\leq N}} $ defined 
by Lemma \ref{inversoapp} and $\Omega_c$ is the 
standard symplectic form  in \eqref{sfc}. 
The next lemma describes properties of the  approximate inverse $\Psi_{\leq N}$.

\begin{lemma}\label{lem:app.inv} {\bf (Approximate inverse)} The approximate inverse 
up to homogeneity $ N $ of 
the map  $ \Phi_{\leq N} (U) = \bB_{\leq N}(U)U $ defined in \eqref{PsiN} 
has the form 
\be\label{PsileqN}
\Psi_{\leq N}(V)= \bA_{\leq N}(V)V
\ee
where 

\begin{itemize}
\item[(i)]
 $ {\bf A}_{{\leq N}} (V) - \uno $ is a matrix of pluri-homogeneous spectrally localized maps in  $\Sigma_p^N \wtcS_q \otimes \mM_2 (\C) $;  
\item[(ii)]
$ {\bf A}_{{\leq N}} (V) $  is  linearly symplectic up to homogeneity $N$ (Definition \ref{LSUTHN}), 
more precisely 
\eqref{ATEA} holds.
\end{itemize}
In addition 
\be\label{sommabella2}
{\rm d}_{V} \Psi_{{\leq N}}(V)={\bf A}_{{\leq N}}(V)+  {\bf G}_{{\leq N}}(V) \, , 
\qquad  \bG_{{\leq N}}(V) \hat V := \di_V \bA_{{\leq N}}(V)[ \hat V]V \, , 
\ee
and
\begin{itemize}
\item[(iii)] 
$ \bG_{{\leq N}}(V) $ 
 is a matrix of pluri--homogeneous operators in 
 $\Sigma_p^N \widetilde \mM_{q} \otimes \mM_2 (\C) $;
\item[(iv)]  the transposed operator  
 \be\label{GeNtra}
 {\bf G}_{{\leq N}}^\top(V):= [ {\bf G}_{{\leq N}}(V)]^\top \in 
 \Sigma_p^N \wtcR^{-\varrho}_{q}  \otimes \mM_2 (\C) 
 \ee
   is a matrix of $\vr$-smoothing operators for arbitrary  $\varrho \geq 0 $ .
 \end{itemize}
 \end{lemma}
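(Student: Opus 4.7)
The four claims are extracted by combining three abstract tools already proved in this paper: Lemma \ref{inversoapp} (existence and homogeneity structure of approximate inverses), Lemma \ref{inv.lin.simp} (preservation of linear symplecticity under approximate inversion), and the structural Lemma \ref{aggiunto} (the transposed internal differential of a spectrally localized map is smoothing). The decomposition \eqref{sommabella2} is obtained by simple differentiation, and the delicate point will be (iv), which crucially exploits Lemma \ref{aggiunto}.

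First I would apply Lemma \ref{inversoapp}(i) to $\Phi_{\leq N}(U)=\bB_{\leq N}(U)U$, using hypothesis $(i)$ of Theorem \ref{thm:almost}: this yields an approximate inverse of the form $\Psi_{\leq N}(V)=(\uno+\breve M_{\leq N}(V))V$ with $\breve M_{\leq N}(V)\in \Sigma_p^N\wtcS_q\otimes\mM_2(\C)$. Setting $\bA_{\leq N}(V):=\uno+\breve M_{\leq N}(V)$ proves (i). Then (ii) is exactly the content of Lemma \ref{inv.lin.simp}, applicable because hypothesis $(ii)$ of Theorem \ref{thm:almost} ensures $\bB_{\leq N}(U)$ is linearly symplectic up to homogeneity $N$ in the sense of Definition \ref{LSUTHN}.

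For \eqref{sommabella2} I would simply compute
\begin{equation*}
\di_V\Psi_{\leq N}(V)[\hat V]=\di_V\big(\bA_{\leq N}(V)V\big)[\hat V]=\bA_{\leq N}(V)\hat V+\underbrace{\di_V\bA_{\leq N}(V)[\hat V]\,V}_{=:\bG_{\leq N}(V)\hat V},
\end{equation*}
which identifies the two summands. To prove (iii) I expand $\breve M_{\leq N}(V)=\sum_{q=p}^N \breve M_q(V,\dots,V)$ with each $\breve M_q\in\wtcS_q\otimes\mM_2(\C)$ (by (i)), so that
\begin{equation*}
\bG_{\leq N}(V)\hat V=\sum_{q=p}^{N} q\,\breve M_q(\hat V,V,\dots,V)V.
\end{equation*}
Viewing each summand as an operator in $\hat V$ depending on $V$ through $q$ copies, the spectral bound \eqref{hom:sest} for $\breve M_q$ directly implies the estimate \eqref{omomap} (with $\mu$ replaced by $\mu+m$), so $\bG_{\leq N}(V)\in\Sigma_p^N\wtcM_q\otimes\mM_2(\C)$. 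Note that $\bG_{\leq N}(V)$ is \emph{not} spectrally localized, because placing $\hat V$ in an internal slot destroys the localization enjoyed by $\breve M_q$.

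The main point of the lemma, and the place where the geometry of Definition \ref{smoothoperatormaps} really pays off, is (iv). Each summand of $\bG_{\leq N}(V)$ is precisely of the form $L(V)\hat V=S(\hat V,V,\dots,V)V$ appearing in Lemma \ref{aggiunto}, with $S=q\,\breve M_q\in\wtcS_q^{m}\otimes\mM_2(\C)$. That lemma then gives $L(V)^\top\in\wtcR_q^{-\varrho}\otimes\mM_2(\C)$ for every $\varrho\geq 0$: transposition swaps the poorly-controlled internal frequency with the externally-controlled one, and the spectral constraints $n_0\sim n_{p+1}$ and $\max(n_1,\dots,n_p)\leq\delta n_{p+1}$ of Definition \ref{smoothoperatormaps} then force $\max\sim\max_2$ on the transposed coefficients, which is exactly the smoothing estimate \eqref{hom:rest} for arbitrary $\varrho$. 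Summing over $q=p,\dots,N$ yields \eqref{GeNtra} and concludes the proof.
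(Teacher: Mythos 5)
Your proof is correct and follows essentially the same route as the paper: items (i)--(ii) via Lemmata \ref{inversoapp} and \ref{inv.lin.simp}, the splitting \eqref{sommabella2} by direct differentiation, (iii) by the remark that the internal differential of a spectrally localized map is an $m$-operator (the fifth bullet after Definition \ref{smoothoperatormaps}, which your expansion in homogeneous components reproves), and (iv) by applying Lemma \ref{aggiunto} to each homogeneous component $\cP_q(\bA_{\leq N}(V))$.
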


\begin{proof}
Items $(i)$  and $(ii)$ are proved in  Lemma \ref{inv.lin.simp}, and 
$(iii)$ follows by the fifth bullet after Definition \ref{smoothoperatormaps}.
Finally $(iv)$  follows applying 
Lemma \ref{aggiunto} to each spectrally localized map $ \cP_q (\bA_{\leq N}(V))$ for $ q = p, \ldots, N $ (with  $U \leadsto V$ and $V \leadsto \hat V$). 
\end{proof}

We now compute $\Omega_{{\leq N}}$. 

\begin{lemma} \label{Lem:ZB} {\bf (Non-standard symplectic form $ \Omega_{\leq N}$)} 
The symplectic $2$-form $\Omega_{{\leq N}} = \Psi_{\leq N}^*  \ \Omega_c $ in \eqref{OmegaB} is represented as 
$ \Omega_{{\leq N}} (V) =\langle E_{{\leq N}}(V) \cdot ,  \cdot \rangle_{r}  $
with  symplectic tensor  
\be \label{EB}
 E_{{\leq N}}(V) 
 =  E_c + {\bf A}_{\leq N}^\top(V)E_c {\bf G}_{\leq N} (V)+  
 {\bf G}_{\leq N}^\top(V)E_c {\bf A}_{\leq N}(V)+  {\bf G}_{\leq N}^\top(V)
 E_c {\bf G}_{\leq N}(V)+ \rS_{>N}(V)  
 \ee
where 
 \begin{itemize}
 \item[(i)]
$ {\bf G}_{{\leq N}}^\top(V)E_c {\bf A}_{{\leq N}}(V)$ and $ {\bf G}_{{\leq N}}^\top(V) E_c {\bf G}_{{\leq N}}(V)$ are matrices of pluri--homogeneous smoothing operators  in $ \Sigma_p^{2N} \wtcR^{-\varrho}_{q} \otimes \mM_2 (\C) $ for any $ \varrho \geq 0 $;  
\item[(ii)] 
 $\rS_{>N}(V):= \bA_{\leq N}^\top(V) E_c \bA_{\leq N}(V) - E_c$ is a matrix of pluri--homogeneous spectrally localized maps in 
 $ \Sigma_{N+1} \wtcS_q \otimes \mM_2 (\C) $.
\end{itemize}
Moreover 
\be\label{dthetaB=Omega}
\Omega_{{\leq N}}=\di \theta_{\leq N}  \, ,
\ee
where 
the $ 1$-form  
\be\label{thetaB}
\theta_{\leq N} := \Psi_{\leq N}^* \ \theta_c  \ , 
\qquad 
\theta_c(V)  := \frac12 \big\la E_c V, \cdot  \big\ra_r \, , 
\ee
has the form 
 \be \label{ZB}
  \theta_{{\leq N}}(V) = \frac12 \Big\langle [Z_{{\leq N}}(V)+ \rS_{>N}(V)]V, \cdot 
  \Big\rangle_{r}  \quad \text{with}
  \quad 
 Z_{{\leq N}} (V):=  E_c  + {\bf G}_{{\leq N}}^\top(V) \ E_c  \ {\bf A}_{{\leq N}} (V)  \, . 
 \ee
\end{lemma}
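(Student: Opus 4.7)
The plan is to compute the two pullbacks $\Omega_{\leq N} = \Psi_{\leq N}^* \Omega_c$ and $\theta_{\leq N} = \Psi_{\leq N}^* \theta_c$ directly, using the explicit formulas \eqref{pullback.form1}--\eqref{pullback.form2}, the expression \eqref{sommabella2} for $\di_V \Psi_{\leq N}(V)$, and the linear symplecticity \eqref{ATEA} of $\bA_{\leq N}(V)$. Everything else is a matter of reading off smoothing/spectrally-localized classes from the composition propositions.

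First I would compute $E_{\leq N}(V)$. Applying \eqref{pullback.form2} with $\varphi = \Psi_{\leq N}$ and the $2$-form $\Omega_c(V_1,V_2) = \la E_c V_1, V_2\ra_r$ yields
\[
E_{\leq N}(V) = [\di_V \Psi_{\leq N}(V)]^\top \, E_c \, \di_V \Psi_{\leq N}(V).
\]
Substituting $\di_V \Psi_{\leq N}(V) = \bA_{\leq N}(V) + \bG_{\leq N}(V)$ from \eqref{sommabella2} and expanding gives four terms. For the pure term $\bA_{\leq N}^\top E_c \bA_{\leq N}$, relation \eqref{ATEA} from Lemma \ref{lem:app.inv}$(ii)$ produces $E_c + \rS_{>N}(V)$, which already accounts for $E_c$ and the remainder in \eqref{EB}, and proves $(ii)$ since by \eqref{ATEA} $\rS_{>N}(V)$ is in $\Sigma_{N+1} \wtcS_q \otimes \mM_2(\C)$. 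The remaining three cross terms are exactly those appearing in \eqref{EB}.

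For item $(i)$, the crucial input is Lemma \ref{lem:app.inv}$(iv)$: the transpose $\bG_{\leq N}^\top(V)$ is smoothing of arbitrarily large order, i.e.\ belongs to $\Sigma_p^N \wtcR^{-\varrho}_q \otimes \mM_2(\C)$ for every $\varrho\ge 0$. Since $E_c$ is bounded and both $\bA_{\leq N}(V) - \uno$ and $\bG_{\leq N}(V)$ lie respectively in the classes of spectrally localized maps and $m$-operators, composing on the left by $\bG_{\leq N}^\top$ preserves the smoothing character: applying Proposition \ref{composizioniTOTALIs}$(i)$ to $\bG_{\leq N}^\top \cdot E_c \cdot (\bA_{\leq N} - \uno)$ and Proposition \ref{composizioniTOTALI}$(i)$ to $\bG_{\leq N}^\top \cdot E_c \cdot \bG_{\leq N}$ (plus an explicit treatment of the trivial terms coming from $\uno$) one gets that both compositions are pluri-homogeneous smoothing operators in $\Sigma_p^{2N} \wtcR_q^{-\varrho} \otimes \mM_2(\C)$ for every $\varrho \geq 0$. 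This yields $(i)$.

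The identity \eqref{dthetaB=Omega} is immediate from the fact that pullback commutes with the exterior differential \eqref{commu} combined with $\Omega_c = \di\theta_c$ (see \eqref{exactc}):
\[
\di \theta_{\leq N} = \di\, \Psi_{\leq N}^*\theta_c = \Psi_{\leq N}^*\, \di \theta_c = \Psi_{\leq N}^*\Omega_c = \Omega_{\leq N}.
\]
Finally, to obtain \eqref{ZB} I represent $\theta_c$ via \eqref{1form.rep} with the ``operator'' $M = \tfrac12 E_c$ and apply \eqref{pullback.form1}, getting
\[
\theta_{\leq N}(V)[\hat V] = \tfrac12 \la [\di_V \Psi_{\leq N}(V)]^\top \, E_c \, \Psi_{\leq N}(V),\, \hat V\ra_r.
\]
Substituting $\di_V \Psi_{\leq N}(V) = \bA_{\leq N}(V) + \bG_{\leq N}(V)$ and $\Psi_{\leq N}(V) = \bA_{\leq N}(V) V$ and rewriting the pure $\bA_{\leq N}^\top E_c \bA_{\leq N}$ piece by \eqref{ATEA}, the coefficient in front of $V$ becomes $E_c + \bG_{\leq N}^\top E_c \bA_{\leq N} + \rS_{>N}(V) = Z_{\leq N}(V) + \rS_{>N}(V)$, which is precisely \eqref{ZB}.

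There is essentially no analytical obstacle here; the whole statement is a linear-algebraic consequence of \eqref{sommabella2} and \eqref{ATEA} together with the pullback formulas. The only point that requires care is item $(i)$, where one must invoke the class-preserving composition lemmata in the right order and on the right side (taking advantage that it is $\bG_{\leq N}^\top$, not $\bG_{\leq N}$ itself, that is smoothing); this is the structural observation that makes the whole Darboux correction in the subsequent proposition work.
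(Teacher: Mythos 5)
Your proposal is correct and follows essentially the same route as the paper: pull back via \eqref{pullback.form2} and \eqref{pullback.form1}, expand $\di_V\Psi_{\leq N} = \bA_{\leq N}+\bG_{\leq N}$ from \eqref{sommabella2}, use \eqref{ATEA} for the pure $\bA$-term, deduce the smoothing character of the cross terms from \eqref{GeNtra} together with the composition propositions, and get \eqref{dthetaB=Omega} from \eqref{exactc} and \eqref{commu}. The only cosmetic difference is which composition proposition you invoke for the mixed term (the paper cites Proposition \ref{composizioniTOTALI}, which suffices since $\bA_{\leq N}$ is in particular an $m$-operator), and this does not affect the argument.
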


\begin{proof}
By \eqref{pullback.form2} we have that
$$ 
 \Omega_{{\leq N}}(V)[X,Y] = 
\big\la \di \Psi_{{\leq N}} (V)^\top \, E_c \, \di \Psi_{{\leq N}} (V) X, Y \big\ra_{r} 
$$
which, using \eqref{sommabella2} and the fact 
that $ \bA_{\leq N}(V) $ is linearly symplectic up to homogeneity  
$ N $ 
(cfr. \eqref{ATEA}), provides  formula \eqref{EB}. 
Then items ($i$)-($ii$) follow by 
\eqref{sommabella2}, \eqref{GeNtra}  and  Proposition \ref{composizioniTOTALI}.
The identity   \eqref{dthetaB=Omega} follows by \eqref{exactc} and \eqref{commu}.
Finally  
\eqref{ZB} follows similarly computing 
$ \Psi_{\leq N}^* \theta_c $ by \eqref{pullback.form1},  
$$ 
\theta_{\leq N} (V) [X] 
  = \frac12 
  \big\la \di_V \Psi_{\leq N}(V)^\top \, \,E_c  \, \Psi_{\leq N} (V) , X \big\ra \, , 
  $$
and using \eqref{PsileqN},  \eqref{sommabella2} and 
$ \bA_{\leq N}^\top(V) E_c \bA_{\leq N}(V) = E_c + \rS_{>N}(V) $. 
\end{proof}

The key step is to  implement a Darboux--type procedure to transform the symplectic form $\Omega_{{\leq N}}$ back to the standard symplectic form $\Omega_c$ up to arbitrary high 
degree of homogeneity. It turns out that the required transformation is a smoothing perturbation of the identity as claimed in Theorem \ref{thm:almost}, see
Proposition \ref{DarB}. 
This is not at all obvious, since in the expression \eqref{EB} of $E_{\leq N}(V)$ the second  operator 
${\bf A}_{\leq N}^\top(V)E_c {\bf G}_{\leq N} (V)$
is {\em not} smoothing.
However it has a nice structure that we now describe.

\begin{lemma}\label{diadg}
Let  $X(V)$ be a pluri-homogeneous vector field in $\Sigma _{p'+1} \wt \X_q$ 
for some $p' \in \N_0$.  Then 
\be \label{selfad}
\bA^\top_{\leq N}(V) E_c \bG_{\leq N} (V)[ X(V)] = 
\grad \cW(V) +  R (V)V+ M_{>N}(V)V
\ee
where 
\begin{itemize}
\item $\cW(V)$ is  $0$-form in $\Sigma_{p+p'+2} \wt \Uplambda^0_q$; 
\item $R(V)$ is  matrix of pluri--homogeneous smoothing operators in $ \Sigma_{p+p'}  \wtcR^{-\vr}_q \otimes \mM_2 (\C) $ for any $\varrho \geq 0$;
\item $M_{>N}(V)$ is a  matrix of pluri--homogeneous  operators  in $\Sigma_{N+1} \wtcM_q \otimes \mM_2 (\C)$.
\end{itemize}
\end{lemma}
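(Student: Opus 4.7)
The idea is to rewrite the left-hand side of \eqref{selfad} by interpreting $\bA_{\leq N}^\top E_c \bG_{\leq N}$ in terms of the non-standard symplectic tensor $E_{\leq N}$ of Lemma \ref{Lem:ZB}, and then to apply Cartan's magic formula to the $1$-form
$$
\alpha(V)[\hat V] := (\theta_{\leq N} - \theta_c)(V)[\hat V] = \tfrac12 \la P(V) V, \hat V\ra_r, \qquad P(V) := \bG_{\leq N}^\top(V) E_c \bA_{\leq N}(V) + \rS_{>N}(V).
$$
By item $(i)$ of Lemma \ref{Lem:ZB} combined with item $(iv)$ of Lemma \ref{lem:app.inv}, the operator $\bG_{\leq N}^\top E_c \bA_{\leq N}$ is a pluri-homogeneous smoothing operator in $\Sigma_p^{2N}\wtcR^{-\vr}_q \otimes \cM_2(\C)$ for every $\vr \geq 0$, while $\rS_{>N}$ is in $\Sigma_{N+1}\wtcS_q \otimes \cM_2(\C)$. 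Hence the $1$-form $\alpha$ is represented by a smoothing vector plus a high-homogeneity correction.

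Since $\di\theta_{\leq N} = \Omega_{\leq N}$ and $\di\theta_c = \Omega_c$, Cartan's identity for $\alpha$ reads $i_X(\Omega_{\leq N} - \Omega_c) = \mL_X \alpha - \di (i_X \alpha)$. The vector representative of $i_X(\Omega_{\leq N}-\Omega_c)$ is, by \eqref{EB}, given by $\bA_{\leq N}^\top E_c \bG_{\leq N}[X] + \bG_{\leq N}^\top E_c \bA_{\leq N} X + \bG_{\leq N}^\top E_c \bG_{\leq N}[X] + \rS_{>N}X$. Solving for the first summand and setting $\cW(V) := -(i_X \alpha)(V) = -\tfrac12 \la P(V) V, X(V)\ra_r$ produces the identity
$$
\bA_{\leq N}^\top E_c \bG_{\leq N}[X(V)] = \grad \cW(V) + \mathfrak{L}(V) - \bG_{\leq N}^\top E_c \bA_{\leq N} X - \bG_{\leq N}^\top E_c \bG_{\leq N}[X] - \rS_{>N} X,
$$
where $\mathfrak{L}(V)$ denotes the vector representative of $\mL_X \alpha$. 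By Proposition \ref{composizioniTOTALI} the scalar function $\cW$ lies in $\Sigma_{p+p'+2}\wt \Uplambda^0_q$, which accounts for the gradient term in \eqref{selfad}.

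It remains to identify the three explicit leftover terms and $\mathfrak{L}(V)$ with $R(V)V + M_{>N}(V)V$. Writing $X(V) = \widetilde M(V) V$ with $\widetilde M \in \Sigma_{p'} \wt\cM_q \otimes \cM_2(\C)$, the smoothing operator $\bG_{\leq N}^\top$ (of arbitrary regularization order) composed with $E_c \bA_{\leq N}$ or $E_c \bG_{\leq N}$ and then with $\widetilde M$ yields, by Proposition \ref{composizioniTOTALI}$(i)$, operators in $\Sigma_{p+p'}\wtcR^{-\vr}_q \otimes \cM_2(\C)$ for every $\vr \geq 0$; while $\rS_{>N}X$ lies in $\Sigma_{N+1}\wt\cM_q \otimes \cM_2(\C)$ applied to $V$. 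The Lie derivative piece $\mathfrak{L}(V)$ is computed from
$$
(\mL_X \alpha)(V)[\hat V] = \tfrac12 \la \di_V P(V)[X(V)] V + P(V) X(V) + \di X(V)^\top P(V) V, \hat V\ra_r
$$
and each summand is handled similarly: the first two are a smoothing (respectively high-homogeneity) operator applied to $V$ by Propositions \ref{composizioniTOTALI} and \ref{composizioniTOTALIs}, while for the last one Lemma \ref{aggiunto} converts the transpose of the internal differential of $\widetilde M$ into a smoothing operator before composing with $P V$.

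The main obstacle will be controlling the term $\di X(V)^\top P(V) V$ inside $\mathfrak{L}(V)$. Since $X$ is only assumed in $\Sigma_{p'+1}\wt\X_q$, the factor $\widetilde M$ is a generic $m$-operator and the differential $\di X$ has finite positive order; only the transpose of its ``internal'' part becomes smoothing via Lemma \ref{aggiunto}. Composing this smoothing contribution with the smoothing vector $P(V)V$ and tracking orders through Propositions \ref{composizioniTOTALI} and \ref{composizioniTOTALIs} requires that the regularity supplied by $\bG_{\leq N}^\top$ be arbitrarily large, which is precisely the content of Lemma \ref{lem:app.inv}$(iv)$ and suffices to absorb the finite positive order of $\widetilde M$ into a final smoothing operator in $\Sigma_{p+p'}\wtcR^{-\vr}_q \otimes \cM_2(\C)$ for every $\vr \geq 0$.
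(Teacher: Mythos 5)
Your Cartan-formula reduction is sound as an algebraic identity, and your classification of the terms $\bG_{\leq N}^\top E_c \bA_{\leq N}X$, $\bG_{\leq N}^\top E_c\bG_{\leq N}[X]$, $\rS_{>N}X$ and of $\cW=-i_X\alpha$ is fine; but the step where the argument must do real work fails. The term $\tfrac12\,\di_V X(V)^\top P(V)V$ inside $\mathfrak L$ has homogeneity $p+p'+1$, which can be $\leq N$, and it is \emph{not} a smoothing operator applied to $V$. Lemma \ref{aggiunto} cannot be invoked for it: that lemma requires the map whose internal differential is transposed to be spectrally localized, while $X(V)=\widetilde M(V)V$ with $\widetilde M$ only a generic $m$-operator, so by Lemma \ref{primecosette} (cf. Remark \ref{rem:top.smoo}) $\di_V X(V)^\top$ is merely an operator of some finite positive order, with a loss $\mu+\max(m,0)$. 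Nor does the arbitrary regularizing order of $P=\bG_{\leq N}^\top E_c\bA_{\leq N}+\rS_{>N}$ absorb this: the composition of an $m$-operator on the left with a smoothing operator is in general \emph{not} smoothing (this is exactly the warning in the bullet labelled at page \pageref{McR}), because when the largest frequency sits in an internal argument of $\di_V X^\top$, the decay of $P$ — which acts on the maximum of \emph{its own} frequencies only — gives nothing, while $\di_V X^\top$ grows in that very frequency. So your remainder is not in $\Sigma_{p+p'}\wtcR^{-\vr}_q\otimes\cM_2(\C)$. Note also that with your choice of $\cW$ the same term $\tfrac12\,\di_V X^\top P V$ appears with the opposite sign inside $\grad\cW$, so the Cartan identity as you organize it is a tautology: the problematic piece has only been shuffled between $\grad\cW$ and $\mathfrak L$, and no smoothing structure has been produced.

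The missing idea is precisely the paper's mechanism for letting the dependence on $\di_V X$ enter only through the transpose of the internal differential of a \emph{spectrally localized} map. Differentiating $\bA_{\leq N}^\top E_c\bA_{\leq N}=E_c+\rS_{>N}$ shows that $\bK(V):=\bA_{\leq N}^\top E_c\,\di_V\bA_{\leq N}[X(V)]$ is symmetric up to homogeneity $N$ (this is the useful consequence of exactness, rather than Cartan's formula); Lemma \ref{decomposta} then splits $\di_V\bA_{\leq N}[X(V)]=S'(V)+R'(V)$ with $S'$ spectrally localized and $R'$ smoothing together with its transpose, so the symmetric part of $\bK$ splits as $S(V)+\breve R(V)$ with $S$ symmetric and spectrally localized and $\breve R$ smoothing. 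Choosing $\cW(V):=\tfrac12\la S(V)V,V\ra_r$, Lemma \ref{H.S.U} — i.e. Lemma \ref{aggiunto} applied to $S$, not to $X$ — gives $\grad\cW=S(V)V+{}$smoothing, and \eqref{selfad} follows. Without an input of this kind (the up-to-homogeneity-$N$ symmetry of $\bK$ plus Lemma \ref{decomposta}), your scheme cannot yield a smoothing remainder.
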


\begin{proof}
For simplicity of notation we set $ \bA (V) := \bA_{\leq N}(V) $ and $  \bG (V)  := \bG_{\leq N}(V)   $.

\noindent{\sc Step 1:} {\it For any vector $W$, the linear operator
\be\label{K_W}
\bK_W(V) := \bA(V)^\top\,  E_c \, \big[ \di_V \bA(V) [W]  \big]
\ee
is symmetric up to homogeneity $N$, 
precisely 
\be \label{quasiadg}
\bK_W(V) -  \bK_W(V)^\top = \di_V \rS_{>N}(V)[ W]
\ee
 where $ \rS_{>N}(V)$ is the spectrally localized map in    $\Sigma_{N+1}
\widetilde  \mS_q \otimes \mM_2 (\C) $ of Lemma  \ref{Lem:ZB}. }\\ Indeed, differentiating the relation  $ \bA(V)^\top E_c \bA(V)=E_c+ \rS_{>N}(V)$ (see Lemma \ref{Lem:ZB} $(ii)$), 
in direction $W$, we get 
\begin{align*}
\di_V \rS_{>N}(V)[ W] & = \bA(V)^\top E_c \, 
\big[\di_V \bA(V)[ W]\big] + 
\big[\di_V \bA(V)[W] \big]^\top  E_c \bA(V)  \\
& =
\bA(V)^\top E_c \,  \big[\di_V \bA(V)[ W]  \big]-
\Big(\bA(V)^\top E_c \, \big[\di_V \bA(V)[W]\big]\Big)^\top 
\end{align*}
proving, in view of \eqref{K_W},  \eqref{quasiadg}.

\noindent {\sc Step 2:} {\it The linear operator
\be\label{KV}
\bK(V) := 
\bK_{X(V)}(V) = \bA(V)^\top E_c \big[\di_V \bA(V)[X(V)] \big]
\ee
can be decomposed as  
\be\label{decompl}
\bK(V) = S(V) + \breve R(V) +  M_{>N}(V)
\ee
where \\
\noindent 
$\bullet$ $S(V)$ is  a symmetric  matrix of spectrally localized pluri--homogeneous maps  in $\Sigma_{p+p'} \wtcS_{q} \otimes \mM_2 (\C) $; \\
\noindent
$\bullet$  $ \breve R(V)$ is 
a  symmetric matrix of  pluri--homogeneous smoothing operators in $ \Sigma_{p+p'}\wtcR^{-\vr}_{q} \otimes \mM_2 (\C) $ for any $\varrho \geq 0$ as well as its transpose;\\
\noindent
$\bullet$ 
$M_{>N}(V):= \frac12 \di_V \rS_{>N}(V)[ X(V)]$ is a  matrix of pluri--homogeneous operators in  $\Sigma_{N+1}
\widetilde  \mM_q \otimes \mM_2 (\C) $. 
}
We apply  Lemma 
 \ref{decomposta} to each component 
$\bA_q(V) := \cP_q [ \bA(V)]$, $q = p, \ldots, N$, each of which is a  map in
$\wt \cS_q \otimes \cM_2(\C)$, see  Lemma \ref{lem:app.inv} $(i)$.
Lemma  \ref{decomposta} (with $M(U)U \leadsto X(V)$)
gives the  decomposition
\be \label{decompl0}
\di_V \bA(V)[ X(V)]  = 
\sum_{q=p}^{N} q \bA_{q}  \big( X(V), V, \dots , V\big)  =S'(V)+ R'(V) \, ,  
\ee 
 where $S'(V)$ is  a  matrix of spectrally localized pluri--homogeneous maps  in $\Sigma_{p+p'} \wtcS_{q} \otimes \mM_2 (\C) $ and $ R'(V)$ is 
a  matrix of  pluri--homogeneous smoothing operators in $ \Sigma_{p+p'}\wtcR^{-\vr}_{q} \otimes \mM_2 (\C) $ for any $\varrho \geq 0$,  as well as its transpose. 
Then we obtain by \eqref{quasiadg} (with $W = X(V)$), 
\begin{align*}
\bK(V)= &\, \frac12 (\bK(V)+\bK(V)^\top)+  \frac12 \di_V \rS_{>N}(V)[X(V)]  \\
 \stackrel{\eqref{KV},\eqref{decompl0}} = &\underbrace{\frac12 \big[\bA^\top E_c S'- S'^\top E_c\bA \big](V) }_{:=S(V), \ S(V)= S(V)^\top} +\underbrace{ \frac12\big[\bA^\top E_c  R'-  R'^\top E_c\bA \big](V)}_{:=\breve R(V), \ \breve R(V)= \breve R(V)^\top } +  M_{>N}(V)  \, ,
\end{align*}
where
$M_{>N}(V)= \frac12 \di_V \rS_{>N}(V)[ X(V)]$ is
in  $\Sigma_{N+1}
\widetilde  \mM_q \otimes \mM_2 (\C) $ by Proposition \ref{composizioniTOTALI}.
 Since the maps $ S'(V)$ and $ \bA(V)$ are spectrally localized then  
 $S(V) $ belongs to $ \Sigma_{p+p'} \wtcS_q \otimes \mM_2 (\C) $ by   Proposition \ref{composizioniTOTALIs} $(ii)$   and  Lemma \ref{omonon}. 
The operator $ \breve R(V) $ is in $  \Sigma_{p+p'} \wtcR^{-\vr}_q \otimes \mM_2 (\C) $ for any $\varrho \geq 0$ by 
Proposition \ref{composizioniTOTALIs} $(i)$, Lemma \ref{omonon}  and the fact that $R'(V)$ belongs 
$  \Sigma_{p+p'} \wtcR^{-\vr}_q \otimes \mM_2 (\C) $ for any $\varrho \geq 0$
 as well as its  transpose.

\noindent {\sc Conclusion:} 
The  identity \eqref{selfad} follows by Step 2 defining 
$$
\cW(V) := \frac12 \big\la S(V) V, V \big\ra_r \, , \quad
S(V) \mbox{ in } \eqref{decompl} \ . 
$$
Indeed by  Lemma \ref{H.S.U}, $\grad \cW(V) = S(V) V + R''(V) V$ for some smoothing operator $R''(V)$ belonging to $ \wt\cR^{-\varrho}_{p+p'} \otimes \cM_2(\C)$ for any $\varrho > 0$.
Then
we get
\begin{align*}
\bA^\top_{\leq N}(V) E_c \bG_{\leq N} (V)[ X(V)]  
& \stackrel{ \eqref{selfad}, \eqref{sommabella2} }{=}
\bK(V)V \stackrel{\eqref{decompl}}{=} S(V)V + \breve R(V) V + M_{>N}(V)V \\
& = 
\grad \cW(V) -  R''(V) V + \breve R(V) V + M_{>N}(V)V  \ , 
\end{align*}
proving  
 \eqref{selfad}  with 
 $R(V) := \breve R(V) - R''(V)$ which belongs to $ \Sigma_{p+p'}  \wtcR^{-\vr}_q \otimes \mM_2 (\C) $ for any $\varrho \geq 0$.
\end{proof}

\begin{remark}\label{rem:dY}
The vector field $R(V)V$ in \eqref{selfad} depends on $ X(V)$ and its differential $ \di_V X (V)$. 
This is because  $S(V)$ depends linearly on $X(V)$
(actually it is the term   $S'(V)$ that depends linearly on $X(V)$, 
 see  \eqref{decompl0}).
Hence  the smoothing vector field $R''(V)V$ coming from the gradient $ \nabla \cW(V)$, given explicitly by $R''(V)V := L(V)^\top V$ where $L(V)W  =  \frac12 \di_V S(V)[W]V$ (see \eqref{ham.sap10}), 
 depends on the differential of $X(V)$.
\end{remark}

Now we present the main Darboux procedure. 
 
\begin{proposition}{\bf (Darboux procedure)}\label{DarB}
 There exists 
 a $\tau$--dependent pluri-homogeneous smoothing vector field  $Y^\tau(V)  $ in 
 $ \Sigma_{p+1} \wt \X^{-\varrho}_{p+1}$, for any $ \varrho \geq 0 $, 
 defined for $ \tau \in [0,1]$,  
  such that its approximate time $1$--flow
 \be\label{darbouxF}
  \mF_{\leq N}(V)= V + R_{\leq N}(V)V  \quad \text{with} \quad 
  R_{\leq N}(V) \in  \Sigma_{p}^N \wtcR^{-\vr}_q\otimes \mM_2(\C) \, , 
   \ \ \forall \varrho \geq 0 \, , 
\ee
(given by Lemma \ref{extistencetruflow}) satisfies 
\be \label{denew2B}
 \mF_{\leq N}^*\Omega_{{\leq N}} = \Omega_c + \Omega_{> N} 
\ee 
 where $\Omega_{> N}  $ is a pluri-homogeneous $ 2$-form in 
 $ \Sigma_{N+1} \wt \Uplambda_q^2 $. 
 \end{proposition}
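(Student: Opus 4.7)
I would implement a Moser-type homotopy argument, adapted to the pluri-homogeneous setting. Consider the interpolation
$\theta^\tau := \theta_c + \tau(\theta_{\leq N} - \theta_c)$ and $\Omega^\tau := \di \theta^\tau = \Omega_c + \tau(\Omega_{\leq N} - \Omega_c)$, $\tau \in [0,1]$,
and look for a $\tau$-dependent pluri-homogeneous smoothing vector field $Y^\tau$ whose approximate time-$1$ flow $\mF_{\leq N}$, provided by Lemma \ref{extistencetruflow}, satisfies \eqref{denew2B}. Combining Lemma \ref{lem:pullback} applied to $\theta^\tau$ with Cartan's formula \eqref{cartan} and the closedness $\di \Omega^\tau = 0$, one obtains $\frac{\di}{\di\tau}(\mF_{\leq N}^\tau)^*\Omega^\tau = (\mF_{\leq N}^\tau)^*\di\bigl[i_{Y^\tau}\Omega^\tau + \pa_\tau \theta^\tau\bigr]$ modulo a $2$-form of homogeneity $\geq N+2$. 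It therefore suffices to construct $Y^\tau$ together with a $0$-form $f^\tau \in \Sigma_{p+2}\wt\Uplambda^0_q$ such that $i_{Y^\tau}\Omega^\tau + \pa_\tau \theta^\tau - \di f^\tau$ is a $1$-form of homogeneity $\geq N+2$; the exact summand $\di f^\tau$ disappears under the outer exterior derivative.

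\textbf{Leading-order solution.} Using the representations in Lemmas \ref{Lem:ZB} and \ref{lem:ident.forms}, together with $\pa_\tau \theta^\tau = \theta_{\leq N} - \theta_c$, the previous condition reduces to the operator identity
$E^\tau(V) Y^\tau(V) = -\tfrac12 \bG_{\leq N}^\top(V) E_c \bA_{\leq N}(V) V + \grad f^\tau(V)$
modulo homogeneity $\geq N+1$, where $E^\tau(V) = E_c + \tau(E_{\leq N}(V) - E_c)$. I would solve this iteratively by writing $Y^\tau = \sum_{q=p}^N Y^\tau_q$ with $Y^\tau_q$ a $(q+1)$-homogeneous $\tau$-smooth vector field. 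At the base level $q = p$, the equation reads $E_c Y^\tau_p(V) = -\tfrac12\cP_{p+1}\bigl[\bG_{\leq N}^\top(V) E_c \bA_{\leq N}(V) V\bigr]$ (the term $\tau(E_{\leq N}-E_c)Y^\tau$ only contributes at homogeneity $\geq 2p+1$, and no $\grad f^\tau$ correction is needed yet); by \eqref{GeNtra} and Proposition \ref{composizioniTOTALIs} the right-hand side is a smoothing vector field of homogeneity $p+1$ for any $\varrho \geq 0$, and applying $J_c = E_c$ yields $Y^\tau_p \in \wt\X^{-\varrho}_{p+1}$.

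\textbf{Iteration and the role of Lemma \ref{diadg}.} At homogeneity $q+1 > p+1$, the equation for $Y^\tau_q$ absorbs further contributions from the already-constructed lower-order components of $Y^\tau$, of the form $\cP_{q+1}\bigl[(E_{\leq N}(V) - E_c) \sum_{q' < q} Y^\tau_{q'}(V)\bigr]$. By item (i) of Lemma \ref{Lem:ZB}, the pieces $\bG^\top E_c \bA$ and $\bG^\top E_c \bG$ of $E_{\leq N} - E_c$ are smoothing, so their contributions are harmless. The only genuinely unbounded contribution comes from $\bA^\top_{\leq N}(V) E_c \bG_{\leq N}(V)\bigl[\sum_{q' < q} Y^\tau_{q'}(V)\bigr]$; here Lemma \ref{diadg} is the key structural ingredient, applied to this pluri-homogeneous vector field: it produces the decomposition $\grad \cW^\tau(V) + R^\tau(V)V + M^\tau_{>N}(V)V$, with $R^\tau$ smoothing and $M^\tau_{>N}$ a remainder of homogeneity $\geq N+1$. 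The gradient summand is absorbed into an iterative update of $\grad f^\tau$, the $M^\tau_{>N}$ contribution is pushed into the homogeneity $\geq N+1$ remainder, and the remaining smoothing piece merges with the principal right-hand side; inverting $E_c$ through $J_c$ then delivers $Y^\tau_q \in \wt\X^{-\varrho}_{q+1}$ for arbitrary $\varrho \geq 0$, closing the recursion.

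\textbf{Conclusion and main difficulty.} Once $Y^\tau \in \Sigma_{p+1}\wt\X^{-\varrho}_q$ is built, Lemma \ref{extistencetruflow} supplies the approximate time-$1$ flow $\mF_{\leq N}(V) = V + R_{\leq N}(V)V$ with $R_{\leq N}$ pluri-homogeneous smoothing, which gives \eqref{darbouxF}. Integrating $\frac{\di}{\di\tau}(\mF_{\leq N}^\tau)^*\Omega^\tau$ from $\tau = 0$ to $\tau = 1$ with $\mF_{\leq N}^0 = \id$ yields \eqref{denew2B} with remainder $\Omega_{>N} \in \Sigma_{N+1}\wt\Uplambda^2_q$. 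The main difficulty is that $E_{\leq N}(V) - E_c$ is \emph{not} a smoothing perturbation of $E_c$, so a naive Moser inversion would lose derivatives at every iteration; Lemma \ref{diadg} is precisely the algebraic tool that isolates the obstruction as a cohomologically trivial gradient plus genuinely smoothing pieces, thereby keeping each $Y^\tau_q$ smoothing and making the iterative construction close.
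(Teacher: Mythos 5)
Your proposal is correct and follows essentially the same route as the paper: the Moser homotopy $\Omega^\tau$, the reduction via Lemma \ref{lem:pullback}, Cartan's formula and $\di^2=0$ to the Darboux equation $i_{Y^\tau}\Omega^\tau+\theta_{\leq N}-\theta_c=\di\cW^\tau$ modulo high homogeneity, and the perturbative solution of $E_cY^\tau=-\tfrac12\bG_{\leq N}^\top E_c\bA_{\leq N}V+\dots$ in which Lemma \ref{diadg} splits the unbounded term $\bA_{\leq N}^\top E_c\bG_{\leq N}[Y^\tau]$ into an absorbable gradient, a smoothing piece and a homogeneity $\geq N+1$ remainder. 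The only (immaterial) difference is that you index the recursion by homogeneity degree rather than by the rounds of substitution used in Lemma \ref{lem.sol.dar}; both organizations close for the same reason, namely that the perturbation raises homogeneity by at least $p$.
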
 

\begin{proof}
 We follow the famous  deformation argument  by Moser.
We define the homothety between the  symplectic $ 2$-forms 
$ \Omega_c $  and $\Omega_{\leq N} $ defined in  \eqref{OmegaB} by setting
 \be\label{omegamoser}
 \Omega^\tau := \Omega_c+\tau (\Omega_{{\leq N}}-\Omega_c) \, , \quad
  \forall \tau\in[0,1] \, .  
 \ee
Equivalently $  \Omega^\tau = \la E^\tau(V) \cdot , \cdot \ra_r $ with 
associated symplectic tensor 
\begin{align}\label{asspt}
 E^\tau (V) &  = E_c+ \tau (E_{{\leq N}}(V)-E_c) \\
 & \stackrel{ \eqref{EB}}  = E_c + \tau \bR_{{\leq N}}(V)+ \tau{\bf A}_{{\leq N}}^\top(V)\ E_c \ {\bf G}_{{\leq N}}(V) + \tau \rS_{>N}(V)\label{EtauV}
 \end{align}
  where
  
  \noindent 
 $ \bullet $ $ \bR_{{\leq N}}(V) $ is the matrix of 
 pluri--homogeneous smoothing operators
$$
 \bR_{{\leq N}}(V):=  {\bf G}_{{\leq N}}^\top(V)\ E_c  {\bf A}_{{\leq N}}(V)+  {\bf G}_{{\leq N}}^\top(V) \ E_c \ {\bf G}_{{\leq N}}(V)  
$$
belonging  to 
$ \Sigma_{p} \wtcR^{-\varrho}_{q}\otimes \mM_2 (\C) $ for any $ \varrho \geq 0 $;   
 
   \noindent 
 $ \bullet $  $\rS_{>N}(V)$ is the map in $ \Sigma_{N+1} \wtcS_q \otimes \mM_2 (\C) $ of Lemma  \ref{Lem:ZB}.

In addition, by \eqref{dthetaB=Omega} and \eqref{exactc}, we have 
\be\label{Omegadtheta}
 \Omega^\tau = \di \theta^\tau \, , 
 \quad  \theta^\tau:= \theta_c+ \tau( \theta_{\leq N}- \theta_c) 
\ee
 where $ \theta_{\leq N}$ is given in \eqref{ZB}. 
 
 We look for a $\tau$--dependent pluri-homogeneous smoothing  vector field  $Y^\tau(V) 
 $ in $ \Sigma_{p+1} \wt \X^{-\varrho}_q$, for any $ \varrho \geq 0 $, 
 such that  its approximate flow $\mathcal{F}^\tau_{\leq N}$ up to homogeneity $ N $  
 (defined by Lemma  \ref{extistencetruflow}), 
satisfies 
 \be \label{dar12}
 \frac{{\rm d}}{{\rm d}\tau} (\mF^\tau_{\leq N})^* \Omega^\tau=  \di \theta^\tau_{> N+1} \, ,  \quad \forall \tau\in[0,1] \, , 
 \ee 
for a certain $1$-form   $  \theta^\tau_{> N+1} $   in $ \Sigma_{N+2} 
 \wt \Uplambda^1_q $. 
 Then, integrating \eqref{dar12} and recalling \eqref{omegamoser}, we deduce 
   $$
   (\mF^1_{\leq N})^*\Omega_{{\leq N}} = 
   \Omega_c + \int_0^1   \di\theta^\tau_{> N+1} \, \di \tau \, ,
   $$ 
which proves \eqref{denew2B} with $\mathcal{F}_{\leq N} := \mathcal{F}^1_{\leq N}$ and 
$  
   \Omega_{> N} :=  \int_0^1 \di\theta^\tau_{> N+1} \, \di \tau $.

We now construct the vector field $Y^\tau(V)$. 
Using the definition of Lie derivative and the Cartan magic formula, we derive the chain of identities
 \begin{align}
 \notag
 \frac{{\rm d}}{{\rm d}\tau} (\mF_{\leq N}^\tau)^* \Omega^\tau 
& \stackrel{\eqref{Omegadtheta}, \eqref{commu}
}{=} \di \Big( \frac{{\rm d}}{{\rm d}\tau} (\mF_{\leq N}^\tau)^* \theta^\tau \Big)\\
&\stackrel{\eqref{deriv:pullback}}{=} \di\Big((\mF_{\leq N}^\tau)^*( \mL_{Y^\tau} \theta^\tau+ \frac{{\rm d}}{{\rm d}\tau}\theta^\tau)\Big)+ \di \tilde \theta^\tau_{>N+1} \notag \\ 
 \notag
&  \stackrel{\eqref{cartan}, \eqref{commu}}{=} (\mF^\tau_{\leq N})^*\di \big( (i_{Y^\tau}\circ {\rm d }+ {\rm d}\circ i_{Y^\tau}) \theta^\tau + 
 \theta_{{\leq N}} -\theta_c\big)+ \di \tilde \theta^\tau_{>N+1}\\
 & \stackrel{\eqref{Omegadtheta}, \eqref{d2=0}}{=} (\mF^\tau_{\leq N})^*  {\rm d}  \big( 
  i_{Y^\tau} \Omega^\tau + \theta_{{\leq N}}- \theta_c \big) +\di \tilde \theta^\tau_{>N+1}  \label{meraviglia}
 \end{align}
where $ \tilde \theta^\tau_{>N+1} $ is a $1$-form  in $ \Sigma_{N+2} \wt \Uplambda_q^1$ by Lemma \ref{lem:pullback}. 
We look for a vector field $Y^\tau(V)$ and a $0$-form  $\cW^\tau(V)$ such that 
 \be\label{darPRIN}
i_{Y^\tau} \Omega^\tau+ \theta_{{\leq N}}-\theta_c  = \breve \theta^\tau_{> N+1} + \di  \cW^\tau,  
 \ee
for some  pluri--homogeneous $1$--form $ \breve \theta^\tau_{> N+1}$ in $ \Sigma_{N+2} \wt \Uplambda^1_q $. 
If  \eqref{darPRIN} holds, then, in view of \eqref{meraviglia}, equation \eqref{dar12} is satisfied with 
$$
\theta_{> N+1}^\tau:= (\mF^\tau_{\leq N})^* \breve \theta^\tau_{>N+1}+ \tilde \theta^\tau_{>N+1} \in\Sigma_{N+2} \wt \Uplambda^1_q \, .
$$
We turn to solve equation \eqref{darPRIN}.
Using  \eqref{asspt},  \eqref{ZB},  recalling that 
$ \theta_c(V)  = \frac12 \la E_c V, \cdot  \ra_r $,
and writing $\breve \theta^\tau_{>N+1}(V)  = \frac12 \la \breve Z^\tau_{>N}(V)V, \cdot  \ra_r $,   
we first rewrite \eqref{darPRIN} as the 
equation
 \be \label{risoluta}
 E^\tau(V) Y^\tau(V)+ \frac12 \Big( 
 Z_{{\leq N}}(V)V- E_c V + \rS_{>N}(V)V  \Big)= \frac12 \breve Z^\tau_{> N}(V)V 
+ \nabla \cW^\tau(V) \ . 
 \ee

\begin{remark}
 This equation  is  linear in $Y^\tau(V)$.
 In the works  \cite{KukPer, Bam2, Cuc1,Cuc2,BaM0,BaM1} 
 the operator $E^\tau(V)$ is  a smoothing perturbation of $E_c$, so is its inverse  and the vector field $Y^\tau(V)$ is immediately a smoothing vector field.
In our case, $E^\tau(V)$ is a (possibly) unbounded perturbation of $E_c$, and its (approximate) inverse is only an $m$-operator. 
Hence, the composition of the (approximate) inverse of $E^\tau(V)$  with the  smoothing operator $Z_{\leq N}(V)  - E_c$  (see \eqref{ZB})
is only an $m$-operator,  not a smoothing one (see the bullet at pag. \pageref{McR}).
Therefore we cannot directly conclude  that $Y^\tau(V)$ is a smoothing vector field. 
We proceed differently and solve  the equation \eqref{risoluta} in homogeneity, exploiting the freedom given by the  function $\grad \cW^\tau(V)$ to remove the non-smoothing components of the equation, thanks to
 structural Lemma \ref{diadg}.
\end{remark} 

 By  \eqref{EtauV}  
  and  \eqref{ZB}, equation \eqref{risoluta} becomes 
 \begin{equation}\label{fin}
 \begin{aligned}
E_c Y^\tau(V) = &-\tfrac{1}{2} {\bf G}_{{\leq N}}^\top(V) \ E_c  \ {\bf A}_{{\leq N}} (V) V- \tau \bR_{{\leq N}}(V) Y^\tau(V)\\& - \tau {\bf A}_{{\leq N}}^\top(V)E_c{\bf G}_{{\leq N}}(V)Y^\tau(V)  + \nabla \cW^\tau(V)\\
& +  \tfrac12 \breve Z^\tau_{> N}(V)V- \tfrac12 \rS_{>N}(V)V- \tau \rS_{>N}(V) Y^\tau(V) \, . 
\end{aligned}
\end{equation}
We now solve 
 \eqref{fin} 
for a smoothing vector field $Y^\tau(V)$, a suitable function $ \cW^\tau(V)$ and a high homogeneity pluri--homogeneous map $\breve Z_{> N}^\tau(V)$  by an iterative procedure in increasing order
of homogeneity.  Note that $ {\bf G}_{{\leq N}}^\top(V) \ E_c  \ {\bf A}_{{\leq N}} (V)$ and $ \bR_{{\leq N}}(V)$  
are smoothing operators  unlike 
$ {\bf A}_{{\leq N}}^\top(V)E_0{\bf G}_{{\leq N}}(V)Y^\tau(V)$ 
that will be canceled using $ \nabla \cW^\tau(V)$, thanks to the structure property explicated in Lemma \ref{diadg}.

\begin{lemma}\label{lem.sol.dar}
Fix $ \bar N\in \N$ such that $ (\bar N+2)p\geq N+1$.
There exist

\noindent
$\bullet$ 
a pluri-homogeneous smoothing vector field $Y^\tau(V) = \sum_{a = 0}^{\bar N} Y_{(a)}^\tau(V)$, defined for any 
$ \tau \in [0,1] $,  with $Y_{(a)}^\tau(V) $ in $  \Sigma_{(a+1) p + 1}\wt \X^{-\varrho}_q$ for any $\varrho  \geq 0$, uniformly in $ \tau \in [0,1] $; \\
\noindent
$\bullet$ a pluri-homogeneous  Hamiltonian 
$\cW^\tau(V) = \sum_{a = 0}^{\bar N} 
\cW^\tau_{(a)}(V)$, defined for any 
$ \tau \in [0,1] $,  with $\cW_{(a)}^\tau(V) $ in $  \Sigma_{(a+1)p+2} \wt \Uplambda^0_q$, uniformly in $ \tau \in [0,1] $;\\
\noindent
$\bullet$ a pluri-homogeneous matrix of operators $\breve Z_{>N}^\tau(V) $, 
defined for any 
$ \tau \in [0,1] $,  in $ \Sigma_{N+1} \wt \cM_q \otimes \cM_2(\C)$, uniformly in $ \tau \in [0,1] $;

which solve equation \eqref{fin}. 
\end{lemma}

\begin{proof}
We define
\be \label{Def:Y0}
\begin{cases}
Y_{(0)}(V):= -\frac{1}{2} E_c^{-1} {\bf G}^\top_{{\leq N}}(V)\ E_c  \ {\bf A}_{{\leq N}}(V)V \\
 \cW_{(0)}^\tau(V):=0
 \end{cases}  
\ee
Note that  $Y_{(0)}(V)$ is smoothing vector field in 
 $ \Sigma_{p+1} \wt \X^{-\varrho}_{q}  $ for any $ \varrho \geq 0 $, since 
 ${\bf G}^\top_{{\leq N}}(V) \ E_c  \ {\bf A}_{{\leq N}}(V)$ are smoothing operators in $ \Sigma_{p} \wtcR^{-\varrho}_{q}\otimes \mM_2 (\C) $ for any $\varrho \geq 0$, by Lemma \ref{Lem:ZB}-($i$).
 
For $a \geq 0$, we prove the following recursive statements: {\it there exist a \\
\noindent
 {\bf (S1$)_a$}  pluri-homogeneous smoothing vector field $Y_{(a)}^\tau(V)$ belonging to $\Sigma_{(a+1)p+1}\wt \X_q^{-\vr}$ for any $\varrho \geq 0$;\\
\noindent
 {\bf (S2$)_a$}  pluri-homogeneous Hamiltonian $ \cW_{(a)}^\tau(V) $ in  $\Sigma_{{(a+1)p+2}} \wt \Uplambda^0_q $;\\
\noindent 
 {\bf (S3$)_a$}  matrix of pluri-homogeneous operators $Z_{>N,(a)}^{\tau}(V)$ in $ \Sigma_{N+1} \wtcM_q \otimes \mM_2 (\C)$; \\
 uniformly in $ \tau \in [0,1] $,  
with $ (Y_{(0)}^\tau(V), \cW_{(0)}^\tau(V),  Z_{>N,(0)}^{\tau}(V))$ defined in \eqref{Def:Y0},
satisfying,  for any $a \geq 1 $,}
 \be \label{jesima1}
\begin{aligned}
E_c Y_{(a)}^\tau(V) & =  - \tau \bR_{{\leq N}}(V) 
Y_{(a-1)}^\tau(V)\\
& \quad - \tau {\bf A}_{{\leq N}}^\top(V)E_c{\bf G}_{{\leq N}}(V)
[ Y_{(a-1)}^\tau(V)] + \nabla \cW_{(a)}^\tau(V)\\
& \quad +  Z_{>N,(a)}^{\tau}(V)V- \tau \rS_{>N}(V)Y_{(a-1)}^\tau(V)  \, . 
\end{aligned}
\ee
Given $ (Y_{(a-1)}^\tau(V), \cW_{(a-1)}^\tau(V),  Z_{>N,(a-1)}^{\tau}(V))$ we now prove
{\bf (S1$)_a$}-{\bf (S3$)_a$}.   
 Note that the first term in \eqref{jesima1} is a smoothing vector field of homogeneity $ (a+1)p+1$ while  the first term in the second line of \eqref{jesima1} has homogeneity  $ (a+1)p+1$ but it is not a smoothing vector field.
However by Lemma  \ref{diadg} we have  the decomposition 
$$
 {\bf A}_{{\leq N}}^\top(V)E_c{\bf G}_{{\leq N}}(V)[Y^\tau_{(a-1)}(V)] 
  = \grad \breve \cW^\tau_{(a-1)}(V) +\breve  R^\tau_{(a-1)}(V)V+ \breve M_{>N,(a-1)}^{\tau}(V)V \, , 
$$
where 
$ \breve  \cW^\tau_{(a-1)}(V)$ is a Hamiltonian in 
$ \Sigma_{{(a+1)p+2}} \wt \Uplambda^0_q $, 
$ \breve R^\tau_{(a-1)}(V) $ is a pluri--homogeneous  smoothing operator in $ \Sigma_{(a+1)p} \wtcR^{-\vr}_q \otimes \mM_2 (\C) $ and 
$  \breve M_{>N,(a-1)}^{\tau}(V)$ is a pluri--homogeneous operator in $\Sigma_{N+1} \wtcM_q \otimes \mM_2 (\C) $.
Then equation \eqref{jesima1} becomes 
$$
\begin{aligned}
E_c Y_{(a)}^\tau(V) & =  - \tau \bR_{{\leq N}}(V) Y_{(a-1)}^\tau(V)
- \tau \breve R^\tau_{(a-1)}(V)V  \\
& \quad + \grad \big(\cW_{(a)}^\tau (V) - \tau \breve \cW_{(a-1)}^\tau(V) \big)\\
 & \quad 
 + Z_{>N,(a)}^{ \tau}(V)V  -\tau \breve M_{>N, (a-1)}^{\tau}(V)V- \tau \rS_{>N}(V) Y_{(a-1)}^\tau(V) \, 
\end{aligned}
$$
which is solved by
$$
\begin{cases}
Y_{(a)}^\tau(V):=  -E_c^{-1}\big[  \tau \bR_{{\leq N}}(V) Y_{(a-1)}^\tau(V)+ \tau \breve R^\tau_{(a-1)}(V)V \big] \\
 {\cal W}_{(a)}^\tau(V):=\tau    \breve \cW_{(a-1)}^\tau(V) \\
  Z_{>N, (a)}^{\tau}(V)V:= \tau  \breve M_{>N, (a-1)}^{\tau}(V)V+ \tau \rS_{>N}(V)Y_{(a-1)}^\tau(V) \, 
 \end{cases} 
$$
proving  {\bf (S1$)_a$}-{\bf (S3$)_a$}.

Summing \eqref{Def:Y0} and \eqref{jesima1}  for any $a = 1, \ldots, \bar N$ we find that
$Y^\tau(V) = \sum_{a = 0}^{\bar N} Y_{(a)}^\tau(V)$ and 
$\cW^\tau(V) = \sum_{a = 0}^{\bar N} 
\cW_{(a)}^\tau (V)$ solve \eqref{fin}
with
$$
\frac12 \breve Z_{>N}^\tau(V)V
:= \frac 12 \rS_{>N}(V)V + 
\sum_{a=1}^{\bar N}  Z_{>N,(a)}^{\tau} (V) V
+
\tau \big[ \bR_{\leq N}(V)
+\bA_{\leq N}^\top(V) E_c \bG_{\leq N} (V) +  
\rS_{>N}(V)  
\big] Y_{(\bar N)}^\tau(V)
$$
which is an operator in $ \Sigma_{N+1} \wtcM_q \otimes \mM_2(\C) $, since 
$ (\bar N+2)p \geq N+ 1  $.  Lemma  \ref{lem.sol.dar} is proved.
\end{proof}
The approximate flow up to homogeneity $ N $ of  
the smoothing vector field $ Y^\tau $ defined by Lemma \ref{lem.sol.dar} solves 
\eqref{dar12}. This concludes the proof of Proposition  \ref{DarB}.
 \end{proof}

\noindent \begin{proof}[{\bf Proof of Theorem \ref{thm:almost}.}]
The map  $\mE_N :=  \Psi_{\leq N} \circ \mF_{\leq N}$, where  $ \mF_{\leq N}$ is defined in  Proposition \ref{DarB}, fulfills 
$$
 \mE_N^* \Omega_c = \mF_{\leq N}^* \Psi_{\leq N}^*  \Omega_c 
 \stackrel{ \eqref{OmegaB}}  =  \mF_{\leq N}^* \Omega_{\leq N} 
 \stackrel{\eqref{denew2B} } =  \Omega_c + \Omega_{>N} 
$$
and so $\mE_N$ is symplectic up to homogeneity $N$.
We define the map $\cC_{\leq N}$ in \eqref{CiEnne} as the  approximate inverse (given by Lemma \ref{inversoapp}) 
 of the nonlinear map $\mF_{\leq N}$ in \eqref{darbouxF}, hence it  has the claimed  form.
Since  $\Psi_{\leq N}$ is an  approximate inverse of $\Phi_{\leq N}$,
the map 
$\mD_N:= \mC_{\leq N} \circ \Phi_{\leq N}$ 
 is an approximate inverse of $\mE_N$, and so it is symplectic up to homogeneity $N$ by 
Lemma \ref{lem:ails}.
\end{proof}

\noindent \begin{proof}[{\bf Proof of Theorem \ref{thm:symplgu}.}]
We write 
the good-unknown of Alinach  \eqref{simgud} in complex variables
$ (u, \bar u)$ induced by the transformation $ \mC $  defined in \eqref{cc}, obtaining the real-to-real  
spectrally localized, linearly symplectic  map (according to Definition \ref{LS})  
$$ 
\mG_c(U):=   \mC  \Opbw{\big[\begin{smallmatrix}1 & 0 \\ -B( \mC U) & 1\end{smallmatrix} \big]} \mC^{-1},\quad 
(\eta, \psi) = \mC U \, , 
$$
where  $B(\eta, \psi) $ is the real function 
defined in \eqref{form-of-B} which, as stated in Lemma \ref{laprimapara}, belongs  
to $ \Sigma {\cal F}^\R_{K, 0,1}[r,N]$.  
Then Theorem \ref{thm:symplgu} follows by applying 
Theorem \ref{thm:almost} to the pluri-homogeneous spectrally localized map 
$ \bB_{\leq N} (U) = $ $ \mP_{\leq N}( \mG_c(U)) = \mC \opbw\big(\big[\begin{smallmatrix}1 & 0 \\ \mP_{\leq N}[-B( \mC U)] & 1\end{smallmatrix} \big]\big)\mC^{-1} $. 
\end{proof}

\part{Almost global existence of water waves}\label{part:II}

We now begin the proof of  
the almost global existence Theorem \ref{teo1}  
for solutions of the gravity-capillary water waves equations \eqref{eq:etapsi} 
with constant vorticity.

After further describing  the Hamiltonian structure of the 
water waves equations \eqref{eq:etapsi} and diagonalizing 
the linearized system at the
equilibrium, we paralinearize
the water waves equations \eqref{eq:etapsi} 
with constant vorticity, 
 written in the Zakharov-Craig-Sulem $(\eta,\psi)$ variables, which are the Hamiltonian system \eqref{HamWW} with the non--standard Poisson tensor $ J_\gamma $.
 Then we express such paralinearized  system in the   Wahl\'en  coordinates $(\eta,\zeta)$ in \eqref{Whalen}, which coincides with the Hamiltonian system in \eqref{HamWW2} in  standard Darboux form. Finally we write such paralinearized  system in the complex variable $U $  defined in \eqref{defMM-1}, i.e. \eqref{Ugrande}.
The final system \eqref{complexo} is \emph{Hamiltonian in the complex sense}, i.e. has the form \eqref{complexHS}.

\section{Paralinearization of the 
water waves equations 
with constant vorticity and its Hamiltonian structure} 
\label{sec:paraWW}

From now on we consider  \eqref{eq:etapsi} as a system
 on (a dense subspace of) the {\it homogeneous} space $ \dot L^2_r\times \dot L^2_r $, namely,  
 denoting  $ X_\gamma (\eta, \psi) $ the right hand side in \eqref{eq:etapsi}, 
 we consider  
\be\label{WWhomo}
 \partial_t (\eta, \psi) =  X_\gamma (\Pi_0^\bot \eta, \psi)  
\ee
where $\Pi_0^\perp$ is the  $ L^2 $-projector onto the space of functions with zero average. 
For simplicity of notation we shall not distinguish between \eqref{WWhomo}
and \eqref{eq:etapsi},  which are equivalent via the isometric 
isomorphism $ \Pi_0^\bot $
between $ \dot L^2(\T;\R) $ and $ L^2_0(\T;\R) $. 
System \eqref{WWhomo} is the Hamiltonian system 
as in \eqref{HamWW} 
defined on (a dense subspace of) $ \dot L^2_r \times \dot L^2_r $ generated   by 
the Hamiltonian 
$ H_\gamma ( \Pi_0^\bot \eta, \psi )  $, with $ H_\gamma  $   in \eqref{H.gamma},
computing the $\dot L^2_r$-gradients $(\grad_\eta H_\gamma, \grad_\psi H_\gamma) $
with respect to the scalar product  $\langle \cdot , \cdot \rangle_{\dot L^2_r}$ in \eqref{scpr12hom} 
and regarding the Poisson tensor $ J_\gamma $ in 
\eqref{HamWW}  as a linear operator acting in  
$ \dot L^2_r\times \dot L^2_r$. 
We shall not insist more on this detail. 
\\[1mm]
\noindent{\bf Wahl\'en variables.} 
The variables $(\eta, \psi)$
are not Darboux coordinates, since the   Poisson tensor $ J_\gamma $ in 
\eqref{HamWW}  is not the canonical  one when $\gamma \neq 0$.
Wahl\'en noted in  \cite{Wh} that, introducing the variable 
$ \zeta:= \psi -   \tfrac{\gamma}{2} \partial_x^{-1} \eta $, 
 the coordinates $(\eta, \zeta)$  are canonical coordinates. 
Precisely,  under the linear change of variables
\be \label{Whalen}
\vect{\eta}{ \psi} = \cW \vect{\eta}{\zeta}\, , \quad \cW := 
\begin{pmatrix} \uno & 0\\  \frac{\gamma}{2} \pa_x^{-1} & \uno \end{pmatrix} \, , \quad
\cW^{-1} = \begin{pmatrix} \uno & 0\\ - \frac{\gamma}{2} \pa_x^{-1} & \uno \end{pmatrix}  \, , 
\ee
the Poisson tensor  $ J_\gamma $ becomes the standard one, 
$$
\cW^{-1} J_\gamma (\cW^{-1})^\top= J \, , \quad J:= \begin{pmatrix} 0 & \uno \\ -\uno& 0\end{pmatrix} \, , 
$$
and the Hamiltonian system \eqref{HamWW} assumes the standard Darboux  form
\be \label{HamWW2}
\pa_{t}\vect{\eta}{ \zeta} = J \vect{\nabla_{\eta} \mH_\gamma( \eta, \zeta)}{\nabla_{\zeta}  \mH_\gamma( \eta, \zeta)}, \quad \mH_\gamma( \eta, \zeta):=H_\gamma \big( \eta, \zeta+ \frac{\gamma}{2} \pa_x^{-1}\eta \big) \, .
\ee
Note that the new Hamiltonian $ \mH_\gamma$ is still translation invariant
so  is its Hamiltonian vector field. 
\\[1mm]
{\bf Linearized equation at the equilibrium.} 
The linearized equations  \eqref{HamWW2} at the equilibrium $ (\eta,\zeta)=(0,0)$ are
obtained by conjugating the linearized equations \eqref{eq:etapsi} at $ (\eta,\psi)=(0,0)$, namely 
\be\label{eq:lin}
\pa_t \vect{  \eta}{  \zeta} \!\! = 	\!
\cW^{-1} \! \begin{pmatrix} 0 & \!\!\!\! G(0) \\ -(g+ \kappa D^2) & \!\!\!\! \gamma G(0) \pa_x^{-1} \end{pmatrix} \! \! \cW \! = \!
\begin{pmatrix}\frac{ \gamma}{2} G(0) \pa_x^{-1} & \!\!\! G(0) \\ -(g+ \kappa D^2 + \frac{\gamma^2}{4} G(0)D^{-2} ) &\!\!\! \frac{ \gamma}{2}  \pa_x^{-1} G(0)\end{pmatrix} \!\!\vect{ \eta}{  \zeta} 
\ee
where  $ D := \frac{1}{\ii} \pa_x $ and the   Dirichlet-Neumann operator  
$G(0)$ at the flat surface $\eta = 0$ is the Fourier multiplier 
with  symbol
 \be\label{Gxi} 
  \tG(\xi):= 
  \begin{cases} 
  \xi\tanh(\tth \xi)& 0<\tth<+\infty\\ 
 |\xi| & \tth=+\infty \, . 
 \end{cases}
 \ee 
We diagonalize 
system \eqref{eq:lin} introducing the complex variables 
\be\label{defMM-1}
\begin{aligned}
& \qquad \qquad \qquad \qquad\qquad  \vect{ u}{ {\bar u} }:= \mM^{-1} \vect{ \eta}{ \zeta} \, , \\
& \ \mM:= \frac{1}{\sqrt{2}} \begin{pmatrix} M(D) & M(D)\\ - \ii M^{-1}(D) & \ii M^{-1} (D) \end{pmatrix}, \quad
 \mM^{-1}:= \frac{1}{\sqrt{2}} \begin{pmatrix} M(D)^{-1} & \ii M(D)\\ M^{-1}(D) & -\ii M(D) \end{pmatrix} \, , 
\end{aligned}
\ee
where $ M(D) $ is  the Fourier multiplier
\be\label{defMD}
M(D):= \left( \frac{G(0)}{ g+ \kappa D^2 + \frac{\gamma^2}{4} G(0)D^{-2} } \right)^{\frac{1}{4}} \, . 
\ee
A direct computation (cfr. Section 2.2. in \cite{BFM1}), 
using the identities
\be\label{omeghino0}
M(D) \big( g+ \kappa D^2 + \frac{\gamma^2}{4} G(0) D^{-2} \big)M(D)= \omega(D)= M^{-1}(D) G(0) M^{-1}(D)  
\ee
where $\omega(D)$ is the Fourier multiplier with symbol 
\be \label{omegaxi}
 \omega(\xi):=\sqrt{ \tG(\xi)
 \Big( g+ \kappa \xi^2 + \frac{\gamma^2}{4} \frac{\tG(\xi)}{\xi^{2}} \Big)} 
 \ee
(with  $ \tG(\xi) $  defined in  \eqref{Gxi})
 shows that the variables $(u, \bar u)$ in 
 \eqref{defMM-1}
 solve the diagonal linear system 
\be \label{diaglin}
\pa_t\vect{u}{ {\bar u} }= -\ii \vOmega(D)\vect{u}{{\bar u}}, \quad \vOmega(D):=\begin{pmatrix}\Omega(D) & 0 \\ 0 & -\bar \Omega(D) \end{pmatrix}
\ee
where
\be \label{relation} 
\Omega(D):= \omega(D) + \ii \frac{ \gamma}{2} G(0) \pa_x^{-1}, \quad \bar \Omega(D):= \omega(D) - \ii \frac{ \gamma}{2} G(0) \pa_x^{-1} \, . 
\ee
The real-to-real  
system \eqref{diaglin} amounts to the scalar equation 
$$
\pa_t u = - \im \Omega(D) u \, , \quad u(x) = 
\frac{1}{\sqrt {2\pi}} \sum_{j \in \Z\setminus \{0\}} u_j \,  e^{\im j x} \ ,
$$
which, written  in Fourier basis,  decouples in infinitely many harmonic oscillators
$$
\pa_t u_j = - \im \Omega_j(\kappa) u_j \, , \quad j \in \Z \setminus \{0\} \, , 
$$
where 
\be \label{omegonej}
\Omega_j(\kappa):= \omega_j(\kappa) +  \frac{ \gamma}{2} \frac{\tG(j)}{j} \, , 
\qquad 
\omega_j(\kappa):=\sqrt{ \tG(j) \Big(g+ \kappa j^2 + \frac{\gamma^2}{4} \frac{\tG(j)}{j^{2}}\Big)} \, . 
\ee
Note that the map $ j\rightarrow  \Omega_j(\kappa)$ is not even because of the vorticity term $ \frac{ \gamma}{2} \frac{\tG(j)}{j}$ which is odd.

A fundamental property that we prove in Appendix \ref{sec:non-res} is that  
the linear frequencies $\{\Omega_j(\kappa)\}_{j \in \Z\setminus \{0\}}$  
satisfy  the non-resonance conditions of Theorem \ref{nonresfin0}.  
Thus one can think to implement a  Birkhoff normal form procedure. 
Since the water waves equations  \eqref{eq:etapsi}  are a quasilinear  system 
 we first paralinearize them.

\paragraph{Paralinearization of the water waves equations with constant vorticity.}  
We denote the horizontal and vertical components of the velocity field 
at the free interface by
\begin{align} 
\label{def:V}
& V =  V (\eta, \psi) :=  (\pa_x \Phi) (x, \eta(x)) = \psi_x - \eta_x B \,,
\\
\label{form-of-B}
& B =  B(\eta, \psi) := (\pa_y \Phi) (x, \eta(x)) =  \frac{G(\eta) \psi + \eta_x \psi_x}{ 1 + \eta_x^2} \, . 
\end{align}

\begin{lemma}{\bf (Water-waves equations in Zakharov-Craig-Sulem  
variables)}\label{laprimapara} 
Let  $N \in \N_0 $ and $ \varrho  \geq 0$. 
For any $K \in \N_0$ there exist $ s_0 ,r > 0 $ such that,
if $ (\eta, \psi) \in B^K_{s_0} (I;r)  $ solves \eqref{eq:etapsi}, then 
\begin{align}   
\partial_t \eta  & =G(0)\psi + \Opbw{-B(\eta,\psi;x)|\x|- \ii V_\gamma(\eta,\psi;x) \xi +a_0(\eta, \psi; x, \xi )}\eta+  \Opbw{ b_{-1}(\eta; x, \xi)} \psi\notag  \\
& \quad + R_1(\eta)\psi + R'_1(\eta,\psi)\eta \, , \label{eq:1n}\\
\partial_t\psi  & = -(g+\kappa D^{2})\eta +   \gamma  G(0) \pa_x^{-1}  \psi 
 +\Opbw{-\kappa {\mathtt f}(\eta;x) \x^{2}- B^2(\eta,\psi;x) 
 | \x|+c_0(\eta, \psi;x, \xi)} \eta \notag \\ 
 & \quad + 
 \Opbw{ B(\eta,\psi;x) |\x |-\ii V_\gamma(\eta,\psi;x) \x + d_0(\eta, \psi; x,\xi )} \psi   
   + R_2(\eta,\psi)\psi + R'_2(\eta,\psi)\eta \label{eq:2n}
\end{align}
where
\begin{itemize} 
\item  $V_\gamma (\eta,\psi;x) := V(\eta,\psi;x) - \gamma \eta (x) $ is a function in $ \Sigma {\cal F}^\R_{K, 0,1}[r,N]$ as well as the functions 
 $ V, B $ defined in \eqref{def:V}-\eqref{form-of-B}; 
\item  $ a_0,\, c_0,\, d_0$ are 
 symbols in $ \Sigma\Gamma^{0}_{K,0,1}[r,N]$ and 
 $b_{-1}(\eta;x,\xi) $ is a symbol  in $ \Sigma\Gamma^{-1}_{K,0,1}[r,N]$ satisfying \eqref{realetoreale};
\item  the function  $ {\mathtt f}(\eta; x) := (1+\eta_{x}^{2}(x))^{-\frac{3}{2}}-1$ belongs to $ \Sigma \mF^{\R}_{K,0,2}[ r, N]$;
\item  $R_1$, $R_{1}'$, $R_2$, $R_{2}'$ are real  smoothing  operators in 
$\Sigma\mathcal{R}^{-\varrho}_{K,0,1}[r,N]$.  
\end{itemize}
Moreover \eqref{eq:1n}--\eqref{eq:2n} are the \emph{Hamiltonian} system \eqref{HamWW}.
\end{lemma}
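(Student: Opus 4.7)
\begin{pf}[Proof plan]
The plan is to paralinearize each term of the right hand side of \eqref{eq:etapsi} separately, relying on the paralinearization of the Dirichlet-Neumann operator already proved in \cite{BD}, Bony's paraproduct formula (Lemma \ref{bony}), and the composition results of Proposition \ref{teoremadicomposizione}. The vorticity contributions (the $\gamma$-dependent terms) are new compared to \cite{BD} but are lower order and algebraic in nature, so they can be handled by direct paralinearization.

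First, I would paralinearize the Dirichlet-Neumann term $G(\eta)\psi$. By the main paralinearization result of \cite{BD} (applied at the regularity level demanded by our classes $\Sigma\Gamma^m_{K,0,1}[r,N]$ and $\Sigma\mathcal{R}^{-\varrho}_{K,0,1}[r,N]$), one has
\[
G(\eta)\psi = \Opbw{-B(\eta,\psi;x)|\xi| - \im V(\eta,\psi;x)\xi + \wt a_0(\eta,\psi;x,\xi)}\eta + \Opbw{G(0) + \wt b_{-1}(\eta;x,\xi)}\psi + \wt R(\eta,\psi)
\]
with $\wt a_0 \in \Sigma\Gamma^0_{K,0,1}[r,N]$, $\wt b_{-1} \in \Sigma\Gamma^{-1}_{K,0,1}[r,N]$ and $\wt R \in \Sigma\mathcal{R}^{-\varrho}_{K,0,1}[r,N]$. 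The key identities are the standard shape-derivative formulas which express $G(\eta)\psi$ in terms of $B$ and $V$ modulo smoothing remainders. For the vorticity term $\gamma\eta\eta_x = \frac{\gamma}{2}\pa_x(\eta^2)$ I apply Bony's decomposition (Lemma \ref{bony}) to paralinearize the product $\eta \cdot \eta_x$, obtaining a contribution $\Opbw{-\im \gamma\eta(x)\xi}\eta$ plus a smoothing remainder; this is exactly the $-\gamma\eta$ correction that turns $V$ into $V_\gamma = V - \gamma\eta$. This yields \eqref{eq:1n}.

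For \eqref{eq:2n} I would treat the terms in the second equation of \eqref{eq:etapsi} one by one:
(i) The linear terms $-g\eta$ and $-\tfrac12\psi_x^2$: the former is trivial, the latter expands via Bony as $-\Opbw{\psi_x}\psi_x$ plus smoothing, and using the identity $V = \psi_x - \eta_x B$ I rewrite the symbol so that the top-order contribution becomes $-\im V_\gamma\xi$ acting on $\psi$ plus a term of the form $\Opbw{b_2}\eta$ that combines with the other quadratic pieces.
(ii) The term $\tfrac12(\eta_x\psi_x + G(\eta)\psi)^2/(1+\eta_x^2)$: using \eqref{form-of-B} this equals $\tfrac12 B^2(1+\eta_x^2)$; Bony's formula and composition rules (Proposition \ref{teoremadicomposizione}) yield the contribution $\Opbw{B|\xi|}\psi - \Opbw{B^2|\xi|}\eta$ modulo lower order symbols in $\Sigma\Gamma^0_{K,0,1}[r,N]$ and smoothing remainders.
(iii) The capillary term $\kappa\pa_x\bigl[\eta_x/(1+\eta_x^2)^{1/2}\bigr]$: writing $\eta_x/(1+\eta_x^2)^{1/2} = \eta_x + (\eta_x/(1+\eta_x^2)^{1/2} - \eta_x)$ and using $\tth(\eta;x)=(1+\eta_x^2)^{-3/2}-1 \in \Sigma\mathcal{F}^\R_{K,0,2}[r,N]$ (which is in degree $\geq 2$), I paralinearize the composition and apply Proposition \ref{teoremadicomposizione} to get $-\Opbw{\kappa\xi^2 + \kappa\tth(\eta;x)\xi^2}\eta$ plus symbols of lower order.
(iv) The vorticity terms $\gamma\eta\psi_x$ and $\gamma\pa_x^{-1}G(\eta)\psi$: the first I paralinearize as $\Opbw{-\im\gamma\eta(x)\xi}\psi + \Opbw{\gamma\psi_x}\eta + \text{smoothing}$ via Bony; the second combines the already paralinearized $G(\eta)\psi$ with the Fourier multiplier $\gamma\pa_x^{-1}$. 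Their leading contributions rearrange to produce the $-\im V_\gamma\xi$ symbol on $\psi$ together with the $\gamma G(0)\pa_x^{-1}\psi$ linear term explicitly displayed in \eqref{eq:2n}.

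The main obstacle is the bookkeeping of the top-order symbols: after the above steps I obtain a system of the form \eqref{eq:1n}-\eqref{eq:2n} with a priori symbols that have to be carefully combined so that in both equations the coefficient of the transport-type operator on $\psi$ and on $\eta$ reads as $-\im V_\gamma\xi$, and the normal-form coefficient of $|\xi|$ on $\eta$ in the first equation (resp.\ on $\psi$ in the second) is exactly $-B$ (resp.\ $+B$), with the mixed terms $-B^2|\xi|$ and $-\kappa\tth\xi^2$ appearing only as coefficients of $\eta$ in \eqref{eq:2n}. The correct combinations are forced by two facts already established at the linear level: $\psi_x - \eta_x B = V$ and $B^2 - 2\eta_x B\psi_x + \eta_x^2 \psi_x^2$ simplifies nicely modulo smoothing. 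Finally, the Hamiltonian character of the paralinearized system is inherited from \eqref{HamWW}: since the right-hand side of \eqref{eq:etapsi} is $X_{H_\gamma}$ and the paralinearization is an identity modulo symbols and smoothing remainders in the stated classes, no identification step is needed.
\end{pf}
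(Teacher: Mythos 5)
Your proposal is correct and follows essentially the same route as the paper's proof: the Dirichlet--Neumann paralinearization from \cite{BD}, Bony paraproducts for the quadratic and vorticity terms, the composition calculus of Proposition \ref{teoremadicomposizione}, and the final bookkeeping of top-order symbols via $V=\psi_x-\eta_x B$. The only quibble is a sign slip in your treatment of $\gamma\eta\eta_x$: Bony's decomposition gives $\gamma\eta\eta_x=\Opbw{\im\gamma\eta\,\xi+\tfrac{\gamma}{2}\eta_x}\eta$ plus smoothing, i.e.\ the contribution is $+\im\gamma\eta\xi$ (not $-\im\gamma\eta\xi$), which is what turns $-\im V\xi$ into $-\im V_\gamma\xi$ with $V_\gamma=V-\gamma\eta$.
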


\begin{proof}
By Proposition 7.4 of \cite{BD},  the function 
 $B $ defined in \eqref{form-of-B} belongs to $ \Sigma {\cal F}^\R_{K, 0,1}[r,N]$, as well as  
 the function $ V $ in \eqref{def:V} and $ V_\gamma = V - \gamma \eta $.

\noindent
{\sc Paralinearization of the first equation in \eqref{eq:etapsi}.}
We  use the paralinearization of the Dirichlet-Neumann operator $ G(\eta)\psi$ proved in \cite{BD}. 
By Propositions 7.5 and 8.3 in \cite{BD}  where $ \omega := \psi- \opbw(B)\eta $ 
is the ``good unknown'' of Alinhac, using 
Propositions \ref{teoremadicomposizione} and \ref{composizioniTOTALIs}-($i$), 
the second bullet below 
\eqref{espansione2}, 
and noting that $\xi \tanh(\tth \xi) -  |\xi| \in \wt\Gamma^{-\varrho}_0$, for any $ \varrho >0 $,  
we get 
\begin{align}
G(\eta)\psi & = G(0) \big( \psi- \opbw(B) \eta\big) + \opbw\big(-\ii V \xi + \breve a_0\big) \eta + \opbw( b_{-1}) \big(\psi-  \opbw(B) \eta\big) 	\notag \\
& \quad + R'(\eta,\psi)\eta+R(\eta)\psi \notag \\
& = G(0)\psi +\Opbw{-B|\x|- \ii V \xi +  a_0' } \eta
+ \opbw( b_{-1})\psi +    R(\eta)\psi + R'(\eta,\psi)\eta \label{DNPara}
\end{align}
where $ \breve a_0,  a_0'$ are symbols in $ \Sigma \Gamma^0_{K,0,1}[r,N]$, 
$  b_{-1}$ is a symbol in $ \Gamma^{-1}_{K,0,1}[r,N]$
depending only on $ \eta $, and
$R(\eta)$, $R'(\eta, \psi)$ are smoothing operators in $\Sigma\mathcal{R}^{-\varrho}_{K,0,1}[r,N]$.

We now paralinearize the term with the vorticity. Using Lemma \ref{bony}, 
Proposition \ref{teoremadicomposizione}, the identity $ \eta_x= \Opbw{\ii \xi} \eta$ and $(i)$ of Proposition \ref{composizioniTOTALI}  we get 
\be\label{ga1}
 \eta \eta_x 
  = \opbw ( \eta ) \opbw(\ii \xi)\eta + \opbw( \eta_x) \eta + R( \eta) \eta
  = \Opbw{ \ii \eta\xi +  \tfrac12 \eta_x}\eta  + R(\eta)\eta 
\ee
where $ R(\eta) $ is a homogeneous smoothing operator in $ \widetilde \mR^{-\vr}_1$. Then 
\eqref{DNPara} and \eqref{ga1} imply \eqref{eq:1n}
with symbol $a_0:= a_0' + \frac{\gamma}{2} \eta_x $ in $ \Sigma \Gamma^0_{K,0,1}[r,N] $. Furthermore, since \eqref{eq:1n} is a real equation we may assume that $a_0$ and $b_{-1}$ satisfy \eqref{realetoreale} eventually replacing them with $\tfrac12 (a_0+\bar a_0^\vee)$ and $\tfrac12 (b_{-1}+\bar b_{-1}^\vee)$ and replacing the smoothing remainders with $ \tfrac12(R_1+\bar R_1)$ and $\tfrac12(R'_1+\bar R'_1)$ .
\\[1mm]
{\sc Paralinearization of the second equation  in \eqref{eq:etapsi}.}
By Lemma \ref{bony} and Proposition \ref{teoremadicomposizione} we get 
\be\label{interm1}
-\frac12 \psi_x^2 = - \Opbw{\ii \psi_x \xi - \tfrac12 \psi_{xx}} \psi+ R(\psi)\psi 
\ee
where  $ R(\psi) $  is a 
smoothing operator in $ \widetilde \mR^{-\vr}_1$.
Next, recalling \eqref{form-of-B}
and using Lemma \ref{bony},  we get 
\begin{align}\label{interm}
 \frac12 \frac{ (G(\eta)\psi+ \eta_x\psi_x)^2}{(1+\eta_x^2)} 
 & = \frac12 (1+\eta_x^2) B^2 \\
& = \frac12 \Opbw{(1+\eta_x^2)B} B +
\frac12\Opbw{B} \big[ (1+\eta_x^2)B \big]
+ R(\eta,\psi)\psi+ R'(\eta,\psi)\eta  \notag 
\end{align}
where $R(\eta, \psi)$, $R'(\eta, \psi)$ are smoothing operators in $\Sigma\mathcal{R}^{-\varrho}_{K,0,1}[r,N]$.
Consider the second term in the right hand side of \eqref{interm}. 
Applying Lemma \ref{bony},  Propositions \ref{teoremadicomposizione}
and \ref{composizioniTOTALI} (and since $\Opbw{B}[1] $ is a constant which we neglect because we consider \eqref{eq:etapsi} posed in homogeneous spaces) we get \be\label{Bunopiu} 
 \tfrac12 \opbw(B) \big[ (1+\eta_x^2)B \big] 
 =  \tfrac12  \Opbw{ (1+\eta_x^2)B}B +   \ii \Opbw{B^2\eta_x \xi+ \breve c_0}\eta  +R(\eta,\psi)\psi+R'(\eta,\psi)\eta 
\ee
where 
$ \breve c_0 $ is a symbol in $ \Sigma \Gamma^0_{K,0,1}[r,N]$ and 
$ R(\eta,\psi)$, $ R'(\eta,\psi) $ 
are smoothing operators in $  \Sigma \mR^{-\vr}_{K,0,1}[r,N]$.
 Then  by \eqref{interm}-\eqref{Bunopiu} and \eqref{form-of-B} we deduce that 
\be\label{intermediatepww}
\frac12 (1+\eta_x^2)B^2  = 
  \Opbw{ B} [ G(\eta)\psi+\eta_x\psi_x ]
 - \ii \Opbw{ B^2\eta_x\xi + \breve c_0} \eta+ R(\eta,\psi)\psi+ R'(\eta,\psi)\eta \, . 
\ee
In order to expand this term 
we first write 
\be\label{passint2ww}
\eta_x\psi_x =  \Opbw{ \ii \eta_x \xi - \tfrac12 \eta_{xx}} \psi  + \Opbw{ \ii \psi_x \xi 
- \tfrac12 \psi_{xx}}  \eta + R(\eta) \psi +
R'(\psi ) \eta 
\ee
where $ R(\eta), R'(\psi )  $ are smoothing homogeneous operators in 
$  \widetilde \mR^{-\vr}_1 $. 
Finally, 
using \eqref{DNPara}, \eqref{passint2ww},  Proposition \ref{composizioniTOTALI}, 
and exploiting the explicit form \eqref{def:V} of the function $ V $,  
 we conclude that  \eqref{intermediatepww} is equal to 
\begin{align}
\eqref{interm} 
& = \opbw( -B^2 |\xi|+ \breve c_0)\eta+\opbw(B|\xi|+ \ii B\eta_x\xi+ \breve d_0)\psi+ R(\eta,\psi)\psi+ R'(\eta,\psi)\eta \,  \label{interm2}
\end{align}
where $\breve c_0, \breve d_0 $ are symbols   in $ \Sigma \Gamma^0_{K,0,1}[r,N]$ and 
$R(\eta,\psi), R'(\eta,\psi) $ are smoothing operators in $  \Sigma \mR^{-\vr}_{K,0,1}[r,N]$.

Next we paralinearize the capillary term 
$$
\kappa \pa_x \Big[ \frac{\eta_x}{(1+\eta_x^2)^{1/2}}\Big] = \kappa\pa_x F(\eta_x) \, , \quad F(t):= \frac{t}{(1+t^2)^{1/2}} \, .
$$
The Bony paralinearization formula for the composition (Lemma 3.19 in \cite{BD})  and Proposition \ref{teoremadicomposizione} imply
\be\label{capilterm}
\begin{aligned}
 \pa_x  F(\eta_x)
  & = \Opbw{\ii \xi}  \Opbw{F'(\eta_x)}\eta_x+ R(\eta) \eta
  = \opbw \Big( - (1+\eta_x^2)^{-\frac{3}{2}} \xi^2+ c_{0}' \Big)\eta + R(\eta)\eta \\
  & = - D^2 \eta   - \Opbw{\mathtt f(\eta;x) \xi^2 + c_0'}\eta + R(\eta)\eta 
 \end{aligned}
\ee
where $ c_0' $ is symbol  in $ \Sigma \Gamma^0_{K,0,1}[r,N] $, 
 the function $ {\mathtt f}(\eta; x) := (1+\eta_{x}^{2}(x))^{-\frac{3}{2}}-1$ belongs to $ \Sigma \mF^{\R}_{K,0,2}[ r, N]$ 
and $R(\eta) $ is a smoothing operator in $ \Sigma \mR^{-\vr}_{K,0,1}[r,N]$. 

Next, by Lemma  \ref{bony}  and Proposition \ref{teoremadicomposizione}  we get 
\be\label{vort3} 
 \eta \psi_x 
  = \Opbw { \ii \eta\xi -\tfrac12  \eta_x} \psi +\Opbw { \psi_x } \eta +  R ( \eta) \psi+ R'(\psi) \eta
\ee
where $ R(\eta)$, $ R'(\psi) $ are homogeneous smoothing operators in 
$  \widetilde \mR^{-\vr}_1$.

Finally using \eqref{DNPara}, Propositions \ref{teoremadicomposizione} and  \ref{composizioniTOTALI}-$(i)$, and that  $ \pa_x^{-1} =  \Opbw { \frac{1}{\ii \xi}} $ we get 
\be\label{inter5}
 \pa_x^{-1} G(\eta) \psi
 = 
  G(0) \pa_x^{-1}  \psi  + \Opbw { c_{-2} } \psi + \Opbw{ d_{0}'} \eta+ R(\eta,\psi)\eta+ R'(\eta, \psi)\psi 
 \ee
 where $ c_{-2} $ is a symbol in $ \Sigma\Gamma^{-2}_{K,0,1}[r,N]$,  
the symbol  $ d_{0}' $ is in $  \Sigma \Gamma^0_{K,0,1}[ r,N]$ and $ R,R' $ are smoothing operators in
 $ \Sigma \mR^{-\vr}_{K,0,1}[r,N]$. 
 
In conclusion, collecting \eqref{interm1}, \eqref{interm2},  \eqref{capilterm},  \eqref{vort3}, \eqref{inter5}  and using the explicit form of  $ V $ in \eqref{def:V} 
we deduce the second equation \eqref{eq:2n} with symbols  
$c_0 := \breve c_0 -  \kappa  c_0' + \gamma \psi_x + \gamma d_0'$ and 
$d_0 := \frac12 \psi_{xx} + \breve d_0 - \frac{\gamma}{2}\eta_x + \gamma 
c_{-2}$ in 
$ \Sigma \Gamma^0_{K,0,1}[r,N]$. Since \eqref{eq:2n} is a real equation we may assume that $c_0$ and $d_{0}$ satisfy \eqref{realetoreale} arguing as for the first equation.
\end{proof}

\begin{remark}\label{precisa}
The symbols $ a_0,\, c_0,\, d_0$  in \eqref{eq:1n}--\eqref{eq:2n} can be explicitly computed in terms of $V$ and $B$ (e.g. see \cite{BFP}). On the other hand the symbol  $b_{-1}(\eta;x,\xi)$ is expected to be of order $ -\infty$ but in \cite{BD} 
it has been estimated as a symbol of order $-1$ only. 
\end{remark}

We write \eqref{eq:1n}-\eqref{eq:2n} as the system 
\begin{align}
\pa_t \vect{\eta}{\psi}= &\begin{pmatrix} 0 & G(0) \\ -(g+ \kappa D^2) &   \gamma G(0) \pa_x^{-1} \end{pmatrix} \vect{\eta}{\psi} \label{ParaWW} \\
& + \Opbw {\begin{bmatrix} -B |\x|- \ii V_\gamma \xi  & 0  \\  - \kappa {\mathtt f} \x^{2}- B^2 | \x| &  B |\x |-\ii V_\gamma \x\end{bmatrix} + \begin{bmatrix} a_0  & b_{-1}   \\ c_0  &  d_0 \end{bmatrix}}\vect{\eta}{\psi}+ R(\eta,\psi) \vect{\eta}{\psi}. \notag
\end{align}

\paragraph{Wahl\'en coordinates.} 
We now transform  system \eqref{ParaWW}  in the Wahl\'en coordinates $(\eta, \zeta)$ defined in \eqref{Whalen}. 

\begin{lemma}
{\bf (Water-waves equations in Wahl\'en variables)}
Let  $N \in \N_0 $ and $ \varrho  \geq 0$. 
For any $K \in \N_0$ there exist $ s_0 ,r > 0 $ such that,  
if $ (\eta, \psi) \in B^K_{s_0} (I;r)  $ solves \eqref{eq:etapsi}
then $ (\eta, \zeta) = \cW^{-1} (\eta, \psi) $ defined in \eqref{Whalen} solves 
\begin{align}
\pa_t \vect{\eta}{ \zeta} =& \begin{pmatrix}\frac{ \gamma}{2} G(0) \pa_x^{-1} & G(0) \\ -(g+ \kappa D^2 + \frac{\gamma^2}{4}G(0) D^{-2} ) & \frac{ \gamma}{2} G(0) \pa_x^{-1}\end{pmatrix} \vect{\eta}{ \zeta} \notag  \\
&+  \Opbw {\begin{bmatrix} -B^{(1)}(\eta,\zeta;x)|\x|- \ii V^{(1)}(\eta,\zeta;x)  \xi  & 0  \\ -\kappa {\mathtt f}(\eta;x) \x^{2}- [B^{(1)}(\eta,\zeta;x)]^2| \x| &  B^{(1)}(\eta,\zeta;x)|\x |-\ii V^{(1)}(\eta,\zeta;x) \x\end{bmatrix}}\vect{\eta}{\zeta} \notag \\
&+ \Opbw{\begin{bmatrix}  a_0^{(1)}(\eta,\zeta;x, \xi)  &  b_{-1}(\eta;x, \xi)  \\  c_0^{(1)}(\eta,\zeta; x, \xi) &   d^{(1)}_0(\eta,\zeta;x, \xi)\end{bmatrix}}\vect{\eta}{\zeta}+ R(\eta,\zeta) \vect{\eta}{\zeta}  \label{WWW}
\end{align}
where
\begin{itemize}
\item  $ B^{(1)}(\eta, \zeta; x):=  B( \cW (\eta, \zeta);x)$  and  $V^{(1)}(\eta, \zeta; x):=V_\gamma(\cW (\eta, \zeta) ; x) $ are functions in $\Sigma {\cal F}^\R_{K, 0,1}[r,N]$, 
$ {\mathtt f}(\eta; x)$ is the function in  $ \Sigma \mF^{\R}_{K,0,2}[ r, N]$ defined in  Lemma \ref{laprimapara};
\item  $  a_0^{(1)}(\eta, \zeta; x, \xi)$, $ c_0^{(1)}(\eta, \zeta; x, \xi)$, 
$d_0^{(1)}(\eta, \zeta; x, \xi) $ are symbols in $  \Sigma \Gamma^0_{K,0,1}[r,N]$ 
satisfying \eqref{realetoreale}, and 
the symbol $   b_{-1}(\eta; x, \xi) $   in $ \Sigma\Gamma^{-1}_{K,0,1}[r,N]$
 is defined in  Lemma \ref{laprimapara};
\item  $R(\eta,\zeta) $ 
is a matrix of real 
smoothing operators in $ \Sigma \mR^{-\vr}_{K,0,1}[r,N]\otimes \mM_2 (\C) $. 
\end{itemize}
Moreover  system \eqref{WWW} is the \emph{Hamiltonian} system \eqref{HamWW2}.
\end{lemma}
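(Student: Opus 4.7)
The plan is to derive the equation for $(\eta,\zeta)=\cW^{-1}(\eta,\psi)$ directly from the paralinearized system \eqref{ParaWW} by computing
$$
\pa_t\vect{\eta}{\zeta}=\cW^{-1}\pa_t\vect{\eta}{\psi}=\cW^{-1}\Big[\cA\,\cW\vect{\eta}{\zeta}+\opbw(A(\eta,\psi;x,\xi))\cW\vect{\eta}{\zeta}+R(\eta,\psi)\cW\vect{\eta}{\zeta}\Big]
$$
where $\cA$ denotes the constant-coefficient matrix operator in \eqref{ParaWW} and $A(\eta,\psi;x,\xi)$ is the total matrix of symbols appearing there. The conjugation of the linear part $\cW^{-1}\cA\,\cW$ reproduces exactly the constant-coefficient matrix in \eqref{eq:lin}, as already observed in the discussion leading to \eqref{HamWW2}; this accounts for the first matrix on the right hand side of \eqref{WWW}. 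For the nonlinear contributions, I would substitute $\psi=\zeta+\tfrac{\gamma}{2}\pa_x^{-1}\eta$ inside $B$, $V_\gamma$, $a_0$, $b_{-1}$, $c_0$, $d_0$, and appeal to composition item $(iv)$ of Proposition \ref{composizioniTOTALI} (applied with $\bM_0 = \id$ and the bounded map $M(U)U$ given by $\cW-\id$, which is a Fourier multiplier of order $-1$ acting linearly) to conclude that the resulting functions $B^{(1)}(\eta,\zeta;x):=B(\cW(\eta,\zeta);x)$ and $V^{(1)}(\eta,\zeta;x):=V_\gamma(\cW(\eta,\zeta);x)$ are still in $\Sigma\cF^\R_{K,0,1}[r,N]$, and similarly for the symbols $a_0,c_0,d_0$. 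The function $\mathtt f(\eta;x)$ and the symbol $b_{-1}(\eta;x,\xi)$ depend only on $\eta$ and are therefore unchanged.

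The second step is to conjugate the paradifferential matrix by $\cW$, i.e. to analyze
$$
\cW^{-1}\opbw\Big(\sm{\alpha_1}{\beta_1}{\gamma_1}{\delta_1}\Big)\cW,
$$
where $\alpha_1,\beta_1,\gamma_1,\delta_1$ denote the entries of $A(\eta,\psi;x,\xi)$ after the substitution above. Since $\tfrac{\gamma}{2}\pa_x^{-1}=\opbw(\tfrac{\gamma}{2\ii\xi})$ is a constant-coefficient paradifferential operator, the composition theorem (Proposition \ref{teoremadicomposizione}) and the second bullet after Definition \ref{def:as.ex} give
$$
\pa_x^{-1}\circ\opbw(a)\circ\pa_x = \opbw(a) + \opbw(r_{-1}) + \text{smoothing},
$$
with $r_{-1}$ of order $\mathrm{ord}(a)-1$; likewise the off-diagonal commutators only produce symbols of strictly lower order than the leading ones already present (in particular, the corrections to $-B|\xi|-\ii V_\gamma\xi$, $-\kappa\mathtt f\xi^2-B^2|\xi|$ and $B|\xi|-\ii V_\gamma\xi$ stay of order $\leq 0$), together with smoothing remainders. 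All these lower-order corrections are absorbed into the new symbols $a_0^{(1)}$, $c_0^{(1)}$, $d_0^{(1)}$ in the class $\Sigma\Gamma^0_{K,0,1}[r,N]$, while the smoothing contributions are put into $R(\eta,\zeta)\in\Sigma\cR^{-\varrho}_{K,0,1}[r,N]\otimes\mM_2(\C)$ using Proposition \ref{composizioniTOTALI}-$(i)$ and Lemma \ref{mappabonetta}. The reality condition \eqref{realetoreale} for the new symbols is preserved because the conjugation only mixes real symbols with the real Fourier multiplier $\tfrac{\gamma}{2\ii\xi}$; as in the proof of Lemma \ref{laprimapara} one may symmetrize to guarantee it.

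The remaining point is the Hamiltonian character of \eqref{WWW}. This is \emph{not} proved by manipulating the paradifferential formula, since paradifferential truncation breaks the Hamiltonian structure; rather it follows tautologically, because \eqref{ParaWW} is nothing but the exact Hamiltonian system \eqref{HamWW} (by Lemma \ref{laprimapara}) and $\cW$ is the linear Darboux change of variables that turns $J_\gamma$ into the canonical $J$, as recorded in \eqref{HamWW2}. Thus \eqref{WWW} coincides with \eqref{HamWW2} as a Hamiltonian system, with Hamiltonian $\mH_\gamma(\eta,\zeta)=H_\gamma(\eta,\zeta+\tfrac{\gamma}{2}\pa_x^{-1}\eta)$.

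The main obstacle I anticipate is purely bookkeeping: keeping track of all the lower-order paradifferential corrections produced by the two nontrivial commutators $[\pa_x^{-1},\opbw(a)]$ and $[\opbw(a),\pa_x]$ that arise from conjugating the four entries of the symbol matrix through $\cW$, and checking that none of them contaminates the three leading symbols $-B^{(1)}|\xi|-\ii V^{(1)}\xi$, $-\kappa\mathtt f\xi^2-(B^{(1)})^2|\xi|$, $B^{(1)}|\xi|-\ii V^{(1)}\xi$, respectively that the off-diagonal top entry stays $0$ at the principal-symbol level. This is the only delicate verification; everything else is a mechanical application of the composition theorems of Section \ref{sec:para}.
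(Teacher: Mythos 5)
Your proposal is correct and follows essentially the same route as the paper: conjugate the paralinearized system \eqref{ParaWW} by $\cW$, observe that the linear part reproduces \eqref{eq:lin}, use symbolic calculus to check that the only correction of positive order lands as an order-zero symbol in the bottom-left entry (in the paper this is the explicit $\breve c_0$ coming from the one-sided compositions with $\tfrac{\gamma}{2}\pa_x^{-1}$ in the conjugation formula \eqref{generic}, since the top-right entry $\mB=0$ makes the other entries exactly unchanged), absorb everything else into $a_0^{(1)},c_0^{(1)},d_0^{(1)}$ and the smoothing remainder, evaluate the symbols at $(\eta,\psi)=\cW(\eta,\zeta)$, and note that the Hamiltonian claim is automatic because $\cW$ is the Wahl\'en--Darboux change of variables turning \eqref{HamWW} into \eqref{HamWW2}. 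The only slight imprecision is your intermediate display $\pa_x^{-1}\circ\opbw(a)\circ\pa_x$: the conjugation by the triangular matrix $\cW$ produces one-sided compositions $\pa_x^{-1}\opbw(a)$ and $\opbw(a)\pa_x^{-1}$ (never two-sided sandwiches), but the order count and the conclusion you draw from it are unaffected.
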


\begin{proof}
By  \eqref{Whalen} and \eqref{ParaWW}   one has 
\begin{align}
\pa_t \vect{\eta}{\zeta} & =  \cW^{-1} \begin{pmatrix} 0 & G(0) \\ -(g+ \kappa D^2) & \gamma G(0) \pa_x^{-1} \end{pmatrix} \cW \vect{\eta}{\zeta} \label{ilprimo} \\
& \ \ + \cW^{-1}  \Opbw{\begin{bmatrix} -B|\x|- \ii V_\gamma  \xi  & 0  \\ -\kappa {\mathtt f}(\eta) \x^{2}- B^2| \x| &  B|\x |-\ii V_\gamma  \x\end{bmatrix}}\cW\vect{\eta}{\zeta} \label{ilprimo2} \\
& \ \ + \cW^{-1} \Opbw {\begin{bmatrix} a_0  & b_{-1}  \\ c_0&  d_0\end{bmatrix}}\cW\vect{\eta}{\zeta}+ R(\eta,\zeta) \vect{\eta}{\zeta}  \label{ilprimo3}
\end{align}
where $R(\eta,\zeta) $ is  a matrix of smoothing operators in $ \Sigma \mR^{-\vr}_{K,0,1}[r,N]\otimes \mM_2 (\C)$.
We now compute the above conjugated operators 
applying the transformation rule
\be \label{generic}
\cW^{-1}\begin{pmatrix} \mA& \mB\\ \mC& \mD\end{pmatrix}\cW= \begin{pmatrix} \mA+ \frac{\gamma}{2} \mB \pa_x^{-1} & \mB \\ \mC - \frac{\gamma}{2} \pa_x^{-1} \mA - \frac{\gamma^2}{4} \pa_x^{-1} \mB\pa_x^{-1} + \frac{\gamma}{2} \mD  \pa_x^{-1}& \mD - \frac{\gamma}{2} \pa_x^{-1}\mB\end{pmatrix}. 
\ee
The  operator in the right hand side of \eqref{ilprimo} is given in \eqref{eq:lin}.
Then by \eqref{generic} and Proposition \ref{teoremadicomposizione}, 
\be \label{conj2} 
\begin{aligned}
\eqref{ilprimo2} =   \Opbw{\begin{bmatrix} -B|\x|- \ii V_\gamma  \xi  & 0  \\ -\kappa {\mathtt f}(\eta) \x^{2}- B^2| \x| + \breve c_{0}&  B|\x |-\ii V_\gamma \x\end{bmatrix}}+ R(\eta, \zeta) \, ,
\end{aligned}
\ee
where the symbol 
$ \breve c_{0}:=  \frac{\gamma}{2} \frac{1}{\ii\xi} \#_\vr [B|\x|+ \ii V_\gamma  \xi ]+ \frac{\gamma}{2}[B|\x |-\ii V_\gamma  \x]\#_\vr \frac{1}{\ii\xi}$ 
belongs to $ \Sigma \Gamma_{K,0,1}^0[ r,N]$ and 
$R (\eta,\zeta) $  is matrix of smoothing operators in 
$ \Sigma \mR^{-\vr}_{K,0,1}[r,N] \otimes \mM_2 (\C) $. 

Finally, by \eqref{generic} and Proposition \ref{teoremadicomposizione}, we deduce that 
\be \label{conj3} 
\eqref{ilprimo3} = \Opbw {\begin{bmatrix}  a_0'  &  b_{-1}  \\  c'_0 &   d_0'\end{bmatrix}}+ R(\eta,\zeta) \, ,  
\ee
where $a_0', c'_0, d_0' $ are symbols in $ \Sigma \Gamma_{K,0,1}^0[ r,N]$ and 
$ R(\eta,\zeta) $ are smoothing operators in $  \Sigma \mR^{-\vr}_{K,0,1}[r,N] \otimes \mM_2 (\C) $. 
In conclusion, by \eqref{eq:lin}, \eqref{conj2}, \eqref{conj3}, we deduce that 
system  \eqref{ilprimo}-\eqref{ilprimo3} has the form  \eqref{WWW}     with
symbols  $ a_0^{(1)} := a_0' $, 
$ c_0^{(1)} := \breve c_0 + c_0' $ and $ d_0^{(1)} := d_0' $ evaluated at $ (\eta, \psi) = 
\cW (\eta, \zeta)$ which belong to $ \Sigma \Gamma^0_{K,0,1}[r,N]$.
Since the Whalen transformation  is a  real map, 
we may assume that $ a_0^{(1)}, c_0^{(1)}, d_0^{(1)}  $ satisfy \eqref{realetoreale} arguing as in the previous lemma.
\end{proof}

\begin{remark}
The first two matrices of paradifferential operators in \eqref{WWW} have the linear Hamiltonian structure  \eqref{HSrealpara}-\eqref{HSrealpara1}. We do not  claim that the third matrix of paradifferential operators  in \eqref{WWW} has the linear Hamiltonian structure  \eqref{HSrealpara}-\eqref{HSrealpara1}.
Nevertheless in Lemma \ref{LemCompl} we shall recover the complex 
linear Hamiltonian structure of $ J_c \opbw(A_0^{(2)})$, up to homogeneity $ N $, thanks to the abstract Lemma \ref{HS:repre}.
\end{remark} 

\paragraph{Complex coordinates.}

We now diagonalize the linear part of the system \eqref{WWW} 
at $(\eta, \zeta) = (0,0) $ introducing the complex variables
\be\label{Ugrande}
U:= \vect{ u}{{\bar u} }= \mM^{-1} \vect{ \eta}{ {\zeta} } \, , \quad
\mM^{-1} : \dot H^{s+ \frac14} (\T,\R)
 \times \dot H^{s- \frac14} (\T,\R) \to \dot H^s_\R (\T,\C^2)  \, , \ \forall s \in \R \, , 
\ee
where $ \mM $ is the matrix of Fourier multipliers defined in \eqref{defMM-1}.

\begin{lemma}\label{LemCompl}
{\bf (Hamiltonian formulation of the water waves in complex coordinates)}
Let  $N \in \N_0 $ and $ \varrho  \geq 0$. 
For any $K \in \N_0 $  there exist $ s_0 ,r > 0 $ such that, 
if $ (\eta, \psi) \in B^K_{s_0} (I;r) $ is a solution of \eqref{WWW} then $ U$ defined in \eqref{Ugrande} solves 
\be \label{complexo}
\begin{aligned}
\pa_tU & = J_c \Opbw{A_\frac32(U;x) \omega(\xi)}U+ \frac{\gamma}{2} G(0)\pa_x^{-1}   U \\
& \quad +  J_c\Opbw{A_1(U;x,\xi)  +  A_{\frac12} (U;x,\xi)+A^{(2)}_0(U;x,\xi)} U+R(U)U 
\end{aligned}
\ee 
where $ J_c $ is the Poisson tensor  defined in \eqref{JcH.intro} and 
\begin{itemize}
\item 
$ \omega(\xi) \in \widetilde\Gamma^{\frac32}_0$ is the symbol in 
\eqref{omegaxi};
 \item $A_\frac32 (U;x) \in  \Sigma \cF^{\R}_{K,0,0}[r,N] \otimes \cM_2(\C)$ is the matrix of  real functions 
 \be\label{Adue}
 A_\frac32(U;x):=  \begin{pmatrix} f(U;x) &  1+f(U;x)\\1+f(U;x)  &f(U;x) \end{pmatrix},
 \ee
 where $f(U;x):= \frac12 \mathtt f(\cM U;x)$ belongs to $ \Sigma \mF^{\R}_{K,0,2}[r, N]$. Note that 
$J_c \opbw(A_\frac32  \omega(\xi) )$ is linearly Hamiltonian according to 
Definition \ref{def:LH}; 
 \item 
 $A_1(U;x,\xi) \in \Sigma \Gamma^{1}_{K,0,1}[r,N]\otimes \mM_2 (\C)$   
 is the  matrix of symbols
\be\label{A1}
A_1(U;x,\xi):= \begin{pmatrix}   \ii B^{(2)}(U;x) |\xi|  & -  V^{(2)}(U;x) \xi  \\   V^{(2)}(U;x) \xi &-\ii   B^{(2)}(U;x) |\xi|  \end{pmatrix}
 \ee
 where $  B^{(2)}(U;x):= B^{(1)}(\cM U; x)$ and   $V^{(2)}(U;x):=V^{(1)}(\cM U; x)$ are real functions in $ \Sigma \mF^{\R}_{K,0,1}[r, N]$. 
 Note that 
 $J_c \opbw(A_1)$ is linearly Hamiltonian;
 \item $A_{\frac12}(U;x,\xi) \in  \Sigma \Gamma^{\frac12}_{K,0,2}[r,N] \otimes \cM_2(\C)$
is the symmetric matrix of symbols 
 \be \label{A12}
 A_{\frac12}(U;x,\xi)= \frac12  \begin{pmatrix}  1 &   1  \\  1 & 1\end{pmatrix} [B^{(2)}(U;x)]^2 |\xi|M^2(\xi)  
  \ee
  where $ M(\xi)\in \widetilde \Gamma^{-\frac{1}{4}}_0$ is the symbol of the Fourier multiplier $M(D)$ in \eqref{defMD}.
Note that  $J_c \opbw(A_\frac12)$ is linearly Hamiltonian;
  \item 
 $ A^{(2)}_0(U;x, \xi)$ is a matrix of symbols in $ \Sigma \Gamma^{0}_{K,0, 1}[ r,N]\otimes \mM_2 (\C)$ and the operator $J_c \opbw(A^{(2)}_0)$ is  linearly Hamiltonian up to homogeneity $N$ according to Definition \ref{def:LHN};
 \item  $R(U) $ is a real-to-real matrix of smoothing operators in $ \Sigma \mR^{-\vr}_{K,0,1}[r,N]\otimes \mM_2 (\C) $.
\end{itemize}
  Moreover system \eqref{complexo} is \emph{Hamiltonian in the complex sense}, i.e. has the form \eqref{complexHS}. 
\end{lemma}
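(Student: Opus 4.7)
The plan is to substitute $U = \mathcal{M}^{-1}(\eta,\zeta)$ into the paralinearized Wahl\'en system \eqref{WWW} and then conjugate each block by $\mathcal{M}^{-1}(\cdot)\mathcal{M}$. Since $\mathcal{M}$ is a matrix of Fourier multipliers built from $M(D)$ which has symbol $M(\xi)$ of order $-1/4$, every such conjugation of a paradifferential operator of order $m$ produces, via Proposition \ref{teoremadicomposizione} and the $\#_\varrho$-expansion of Definition \ref{def:as.ex}, a paradifferential operator of the same order $m$ whose principal symbol is $M(\xi)^{\pm 1}\,a(U;x,\xi)\,M(\xi)^{\pm 1}$, with all sub-principal terms of strictly lower order, plus a $\varrho$-smoothing remainder for $\varrho$ taken large with respect to the number of steps we run. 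Everything of order $\leq 0$ so produced will be gathered into a single symbol $A_0^{(2)}\in \Sigma\Gamma^0_{K,0,1}[r,N]\otimes\mathcal{M}_2(\mathbb C)$ and everything smoothing into $R(U)$.

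First I would handle the \emph{linear} part of \eqref{WWW}. The Fourier multiplier identities \eqref{omeghino0} show that $\mathcal{M}^{-1}$ diagonalizes the matrix $\begin{psmallmatrix}0 & G(0) \\ -(g+\kappa D^2) & 0\end{psmallmatrix}+\frac{\gamma}{2}G(0)\partial_x^{-1}\cdot I$ into the diagonal $-\ii\mathbf{\Omega}(D)$ in \eqref{diaglin}. Splitting $\mathbf{\Omega}(D)=\omega(D)\,\mathrm{diag}(1,-1) -\ii\frac{\gamma}{2}G(0)\partial_x^{-1}\,I$ via \eqref{relation}, a direct computation gives $-\ii\mathbf{\Omega}(D) = J_c\,\mathrm{Op}^{\mathrm{BW}}\big(A_{3/2}(0;x)\,\omega(\xi)\big)+\frac{\gamma}{2}G(0)\partial_x^{-1}$, which matches the linear part of \eqref{complexo} because $A_{3/2}(0;x)=\begin{psmallmatrix}0 & 1 \\ 1 & 0\end{psmallmatrix}$ and $J_c\begin{psmallmatrix}0 & \omega \\ \omega & 0\end{psmallmatrix}=\mathrm{diag}(-\ii\omega,\ii\omega)$.

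Next I would track the principal symbols of the three nonlinear paradifferential blocks of \eqref{WWW} under $\mathcal{M}^{-1}\mathrm{Op}^{\mathrm{BW}}(\cdot)\mathcal{M}$. The capillary entry $-\kappa\mathtt{f}(\eta;x)\xi^2$ in the $(2,1)$-slot is of order $2$ and, after conjugation, its principal part becomes $-\kappa\mathtt{f}(\eta;x)\xi^2 M(\xi)^2$; identifying $\kappa\xi^2 M(\xi)^2 = \omega(\xi)+(\text{order }-1/2)$ from \eqref{defMD}-\eqref{omegaxi} and symmetrizing across the four entries of $\mathcal{M}^{-1}\begin{psmallmatrix}\cdot & \cdot \\ \cdot & \cdot\end{psmallmatrix}\mathcal{M}$, the capillary block contributes exactly the $f$ corrections in $A_{3/2}$ of \eqref{Adue}, modulo lower order. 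The transport/gravity block $\mathrm{diag}(-B^{(1)}|\xi|-\ii V^{(1)}\xi,\, B^{(1)}|\xi|-\ii V^{(1)}\xi)$ is of order $1$; conjugation preserves its order and, after collecting terms, yields the symbol $A_1$ in \eqref{A1}. The $B^2$ contribution in the $(2,1)$-slot is of order $1$ but, being multiplied by the $M(\xi)^2$ factor coming from $M(D)\cdot M(D)$, drops to order $1/2$ with symbol $[B^{(2)}]^2|\xi|M(\xi)^2$, producing \eqref{A12} after symmetrization. All the order-$0$ symbols $a_0^{(1)},c_0^{(1)},d_0^{(1)},b_{-1}$ from \eqref{WWW}, together with sub-principal corrections generated by the Poisson brackets in $\#_\varrho$ at each conjugation, are assembled into $A_0^{(2)}\in \Sigma\Gamma^0_{K,0,1}[r,N]\otimes\mathcal{M}_2(\mathbb C)$, while Proposition \ref{teoremadicomposizione} contributes the remainder $R(U)$ in $\Sigma\mathcal{R}^{-\varrho}_{K,0,1}[r,N]\otimes\mathcal{M}_2(\mathbb C)$.

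The last step is to upgrade the conclusion to the Hamiltonian statement. System \eqref{HamWW2} is real Hamiltonian in standard Darboux form, and a direct check shows $\mathcal{M}^\top E_0 \mathcal{M}=E_c$ so that \eqref{complexo} is Hamiltonian in the complex sense \eqref{complexHS}, with Hamiltonian $H_\gamma\circ\mathcal{W}\circ\mathcal{M}$. The matrices $A_{3/2}\omega$, $A_1$, $A_{1/2}$ are visibly of the symmetric-in-$\xi$ real shape \eqref{LHS-c}, so each $J_c\mathrm{Op}^{\mathrm{BW}}(\cdot)$ among them is linearly Hamiltonian on the nose. The delicate point, which I expect to be the main obstacle, is that $J_c\mathrm{Op}^{\mathrm{BW}}(A_0^{(2)})$ need not be linearly Hamiltonian on each homogeneous component, only modulo smoothing. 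To handle this I would invoke Lemma \ref{HS:repre}: writing the full vector field as $J_c\nabla H(U)$ plus a non-homogeneous remainder of degree $>N$, the lemma produces a symmetrized symbol $A_1^{(0)}$ as in \eqref{aunoa} such that $J_c\mathrm{Op}^{\mathrm{BW}}(A_1^{(0)})$ is linearly Hamiltonian, and the difference with the original $A_0^{(2)}$ is absorbed as a smoothing operator into $R(U)$. Redefining $A_0^{(2)}$ as this symmetrized symbol yields the required property \eqref{LHS-cN} for the paradifferential block up to homogeneity $N$, completing the proof.
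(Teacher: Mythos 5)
Your proposal is correct and follows essentially the same route as the paper's proof: conjugation of \eqref{WWW} by $\cM$ using \eqref{omeghino0} and symbolic calculus, identification of the principal symbols via $\kappa\xi^2M^2(\xi)-\omega(\xi)\in\wt\Gamma_0^{-\frac12}$, collection of order-zero and smoothing terms, and a final symmetrization of the zero-order block by Lemma \ref{HS:repre} to get linear Hamiltonianity up to homogeneity $N$. Your direct check $\cM^\top E_0\cM=E_c$ is equivalent to the paper's factorization $\cM=\breve\cM\circ\cC$ with $\breve\cM$ symplectic and $\cC$ the complexification, so the Hamiltonian conclusion is obtained the same way.
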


\begin{proof}
We begin by noting that the operator $\cM$ in the change of coordinates \eqref{Ugrande} has the form (cfr. \eqref{defMM-1})
$$
\cM = 
\breve \cM \circ \cC  , \qquad  \breve \cM := \begin{pmatrix}
M(D) & 0 \\
0 & M(D)^{-1}
\end{pmatrix}
$$
where $ M(D) $ is  the Fourier multiplier in \eqref{defMD} and $\cC$ the matrix in \eqref{cc}.
The operator $\breve \cM$  is symplectic whereas under the change of variables $\cC$ 
a real 
 Hamiltonian system in standard  Darboux form \eqref{HamWW2} assumes the standard complex form \eqref{complexHS}, see the paragraph at page \pageref{realHam.com}.
 Therefore $U$ solves a system which is Hamiltonian in the complex sense.

Since $ (\eta, \zeta) $ solves \eqref{WWW}, the complex variable $ U$ in \eqref{Ugrande} solves 
\begin{align}
\pa_t U & = \mM^{-1} \begin{pmatrix}\frac{ \gamma}{2} G(0) \pa_x^{-1} & G(0) \\ -(g+ \kappa D^2 + \frac{\gamma^2}{4} G(0) D^{-2} ) & \frac{ \gamma}{2} G(0) \pa_x^{-1}\end{pmatrix}\mM U
\label{coniucomplessa}  \\
&\ \ +  \mM^{-1} 
\Opbw{\begin{bmatrix} -B^{(1)}|\x|- \ii V^{(1)}\xi  & 0  \\ -\kappa {\mathtt f}(\eta) \x^{2}- [B^{(1)}]^2| \x| &  B^{(1)}|\x |-\ii V^{(1)} \x\end{bmatrix} } 
\mM U \label{coniucomplessa1} \\
&\ \ + \mM^{-1} 
\Opbw{\begin{bmatrix}  a^{(1)}_0  &  b_{-1}  \\  c^{(1)}_0&   d^{(1)}_0\end{bmatrix}} 
\mM U + R( U)U  \label{coniucomplessa2}
\end{align}
where  $R(U) $ is a real-to-real matrix of smoothing operators in $ \Sigma \mR^{-\vr}_{K,0,1}[r,N]\otimes \mM_2 (\C)$.

The operator in the right hand side of \eqref{coniucomplessa} is computed in 
\eqref{diaglin}-\eqref{relation}.    
In order to compute the  conjugated operators  in \eqref{coniucomplessa1}-\eqref{coniucomplessa2}, we apply the following transformation rule,   
where we denote  by $M:= M(D)$ the Fourier multiplier in \eqref{defMD} (which 
satisfies $M(D) = \bar M(D)$),  
\begin{align} \label{generic2} 
&\footnotesize   \mM^{-1} \begin{pmatrix} \mA_1& \mA_2 \\ \mA_3 & \mA_4\end{pmatrix} \mM \\
& \footnotesize  \stackrel{\eqref{defMM-1}} = 
\frac12 \begin{pmatrix} M^{-1}\mA_1 M + M \mA_4 M^{-1} + \ii M \mA_3 M - \ii M^{-1}  \mA_2 M^{-1} & M^{-1} \mA_1 M  - M \mA_4 M^{-1} + \ii M  \mA_3 M +\ii M^{-1} \mA_2 M^{-1} \\ 
M^{-1} \mA_1 M  - M \mA_4 M^{-1} - \ii M  \mA_3 M - \ii M^{-1}  \mA_2 M^{-1} &  M^{-1}  \mA_1 M  + M  \mA_4 M^{-1}  - \ii M  \mA_3 M +  \ii M^{-1}  \mA_2 M^{-1} \notag \end{pmatrix}.
\end{align}
Using \eqref{generic2} and Proposition \ref{teoremadicomposizione} we get  that
\be \label{nonsos}
\eqref{coniucomplessa1}  = \frac12 \Opbw {\begin{bmatrix} a_1& b_1 \\ \bar{b}_1^{\vee}&  \bar {a}_1^{\vee}\end{bmatrix}} + R(U) 
\ee
where   $ a_1, b_1 $ are the symbols 
\begin{align} \footnotesize
a_1 & :=  \footnotesize M^{-1} (\xi)  \#_\vr \big( -B^{(1)}|\x|- \ii V^{(1)}  \xi \big) \#_\vr M (\xi)   + M(\xi)  \#_\vr  \big( B^{(1)}|\x |-\ii V^{(1)} \x \big)\#_\vr M^{-1} (\xi)  \notag \\
& \quad + \ii M (\xi)  \#_\vr \big( 
-\kappa {\mathtt f}(\eta) \x^{2}- [B^{(1)}]^2| \x| \big) \#_\vr M  (\xi) 
\notag \\
b_1& :=  \footnotesize M^{-1}(\xi)   \#_\vr \big( 
-B^{(1)}|\x|- \ii V^{(1)}  \xi \big) \#_\vr M (\xi)   -
 M(\xi)  \#_\vr  \big( B^{(1)}|\x |-\ii V^{(1)} \x \big) \#_\vr M^{-1}(\xi)  \notag \\
& \quad + \ii M(\xi)  \#_\vr 
\big(-\kappa {\mathtt f}(\eta) \x^{2}- [B^{(1)}]^2| \x| \big)\#_\vr M(\xi)  \notag 
\end{align}
and  $R(U) $  is a real-to-real matrix of smoothing operators in 
$ \Sigma \mR^{-\vr}_{K, 0, 1}[r,N] \otimes \mM_2 (\C) $.  
Noting that 
\be\label{omega.M.asy}
\omega(\xi)- \sqrt{\kappa} |\xi|^{\frac{3}{2}} \in \widetilde \Gamma_0^{-\frac12} \, , \quad 
 M(\xi)-\kappa^{-\frac{1}{4}} |\xi|^{-\frac{1}{4}} \in  \wt \Gamma_0^{-\frac{9}{4}} \, ,
 \quad  
M^{-1}(\xi)- \kappa^{\frac{1}{4}} |\xi|^{\frac{1}{4}} \in \wt\Gamma_0^{-\frac{7}{4}} \, , 
\ee
{so that $ \kappa \xi^2 M^2 (\xi) - \omega (\xi) \in \wt\Gamma_0^{-\frac{1}{2}} $,} 
we deduce, using also the remarks after Definition \ref{def:as.ex},  that
\begin{align} 
a_1 
& =   - 2 \ii V^{(1)}  \xi- \ii {\mathtt f}(\eta) \omega(\xi) - \ii  [B^{(1)}]^2| \x|M^2(\xi) + a_0' \quad \text{with} \quad a_0' \in  \Sigma \Gamma^{0}_{K,0,1}[r,N] \, , 
\label{contacci1} \\
b_1 
& =   -2 B^{(1)} | \xi | - \ii  {\mathtt f}(\eta) \omega(\x) - \ii [B^{(1)}]^2| \x|M^2(\xi) + b_0' \quad \text{with} \quad b_0' \in  \Sigma \Gamma^{0}_{K,0,1}[r,N] \, .
\label{contacci}
\end{align}
Finally, noting that 
$ M^{-1}(\xi) \#_\vr  b_{-1}\#_\vr M^{-1} (\xi)
$ belongs to $ \Sigma \Gamma^{-\frac12}_{K,0,1}[r,N]$, 
we deduce that   
\be \label{ultimocomplesso}
\eqref{coniucomplessa2} = \opbw(A'_0) + R(U) 
\ee 
where $A'_0 $ is a real-to-real matrix of symbols in $  \Sigma \Gamma^{0}_{K,0,1}[r,N]\otimes \mM_2 (\C)$ and $R(U)$  is a real-to-real matrix of smoothing operators in
$ \Sigma \mR^{-\vr}_{K, 0, 1}[r,N] \otimes \mM_2 (\C) $. 

In conclusion, by  \eqref{diaglin}-\eqref{relation}, \eqref{nonsos}, \eqref{contacci1}, \eqref{contacci}, \eqref{ultimocomplesso}, 
computing the symbols at $ (\eta, \zeta) = \cM U $, 
we deduce that system 
\eqref{coniucomplessa}-\eqref{coniucomplessa2} has the form 
 \eqref{complexo}.  
Note that the matrices of paradifferential operators $J_c \opbw ( A_\frac32 \omega (\xi) ) $, 
$ J_c \opbw(A_1) $, $ J_c \opbw(A_{\frac12}) $ in \eqref{Adue}, \eqref{A1}, \eqref{A12} are linearly Hamiltonian according to \eqref{LHS-c}, whereas $ J_c \opbw( A^{(2)}_0) $ might not be.  Thanks to Lemma \ref{HS:repre} we 
 replace each homogeneous component of 
 $ A_\frac32 \omega (\xi)  + A_1 +  A_{\frac12} + 
  A_0^{(2)}$ with its symmetrized version, by adding another smoothing operator. Since  the symbols with positive orders are unchanged we obtain a new  operator $ J_c \opbw( A^{(2)}_0) $ (that we denote in the same way) which 
  is linearly Hamiltonian up to homogeneity $N$.  
\end{proof}

\section{Block-diagonalization and reduction to constant coefficients}\label{sec:redu}

In this section we perform several transformations  
in order to
symmetrize and reduce system \eqref{complexo} to constant coefficients up to smoothing remainders. 
In particular we will prove the following: 

\begin{proposition}\label{teoredu1}
{\bf (Reduction to constant coefficients up to smoothing operators)}
Let $N \in \N_0$ and  $\vr> 3(N+1)$. 
Then there exists $\underline K':= \underline K'(\vr)>0$ such that for any $ K\geq \underline K'$ there are $s_0 >0$, $ r > 0 $ such that 
for any solution $U \in B_{s_0,\R}^K(I;r) $ of  \eqref{complexo}, there exists a 
real-to-real invertible matrix of spectrally localized  maps $ \bB(U;t)$ such that $ \bB(U;t)-\uno \in \Sigma \mS^{\frac32 (N+1)}_{K,\underline K'-1,1}[r,N]\otimes \cM_2(\C)$ and  the following holds true: 
\\[1mm]
$(i)$ {\bf Boundedness:} $\bB(U;t)$ and $\bB(U;t)^{-1}$  
 are  non--homogeneous maps in $  \mS_{K,\underline K'-1,0}^0 [r]\otimes \mM_2 (\C) $, cfr. \eqref{piovespect}  with $m = N = 0$. \\
$(ii)$ {\bf Linear symplecticity:} The map  $ \bB(U;t) $ is linearly symplectic up to homogeneity $ N $, according to Definition \ref{LSUTHN}.
\\[1mm]
$(iii)$ {\bf Conjugation:} If $U$ solves \eqref{complexo} then  $W:= \bB(U;t)U$ solves 
\be \label{teo61}
\pa_tW = \vOpbw{\ii \tm_{\frac32}(U;t,\xi)}W 
 + R(U;t)W 
\ee
(recall notation \eqref{vecop}) where 
\be\label{m32}
\tm_{\frac32}(U;t,\xi):= -  \left[(1+\zeta(U))\omega(\xi) + \frac{\gamma}{2}\frac{\tG(\x)}{\x} + \mathtt V(U;t) \xi + \tb_{\frac12}(U;t) |\x|^{\frac12}+  \tb_0(U;t,\xi)\right]
\ee
and
\begin{itemize}
\item $\omega(\xi) \in \widetilde\Gamma^{\frac32}_0$ is the Fourier multiplier  defined in \eqref{omegaxi};
\item $\zeta(U)$ is a real function in $\Sigma \mF^{\R}_{K,0,2}[r,N]$ independent of $ x$;
\item $\mathtt V(U;t)$ is a real function in $ \Sigma\mF^{\R}_{K,1,2}[r,N]$ independent of $ x$;
\item  $\tb_{\frac12}(U;t)$ is a real function in $\Sigma \mF^{\R}_{K,2,2}[r,N]$ independent of $ x$; 
\item $\tb_0(U;t, \xi)$ is a symbol in $ \Sigma \Gamma^0_{K,\underline  K',2}[r,N]$ independent of $x$ and its imaginary part $\Im \tb_0(U;t, \xi) $ is in $ \Gamma^0_{K,\underline  K',N+1}[r]$;
\item $R(U;t)$ is a real-to-real matrix of smoothing operators in $ \Sigma \mR^{-\vr+3(N+1)}_{K,\underline  K',1}[r,N] \otimes \mM_2 (\C)$.
\end{itemize} 
\end{proposition}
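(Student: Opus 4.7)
The plan is to construct $\bB(U;t)$ as a composition $\bB(U;t) := \bB_0(U;t) \circ \bB_{1/2}(U;t) \circ \bB_1(U;t) \circ \bB_{3/2}^{\mathrm{flat}}(U;t) \circ \bB_{3/2}^{\mathrm{diag}}(U;t)$ of linearly symplectic (up to homogeneity $N$) conjugations, each of which reduces one order of the symbolic expansion of \eqref{complexo}, descending from order $3/2$ down to order $0$. Each factor $\bB_\bullet(U;t)$ will be the time-$1$ flow of a system
$$
\partial_\tau \mG^\tau = J_c \opbw(B_\bullet(\tau, U;t,x,\xi)) \mG^\tau, \qquad \mG^0 = \mathrm{Id},
$$
generated by a matrix of symbols $B_\bullet$ constructed so that $J_c\opbw(B_\bullet)$ is linearly Hamiltonian up to homogeneity $N$ in the sense of Definition \ref{def:LHN}. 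Lemma \ref{flow} then provides both the boundedness estimate \eqref{invero2B}, the linear symplecticity up to homogeneity $N$, and the spectrally localized expansion $\bB_\bullet-\uno \in \Sigma\mS^{m_\bullet (N+1)}_{K,K'_\bullet,1}[r,N]\otimes \cM_2(\C)$. The composition will then satisfy items $(i)$--$(ii)$ by Lemma \ref{coniugazionehamiltoniana} and the composition rules of Proposition \ref{composizioniTOTALIs}.

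First I would block-diagonalize the principal symbol $A_{3/2}(U;x)\omega(\xi)$ of \eqref{Adue}, whose off-diagonal entries $1+f(U;x)$ are of the same order as the diagonal: I look for $B_{3/2}^{\mathrm{diag}}$ of order $0$, with a purely off-diagonal structure $\bigl[\begin{smallmatrix} 0 & \beta \\ \bar\beta^\vee & 0\end{smallmatrix}\bigr]$ compatible with the linearly Hamiltonian form \eqref{LHS-c}, such that the commutator of $\opbw(B_{3/2}^{\mathrm{diag}})$ with the principal symbol cancels the off-diagonal part. This yields a system with diagonal principal part $\ii(1+f^{(1)}(U;x))\omega(\xi)I$ plus lower-order symbols and a smoothing remainder. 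Next, $\bB_{3/2}^{\mathrm{flat}}(U;t)$ straightens $1+f^{(1)}(U;x)$ to an $x$-independent coefficient $1+\zeta(U)$ via a paradifferential change of variable: I choose the generator of the form $J_c\opbw(\beta^{(1)}(U;x)\xi \cdot I)$ (symmetrized to be linearly Hamiltonian via Lemma \ref{spezzamento}), whose effect at principal order is the transport of the coefficient of $\omega(\xi)$ along the vector field $\beta^{(1)}\partial_x$; the $x$-dependence is removed by solving a simple transport equation for $\beta^{(1)}$. The remaining conjugations $\bB_1$, $\bB_{1/2}$, $\bB_0$ proceed analogously: at each step the generator is a linearly Hamiltonian paradifferential operator of suitable order (order $-1/2$, $-1$, $-3/2$, respectively), whose principal commutator with $\omega(\xi)$ cancels the $x$-dependent part of the next lower-order symbol, turning $V^{(2)}(U;x)\xi$ into $\mathtt V(U;t)\xi$, then the order $1/2$ symbol coming from $A_{1/2}$ into $\tb_{1/2}(U;t)|\xi|^{1/2}$, and finally the order $0$ symbol into $\tb_0(U;t,\xi)$, all independent of $x$.

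At each step I rely on Proposition \ref{teoremadicomposizione} to expand the conjugated symbol $e^{-\opbw(B_\bullet)} \circ \opbw(A) \circ e^{\opbw(B_\bullet)}$ modulo smoothing remainders in $\Sigma\cR^{-\varrho'}_{K,K'_\bullet,1}[r,N]$, where at each conjugation the order of the remainder improves but the admissible time-differentiability decreases by a fixed amount depending on the order of $B_\bullet$. Keeping track of these losses gives the final exponent $\vr - 3(N+1)$ in the smoothing order (the $3(N+1)$ accounts for the three positive orders $3/2,1,1/2$ and their iterated applications up to degree $N$) and fixes $\underline K'$. After each step I apply Lemma \ref{spezzamento} to the residual paradifferential operator to replace it by a symmetrized version (absorbing the antisymmetric part into a smoothing operator by Lemma \ref{aggiunto} via Lemma \ref{lem:ide}); this is what guarantees that at each stage the leading-order operator $J_c\opbw(\cdot)$ is linearly Hamiltonian up to homogeneity $N$, so that the next flow is linearly symplectic up to homogeneity $N$ as well. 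The reality of the symbols $\zeta, \mathtt V, \tb_{1/2}$ and the statement that $\Im \tb_0 \in \Gamma^0_{K,\underline K',N+1}[r]$ follow from \eqref{LHS-cN}: the linear Hamiltonian structure forces the principal symbols on the diagonal of $J_c\opbw(\cdot)$ to be real \emph{up to homogeneity $N$}, while any obstruction appears only at homogeneity $\geq N+1$.

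The main obstacle will be the simultaneous bookkeeping of the three parameters (order, homogeneity degree, and time-regularity index $K'$) across the five composition steps, together with verifying that the resolution of the transport-type equation defining each generator (e.g.~the equation for $\beta^{(1)}$ at the flattening step) lives in the correct class of pluri-homogeneous symbols in $\Sigma\Gamma^{m_\bullet}_{K,K'_\bullet,1}[r,N]$ and depends only on $U$ through the previously reduced data; this requires an inductive argument in the homogeneity degree. A secondary technical point is ensuring that the symmetrization performed via Lemma \ref{spezzamento} does not destroy the spectral localization properties of the generator (so that Lemma \ref{flow}-$(iii)$ still yields $\bB_\bullet - \uno \in \Sigma \mS^{\bullet}_{K,K'_\bullet,1}[r,N]\otimes\cM_2(\C)$), which is the reason the error is placed in $\cR^{-\varrho}$ rather than merged back into the symbol.
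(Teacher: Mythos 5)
Your five-step scheme has two genuine gaps. First, a \emph{fixed} number of conjugations cannot produce a remainder of arbitrary smoothing order $-\vr+3(N+1)$. Removing the $x$-dependence of the symbol at a given order $\sigma$ by conjugating against the principal part $\omega(\xi)$ only pushes the obstruction down to order $\sigma-\tfrac12$, and block-diagonalizing an off-diagonal term of order $\sigma$ only pushes it down to order $\sigma-\tfrac32$; neither of these leftovers is smoothing, so after your last step ($\bB_0$) you are left with $x$-dependent and off-diagonal paradifferential terms of order $-\tfrac12$ (and below) which cannot be absorbed into $R(U;t)\in\Sigma\mR^{-\vr+3(N+1)}_{K,\underline K',1}[r,N]$ when $\vr$ is large. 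The reduction must therefore be iterated $O(\vr)$ times: this is exactly why the paper performs two $\vr$-dependent iterative families of conjugations (the block-diagonalization of Lemma \ref{blockdiag}, with $n_1(\vr)$ steps gaining $\tfrac32$ each, and the constant-coefficient reduction of Lemma \ref{conjcost}, with $n_2(\vr)$ steps gaining $\tfrac12$ each), and why the time-regularity loss $\underline K'=\underline K'(\vr)$ depends on $\vr$ — in your scheme $\underline K'$ would be an absolute constant, which already signals that the bookkeeping cannot close.

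Second, at the principal order $\tfrac32$ a commutator-based (perturbative in homogeneity) cancellation is not enough. The target form \eqref{teo61}--\eqref{m32} requires the order-$\tfrac32$ part to be \emph{exactly} diagonal, real and $x$-independent — including its non-homogeneous component of degree $>N$, since an $x$-dependent or off-diagonal operator of order $\tfrac32$ and homogeneity $>N$ is neither of the form $\vOpbw{\ii\tm_{\frac32}}$ nor smoothing. Your proposed generator whose ``commutator with the principal symbol cancels the off-diagonal part'', and the ``simple transport equation'' for $\beta^{(1)}$, only achieve this up to homogeneity $N$. The paper instead diagonalizes $J_cA_{\frac32}(U;x)$ algebraically via the eigenvector matrix $F(U;x)$ realized as a flow (Lemma \ref{diag}, using Lemma 3.11 of \cite{BFP}) and removes the $x$-dependence of $\lambda(U;x)\omega(\xi)$ by an exact change of the space variable with the Egorov-type conjugation Theorem 3.27 of \cite{BD} (Lemma \ref{prop.red2}), both valid to all degrees of homogeneity; it also performs first the (complex) good unknown of Alinhac (Lemma \ref{goodlem}) to kill the symmetric off-diagonal term $B^{(2)}|\xi|$ of order $1$ together with the order-$\tfrac12$ matrix. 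Your omission of the good unknown is by itself only a reordering (it is essentially the block-diagonalization step at order $1$ against $\omega(\xi)$), but without the exact principal-order reduction and the $\vr$-dependent iterations the argument does not reach the stated conclusion.
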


\begin{remark}\label{rem:ee0}
The symbol $\tm_{\frac32}(U;t,\xi)$ in \eqref{m32} is real valued  except for the term $\tb_0(U;t,\xi)$  whose imaginary part has order 0 and homogeneity at least  $N+1$. Hence system \eqref{teo61} fulfills energy estimates in $\dot H^s(\T, \C^2)$, of the type 
 \eqref{eeN} with $N = 0$.
\end{remark}

\begin{remark}\label{rem:pro61}
One can choose $\underline K' (\varrho) \geq 3\varrho - 8(N+1)+1$.
\end{remark}

The rest of Section \ref{sec:redu} is devoted to 
the proof of Proposition \ref{teoredu1}. 
We shall use constantly the identities
\be\label{omega.der}
\omega(\xi) = \sqrt{\kappa} |\xi|^\frac32 + \wt\Gamma^{-\frac12}_0 
, 
\quad
\omega'(\xi) = \frac32 \sqrt{\kappa} |\xi|^\frac12 \textup{ sign }\xi  + \wt\Gamma^{-\frac32}_0  \ .
\ee

\subsection{A complex good unknown of Alinhac}\label{secAl1}

In this section we introduce 
a complex version of  the good unknown of Alinhac,
whose goal is to  diagonalize the  matrix of paradifferential 
operators of order $1$ in \eqref{complexo}  and remove the paradifferential  operators of order $\frac12$.
The complex good unknown that we use coincides at principal order with   
$\cM^{-1} \cG_A \cM$ where $\cG_A$ is the classical good unknown of Alinhac in \eqref{simgud} and $\cM$ is the change of variables in \eqref{defMM-1}.

\begin{lemma}\label{goodlem}
Let $N \in \N_0$ and  $\vr>0$.  Then for any $ K \in \N$ there 
are
 $s_0 >0 $, $ r > 0 $ such that for  any  solution $U \in B_{s_0,\R}^K(I;r)$ of  \eqref{complexo}, there exists  a 
real-to-real invertible matrix of spectrally localized maps  $ \mG(U)$ satisfying
$ \mG(U)- \uno  \in \Sigma \mS^0_{K,0,1}[r,N] \otimes \mM_2(\C) $ and the following holds true:
\\[1mm]
$(i)$ {\bf Boundedness:}  $ \mG(U)$  and its inverse 
 are  non--homogeneous maps in $  \mS_{K,0,0}^0 [r]\otimes \mM_2 (\C) $.
\\[1mm]
$(ii)$ {\bf Linear symplecticity:} The map  $ \mG(U) $ is linearly symplectic according to Definition \ref{LS};
\\[1mm]
$(iii)$ {\bf Conjugation:} 
If $U$ solves \eqref{complexo} then  $V_0:= \mG(U)U$ solves 
\be \label{sisV0}
\begin{aligned}
\pa_tV_0 & = J_c \Opbw{A_\frac32(U;x) \omega(\xi)  }V_0+  \frac{\gamma}{2} G(0) \pa_x^{-1} V_0 \\
& \ \ +  \vOpbw{ -\ii V^{(2)}(U;x) \xi}V_0 + J_c \Opbw{  A^{(3)}_0 (U;t,x,\xi)} V_0 + R(U)V_0
\end{aligned}
\ee 
where
\begin{itemize}
 \item the  matrix of real functions $ A_\frac32(U;x) \in \Sigma \mF^{\R}_{K,0,0}[r, N] \otimes \mM_2(\C)$ and  the real function $V^{(2)}(U;x) \in \Sigma \mF^{\R}_{K,0,1}[r,N] $ are defined in Lemma \ref{LemCompl}; 
 \item 
 $ \omega(\xi) \in  \widetilde\Gamma^{\frac32}_0$ is  the symbol defined in \eqref{omegaxi}; 
 \item The matrix of symbols  $ A^{(3)}_0(U; t, x, \xi)$    belongs to $ \Sigma \Gamma^0_{K,1,1}[r,N]
 \otimes \mM_2 (\C) $ and  the operator  $ J_c \opbw(A^{(3)}_0)$ is linearly Hamiltonian up to homogeneity $N$ according to Definition \ref{def:LHN}; 
 \item  $R(U)$ is a real-to-real matrix of smoothing operators in $ \Sigma \mR^{-\vr}_{K,0,1}[r,N]
 \otimes \mM_2 (\C) $. 
 \end{itemize}
\end{lemma}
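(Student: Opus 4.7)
The plan is to construct $\mG(U)$ as the time-$1$ flow of a linearly Hamiltonian paradifferential equation whose generator is calibrated to cancel both the off-diagonal order-$1$ symbol $-B^{(2)}|\xi|$ in $J_{c}A_{1}$ and the order-$1/2$ symbol $J_{c}A_{\tfrac{1}{2}}$ of \eqref{complexo}. Motivated by rewriting the classical real Alinhac good unknown $\cG_{A}$ of \eqref{simgud} in the complex coordinates $U = \cM^{-1}(\eta,\zeta)^{\top}$ from \eqref{defMM-1}, I take the symmetric matrix of symbols
\begin{equation*}
B(U;x,\xi) \; := \; \tfrac{1}{2}\, B^{(2)}(U;x)\, M^{2}(\xi) \begin{pmatrix} 1 & 1 \\ 1 & 1 \end{pmatrix} \; \in \; \widetilde{\Gamma}^{-1/2}_{1}\otimes \mM_{2}(\C) \, ,
\end{equation*}
where $M^{2}(\xi) \in \widetilde{\Gamma}^{-1/2}_{0}$ by \eqref{omega.M.asy}. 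Since $B^{(2)}$ is real and $M^{2}$ is real and even in $\xi$, the symbol $B$ has the form \eqref{LHS-c}, so $J_{c}\opbw(B)$ is linearly Hamiltonian (Definition \ref{def:LH}). Define then $\mG(U) := \mG^{\tau}_{B}(U)|_{\tau = 1}$, the time-$1$ flow of $\pa_{\tau}\mG^{\tau} = J_{c}\opbw(B)\,\mG^{\tau}$, $\mG^{0}=\uno$. Items $(i)$ and $(ii)$ follow directly from Lemma \ref{flow}: boundedness of $\mG(U)^{\pm 1}$ and the estimates \eqref{invero2B} come from item $(i)$ there; full linear symplecticity (not merely up to homogeneity $N$) from item $(ii)$, since the generator is fully linearly Hamiltonian; and the homogeneous expansion $\mG(U) - \uno \in \Sigma \mS^{0}_{K,0,1}[r,N]\otimes \mM_{2}(\C)$ from item $(iii)$, where the exponent $0$ arises as $(N+1)\max(-1/2,\,0) = 0$.

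For item $(iii)$, differentiating $V_{0} = \mG(U)U$ in $t$ and using \eqref{complexo} yields
\begin{equation*}
\pa_{t}V_{0} \, = \, \mG(U)\Bigl[J_{c}\opbw(A_{\tfrac{3}{2}}\omega + A_{1} + A_{\tfrac{1}{2}} + A_{0}^{(2)}) + \tfrac{\gamma}{2}G(0)\pa_{x}^{-1}\Bigr]\mG(U)^{-1}\,V_{0} + (\pa_{t}\mG(U))\mG(U)^{-1}\,V_{0} + \wt R(U)\,V_{0} \, ,
\end{equation*}
with $\wt R(U) \in \Sigma\mR^{-\vr}_{K,0,1}[r,N]\otimes \mM_{2}(\C)$ by Proposition \ref{composizioniTOTALIs}-$(i)$ (the Fourier multiplier $\tfrac{\gamma}{2}G(0)\pa_{x}^{-1}$ commutes with $J_{c}\opbw(B)$ modulo paradifferential operators of strictly lower orders, treated analogously). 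The conjugations are then evaluated via the symbolic calculus of Proposition \ref{teoremadicomposizione}. The leading-order symbol $J_{c}A_{\tfrac{3}{2}}\omega$ is preserved since its commutator with $J_{c}B$ is of lower order. The off-diagonal order-$1$ part of $J_{c}A_{1}$ is cancelled by the matrix commutator $[J_{c}B,J_{c}A_{\tfrac{3}{2}}\omega(\xi)]$: a direct computation, combined with the structural identity $M^{2}(\xi)\omega(\xi) = \tG(\xi)$ from \eqref{omeghino0} and the fact that $\tG(\xi)-|\xi| \in \widetilde{\Gamma}^{-\vr}_{0}$ for any $\vr\geq 0$, shows that it equals $B^{(2)}|\xi|\bigl(\begin{smallmatrix}0&1\\1&0\end{smallmatrix}\bigr)$ modulo smoothing; the diagonal part $-\ii V^{(2)}\xi\,I$ commutes with everything and so survives. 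The order-$1/2$ symbol $J_{c}A_{\tfrac{1}{2}}$ is eliminated by the second-order BCH-type contribution $\tfrac{1}{2}[J_{c}\opbw(B),[J_{c}\opbw(B),J_{c}\opbw(A_{\tfrac{3}{2}}\omega)]]$ together with the sub-principal Poisson-bracket correction from $\#_{\vr}$ applied to $[J_{c}\opbw(B),J_{c}\opbw(A_{1})]$; these are precisely the cancellations produced by the classical Alinhac good unknown on the real paralinearized system of Lemma \ref{laprimapara}, read through the transformation $\cM$. All remaining terms of order $\le 0$ from the conjugations, together with $(\pa_{t}\mG(U))\mG(U)^{-1}$ (which, upon using \eqref{complexo} to express $\pa_{t}U$, produces a paradifferential operator of order $0$ carrying one extra time derivative of $U$, hence in the class with $K' = 1$), are collected into $A_{0}^{(3)} \in \Sigma\Gamma^{0}_{K,1,1}[r,N]\otimes \mM_{2}(\C)$. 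That $J_{c}\opbw(A_{0}^{(3)})$ is linearly Hamiltonian up to homogeneity $N$ follows from Lemma \ref{coniugazionehamiltoniana} applied to the fully linearly symplectic $\mG(U)$, combined with Lemma \ref{spezzamento} to restore the symbol-level symmetry (modulo a smoothing contribution absorbed into $R(U)$).

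The main technical obstacle is the matrix-valued symbolic bookkeeping required to verify the double cancellation at orders $1$ (off-diagonal) and $1/2$: since $J_{c}$ does not commute with the matrix structures of $B$ and $A_{\tfrac{3}{2}}$, the correct coefficients only emerge after carefully combining matrix commutators with the Poisson-bracket sub-principal corrections from $\#_{\vr}$, and repeatedly invoking the structural identity $M^{2}(\xi)\omega(\xi) = \tG(\xi)$ of \eqref{omeghino0}.
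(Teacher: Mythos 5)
Your construction is correct and is essentially the paper's own: since $J_c\begin{psmallmatrix}1&1\\ 1&1\end{psmallmatrix}=-\ii\begin{psmallmatrix}1&1\\ -1&-1\end{psmallmatrix}$ is nilpotent and your generator is $\tau$-independent, the time-$1$ flow is exactly the paper's explicit map $\uno-\tfrac{\ii}{2}\begin{psmallmatrix}1&1\\ -1&-1\end{psmallmatrix}\opbw\big(B^{(2)}(U;x)M^2(\xi)\big)$, and your conjugation scheme — the cancellations at order $1$ (off-diagonal) and order $\tfrac12$ via the identity $M^2(\xi)\omega(\xi)=\tG(\xi)$, the treatment of $(\pa_t\mG)\mG^{-1}$ in the class with $K'=1$, and the recovery of the Hamiltonian structure of $A_0^{(3)}$ through Lemmata \ref{coniugazionehamiltoniana} and \ref{spezzamento} — reproduces the paper's proof. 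One minor bookkeeping correction: the order-$\tfrac12$ contribution of $[J_c\opbw(B),J_c\opbw(A_1)]$ arises from the leading matrix non-commutativity (the zeroth term of the composition), not from the Poisson-bracket sub-principal correction of $\#_{\vr}$ (which is of order $-\tfrac12$ here), but the pieces you identify and their resulting cancellation are exactly those of the paper.
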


\begin{proof}
We define $\cG(U)$ to be the real-to-real map 
\be \label{calgi}
\mG(U):= \uno - \frac{\ii}{2}  \begin{pmatrix} 1&1 \\ -1&-1\end{pmatrix} 
\Opbw{B^{(2)}(U;x) M^2(\xi)}
\ee
where $B^{(2)}(U;x) $ is the  function  in $ \Sigma \mF^{\R}_{K,0,1}[r, N] $ 
defined in  \eqref{A1} and  
$ M(\xi)\in \wt \Gamma_0^{-\frac14}$ is the symbol of 
the Fourier multiplier $M(D)$ defined in \eqref{defMD}.
Its inverse and transpose are given by
\be
\begin{aligned}
 \label{calgit}
& \mG(U)^{-1}=  \uno + \frac{\ii}{2}  \begin{pmatrix} 1&1 \\ -1&-1\end{pmatrix} \Opbw{B^{(2)}(U;x) M^2(\xi)} \, ,
\\
&  \mG(U)^{\top}=  \uno - \frac{\ii}{2}  \begin{pmatrix} 1&-1 \\ 1&-1\end{pmatrix} \Opbw{B^{(2)}(U;x)M^2(\xi)} \, . 
\end{aligned}
\ee 
By the fourth bullet  below  Definition \ref{smoothoperatormaps} the 
matrices of paradifferential operators
$\mG(U)^{\pm 1} - \uno  $   belong 
to $ \Sigma \mS_{K,0,1}^0 [r,N] \otimes \mM_2(\C) $ and
item $ (i)$ follows. Also $(ii)$ follows by a direct computation using the explicit expressions 
in \eqref{calgi} and \eqref{calgit}. 

Let us prove item $ (iii)$. 
Since $ U $ solves  \eqref{complexo} the variable $ V_0 := \mG(U)U$ solves 
\begin{align} 
\notag
\pa_tV_0  & =   \mG(U)\left[  J_c 
\Opbw{ A_{\frac32}\omega(\xi) + A_1 +A_{\frac12}  +A_0^{(2)}} + \frac{\gamma}{2} G(0) \pa_x^{-1} \right]  \mG(U)^{-1}V_0
+  (\pa_t\mG(U)) \mG(U)^{-1}V_0\\
 \label{complexo2}
& \quad +   \mG(U)R(U)\mG(U)^{-1} V_0 \, .
\end{align}
We now expand each of the above operators. 
By \eqref{calgi}, the form of $ J_c $ in \eqref{JcH.intro},  
 \eqref{calgit},   
the symbolic calculus Proposition \ref{teoremadicomposizione},
writing 
$J_c A_\frac32 \omega(\xi) = - \im \begin{psmallmatrix} 1 & 0 \\ 0 & -1 \end{psmallmatrix} \omega(\xi) - \im \begin{psmallmatrix} 1 & 1 \\ -1 & -1 \end{psmallmatrix} f(U) \omega(\xi)$ (see \eqref{Adue}), 
 and since  
 $ \begin{psmallmatrix} 1 & 1 \\
 -1 & - 1 \end{psmallmatrix}^2 = 0 $, after a lengthy computation we obtain that 
the first term in \eqref{complexo2} is   
\begin{align}
\notag
& \mG(U)   J_c   \Opbw{ A_\frac32 \, \omega(\xi)}  \mG^{-1}(U)   =    J_c \Opbw { A_\frac32 \, \omega(\xi)} \\
\notag
& \quad   + \frac12\opbw\left( \begin{bmatrix}  \omega(\xi)\#_\vr B^{(2)}M^2(\xi)-B^{(2)}M^2(\xi)\#_\vr \omega(\xi)& \omega(\xi)\#_\vr B^{(2)}M^2(\xi) + B^{(2)}M^2 (\xi) \#_\vr \omega(\xi)\\\omega(\xi)\#_\vr B^{(2)}M^2 (\xi) + B^{(2)}M^2 (\xi) \#_\vr \omega(\xi)&\omega(\xi)\#_\vr B^{(2)}M^2(\xi) -B^{(2)}M^2(\xi)\#_\vr \omega(\xi)\end{bmatrix}\right) \\
\notag
&  \quad - \frac{\ii}{2} \begin{pmatrix} 1&1\\-1&-1\end{pmatrix} \Opbw{ B^{(2)}M^2 (\xi)\#_\vr \omega(\xi)\#_\vr B^{(2)}M^2(\xi)} +R(U)   \\
\notag
&   =  
J_c \Opbw { A_\frac32 \, \omega(\xi)  } 
 + \opbw\left( \begin{bmatrix} 0&  \, B^{(2)}|\xi|\\ \ \, B^{(2)}|\xi|&0\end{bmatrix}\right)\\
& \quad  - \frac{\ii}{2} \begin{pmatrix} 1&1\\-1&-1\end{pmatrix} \Opbw{[B^{(2)}]^2 |\xi|M^2(\xi)}+  J_c\opbw(A_0) + 
 R(U) \, , \label{good1}
\end{align}
where $ A_0 $ is a matrix of symbols in $ \Sigma \Gamma^0_{K,0,1}[ r,N]\otimes \mM_2(\C)$  and 
 $R(U) $ is a matrix of smoothing operators in $ \Sigma \mR^{-\vr}_{K,0,1}[r, N]\otimes \mM_2(\C)$. In the last passage to get \eqref{good1} we also used that 
$ M^2 (\xi)\omega(\xi)= \tG(\xi) = |\xi|+ \wt \Gamma_0^{-\vr} $, for any $ \vr \geq0 $,
cfr. \eqref{omeghino0}, \eqref{Gxi}.  

Next using the explicit form \eqref{A1} of $ A_1 $ we  get, arguing similarly,  
\begin{align}
\notag
\mG(U) & J_c\opbw( A_1 ) \mG(U)^{-1} \\
\label{good2}
& =  J_c\opbw( A_1 ) + \ii \begin{pmatrix} 1& 1 \\ -1& -1\end{pmatrix} \Opbw{[B^{(2)}]^2 |\xi| M^2(\xi) } +J_c  \Opbw{A_0'} + R(U)
\end{align}
where $ A_0'$ is  a matrix of symbols in 
$ \Sigma \Gamma^0_{K,0,1}[r,N] \otimes \mM_2 (\C)$ 
and $ R(U)$  is  a matrix of smoothing operators in 
$ \Sigma \mR^{-\vr}_{K,0,1}[r,N] \otimes \mM_2(\C) $. 

Moreover, using the form \eqref{A12} of $ A_\frac12 $   we get
\be \label{good3}
 \mG(U)J_c\Opbw{ A_{\frac12}  }\mG(U)^{-1} = 
J_c  \Opbw{ A_{\frac12}  } = 
 - \frac{\ii}{2}\begin{pmatrix} 1&1\\ -1&-1\end{pmatrix} \Opbw{[B^{(2)}]^2|\xi| M^2(\xi)} \, .
\ee
Then, since $ A_0^{(2)} $ is a matrix of symbols of order zero, by  Proposition \ref{teoremadicomposizione} we have 
\be\label{good11}
  \mG(U) \left[ J_c \Opbw{A_0^{(2)}} + \frac{\gamma}{2} G(0) \pa_x^{-1} \right] \, \mG(U)^{-1} =
 \frac{\gamma}{2} G(0) \pa_x^{-1} + J_c \opbw(A_0'') + R(U) \, , 
  \ee
for a matrix of symbols $ A_0''  $ in 
$  \Sigma \Gamma^0_{K,0,1}[ r,N]\otimes \mM_2(\C)$ and smoothing operators $ R (U) $ in $  \Sigma \mR^{-\vr}_{K,0,1}[r, N]\otimes \mM_2(\C)$. 
Next by \eqref{calgi}-\eqref{calgit}
\be \label{good4}
 (\pa_t\mG(U)) \mG(U)^{-1} = 
J_c \Opbw{A_{-\frac12}} + R(U) \, , \quad  A_{-\frac12}:= \frac12   \begin{pmatrix} 1&1\\ 1&1\end{pmatrix}\opbw\left( \pa_t B^{(2)} M^2(\xi)\right) 
\ee
where, in view of the last bullets at the end of Section \ref{sec:para} and  Proposition \ref{composizioniTOTALI}-$(iv)$, $A_{-\frac12}$ is a matrix of symbols in $\Sigma \Gamma^{-\frac12}_{K,1,1}[r,N]\otimes \cM_2(\C)$ 
and $ R(U)$ is a matrix of smoothing operator in $ \Sigma \mR^{-\vr}_{K,0,1}[r,N]\otimes \mM_2(\C)$.

Finally by Proposition \ref{composizioniTOTALIs} we have that
  $\mG(U) R(U) \mG(U)^{-1}$ 
  is a matrix of smoothing operators in 
  $ \Sigma \mR^{-\vr}_{K,0,1}[r,N] \otimes \mM_2 (\C)  $, being $\mG(U)$ a spectrally localized map.

Note that, using  the expression \eqref{A1},  the sum of the terms of order 1 which are in 
\eqref{good1} and \eqref{good2} is 
\be\label{good6}
J_c \Opbw{A_1} + \opbw\left( \begin{bmatrix} 0&  \, B^{(2)}|\xi|\\ \ \, B^{(2)}|\xi|&0\end{bmatrix}\right) = \vOpbw{- \im V^{(2)}(U;x) \xi} \, .
\ee
Note also that the sum of terms which are of order $\frac12$ in 
\eqref{good1}, \eqref{good2} and \eqref{good3} equals zero.

In conclusion, by \eqref{good1},  \eqref{good2} \eqref{good3}, 
\eqref{good11}, \eqref{good4} and \eqref{good6} we obtain that system \eqref{complexo2} has the form  \eqref{sisV0}
with  $ A^{(3)}_0:= A_0 + A_0'+ A_0''  + A_{-\frac12}$   in 
$ \Sigma \Gamma^0_{K,1,1}[r,N] \otimes \mM_2 (\C) $.
Note that the paradifferential operators of positive order in \eqref{sisV0} are  linearly Hamiltonian, whereas $ J_c \opbw{(A_0^{(3)})}$ might not be. 
However   the operator in the first line of  \eqref{complexo2} 
is a spectrally localized map which is linearly Hamiltonian up to homogeneity $N$ by Lemma \ref{coniugazionehamiltoniana},  with a paradifferential structure as in 
\eqref{separato}.
Then by Lemma \ref{spezzamento} 
we 
 replace each homogeneous component of 
 $ A_\frac32 \omega (\xi)  + \begin{bsmallmatrix} 0 & - V^{(2)} \xi \\ V^{(2)} \xi & 0 \end{bsmallmatrix} + 
  A_0^{(3)}$ with its symmetrized version, by adding another smoothing operator. Since  the symbols with positive orders are unchanged we obtain a new  operator $ J_c \opbw( A^{(3)}_0) $ (that we denote in the same way) which 
  is linearly Hamiltonian up to homogeneity $N$.   
\end{proof}

 \subsection{Block-Diagonalization at highest order}
 
In this section we  diagonalize the operator 
$J_c \opbw\big(A_\frac32(U; x) \omega (\xi)\big)$ in \eqref{sisV0} 
where  $A_\frac32$ is the matrix defined in \eqref{Adue}. 
 Note that the eigenvalues of the matrix 
\begin{equation}
\label{diag1}
J_c A_\frac32(U;x)=  \ii \begin{bmatrix}
-(1 +f( U;x)) &  -f( U; x) \\
f( U; x) & 1+f( U; x)
\end{bmatrix} \, , 
\end{equation}
where $f( U;x) $ is the real function defined in \eqref{Adue}, are 
 $\pm \im \lambda( U; x)$ with 
\begin{equation}
\label{diag2}
\lambda( U; x) := \sqrt{(1+f( U;x))^2 -f( U;x)^2}  = \sqrt{1+ 2f( U;x)} \, .  
\end{equation}
Since the function $f( U;x) $ is in $ \Sigma \mF^\R_{K,0,2}[r,N]$, 
for any $  U \in B_{s_0,\R}^K(I;r)$ with $ r > 0 $ small enough it results that 
$ |f( U;x)|\leq \frac{1}{4}$,  
the function $\lambda( U;x)-1$  
belongs to $ \Sigma \mF^\R_{K,0,2}[r,N] $  and 
$$
\lambda( U;x) \geq \tfrac{\sqrt{2}}{2}  > 0 \, , \quad \forall x \in \T \, . 
$$
Actually the function $\lambda( U;x) $  is real valued also for not small $ U $, see
Remark \ref{rem:elliptic}. 

 A matrix which diagonalizes \eqref{diag1} is 
 \begin{equation} \label{diag4}
 \begin{aligned}
&  \qquad \qquad \quad F (U; x)  : = 
 \begin{pmatrix}
 h (U; x )& g(U; x ) \\
 g(U; x ) & h(U; x ) 
 \end{pmatrix} \\
&   
h := { \frac{1+f + \lambda}{\sqrt{(1+f + \lambda)^2 - f^2}} } \, , \quad 
g:=  {  \frac{-f}{\sqrt{(1+f + \lambda)^2 - f^2}} } \, .
\end{aligned}
 \end{equation}
 Note that $ F(U; x) $ is well defined since 
 $ (1+f+ \lambda)^2 - f^2  = 
(1+2f + \lambda)(1+ \lambda)  \geq 
\frac12  $. 
Moreover the matrix $ F(U; x) $ is symplectic, i.e. 
\be\label{detF=1}
\det F = h^2 - g^2 = 1 \, .
\ee
 The inverse of $F(U;x)$ is the symplectic and symmetric matrix 
  \begin{equation}
 \label{diag5}
 F(U; x )^{-1} : = 
 \begin{pmatrix}
h (U; x )& - g(U; x ) \\
-g(U; x ) & h(U; x )
 \end{pmatrix} \, . 
 \end{equation}
Moreover $F(U;x) - \id $ is a matrix of real functions in $ \Sigma\cF^\R_{K,0,1}[r,N]\otimes \mM_2(\C)$ and
\begin{equation}
\label{FAF}
F( U;x)^{-1} \,
J_c  A_\frac32( U;x)  F( U;x) 
 = 
 \begin{bmatrix} -\ii \lambda(U; x)&0\\0 &\ii \lambda(U; x)\end{bmatrix} \, ,
\end{equation}
which amounts to $(h^2+g^2) \, (1+f) + 2hg f  = \lambda$ and 
$2 h g (1+f) 
 + (h^2 + g^2)   f   = 0$.
 \begin{lemma}
 \label{diag}
 Let $N \in \N_0$ and  $\vr>0$. 
Then for any $ K \in \N$ there are $s_0 >0 $, $ r > 0 $ such that for any  solution 
$U \in B_{s_0,\R}^K(I;r)$ of  \eqref{complexo}, there exists a  real-to-real invertible matrix of spectrally localized maps   $ \Psi_1(U)$ satisfying
 $ \Psi_1(U)- \uno  \in \Sigma \mS^0_{K,0,1}[r,N] \otimes \mM_2 (\C) $ and the following holds true: 
\\[1mm]
$(i)$ {\bf Boundedness:}  
The operator $ \Psi_1(U)$ and its inverse 
 are  non--homogeneous maps in $  \mS_{K,0,0}^0 [r]\otimes \mM_2 (\C)$;
\\[1mm]
$(ii)$ {\bf Linear symplecticity:} The map  $ \Psi_1(U) $ is linearly symplectic according to Definition \ref{LS};
\\[1mm]
$(iii)$ {\bf Conjugation:} If $V_0$ solves \eqref{sisV0} then  $V_1:= \Psi_1(U)V_0$ solves the system
\begin{equation}\label{NuovoParaprod}
\begin{aligned}
\pa_t V_1& = 
\vOpbw{ -\ii \lambda(U;x) \omega(\xi)   - \ii V^{(2)}(U;x)\xi } V_1+\frac{\gamma}{2} G(0) \pa_x^{-1} V_1\\
&\quad  + J_c\Opbw{A^{(4)}_0(U;t, x, \xi)}V_1 + R(U;t)  V_1
\end{aligned}
\end{equation}
where 
\begin{itemize}
\item the function $ \lambda(U;x) \in \Sigma \mF^{\R}_{K,0,0}[r,N]$, defined in \eqref{diag2}, fulfills  $ \lambda(U;x)-1 \in \Sigma \mF^{\R}_{K,0,2}[r,N]$;
\item the Fourier multiplier   $ \omega(\xi) \in \widetilde\Gamma^{\frac32}_0$ is defined in \eqref{omegaxi}; 
\item the real function $V^{(2)}(U;x)\in \Sigma \mF^{\R}_{K,0,1}[r,N]$ is defined in Lemma \ref{LemCompl};
\item the matrix of symbols $  A^{(4)}_0(U; t,x,\xi)$  belongs to $ \Sigma \Gamma^0_{K,1,1}[r,N] \otimes \mM_2 (\C) $ and the operator
$J_c\opbw(A^{(4)}_0)$
 is linearly Hamiltonian up to homogeneity $N$;
\item $R(U;t)$ is a real-to-real matrix of smoothing operators in $ \Sigma \mR^{-\vr}_{K,1,1}[r,N] 
\otimes \mM_2 (\C)$. 
\end{itemize}
\end{lemma}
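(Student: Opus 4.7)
I will define the conjugation map $\Psi_1(U) := \opbw(F(U;x)^{-1})$, where $F^{-1}$ is given by \eqref{diag5}. Since $f(U;x)$ lies in $\Sigma\cF^{\R}_{K,0,2}[r,N]$, both $\lambda-1$ and the entries $h-1$, $g$ of $F^{\pm 1}-I_2$ sit in the same class, so $\Psi_1(U)-\id$ is a matrix of paradifferential operators with $\xi$-independent symbols in $\Sigma \cF_{K,0,1}[r,N]\otimes \cM_2(\C)$, hence a spectrally localized map in $\Sigma \mS^0_{K,0,1}[r,N]\otimes\cM_2(\C)$. Boundedness on $\dot H^s$ is standard, and invertibility follows from a Neumann series since Proposition \ref{teoremadicomposizione} gives $\opbw(F^{-1})\opbw(F)=\id+R$ with $R$ smoothing — all Poisson brackets are identically zero because $F^{\pm 1}$ do not depend on $\xi$. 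For linear symplecticity, the identity $\det F = h^{2}-g^{2}=1$ implies by a direct computation $(F^{-1})^\top E_c F^{-1}=E_c$; combined with the symmetry $(F^{-1})^\top=F^{-1}$ and the $\xi$-independence of $F^{-1}$, Proposition \ref{teoremadicomposizione} yields \emph{exactly} $\Psi_1^\top E_c \Psi_1 = \opbw(F^{-1}E_c F^{-1}) = E_c$, so $\Psi_1$ is linearly symplectic in the sense of Definition \ref{LS}.

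I then compute $\pa_t V_1 = \Psi_1\cdot(\mbox{r.h.s.\ of \eqref{sisV0}}) + (\pa_t\Psi_1)\Psi_1^{-1}V_1$ by conjugating each piece via Proposition \ref{teoremadicomposizione} and Propositions \ref{composizioniTOTALI}--\ref{composizioniTOTALIs}. The leading order-$\tfrac{3}{2}$ part produces $\vOpbw{-\ii \lambda \omega}$ by the pointwise diagonalization \eqref{FAF}. The order-$1$ term $\vOpbw{-\ii V^{(2)}\xi}$ is a scalar times $I_2$, so it commutes with $F^{\pm 1}$ at leading order and is preserved, its subprincipal contribution being a symbol of order $0$. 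The Fourier multiplier $\tfrac{\gamma}{2}G(0)\pa_x^{-1}$ is scalar and of order $0$, hence unchanged at leading order with a smoothing correction. The block $J_c \opbw(A_0^{(3)})$ stays of order $0$, and the time-derivative term $(\pa_t \Psi_1)\Psi_1^{-1}$ is paradifferential with the $\xi$-independent symbol $(\pa_t F^{-1})F$ of order $0$, carrying one extra time derivative and thus landing in $\Sigma \Gamma^0_{K,1,1}[r,N]\otimes\cM_2(\C)$. Finally, $\Psi_1 R(U)\Psi_1^{-1}$ is smoothing in $\Sigma \mR^{-\vr}_{K,1,1}[r,N]\otimes\cM_2(\C)$ by Proposition \ref{composizioniTOTALIs}-$(i)$, since $\Psi_1^{\pm 1}$ are spectrally localized.

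The main obstacle lies in the conjugation of the highest-order block $J_c \opbw(A_\frac{3}{2}\omega)$, which at first sight produces a subprincipal correction of order $\tfrac{1}{2}$ that falls outside the shape of \eqref{NuovoParaprod}. A direct application of \eqref{espansione2} with one Poisson bracket yields the matrix-valued subprincipal symbol $\tfrac{1}{2\ii}\bigl(D\,F^{-1}\pa_x F + F^{-1}(\pa_x F)\,D\bigr)\omega'(\xi)$, where $D := F^{-1} J_c A_\frac{3}{2} F = \mathrm{diag}(-\ii\lambda, \ii\lambda)$ and one uses $\pa_x F^{-1} = -F^{-1}(\pa_x F)F^{-1}$. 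Since $\omega'(\xi)\sim|\xi|^{1/2}$, this a priori yields an order-$\tfrac{1}{2}$ term. The algebraic identity that kills it comes directly from $h^{2}-g^{2}=1$: differentiating in $x$ gives $h\pa_x h = g\pa_x g$, which forces the diagonal of $F^{-1}\pa_x F$ to vanish, so that $F^{-1}\pa_x F = \mu(U;x)\bigl(\begin{smallmatrix}0&1\\1&0\end{smallmatrix}\bigr)$ is purely off-diagonal. The matrix anticommutator of this off-diagonal matrix with the diagonal $D$ is identically zero, and the whole order-$\tfrac{1}{2}$ subprincipal symbol vanishes. Only the tail of order $\leq -\tfrac{1}{2}$ survives in \eqref{espansione2}, and — by choosing $\vr$ large — it can be either absorbed into $J_c \opbw(A_0^{(4)})$ or moved into the smoothing remainder.

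To conclude, Lemma \ref{coniugazionehamiltoniana} applied to the linearly symplectic map $\Psi_1$ ensures that the pluri-homogeneous part of the conjugated operator, including the contribution $(\pa_t \Psi_1)\Psi_1^{-1}$, is linearly Hamiltonian up to homogeneity $N$, since the input operator \eqref{sisV0} has this property by Lemmata \ref{LemCompl} and \ref{goodlem}. Applying Lemma \ref{spezzamento} homogeneity by homogeneity, I replace the (possibly non-symmetric) matrix of zeroth-order symbols by its symmetrized version, the discrepancy being smoothing and thus absorbed into $R(U;t)$. This gives $A_0^{(4)} \in \Sigma \Gamma^0_{K,1,1}[r,N]\otimes\cM_2(\C)$ with $J_c \opbw(A_0^{(4)})$ linearly Hamiltonian up to homogeneity $N$, completing the proof of \eqref{NuovoParaprod}.
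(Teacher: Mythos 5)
Your conjugation computation is essentially sound — in particular your anticommutator argument for the vanishing of the order-$\tfrac12$ subprincipal term (using $h\pa_x h=g\pa_x g$ so that $F^{-1}\pa_xF$ is off-diagonal, and $\mathrm{diag}(-\ii\lambda,\ii\lambda)$ anticommutes with it) is a correct and slightly more transparent repackaging of the cancellation the paper obtains from the symmetrized composition rule (third bullet after Definition \ref{def:as.ex}). The handling of the order-$1$, order-$0$ and smoothing terms, and the final appeal to Lemmata \ref{coniugazionehamiltoniana} and \ref{spezzamento}, also match the paper's scheme.

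However, there is a genuine gap in your construction of $\Psi_1$ and in the proof of item $(ii)$. You set $\Psi_1(U):=\opbw(F(U;x)^{-1})$ and claim that Proposition \ref{teoremadicomposizione} yields \emph{exactly} $\Psi_1^\top E_c\Psi_1=\opbw(F^{-1}E_cF^{-1})=E_c$. This is false: for $\xi$-independent symbols the asymptotic expansion does reduce to the pointwise product (all Poisson-bracket terms vanish), but the composition theorem only gives $\opbw(a)\opbw(b)=\opbw(ab)+R$ with $R$ a \emph{nonzero} smoothing remainder — paraproducts do not compose exactly. Hence your $\Psi_1$ satisfies $\Psi_1^\top E_c\Psi_1=E_c+R_2$ with $R_2$ smoothing \emph{of homogeneity $2$}, so it is neither linearly symplectic (Definition \ref{LS}), as item $(ii)$ requires, nor even linearly symplectic up to homogeneity $N$ (Definition \ref{LSUTHN}), since the error does not have homogeneity $>N$. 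Consequently Lemma \ref{coniugazionehamiltoniana} cannot be applied as you do, and the symplecticity-up-to-homogeneity-$N$ of the composed map $\bB(U;t)$ claimed in Proposition \ref{teoredu1}, on which Theorem \ref{conjham} later relies, would fail. The same remark makes your invertibility argument (Neumann series from $\opbw(F^{-1})\opbw(F)=\id+R$) only approximate and in need of extra structural checks. The paper circumvents exactly this point: $\Psi_1(U)$ is defined as the time-$1$ flow of the linearly Hamiltonian generator $J_c\opbw(M(U;x))$ with $M=\mathrm{diag}(-\ii m,\ii m)$, $m=-\log(h+g)$, so that Lemma \ref{flow} gives exact invertibility and \emph{exact} linear symplecticity, while Lemma 3.11 of \cite{BFP} guarantees $\Psi_1(U)=\opbw(F^{-1})+R(U)$ and $\Psi_1(U)^{-1}=\opbw(F)+R'(U)$ with smoothing $R,R'$; your symbolic computations then go through verbatim modulo smoothing remainders, which are absorbed into $R(U;t)$.
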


\begin{proof}
By Lemma 3.11 of \cite{BFP}, there exists a  real valued function $m(U;x) \in \Sigma \mathcal{F}^\R_{K,0,1}[r,N]  $ 
(actually 
$m(U;x):= - \log( h(U;x)  + g(U;x))$) 
such that the time 1 flow $\Psi_1(U):=  \Psi^{\tau}(U)|_{\tau=1}$  of 
$$ 
\begin{cases}
\pa_\tau\Psi^{\tau}(U)=
J_c  \, \Opbw{M(U;x)}
 \Psi^{\tau}(U)\\
\Psi^0(U)={\rm Id} \, , 
\end{cases}    \qquad 
M(U;x):=  \begin{bsmallmatrix}
-\im m(U;x) & 0 \\
0 & \im m(U;x)
\end{bsmallmatrix}\, , 
$$
fulfills 
\begin{equation}\label{expo}
\Psi_1(U) = \Opbw{F^{-1}(U;x)}+ R(U) \, , \quad \Psi_1(U)^{-1} = \Opbw{F(U;x)}+ R'(U) \, , 
\end{equation} 
where the matrix of functions $F(U;x)$ is defined in \eqref{diag4} 
 and  $R(U), R'(U) $ are matrices of smoothing operators in $ \Sigma\mathcal{R}^{-\vr}_{K,0,1}\otimes\mathcal{M}_2(\C)$.

Since  the  operator $J_c  \, \Opbw{M(U;x)}$ is linearly Hamiltonian
according to Definition \ref{def:LH}, 
 Lemma \ref{flow} guarantees that  $\Psi_1(U)$ is invertible, linearly symplectic and $\Psi_1(U)^{\pm 1}- \id$ belong to $ \Sigma \mS^0_{K,0,1}[r,N]\otimes \mM_2(\C)$.

Since $ V_0 $ solves \eqref{sisV0} then the variable $ V_1 = \Psi_1 (U) V_0 $ solves
\begin{align}
\notag
\pa_t V_1  & =  \Psi_1(U)  \,  \Big[ J_c \Opbw{
A_\frac32\omega(\xi)} + \frac{\gamma}{2} G(0) \pa_x^{-1} +\vOpbw{ -\ii V^{(2)} \xi}  +J_c \Opbw{A^{(3)}_0}   \Big] \Psi_1(U)^{-1}  \,  V_1 \\
& \ \ +   (\pa_t \Psi_1(U)) \Psi_1(U)^{-1}  \, V_1+ \Psi_1(U)R(U)   \Psi_1(U)^{-1}V_1 \ . 
\label{V1.eq}
\end{align}
Next we compute each term in \eqref{V1.eq}. We begin with 
$  J_c \,  \opbw ( 
A_\frac32\omega(\xi)   )   $.
Using \eqref{expo},  Proposition \ref{composizioniTOTALIs}-$(i)$,
Proposition \ref{composizioniTOTALI}-($i$)  and the explicit form of $A_\frac32 $ in \eqref{Adue},  one computes
 \begin{align}
\notag
  \Psi_1(U)   J_c \Opbw{
A_\frac32 \omega(\x) }   \Psi_1(U)^{-1}
&=\Opbw{F^{-1}}   J_c \,  \Opbw{
A_\frac32 \omega (\xi)} \Opbw{F} +R(U)
\\
 &= \Opbw{ 
 \begin{bmatrix}
 a_\frac32 & b_{-\frac12} \\
 \bar{b_{-\frac12}}^{\vee} & \bar{a_\frac32}^\vee
 \end{bmatrix}
 } 
+ R(U)
\label{a2b2}
 \end{align}
where $R(U) $ is a real-to-real matrix of smoothing operators in $  \Sigma \cR^{-\vr}_{K,0,1}[r,N] \otimes\mathcal{M}_2(\C) $ and 
using \eqref{diag1}, \eqref{diag4}, \eqref{diag5}, Proposition \ref{teoremadicomposizione},
we have 
\begin{equation*}
 \begin{aligned}
& a_{\frac32} := 
 -\ii \big[ h \#_\vr (1+f)\omega  \#_\vr h +
 g \#_\vr (1+f)\omega \#_\vr g
+ h \#_\vr  f\omega  \#_\vr g
+ g  \#_\vr  f  \omega \#_\vr  h \big] \\
&b_{-\frac12} := -\ii \big[
 h \#_\vr (1+f)\omega  \#_\vr g + 
g \#_\vr (1+f)\omega  \#_\vr h
 + h \#_\vr  f \omega \#_\vr h
+ g \#_\vr  f\omega  \#_\vr g \big] \, .
 \end{aligned}
 \end{equation*}
 The real-to-real structure of the symbols in \eqref{a2b2} follows also by the last bullet after Definition \ref{def:as.ex}.
By the symbolic calculus rule \eqref{espansione2},  the second and third bullets after Definition 
\ref{def:as.ex} and
\eqref{FAF}
 one has 
 \begin{align*}
& a_\frac32 = 
 -\ii 
  \big[ (h^2+g^2) \, (1+f) + 2hg f \big] \, \omega  
  + a_{-\frac12}= -\ii \lambda \omega + \breve a_{-\frac12}  \\
&b_{-\frac12} = - \im  \big[
2 h g (1+f) 
 + (h^2 + g^2)   f  \big] \omega + \breve b_{-\frac12}= \breve b_{-\frac12}
  \end{align*}
 with symbols $\breve a_{-\frac12}, \breve b_{-\frac12} $ in 
 $  \Sigma \Gamma^{-\frac12}_{K,0,1}[r,N]$.
Then we obtain
 \be \label{Ham.prima:eq}
 \Opbw{ \begin{bmatrix}
 a_\frac32 & b_{-\frac12}\\
 \bar{b_{-\frac12}}^\vee & \bar{a_\frac32}^\vee
 \end{bmatrix}} =
 \vOpbw{ -\ii \lambda \omega  } + J_c \Opbw{A_{-\frac12}} 
 \ee
 where $ A_{-\frac12}$ is a real-to-real 
 matrix of symbols in $  \Sigma \Gamma^{-\frac12}_{K,0,1}[r,N]\otimes \mM_2(\C)$.

 Proceeding similarly one finds that  
\be\label{Ham.seconda:eq}
\Psi_1(U)
 \vOpbw{
 - \ii V^{(2)}\xi  } \Psi_1(U)^{-1} =\Opbw{
\begin{bmatrix}
a_1 & b_0 \\
\bar b_0^\vee & \bar a_1^\vee
\end{bmatrix} }  + R(U)
\ee
where $R(U) $ is a real-to-real matrix of smoothing operators in $  \Sigma \cR^{-\vr}_{K,0,1}[r,N] \otimes\mathcal{M}_2(\C) $ and 
\be
\begin{aligned}\label{Ham.seconda:eq2}
a_1 & :=  h \#_\vr (-\ii V^{(2)}  \xi)    \#_\vr h - g \#_\vr(-\ii V^{(2)}  \xi )\#_\vr g 
 = (h^2 - g^2) ( -\ii V^{(2)}  \xi)  \stackrel{\eqref{detF=1}}{=}  -\ii V^{(2)}  \xi  \\
b_0 & :=  h \#_\vr ( -\ii V^{(2)}  \xi)  \#_\vr g - g \#_\vr  (-\ii V^{(2)}  \xi)  \#_\vr h 
  \in \mF^{\R}_{K,0,1}[r,N]  \, . 
\end{aligned}
\ee
In addition, using   \eqref{expo}, 
the last bullets at the end of Section \ref{sec:para}, Proposition \ref{composizioniTOTALI}-($iv$)
  we obtain that 
\begin{align}
\notag
\Psi_1(U) \left[  \frac{\gamma}{2} G(0) \pa_x^{-1} + J_c \,  \Opbw{
A_0^{(3)}} \right]\Psi_1(U)^{-1}
+ (\pa_t \Psi_1(U)) \Psi_1(U)^{-1}\\
 =  \frac{\gamma}{2} G(0) \pa_x^{-1}+ J_c \Opbw{A_0'}+ R(U;t) 
 \label{Ham.terza:eq}
\end{align}
where $A_0'$ is a real-to-real   matrix of symbols in   $\Sigma \Gamma^0_{K,1,1}[r,N]\otimes \cM_2(\C)$ and $R(U;t)$ is 
a matrix of real-to-real smoothing operators in $ \Sigma \cR^{-\vr}_{K,1,1}[r,N]\otimes \cM_2(\C)$.

Finally, by Proposition \ref{composizioniTOTALIs}, 
  $\Psi_1(U) R(U) \Psi_1(U)^{-1}$ 
  is a matrix of smoothing operators in 
  $ \Sigma \mR^{-\vr}_{K,0,1}[r,N] \otimes \mM_2 (\C)  $.

In conclusion, by \eqref{a2b2}-\eqref{Ham.prima:eq}, 
  \eqref{Ham.seconda:eq}-\eqref{Ham.seconda:eq2} and  \eqref{Ham.terza:eq} we 
  deduce  that system \eqref{V1.eq} has the form 
   \eqref{NuovoParaprod} 
   with a matrix of symbols $A_0^{(4)} := A_{-\frac12} +
    J_c \begin{bsmallmatrix}
    0  & b_0 \\ 
    b_0 &  0 
     \end{bsmallmatrix}
    +  A_0' $ in  $ \Sigma \Gamma^0_{K,1,1}[r,N]
 \otimes \mM_2 (\C) $.  
Note that the paradifferential operators of positive order in \eqref{NuovoParaprod} are  linearly Hamiltonian, whereas $J_c \opbw{(A_0^{(4)})}$ might not be. 
However  the  sum of the operators in the first line of  \eqref{V1.eq}  plus 
 $(\pa_t\Psi_1(U))\Psi_1(U)^{-1}$ 
is a spectrally localized map which is a linearly Hamiltonian operator 
up to homogeneity $N$ by Lemma \ref{coniugazionehamiltoniana},  with a paradifferential structure as in 
\eqref{separato}.
Then by Lemma \ref{spezzamento} 
  we can replace each homogeneous component of $A_0^{(4)}$ with its symmetrized version obtaining  that $ J_c \opbw{(A_0^{(4)})} $ is linearly Hamiltonian 
  up to homogeneity $N$, by adding  another smoothing operator.
\end{proof}

\begin{remark}\label{rem:elliptic}
In view of Lemmata \ref{LemCompl} and \ref{laprimapara} the function $\lambda(U;x)$ in  \eqref{diag2} is  equal to  
$  (1+\eta_x^2)^{-\frac34} $. Therefore 
the symbols $ \pm \im \lambda( U; x) \omega (\xi) $ 
are elliptic also for not small data and system 
 \eqref{NuovoParaprod} is hyperbolic  at order $\frac32$.
This is the well known fact that, in presence of capillarity,  
there is no need of the Taylor sign  condition for the local well-posedness. 
\end{remark}

\subsection{Reduction to constant coefficients of the highest order}

In this section we perform a linearly symplectic change of variable which reduces the highest order paradifferential operator 
$\textup{Op}^{{\scriptscriptstyle{\mathrm BW}}}_{\tt vec}(-\ii \lambda(U;x) \omega(\xi))$  in \eqref{NuovoParaprod} to constant coefficients.

\begin{lemma}[{\bf Reduction of the highest order}]
\label{prop.red2}
 Let $N \in \N_0$ and  $\vr>2(N+1)$. 
Then for any $ K \in \N$ there are $s_0 > 0 $, $r > 0 $ such that for 
any solution $U \in B_{s_0,\R}^K(I;r)$ of  \eqref{complexo}, there exists a real-to-real invertible matrix of spectrally localized maps  $ \Psi_2(U)$ satisfying $ \Psi_2(U)-\uno \in \Sigma \mS^{N+1}_{K,0,2}[r,N] \otimes\mathcal{M}_2(\C) $  and the following holds true:
\\[1mm]
$(i)$ {\bf Boundedness:} The linear map $\Psi_2(U)$ and its inverse 
 are  non--homogeneous maps in $  \mS_{K,0,0}^0 [r]\otimes \mM_2 (\C) $;
\\[1mm]
$(ii)$ {\bf Linear symplecticity:} The map  $ \Psi_2(U) $ is linearly symplectic according to Definition \ref{LS};
\\[1mm]
$(iii)$ {\bf Conjugation:}
If $V_1$ solves \eqref{NuovoParaprod} then  $V_2:= \Psi_2(U)V_1$ solves the system
\begin{equation}\label{coeffcost}
\begin{aligned}
\pa_t V_2 & = 
\vOpbw{ -\ii( 1+\zeta(U))\omega(\xi)-\ii V^{(3)}(U;t,x)\xi} V_2 +  \frac{\gamma}{2} G(0) \pa_x^{-1}V_2 \\
&\ \ 
+J_c\Opbw{A^{(5)}_0(U;t, x, \xi)} V_2 + R(U;t)  V_2
\end{aligned}
\end{equation}
where 
\begin{itemize}
\item $\zeta(U) $ is a $x$-independent function in $ \Sigma \cF^\R_{K,0,2}[r, N]$ and $ \omega(\xi)$ is  defined in  \eqref{omegaxi};  
\item $ V^{(3)}(U;t,x)$ is a real valued function  in $ \Sigma \cF^\R_{K,1,1}[r, N]$;
\item The matrix of symbols  $ A^{(5)}_0(U;t,x,\xi)$ belongs to  $\Sigma \Gamma^0_{K,1,1}[r, N]\otimes \cM_2(\C)$
and the paradifferential operator
$J_c\opbw(A^{(5)}_0)$ is
linearly Hamiltonian up to homogeneity $ N $; 
\item  $R(U;t) $ is a  real-to-real matrix of smoothing operators 
in $ \Sigma \cR^{-\vr+2(N+1)}_{K,1,1}[r, N] \otimes \cM_2(\C)$.
\end{itemize}

\end{lemma}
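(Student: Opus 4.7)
The plan is to conjugate \eqref{NuovoParaprod} by the time-one flow of a linearly Hamiltonian paradifferential operator of order one, whose action at principal order implements a $U$-dependent diffeomorphism $\Phi(U;\cdot)$ of $\T$ chosen to absorb the $x$-dependence of $\lambda(U;x)$ at the top order. First I would determine $\Phi$ algebraically: seeking $\Phi(U;x) = x + \wt\beta(U;x)$ such that the transported symbol $\lambda(U;\cdot)\omega(\xi)$ becomes $x$-independent at the principal level forces, since $\omega(\xi) = \sqrt{\kappa}|\xi|^{3/2}$ modulo order $-\tfrac12$ (cf. \eqref{omega.der}) and a diffeomorphism rescales $\xi$ by $1/\Phi_x$, the identity $\lambda(U;x)/(1+\wt\beta_x(U;x))^{3/2} = 1 + \zeta(U)$. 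Integrating this constraint over $\T$ and using the periodicity of $\wt\beta$ yields
$$1 + \zeta(U) := \Big(\tfrac{1}{2\pi}\int_\T \lambda(U;y)^{2/3}\, \di y\Big)^{3/2},$$
which is $x$-independent, and then $\wt\beta_x(U;x) = (\lambda(U;x)/(1+\zeta(U)))^{2/3} - 1$ (integrated with zero mean) gives $\wt\beta$. Since $\lambda - 1 \in \Sigma\cF^\R_{K,0,2}[r,N]$, both $\zeta(U)$ and $\wt\beta(U;x)$ belong to $\Sigma\cF^\R_{K,0,2}[r,N]$.

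Next I would realize $\Phi$ as the time-one map of a paradifferential flow. Let $\Phi^\tau(U;\cdot)$ be a smooth family with $\Phi^0 = \id$, $\Phi^1 = \Phi$, and define the velocity $\beta(\tau, U;x)$ by $\pa_\tau \Phi^\tau = \beta(\tau, U; \Phi^\tau)$, so that $\beta(\tau,\cdot) \in \Sigma\cF^\R_{K,0,2}[r,N]$ uniformly in $\tau \in [0,1]$. Consider the symbol
$$B(\tau, U;x,\xi) := \begin{pmatrix} 0 & \beta(\tau, U;x)\,\xi \\ -\beta(\tau, U;x)\,\xi & 0 \end{pmatrix} \in \Sigma\Gamma^1_{K,0,2}[r,N] \otimes \cM_2(\C),$$
for which $J_c\opbw(B) = \vOpbw{\ii\,\beta(\tau, U;x)\xi}$ is linearly Hamiltonian (the off-diagonal entry is real and the diagonal vanishes, so \eqref{LHS-c} applies). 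Applying Lemma \ref{flow} with $m = 1$, $p = 2$, $K' = 0$, the time-one flow $\Psi_2(U)$ of \eqref{LinFlow0} is well-defined, invertible, and linearly symplectic, with $\Psi_2(U)^{\pm 1} - \uno \in \Sigma\mS^{N+1}_{K,0,2}[r,N] \otimes \cM_2(\C)$. This establishes items $(i)$ and $(ii)$.

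For item $(iii)$, differentiating $V_2 := \Psi_2(U) V_1$ and conjugating each operator in \eqref{NuovoParaprod} via Proposition \ref{teoremadicomposizione} leads to an Egorov-type computation: the principal symbol of $\Psi_2(U)\opbw(a)\Psi_2(U)^{-1}$ is transported along the Hamiltonian flow of $\beta\xi$, i.e., along the phase-space diffeomorphism $(x,\xi)\mapsto (\Phi(U;x), \xi/\Phi_x(U;x))$. By the choice of $\Phi$ in Step 1, the top-order symbol $-\ii\lambda(U;x)\omega(\xi)$ becomes $-\ii(1+\zeta(U))\omega(\xi)$ modulo symbols of order $\leq 1$ and smoothing operators; the order-one term $-\ii V^{(2)}(U;x)\xi$ becomes $-\ii V_a(U;x)\xi$ with $V_a \in \Sigma\cF^\R_{K,0,1}[r,N]$ modulo order zero; the drift $(\pa_t\Psi_2(U))\Psi_2(U)^{-1}$ contributes $\vOpbw{-\ii V_b(U;t,x)\xi}$ with $V_b \in \Sigma\cF^\R_{K,1,1}[r,N]$, the additional time-derivative count arising from $\pa_t\beta(\tau, U;x)$ which involves $\pa_t U$; and the commutators of $\Psi_2^{\pm 1}$ with $\tfrac{\gamma}{2}G(0)\pa_x^{-1}$ and with $J_c\opbw(A_0^{(4)})$ are paradifferential of order $\leq 0$ plus smoothing. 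Setting $V^{(3)}(U;t,x) := V_a + V_b \in \Sigma\cF^\R_{K,1,1}[r,N]$ and gathering the order-zero residues into $J_c\opbw(A_0^{(5)})$ with $A_0^{(5)} \in \Sigma\Gamma^0_{K,1,1}[r,N]\otimes \cM_2(\C)$ yields \eqref{coeffcost}. The linearly Hamiltonian-up-to-homogeneity-$N$ property of $J_c\opbw(A_0^{(5)})$ follows from Lemma \ref{coniugazionehamiltoniana} applied to the conjugation of the linearly Hamiltonian operator in \eqref{NuovoParaprod} together with the drift, combined with Lemma \ref{spezzamento} to symmetrize each homogeneous component of $A_0^{(5)}$ (at the cost of smoothing corrections absorbed into $R(U;t)$).

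The main technical obstacle is the bookkeeping in the Egorov expansion: since $\Psi_2^{\pm 1}$ are $m$-operators of order $N+1$ (not bounded uniformly in $s$), each composition with the smoothing remainders of \eqref{NuovoParaprod} costs $N+1$ in smoothing order by Proposition \ref{composizioniTOTALIs}-$(i)$; applying this twice (once on each side of the conjugation) produces precisely the loss $2(N+1)$ in the class $\Sigma\cR^{-\vr+2(N+1)}_{K,1,1}[r,N]\otimes \cM_2(\C)$, which dictates the hypothesis $\vr > 2(N+1)$. One must also push the symbolic asymptotic expansion to sufficiently high order to guarantee that the discrepancy between $\Psi_2(U)$ and the pure pull-back by $\Phi(U;\cdot)$ is smoothing of the required order, and propagate the time-regularity index $K' = 1$ coming from the drift $(\pa_t\Psi_2)\Psi_2^{-1}$ consistently through all subsequent compositions.
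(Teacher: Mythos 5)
Your proposal is correct and follows essentially the same route as the paper: the $x$-dependence at order $3/2$ is removed by the time-one flow of the scalar, linearly Hamiltonian transport operator $\vOpbw{\ii\,\beta(\tau,U;x)\xi}$ (Lemma \ref{flow} with $m=1$, giving items $(i)$--$(ii)$ and $\Psi_2^{\pm1}-\uno\in\Sigma\mS^{N+1}_{K,0,2}[r,N]\otimes\cM_2(\C)$), the new symbols are obtained by the Egorov-type conjugation (which the paper makes quantitative by invoking Theorem 3.27 of \cite{BD} rather than Proposition \ref{teoremadicomposizione} alone), the Hamiltonian structure of $J_c\opbw(A_0^{(5)})$ is recovered via Lemmata \ref{coniugazionehamiltoniana} and \ref{spezzamento}, and the loss $2(N+1)$ indeed comes from composing the smoothing remainder on both sides with $\Psi_2^{\pm1}$ through Proposition \ref{composizioniTOTALIs}. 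The only cosmetic difference is that you normalize the forward diffeomorphism, yielding $1+\zeta(U)=\big(\tfrac{1}{2\pi}\int_\T\lambda(U;y)^{2/3}\,\di y\big)^{3/2}$, while the paper imposes the constraint on the inverse diffeomorphism and gets $1+\zeta(U)=\big(\tfrac{1}{2\pi}\int_\T\lambda(U;y)^{-2/3}\,\di y\big)^{-3/2}$; since the lemma only requires $\zeta(U)$ to be real, $x$-independent and in $\Sigma\cF^\R_{K,0,2}[r,N]$, this is immaterial.
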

\begin{proof}
We define the map $ \Psi_2(U)$ as the time 1 flow $\Psi_2(U):= \Psi^{\tau}(U)|_{\tau=1}$ of 
$$
\pa_\tau\Psi^{\tau}(U)=
J_c  \, \Opbw{B(\tau,U; x, \xi)}
 \Psi^{\tau}(U) , \qquad 
\Psi^0(U)={\rm Id} \, ,   
$$
where
$$
 B(\tau,U; x, \xi):=
\begin{psmallmatrix}
 0 &  b(\tau, U;x,\xi )  \\  b(\tau, U;x,-\xi )  & 0  \end{psmallmatrix}, \quad b(\tau,U;x,\x):=  \frac{\beta(U;x)}{1+\tau\beta_x(U;x)}\xi \, , 
$$
and the function $\beta(U;x) $ in $  \Sigma\mF^\R_{K,0,2}[r,N]$ has to be determined. 
As $\beta(U;x)$ is real valued, the operator $J_c  \, \Opbw{ B(\tau,U; \cdot)}$ is linearly Hamiltonian. 
Thus Lemma \ref{flow}, applied with $ m=1$, guarantees that 
$ \Psi_2(U)$ is a spectrally localized map in  $\cS^0_{K,0,0}[r]\otimes \mM_2(\C)$, it is linearly symplectic and  $\Psi_2(U)^\pm - \id $ belongs to 
$  \Sigma \mS^{N+1}_{K,0,2}[r,N]\otimes \mM_2(\C)$.
Note that the  diagonal operator 
$J_c \Opbw{B} = \Opbw{ \im b(\tau, U; x, \xi) } \uno $ is a multiple of the identity and 
hence the flow $\Psi_2(U)$ acts as a scalar operator. 

Since $ V_1 $ solves \eqref{NuovoParaprod},  then the variable  $ V_2=\Psi_2(U)V_1$ solves 
 \be
 \begin{aligned}
 \label{red2.2}
\pa_t V_2 
& = \Psi_2(U)\left[\vOpbw{
 -\ii    \lambda \omega(\xi) - \ii V^{(2)} \x }  +\frac{\gamma}{2} G(0)\pa_x^{-1} +J_c\Opbw{
A_0^{(4)}
} \right] \Psi_2(U)^{-1} V_2 \\
&  +(\pa_t\Psi_2(U)) \circ\Psi_2(U)^{-1}  V_2+ \Psi_2(U)R(U;t) \Psi_2(U)^{-1} V_2 \, . 
 \end{aligned}
 \ee
 We now compute each term in \eqref{red2.2}.
 By Lemma 3.21 of \cite{BD}, 
 the diffeomorphism $ \Phi_U : x \mapsto x + \beta(U;x)$ of $ \T $
  is invertible with inverse  
 $\Phi_U^{-1} \colon y \mapsto y+ \breve \beta(U;y)$ and  
 $  \breve \beta(U;y) $ belongs to $ \Sigma\mF^\R_{K,0,2}[r,N]$.
By Theorem 3.27 of  \cite{BD}
 one has 
 \be
 \begin{aligned}
&\Psi_2(U) \,
\vOpbw{ -\ii   \,  \lambda(U;x) \omega(\xi) 
} \, \Psi_2(U)^{-1}  \label{conhigho} \\
&=
 \vOpbw{\left.\left[ -\ii  \,  
\lambda(U;y) \,  
\omega\Big(\xi \big( 1+ \breve\beta_y(U;y) \big) \Big) \right]\right|_{y = \Phi_U(x)}  
} + J_c \Opbw{A_{-\frac12}}+ R(U) 
\end{aligned}
\ee
 with a diagonal matrix of symbols $A_{-\frac12} $ in 
 $ \Sigma \Gamma_{K,0,1}^{-\frac12}[r,N]\otimes \mM_2(\C)$, 
 and a diagonal matrix of smoothing operators 
 $R(U) $ in $ \Sigma \cR^{-\vr+\frac32}_{K,0,1}[r, N] \otimes \cM_2(\C)$.
Note that
$\omega\big(\xi (1+ \breve\beta_y(U;y))\big)$
is a symbol in $\Sigma \Gamma^{\frac32}_{K,0,0}[r,N]$ by Lemma 3.23 of \cite{BD}. 
 
Now we choose $\breve \beta$ in such a way that the principal symbol 
in \eqref{conhigho}  is $x$-independent.
Since, 
by  \eqref{omega.M.asy}, 
$\omega_{-\frac12}(\xi) := \omega(\xi)-\sqrt{\kappa} |\xi|^{\frac{3}{2}}  
$ is a Fourier multiplier in $ \wt \Gamma^{-\frac12}_0 $ we get 
 \begin{align}
 \label{red2.40}
\lambda(U;y)  \omega\Big(\xi (1+ \breve\beta_y(U;y)) \Big) 
 & =  \lambda(U;y) \sqrt{\kappa} |\xi|^{\frac{3}{2}} 
 \big| 1+ \breve\beta_y(U;y) \big|^{\frac{3}{2}} 
  + \lambda(U;y) \omega_{-\frac12}\big(\xi(1+ \breve\beta_y(U;y))\big)  
 \end{align}
 and we select  $ \breve\beta(U;\cdot)$ so that
 \begin{equation}
 \label{red2.5}
\lambda(U;y)\, \big| 1+ \breve\beta_y(U;y) \big|^{\frac{3}{2}} =  1 + \zeta(U) 
\end{equation}
 with a $y$-independent function $\zeta (U) $. In order to fulfill \eqref{red2.5} we define
 the functions 
$$
\zeta(U):=  \Big(\frac{1}{2\pi}\int_{\mathbb{T}} 
\lambda(U;y)^{-\frac{2}{3}}\di y \Big)^{-\frac{3}{2}} -1 \, , \quad 
 \breve\beta(U;y):= \partial_y^{-1} \left[\Big( \frac{1+\zeta(U)}{\lambda(U;y)}
 \Big)^{\frac{2}{3}}-1\right] 
$$
which belong to $ \Sigma \cF^\R_{K,0,2}[r, N]$. 
By \eqref{red2.5} and since $ \omega(\xi)-\sqrt{\kappa} |\xi|^{\frac{3}{2}}  \in \wt \Gamma^{-\frac12}_0$, 
the expression   \eqref{red2.40} becomes 
\begin{align*}
\lambda(U;y)  \omega\Big(\xi (1+ \breve\beta_y(U;y)) \Big) 
 &  = \sqrt{\kappa}|\xi|^{\frac{3}{2}}  \, (1+\zeta(U)) + 
\lambda(U;y)  \omega_{-\frac12}\big(\xi(1+\breve\beta_y(U;y))\big)  \\
 & = \omega(\xi)  (1+\zeta(U)) + 
a_{-\frac12}
\end{align*}
where  $a_{-\frac12} $ is a real valued symbol  in 
$ \Sigma\Gamma^{-\frac12}_{K, 0, 1}[r, N]$.
Note that we used that
$
\omega_{-\frac12}\big(\xi(1+\breve\beta_y(U;y))\big) \lambda(U;y) 
- \omega_{-\frac12}(\xi) $ is a symbol in $ \Sigma\Gamma^{-\frac12}_{K, 0, 1}[r, N]$.
In conclusion \eqref{conhigho} is 
 \begin{equation}
 \label{red2.6}
 \Psi_2(U)
 \vOpbw{
 -\ii    \lambda \omega(\xi) }  \Psi_2(U)^{-1}  
= 
\vOpbw{- \im (1+\zeta(U)) \omega(\xi) }  +J_c \Opbw{A_{-\frac12}} + R(U) 
 \end{equation}
where $ A_{-\frac12} $ is a diagonal matrix of symbols 
in $  \Sigma \Gamma^{-\frac12}_{K,0,1}[r, N] \otimes \cM_2(\C)$   and 
$R(U) $ is a diagonal  matrix of smoothing operators  in $  \Sigma \cR^{-\vr+\frac32}_{K,0,1}[r, N] \otimes \cM_2(\C)$. 
 
We now compute the other terms in \eqref{red2.2}.
 Again by Theorem 3.27 of  \cite{BD} (and Lemma A.4 of \cite{BFP}) 
\be \label{red2.7}
\Psi_2(U)\,
\vOpbw{
- \ii V^{(2)} \x } 
\, \Psi_2(U)^{-1} 
=
\vOpbw{
 -\ii \breve V(U;x) \xi }  
 + R(U) 
\ee
where  $ \breve V(U;x) $ is a real function in $ \Sigma \cF^\R_{K,0,1}[r, N]$, 
and $R(U) $ is a diagonal matrix of smoothing operators   in $  \Sigma \cR^{-\vr+1}_{K,0,1}[r, N] \otimes \cM_2(\C)$. In addition, 
again by  Theorem 3.27 of  \cite{BD}
 \be \label{red2.54}
 \Psi_2(U)\, \left[ 
 \frac{\gamma}{2} G(0)\pa_x^{-1}
+ J_c \Opbw{A_0^{(4)}} 
  \right] \, \Psi_2(U)^{-1} = 
 \frac{\gamma}{2} G(0)\pa_x^{-1}+ J_c \Opbw{ A_{0}} + R(U)
 \ee
 where $ A_{0} $ is a  real-to-real  matrix of symbols in $ \Sigma \Gamma^{0}_{K,0,1}[r,N] \otimes \cM_2(\C) $ and $R(U)\in\Sigma \mR^{-\vr}_{K,0,1}[r,N] $.
Moreover, by Lemma A.5 of \cite{BFP} 
 \be\label{detipsi2}
(\pa_t\Psi_2(U)) \circ\Psi_2(U)^{-1}  = 
  \vOpbw{
-\ii g(U;t, x) \xi   
}  + R(U;t) 
\ee
 where $g(U;t,x) $ is a real  function in $ \Sigma \cF^\R_{K,1,1}[r, N]$ and  $R(U;t) $
 is a matrix of real-to-real 
 smoothing operators in $  \Sigma \cR^{-\vr+1}_{K, 1, 1}[r, N]\otimes \cM_2(\C)$. 
 Finally  $\Psi_2(U)R(U;t) \Psi_2(U)^{-1}$ in \eqref{red2.2} is a matrix of 
 real-to-real smoothing operators in $ \Sigma \mR^{-\vr+2(N+1)}_{K,1,1}[r,N] \otimes \cM_2(\C)$, by Proposition \ref{composizioniTOTALIs}.

In conclusion, 
by 
\eqref{red2.6},  \eqref{red2.7}, \eqref{red2.54},
\eqref{detipsi2}, 
 we 
  deduce  that system \eqref{red2.2} has the form 
   \eqref{coeffcost} 
   with 
   $V^{(3)}:= \breve V + g  $
   and
   $  A_0^{(5)}:=  A_{-\frac12} +  A_{0} $.
Note that the paradifferential operators of positive order in \eqref{coeffcost} are  linearly Hamiltonian, whereas $J_c \opbw{(A_0^{(5)})}$ might not be. 
However the  sum of the operators in the first line of  \eqref{red2.2}  and $(\pa_t\Psi_2(U))\Psi_2(U)^{-1}$ 
is a spectrally localized map which is linearly Hamiltonian up to homogeneity $N$ by Lemma \ref{coniugazionehamiltoniana},  with a paradifferential structure as in 
\eqref{separato}.
Then by Lemma \ref{spezzamento} 
  we can replace each homogeneous component of $A_0^{(5)}$ with its symmetrized version obtaining  that $ J_c \opbw{(A_0^{(5)})} $ is linearly Hamiltonian 
  up to homogeneity $N$, by adding  another smoothing operator.
\end{proof}

\subsection{Block-Diagonalization up to smoothing operators}

The goal of this section is to block-diagonalize system \eqref{coeffcost} up to smoothing remainders. 

\begin{lemma}\label{blockdiag}
Let $N \in \N_0$ and  $\vr>2(N+1)$. 
Then for any $ n\in \N_0$ there is $K':=K'(n) \geq 0$ (one can choose $K'=n$) such that for all $ K\geq K'+1$ there are $s_0 >0$, $r>0$ such that for any  solution 
$U \in B_{s_0,\R}^K(I;r)$ of  \eqref{complexo},  there exists a real-to-real invertible matrix of spectrally localized maps  $ \Phi_n(U)$ satisfying  $ \Phi_n(U)-\uno \in \Sigma \mS_{K,K',1}^0[r,N] \otimes\mathcal{M}_2(\C) $  and the following holds true: 
\\[1mm]
$(i)$ {\bf Boundedness:} Each  $\Phi_n(U)$ and its inverse 
 are  non--homogeneous maps in $  \mS_{K,K',0}^0 [r]\otimes \mM_2 (\C) $;
\\[1mm]
$(ii)$ {\bf Linear symplecticity:} The map  $\Phi_n(U) $ is linearly symplectic up to homogeneity $ N$ according to Definition \ref{LSUTHN};
\\[1mm]
$(iii)$ {\bf Conjugation:} 
 If $V_2$ solves \eqref{coeffcost} then  $V_{n+2}:= \Phi_n(U)V_2$ solves 
\begin{equation}
\label{pheq00n}
\begin{aligned}
\pa_t V_{n+2}
&  =  \vOpbw{-\ii \left[ (1+\zeta(U))\omega(\xi)+ \frac{\gamma}{2} \frac{\tG(\x)}{\x}+ V^{(3)}(U;t,x) \xi 
+  a_0^{(n)}(U;t, x,\xi) \right]  
} V_{n+2}  \\
& + J_c \Opbw{A_{- n}(U; t, x , \xi)} V_{n+2}  + R(U;t) V_{n+2}
\end{aligned}
\end{equation}
where   
\begin{itemize}
\item the Fourier multiplier $\omega(\xi) $ is defined in 
  \eqref{omegaxi}, the $x$-independent real function  
$\zeta(U) \in \Sigma \cF^\R_{K,0,2}[r, N]$ and 
the real function $V^{(3)}(U;t,x)\in \Sigma \cF^\R_{K,1,1}[r, N]$ are defined in Lemma \ref{prop.red2}; 
\item $a_{0}^{(n)}(U;t, x,\xi)$ is a symbol   in $\Sigma \Gamma^{0}_{K,K',1}[r, N]$ and $\Im a_{0}^{(n)}(U;t,x,\xi)$ belongs to $ \Gamma^{0}_{K,K',N+1}[r] $; 
\item The matrix of symbols $  A_{-n}(U;t,x,\xi) $ 
belongs to 
$  \Sigma \Gamma^{-n}_{K, K'+1, 1}[r, N]\otimes \cM_2(\C)$
and 
$J_c\opbw(A_{-n})$ is a linearly Hamiltonian  operator up to homogeneity $ N $;
\item $R(U;t) $ is a real-to-real matrix of smoothing operators 
 in $ \Sigma \cR^{-\vr+2(N+1)}_{K,K'+1, 1}[r, N]\otimes \cM_2(\C)$.
\end{itemize}
\end{lemma}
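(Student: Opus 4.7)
The plan is to proceed by induction on $n \ge 0$. The base case $n = 0$ is obtained by taking $\Phi_0(U) := \id$ and rewriting \eqref{coeffcost} in the form \eqref{pheq00n}: the Fourier multiplier $\tfrac{\gamma}{2}G(0)\pa_x^{-1}$ has symbol $-\ii\tfrac{\gamma}{2}\tG(\xi)/\xi$ which, since $\tG(\xi)/\xi$ is odd in $\xi$, is compatible with the $\vOpbw{\cdot}$ structure. Invoking Lemma \ref{spezzamento} we may assume $A_0^{(5)}$ has the symmetrized form \eqref{aunoa}; then the matrix-off-diagonal (resp.\ matrix-diagonal) part of $A_0^{(5)}$ generates, via $J_c$, a matrix-diagonal (resp.\ matrix-off-diagonal) contribution. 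The former is absorbed into $\vOpbw{-\ii a_0^{(0)}}$, with $\Im a_0^{(0)}$ of homogeneity $\ge N+1$ thanks to the reality (modulo homogeneity $N+1$) of the off-diagonal entries of $A_0^{(5)}$; the latter becomes $A_{-0}$, and $J_c \Opbw{A_{-0}}$ is linearly Hamiltonian up to homogeneity $N$ as a difference of two such operators.

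For the inductive step, assume \eqref{pheq00n} holds at step $n$. The key is to construct a linearly Hamiltonian up to homogeneity $N$ generator $J_c \Opbw{B_n(U;x,\xi)}$ with $B_n$ of order $-n - \tfrac32$ and homogeneity $\ge 1$, such that the time-$1$ flow $\tilde\Phi_n(U)$ (provided by Lemma \ref{flow} with $m = -n - \tfrac32 \le 1$) cancels at leading order the off-block-diagonal piece of $J_c\Opbw{A_{-n}}$. Since $\vOpbw{-\ii(1+\zeta)\omega(\xi)}$ has $x$-independent matrix symbol with diagonal entries of opposite sign, the symbolic homological equation reduces to an algebraic equation solved by $B_n$ matrix-diagonal with entries proportional to the matrix-diagonal part of $A_{-n}$ divided by $2(1+\zeta)\omega(\xi)$; the required $\xi$-parity and reality of $B_n$ modulo homogeneity $N+1$ are inherited from those of $A_{-n}$, which persist through the induction via Lemma \ref{spezzamento}. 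Lemma \ref{flow} then provides the boundedness, linear symplecticity up to homogeneity $N$, and the correct class for $\tilde\Phi_n(U) - \id$; set $\Phi_{n+1}(U) := \tilde\Phi_n(U)\circ\Phi_n(U)$, which remains in the required class by Proposition \ref{composizioniTOTALIs}.

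The effect of conjugation is then computed via Proposition \ref{teoremadicomposizione} and the composition rules of Proposition \ref{composizioniTOTALIs}. By construction, the order-$(-n)$ matrix-off-diagonal block of $J_c\Opbw{A_{-n}}$ cancels, leaving a new remainder of order $-(n+1)$ that defines $A_{-(n+1)}$. All additional order-$(\le 0)$ contributions --- from conjugating $\vOpbw{V^{(3)}\xi}$ and $\vOpbw{a_0^{(n)}}$, from the matrix-diagonal piece of the new order-$(-n-1)$ remainder (split off via Lemma \ref{spezzamento}), and from the time-derivative $(\pa_t\tilde\Phi_n)\tilde\Phi_n^{-1}$ that accounts for the one-step increase of $K'$ --- are absorbed into the updated symbol $a_0^{(n+1)}$. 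Lemma \ref{coniugazionehamiltoniana} ensures the transformed system remains linearly Hamiltonian up to homogeneity $N$, so $\Im a_0^{(n+1)}$ stays in $\Gamma^0_{K,K'+1,N+1}[r]$ and $J_c\Opbw{A_{-(n+1)}}$ is still linearly Hamiltonian up to homogeneity $N$. The smoothing remainder is tracked through Propositions \ref{composizioniTOTALI} and \ref{composizioniTOTALIs}\,(i), with no further loss in $\varrho$ beyond the $2(N+1)$ present at the input coming from Lemma \ref{prop.red2}.

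The principal obstacle is the joint preservation, at every step of the induction, of (a) the linearly Hamiltonian-up-to-homogeneity-$N$ structure, and (b) the $\xi$-parity and reality symmetries of the matrix-diagonal entries of $A_{-n}$ on which the solvability of the algebraic homological equation inside the correct symbol class relies. The combination of Lemma \ref{coniugazionehamiltoniana} (preservation of the Hamiltonian structure under linearly symplectic up-to-homogeneity-$N$ conjugations) with Lemma \ref{spezzamento} (re-symmetrization after each step) is the mechanism that closes the induction.
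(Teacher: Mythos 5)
Your proposal is correct and follows essentially the same route as the paper's proof: induction on $n$, conjugating by the flow of a matrix--anti-diagonal paradifferential generator of order $-n-\tfrac32$ obtained by dividing the off-diagonal block of $J_c\Opbw{A_{-n}}$ by $2(1+\zeta(U))\omega(\xi)$ (the paper's symbol $f_{-n-\frac32}$ in \eqref{pheq6}), absorbing the diagonal order-$(-n)$ block into $a_0^{(n+1)}$, with the $\xi$-parity/reality conditions coming from the linearly Hamiltonian structure up to homogeneity $N$ (restored at each step via Lemma \ref{spezzamento}, cfr.\ Lemma \ref{coniugazionehamiltoniana}), boundedness and linear symplecticity from Lemma \ref{flow}, and $K'$ increasing by one per step. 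The only cosmetic deviations are that the paper's base case is simply $\Phi_0=\uno$, $a_0^{(0)}=0$, $A_{-0}:=A_0^{(5)}$, that the conjugation is computed through the truncated Lie expansions \eqref{Lie1}--\eqref{Lie2} rather than directly from the composition propositions, and that the commutator contributions of $\vOpbw{-\ii V^{(3)}\xi}$ and $\vOpbw{-\ii a_0^{(n)}}$ with the anti-diagonal generator are themselves anti-diagonal of order $\leq -n-\tfrac32$, so they are collected in $A_{-(n+1)}$ rather than in $a_0^{(n+1)}$ (a bookkeeping point that does not affect the argument).
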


\begin{proof}
We prove the thesis by induction on $n\in \N_0$. \\
\noindent{\underline{Case $n  = 0$.}} It follows by \eqref{coeffcost} with 
$a_0^{(0)} := 0$, $A_0:= A_0^{(5)}$, $K' = 0$ and
$ \Phi_0(U):= \uno$. 

\noindent{\underline{Case $n \leadsto n+1$.}}
Suppose   \eqref{pheq00n} holds. We perform a transformation to push the off diagonal part of $J_c \Opbw{ A_{-n}}$ to lower order.
We write the real-to-real matrix $J_c A_{-n}$ as 
\be\label{JA_n}
J_c A_{-n} =  
J_c \begin{pmatrix}
- \im \bar b_{-n}^\vee & - \bar a_{-n}^\vee \\
- a_{-n} &  \im  b_{-n}
\end{pmatrix}
 =  \begin{bmatrix}
\ii a_{-n} & b_{-n} \\
\bar b_{-n}^\vee & -\ii\bar a_{-n}^\vee
\end{bmatrix} , \qquad 
a_{-n}, b_{-n} \in   \Sigma \Gamma^{-n}_{K, K'+1, 1}[r, N]
\ee
where, since  $J_c \Opbw{A_{-n}}$ is linearly Hamiltonian up to homogeneity $N$, by \eqref{LHS-cN} we have 
\begin{equation}
\label{pheq001}
 \Im a_{-n} \in \Gamma^{-n}_{K, K'+1, N+1}[r] \, , \qquad 
b_{-n}  -b_{-n}^\vee \in \Gamma^{-n}_{K, K'+1, N+1}[r] \, . 
\end{equation}
Denote by   $\Phi_{F^{(n)}}(U):=\Phi^\tau_{F^{(n)}}(U;t)\vert_{\tau = 1} $  the time $1$-flow 
of
\be\label{generatorF}
\begin{cases}
\pa_\tau\Phi_{F^{(n)}}^\tau (U)=
 \Opbw{F^{(n)}(U) }
\Phi_{F^{(n)}}^\tau(U)  \\
\Phi_{F^{(n)}}^0(U)={\rm Id} \, ,  
\end{cases}  
\qquad  F^{(n)}(U) :=\begin{bmatrix} 0 & f_{-n-\frac32}\\ \bar f_{-n-\frac32}^\vee & 0\end{bmatrix} 
 \ , 
\ee
where, see  \eqref{JA_n}, 
\be\label{pheq6}
 \qquad 
  f_{-n-\frac32}(U;t, x, \xi) := -  \frac{b_{-n}(U;t, x, \xi)}{ 2 \im \omega(\xi) (1+\zeta(U)) }  \in \Sigma \Gamma^{-n -\frac{3}{2}}_{K, K'+1, 1}[r, N] \, . 
\ee
By \eqref{pheq001}, \eqref{pheq6},  the symbol $f_{-n-\frac32} - f_{-n-\frac32}^\vee  
$ is in $ \Gamma^{-n-\frac{3}{2}}_{K,K'+1,N+1}[r]$ 
and therefore
$\Opbw{F^{(n)}(U)}$ is a linearly Hamiltonian  operator up to homogeneity $N$.
  Lemma
\ref{flow} implies that $\Phi_{F^{(n)}}(U)$ 
 is invertible,  
 linearly symplectic up to homogeneity $N$ and 
$\Phi_{F^{(n)}}(U)^{\pm 1} - \id $ belong to $ \Sigma \mS_{K,K'+1,1}^0[r,N] \otimes\mathcal{M}_2(\C) $.

If  $V_{n+2}$ fulfills \eqref{pheq00n},  the variable $V_{n+3}:= \Phi_{F^{(n)}}(U) V_{n+2}$ solves
\begin{align}
\pa_t  V_{n+3}
& = \Phi_{F^{(n)}}(U)  \, \left[  \vOpbw{\td^{(n)}_{\frac32}}
+ J_c \Opbw{A_{-n}}  \right]\Phi_{F^{(n)}}(U)^{-1}  V_{n+3} \label{pheq0o} \\
& \ \  + (\pa_t \Phi_{F^{(n)}}(U)) \Phi_{F^{(n)}}(U)^{-1} V_{n+3} + \Phi_{F^{(n)}}(U) R(U;t) \Phi_{F^{(n)}}(U)^{-1}  V_{n+3} \,  \label{secondariga}
\end{align}
where, to shorten notation,  we denoted
\be\label{denne}
\td^{(n)}_{\frac32}(U;t,x,\xi):= -\ii \Big( (1+\zeta(U))\omega(\xi)+ \frac{\gamma}{2} \frac{\tG(\x)}{\x}+ V^{(3)}(U;t,x) \xi +  a_0^{(n)}(U;t, x,\xi) \Big) \ . 
\ee
We first expand \eqref{pheq0o}. 
The Lie expansion formula (see e.g.  Lemma A.1 of \cite{BFP})
 says that for any operator  $M(U)$, setting 
 ${\bf \Phi}(U):= \Phi_{F^{(n)}}(U)$, 
 $\bF:= \Opbw{F^{(n)}(U)}$
 and  ${\rm Ad}_{\bF}[M]:= [\bF, M]$,  one has 
\be
\label{Lie1}
{\bf \Phi}(U) M(U)  ({\bf \Phi}(U))^{-1}  = M+\sum_{q=1}^{L}\frac{1}{q!}{\rm Ad}_{\bF}^{q}[M]
 +  \frac{1}{L!} \int_{0}^{1}  (1- \tau)^{L}{\bf \Phi}^\tau (U) {\rm Ad}_{\bF}^{L+1}[M] ({\bf \Phi}^\tau(U))^{-1} 
\di \tau  \, .
\ee
We apply this formula with $L:= L(\varrho) 
\geq (\varrho - n)/(n + \frac32)$  (in this way the integral remainder above is a smoothing operator in $\cR^{-\vr}_{K,K'+1, 1}[r, N]\otimes \cM_2(\C)$), and by 
symbolic calculus in Proposition \ref{teoremadicomposizione},
\eqref{JA_n}, \eqref{generatorF}, \eqref{pheq6} and formula \eqref{omega.der} 
we find
\begin{align}
&\eqref{pheq0o} =\vOpbw{ \td^{(n)}_\frac32 + \ii a_{-n} }  V_{n+3}\notag \\
& \ \ \ +  \Opbw{\begin{bmatrix} 0& \big[  (\bar \td^{(n)}_\frac32)^\vee - \td^{(n)}_\frac32\big] f_{-n-\frac32}+ b_{-n}\notag \\
\big[ \td^{(n)}_\frac32- (\bar \td^{(n)}_\frac32)^\vee \big]  \bar f_{-n-\frac32}^\vee + \bar{b_{-n}}^\vee & 0 \end{bmatrix}}
V_{n+3}\notag\\
&\ \ \   +J_c \Opbw{ A'_{- (n +1)} }  V_{n+3}+ R(U;t) \,  V_{n+3}\label{parastru}
\end{align}
with a real-to-real matrix of symbols 
$ A_{- (n +1)}'  $ in $  \Sigma \Gamma^{-n-1 }_{K,  K'+1, 1}[r, N]\otimes \cM_2(\C)$ and a matrix of smoothing operators $R(U;t) $ in $ \Sigma \cR^{-\vr}_{K,K'+1, 1}[r, N]\otimes \cM_2(\C)$ (we also used  Lemma 
\ref{flow} and Proposition \ref{composizioniTOTALIs} to estimate the Taylor remainder in the Lie expansion formula).
By  \eqref{denne},   
\eqref{pheq6} 
and since  $a_0^{(n)}$ is of order 0, 
we have 
\be\label{parastru10}
\big[ (\bar \td^{(n)}_\frac32)^\vee -\td^{(n)}_\frac32 \big] f_{-n-\frac32} + b_{-n} =:  b_{-n -\frac32} \in  \Sigma \Gamma^{-n-\frac{3}{2}}_{K,K'+1,1}[r,N] \, .
\ee
We pass to the first term in \eqref{secondariga}.
Using the Lie expansion (cfr. Lemma A.1 of \cite{BFP})
\begin{align}
\label{Lie2}
 (\pa_t {\bf \Phi}(U) )({\bf \Phi}(U))^{-1}   
& =\pa_t \bF + 
\sum_{q=2}^{L}\frac{1}{q!}{\rm Ad}_{\bF}^{q-1}[\pa_{t}\bF] \nonumber \\ 
& \quad +\frac{1}{L!}\int_{0}^{1}
(1- \tau )^{L} 
 {\bf \Phi}^\tau (U){\rm Ad}_{\bF}^{L}[ \pa_{t}\bF]({\bf \Phi}^\tau(U))^{-1} \di \tau \, 
\end{align}
with the same $L$ as above, the last bullets at the end of Section \ref{sec:para}, Proposition \ref{composizioniTOTALI}-$(iv)$  and \eqref{pheq6} we get 
\be \label{parastru2}
 (\pa_t \Phi_{F^{(n)}}(U)) \Phi_{F^{(n)}}(U)^{-1} = J_c \Opbw{Q(U;t,x,\xi)} + R(U;t)
\ee
with a real-to-real matrix of symbols 
$Q(U;t,x,\xi)$  in $ \Sigma\Gamma^{-n-\frac32}_{K,K'+2,1}[r,N] \otimes \mM_2(\C)$ and a matrix of smoothing operators $R(U;t) $
in $ \Sigma \cR^{-\vr}_{K,K'+2, 1}[r, N]\otimes \cM_2(\C)$. 

 Thanks to $(i)$ and $(ii)$ of Proposition \ref{composizioniTOTALIs}, the operator $\Phi_{F^{(n)}}(U) R(U;t) \Phi_{F^{(n)}}(U)^{-1}$ in \eqref{secondariga} is a smoothing operator in $ \Sigma \mR^{-\vr+ 2(N+1)}_{K,K'+1,1}[r,N]\otimes \mM_2(\C)$ as well as $R(U;t)$. In conclusion, 
 by \eqref{parastru}, \eqref{parastru10}, \eqref{parastru2}, the system in 
 \eqref{pheq0o}--\eqref{secondariga} has the form 
\begin{align}
 \label{parastru20}
\pa_t  V_{n+3} 
&=\vOpbw{\td^{(n)}_\frac32 + \ii a_{-n}   } V_{n+3}   + J_c \Opbw{ A_{- n-1}} V_{n+3}  + R(U;t) V_{n+3} 
\end{align}
 where 
the matrix of symbols $ A_{- n -1}$  in $  \Sigma \Gamma^{-n-1 }_{K,  K'+2, 1}[r, N]\otimes \cM_2(\C)$ is given by 
$ A_{- n -1} := A'_{-(n+1)} + 
J_c
\begin{bsmallmatrix} 
0 & b_{-n-\frac32}\\
\bar  b_{-n-\frac32}^\vee & 0 
\end{bsmallmatrix}
+
Q
$ and $R(U;t) $ a matrix of smoothing operators  in 
$ \Sigma \cR^{-\vr+2(N+1)}_{K,K'+2, 1}[r, N]\otimes \cM_2(\C)$.
By Lemma \ref{spezzamento},
  we  replace each homogeneous component of $A_{-n-1}$ with its symmetrized version obtaining  that $ J_c \Opbw{A_{-n-1}} $ is linearly Hamiltonian 
  up to homogeneity $N$, by adding  another smoothing operator. 
  
In conclusion, by  \eqref{denne},  system \eqref{parastru20}  has the form  \eqref{pheq00n} at step $n+1$ 
with $a_0^{(n+1)}:= a_0^{(n)}- a_{-n}$ and $K'(n+1):= K'(n)+1$. Note that the imaginary part 
$ \Im a_0^{(n+1)}$
 is in $ \Gamma^0_{K, K', N+1}[r]$ by the inductive assumption and \eqref{pheq001}.

Finally we define $\Phi_{n+1}(U):= \Phi_{F^{(n)}}(U) \circ \Phi_{n}(U)$. The claimed properties of $\Phi_n(U)$ follow by the analogous ones of 
each $\Phi_{F^{(n)}}(U)$ and Proposition \ref{composizioniTOTALIs}.
\end{proof}

For any $\vr> 2(N+1)$ we now choose in Lemma \ref{blockdiag} a number of iterative steps $n:= n_1(\vr)$ such that $ n_1\geq \vr -2(N+1)$,
so that we can incorporate $J_c \Opbw{A_{-n}(U)}$ in the smoothing remainder $R(U)$ in $ \Sigma \cR^{-\vr+2(N+1)}_{K,K'+1, 1}[r, N]\otimes \cM_2(\C)$.
 Thus,   denoting  $Z:= V_{n+2}$, we write system \eqref{pheq00n} 
 as 
\be\label{inizioZ}
\!\! \!  \pa_t Z = \vOpbw{ -\ii \left[ (1+\zeta(U))\omega(\xi) + \frac{\gamma}{2} \frac{\tG(\x)}{\x} +  V^{(3)}(U;t,x) \xi  + a_0(U;t, x,\xi) \right] } Z  + R(U;t) Z 
\ee
where the symbol $a_0(U;t, x,\xi) := a_0^{(n_1)}(U;t,x,\x)$ is given in \eqref{pheq00n} with $n=n_1(\vr)$. 
Thus $a_{0}(U;t, x,\xi)$ belongs to $\Sigma \Gamma^{0}_{K,K',1}[r, N]$ and its imaginary part 
 $\Im a_{0}(U;t,x,\xi)$    in  $ \Gamma^{0}_{K,K',N+1}[r] $ with $ K'= n_1(\vr)$.

\subsection{Reduction to constant coefficients up to smoothing operators}

The goal of this section is to reduce the  symbol in the paradifferential operator 
in \eqref{inizioZ} to an $ x $-independent one, up to smoothing operators. 
 
\begin{lemma}\label{conjcost}
Let $N \in \N_0$ and  $\vr>3(N+1)$. 
Then for any $ n\in \N_0$ there is $K'':=K''(\vr,n)>0$ (one can choose $K''=K'(n_1(\vr))+n$) such that for all $ K\geq K''+1$ there are $s_0 >0$, $r>0$ such that for any  solution 
$U \in B_{s_0,\R}^K(I;r)$ of  \eqref{complexo},   there exists a real-to-real invertible matrix of spectrally localized maps  $ \mF_n(U)$ satisfying  $ \mF_n(U)-\uno \in \Sigma \mS^{(N+1)/2}_{K,K'',1}[r,N] \otimes \cM_2(\C) $  and the following holds true:
\\[1mm]
$(i)$ {\bf Boundedness:} Each  $\mF_n(U)$ and its inverse 
 are  non--homogeneous maps in $  \mS_{K,  K'',0}^0 [r]\otimes \mM_2 (\C) $;
\\[1mm]
$(ii)$ {\bf Linear symplecticity:} The map  $\mF_n(U) $ is linearly symplectic up to homogeneity $ N$ according to Definition \ref{LSUTHN}.
\\[1mm]
$(iii)$ {\bf Conjugation:} If $ Z $ solves \eqref{inizioZ} then  
$ Z_{n}:= \mF_n(U) Z $ solves
\begin{equation}
\label{pheq00n2}
\pa_t Z_{n}  = \vOpbw{ \ii \, \tm_{\frac32}^{(n)}(U;t, \xi)+\ii a_{-\frac{n}{2}}(U;t,x,\xi)}Z_{n} + R(U;t) Z_{n}
\end{equation}
with the $x$--independent symbol 
\be \label{emmenne}
\tm_{\frac32}^{(n)}(U;t,\x):=-\left[ (1+\zeta(U))\omega(\xi)+ \frac{\gamma}{2} \frac{\tG(\xi)}{\xi} +  \mathtt V(U;t) \xi + \tb_{\frac12}(U;t)|\xi|^{\frac12} \right]+  \tb_0^{(n)}(U;t, \xi)
\ee
where
\begin{itemize}
\item  the $x$--independent function $\zeta(U) \in \Sigma \cF^\R_{K,0,2}[r, N]$ is defined in Lemma \ref{prop.red2} and  $\omega(\xi) $ 
in  \eqref{omegaxi}; 
\item  the function $\mathtt V(U;t) \in \Sigma \cF^\R_{K,1,2}[r, N]$ is 
$ x $-independent;  
\item  the function $\tb_{\frac12}(U;t) \in \Sigma\cF^\R_{K,2,2}[r, N]$ is 
$ x $-independent;  
\item the symbol $\tb_{0}^{(n)}(U;t, \xi) \in \Sigma \Gamma^{0}_{K,K'',2}[r, N]$
 is $x$--independent and its imaginary part $\Im \tb_{0}^{(n)}(U;t, \x) $ is in $ \Gamma^{0}_{K,K'',N+1}[r]$; 
\item the symbol  $a_{-\frac{n}{2}}(U;t, x,\xi)$ belongs to $\Sigma \Gamma^{-\frac{n}{2}}_{K,K''+1,1}[r, N]$ and its imaginary part $ \Im a_{-\frac{n}{2}}(U;t, x,\xi) $ is in $ \Gamma^{-\frac{n}{2}}_{K,K''+1,N+1}[r]$;
\item  $R(U;t) $ is a real-to-real matrix of smoothing operators in 
$ \Sigma \cR^{-\vr+3(N+1)}_{K,K''+1, 1}[r, N]\otimes \cM_2(\C)$.
\end{itemize}
\end{lemma}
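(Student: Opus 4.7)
The plan is to argue by induction on $n\in\N_0$, building $\cF_n(U)$ as a composition of time-$1$ flows of paradifferential operators of the form $\vOpbw{\ii g^{(k)}(U;t,x,\xi)}$ for suitable real symbols $g^{(k)}$ of decreasing orders. At each step, the reduction of the $x$-dependent part of a symbol $p$ to its $x$-average $\langle p\rangle_x$ rests on solving, at leading order, the homological equation
\[
(1+\zeta(U))\,\omega'(\xi)\,\pa_x g^{(k)}(U;t,x,\xi) \;=\; p(U;t,x,\xi)-\langle p(U;t,\cdot,\xi)\rangle_x.
\]
By \eqref{omega.der} the coefficient $(1+\zeta)\omega'(\xi)$ is elliptic of order $1/2$ (with the smoothed-out version as in the bullets after Definition \ref{quantizationtotale}), so that $g^{(k)} := \pa_x^{-1}\!\big[(p-\langle p\rangle_x)/\big((1+\zeta)\omega'(\xi)\big)\big]$ is a well-defined symbol of order $\mathrm{ord}(p)-1/2$ in the class $\Sigma\Gamma^{\mathrm{ord}(p)-1/2}_{K,K'+1,1}[r,N]$, with $\Im g^{(k)}$ of homogeneity $\geq N+1$ inherited from $\Im p$. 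Then Lemma \ref{flow} applied to the generator $\vOpbw{\ii g^{(k)}}$ (of non-positive order for $k$ large enough) produces a linearly symplectic up to homogeneity $N$, spectrally localized, invertible time-$1$ flow with the mapping properties listed in $(i)$--$(ii)$.

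For the base case $n=0$, one applies the procedure three times in succession: first with $p = V^{(3)}(U;t,x)\xi$, producing the $x$-independent drift $\mathtt V(U;t)\xi$ and a generator of order $1/2$; then with $p$ equal to the order-$1/2$ $x$-dependent symbol created by the first commutator, producing $\tb_{1/2}(U;t)|\xi|^{1/2}$ and a generator of order $0$; finally with the order-$0$ $x$-dependent residue (which includes $a_0$), producing the $x$-independent symbol $\tb_0^{(0)}(U;t,\xi)$ and a generator of order $-1/2$. All the $x$-dependent by-products of negative order are collected into $a_0^{(0)}$ in $\Sigma\Gamma^0_{K,K'',1}[r,N]$. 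For the inductive step $n\leadsto n+1$ one applies the homological reduction with $p := a_{-n/2}$, producing $g_{-(n+1)/2}\in\Sigma\Gamma^{-(n+1)/2}_{K,K''+1,1}[r,N]$. Setting $\tb_0^{(n+1)} := \tb_0^{(n)} + \langle a_{-n/2}\rangle_x$, the residual $x$-dependent symbol $a_{-(n+1)/2}$ is then read off from the subleading terms in the symbolic expansion of Proposition \ref{teoremadicomposizione} and from the Lie-series remainders of \eqref{Lie1}--\eqref{Lie2}, truncated at an $L=L(\varrho,n)$ large enough that the integral tail is smoothing in $\Sigma\cR^{-\varrho+3(N+1)}_{K,K''+1,1}[r,N]\otimes\cM_2(\C)$.

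The preservation of the linearly Hamiltonian structure up to homogeneity $N$ at each step follows from the parity (in $\xi$) and reality of $g^{(k)}$ modulo $N+1$-homogeneous contributions, which force $\vOpbw{\ii g^{(k)}}$ into the form \eqref{LHS-cN}; Lemmata \ref{flow}$(ii)$ and \ref{coniugazionehamiltoniana} then ensure that the conjugated system stays in the paradifferential form of \eqref{pheq00n2}. The lower-order non-symmetric contributions created by symbolic calculus are re-symmetrized via Lemma \ref{spezzamento} and absorbed into $a_{-(n+1)/2}$ or into the smoothing remainder, and composing with the previous $\cF_n$ by means of Proposition \ref{composizioniTOTALIs} yields $\cF_{n+1}$ with the stated class $\Sigma\mS^{(N+1)/2}_{K,K'',1}[r,N]\otimes\cM_2(\C)$.

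The main technical obstacle is the precise bookkeeping of time regularity and of the smoothing loss. Each conjugation consumes one $t$-derivative of $U$ through the term $(\pa_t \Phi_{G^{(n)}})\Phi_{G^{(n)}}^{-1}$ in the conjugation formula (as in \eqref{detipsi2}), forcing $K''(n+1)=K''(n)+1$ and thus the choice $K''=K'(n_1(\vr))+n$. Furthermore, each application of Proposition \ref{composizioniTOTALIs}$(i)$ to compose spectrally localized and smoothing operators costs an additional smoothing order bounded by $N+1$, which is why the hypothesis $\varrho>3(N+1)$ is needed: it absorbs the $2(N+1)$ already paid in Lemma \ref{blockdiag} together with the further $(N+1)$ consumed by the present conjugations, leaving the final smoothing remainder in the announced class $\Sigma\cR^{-\vr+3(N+1)}_{K,K''+1,1}[r,N]\otimes\cM_2(\C)$.
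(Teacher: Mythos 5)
Your proposal follows essentially the same route as the paper's proof: iterated conjugation by time-$1$ flows of real paradifferential generators obtained by inverting the homological equation with elliptic leading coefficient $(1+\zeta(U))\,\omega'(\xi)\approx\tfrac32\sqrt{\kappa}\,(1+\zeta(U))\,|\xi|^{1/2}\sgn\xi$, with Lemma \ref{flow} for boundedness and linear symplecticity, Lie expansions truncated into the smoothing remainder, Lemma \ref{spezzamento} for re-symmetrization, and the same $K''$ and $\vr$ bookkeeping (the only cosmetic difference being that you also reduce the order-$0$ symbol in the base case, whereas the paper sets $\tb_0^{(0)}=0$ and defers that reduction to the first inductive step). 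Just be sure to invoke Remark \ref{rem:symbol} to check that the $x$-averages $\mathtt V(U;t)$, $\tb_{\frac12}(U;t)$ and $\ta_{-\frac n2}(U;t)$ have no linear-in-$U$ component, so that they land in the stated homogeneity-$\geq 2$ classes.
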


\begin{proof}
We transform the equation \eqref{inizioZ} for the variable $Z$. 
 
\noindent{\underline{Case $n = 0$.}} 
\noindent {\bf Reduction to constant coefficients of order 1.}
 We first reduce to constant coefficients the transport term of order $1$ in \eqref{inizioZ}.
Let 
 $\Phi_{\beta_{\frac12 }}(U):=\Phi_{\beta_{\frac12 }}^\tau(U;t)\vert_{\tau = 1} $ be 
 the time $1$-flow 
of
$$
\pa_\tau \Phi_{\beta_{\frac12 }}^\tau(U)=
 \vOpbw{ -\ii \beta_{\frac12}(U;t, x) |\xi|^{\frac12}}
\Phi_{\beta_{\frac12 }}^\tau(U) , \qquad 
\Phi_{\beta_{\frac12 }}^0(U)={\rm Id} \, ,  
$$
where  $\beta_{\frac12} $ is the 
 real function  
 in $  \Sigma\mF^{\R}_{K,1,1}[r, N]$ defined by
 \be
 \begin{aligned}\label{beta12}
& \beta_\frac12(U; t, x)  := \frac{2}{3\sqrt{\kappa} (1+ \zeta(U)) } \, \pa_x^{-1} \left[  \mathtt{V}(U;t) - V^{(3)}(U;t,x) \right] \,  \\
& \mathtt{V}(U;t)  :=  \frac{1}{2\pi} \int_\T  V^{(3)}(U;t,x) \, \di x \, . 
\end{aligned}
 \ee
Note that the real $x$-independent function  
$\mathtt{V}(U;t) $ is in $  \Sigma\mF^{\R}_{K,1,2}[r, N]$ thanks to Remark \ref{rem:symbol}
(it could be also directly verified that the linear component in $U$ of the space average of $V^{(3)}$ vanishes).

By \eqref{LHS-c}
 the operator $  \vOpbw{  -\ii \beta_{\frac12} |\xi|^{\frac12}  }$
 is linearly Hamiltonian.
 By Lemma
\ref{flow}, the flow $\Phi_{\beta_{\frac12 }}(U)$ is a diagonal matrix of 
spectrally localized maps in 
$ \cS^0_{K,1,0}[r]\otimes\mathcal{M}_2(\C)$ with its inverse, 
it is linearly symplectic  and 
$\Phi_{\beta_{\frac12 }}(U)^{\pm 1} - \id $ belong to  $  \Sigma \mS_{K,1,1}^{(N+1)/2}[r,N] \otimes\mathcal{M}_2(\C) $. 

If $ Z $ solves equation \eqref{inizioZ}, then  the variable $ \breve Z :=  \Phi_{\beta_{\frac12 }}(U) Z$ satisfies 
$$
\begin{aligned}
\pa_t  \breve Z  & =   \Phi_{\beta_{\frac12}}(U)  \,  \vOpbw{  -\ii \left[ (1+\zeta(U))\omega(\xi) + \frac{\gamma}{2} \frac{\tG(\x)}{\x} +  V^{(3)} \xi  + a_0 \right]  }\Phi_{\beta_{\frac12}}(U)^{-1}\, \breve Z \\
& \quad + (\pa_t \Phi_{\beta_{\frac12}}(U))  \Phi_{\beta_{\frac12}}(U)^{-1}  \breve Z
+ \Phi_{\beta_{\frac12}}(U) R(U;t) \Phi_{\beta_{\frac12}}(U)^{-1}  \breve Z \, .
\end{aligned}
$$
 Using the Lie expansions in \eqref{Lie1}, \eqref{Lie2} with
 ${\bf \Phi}:= \Phi_{\beta_\frac12}$, $\bF:=  \vOpbw{ -\ii \beta_{\frac12} |\xi|^{\frac12}}$, 
 $L:= L(\varrho) \geq 2(\varrho + 1)$ (so the integral remainders in the Lie expansions are smoothing operators in $\cR^{-\vr}_{K,K', 1}[r, N]\otimes \cM_2(\C)$), Proposition \ref{def:as.ex} and \eqref{omega.der}
   we obtain 
\begin{align}
&\pa_t  \breve Z  = \vOpbw{ -\ii \left[ (1+\zeta(U))\omega(\xi)+ \frac{\gamma}{2} \frac{\tG(\xi)}{\xi}\right] }\,  \breve Z \notag \\
&+  \vOpbw{ -\ii \left[V^{(3)} + \frac{3}{2} \sqrt{\kappa} (\beta_{\frac12})_x (1+\zeta(U))\right]	\xi }\,  \breve Z \label{ordineone}\\
& +
 \vOpbw{-\ii b_{\frac12} (U; t, x) | \xi |^{\frac12}- \ii a_0^{(1)}(U;t, x,\xi)  } \breve Z+ R'(U;t) \breve Z +  \Phi_{\beta_{\frac12}}(U) R(U;t) \Phi_{\beta_{\frac12}}(U)^{-1}  \breve Z \notag
\end{align}
where $R'(U;t)$ belongs to $\Sigma \cR^{-\vr}_{K,K', 1}[r, N]\otimes \cM_2(\C)$ and 
\be \label{b12}
b_{\frac12}(U;t, x) :=
-\frac12  \beta_\frac12 V^{(3)}_x + (\beta_\frac12)_x V^{(3)} - \frac34 \sqrt{\kappa}(1 + \zeta(U)) \left( \frac12 \beta_\frac12 (\beta_\frac12)_{xx}+
 (\beta_\frac12)_x^2 \right)
 + \pa_t \beta_\frac12 
\ee
is a  real valued  function in $ \Sigma \cF^\R_{K,2,1}[r,N]$ 
(use also the last bullets at the end of Section \ref{sec:para} and Proposition \ref{composizioniTOTALI}-$(iv)$), 
  and we collect in  $ a_0^{(1)}(U;t, x,\xi)$ all the symbols in $ \Sigma \Gamma^0_{K,K',1}[r,N]$.  Finally 
by  Proposition \ref{composizioniTOTALIs} we  deduce that  
$ \Phi_{\beta_{\frac12}}(U) R(U;t) \Phi_{\beta_{\frac12}}(U)^{-1} $ is    
 in $ \Sigma \mR^{-\vr +  3(N+1)}_{K,K',1}[r,N] \otimes\mathcal{M}_2(\C) $.
By  \eqref{beta12} the first order term in \eqref{ordineone} is constant coefficient, namely 
$$ 
V^{(3)}(U;t, x) + \frac{3}{2} \sqrt{\kappa} (\beta_{\frac12})_x (U;t,x)(1+\zeta(U))=\mathtt{V}(U;t) \, ,  
$$
and \eqref{ordineone} reduces to 
\be \label{vudoppio0}
\pa_t  \breve Z   =   \vOpbw{  -\ii \left[ (1+ \zeta(U))\omega(\xi) + 
\frac{\gamma}{2} \frac{\tG(\xi)}{\xi}+  \mathtt{V}(U;t) \xi
+  b_{\frac12}|\xi|^{\frac12} +a_0^{(1)} 
 \right]  }  \breve Z    + R(U;t)  \breve Z \, . 
\ee
The paradifferential operators of positive order in \eqref{vudoppio0} are  linearly Hamiltonian, whereas $\textup{Op}^{\scriptscriptstyle {\rm BW}}_{\tt vec}(- \ii  a_0^{(1)} )$ might not be. 
By the usual argument, 
  we  replace each homogeneous component of $a_0^{(1)}$ with its symmetrized version obtaining  that  $\textup{Op}^{ \scriptscriptstyle {\rm BW}}_{\tt vec}( -\ii  a_0^{(1)})$   is  linearly Hamiltonian 
  up to homogeneity $N$, by adding  another smoothing operator.

\noindent {\bf Reduction to constant coefficients of order $\frac12$.}
The next step is to put to constant coefficients the symbol 
$- \ii b_{\frac12}|\xi|^{\frac12} $
in  system \eqref{vudoppio0}.
Let 
 $\Phi_{\beta_{0}}(U):=\Phi_{\beta_{0}}^\tau(U;t)\vert_{\tau = 1} $ be the time
 $1$-flow 
of
$$
\pa_\tau \Phi_{\beta_{0}}^\tau(U)=
 \vOpbw{ \ii \beta_{0}(U;t, x)\,  \sgn \, \xi}
\Phi_{\beta_{0 }}^\tau(U) , \qquad 
\Phi_{\beta_{0 }}^0(U)={\rm Id} \, ,  
$$ 
where the 
 real function $\beta_0 $ in $ \Sigma\mF^{\R}_{K,2,1}[r,N]$  is 
\be\label{b0}
 \begin{aligned}
 &  \beta_0(U;t,x)= \frac{2}{3 \sqrt{\kappa}(1+\zeta(U))} \pa_x^{-1}\left( b_{\frac12}(U;t,x) -  \tb_{\frac12}(U;t)  \right) \, , \\
 &  \tb_{\frac12}(U;t):= \frac{1}{2\pi}\int_\T b_{\frac12}(U;t,x)\, \di x \,  .
\end{aligned}
\ee
Note that the real $x$-independent function $\tb_{\frac12}(U;t) $ is in 
$ \Sigma\mF^{\R}_{K,2,2}[r,N]$ thanks to Remark \ref{rem:symbol} (it also follows by  \eqref{b12} since its linear component in $ U $ comes from $\pa_t \beta_{\frac12}$ which has zero average, see \eqref{beta12}).

By \eqref{LHS-c}, the operator $\vOpbw{\im \beta_0\, \textup{sign}\, \xi}$ is linearly Hamiltonian. 
Hence   by Lemma
\ref{flow}, $\Phi_{\beta_{0 }}(U)$ is a diagonal matrix of spectrally localized maps in 
$ \cS^0_{K,2,0}[r] \otimes\mathcal{M}_2(\C)$ with its inverse, 
it is linearly symplectic  and
$\Phi_{\beta_{0 }}(U)^{\pm 1} - \id $ belong to  $  \Sigma \mS^0_{K,2,1}[r,N] \otimes \mM_2 (\C) $. 

If $  \breve Z$ solves \eqref{vudoppio0} then the variable $ Z_0:= \Phi_{\beta_{0}}(U)  \breve Z$ solves  
\be\label{pheq300}
\begin{aligned}
\pa_t Z_0 &  = \Phi_{\beta_{0}}(U)  \,  \vOpbw{ -\ii \left[ (1+\zeta(U))\omega(\xi)+\frac{\gamma}{2} \frac{\tG(\xi)}{\xi}+\mathtt{V}(U;t) \xi \right]}
\Phi_{\beta_{0}}(U)^{-1}\, Z_0\notag \\
& \ \ + \Phi_{\beta_{0}(U)}  \,  \vOpbw{- \ii b_{\frac12}|\xi|^{\frac12} -\ii a_0^{(1)} }\Phi_{\beta_{0}}(U)^{-1}\, Z_0\notag\\
& \ \ + (\pa_t \Phi_{\beta_{0}}(U))  \Phi^{-1}_{\beta_{0}}(U) Z_0+ \Phi_{\beta_{0}}(U) R(U;t) \Phi_{\beta_{0}}(U)^{-1} Z_0 \, . 
\end{aligned}
\ee
Using the Lie expansions in \eqref{Lie1}, \eqref{Lie2} with
 $ {\bf \Phi}:= \Phi_{\beta_0} $, $ \bF:=  \vOpbw{ \ii \beta_{0} \sign (\xi) }$, 
 $ L := L(\vr) $ large enough so that the integral remainders in the Lie expansions are $\vr$-smoothing operators, and  \eqref{omega.der} 
   we obtain 
   \be
\begin{aligned}
\pa_t Z_0  & =    \vOpbw{ -\ii \left[  (1+\zeta(U))\omega(\xi)+ \frac{\gamma}{2} \frac{\tG(\xi)}{\xi} + \mathtt{V}(U;t) \xi \right]} Z_0  \\
& \ \  + \vOpbw{ \ii \left(\frac32 \sqrt{\kappa}(\beta_0)_x (1+\zeta(U)) - b_{\frac12} \right)|\xi|^{\frac12} + \ii a_0^{(2)} } \, Z_0 + R(U;t)  Z_0 \label{pheq32}
\end{aligned}
\ee
where
$ a_0^{(2)} $ is a symbol  
in $ \Sigma \Gamma^0_{K,K',1}[r,N]$ and $R(U;t) $ is a real-to-real matrix of 
smoothing operators in $  \Sigma \mR^{-\vr+ 3(N+1)}_{K,K',1}[r,N] \otimes \mM_2 (\C) $.
By  
\eqref{b0}
 the symbol of order $\frac12$  in \eqref{pheq32} is constant coefficient, namely 
  $$ 
  \frac32 \sqrt{\kappa}(\beta_0)_x (1+\zeta(U)) - b_{\frac12}
  =- \tb_\frac12(U;t) \, , 
  $$ 
and 
 system \eqref{pheq300} reduces to 
\be \label{vudoppio}
\begin{aligned}
\pa_t Z_0 &  =   \vOpbw{  -\ii \Big[ (1+ \zeta(U))\omega(\xi) + 
\frac{\gamma}{2} \frac{\tG(\xi)}{\xi}+  \mathtt{V}(U;t) \xi  
+ \tb_{\frac12}(U;t)|\xi|^{\frac12} \Big] + \im a_0^{(2)}  } Z_0 \\
& \quad + R(U;t) Z_0 \, . 
\end{aligned}
\ee 
By Lemma \ref{spezzamento},  
we  replace each homogeneous component 
of $\textup{Op}^{\scriptscriptstyle {\rm BW}}_{\tt vec}{
 ( \ii a_0^{(2)}) }$ so that it becomes  linearly Hamiltonian up to homogeneity $N$, i.e. 
 by \eqref{LHS-cN}, it results that  Im $ a_0^{(2)} $ belongs to
 $ \Sigma \Gamma^0_{K,K',N+1}[r,N] $.
  
  So far we have shown 
  that   \eqref{vudoppio} becomes \eqref{pheq00n2} with $ n = 0 $, putting 
$ a_{0} := a_0^{(2)} $
(which we consider as a symbol in $\Sigma \Gamma^0_{K, K''+1, 1}[r,N]$) 
 and $\tb_0^{(0)}:= 0$.  
   We put $\mF_0(U):= \Phi_{\beta_{0}}(U) \Phi_{\beta_{\frac12}}(U)$.\\ 
 \noindent \underline{\bf Case $n \leadsto n+1$.} The proof is by induction on $n$.
Suppose that $Z_n$ is a solution of system  \eqref{pheq00n2}.
 Let  $ \Phi_{F_n}(U): = \Phi_{F_n}^\tau(U;t)\vert_{\tau = 1} $ be the time $1$-flow  
 $$
\pa_\tau \Phi_{F_n}^\tau(U) =  \vOpbw{ \im  \beta_{-\frac{n}{2}-\frac12}(U;t,x,\xi) }  \Phi_{F_n}^\tau(U)  \, , 
\quad 
\Phi_{F_n}^0(U) = \id \, , 
 $$
where
\be\label{betan12}
\begin{aligned}
& \beta_{-\frac{n}{2}-\frac12}(U;t,x,\x):=-\frac{2\, \sgn \, \xi }{3\sqrt{\kappa}(1+\zeta(U)) | \xi|^{\frac12}} \pa_x^{-1} \left( a_{-\frac{n}{2}}(U;t,x,\xi)-\ta_{-\frac{n}{2}}(U;t) \right)   \, ,    \\
& 
\ta_{-\frac{n}{2}}(U;t):= \frac{1}{2\pi} \int_{\T} a_{-\frac{n}{2}}(U;t,x,\xi)\di x \, . 
\end{aligned}
\ee
By the inductive assumption, the symbol   $a_{-\frac{n}{2}}$ belongs to 
$ \Sigma \Gamma^{-\frac{n}{2}}_{K,K''+1,1}[r,N]$ and has imaginary part in $ \Gamma^{-\frac{n}{2}}_{K,K'' +1,N+1}[r]$.
Then the
$x$-independent symbol $\ta_{-\frac{n}{2}}$ belongs to
$  \Sigma \Gamma^{-\frac{n}{2}}_{K,K''+1,2}[r,N]$ thanks to Remark \ref{rem:symbol}
and $\textup{Im}\, \ta_{-\frac{n}{2}} \in  \Gamma^{-\frac{n}{2}}_{K,K'' +1,N+1}[r]$.
It follows that  the symbol 
$\beta_{-\frac{n}{2}-\frac12}$
belongs to  $ \Sigma \Gamma^{-\frac{n}{2}-\frac12}_{K,K''+1,1}[r,N]$ and has 
 imaginary part in $\Gamma^{-\frac{n}{2}-\frac12}_{K,K'' +1,N+1}[r]$.
 
Therefore by \eqref{LHS-cN} the operator 
 $\textup{Op}^{\scriptscriptstyle {\rm BW}}_{\tt vec}(\im \beta_{-\frac{n}{2}- \frac12})$ is linearly Hamiltonian up to homogeneity $N$.
By Lemma \ref{flow}, the flow $ \Phi_{F_n}(U) $  is invertible, 
 linearly symplectic up to homogeneity $N$ and 
$\Phi_{{F_n}}(U)^{\pm 1} - \id $ belong to 
$ \Sigma \mS_{K,K''+1,1}^0[r,N] \otimes \mM_2 (\C) $. 

If $Z_n$ solves \eqref{pheq00n2} then the variable $ Z_{n+1}:= \Phi_{F_n}(U) Z_n$ solves 
$$
\begin{aligned}
\pa_t Z_{n+1} &  = \Phi_{F_n}(U)  \,  \vOpbw{ \im \, \tm_{\frac32}^{(n)}(U;t,\xi)  + \im a_{-\frac{n}{2}} }
 \Phi_{F_n}(U)^{-1} Z_{n+1}\\
& \ \ + (\pa_t \Phi_{F_n}(U))  \Phi_{F_n}(U)^{-1} Z_n+ \Phi_{F_n}(U) R(U;t) \Phi_{F_n}(U)^{-1} Z_{n+1} \, . 
\end{aligned}
$$
Using the Lie expansions in \eqref{Lie1}, \eqref{Lie2} with
 ${\bf \Phi}:= \Phi_{F_n}$, $\bF:=  \vOpbw{ \ii \beta_{-\frac{n}{2}-\frac12}  }$ 
 with  ($L:=L(\vr)$ large enough), 
the last bullets at the end of Section \ref{sec:para} and Proposition \ref{composizioniTOTALI}-$(iv)$,
 \eqref{emmenne},  \eqref{omega.der}, 
   we obtain  that 
\begin{align}
\notag
\pa_t Z_{n+1}    & =   \vOpbw{
\ii  \,\tm_{ \frac32}^{(n)} + \ii \left[ 
\frac32 \sqrt{\kappa}(\beta_{-\frac{n}{2}-\frac12})_x  (1+\zeta(U)) | \xi|^{\frac12} \sgn\,{\xi} +a_{-\frac{n}{2}}
\right] }Z_{n+1}\\
\notag
& \quad + \vOpbw{\ii a_{-\frac{n}{2}-\frac12} }Z_{n+1} + R(U;t) Z_{n+1}\\
& \quad \stackrel{\eqref{betan12}}{=}  
 \vOpbw{
\ii \, \tm_{ \frac32}^{(n)}  + \im \ta_{-\frac{n}{2}} +\ii a_{-\frac{n}{2}-\frac12} }Z_{n+1} + R(U;t) Z_{n+1} 
\label{enneconstcoeff}
\end{align}
where we collect in $a_{-\frac{n}{2}-\frac12}$ all the symbols in $ \Sigma \Gamma^{-\frac{n}{2}-\frac12}_{K,K''+2,1}[r,N]$, 
and  $ R(U;t)$  is a smoothing operator in 
 $ \Sigma \mR^{-\vr+3(N+1)}_{K, K''+2,1}[r,N] \otimes \mM_2 (\C) $. 
By Lemma \ref{spezzamento}, we   replace each homogeneous component of 
$\textup{Op}^{\scriptscriptstyle {\rm BW}}_{\tt vec}{ ( \ii a_{-\frac{n}{2}-\frac12}) } $ so that it is linearly Hamiltonian up to homogeneity $N$; which,  
 by \eqref{LHS-cN}, is equivalent to  assume that the imaginary part 
$\Im a_{-\frac{n}{2}-\frac12}$ is  a symbol in
$ \Gamma^{-\frac{n}{2}-\frac12}_{K, K''+2,N+1}[r]$. 

System \eqref{enneconstcoeff} has the form  \eqref{pheq00n2} at step $n+1$ with
 $\tb_0^{(n+1)}:= \tb_0^{(n)}+ \ta_{-\frac{n}{2}}$
 (hence 
 $ \tm_{ \frac32}^{(n+1)} :=  \tm_{ \frac32}^{(n)} + \ta_{-\frac{n}{2}}$) and  $K''(n+1):= K''(n)+1$.

The thesis follows with $ \mF_{n+1}(U):= \Phi_{F_n}(U) \mF_n(U)$. The proof of Lemma \ref{conjcost} is complete. 
\end{proof}

\noindent {\bf Proof of Proposition \ref{teoredu1}.}
We now choose in Lemma \ref{conjcost} a number $n:=n_2(\vr) $
of iterative steps  satisfying  $ n_2(\vr) \geq 2( \vr - 3(N+1))$
so that  we incorporate $\textup{Op}^{\scriptscriptstyle {\rm BW}}_{\tt vec}{ ( \ii a_{-\frac{n}{2}}) }$ in the smoothing remainder $R(U;t)$, which belongs to $\Sigma \cR^{-\vr+3(N+1)}_{K,K''+1, 1}[r, N]\otimes \cM_2(\C)$
 with $ K'' =n_1(\vr)+n_2(\vr)\geq 3\vr - 8(N+1)$, with $n_1(\vr)$
 fixed above  \eqref{inizioZ}.
Denoting  $W:= Z_n$, system \eqref{pheq00n2} has the form  
\eqref{teo61} 
with $\tb_0(U;t,\xi):= -\tb_0^{(n_2)}(U;t,\x)$ in 
\eqref{m32}
and  taking  as $ \underline K':= K''+1 =  n_1(\vr)+n_2(\vr)+1$ 
(this proves Remark \ref{rem:pro61}). 
The variable  $ W $ can be written as $ W =  \bB(U;t)U $ where 
$$
\bB(U;t):= \mF_{n_2}(U) \circ \Phi_{n_1}(U)\circ \Psi_2(U)\circ \Psi_1(U)\circ\mG(U) 
$$
and  $\mG(U) $ is the map of Lemma \ref{goodlem}, $ \Psi_1(U) $ is the map of Lemma \ref{diag}, $ \Psi_2(U) $ is the map of Lemma \ref{prop.red2}, $ \Phi_{n_1}(U) $
 is the map of Lemma \ref{blockdiag} with number of steps
 $ n_1:=n_1(\vr)$ and 
 $ \mF_{n_2}(U) $ is the map of Lemma \ref{conjcost} with number of steps 
 $ n_2:= n_2(\vr) $. 
Since 
$$
\begin{aligned}
& \mG(U)- \uno \, , \ \ \  \Psi_1(U)- \uno  \in \Sigma \mS^0_{K,0,1}[r,N] \otimes \mM_2(\C) \, ,
\ \  \Phi_{n_1}(U)-\uno \in \Sigma \mS_{K,K',1}^0[r,N] \otimes\mathcal{M}_2(\C)  \\
& \quad \Psi_2(U)-\uno \in \Sigma \mS^{N+1}_{K,0,2}[r,N] \otimes\mathcal{M}_2(\C) \, , \quad
 \mF_{n_2}(U)-\uno \in \Sigma \mS^{(N+1)/2}_{K,K'',1}[r,N] \otimes \cM_2(\C) \, , 
\end{aligned}
$$
we deduce by Proposition \ref{composizioniTOTALIs} that $\bB(U;t)- \id $ is a 
real-to-real matrix of spectrally 
localized maps in 
$
 \Sigma \mS^{\frac32 (N+1)}_{K,\underline K '-1,1}[r,N]\otimes \mM_2(\C)$. 
 In addition  $\bB(U;t)$ 
is a spectrally localized map in 
$\cS^0_{K, \underline K'-1,0}[r,N] \otimes \mM_2(\C) $ with its inverse, 
 as each map 
$ \mF_{n_2}(U) $, $ \Phi_{n_1}(U)$, $  \Psi_2(U) $, $ \Psi_1(U)$,  $ \mG(U) $ separately.
Finally $ \bB(U;t)$ is linearly symplectic up to homogeneity $N$, being  the composition of linearly symplectic maps  up to homogeneity  $ N $. This completes the proof of 
 Proposition \ref{teoredu1}. 

\section{Hamiltonian Birkhoff normal form}\label{sec:birk}

The main result of this section is  Proposition \ref{birkfinalone} which transforms 
the water waves equations in Hamiltonian Birkhoff normal form.
This is required   to ensure that the life span of the solutions is of order $\e^{-N-1}$ with $N \in \N$. 
So 
from now on we take $N \in \N$.

In Proposition \ref{teoredu1} we have conjugated 
the  water waves  Hamiltonian system  \eqref{complexo} into \eqref{teo61}, by applying the transformation 
 $ W = \bB(U;t)U  $ which is just linearly symplectic up to homogeneity $N$. Thus the transformed system \eqref{teo61} is not Hamiltonian anymore. 
The first goal of this section is to 
construct a nearby transformation which is symplectic up to homogeneity $ N $, according to Definition \ref{def:LSMN}, thus obtaining a Hamiltonian system up to homogeneity $ N $, 
according to Definition \ref{def:ham.N}.

\subsection{Hamiltonian correction up to homogeneity $N$}

We first prove the following abstract result, which is a 
direct consequence of Theorem \ref{thm:almost}.

\begin{theorem}\label{conjham} 
Let $p, N \in \N $ with   $p \leq N$, $K, K' \in \N_0 $ with 
$K'+1 \leq K$, $r >0$. 
Let 
$ Z = \bM_0(U;t)U $ with $ \bM_0(U;t) \in \cM^0_{K,K',0}[r]\otimes \mM_2(\C) $
as in \eqref{Z.M0}. Assume 
that $ Z(t) $ solves a Hamiltonian system up to homogeneity $N$, 
according to Definition \ref{def:ham.N}. Consider 
\be\label{defPhiUBU}
\Phi(Z):= \bB(Z;t)Z 
\ee
where
\begin{itemize}
\item 
$\bB(Z;t)-\uno$ is a   matrix of spectrally localized maps in
\be \label{c1c2Mteo71}
\bB(Z;t) - \uno  \in \begin{cases}
 \Sigma\cS_{K,K',p}[r,N] \otimes \cM_2(\C) \qquad \qquad 
 \text{if} \quad  \bM_0(U;t)  = \id \, , \\
  \Sigma\cS_{K,0,p}[\breve r,N] \otimes \cM_2(\C) \, , \, \forall \breve r > 0 \quad \text{otherwise} \, . 
\end{cases}
\ee 
\item $\bB(Z;t)$ is linearly symplectic up to homogeneity $N$, 
according to Definition \ref{LSUTHN}.
\end{itemize}
Then there exists a real-to-real matrix of pluri--homogeneous smoothing operators $ R_{\leq N}( \cdot ) $ in $ \Sigma_p^N \wtcR^{-\vr}_q \otimes \mM_2 (\C) $, 
for any $\varrho >0 $,  such that the non-linear map  
$$ 
Z_+ := \big( \uno + R_{\leq N}(\Phi(Z) \big) \Phi(Z)
$$
is symplectic up to homogeneity $ N $ (Definition \ref{def:LSMN}) and thus $Z_+ $
solves a system which is Hamiltonian up to homogeneity $N$.
\end{theorem}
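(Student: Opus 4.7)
My plan is to reduce the statement to Theorem \ref{thm:almost} applied to the pluri-homogeneous truncation of $\bB(Z;t)$, and then to invoke Lemma \ref{conj.ham.N} with the resulting Darboux-corrected map. Even though $\Phi(Z) = \bB(Z;t)Z$ has a time-dependent, non-homogeneous tail, only its pluri-homogeneous component enters the symplectic-up-to-$N$ condition of Definition \ref{def:LSMN}, so an abstract Darboux correction constructed from the pluri-homogeneous part should suffice to rectify the full map.

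First I would set $\bB_{\leq N}(Z) := \cP_{\leq N}[\bB(Z;t)]$ and $\Phi_{\leq N}(Z) := \bB_{\leq N}(Z)Z$. From \eqref{c1c2Mteo71} one reads off that $\bB_{\leq N}(Z) - \uno \in \Sigma_p^N \wtcS_q \otimes \cM_2(\C)$; moreover, since linear symplecticity up to homogeneity $N$ (Definition \ref{LSUTHN}, condition \eqref{A:sym23}) is a condition on the pluri-homogeneous component only, $\bB_{\leq N}(Z)$ inherits this property from $\bB(Z;t)$. Both hypotheses of Theorem \ref{thm:almost} being met, I would obtain a real-to-real pluri-homogeneous smoothing operator $R_{\leq N}(\cdot) \in \Sigma_p^N \wtcR_q^{-\varrho} \otimes \cM_2(\C)$, for every $\varrho \geq 0$, such that the pluri-homogeneous map $\mD_N(Z) := (\uno + R_{\leq N}(\Phi_{\leq N}(Z)))\,\Phi_{\leq N}(Z)$ is symplectic up to homogeneity $N$.

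Next I would introduce the full $t$-dependent map $\cD(Z;t) := (\uno + R_{\leq N}(\Phi(Z;t)))\,\Phi(Z;t)$, and write it as $M(Z;t)Z$. Splitting $\Phi(Z;t) = \Phi_{\leq N}(Z) + \cP_{>N}[\bB(Z;t)]\,Z$ and expanding $R_{\leq N}$ multilinearly, the composition rules of Propositions \ref{composizioniTOTALIs} and \ref{composizioniTOTALI} put $M(Z;t) - \uno$ into $\Sigma\cM_{K,K',p}[r,N] \otimes \cM_2(\C)$ when $\bM_0(U;t) = \id$, and into $\Sigma\cM_{K,0,p}[\breve r,N] \otimes \cM_2(\C)$ for every $\breve r > 0$ otherwise; this is precisely the hypothesis \eqref{simdico} of Lemma \ref{conj.ham.N}. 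The crucial point is the identification $\cP_{\leq N}[M(Z;t)Z] = \mD_N(Z)$, which holds because the tail $\Phi(Z;t) - \Phi_{\leq N}(Z)$ is a non-homogeneous operator vanishing to order $\geq N+1$ in $Z$ (modulo the $\bM_0$-dependent pre-composition, absorbed by Proposition \ref{composizioniTOTALI}$(iii)$), so its insertion into the pluri-homogeneous smoothing operator $R_{\leq N}$ contributes only to degrees $>N$. Hence $\cD(Z;t)$ is symplectic up to homogeneity $N$ in the sense of Definition \ref{def:LSMN}.

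Finally I would invoke Lemma \ref{conj.ham.N} with $W := Z_+ = \cD(Z;t)$ to conclude that $Z_+$ solves a Hamiltonian system up to homogeneity $N$, generated by the Hamiltonian pulled back along the approximate inverse of $\mD_N$. The step I expect to require the most care is precisely the identification $\cP_{\leq N}[\cD(Z;t)] = \mD_N(Z)$, together with the tracking of the functional class of $M(Z;t) - \uno$ in the two regimes of \eqref{c1c2Mteo71}; this bookkeeping is where the two clauses of the theorem are distinguished and relies on the substitution rules for $\bM_0$-type inhomogeneous maps recorded in Propositions \ref{composizioniTOTALIs}$(iv)$ and \ref{composizioniTOTALI}$(iii)$, which guarantee that any contribution arising from the non-homogeneous part of $\bB(Z;t)$ or from $\bM_0$ falls into the remainder $\mathcal{M}_{K,K'+1,N+1}[r]$ appearing in Definition \ref{def:ham.N}.
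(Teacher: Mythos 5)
Your proposal is correct and follows essentially the same route as the paper's proof: truncate $\bB(Z;t)$ to its pluri-homogeneous part, apply Theorem \ref{thm:almost} to $\Phi_{\leq N}(Z)=\bB_{\leq N}(Z)Z$, check that the full corrected map $\big(\uno+R_{\leq N}(\Phi(Z))\big)\Phi(Z)$ differs from $\mD_N(Z)$ only by a remainder of homogeneity $>N$ in the appropriate class, and conclude with Lemma \ref{conj.ham.N}. The only cosmetic difference is the choice of composition lemmas cited for the bookkeeping of the tail (the paper uses Proposition \ref{composizioniTOTALI}-$(ii)$ and the first bullet after Definition \ref{Def:Maps}), which does not affect the argument.
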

 
\begin{proof}
We decompose 
$ \bB(Z;t) = \bB_{\leq N}(Z) + \bB_{>N}(Z;t) $ where 
$\bB_{\leq N}(Z) := \cP_{\leq N} [\bB(Z;t)] $. 
Note that 
$ \bB_{\leq N}(Z) - \uno  $ is in 
$ \Sigma_p^N\wtcS_q \otimes \mM_2 (\C)    $
and $\bB_{>N}(Z;t) $
is in 
$ \mS_{K,K',N+1}[r] \otimes \mM_2 (\C) $. 
Since $\bB(Z;t)$ is linearly symplectic up to homogeneity $N$,  
its pluri-homogeneous component 
$\bB_{\leq N}(Z) $ is linearly symplectic up to homogeneity $N$ as well. Then 
Theorem \ref{thm:almost} applied to $\Phi_{\leq N}(Z):= \bB_{\leq N}(Z)Z$ implies the existence of pluri-homogeneous smoothing operators $ R_{\leq N}( \cdot ) $ in $  \Sigma_p^N \wtcR^{-\vr}_q \otimes \mM_2 (\C) $ for any  $  \varrho \geq 0 $, 
 such that the nonlinear map 
$ \mD_N(Z):= \big( \uno + R_{\leq N}(\Phi_{\leq N}(Z)) \big)\Phi_{\leq N}(Z) 
$
 is {\it symplectic up to homogeneity} $N$.
We then write
$$
\cD(Z;t):= \big( \uno + R_{\leq N}(\Phi(Z)) \big) \Phi(Z) = \cD_N(Z) + M_{>N}(Z;t)Z
$$ 
where, using  Proposition \ref{composizioniTOTALI}-$(ii)$ and the first bullet after Definition \ref{Def:Maps},  
$$ 
M_{>N}(Z;t) \in 
\begin{cases} 
\cM_{K,K',N+1}[r] \otimes \mM_2 (\C)  \qquad \qquad 
 \text{if} \quad  \bM_0(U;t)  = \id \, , \\
 \cM_{K,0,N+1}[\breve r] \otimes \mM_2 (\C) \, , 
 \, \forall \breve r > 0 \quad \text{otherwise} \, , 
\end{cases}
$$
showing that $\cD(Z;t)$ is symplectic up to homogeneity $N$ as well.
Then Lemma \ref{conj.ham.N} implies the thesis.
\end{proof}

The first application of Theorem \ref{conjham} is to
provide a symplectic correction of the map 
 $\Phi(U):=\bB(U;t)U$ of Proposition  \ref{teoredu1}
 and  to conjugate the 
 Hamiltonian system  \eqref{complexo}  into system 
\eqref{teo62}, which is {\it Hamiltonian up to homogeneity $ N $}.

\begin{proposition}[{\bf Hamiltonian reduction up to smoothing operators}]\label{prop:Z}
Let $N \in \N$ and  {$\vr>c(N) :=3(N+1)+ \frac32 (N+1)^3$.}
Then for any $ K\geq \underline K'$ (fixed in Proposition \ref{teoredu1}) 
there is $s_0 >0, r > 0 $, such that
for any solution $U \in B_{s_0,\R}^K(I;r) $
of  \eqref{complexo},  there exists a real-to-real matrix of pluri--homogeneous smoothing operators $R(U) $ in $ \Sigma_1^N \wtcR^{-\vr'}_q \otimes \mM_2 (\C) $ 
for any $\varrho'\geq 0$, such that defining 
\be \label{zetone}
Z_0:=  \big( \uno+ R( \Phi(U)) \big) \Phi(U),\quad \Phi(U):=\bB(U;t)U \, ,
\ee
where $\bB(U;t)$ is the 
real-to-real matrix of spectrally localized maps   defined in Proposition \ref{teoredu1},
  the following holds true:
  \\[1mm]
  {$(i)$} {\bf Symplecticity:} The non-linear map 
in \eqref{zetone}
  is symplectic up to homogeneity $N$ according to Definition \ref{def:LSMN}.
  \\[1mm]
{$(ii)$} {\bf Conjugation:}
 the variable $Z_0$ solves the {\em Hamiltonian system up to homogeneity $N$}  (cfr. Definition \ref{def:ham.N})  
\be \label{teo62}
\begin{aligned}
\pa_tZ_0 & = - \ii\vOmega(D)Z_0+  \vOpbw{-\ii (\tm_{\frac32})_{\leq N}(Z_0;\xi)- \ii (\tm_{\frac32})_{>N}(U;t,\xi) }Z_0 \\
& \quad  + R_{\leq N}(Z_0)Z_0+ R_{>N}(U;t)U 
\end{aligned}
\ee
where 
\begin{itemize}
\item $\vOmega(D)$ is the diagonal matrix of Fourier multipliers defined in \eqref{diaglin};
\item $( \tm_{\frac32})_{\leq N} (Z_0;\xi)$  is a  real valued symbol, independent of $x $,  in  
 $\Sigma_{2}^N \wt\Gamma^{\frac32}_q$;

\item $( \tm_{\frac32})_{>N} (U;t,\xi)$ is a non--homogeneous symbol, independent of $x$,  in $ \Gamma^{\frac32}_{K,\underline K',N+1}[r]$ with imaginary part $\Im (\tm_{\frac32})_{>N}(U;t,\xi) $ in $  \Gamma^{0}_{K, \underline K',N+1}[r]$;
\item  $R_{\leq N}(Z_0)$  is a real-to-real matrix of smoothing operators in
$\Sigma_{1}^N \wt\cR^{-\varrho+c(N)}_q \otimes \cM_2(\C)$;
\item $R_{>N}(U;t) $ is a real-to-real matrix of non--homogeneous smoothing operators in $ \mR^{-\vr+c(N)}_{K,\underline K',N+1}[r] \otimes \mM_2 (\C) $.
\end{itemize}
{$(iii)$} {\bf Boundedness:}
The variable $Z_0 = {\bf M}_0(U;t)U$ with $\bM_{0}(U;t) \in \mM^0_{K,\underline{K}'-1,0}[r]
 \otimes \mM_2 (\C)  $ and
 for any $s \geq s_0$,  for all $0 < r<r_0(s)$ small enough, 
for  any $U \in B_{s_0}^K(I;r)\cap C_{*\R}^{K}(I;\dot{H}^{s}(\T, \C^2))$,
there is a  constant $C:= C_{s,K}>0$ such that, for all $k=0,\dots, K-\underline K'$,  
\be \label{equivalenzaZU}
C^{-1}\| U\|_{k,s}\leq \|Z_0 \|_{k,s}\leq C \| U\|_{k,s} \, .
\ee
\end{proposition}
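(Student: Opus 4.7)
The plan is to apply Theorem~\ref{conjham} to the Hamiltonian system \eqref{complexo} with the transformation $\Phi(U) = \bB(U;t)U$ provided by Proposition~\ref{teoredu1}. First I would observe that \eqref{complexo} is a Hamiltonian system in the complex sense, hence in particular Hamiltonian up to homogeneity $N$ according to Definition~\ref{def:ham.N}, with the trivial choice $\bM_0(U;t)=\id$. The transformation $\bB(U;t)$ satisfies both hypotheses of Theorem~\ref{conjham}: by Proposition~\ref{teoredu1} it is linearly symplectic up to homogeneity $N$, and $\bB(U;t)-\uno$ belongs to $\Sigma\cS^{\frac32(N+1)}_{K,\underline K'-1,1}[r,N]\otimes\cM_2(\C)$. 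Hence Theorem~\ref{conjham} produces a pluri-homogeneous matrix of smoothing operators $R(\cdot)$ in $\Sigma_1^N\wtcR^{-\varrho'}_q\otimes\cM_2(\C)$ for any $\varrho'\geq 0$, so that the nonlinear map $Z_0 = (\uno+R(\Phi(U)))\Phi(U)$ in \eqref{zetone} is symplectic up to homogeneity $N$. This gives item~(i), and Lemma~\ref{conj.ham.N} immediately yields that $Z_0$ solves a Hamiltonian system up to homogeneity $N$.

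The next step is to extract from this abstract conclusion the explicit paradifferential form \eqref{teo62}. Starting from \eqref{teo61} for $W=\Phi(U)$, the map $\cC(W):=W+R(W)W$ differs from the identity by a smoothing operator, so conjugating $\vOpbw{\ii\tm_{3/2}(U;t,\xi)}$ by $\cC$ only produces smoothing corrections by Proposition~\ref{composizioniTOTALIs}$(i)$. Computing $\pa_t Z_0 = (\id+\di R(W)|_{W}W+R(W))\pa_t W$ and substituting \eqref{teo61}, the constant-coefficient part of the symbol \eqref{m32}, namely $\omega(\xi)+\frac{\gamma}{2}\tG(\xi)/\xi$, gives the linear term $-\ii\vOmega(D)Z_0$ recognized from \eqref{diaglin}. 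The pluri-homogeneous component of $\tm_{3/2}(U;t,\xi)$, originally a function of $U$, is rewritten as a function of $Z_0$ by invoking the approximate inverse of the map $U\mapsto Z_0$ up to homogeneity $N$, as formalized in Lemmata~\ref{lem:conj.fou} and~\ref{NormFormMap0}; the substitution error is absorbed into new smoothing remainders. The high-homogeneity tail $(\tm_{3/2})_{>N}(U;t,\xi)$ and the non-homogeneous smoothing remainder $R_{>N}(U;t)U$ are kept expressed in the $U$ variable, which is harmless since they contribute at order $>N$. The Hamiltonian structure of the resulting system up to homogeneity $N$ then forces $(\tm_{3/2})_{\leq N}(Z_0;\xi)$ to be real valued via the characterization \eqref{LHS-cN}, while the imaginary part of the remainder $(\tm_{3/2})_{>N}$ retains order zero, inherited from the analogous property in Proposition~\ref{teoredu1}.

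For item~(iii), the norm equivalence follows by composition. The map $\bB(U;t)$ and its inverse belong to $\cS^0_{K,\underline K'-1,0}[r]\otimes\cM_2(\C)$, giving $\|\Phi(U)\|_{k,s}\sim\|U\|_{k,s}$ uniformly on a small ball. The correction $R(\Phi(U))\Phi(U)$ is a smoothing vector field vanishing at least linearly in $\Phi(U)$, so for $U$ sufficiently small the estimate \eqref{mappaBflow} applied to $\id+R(\cdot)$ yields $\|Z_0\|_{k,s}\sim\|\Phi(U)\|_{k,s}$, producing \eqref{equivalenzaZU}, and the representation $Z_0=\bM_0(U;t)U$ with $\bM_0(U;t)\in\cM^0_{K,\underline K'-1,0}[r]\otimes\cM_2(\C)$ follows by absorbing $\bB$ and the correction into a single non-homogeneous operator.

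The main obstacle is the precise bookkeeping in the second paragraph: verifying that the substitution of the approximate inverse $U = Z_0+(\text{higher pluri-homogeneous terms})$ into $\tm_{3/2}(U;t,\xi)$ generates only admissible error terms, that is, either smoothing operators, or pure paradifferential contributions of order $\leq\frac{3}{2}$, or $>N$-homogeneous remainders that can be repacked into $R_{>N}(U;t)U$. Tracking this substitution is where the loss of derivatives accumulates: combining the $3(N+1)$ loss already present in Proposition~\ref{teoredu1} with the $\frac32(N+1)$ loss per composition with $\bB-\id$ iterated up to $(N+1)^2$ times through the approximate inverse procedure yields the final threshold $c(N)=3(N+1)+\frac32(N+1)^3$.
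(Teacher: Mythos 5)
Your proposal follows essentially the same route as the paper's proof: apply Theorem \ref{conjham} to $\Phi(U)=\bB(U;t)U$ to get the smoothing symplectic corrector and Hamiltonianity up to homogeneity $N$, then compute the explicit form \eqref{teo62} by substituting an approximate inverse into the homogeneous parts of \eqref{teo61} and conjugating under the smoothing perturbation of the identity via Lemma \ref{NormFormMap0}, and finally obtain \eqref{equivalenzaZU} from the boundedness of $\bB(U;t)^{\pm1}$ together with \eqref{mappaBflow}; your accounting of the loss $c(N)=3(N+1)+\tfrac32(N+1)^3$ matches the paper's. The only cosmetic difference is that the substitution step is carried out in the paper through Lemma \ref{cor.app.inv} and Lemma \ref{sost} (rather than Lemma \ref{lem:conj.fou}), and the realness of $(\tm_{\frac32})_{\leq N}$ is inherited directly from Proposition \ref{teoredu1} and Lemma \ref{sost} instead of being deduced a posteriori from the Hamiltonian structure.
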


\begin{proof}
We  construct the symplectic corrector to the map 
 $ W:=\Phi(U)= \bB(U;t)U$ of Proposition  \ref{teoredu1} by Theorem \ref{conjham}.
Let us check its assumptions. 
By Lemma \ref{LemCompl},  the function $U$ solves the  Hamiltonian system \eqref{complexo}.
By  Proposition \ref{teoredu1}, $\bB(U;t)-\uno $ is a spectrally localized map in $  \Sigma \mS^{\frac32(N+1)}_{K,\underline K'-1,1}[r,N]\otimes \cM_2(\C)$ and $\bB(U;t) $ is linearly symplectic  
up to homogeneity $N$.
So Theorem  \ref{conjham}  (in the case $ {\bf M}_0(U;t) = \uno  $) implies the existence of 
a matrix of pluri--homogeneous smoothing operators 
$ R(W) $ in $ \Sigma_1^N \wtcR^{-\vr'}_q \otimes \mM_2 (\C) $,   for any $\varrho' \geq 0$, 
such that the variable  
\be\label{Z.W}
Z_0 :=  (\uno + R(\Phi(U)))\Phi(U)   =  (\uno+ R(W))W \  
\ee
solves a system which is Hamiltonian up to homogeneity $N$. We now prove that such system has the form \eqref{teo62}.
We will compute it by  transforming  system 
\eqref{teo61} solved by $W(t)$ under the change of variable 
$Z_0 := (\uno+ R(W))W$, exploiting that $R(W)$ is a pluri-homogeneous smoothing operator. 
We first substitute   the variable $U$ with the variable $W$  
in the  homogeneous components up to degree $N$ of both the symbols and the smoothing operator in  \eqref{teo61}. 
We first use Lemma \ref{cor.app.inv} (with $\bM_0(U;t) = \id$ 
and $p=1$) to construct an approximate inverse of $W = \Phi(U)$, getting
\be\label{W.U}
U = \Psi_{\leq N}(W) + M_{>N}(U;t) U \, , 
\qquad \Psi_{\leq N}(W) =W + \breve S_{\leq N}(W)W \, , 
\ee
where $ \breve S_{\leq N}(W) \in \Sigma_1^N \wt \cS_q^{\frac32(N+1)N} \otimes \cM_2(\C)$ and 
$M_{> N}(U;t) $ is a matrix of operators in 
$ \cM_{K,\underline K', N+1}^{\frac32 (N+1)^2}[r]\otimes \cM_2(\C)$.
Next we substitute \eqref{W.U} in the homogeneous components of order $\leq N$ 
in system \eqref{teo61} of 
$$ 
\vOpbw{ \ii \tm_{\frac32}(U;t,\xi)} = 
\vOpbw{ \ii (\tm_{\frac32})_{\leq N}(U) +  \ii (\tm_{\frac32})_{>N}(U;t,\xi)} \, , \  \  \ 
R(U;t) =   R_{\leq N}(U) + R_{>N}(U;t) \, , 
$$ 
  and substitute $W = \bB(U;t)U$ in the term 
$R_{>N}(U;t)W$. 
By  \eqref{teo61},  \eqref{m32}, Lemma \ref{sost} 
(with $Z \leadsto U$, $m'\leadsto \frac32$, $m\leadsto \frac32(N+1)^{{2}}$ and $\vr \leadsto \vr-3(N+1)$) and Proposition \ref{composizioniTOTALI} $(i)$ we obtain
\be\label{W.eq}
\begin{aligned}
\pa_t W & = - \ii\vOmega(D)W+  \vOpbw{-\ii (\wt \tm_{\frac32})_{\leq N}(W;\xi)- \ii (\wt \tm_{\frac32})_{>N}(U;t,\xi) }W \\
& \quad + \wt R_{\leq N}(W)W+ \wt R_{>N}(U;t)U
\end{aligned}
\ee
where\\
\noindent
$\bullet$   $(\wt \tm_{\frac32})_{\leq N} (W;\xi)$  is a  real valued symbol, independent of $x $, in  
 $\Sigma_{2}^N \wt\Gamma^{\frac32}_q$;
 
 \noindent
$\bullet$ $(\wt \tm_{\frac32})_{>N} (U;t,\xi)$ is a non--homogeneous symbol, independent of $x$,  in $ \Gamma^{\frac32}_{K,\underline K',N+1}[r]$ 
given by the sum of the 
old non-homogeneous symbol $\mP_{\geq N+1}(-\tm_{\frac32}(U;t,\xi))$ 
in \eqref{teo61}-\eqref{m32} and a purely real correction coming from formula \eqref{sostbony} (cfr. $a^+_{>N}$) hence its imaginary part $\Im (\wt \tm_{\frac32})_{>N}(U;t,\xi) $ is in $ \Gamma^{0}_{K,\underline K',N+1}[r]$;

\noindent
$\bullet$  $\wt R_{\leq N}(W)$  is a matrix of pluri-homogeneous smoothing operators in
$\Sigma_{1}^N \wt\cR^{-\varrho+c(N)}_q \otimes \cM_2(\C)$ with $c(N)=3(N+1)+\frac32(N+1)^3$;

\noindent 
$\bullet$ $\wt R_{>N}(U;t) $ is  a matrix of  non--homogeneous smoothing operators in $ \mR^{-\vr+c(N)}_{K,\underline K',N+1}[r] \otimes \mM_2 (\C) $.

We finally conjugate system \eqref{W.eq} 
under the change of  variable $Z_0 = W + R(W)W$ defined in \eqref{Z.W}.
 Note that system \eqref{W.eq} fulfills  Assumption {\bf (A)} at pag  \pageref{A} with $p \leadsto 1$, with $W(t)$ replacing $Z(t)$, $\bM_0(U;t)\leadsto \bB(U;t)$ and $ \vr \leadsto \vr -c(N)$.
Then we apply Lemma \ref{NormFormMap0} with the 
smoothing perturbation of the identity 
defined in \eqref{Z.W} (choosing also $ \vr':= \vr+\frac32$) and we deduce 
that $Z(t)$ satisfies system  \eqref{teo62}. 
Item $(iii)$ follows from \eqref{zetone},
the fact that  $\bB(U;t) \in   \mS_{K,\underline K'-1,0}^0 [r]\otimes \mM_2 (\C) $ (Proposition \ref{teoredu1} $(i)$), the fact that $\id + R(Z) \in \mM^0_{K,0,0}[\breve r] \otimes \mM_2 (\C)  $ for any $\breve r>0$ (by Lemma \ref{mappabonetta} and since $R(Z)$ is pluri-homogeneous) and by  Proposition \ref{composizioniTOTALI} items $(iii)$ 
(with $K' \leadsto  \underline{K'}-1$)  and $(i)$.
Finally estimate \eqref{equivalenzaZU} follows combining also \eqref{mappaBflow} and the estimate below \eqref{piovespect} for $ \bB(U;t)$ and $\bB(U;t)^{-1}$.
\end{proof}

\subsection{Super action preserving symbols and Hamiltonians} 

In this section we define the special class of ``super--action preserving" 
$\tSAP$
homogeneous symbols and Hamiltonians 
which will appear in the Birkhoff normal form reduction of the next
Section \ref{sec:BNF}.

\begin{definition}\label{def:SAPindex}
{\bf ($\tSAP$ multi-index)} 
A multi-index 
$(\alpha, \beta) \in \N_0^{\Z\setminus \{0\}} \times \N_0^{\Z\setminus \{0\}} $ 
is {\em super action preserving } if 
\begin{equation} \label{sap}
\alpha_n + \alpha_{-n} = \beta_n + \beta_{-n} \ , \qquad \forall n \in \N \, .
\end{equation}
\end{definition}
A super action preserving multi-index $(\alpha,\beta) $  satisfies $ |\alpha| = |\beta|$ where
$ |\alpha | :=  \sum_{j \in \Z\setminus\{0\}} \alpha_j  $.
If a multi-index $(\alpha, \beta) \in  \N_0^{\Z\setminus \{0\}} \times \N_0^{\Z\setminus \{0\}} $ is not super action preserving, then the set 
\be\label{ennonegotico}
\mathfrak N(\alpha, \beta) := 	\Big\{ n \in \N \, \colon \, 
\alpha_{n} + \alpha_{-n} - \beta_n  -\beta_{-n} \neq 0 \Big\}  
\ee
is not empty 
and, since $ \mathfrak N(\alpha, \beta) \subset \{ n \in \N \, : \, 
\alpha_{n} + \alpha_{-n} + \beta_n  +\beta_{-n}  \neq 0 \} $,  its cardinality satisfies
\be\label{cardNab}
 | \mathfrak N(\alpha, \beta) | \leq |\alpha + \beta| = |\alpha | + | \beta | \, . 
\ee

\begin{definition} {\bf ($\tSAP$  monomial)} \label{passaggioalpha}
Let  $p\in \N$. Given $ (\vec \jmath , \vec \sigma)= (j_a,\sigma_a)_{a=1,\dots, p} \in 
(\Z \setminus \{0\})^p \times \{ \pm \}^p$ we define the multi-index $ (\alpha,\beta) \in \N_0^{\Z \setminus \{0\}} 	\times \N_0^{\Z \setminus \{0\}}  $ with components, for any $k \in \Z \setminus \{0\}$, 
\be\label{defalbe}
\begin{aligned}
& \alpha_k(\vec \jmath, \vec \sigma):= \# \big\{ a = 1, \ldots ,p \, : \, (j_a , \sigma_a) = (k,+) \big\} \, , \\
&  \beta_k(\vec \jmath, \vec \sigma):= \# \big\{ a = 1, \ldots ,p \, : \, (j_a , \sigma_a) = (k,-) \big\} \, .
\end{aligned}
\ee
We say that a monomial of the form 
$ z_{\vec \jmath}^{\vec \sigma} = z_{j_1}^{\sigma_1}\dots z_{j_p}^{\sigma_p} $ is super-action preserving if the associated multi-index $ (\alpha,\beta)= (\alpha(\vec \jmath, \vec \sigma),\beta(\vec \jmath, \vec \sigma))$ is super-action preserving according to Definition \ref{def:SAPindex}.
\end{definition}

We now introduce the subset $ \mathfrak S_p  $ of the indexes of $\fT_p$ defined in \eqref{fTset}
 composed by super-action preserving indexes
\be\label{fSp}
\mathfrak S_p  :=  \Big\{(\vec \jmath , \vec \sigma) \in \fT_p 
\, : \, (\alpha(\vec \jmath, \vec \sigma), \beta(\vec \jmath, \vec \sigma)) \in \N_0^{\Z \setminus \{0\}} 
\times \N_0^{\Z \setminus \{0\}} \ \text{in} \ \eqref{defalbe} 
\ \text{are  super  action   preserving} \Big\} \, . 
\ee
We remark that the multi-index $(\alpha, \beta)$ associated to 
$ (\vec \jmath, \vec \sigma) \in (\Z\setminus \{ 0\} \times \{\pm\})^p$ as in \eqref{defalbe} satisfies  $|\alpha+\beta|=p$ and 
\be \label{zjalpha}
z_{\vec \jmath}^{\vec \sigma}= z^\alpha \bar z^\beta:= \prod_{j \in \Z \setminus \{0\} } z_j^{\alpha_j}{\ov{z_j}}^{\beta_j} = \prod_{n \in \N } z_n^{\alpha_n}z_{-n}^{\alpha_{-n}}
{\ov{z_n}}^{\beta_n}\ov{z_{-n}}^{\beta_{-n}}\,  \, .
\ee
It turns out
\be \label{defOmegakappa00}
\vec \sigma \cdot  \Omega_{\vec \jmath} \, (\kappa)
= 
\sigma_1 \Omega_{j_1}(\kappa)+\dots+ \sigma_p\Omega_{j_p}(\kappa)= 
 (\alpha- \beta) \cdot \vec \Omega(\kappa) = \sum_{k \in \Z  \setminus \{0\}} (\alpha_k- \beta_k)  \Omega_k(\kappa) \, ,
\ee
where we denote 
\be\label{defOmegakappa} 
\vec{\Omega}(\kappa):= \{ \Omega_j(\kappa)\}_{j\in \Z\setminus\{0\}}, \quad  \Omega_{\vec \jmath} \, (\kappa):= ( \Omega_{j_1}(\kappa), \dots, \Omega_{j_p}(\kappa) ) \, .
\ee
\begin{remark}
In view of \eqref{zjalpha} and \eqref{sap} a super action monomial has either the 
integrable  form $ |z_{j_1}|^2 \ldots |z_{j_m}|^2 $ or 
the one described in \eqref{nonintm}  (with not necessarily distinct indexes $j_1, \ldots, j_m$).
\end{remark}

\begin{remark}\label{remino}
If the monomial $ z_{\vec \jmath}^{\vec \sigma}$ is super--action preserving then, for any $j \in \Z \setminus \{0\} $,  the monomial $ z_{\vec \jmath}^{\vec \sigma}z_j \bar z_j$ is super-action preserving as well.
\end{remark}

\smallskip
For any $n \in \N$ we define the {\em super action}
\begin{equation}
\label{sa}
J_n := |z_n|^2 + |z_{-n}|^2 \, .
\end{equation}

\begin{lemma}\label{lemma:Poissonbra}
The Poisson bracket between a  monomial $z_{\vec \jmath}^{\vec \sigma} $ 
and a super-action  $ J_n $, $ n \in \N $,  defined  
 in \eqref{sa},  is 
 \be\label{poisuperaction}
\{ z_{\vec \jmath}^{\vec \sigma}, J_n \} = 
\ii \big(\beta_n + \beta_{-n} - \alpha_n - \alpha_{-n}  \big) \, z_{\vec \jmath}^{\vec \sigma} \, ,
\ee
where $ (\alpha, \beta)= (\alpha(\vec \jmath, \vec \sigma),\beta(\vec \jmath, \vec \sigma))$ is the multi-index defined in \eqref{defalbe}. 
In particular a super action preserving 
monomial $ z_{\vec \jmath}^{\vec \sigma} $
(according to Definition \ref{passaggioalpha})  Poisson commutes with any super action $ J_n $, $ n \in \N $. 
\end{lemma}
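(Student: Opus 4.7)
The plan is to establish \eqref{poisuperaction} by a direct computation starting from the Fourier-coordinate expression of the Poisson bracket \eqref{Pois}, namely
\[
\{F,G\} = \sum_{j \in \Z\setminus\{0\}} \ii \big( \pa_{\bar u_j} F \, \pa_{u_j} G - \pa_{u_j} F \, \pa_{\bar u_j} G \big),
\]
applied to $F = z_{\vec \jmath}^{\vec \sigma}$ and $G = J_n$. The identity in the statement is then a straightforward algebraic consequence of how the partials $\pa_{u_j}, \pa_{\bar u_j}$ (defined in \eqref{defpaupaubar}) act on the monomial, and the ``in particular'' clause drops out immediately from Definition \ref{def:SAPindex}.

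First I would use the representation \eqref{zjalpha} to rewrite $z_{\vec \jmath}^{\vec \sigma} = z^\alpha \bar z^\beta$ where $(\alpha,\beta)$ is the multi-index associated to $(\vec \jmath, \vec \sigma)$ via \eqref{defalbe}. Since $\pa_{u_j} u_k^{\sigma} = \delta_{j,k}$ if $\sigma = +$ and $0$ if $\sigma = -$, and symmetrically for $\pa_{\bar u_j}$, one gets
\[
\pa_{z_j}(z^\alpha \bar z^\beta) = \alpha_j \, z^{\alpha - e_j} \bar z^\beta,
\qquad
\pa_{\bar z_j}(z^\alpha \bar z^\beta) = \beta_j \, z^\alpha \bar z^{\beta - e_j},
\]
where $e_j$ is the unit multi-index at position $j$ (and the expression is interpreted as $0$ when $\alpha_j = 0$, respectively $\beta_j = 0$). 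Writing $J_n = z_n \bar z_n + z_{-n}\bar z_{-n}$, one likewise computes $\pa_{z_j} J_n = \bar z_n \delta_{j,n} + \bar z_{-n} \delta_{j,-n}$ and $\pa_{\bar z_j} J_n = z_n \delta_{j,n} + z_{-n} \delta_{j,-n}$.

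Next I would substitute these four formulas into the Poisson bracket sum. The sum over $j$ collapses to the two contributing indices $j = n$ and $j = -n$: the first piece gives
\[
\sum_j \ii\, \pa_{\bar z_j}(z^\alpha\bar z^\beta)\, \pa_{z_j} J_n
= \ii\big(\beta_n \, z^\alpha \bar z^{\beta - e_n} \bar z_n + \beta_{-n} \, z^\alpha \bar z^{\beta - e_{-n}} \bar z_{-n}\big)
= \ii(\beta_n + \beta_{-n}) \, z^\alpha \bar z^\beta,
\]
and analogously the second piece contributes $-\ii(\alpha_n + \alpha_{-n}) z^\alpha \bar z^\beta$. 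Adding the two gives exactly \eqref{poisuperaction}. There is no real obstacle here — the only point of care is bookkeeping the cases $\alpha_n = 0$ or $\beta_n = 0$, in which $z^{\alpha - e_n}$ or $\bar z^{\beta - e_n}$ formally contain a negative exponent; but in each such case the corresponding coefficient $\alpha_n$ or $\beta_n$ is zero, so the term vanishes and the formula remains valid.

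Finally, if $z_{\vec \jmath}^{\vec \sigma}$ is super-action preserving then by Definition \ref{def:SAPindex} we have $\alpha_n + \alpha_{-n} = \beta_n + \beta_{-n}$ for every $n \in \N$, so the scalar coefficient in \eqref{poisuperaction} vanishes and $\{z_{\vec \jmath}^{\vec \sigma}, J_n\} = 0$ for every $n \in \N$. This proves the ``in particular'' statement.
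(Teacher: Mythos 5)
Your proposal is correct and follows essentially the same route as the paper: both compute the partials $\pa_{z_j}$, $\pa_{\bar z_j}$ of the monomial written as $z^\alpha\bar z^\beta$ via \eqref{zjalpha} and of $J_n$, insert them into the Fourier expression \eqref{Pois} of the Poisson bracket, and observe that the sum collapses to $j=\pm n$, after which the super-action-preserving condition \eqref{sap} gives the vanishing of the bracket. Your extra remark on the degenerate cases $\alpha_n=0$ or $\beta_n=0$ is harmless bookkeeping and does not change the argument.
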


\begin{proof}
We write the monomial $ z_{\vec \jmath}^{\vec \sigma} = z^\alpha \bar z^\beta $
as in \eqref{zjalpha}. Then,  
for any $n\in \N$ and $j,k\in \Z \setminus \{0\} $, one has 
\be\label{poissonsa}
\begin{aligned}
&\pa_{z_j} (z_{\vec \jmath}^{\vec \sigma})= \alpha_j z_j^{\alpha_j-1} { \ov{z_j}^{\beta_j}  } \prod_{k \not= j } z_k^{\alpha_k} \bar{z_k}^{\beta_k}, \quad \pa_{\bar{ z_j}} 
(z_{\vec \jmath}^{\vec \sigma})= 
\beta_j z_j^{\alpha_j}  \bar{z_j}^{\beta_j-1}  \prod_{k \not= j } z_k^{\alpha_k} \bar{z_k}^{\beta_k} \, , 
\\
&\pa_{z_j} J_n= \begin{cases}
 \bar{z_j} & j=\pm n \\ 0 & j\not= \pm n \, , 
\end{cases}\quad \pa_{\bar{ z_j}} J_n= \begin{cases} z_j & j=\pm n \\ 0 & j\not= \pm n \, . 
\end{cases}
\end{aligned}
\ee
Then  by \eqref{Pois} and \eqref{poissonsa} we deduce 
\eqref{poisuperaction}.
\end{proof}

We now define a super action preserving  Hamiltonian.  

\begin{definition}\label{sapham} {\bf ($\tSAP$ Hamiltonian)}  Let $ p \in \N_0 $. 
A $(p+2)$--homogeneous super action preserving Hamiltonian $H^{(\tSAP)}_{p+2}(Z)$ is a real function of the form 
$$
H^{(\tSAP)}_{p+2}(Z)=\frac{1}{p+2} \sum_{\substack{(\vec \jmath_{p+2},\vec \sigma_{p+2})\in\mathfrak S_{p+2}} } H_{\vec \jmath_{p+2}}^{\vec \sigma_{p+2}} z_{\vec \jmath_{p+2}}^{\vec \sigma_{p+2}} 
$$
where $ \mathfrak{S}_{p+2}$ is defined as in \eqref{fSp}.
A pluri-homogeneous  super action preserving Hamiltonian is a finite sum of  
homogeneous super action preserving Hamiltonians. 
A Hamiltonian vector field is super action preserving if it is generated by a
super action preserving Hamiltonian.
\end{definition}

We now define a super action preserving symbol. 

\begin{definition}\label{sapsym} {\bf ($\tSAP$ symbol)} 
 Let $ p \in \N_0 $ and $m \in \R$. 
For $ p \geq 1 $ a  real valued,  $p$--homogeneous {\em super action preserving symbol} of order $ m  $ is a symbol $\tm_p^{({\tSAP })}(Z;\xi) $ in $ \wt\Gamma^m_p$, independent of $x$, of the form 
\be \label{symSAP}
\tm_p^{({\tSAP })}(Z;\xi)=
\sum_{({\vec{\jmath}_p},{\vec{\sigma}_p})\in \mathfrak S_p} M_{\vec{\jmath}_p}^{\vec{\sigma}_p}(\xi) z_{\vec{\jmath}_p}^{\vec{\sigma}_p } \, .
\ee 
For $ p = 0 $ we say that any  symbol in  $ \wt\Gamma^m_0 $ is 
 super action preserving.
A pluri-homogeneous  super action preserving symbol is a finite sum of  
homogeneous super action preserving symbols.
\end{definition}

\begin{remark}\label{nonsuper}
A super action preserving symbol has even degree $p$ of homogeneity.
Indeed, if $z_{\vec{\jmath}_p}^{\vec{\sigma}_p} $ is super-
action preserving then $(\alpha,\beta)$ defined in \eqref{defalbe} satisfies 
$ |\alpha| = |\beta|$ and $ p= |\alpha+\beta| = 2| \alpha|$ is even.
\end{remark}

Given a super action preserving symbol we associate a 
super action preserving  Hamiltonian according to the following lemma. 

\begin{lemma}\label{lem:ham.sap}
Let $ p \in \N_0 $, $ m \in \R $.  
If $(\tm^{({\tSAP })})_p(Z;\xi)$ is a $p$--homogeneous super action preserving symbol 
in $ \wt\Gamma^m_p $ according to Definition \ref{sapsym} then 
$$
H^{(\tSAP)}_{p+2} (Z):=\Re \Big\langle \Opbw{(\tm^{({\tSAP })})_p(Z;\xi)} z , \bar z\Big \rangle_{\dot L^2_r}
$$
is a $(p+2)$--homogeneous super action preserving Hamiltonian according to Definition \ref{sapham}.
\end{lemma}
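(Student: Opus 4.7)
The plan is to compute $H^{(\tSAP)}_{p+2}(Z)$ explicitly in Fourier coordinates and to verify that every monomial appearing in its expansion is super action preserving of total degree $p+2$. Since the symbol $\tm_p^{(\tSAP)}(Z;\xi)$ is independent of $x$, its Bony-Weyl quantization $\Opbw{\tm_p^{(\tSAP)}(Z;\xi)}$ acts as a $Z$-dependent Fourier multiplier on the basis $\{e^{\im k x}/\sqrt{2\pi}\}_{k \in \Z\setminus\{0\}}$ (up to the cutoff $\chi_p$ appearing in \eqref{regula12}). Inserting the expansion \eqref{symSAP} and applying Plancherel's theorem to the real scalar product \eqref{scpr12hom}, one arrives at the explicit formula
$$
\la \Opbw{\tm_p^{(\tSAP)}(Z;\xi)}\, z , \bar z \ra_{\dot L^2_r} \; = \!\! \sum_{(\vec \jmath_p, \vec \sigma_p) \in \mathfrak{S}_p} \sum_{k \in \Z \setminus\{0\}} \!\! \chi_p(|\vec \jmath_p|, k)\, M_{\vec \jmath_p}^{\vec \sigma_p}(k)\, z_{\vec \jmath_p}^{\vec \sigma_p} z_k \overline{z_k} \, .
$$

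The key algebraic point is that each monomial $z_{\vec \jmath_p}^{\vec \sigma_p} z_k \overline{z_k}$ on the right-hand side is super action preserving: its multi-indices $(\alpha', \beta')$, defined through \eqref{defalbe}, differ from those of the SAP factor $z_{\vec \jmath_p}^{\vec \sigma_p}$ by the same increment $(\mathbf{1}_{\{k\}}, \mathbf{1}_{\{k\}})$, so the identity $\alpha'_n + \alpha'_{-n} = \beta'_n + \beta'_{-n}$ is preserved for every $n \in \N$; this is exactly Remark \ref{remino}. Moreover the overall momentum is $\vec \sigma_p \cdot \vec \jmath_p + k - k = 0$, so the new index $(\vec \jmath_p, k, k, \vec \sigma_p, +, -)$ belongs to $\mathfrak{S}_{p+2} \subset \fT_{p+2}$. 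Taking the real part only adds complex conjugates of such monomials, and complex conjugation swaps $\alpha \leftrightarrow \beta$, which again preserves the SAP identity.

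The remaining conditions for $H^{(\tSAP)}_{p+2}(Z)$ to be a $(p+2)$-homogeneous Hamiltonian in the sense of Definition \ref{sapham} are then routine to verify: reality of $H^{(\tSAP)}_{p+2}$ is automatic from the $\Re$ operation (combined with the reality condition \eqref{realsim} on $M_{\vec \jmath_p}^{\vec \sigma_p}$ that encodes $\tm_p^{(\tSAP)}$ being real valued for $Z \in \dot L^2_\R(\T, \C^2)$); the coefficient bound \eqref{upperbounH} follows from the symbol estimate \eqref{rem:symbol.1} for $M_{\vec \jmath_p}^{\vec \sigma_p}(k)$ together with the support properties of the cutoff $\chi_p$, which enforce $|\vec \jmath_p| \lesssim \langle k \rangle$ on the non-trivial terms; and the symmetry property \eqref{symham} is recovered by the standard symmetrization of the coefficients over permutations of the external index $(\vec \jmath_p, k, k)$, an operation that manifestly preserves the SAP property.

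No serious obstacle is expected in the argument: it is fundamentally algebraic, and the role of the paradifferential cutoff $\chi_p$ is merely to zero out certain Fourier modes without creating new ones, so it cannot destroy the SAP structure. The only mild book-keeping concerns translating between the Weyl and Bony-Weyl quantizations of the $x$-independent symbol and reading off the Fourier coefficients of the resulting polynomial in the $z_k$'s in a manner consistent with the normalization $\frac{1}{p+2}$ appearing in Definition \ref{sapham}.
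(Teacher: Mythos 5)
Your proof is correct and follows essentially the same route as the paper's: expand the quadratic form in Fourier using \eqref{symSAP} and the quantization \eqref{regula12}--\eqref{BW}, observe that each resulting monomial is a SAP monomial times $z_k\overline{z_k}$, and conclude by Remark \ref{remino}, with the reality, symmetry and coefficient bounds being routine. The only point the paper treats separately that you leave implicit is the degenerate case $p=0$, where the expansion \eqref{symSAP} does not apply and the Hamiltonian is directly a series of integrable monomials $z_j\overline{z_j}$.
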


\begin{proof}
By the expression \eqref{symSAP} and  \eqref{BW} we have 
$$
 \int_{\T}\Opbw{\tm_p^{({\tSAP })}(Z;\xi)} \Pi_0^\bot z\cdot \Pi_0^\bot \bar z\, \di x= \sum_{j\in \Z \setminus \{0\}} \sum_{(\vec{\jmath}_p, \vec{\sigma}_p) \in \mathfrak S_p} \chi_p (\vec \jmath_{p},j) M_{\vec{\jmath}_p}^{\vec{\sigma}_p}(j) z_{\vec{\jmath}_p}^{\vec{\sigma}_p} z_j \ov {z_j}
$$
where $\mathfrak S_p$ is defined in 
\eqref{fSp}.
 Then  Remark \ref{remino} implies the thesis. 
 For $ p = 0 $ the 
 Hamiltonian $ H^{(\tSAP)}_{2} (Z) $
 is a series  of integrable monomials $ z_j \ov{z_j} $. The proof of the lemma is complete. 
\end{proof}

\subsection{Birkhoff normal form reduction}\label{sec:BNF}

In this section we  finally  
transform system \eqref{teo62} into its  
Hamiltonian Birkhoff normal form,  up to homogeneity $ N $.

\begin{proposition} {\bf (Hamiltonian Birkhoff normal form)} \label{birkfinalone}
Let $N \in \N$. 
Assume that, for any value of the gravity $ g > 0 $,  
vorticity $ \gamma \in \R $ and depth $ \tth \in  (0,+\infty] $, 
the surface tension coefficient $ \kappa $ is outside the zero measure set
$ {\cal K} \subset (0,+\infty) $ defined in Theorem \ref{nonresfin0}. 

Then there exists $ \underline \varrho$ (depending on $N$) such that, for any $ \varrho \geq  \underline \varrho $, for any  $ K \geq  \underline K' (\varrho) $ (defined in Proposition \ref{teoredu1}),
there exists $ \underline s_0 > 0 $ such that, for any $ s \geq \underline s_0 $ there is $ \underline r_0:=  \underline r_0(s)>0$ such that  for all $ 0 < r < \underline r_0(s) $ small enough, and any solution $ U \in B_{\underline s_0}^K (I; r)\cap C^K_{*\R}(I; \dot H^{s}(\T;\C^2))$ of the water waves system \eqref{complexo},
there exists a 
non--linear map $ \mF_{\tnf}(Z_0)$  
such that:
\\[1mm]
$(i)$ {\bf Simpleticity:}  $ \mF_{\tnf}(Z_0)$ is symplectic up to homogeneity $N$ (Definition \ref{def:LSMN});
\\[1mm]
$(ii)$ {\bf Conjugation:} If $ Z_0 $ solves the system \eqref{teo62} then the variable $ Z:= \mF_{\tnf}(Z_0)$ solves the {\em Hamiltonian system up to homogeneity $N$}  (cfr. Definition \ref{def:ham.N})
\be\label{final:eq}
\begin{aligned}
\pa_tZ & = -\ii \vOmega(D) Z +  J_c \nabla H^{(\tSAP)}_{\frac32}(Z)+J_c \nabla H^{(\tSAP)}_{-\vr}(Z) \\
& \quad +\vOpbw{- \ii (\tm_{\frac32})_{>N}(U;t,\xi)}Z+ R_{>N}(U;t)U 
\end{aligned}
\ee
where 
\begin{itemize}
\item $H^{(\tSAP)}_\frac32(Z) $ is the super action preserving Hamiltonian 
$$
\Re \Big \langle \Opbw{ (\tm_{\frac32}^{(\tSAP)})_{\leq N}(Z;\xi)}z , \bar z\Big \rangle_{\dot L^2_r}
$$
with a pluri homogeneous super action preserving  symbol 
$ ( \tm_{\frac32}^{(\tSAP)})_{\leq N}(Z;\xi)$  in $\Sigma_2^N \wt\Gamma_q^{\frac32}$,
according  to  Definition \ref{sapsym};
\item  $J_c \nabla H^{(\tSAP)}_{-\vr}(Z)$
 is a super action preserving,  Hamiltonian, smoothing vector field in $ \Sigma_{3}^{N+1} \wt \X^{-\vr+ \underline \varrho}_q$ (see Definitions \ref{def:X} and \ref{sapham});
\item  $(\tm_{\frac32})_{>N}(U;t,\xi)$ is a non--homogeneous symbol in $ \Gamma^{\frac32}_{K,\underline K',N+1}[r]$ with imaginary part $\Im (\tm_{\frac32})_{>N}(U;t,\xi) $ in $ \Gamma^{0}_{K,\underline K',N+1}[r]$;
\item  $R_{>N}(U;t)$ is a real-to-real matrix of non--homogeneous smoothing operators 
 in $\mR^{-\vr+ \underline \varrho}_{K,\underline K',N+1}[r] \otimes\mathcal{M}_2(\C) $.
  \end{itemize}
$(iii)$ {\bf Boundedness:} there exists $C:=C_{s,K}>0$ such that   for all  $ 0\leq k\leq  K$ and  any $Z_0\in B_{\underline s_0}^K(I;r)\cap C^{K}_{*\R}(I;\dot H^{s}(\T, \C^2))$  one has 
\be\label{mappaB}
C^{-1} \| Z_0 \|_{k,s}\leq \|\mF_{\tnf}(Z_0) \|_{k,s} \leq 
C\| Z_0 \|_{k,s} \, .
\ee 
and 
 \be\label{equivalenzan}
 {C}^{-1} \| U(t) \|_{\dot H^s} \leq \| Z (t) \|_{\dot H^s} \leq C \| U(t) \|_{\dot H^s} \, , 
\quad \forall  t \in I   \, .
 \ee
\end{proposition}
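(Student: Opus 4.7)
The plan is to construct $\mF_{\tnf}$ as a composition of transformations, organized as an iterative Birkhoff reduction on the degree of homogeneity. Starting from system \eqref{teo62}, which is already Hamiltonian up to homogeneity $N$, I would run a loop for $p = 2, 3, \dots, N$, at each step eliminating the non-super-action-preserving contributions of the $p$-homogeneous paradifferential symbol of order $3/2$ and of the $(p+1)$-homogeneous smoothing vector field, as outlined in the introduction around \eqref{W.bir}--\eqref{eq:W:Bir7}. Each iteration will be split into two substeps.

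\emph{Substep A: symbol normal form.} Writing the current $p$-homogeneous symbol as a Fourier expansion $\tm_{3/2,p}(Z;\xi) = \sum_{(\vec\jmath_p,\vec\sigma_p)} M_{\vec\jmath_p}^{\vec\sigma_p}(\xi)\, z_{\vec\jmath_p}^{\vec\sigma_p}$, I would conjugate by the time-$1$ flow $\cG_{g_p}^\tau$ of a Fourier multiplier as in Lemma \ref{flussoconst}, with $x$-independent real symbol $g_p(Z;\xi)\in\wt\Gamma_p^{3/2}$ whose coefficients solve the homological equation
\[
\ii(\vec\sigma_p\cdot \Omega_{\vec\jmath_p}(\kappa))\,G_{\vec\jmath_p}^{\vec\sigma_p}(\xi) = M_{\vec\jmath_p}^{\vec\sigma_p}(\xi) \qquad \text{for every $(\vec\jmath_p,\vec\sigma_p)\notin\mathfrak S_p$,}
\]
and $G_{\vec\jmath_p}^{\vec\sigma_p}=0$ on $\mathfrak S_p$. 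Here the small divisors $\vec\sigma_p\cdot\Omega_{\vec\jmath_p}(\kappa)$ are controlled from below by $\max(|j_1|,\dots,|j_p|)^{-\tau}$ thanks to the non-resonance conditions \eqref{Nwavei1} provided by Theorem \ref{nonresfin0}, at the cost of losing $\tau$ derivatives; this forces the choice of $\underline\varrho$ and is the source of the regularity budget. After conjugation, the new $p$-homogeneous symbol becomes super-action-preserving (its $\mathfrak S_p$-part is unchanged). Since this flow is only \emph{linearly} symplectic, I would then apply the Darboux corrector of Theorem \ref{conjham} to produce a nearby map that is symplectic up to homogeneity $N$; by Lemma \ref{conj.ham.N}, the conjugated system is again Hamiltonian up to homogeneity $N$, with a paradifferential structure still as in \eqref{teo62}.

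\emph{Substep B: smoothing normal form.} Using Lemma \ref{lem:hamsym} and the fact that the system is Hamiltonian up to homogeneity $N$, the $(p+1)$-homogeneous smoothing remainder can be represented as $J_c\nabla H_{p+2}(Z)$ modulo higher-order terms. I would then apply the Hamiltonian flow (which, being generated by a smoothing Hamiltonian, is genuinely symplectic and smoothing) of a pluri-homogeneous smoothing Hamiltonian $F_{p+2}(Z)$ whose Fourier coefficients again solve a homological equation of the form
\[
\ii\big(\vec\sigma_{p+2}\cdot\Omega_{\vec\jmath_{p+2}}(\kappa)\big) F_{\vec\jmath_{p+2}}^{\vec\sigma_{p+2}} = H_{\vec\jmath_{p+2}}^{\vec\sigma_{p+2}} \qquad \text{for } (\vec\jmath_{p+2},\vec\sigma_{p+2})\notin\mathfrak S_{p+2},
\]
using again Theorem \ref{nonresfin0} to bound the divisors and losing a fixed amount of derivatives, compensated by the $\varrho$ in the smoothing class. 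By Lemma \ref{lem:app.flow.ham} its approximate flow is symplectic up to homogeneity $N$, so Lemma \ref{conj.ham.N} preserves the Hamiltonian structure up to homogeneity $N$, and by Lemma \ref{lemma:Poissonbra} the remaining $\tSAP$ monomials Poisson-commute with all super-actions.

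After $N-1$ iterations all homogeneous pieces of the symbol and of the smoothing Hamiltonian up to degree $N+1$ are super-action-preserving. The composition of all Darboux-corrected transformations defines $\mF_{\tnf}$; its symplecticity up to homogeneity $N$ follows by composing the corresponding statements of Theorem \ref{conjham} and Lemma \ref{lem:app.flow.ham}, and \eqref{final:eq} follows by collecting the high-homogeneity remainders into $(\tm_{3/2})_{>N}$ and $R_{>N}(U;t)U$ using Lemmata \ref{lem:conj.fou} and \ref{NormFormMap0}. The bound \eqref{mappaB} is a consequence of \eqref{mappaBflow} applied to each smoothing step together with the boundedness of the paradifferential flows in Lemma \ref{flussoconst}; \eqref{equivalenzan} then follows by combining \eqref{mappaB} with the estimate \eqref{equivalenzaZU} for $Z_0$ in terms of $U$.

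The main obstacle will be the interplay between two competing requirements: the generators of the symbol normal form have order $3/2$, so their flows are unbounded operators and only linearly symplectic, while the Hamiltonian structure must be recovered after each such step. This is exactly the reason we cannot proceed as in the integrable case \cite{BFP} by an a-posteriori identification, and instead must invoke the Darboux corrector of Theorem \ref{conjham} at every iteration, checking that the $\varrho$-smoothing corrector does not spoil the paradifferential structure already achieved. A careful bookkeeping of how much regularity is consumed at each step--by the small divisors \eqref{Nwavei1} and by the finite-order smoothing of the Darboux correctors--will determine the value of $\underline\varrho$ in terms of $N$.
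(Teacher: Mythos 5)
Your proposal follows essentially the same route as the paper: an induction on the homogeneity degree in which, at each step, the $p$-homogeneous $x$-independent symbol is normalized by the flow of a Fourier multiplier solving a homological equation with divisors controlled by Theorem \ref{nonresfin0}, followed by the Darboux corrector of Theorem \ref{conjham} to restore the Hamiltonian structure up to homogeneity $N$, and then the $p$-homogeneous smoothing remainder (a Hamiltonian vector field, by difference) is normalized by the approximate flow of a smoothing Hamiltonian vector field, which by Lemma \ref{lem:app.flow.ham} is already symplectic up to homogeneity $N$; the bookkeeping via Lemmata \ref{lem:conj.fou} and \ref{NormFormMap0} and the estimates \eqref{mappaBflow}, \eqref{equivalenzaZU} are used exactly as in the paper.

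One point is missing as written: your loop runs over $p=2,\dots,N$ with the smoothing substep acting on the $(p+1)$-homogeneous vector field, i.e.\ on smoothing operators of homogeneity $p\geq 2$, so the homogeneity-$1$ smoothing operator $R_1(Z_0)=\cP_1[R_{\leq N}(Z_0)]$ of \eqref{teo62} (a quadratic vector field, i.e.\ cubic Hamiltonian) is never treated. Since $\tSAP$ monomials have even degree (Remark \ref{nonsuper}), this cubic term cannot be super-action preserving, so if it is left untouched the final system does not have the claimed form: $J_c\nabla H^{(\tSAP)}_{-\vr}$ would not lie in $\Sigma_3^{N+1}\wt\X_q^{-\vr+\underline\varrho}$ and the super-actions would not be conserved by the truncated system. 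The paper handles this in a preliminary ``Step $1$'': before the main loop it eliminates $R_1(Z_0)Z_0$ by exactly your Substep~B mechanism at degree $1$, solving the homological equation with the three-wave divisors of \eqref{lowerj} (no symbol substep is needed there because $x$-independent symbols of homogeneity $1$ vanish, cfr.\ Remark \ref{rem:symbol}, and no Darboux correction is needed since the approximate flow of the smoothing Hamiltonian vector field is already symplectic up to homogeneity $N$). Adding this initialization, your argument coincides with the paper's proof.
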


\begin{proof}
We divide the proof in $N$ steps. At the $p$-th step, $ 1 \leq p \leq N $, 
we reduce the $p$--homogeneous  component of the Hamiltonian vector field which appears in the equation to its super action preserving part, up to higher homogeneity terms.
\\[1mm]
\noindent {\bf Step $1$: Elimination of the quadratic smoothing remainder in equation \eqref{teo62}.}
\\[1mm]
The $ x$-independent symbol  $(\tm_{\frac32})_{\leq N}(Z_0;\x)$ in \eqref{teo62}  belongs to $ \Sigma_2^N \wt \Gamma^{\frac32}_q$ and the only quadratic component of the vector field in \eqref{teo62} is
$R_1(Z_0)Z_0$ where 
\be \label{R1}
R_1(Z_0):= \cP_1[R_{\leq N}(Z_0)] 
 \in \wt \cR_1^{-\varrho+c(N)} \otimes \cM_2(\C) \ . 
 \ee
 Since system  \eqref{teo62} is  Hamiltonian  up to homogeneity $N$,  $R_1(Z_0)Z_0$ is a Hamiltonian vector field in $\wt \X^{-\varrho + c(N)}_2$
that we expand in Fourier coordinates 
as in \eqref{polvect}
\be\label{R1.exp}
\big(R_1(Z_0)Z_0\big)_k^\sigma= \sum_{
\substack{
(j_1, j_2, k, \sigma_1, \sigma_2, - \sigma) \in \mathfrak T_3
}
} X_{j_1,j_2,k}^{\sigma_1,\sigma_2,\sigma} 
(z_0)_{j_1}^{\sigma_1} (z_0)_{j_2}^{\sigma_2}  \,  .
\ee
In order to remove $ R_1 (Z_0) Z_0 $ from equation \eqref{teo62} 
we perform the change of variable 
 $ Z_1 = \mathtt{F}^{(1)}_{\leq N}(Z_0)$
 where $ \mathtt{F}^{(1)}_{\leq N}(Z_0)$ 
 is the time $1$-approximate flow, 
 given by Lemma \ref{extistencetruflow}, generated by  
the smoothing vector field 
 \be\label{G1}
\big(G_1(Z_0)Z_0\big)_k^\sigma= \sum_{
(j_1, j_2, k, \sigma_1, \sigma_2, - \sigma) \in \fT_3
} G_{j_1,j_2,k}^{\sigma_1,\sigma_2,\sigma} (z_0)_{j_1}^{\sigma_1} (z_0)_{j_2}^{\sigma_2} 
\ee
with  
\be\label{G1+.coeff}
G_{j_1,j_2,k}^{\sigma_1,\sigma_2,\sigma}:= 
\begin{cases} 0 &   \mbox{ if } \ 
(j_1,j_2,k,\sigma_1,\sigma_2,-\sigma) \notin \fT_3 \\
 \displaystyle 
 {\frac{ X_{j_1,j_2,k}^{\sigma_1,\sigma_2,\sigma}}{\ii\big(\sigma_1\Omega_{j_1}(\kappa)+\sigma_2 \Omega_{j_2}(\kappa)- \sigma \Omega_{k}(\kappa)\big)}} &  \mbox{ if } 
 (j_1,j_2,k,\sigma_1,\sigma_2,-\sigma) \in \fT_3 \, .
\end{cases} 
\ee

\begin{lemma}
Let $\kappa \in (0,+\infty) \setminus \cK$. 
Then the vector field 
$G_1(Z_0)Z_0$ in \eqref{G1}, \eqref{G1+.coeff} is a well defined Hamiltonian vector field in $\wt \X^{-\varrho'}_2$ with   $\vr' := \vr - c(N) - \tau$ and where  $\tau$ is defined in Theorem \ref{nonresfin0}.
\end{lemma}

\begin{proof}
We claim  that  for any $\kappa \in (0, + \infty) \setminus \cK$ 
there exist $ \tau, \nu>0$ such that 
\be\label{lowerj}
| \sigma_1\Omega_{j_1}(\kappa)+\sigma_2 \Omega_{j_2}(\kappa)- \sigma \Omega_{k}(\kappa)| > \frac{\nu}{\max\{ |j_1|,|j_2|,|k|\}^\tau} \, ,
\quad \forall (j_1,j_2,k,\sigma_1,\sigma_2,-\sigma) \in \fT_3  \, . 
\ee
Indeed,    
to  any $ (j_1,j_2,k,\sigma_1,\sigma_2,-\sigma)   $ 
we associate the multi--index 
$(\alpha,\beta)$ as in \eqref{defalbe} whose length is $|\alpha+\beta|= 3 $ and satisfies  
$
\sigma_1\Omega_{j_1}(\kappa)+\sigma_2 \Omega_{j_2}(\kappa)- \sigma \Omega_{k}(\kappa)= (\alpha-\beta)\cdot \vec \Omega(\kappa) $ by \eqref{defOmegakappa00}.
 Having length 3, by Remark \ref{nonsuper}, the multi-index $(\alpha,\beta)$ is not super--action preserving and  
therefore 
Theorem \ref{nonresfin0} implies \eqref{lowerj}. 
In view of \eqref{lowerj}  the coefficients in \eqref{G1+.coeff} are well defined.

Next we  show that $G_1(Z_0)Z_0$ is a vector field in $\wt\X^{-\vr'}_{2}$. 
As $R_1(Z_0)Z_0 $ belongs to $ \wt \X^{-\varrho + c(N)}_2$, 
 by Lemma \ref{lem:X.R} the coefficients
 $X_{j_1, j_2, k}^{ \sigma_1, \sigma_2, \sigma}$ in \eqref{R1.exp}
satisfy the 
symmetric and reality 
properties \eqref{symmetric}, \eqref{X.real} and the estimate: 
   for some  $ \mu \geq 0 $, $ C>0 $, 
\be\label{smoocara2}
 |X_{j_1, j_2, k}^{ \sigma_1, \sigma_2, \sigma} |\leq C 
\frac{\max_2\{ |j_1|, | j_2|\}^{\mu}}{\max\{ |j_1|,| j_2|\}^{\vr-c(N)}} \, , \qquad
\forall (j_1, j_2, k, \sigma_1, \sigma_2, - \sigma) \in \fT_3
\, . 
 \ee
Hence also the
coefficients $G_{j_1,j_2,k}^{\sigma_1,\sigma_2,\sigma}$
in \eqref{G1+.coeff}
 fulfill the
symmetric, reality  
properties \eqref{symmetric}, \eqref{X.real} as well as  $ X_{j_1,j_2,k}^{\sigma_1,\sigma_2,\sigma} $.
Moreover, using \eqref{smoocara2},  \eqref{lowerj}  
and the momentum relation  $ \sigma k  = \sigma_1 j_1 + \sigma_2 j_2 $, they also satisfy 
$$
| G_{j_1,j_2,k}^{\sigma_1,\sigma_2,\sigma}|\leq C \frac{\max_2\{ |j_1|, | j_2|\}^{\mu}}{\max\{ |j_1|, | j_2|\}^{\vr-c(N)-\tau}}
$$
for a new constant $C >0$ (depending on $\nu$).
Then  Lemma \ref{lem:X.R} implies that  $ G_1(Z_0)Z_0 
$ belongs to $ \wt \X^{-\varrho'}_2$ with $ \vr':= \vr-c(N)- \tau$. 

Finally we show that 
 $G_1(Z_0)Z_0$ is   Hamiltonian.
Recall that $R_1(Z_0)Z_0$ in \eqref{R1.exp} is a Hamiltonian vector field whose Hamiltonian function
 $H_{R_1}(Z_0) $ is, thanks to  Lemma \ref{carhamvec},  
\be\label{HR1}
 H_{R_1}(Z_0)= \tfrac{1}{3} 
 \!\!\! \!\!\! \!\!\! \!\!\!\!\!\!\!\!\!\!
 \sum_{(j_1, j_2, j_3, \sigma_1, \sigma_2,  \sigma_3) \in \fT_3} \!\!\!\!\!\!\!\!\!\!\!\!\!\!\!\!
[H_{R_1}]_{j_1,j_2,j_3}^{\sigma_1,\sigma_2,\sigma_3} 
 (z_0)_{j_1}^{\sigma_1} (z_0)_{j_2}^{\sigma_2} (z_0)_{j_3}^{\sigma_3} \, , \qquad
 [H_{R_1}]_{j_1,j_2,j_3}^ {\sigma_1,\sigma_2,\sigma_3} := - \ii \sigma_3 X_{j_1,j_2,j_3}^{\sigma_1,\sigma_2,-\sigma_3}  \, . 
\ee 
 Then 
  the coefficients defined for  $\sigma_1 j_1 + \sigma_2 j_2 + \sigma_3  j_3 = 0$ by 
\be\label{HG1}
[H_{G_1}]_{j_1,j_2,j_3}^ {\sigma_1,\sigma_2,\sigma_3}
:= - \ii \sigma_3  G_{j_1,j_2,j_3}^{\sigma_1,\sigma_2,-\sigma_3}
\stackrel{\eqref{G1+.coeff}, \eqref{HR1}}{=} \frac{ [H_{R_1}]_{j_1,j_2,j_3}^ {\sigma_1,\sigma_2,\sigma_3}}{\ii\big(\sigma_1\Omega_{j_1}(\kappa)+\sigma_2 \Omega_{j_2}(\kappa)+\sigma_3 \Omega_{j_3}(\kappa)\big)} 
\ee
satisfy the symmetric, reality properties  \eqref{symham},  \eqref{realham}  as well as 
the coefficients $[H_{R_1}]_{j_1,j_2,j_3}^ {\sigma_1,\sigma_2,\sigma}$.
Then Lemma \ref{carhamvec} implies that 
 $G_1(Z_0)Z_0$ is the Hamiltonian vector field generated by the Hamiltonian  $H_{G_1}$  with coefficients defined in \eqref{HG1}.
\end{proof}

We now conjugate system \eqref{teo62} 
by  the approximate time 1-flow $\mathtt{F}^{(1)}_{\leq N}(Z_0) $ generated by 
$ G_1(Z)Z $ provided  by Lemma 
\ref{extistencetruflow}, which has the form 
\be\label{mFleqN}
Z_1 :=\mathtt{F}^{(1)}_{\leq N}(Z_0)  = 
Z_0 + F_{\leq N}(Z_0) Z_0 \, , \quad F_{\leq N}(Z_0) \in \Sigma_{1}^N \wt \cR^{-\varrho'}_q \otimes  \cM_2(\C) \, .
\ee
Since $G_1(Z)Z$ is a Hamiltonian vector field, by 
Lemma 
\ref{lem:app.flow.ham}  
the approximate flow $\mathtt{F}^{(1)}_{\leq N}$ is symplectic up to homogeneity $N$.
Applying  Lemma \ref{conj.ham.N} (with $Z \leadsto Z_0$, $W \leadsto Z_1$ and $\bM_0(U;t) \in \cM_{K,\underline K'-1, 0}^0[r] \otimes \cM_2(\C)$),  we obtain that the variable $Z_1$ solves a  system which is 
 Hamiltonian up to homogeneity $N$. 
We compute it using  Lemma \ref{NormFormMap0}.
Its assumption {\bf (A)} at page \pageref{A} holds since $ Z_0  $ solves \eqref{teo62}
 (with $ a_{\leq N}  = -  (\tm_{\frac32})_{\leq N} $, $ K' = \underline{K}' $ and 
$ \varrho \leadsto \varrho - c(N)$).
  Then 
  Lemma \ref{NormFormMap0} (with $ W \leadsto Z_1 $, $ p = 1 $ and $ \vr' = \vr - c(N) - \tau $) implies that the variable $Z_1$  solves 
\be\label{Z1}
\begin{aligned}
\pa_t Z_1
& = -\ii \vOmega(D) Z_1+ \vOpbw{-\ii (\tm_{\frac32})_{\leq N}^+(Z_1;\xi)- \ii (\tm_{\frac32})_{>N}^+(U;t,\xi)}Z_1 \\
& \quad +[ R_{1 }(Z_1) + G^+_1(Z_1)]Z_1 
+ R_{\geq 2}^+(Z_1)Z_1 +  R^+_{>N}(U;t)U
\end{aligned}
\ee
where \\
\noindent 
$\bullet$
$ (\tm_{\frac32})_{\leq N}^+(Z_1;\xi)$ is a real valued symbol, independent of $x$, in $\Sigma_{2}^N \wt \Gamma_q^{\frac32}$; \\
\noindent
$\bullet$
$(\tm_{\frac32})_{>N}^+(U;t,\xi)$ is a non-homogeneous real valued symbol, independent of $x$, in $\Gamma^\frac32_{K,\underline K',N+1}[r]$ with imaginary part ${\rm Im} (\tm_{\frac32})_{>N}(U; t, \xi) $ in $  \Gamma^0_{K,\underline K', N+1}[r]$; \\
\noindent
$\bullet$
$R_1(Z_1)$ is defined in \eqref{R1}  and $G_1^+(Z_1)Z_1\in \wt \X^{-\vr'+\frac32}_{2}$ has  Fourier expansion, by \eqref{ditigi40} and \eqref{G1},    
\be\label{G1+}
(G^+_1(Z_1)Z_1)_{k}^\sigma=\!\!\! \!\!\! \!\!\! \sum_{
(j_1, j_2, k, \sigma_1, \sigma_2, - \sigma) \in \fT_3
} 
\!\!\! \!\!\! \!\!\! 
 - \ii\big(\sigma_1\Omega_{j_1}(\kappa)+\sigma_2 \Omega_{j_2}(\kappa)- \sigma \Omega_{k}(\kappa)\big)G_{j_1,j_2,k}^{\sigma_1,\sigma_2,\sigma} (z_1)_{j_1}^{\sigma_1} (z_1)_{j_2}^{\sigma_2};
\ee
\noindent
$\bullet$
$ R_{\geq 2}^+(Z_1) $ is a matrix of pluri-homogeneous smoothing operators in  $ 
\Sigma_2^N \wtcR^{-\vr + \underline \varrho(2)}_q \otimes \mM_2 (\C) $ where
\be\label{c2}
\underline \varrho(2) := c(N) + \tau + \tfrac32 \ ; 
\ee

\noindent
$\bullet$ 
$R_{>N}^+(U;t) $ is a matrix of non--homogeneous smoothing operators in $ \mR^{-\vr + \underline \varrho(2)}_{K,\underline K',N+1}[r] \otimes \mM_2 (\C) $.

By \eqref{R1.exp}, \eqref{G1+}, \eqref{G1+.coeff} we have 
\begin{equation}
\label{homologgica1}
R_{1}(Z_1)Z_1 + G_1^+(Z_1)Z_1 = 0  \ . 
\end{equation}
\noindent {\bf Step $p\geq 2$:} 
We claim the following inductive  statements hold true.  
Let $Z_0 $ solve \eqref{teo62}. Then for any $ p \geq 2 $ \\

\noindent 
{\em 
{\bf (S0$)_p$}
There is a transformation $\mF_{\leq N}^{(p-1)}(Z_0)$, symplectic up to homogeneity $ N$, fulfilling 
 $(iii)$  of Proposition \ref{birkfinalone} (with $C = 2\times 8^{p-2}$ in \eqref{mappaB}) such that the variable  $Z_{p-1}=\mF_{\leq N}^{(p-1)}(Z_0)$ has the form $Z_{p-1}= \bM_{0}^{(p-1)}(U;t)U$ with $\bM_{0}^{(p-1)}(U;t) \in \mM^0_{K,\underline{K}'-1,0}[r] 
 \otimes \cM_2(\C)  $ and
 solves the  system 
\begin{align}
\pa_t Z_{p-1} & = -\ii \vOmega(D)Z_{p-1}+J_c \nabla 
\big(H^{(\tSAP )}_{\frac32}\big)_{\leq p+1}(Z_{p-1}) +
J_c \nabla \big(H^{(\tSAP)}_{-\vr}\big)_{\leq p+1}(Z_{p-1}) \notag \\
& \ + \vOpbw{-\im (\tm_{\frac32})_{p}(Z_{p-1};\xi)-\im (\tm_{\frac32})_{\geq p+1}(Z_{p-1};\xi)}Z_{p-1}  +R_{\geq p}(Z_{p-1})Z_{p-1} \notag \\
&\ +\vOpbw{-\im (\tm_{\frac32})_{>N}(U;t,\xi)}Z_{p-1}+ R_{>N}(U;t)U \label{traspBNF}
\end{align}
where \\
\noindent
 {\bf (S1$)_p$}   $ \big(H^{(\tSAP )}_{\frac32}\big)_{\leq p+1}(Z_{p-1}) $
is the 
real valued Hamiltonian
\be\label{H.SAP.p1}
\big(H^{(\tSAP )}_{\frac32}\big)_{\leq p+1}(Z_{p-1}):= \Re \Big \langle  \Opbw{ (\tm_{\frac32}^{(\tSAP)})_{\leq {p-1}}(Z_{p-1};\xi)}z_{p-1},  \bar {z}_{p-1}\Big \rangle_{\dot L^2_r} 
\ee
with a super action preserving symbol
 $ (\tm_{\frac32}^{(\tSAP)})_{\leq {p-1}}(Z_{p-1};\xi) $ in $  \Sigma_2^{p-1} \wt \Gamma^{\frac32}_q$ (see Definition \ref{sapsym});
  its Hamiltonian vector field is given by
\be\label{ham.sap8}
J_c \nabla  
\big(H^{(\tSAP )}_{\frac32}\big)_{\leq p+1}(Z_{p-1})  =  \vOpbw{- \im (\tm_{\frac32}^{({\tSAP })})_{\leq p-1}(Z_{p-1};\xi)} Z_{p-1} + R_{\leq p-1}(Z_{p-1}) Z_{p-1}
\ee
with $R_{\leq p-1}(Z_{p-1}) \in \Sigma_2^{p-1} \wt\cR^{-\varrho'}_{q} \otimes \cM_2(\C)$ for any $\varrho' >0$ (see  Lemma \ref{lem:hamsym}).
 
 \noindent
 {\bf (S2$)_p$}  
$J_c \nabla \big(H^{(\tSAP)}_{-\vr} \big)_{\leq p+1}(Z_{p-1})$ is a   super action preserving, Hamiltonian, smoothing vector field in $\Sigma_3^{p} \wt\X_q^{-\varrho + \underline \varrho(p)}$, where
 \be\label{cp}
 \underline \varrho(1):= c(N) \ , \qquad 
 \underline \varrho(p):= \underline \varrho(p-1) + \tau + \tfrac32   \, , \ \ p \geq 2 \ ; 
 \ee

\noindent
{\bf (S3$)_p$} $(\tm_{\frac32})_{p} (Z_{p-1};\xi) $ and $(\tm_{\frac32})_{\geq p+1} (Z_{p-1};\xi)$  are  real valued symbols, independent of $x $,  respectively in $ \wt\Gamma^{\frac32}_p$ 
and $\Sigma_{p+1}^N \wt\Gamma^{\frac32}_q$;

\noindent
{\bf (S4$)_p$}  $R_{\geq p}(Z_{p-1})$  is a  smoothing operator in
$\Sigma_{p}^N \wt\cR^{-\varrho + \underline \varrho(p)}_q \otimes \cM_2(\C)$;

\noindent
{\bf (S5$)_p$} $(\tm_{\frac32})_{>N} (U;t,\xi)$ is a non--homogeneous symbol in $ \Gamma^{\frac32}_{K,\underline K',N+1}[r]$ with imaginary part $\Im (\tm_{\frac32})_{>N}(U;t,\xi) $ in $ \Gamma^{0}_{K,\underline K',N+1}[r]$;

\noindent 
{\bf (S6$)_p$} $R_{>N}(U;t) $ is a matrix of non--homogeneous smoothing operators in $ \mR^{-\vr + \underline \varrho(p)}_{K,\underline K',N+1}[r] \otimes \mM_2 (\C) $;

\noindent 
{\bf (S7$)_p$}
the system \eqref{traspBNF} is 
Hamiltonian up to homogeneity $N$.
 }
 \vspace{.5em}
 
Note that for $p = N+1$, system  \eqref{traspBNF} has the claimed form in \eqref{final:eq} with 
$ Z \equiv Z_N $,  Hamiltonians $ 
H^{(\tSAP )}_{\frac32} := \big(H^{(\tSAP )}_{\frac32}\big)_{\leq N+2}  $, 
$ H^{(\tSAP)}_{-\vr} := \big(H^{(\tSAP)}_{-\vr}\big)_{\leq N+2} $ and $ \underline \varrho:= \underline \varrho(N+1)$, thus proving Proposition \ref{birkfinalone}. We now prove the  
inductive statements  {\bf (S0$)_p$}-{\bf (S7$)_p$}. \\

 \noindent
{\bf Initialization: case $p = 2$.}  We set $\mF_{\leq N}^{(1)}:=\mathtt{F}^{(1)}_{\leq N}$ defined in
\eqref{mFleqN} which is symplectic up to homogeneity $N$.
Thanks to \eqref{mappaBflow}, the non--linear map $\mF_{\leq N}^{(1)}$ satisfies $(iii)$ of Proposition \ref{birkfinalone}. 
The  system  \eqref{Z1} with $R_1(Z_1)Z_1 + G_1^+(Z_1)Z_1 = 0$ is \eqref{traspBNF} with  Hamiltonians   $\big(H^{(\tSAP )}_{\frac32}\big)_{\leq 3} = \big(H^{(\tSAP)}_{-\vr}\big)_{\leq 3} =0 $, 
and symbols $( \tm_{\frac32})_2= \mP_2[(\tm_{\frac32})^+_{\leq N}]$, 
$( \tm_{\frac32})_{\geq 3}= \mP_{\geq 3}[(\tm_{\frac32})^+_{\leq N}]$ and
$ (\tm_{\frac32})_{>N} = (\tm_{\frac32})_{>N}^+ $. 
Furthermore 
  $Z_{1}= \bM_{0}^{(1)}(U;t)U$ with $\bM_{0}^{(1)}(U;t) \in \mM^0_{K,\underline{K}'-1,0}[r]
 \otimes \mM_2 (\C)  $ because  the map in \eqref{mFleqN} has the form  $\mathtt{F}^{(1)}_{\leq N}(Z_0)=\breve \bM_0(Z_0)Z_0$ with
$\breve \bM_0(Z_0) \in \mM^0_{K,0,0}[r] \otimes \mM_2 (\C) $ thanks to Lemma \ref{mappabonetta}, Proposition \ref{prop:Z}-$(iii)$ and Proposition \ref{composizioniTOTALI} $(iii)$ 
(with $K' \leadsto  \underline{K}'-1$).
Thus {\bf (S0$)_2$}-{\bf (S7$)_2$}
are satisfied.\\

 \noindent
 {\bf Iteration: reduction of the $p$--homogenous symbol.} Suppose {\bf (S0$)_p$}-{\bf (S7$)_p$} hold true. 
The goal of this step is to  
reduce the real valued, $ x$-independent,  $ p $-homogenous symbol  
$-\im (\tm_{\frac32})_{p}(Z_{p-1};\xi) \in \wt \Gamma^{\frac32}_p $ 
in \eqref{traspBNF}. We Fourier  expand  as in \eqref{sviFou}
\be \label{defm32p}
(\tm_{\frac32})_{p}(Z_{p-1};\xi)= \sum_{
(\vec{\jmath}_p,  \vec{\sigma}_p) \in \fT_p} \tm_{\vec{\jmath}_p}^{\vec{\sigma}_p}(\xi) \, (z_{p-1})_{\vec{\jmath}_p}^{\vec{\sigma}_p} \,  , 
\quad \bar{\tm_{\vec{\jmath}_p}^{-\vec{\sigma}_p}}(\xi)= \tm_{\vec{\jmath}_p}^{\vec{\sigma}_p}(\xi) \, . 
\ee
to its super action preserving normal form. 
We conjugate \eqref{traspBNF} under  the change of variable 
\be\label{W.bir}
W:=\Phi_p(Z_{p-1}):= \cG_{g_p}^1(Z_{p-1})Z_{p-1}
\ee
where  $ \cG_{g_p}^1(Z_{p-1}) $ is 
the time $1$-linear flow generated by $\vOpbw{\ii g_p}$
 as in \eqref{FouFlow}, 
 where $g_p$ is the Fourier multiplier 
 \be\label{bir.gp}
 g_p(Z_{p-1};\xi):= \sum\limits_{(\vec{\jmath}_p,  \vec{\sigma}_p) \in \fT_p} G_{\vec{\jmath}_p}^{\vec{\sigma}_p}(\xi)(z_{p-1})_{\vec{\jmath}_p}^{\vec{\sigma}_p} \in \wt \Gamma_p^{\frac32}
 \ee
with coefficients 
\begin{align}\label{G.coeff.op}
 G_{\vec{\jmath}_p}^{\vec{\sigma}_p}(\xi) :=\begin{cases} 0 & \mbox{ if } (\vec{\jmath}_p, \vec{\sigma}_p)\in \mathfrak{S}_p\\ 
 \displaystyle{\frac{\tm_{\vec{\jmath}_p}^{\vec{\sigma}_p}(\xi)}{-\ii \vec{\sigma}_p \cdot \Omega_{\vec{\jmath}_p}(\kappa) }} &
 \mbox{ if }  (\vec{\jmath}_p, \vec{\sigma}_p)\not\in \mathfrak{S}_p \, ,
  \end{cases} 
\end{align}
where  the super action  set  $\mathfrak{S}_{p}$ is defined in \eqref{fSp} and 
$ \Omega_{\vec{\jmath}_p}(\kappa)$ is the frequency vector in \eqref{defOmegakappa}.
\begin{lemma}
Let $\kappa \in (0, + \infty) \setminus \cK$.
The function $g_p(Z_{p-1};\xi)$ in \eqref{bir.gp}, \eqref{G.coeff.op} is a well defined, 
$ x $-independent, real valued, $p$-homogeneous symbol in $\wt \Gamma^{\frac32}_p$.
\end{lemma}

\begin{proof}
We claim  that  for any $\kappa\in (0,+\infty)\setminus \cK$
there exist $ \tau, \nu>0$ such that 
\be\label{small.div0}
\vert \vec{\sigma}_{p}\cdot  {\Omega}_{\vec{\jmath}_{p}}(\kappa) \vert > \frac{\nu}{\max(|j_1|, \ldots, |j_{p}|)^\tau} \, , \qquad 
\forall 
(\vec\jmath_{p}, \vec \sigma_{p}) \not\in \mathfrak{S}_{p} \  . 
\ee
Indeed,   
to  any $(\vec\jmath_{p},  \vec \sigma_{p})$ we associate the 
 multi--index $(\alpha,\beta)$ as in \eqref{defalbe} whose length is 
$|\alpha+\beta|=p$ and satisfies 
$
\vec{\sigma}_{p}\cdot  {\Omega}_{\vec{\jmath}_{p}}(\kappa) = (\alpha-\beta)\cdot \vec \Omega(\kappa)$ by \eqref{defOmegakappa00}.
Recalling \eqref{fSp}, 
the vector 
$(\vec\jmath_{p}, \vec \sigma_{p}) \not\in \mathfrak{S}_{p}$ 
if and only if  $(\alpha, \beta)$  is 
not super action-preserving and  therefore 
Theorem \ref{nonresfin0} implies  \eqref{small.div0}. 
Note also that, by Remark \ref{nonsuper}, 
if  $p$ is odd, there are not super-action preserving indexes, i.e. 
$ \mathfrak{S}_{p} = \emptyset $. 

In view of \eqref{small.div0}  
the coefficients in \eqref{G.coeff.op} are well defined and,  
since the coefficients $ \tm_{\vec{\jmath}_p}^{\vec \sigma_p}(\xi) $ of  the symbol $(\tm_{\frac32})_p$ fulfill
\eqref{rem:symbol.1} (with $m = \frac32$), then 
the coefficients 
$G_{\vec{\jmath}_p}^{\vec{\sigma}_p}(\xi)$ in \eqref{G.coeff.op} satisfy  \eqref{rem:symbol.1} as well  (with $\mu$ replaced by $\mu + \tau$), implying that 
the Fourier multiplier  $g_p$ in \eqref{bir.gp} belongs to $\wt\Gamma^{\frac32}_p$.
Finally  $g_p$ is real because the coefficients 
$G_{\vec{\jmath}_p}^{\vec{\sigma}_p}(\xi)$ in \eqref{G.coeff.op} satisfy \eqref{realsim} 
as $ \tm_{\vec{\jmath}_p}^{\vec{\sigma}_p}(\xi) $.
\end{proof}

By Lemma \ref{flussoconst} the flow map \eqref{W.bir} is well defined and,  
by 
\eqref{inveroGood}
for $ \| Z_{p-1} \|_{k,s_0} < r < r_0(s,K)$  small enough,  
\be\label{Psi.p.est}
2^{-1} \| Z_{p-1} \|_{k,s} \leq  \|\Phi_p(Z_{p-1}) \|_{k,s} \leq 2\| Z_{p-1} \|_{k,s} \, , 
\ \ \  \forall k = 0, \ldots, K \, .
\ee
In order to transform \eqref{traspBNF} under the change of variable 
\eqref{W.bir} we use Lemma \ref{lem:conj.fou}. 
Its assumption {\bf (A)} at page \pageref{A} holds since $ Z_{p-1}  $ solves \eqref{traspBNF}
which, in view of \eqref{ham.sap8} and 
 {\bf (S2$)_p$},  has the form \eqref{Z.conj1} 
 (with $Z\equiv Z_{p-1}$, $ a_{\leq N}\equiv -(\tm_{\frac32}^{({\tSAP })})_{\leq p-1}- (\tm_{\frac32})_{p}- (\tm_{\frac32})_{\geq p+1}$, 
 $ \varrho \leadsto  \varrho - \underline{\varrho}(p)$ and $ K'\equiv \underline{K}'$).

  Then 
 Lemma \ref{lem:conj.fou} implies that the 
variable $W $ defined in \eqref{W.bir} solves 
 \be \label{eq:W:Bir}
\begin{aligned}
\pa_t W & = -\ii \vOmega(D)W+J_c \nabla \big(H^{(\tSAP )}_{\frac32}\big)_{\leq p+1}(W) +J_c \nabla \big(H^{(\tSAP)}_{-\vr}\big)_{\leq p+1}(W)\\
& \ + \vOpbw{ - \im  [(\tm_{\frac32})_{p}(W;\xi) - g^+_p(W;\xi)] - \im (\tm_{\frac32})_{\geq p+1}^+(W;\xi)}W +R_{\geq p}^+(W)W \\
&\ +\vOpbw{- \im (\tm_{\frac32})_{>N}^+ (U;t,\xi)}W+ R_{>N}^+(U;t)U 
\end{aligned}
\ee
where\\
\noindent
$\bullet$ $ g^+_p(W;\xi) \in \wt\Gamma^{\frac32}_p$ is given in  \eqref{ditigi};\\
\noindent
$\bullet$ 
$(\tm_{\frac32})_{\geq p+1}^+ (W;\xi)$  is a  real valued symbol, independent of $x $,   in $\Sigma_{p+1}^N \wt\Gamma^{\frac32}_q$;

\noindent
$\bullet$ $(\tm_{\frac32})_{>N}^+ (U;t,\xi) 
$ is a non-homogeneous symbol independent of $x$ in $ \Gamma^{\frac32}_{K,\underline K',N+1}[r]$ with imaginary part 
${\rm Im} \, (\tm_{\frac32})_{>N}^+ (U;t,\xi) $ in $ \Gamma^{0}_{K,\underline K',N+1}[r]$;

\noindent
$\bullet$
$R^+_{\geq p}(W) $ is a matrix of pluri--homogeneous  smoothing operators in $ \Sigma_p^N \wtcR^{-\vr + \underline \varrho(p) +c(N,p)}_q \otimes \mM_2 (\C)  $ for a certain $c(N,p)\geq 0$;

\noindent
$\bullet$ $R^+_{>N}(U;t)$ is a matrix of  non--homogeneous smoothing operators in $ \mR^{-\vr + \underline \varrho(p) +c(N,p)}_{K,\underline K',N+1}[r] \otimes \mM_2 (\C) $.\\

Note that the Hamiltonian part of degree of homogeneity $\leq p$ in \eqref{eq:W:Bir} has been unchanged with respect to \eqref{traspBNF},  
thanks to the first identity in  \eqref{ditigi0} and \eqref{ditigi2}.
In view of \eqref{ditigi}, \eqref{defm32p}, \eqref{G.coeff.op},   the symbol of homogeneity $p$  in \eqref{eq:W:Bir} reduces to its super action component
$$
(\tm_{\frac32})_{p}(W;\xi)- g^+_p(W;\xi)= (\tm^{(\tSAP)}_{\frac32})_{p}(W;\xi) :=  
\sum_{(\vec{\jmath}_p,  \vec{\sigma}_p) \in \mathfrak{S}_p}  \tm_{\vec{\jmath}_p}^{\vec{\sigma}_p}(\xi) \, w_{\vec{\jmath}_p}^{\vec{\sigma}_p}  
$$
where  the super action  set  $\mathfrak{S}_p$ is defined in \eqref{fSp}, and 
then 
\eqref{eq:W:Bir} becomes  
 \be \label{eq:W:Bir2}
\begin{aligned}
\pa_t W  = &  -\ii \vOmega(D)W+J_c \nabla \big(H^{(\tSAP )}_{\frac32}\big)_{\leq p+1}(W) +J_c \nabla \big(H^{(\tSAP)}_{-\vr}\big)_{\leq p+1}(W)\\
& \ + \vOpbw{ - \im (\tm^{(\tSAP)}_{\frac32})_p(W;\xi) - \im (\tm_{\frac32})_{\geq p+1}^+(W;\xi)}W +R_{\geq p}^+(W)W \\
&\ +\vOpbw{- \im (\tm_{\frac32})_{>N}^+ (U;t,\xi)}W+ R_{>N}^+(U;t)U \, . 
\end{aligned}
\ee
We now observe that, by Lemma \ref{lem:hamsym},
\be\label{msap.p.bir}
\vOpbw{ - \im (\tm^{(\tSAP)}_{\frac32})_p(W;\xi) }W
=
J_c \nabla \big(H_{\frac32}^{(\tSAP)}\big)_{p+2}(W)+ R_p'(W)W 
\ee 
with  the Hamiltonian 
\be\label{H.SAP.p2}
\big(H_{\frac32}^{(\tSAP)}\big)_{p+2}(W) :=\Re \Big \langle  \Opbw{(\tm_{\frac32}^{({\tSAP })})_p(W;\xi)} w, \bar w\Big \rangle_{\dot L^2_r} \ , 
\ee
which is super action preserving by Lemma \ref{lem:ham.sap}, 
and  a matrix of smoothing operators  $R_p'(W) $ in  $ \wt\cR^{-\varrho'}_p \otimes \cM_2(\C)$ for any $\varrho' \geq 0$. 
Therefore \eqref{eq:W:Bir2} becomes  
 \be \label{eq:W:Bir3}
\begin{aligned}
\pa_t W = &  -\ii \vOmega(D)W+J_c \nabla \big(H^{(\tSAP )}_{\frac32}\big)_{\leq p+2}(W) +J_c \nabla  \big(H^{(\tSAP)}_{-\vr}\big)_{\leq p+1}(W)\\
& \ + \vOpbw{  - \im (\tm_{\frac32})_{\geq p+1}^+(W;\xi)}W+ [R_{\geq p}^+(W) + R_{ p}'(W)]W  \\
&\ +\vOpbw{- \im (\tm_{\frac32})_{>N}^+ (U;t,\xi)}W+ R_{>N}^+(U;t)U \, 
\end{aligned}
\ee
where (see \eqref{H.SAP.p1}, \eqref{H.SAP.p2})
\be\label{H.SAP.30}
\big(H^{(\tSAP )}_{\frac32}\big)_{\leq p+2}:= \big(H^{(\tSAP )}_{\frac32}\big)_{\leq p+1} + \big(H_{\frac32}^{(\tSAP)}\big)_{p+2} \ . 
\ee
Note that the new system \eqref{eq:W:Bir3} is not Hamiltonian up to homogeneity $N$ (unlike system \eqref{traspBNF} for $Z_{p-1}$), since the map
$\Phi_p(Z_{p-1}) = \cG^1_{g_p}(Z_{p-1}) Z_{p-1} $  in 
\eqref{W.bir} is not symplectic up to homogeneity $N$. By Lemma \ref{flussoconst} we only know  
 that $\cG^1_{g_p}(Z_{p-1})$ is linearly symplectic.
We now apply Theorem \ref{conjham} to find a  correction of  $\Phi_p(Z_{p-1})$ which is symplectic up to homogeneity $N$.
By Lemma  \ref{flussoconst}, the map $ \Phi_p(Z_{p-1})$ satisfies
the  assumptions of Theorem \ref{conjham} (with $Z$ $\leadsto$ $Z_{p-1}$,  $\bB(Z;t)$ $\leadsto$ 
$\cG_{g_p}^1(Z_{p-1})$   and using the 
inductive assumption 
$Z_{p-1} = \bM_0^{(p-1)}(U;t)U$ with $\bM_0^{(p-1)}(U;t) \in \cM_{K, \underline K'-1, 0}^0[r]\otimes \cM_2(\C)$).
Therefore  Theorem \ref{conjham}  implies the existence of 
 a matrix of pluri--homogeneous smoothing operators $R^{(p)}_{\leq N}(W) $ in $ \Sigma_p^N \wtcR_q^{-\vr-\frac32} \otimes\mathcal{M}_2(\C) $  (the thesis holds for any $\vr>0$ and we take  $\varrho$ $\leadsto$ 
 $\varrho+\frac32 $) such that the variable  
\be\label{darbouxp}
V := \cC_N^{(p)}(W):= \big( \uno + R_{\leq N}^{(p)}(W) \big)  W = 
\big( \uno + R_{\leq N}^{(p)}(\Phi_p(Z_{p-1})) \big)  \Phi_p(Z_{p-1}) 
 \ee
 is symplectic up to homogeneity $N$, thus
solves a system which is  Hamiltonian  up to homogeneity $N$.
By \eqref{darbouxp}   one has
\be\label{VMUU}
V = \breve \bM_0(U; t) U , \quad \breve \bM_0(U;t) \in \cM_{K, \underline K'-1, 0}^0[r]\otimes \cM_2(\C) 
\ee
using that 
 $ \uno + R_{\leq N}^{(p)}(W)$ belongs to $\mM^0_{K,0,0}[r] \otimes \mM_2 (\C)  $ (by Lemma \ref{mappabonetta}), 
 since $ \Phi_p(Z_{p-1})= \cG_{g_p}^1(Z_{p-1})Z_{p-1}  $ 
with $ \cG_{g_p}^1 \in \mM^0_{K,0,0}[r] \otimes \mM_2 (\C)  $ 
(Lemma \ref{flussoconst}  $(i)$), the inductive assumption $Z_{p-1} = \bM_{0}^{(p-1)}(U;t)U$ with $\bM_{0}^{(p-1)}(U;t) \in \mM^0_{K,\underline{K}'-1,0}[r]
\otimes \mM_2 (\C)  $ and Proposition \ref{composizioniTOTALI} $(iii)$.

Moreover,  regarding  $R_{\leq N}^{(p)}(W) $ as a non-homogeneous 
smoothing operator in  $ \mR^{-\varrho-\frac32 }_{K,0,p}[r] \otimes\mathcal{M}_2(\C) $ for any $ r>0$ 
(see  Lemma \ref{mappabonetta}),  estimate \eqref{mappaBflow} implies, for  $0<r< r_0(s,K)$  small, the bound
\be\label{C.p.est}
2^{-1} \| W \|_{k,s} \leq  \|\cC^{(p)}
_N(W) \|_{k,s} \leq 2\| W \|_{k,s} \, , 
\quad \forall k = 0, \ldots, K \, . 
\ee
We compute  the new system satisfied  by $V$ in \eqref{darbouxp} using Lemma \ref{NormFormMap0}.
Its assumption {\bf (A)} at page \pageref{A} holds 
(with $ K' = \underline{K}' $ and 
$ \varrho \leadsto \vr - \underline \varrho(p) -c(N,p)$)
since 
$ W $ solves \eqref{eq:W:Bir3} and \eqref{H.SAP.30}, 
\eqref{msap.p.bir}, \eqref{ham.sap8},  {\bf (S2$)_p$}.
  Then 
  Lemma \ref{NormFormMap0} 
   implies that the variable $V$  solves 
\bigskip  
  
\be \label{eq:W:Bir4}
\begin{aligned}
\pa_t  V  = &  -\ii \vOmega(D)  V+J_c \nabla \big(H^{(\tSAP )}_{\frac32}\big)_{\leq p+2}(V) +J_c \nabla \big(H^{(\tSAP)}_{-\vr}\big)_{\leq p+1}(V)\\
& \ + \vOpbw{  - \im \wt{(\tm_{\frac32})}_{\geq p+1}(V;\xi)}V + \wt R_{\geq p}(V) V  \\
&\ +\vOpbw{- \im \wt{(\tm_{\frac32})}_{>N} (U;t,\xi)}V+ \wt R_{>N}(U;t)U \, 
\end{aligned}
\ee
where  

\noindent
$\bullet$
$\wt{(\tm_{\frac32})}_{\geq p+1} (V;\xi)$  is a  real valued symbol, independent of $x $,   in $\Sigma_{p+1}^N \wt\Gamma^{\frac32}_q$;

\noindent
$\bullet$ $\wt{(\tm_{\frac32})}_{>N} (U;t,\xi) 
$ is a non-homogeneous symbol independent of $x$ in $ \Gamma^{\frac32}_{K,\underline K',N+1}[r]$ with imaginary part 
${\rm Im} \, \wt{(\tm_{\frac32})}_{>N} (U;t,\xi) $ in $ \Gamma^{0}_{K,\underline K',N+1}[r]$;

\noindent
$\bullet$
$\wt R_{\geq p}(V) $ is a matrix of pluri--homogeneous  smoothing operators in $ \Sigma_p^N \wtcR^{-\vr  + \underline \varrho(p)+c(N,p)}_q \otimes \mM_2 (\C)  $;

\noindent
$\bullet$ $\wt R_{>N}(U;t)$ is a matrix of non--homogeneous smoothing operators 
in $ \mR^{-\vr  + \underline \varrho(p)+c(N,p)}_{K,\underline K',N+1}[r] \otimes \mM_2 (\C) $.

Note that in \eqref{eq:W:Bir4} the pluri-homogeneous components up to order $p+1$ of the symbol and up to order $p-1$ of the smoothing operators are unchanged with respect to
\eqref{eq:W:Bir3}, whereas the homogeneous part of order $p$ of the smoothing remainder have been corrected by a new smoothing operator 
in $\wt\cR^{-\vr  + \underline \varrho(p)+c(N,p)}_p \otimes \cM_2(\C)$, see \eqref{ditigi22}.

Since system \eqref{eq:W:Bir4}  is Hamiltonian up to homogeneity $N$ (unlike \eqref{eq:W:Bir3}),  we have in particular that  
\be\label{ham100}
  -\ii \vOmega(D)  V+J_c \nabla  \left(  \big(H^{(\tSAP )}_{\frac32}\big)_{\leq p+2}(V) +\big(H^{(\tSAP)}_{-\vr}\big)_{\leq p+1}(V)\right)+ \cP_p[ \wt R_{\geq p}(V)] V  
\ee
is a pluri-homogeneous Hamiltonian vector field.

\noindent
{\bf Iteration: reduction of the $p$--homogeneous smoothing remainder.}
The goal of this step is 
to reduce the smoothing  homogenous vector field 
$\wt R_p(V)V:= \cP_p[ \wt R_{\geq p}(V)] V$ in 
\eqref{ham100}, which belongs to $\wt \X^{-\vr  + \underline \varrho(p)+c(N,p)}_{p+1}$, 
to its super action preserving normal form.
By \eqref{ham100} we deduce,  by difference,  that $ \wt R_p(V)V $   is Hamiltonian.
We expand  $ \wt R_p(V)V$ in Fourier coordinates 
as in \eqref{polvect}
\be\label{wtR.p}
\big(\wt R_p(V)V\big)_k^\sigma=
\!\!\!\!\!\!\!\!\!\!\!\!\!\!\!
\sum_{
(\vec{\jmath}_{p+1}, k,  \vec{\sigma}_{p+1}, - \sigma) \in \fT_{p+2}}
\!\!\!\!\!\!\!\!\!\!\!\!\!\!
 \wt X_{\vec{\jmath}_{p+1},k}^{\vec{\sigma}_{p+1},\sigma}  v_{\vec{\jmath}_{p+1}}^{\vec{\sigma}_{p+1}} \, . 
\ee
In order to reduce $\wt R_p(V)V$ to its super action preserving part we 
transform \eqref{eq:W:Bir4} under the change of variable 
$  Z_p := \mathtt{F}_{\leq N}^{(p)}(V)$
  where $\mathtt{F}_{\leq N}^{(p)}(V)$ is the time $1$-approximate flow, given by Lemma \ref{extistencetruflow}, generated by  
 the  smoothing vector field 
\be\label{Gp}
\big(G_p(V)V\big)_k^\sigma=
\!\!\!\!\!\!\!\!\!\!\!\!\!\!\!
\sum_{
(\vec{\jmath}_{p+1}, k,  \vec{\sigma}_{p+1}, - \sigma) \in \fT_{p+2}}
\!\!\!\!\!\!\!\!\!\!\!\!\!\!
 G_{\vec{\jmath}_{p+1},k}^{\vec\sigma_{p+1},\sigma} v_{\vec{\jmath}_{p+1}}^{\vec{\sigma}_{p+1}}
\ee
with 
 \begin{align}\label{Gp.coeff}
 G_{\vec{\jmath}_{p+1},k}^{\vec{\sigma}_{p+1},\sigma}  :=\begin{cases} 0 & \mbox{ if } (\vec\jmath_{p+1}, k,  \vec \sigma_{p+1}, - \sigma)\in \mathfrak{S}_{p+2}\\ 
 \displaystyle{\frac{ \wt X_{\vec{\jmath}_{p+1},k}^{\vec{\sigma}_{p+1}, \sigma} }{\ii (\vec{\sigma}_{p+1}\cdot  {\Omega}_{\vec{\jmath}_{p+1}}(\kappa)- \sigma \Omega_k(\kappa))}} &
 \mbox{ if } (\vec\jmath_{p+1}, k,  \vec \sigma_{p+1}, - \sigma) \not\in \mathfrak{S}_{p+2} \, 
  \end{cases} 
\end{align}
where  the super action  set  $\mathfrak{S}_{p+2}$ is defined in \eqref{fSp} (with $p$ replaced by $p+2$).
 
 \begin{lemma}\label{lemGpVVHam}
 Let $\kappa \in (0, + \infty) \setminus \cK$.
 The vector field $G_p(V)V$ in \eqref{Gp}, \eqref{Gp.coeff} is a well defined Hamiltonian vector field in $\wt \X_{p+1}^{-\varrho'}$
 with $\varrho' := \varrho - \underline \varrho(p) - c(N,p) - \tau$ and where  $\tau$ is defined in Theorem \ref{nonresfin0}.
 \end{lemma}
 
\begin{proof}
We claim  that  for any $\kappa \in (0, + \infty) \setminus \cK$ 
there exist $ \tau, \nu>0$ such that 
\be\label{small.div1}
\vert \vec{\sigma}_{p+1}\cdot  {\Omega}_{\vec{\jmath}_{p+1}}(\kappa)- \sigma \Omega_k(\kappa) \vert > \frac{\nu}{\max(|j_1|, \ldots, |j_{p+1}|, |k|)^\tau} \, , \qquad 
\forall 
(\vec\jmath_{p+1}, k,  \vec \sigma_{p+1}, - \sigma) \not\in \mathfrak{S}_{p+2} \  . 
\ee
Indeed,    
to  any $(\vec\jmath_{p+1}, k,  \vec \sigma_{p+1}, - \sigma)$ 
we associate the multi--index 
$(\alpha,\beta)$ as in \eqref{defalbe} whose length is $|\alpha+\beta|=p+2$ and satisfies  
$
\vec{\sigma}_{p+1}\cdot  {\Omega}_{\vec{\jmath}_{p+1}}(\kappa)- \sigma \Omega_k(\kappa) = (\alpha-\beta)\cdot \vec \Omega(\kappa)$ by \eqref{defOmegakappa00}.
Recalling \eqref{fSp}, the  vector 
$ (\vec\jmath_{p+1}, k,  \vec \sigma_{p+1}, - \sigma) \not\in \mathfrak{S}_{p+2}$ if and only if  $(\alpha, \beta)$  is 
not super action-preserving and 
therefore 
Theorem \ref{nonresfin0} implies \eqref{small.div1}. Note also that, by Remark \ref{nonsuper}, if  $p$ is odd, there are not super-action preserving indexes, i.e. $\mathfrak{S}_{p+2} = \emptyset$.
In view of \eqref{small.div1}  the coefficients in \eqref{Gp.coeff} are well defined.

Next we show that  $G_p(V)V$ is a  vector field in $\wt \X_{p+1}^{-\varrho'}$. As $\wt R_p(V)V $ belongs to $ \wt \X^{-\vr  + \underline \varrho(p)+c(N,p)}_{p+1}$,  
by Lemma \ref{lem:X.R} the coefficients 
 $ \wt X_{\vec{\jmath}_{p+1},k}^{\vec{\sigma}_{p+1},\sigma}$  in 
 \eqref{wtR.p}
satisfy the symmetric and 
reality 
 properties \eqref{symmetric}, \eqref{X.real} and the estimate: 
  for some  $ \mu \geq 0 $, $ C>0 $,
\be\label{smoocara2p}
 | \wt X_{\vec{\jmath}_{p+1},k}^{\vec{\sigma}_{p+1},\sigma}  |\leq C 
\frac{\max_2\{ |j_1|, \ldots,  | j_{p+1}|\}^{\mu}}{\max\{ |j_1|, \ldots,  | j_{p+1}| \}^{\vr  - \underline \varrho(p)-c(N,p)}} \, , \
 \quad  \forall  (\vec \jmath_{p+1}, k,  \vec \sigma_{p+1},-\sigma) \in \fT_{p+2}  \, .
 \ee
 Hence also the coefficients 
 $ G_{\vec{\jmath}_{p+1},k}^{\vec{\sigma}_{p+1},\sigma} $ 
 in \eqref{Gp.coeff} satisfy the symmetric and 
reality 
 properties \eqref{symmetric}, \eqref{X.real} as 
 $ \wt X_{\vec{\jmath}_{p+1},k}^{\vec{\sigma}_{p+1}, \sigma}  $. 
 Moreover, using  \eqref{smoocara2p},  \eqref{small.div1}  and the momentum relation  $\sigma k = \vec \sigma_{p+1} \cdot \vec \jmath_{p+1} $ they also  
satisfy 
$$
 | G_{\vec{\jmath}_{p+1},k}^{\vec{\sigma}_{p+1},\sigma}   |\leq C 
\frac{\max_2\{ |j_1|, \ldots,  | j_{p+1}|\}^{\mu}}{\max\{ |j_1|, \ldots,  | j_{p+1}| \}^{\vr  - \underline \varrho(p)-c(N,p)-\tau} }
$$
for a new constant $C >0$ (depending on $\nu$).
Then  
Lemma \ref{lem:X.R} implies that $G_p(V)V $ belongs to $ \wt \X^{-\vr'}_{p+1}$
with $ \vr':= \vr  - \underline \varrho(p)-c(N,p)-\tau$. 

Finally  we show that 
 $G_p(V)V$ is   Hamiltonian. 
Recall that $\wt R_p(V)V$ in  \eqref{wtR.p}  is a Hamiltonian vector field whose Hamiltonian function $H_{\wt R_p}(V) $ is,   thanks to Lemma \ref{carhamvec}, 
\be\label{HRp}
 H_{\wt R_p}(V)= \tfrac{1}{p+2}
\!\!\!\!\!\!\!\! 
  \sum_{(\vec{\jmath}_{p+2}, \vec{\sigma}_{p+2}) \in \fT_{p+2}} 
\!\!\!\!\!\!\!\!  
  [H_{\wt R_p}]_{\vec\jmath_{p+2}}^{\vec\sigma_{p+2}} \, v_{\vec{\jmath}_{p+2}}^{\vec{\sigma}_{p+2}}  \, , 
\qquad
[H_{\wt R_p}]_{j_1, \ldots, j_{p+2}}^{\sigma_1, \ldots, \sigma_{p+2}} :=  - \ii \sigma_{p+2}  \wt X_{\vec{\jmath}_{p+1},j_{p+2}}^{\vec{\sigma}_{p+1},-\sigma_{p+2}} \, . 
\ee
Then the coefficients defined for $   \vec \sigma_{p+2}
\cdot \vec \jmath_{p+2} = 0 $ by
\be\label{HGp}
[H_{G_p}]_{\vec \jmath_{p+2}}^ {\vec \sigma_{p+2}}
:= - \ii \sigma_{p+2}  G_{j_1, \ldots, j_{p+1}, j_{p+2}}^{\sigma_1, \ldots,\sigma_{p+1}, - \sigma_{p+2}}
\stackrel{\eqref{Gp.coeff},\eqref{HRp}}{=} \frac{ [H_{\wt R_p}]_{\vec \jmath_{p+2}}^ {\vec\sigma_{p+2}}}{\ii\big(\vec{\sigma}_{p+1}\cdot  {\Omega}_{\vec{\jmath}_{p+1}}(\kappa) + \sigma_{p+2} \Omega_{j_{p+2}}(\kappa)\big)}  
\ee
 satisfy the symmetric and reality  properties \eqref{symham}, \eqref{realham} as 
 well as the 
 coefficients $ [H_{\wt R_p}]_{\vec \jmath_{p+2}}^ {\vec\sigma_{p+2}}$.
Then Lemma \ref{carhamvec} implies that 
 $G_p(V) V$ is the Hamiltonian vector field 
generated by the Hamiltonian $H_{G_p}$  with coefficients defined in  \eqref{HGp}.
\end{proof} 

We now conjugate system \eqref{eq:W:Bir4} 
by  the approximate time 1-flow $\mathtt{F}^{(p)}_{\leq N}(V) $ generated by 
$ G_p(V)V $ provided  by Lemma 
\ref{extistencetruflow}, which has the form 
 \be\label{Zp.def}
 Z_p = 
 \mathtt{F}_{\leq N}^{(p)}(V) = V + F_{\leq N}^{(p)}(V)V \, , 
  \quad 
  F_{\leq N}^{(p)}(V) \in \Sigma_{p}^N \wt \cR^{-\varrho'}_q \otimes \cM_2(\C) \, .  
 \ee
Since $ G_p(Z)Z $ is a Hamiltonian vector field, by 
Lemma 
\ref{lem:app.flow.ham}  
the approximate flow $\mathtt{F}^{(p)}_{\leq N}$ is symplectic up to homogeneity $N$.
Applying  Lemma \ref{conj.ham.N} 
 (with $Z \leadsto V$, $W \leadsto Z_p$ and by \eqref{VMUU}), 
the variable $Z_p$ solves a  system which is 
 Hamiltonian up to homogeneity $N$. 
We compute it using
 Lemma \ref{NormFormMap0} (with $ W \leadsto Z_p $ and $ Z \leadsto V $).
Its assumption {\bf (A)} at page \pageref{A} holds 
(with $ K' = \underline{K}' $ and 
$ \varrho \leadsto \vr - \underline \varrho(p) -c(N,p)$)
since $ V $ solves \eqref{eq:W:Bir4}
and \eqref{H.SAP.30}, 
\eqref{msap.p.bir}, \eqref{ham.sap8},  {\bf (S2$)_p$}. 
Lemma \ref{NormFormMap0} implies that  the variable 
$ Z_p $ in \eqref{Zp.def} solves (see in particular \eqref{ditigi30})
 \be \label{eq:W:Bir6}
 \begin{aligned}
\pa_t  Z_p  = &  -\ii \vOmega(D)  Z_p+J_c \nabla \big(H^{(\tSAP )}_{\frac32}\big)_{\leq p+2}(Z_p) +J_c \nabla \big(H^{(\tSAP)}_{-\vr}\big)_{\leq p+1}(Z_p)\\
& \ + \vOpbw{  - \im {( \breve \tm_{\frac32})}_{\geq p+1}(Z_p;\xi)}Z_p +  
 \left[ \wt R_{p}(Z_p) + G_p^+(Z_p)\right] Z_p + R_{\geq p+1}(Z_{p}) Z_{p} \\
&\ +\vOpbw{- \im {(\breve \tm_{\frac32})}_{>N} (U;t,\xi)}Z_p+  R_{>N}(U;t)U  
\end{aligned}
\ee
where the part homogeneous up to order $p$ of the symbol and up to order $p-1$ of the smoothing operators are unchanged with respect to
\eqref{eq:W:Bir4}, whereas \\
\noindent
$\bullet$
${(\breve \tm_{\frac32})}_{\geq p+1} (V;\xi)$  is a  real valued symbol, independent of $x $,   in $\Sigma_{p+1}^N \wt\Gamma^{\frac32}_q$;

\noindent
$\bullet$ ${(\breve\tm_{\frac32})}_{>N} (U;t,\xi) 
$ is a non-homogeneous symbol independent of $x$ in $ \Gamma^{\frac32}_{K,\underline K',N+1}[r]$ with imaginary part
${\rm Im} \, \wt{(\tm_{\frac32})}_{>N} (U;t,\xi) $ in $ \Gamma^{0}_{K,\underline K',N+1}[r]$;

\noindent
$\bullet$ $G_p^+(Z_p)Z_p$ is  $p+1$-homogeneous smoothing vector field in $\wt \X^{-\vr'+\frac32}_{p+1}$ with Fourier expansion  (see  \eqref{ditigi40})
\be\label{Rp.exp}
(G_p^+(Z_p)Z_p)_k^\sigma=
\!\!\!\!\!\!\!\!\!\!\!\!\!\!\!\!\!\!\!\!\!\!\!\!
 \sum_{(\vec{\jmath}_{p+1}, k,  \vec{\sigma}_{p+1}, - \sigma)  \in \fT_{p+2}}
 \!\!\!\!\!\!\!\!\!\!\!\!\!\!\!\!\!\!\!\!\!\!\!
 -  \ii\big(\vec{\sigma}_{p+1}\cdot  \vec{\Omega}_{\vec{\jmath}_{p+1}}(\kappa)- \sigma \Omega_k(\kappa)\big)G_{\vec{\jmath}_{p+1},k}^{\vec{\sigma}_{p+1},\sigma}  (z_p)_{\vec{\jmath}_{p+1}}^{\vec{\sigma}_{p+1}} \ ;
\ee
\noindent
$\bullet$
$ R_{\geq p+1}(Z_p) $ is a matrix of pluri--homogeneous  smoothing operators in $ \Sigma_{p+1}^N \wtcR^{-\vr + \underline \varrho(p+1)}_q \otimes \mM_2 (\C)  $ where
\be\label{cp+1}
\underline \varrho(p+1) = \underline \varrho(p) + c(N,p) + \tau + \tfrac32 \ ;
\ee

\noindent
$\bullet$ $ R_{>N}(U;t)$ is a matrix of non--homogeneous smoothing operators in $ \mR^{-\vr + \underline \varrho(p+1)}_{K,\underline K',N+1}[r] \otimes \mM_2 (\C) $.\\

By \eqref{wtR.p}, \eqref{Gp.coeff}, \eqref{Rp.exp},   
the smoothing operators  of homogeneity $ p $  in \eqref{eq:W:Bir6}
reduce to 
\begin{equation}
\label{homologgica2}
\wt R_{p}(Z_p)Z_p + G_p^+(Z_p)Z_p = 
R_{p}^{(\tSAP)}(Z_p)Z_p 
\end{equation}
where $R_{p}^{(\tSAP)}(Z_p)Z_p$ is the    
vector field  
$$
 (R_{p}^{(\tSAP)}(Z_p)Z_p)_{k}^\sigma := 
 \!\!\!\!\!\!\!\!\!\!\!\!\!\!\!\!\!
\sum_{  (\vec\jmath_{p+1}, k,  \vec \sigma_{p+1}, - \sigma) \in \mathfrak S_{p+2}
} 
\!\!\!\!\!\!\!\!\!\!\!\!\!\!\!\!\!\!
\wt X_{\vec{\jmath}_{p+1},k}^{\vec{\sigma}_{p+1}, \sigma}  \, (z_p)_{\vec{\jmath}_{p+1}}^{\vec{\sigma}_{p+1}} 
\in \wt \X_{p+1}^{-\vr  + \underline \varrho(p)+c(N,p)} \, ,
$$
that,  in view of \eqref{HRp}, 
is 
 generated by
the super action preserving Hamiltonian
(cfr. Definition \ref{sapham})
$$
 H_{\wt R_p}^{(\tSAP)}(Z_p):= \frac{1}{p+2} \sum_{ (\vec\jmath_{p+2}, \vec\sigma_{p+2}) \in \mathfrak{S}_{p+2}
} [H_{\wt R_p}]_{\vec\jmath_{p+2}}^{\vec\sigma_{p+2}} \, (z_p)_{\vec{\jmath}_{p+2}}^{\vec{\sigma}_{p+2}}  \, .
$$
By \eqref{homologgica2} and  
 $  R_{p}^{(\tSAP)}(Z_p)Z_p =  J_c\nabla   H_{\wt R_p}^{(\tSAP)} (Z_p) $, 
system \eqref{eq:W:Bir6}
becomes
 \be \label{eq:W:Bir7}
 \begin{aligned}
\pa_t  Z_p   & =   -\ii \vOmega(D)  Z_p+J_c \nabla  \big(H^{(\tSAP )}_{\frac32}\big)_{\leq p+2}(Z_p) +J_c \nabla \big(H^{(\tSAP)}_{-\vr}\big)_{\leq p+2}(Z_p)\\
& \  + \vOpbw{  - \im {( \tm_{\frac32})}_{p+1}(Z_p;\xi)
 - \im {( \tm_{\frac32})}_{\geq p+2}(Z_p;\xi)}Z_p
+ R_{\geq p+1}(Z_{p}) Z_{p} \\
&\ +\vOpbw{- \im {(\tm_{\frac32})}_{>N} (U;t,\xi)}Z_p+  R_{>N}(U;t)U \, 
\end{aligned}
\ee
with $\big(H^{(\tSAP )}_{\frac32}\big)_{\leq p+2}$ defined in \eqref{H.SAP.30} (see also \eqref{msap.p.bir}-\eqref{H.SAP.p2}), 
\be\label{hasmop+2}
\big(H^{(\tSAP)}_{-\vr}\big)_{\leq p+2}(Z_p) :=
\big(H^{(\tSAP)}_{-\vr}\big)_{\leq p+1}(Z_p) + H_{\wt R_p}^{(\tSAP)}(Z_p) \,  , 
\ee
and
$x$-independent real symbols
$$
\begin{aligned}
& {( \tm_{\frac32})}_{p+1}(Z_p;\xi) := \cP_{p+1}\big[{( \breve \tm_{\frac32})}_{\geq p+1}(Z_p;\xi) \big] \in \wt \Gamma^{\frac32}_{p+1} \\
& {( \tm_{\frac32})}_{\geq p+2}(Z_p;\xi):= \cP_{\geq p+2} \big[{(\breve  \tm_{\frac32})}_{\geq p+1}(Z_p;\xi) \big] \in \Sigma_{p+2}^N \wt \Gamma^{\frac32}_q \, . 
\end{aligned}
$$
System  \eqref{eq:W:Bir7} has therefore 
the form \eqref{traspBNF} at the step $ p +1 $ 
 with  
\be\label{Zp:comp}
Z_p:=\cF_{\leq N}^{(p)}(Z_0) := 
\mathtt{F}_{\leq N}^{(p)} \circ  \cC_N^{(p)}\circ \Phi_p \circ\cF_{\leq N}^{(p-1)}(Z_0) \, , 
\ee
see \eqref{W.bir}, \eqref{darbouxp}, \eqref{Zp.def}. 
The map 
$\cF_{\leq N}^{(p)} $ is symplectic up to homogeneity $ N $  
as  $ \cF_{\leq N}^{(p-1)} $, because  $ \cC_N^{(p)}\circ \Phi_p$ 
is symplectic up to homogeneity $ N $ (cfr. \eqref{darbouxp}) 
as well as  the   time $1$-approximate flow 
$ \mathtt{F}_{\leq N}^{(p)} $ 
 generated by the smoothing Hamiltonian vector field $G_p(V)V$
 (cfr. Lemma \ref{lemGpVVHam}) 
by  Lemma \ref{lem:app.flow.ham}. 
In addition  the map
$\cF_{\leq N}^{(p)}(Z_0)$ satisfies 
\eqref{mappaB} (with $p$-dependent constants) because of the inductive assumption,  \eqref{Psi.p.est}, \eqref{C.p.est} and 
\eqref{mappaBflow}.
 Furthermore 
  $Z_{p}= \bM_{0}^{(p)}(U;t)U$ with $\bM_{0}^{(p)}(U;t) \in \mM^0_{K,\underline{K}'-1,0}[r]
  \otimes \mM_2 (\C)  $.
This follows from \eqref{Zp:comp} using that 
$\uno + {F}_{\leq N}^{(p)}$ and $ \uno + R_{\leq N}^{(p)}$ belong to $\mM^0_{K,0,0}[r] \otimes \mM_2 (\C)  $
 (recall identities \eqref{Zp.def}, \eqref{darbouxp} and use 
 Lemma \ref{mappabonetta}),  
 since $ \Phi_p(Z_{p-1})= \cG_{g_p}^1(Z_{p-1})Z_{p-1}  $ 
with $ \cG_{g_p}^1 \in \mM^0_{K,0,0}[r] \otimes \mM_2 (\C)  $ 
(Lemma \ref{flussoconst}  $(i)$), the inductive assumption $\cF_{\leq N}^{(p-1)}(Z_0) = \bM_{0}^{(p-1)}(U;t)U$ with $\bM_{0}^{(p-1)}(U;t) \in \mM^0_{K,\underline{K}'-1,0}[r]
\otimes \mM_2 (\C)  $ and Proposition \ref{composizioniTOTALI} $(iii)$.
The proof of 
{\bf (S0$)_{p+1}$} is complete.

System \eqref{eq:W:Bir7} satisfies  also 
 {\bf (S1$)_{p+1}$}--{\bf (S6$)_{p+1}$} with $\underline{\varrho}(p+1)$ defined in \eqref{cp+1}  by \eqref{H.SAP.30}, \eqref{msap.p.bir}-\eqref{H.SAP.p2} and   
\eqref{hasmop+2}.
    {\bf (S7$)_{p+1}$} follows by Lemma \ref{conj.ham.N} 
because $ \cF_{\leq N}^{(p)}$ is symplectic up to
homogeneity $N$. 
This concludes the proof of the inductive step.

Finally \eqref{equivalenzan} follows by \eqref{mappaB} and \eqref{equivalenzaZU} (where $ C $ denote different constants). 
\end{proof}

\begin{remark}\label{rem:Nwave}
{\bf (Integrability of fourth and six order Hamiltonian Birkhoff normal form)}
The $ \tSAP $ Hamiltonian monomials (Definition \ref{sapham})
of degree $ 4 $ are integrable. Indeed, a-priori 
they are either the integrable ones 
 $ |z_{j_1}|^2  |z_{j_2}|^2  $ or (i) 
$ |z_{j_1}|^2  z_{j_2} \overline{z_{-j_2}} $ or (ii) $
z_{j_1} \overline{z_{-j_1} } z_{j_2} \overline{z_{-j_2}}  $. The momentum condition
implies in case (i)  that  $ j_2 = 0 $, which is not allowed. In case (ii) 
it yields
$ j_1 + j_2 = 0 $ and so $ z_{j_1} \overline{z_{-j_1} } z_{j_2} \overline{z_{-j_2}}  = z_{j_1} \overline{z_{j_2} } z_{j_2} \overline{z_{j_1}}  $ is integrable. 
The $ \tSAP $ Hamiltonian monomials of degree $ 6 $ may contain the not integrable
monomials 
$$ 
(i) \ |z_{j_1}|^2 z_{j_2} \overline{z_{-j_2}}  z_{j_3} \overline{z_{-j_3}} \, , 
\quad (ii) \  |z_{j_1}|^2 |z_{j_2}|^2  z_{j_3} \overline{z_{-j_3}} 
\, , 
\quad (iii) \ z_{j_1} \, \overline{z_{-j_1}}\,
z_{j_2}\, \overline{z_{-j_2}} \, z_{j_3} \, \overline{z_{-j_3}} \, . 
$$
By momentum conservation, a monomial of the form ($i$) is  integrable 
(as in case (ii) above) and
a monomial of the form ($ii$) has $ j_3 = 0 $ thus it is not allowed.  
The monomials  ($iii$)  
turn out to be, for $ \gamma \neq 0 $,  $ \tth = + \infty $, 
Birkhoff non-resonant, namely 
\be\label{lowinfh}
\begin{aligned}
& \big|\Omega_{j_1}(\kappa)- \Omega_{-j_{1}}(\kappa) + 
\Omega_{j_2}(\kappa)- \Omega_{-j_{2}}(\kappa) +
\Omega_{j_3}(\kappa)- \Omega_{-j_{3}}(\kappa) \big| \\
& =
\gamma  \big| \sign (j_1) + \sign (j_2) +  \sign (j_3) \big|  \geq \gamma \, .
\end{aligned} 
\ee
Therefore they might  be eliminated, obtaining an integrable  
normal form Hamiltonian at the degree $  6 $. The same holds in finite depth
exploiting also the momentum restriction $ j_1 + j_2 + j_3 = 0   $ and that  
$
\gamma  \big( \tanh ( \tth j_1) + \tanh ( \tth j_2) + \tanh ( \tth j_3)
\big)  \neq 0 
$ 
by the concavity of $ y \mapsto \tanh ( \tth y) $ for $ y >  0 $. Note that for 
$ |j_1|, |j_2|, |j_3| \geq M $ large enough we have a uniform lower bound 
as in \eqref{lowinfh}. 
In conclusion the fourth and six order Hamiltonian 
Birkhoff normal form of the water waves equations \eqref{eq:etapsi} is integrable.
\end{remark}

\section{Energy estimate and proof of Theorem \ref{teo1}}\label{sec:EE}

The  Hamiltonian equation 
\be \label{troncata}
\pa_tZ= - \im \vOmega(D)Z + J_c \nabla H^{(\tSAP)}_{\frac32}(Z)+J_c \nabla  H^{(\tSAP)}_{-\vr}(Z)  \, , 
\ee
obtained by \eqref{final:eq} neglecting 
the symbol and the smoothing operator of homogeneity larger than $ N $, preserves the  Sobolev norms. Equation \eqref{troncata} can be also written as the Hamiltonian PDE
\be\label{SAPequazione}
\pa_tZ= J_c \nabla  H^{(\tSAP)}(Z)   
\ee 
where $H^{(\tSAP)}(Z)$ is the super--action preserving Hamiltonian (cfr. Definition \ref{sapham})
\be 
H^{(\tSAP)}(Z):=H^{(2)}(Z)+H^{(\tSAP)}_{\frac32}(Z)+H^{(\tSAP)}_{-\vr}(Z) \, ,
\quad H^{(2)}(Z):= \sum_{j\in \Z\setminus\{0\}} \Omega_j(\kappa)|z_j|^2 \, .
\ee
Actually the following more precise result holds.

\begin{lemma}\label{lemmatronco}
{\bf (Super-actions)}
The super--actions  $J_n(Z)= |z_n|^2+ |z_{-n}|^2$, defined in \eqref{sa}, for any $  n\in \N $, 
are  prime integrals of the Hamiltonian equation
\eqref{troncata}.
In particular the Sobolev norm 
$$ 
\| Z\|_{\dot H^s}^2= \sum_{ j\in \Z \setminus \{0\}} | j|^{2s} |z_j|^2 = \sum_{n\in \N} n^{2s} J_n(Z)
$$ 
is constant along the flow of \eqref{troncata}. 
\end{lemma}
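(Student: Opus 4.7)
The plan is to show that the super-actions $J_n$ Poisson-commute with the Hamiltonian $H^{(\tSAP)}$ generating \eqref{troncata}, and then conclude via standard Hamiltonian dynamics that $J_n$ is constant along the flow. The key input is that every monomial appearing in $H^{(\tSAP)}$ is super-action-preserving (Definition \ref{sapham}) so Lemma \ref{lemma:Poissonbra} applies directly.

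First, I would rewrite \eqref{troncata} as the Hamiltonian system \eqref{SAPequazione} and decompose the Hamiltonian into its homogeneous components: $H^{(2)}$ is a diagonal sum of integrable monomials $\Omega_j(\kappa)|z_j|^2$, while $H^{(\tSAP)}_{\frac32}$ and $H^{(\tSAP)}_{-\vr}$ are, by Definition \ref{sapham}, finite sums of pluri-homogeneous $\tSAP$-monomials $z_{\vec\jmath_{p+2}}^{\vec\sigma_{p+2}}$ indexed by $\mathfrak S_{p+2}$. For each such monomial, its associated multi-index $(\alpha,\beta)$ satisfies \eqref{sap}, so $\beta_n+\beta_{-n}-\alpha_n-\alpha_{-n}=0$ for every $n\in\N$. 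Applying \eqref{poisuperaction} from Lemma \ref{lemma:Poissonbra}, we get $\{z_{\vec\jmath_{p+2}}^{\vec\sigma_{p+2}}, J_n\}=0$. The same holds trivially for the integrable monomials in $H^{(2)}$ (one checks directly via \eqref{Pois} that $\{|z_j|^2, J_n\}=0$). By linearity of the Poisson bracket in its first argument, $\{H^{(\tSAP)}, J_n\}=0$ for every $n\in\N$.

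Next, I would use the standard identity $\frac{\di}{\di t} J_n(Z(t)) = \{J_n, H^{(\tSAP)}\}(Z(t))$ along solutions of \eqref{SAPequazione}, which by antisymmetry of the Poisson bracket vanishes. Therefore each $J_n$ is constant along the flow. For the Sobolev norm statement, I would observe that, by \eqref{fourierseries} and the definition \eqref{sa},
\begin{equation*}
\|Z\|_{\dot H^s}^2 = \sum_{j\in\Z\setminus\{0\}} |j|^{2s}|z_j|^2 = \sum_{n\in\N} n^{2s}\bigl(|z_n|^2+|z_{-n}|^2\bigr) = \sum_{n\in\N} n^{2s} J_n(Z),
\end{equation*}
so conservation of every $J_n$ implies conservation of $\|Z\|_{\dot H^s}^2$ for every $s\in\R$.

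I do not foresee any serious obstacle: the substantive work (constructing $H^{(\tSAP)}$ and proving the Poisson bracket identity \eqref{poisuperaction}) has already been done. The only minor care needed is in justifying the formal Poisson bracket computation for the series defining $H^{(2)}$, which reduces to the pointwise identity on each Fourier mode and requires no convergence considerations since each $J_n$ only involves finitely many Fourier coefficients. This makes the proof essentially a one-line invocation of Lemma \ref{lemma:Poissonbra} and Definition \ref{sapham}.
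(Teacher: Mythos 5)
Your proposal is correct and follows essentially the same route as the paper: the paper's proof is exactly the computation $\frac{\di}{\di t}J_n(Z)=\{J_n,H^{(\tSAP)}\}=0$, justified by \eqref{Pois} and Lemma \ref{lemma:Poissonbra}, since every monomial of $H^{(\tSAP)}$ (including the quadratic part $H^{(2)}$) is super-action preserving. Your extra verification of the quadratic terms and the Fourier rewriting of the Sobolev norm are just slightly more explicit versions of the same argument.
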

\begin{proof}
By \eqref{SAPequazione}, \eqref{Pois} and Lemma \ref{lemma:Poissonbra} we have 
$$
 \frac{\di}{\di t} J_n(Z)
= \di_Z J_n(Z)\left[  J_c \nabla H^{(\tSAP)}(Z)\right] =  \{ J_n(Z),  H^{(\tSAP)}(Z)\} = 0
$$
since the Hamiltonian $H^{(\tSAP)}(Z)$ contains  only super action preserving monomials.
\end{proof}

By the previous lemma,  
in order to derive an energy estimate for the solutions of 
\eqref{final:eq}, and thus for  a solution $ U $ of  \eqref{complexo},   
we have to estimate the non-homogeneous term  in \eqref{final:eq}.
We need  the following lemma.

 \begin{lemma}\label{equiv-tempo}
Let $ K \in \N $.
Then there is $ s_0 > 0 $ such that, 
for any $ s \geq s_0 $, for all $ 0 < r \leq \bar r(s) $ small, 
 if  $U$  belongs to $B_{s_0}^0(I;r)\cap C_{*\R}^{0}(I;\dot{H}^{s}(\T, \C^2))$
 and solves \eqref{complexo}, then $U$ belongs to $C_{*\R}^{K}(I;\dot{H}^{s}(\T, \C^2))$ and  $\forall \, 0\leq k \leq K$
 there is a constant $ \bar{C}_{s,k}\geq 1$  such that
\begin{equation}\label{aslong1}
\|\pa_{t}^{k}U(t)\|_{\dot{H}^{s- \frac32 k}}\leq \bar{C}_{s,k} \|U(t)\|_{\dot{H}^{s}} \, 
, \quad \forall t \in I
\, .
\end{equation}
In particular the norm 
$\|U (t) \|_{K,s}$ defined in \eqref{Knorm} is equivalent to the norm $\|U(t) \|_{\dot{H}^{s}}$.
\end{lemma}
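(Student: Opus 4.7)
The plan is to argue by induction on $k$, using the equation \eqref{complexo} to trade a time derivative for $3/2$ spatial derivatives, and then propagate the bound via the tame estimates of Section \ref{sec:para}.

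The base case $k=0$ is the trivial identity $\|U(t)\|_{\dot H^s}\leq \|U(t)\|_{\dot H^s}$. For the inductive step, suppose \eqref{aslong1} holds for every $0\leq j\leq k$ with constants $\bar C_{s,j}$ (and the analogous statement at any Sobolev regularity $s'\geq s_0+\tfrac32 k$). First, I would differentiate \eqref{complexo} $k$ times in time, getting schematically
\begin{equation}\label{eq:proofplan1}
\partial_t^{k+1}U \;=\; \partial_t^k\Bigl[\,J_c\Opbw{A_{\tfrac32}(U;x)\omega(\xi)}U\,\Bigr] + \partial_t^k\Bigl[\,\tfrac{\gamma}{2}G(0)\partial_x^{-1}U\,\Bigr]+\partial_t^k\Bigl[\,J_c\Opbw{A_1+A_{\tfrac12}+A^{(2)}_0}U\,\Bigr]+\partial_t^k\bigl[R(U)U\bigr].
\end{equation}
The second term on the right is a Fourier multiplier of order $1$ applied to $\partial_t^k U$, hence controlled by $\|\partial_t^k U\|_{\dot H^{s-\frac32 k+1}}$, which is $\leq \|\partial_t^k U\|_{\dot H^{s-\frac32 k}}$ up to losing spatial regularity (already available since we are proving \eqref{aslong1} for $s$ arbitrarily large, with $r$ depending on $s$). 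The dominant term is the first one: using the Leibniz rule and the fact that $A_{\tfrac32}(U;x)\omega(\xi)\in \Sigma\Gamma^{\tfrac32}_{K,0,0}[r,N]$, each time derivative falling on $U$ (in either the symbol or the argument) is distributed through the symbolic estimate \eqref{nonhomosymbo} and the tame bound \eqref{piove} from Definition \ref{Def:Maps}. Analogously the terms with symbols of order $1,\tfrac12,0$ and the smoothing remainder $R(U)U$ are handled by Proposition \ref{composizioniTOTALI} and the estimates for maps in $\Sigma\cM^m_{K,0,1}[r,N]$.

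Concretely, after distributing $\partial_t^k$ and using $\partial_t^k A_{\tfrac32}(U;x)$, $\partial_t^k A_1$, etc., the bound \eqref{piove} with $K'=0$ produces on the right-hand side a sum of terms of the form
\begin{equation*}
\|\partial_t^{k_1}U\|_{k_2+0,\,s_0}^{p}\;\|\partial_t^{k_1}U\|_{k_2+0,\,s-\tfrac32 k}\qquad\text{with}\qquad k_1+k_2\leq k,
\end{equation*}
each of which is, by the inductive hypothesis at times $\tau=t$ and at a possibly larger value of $s$, bounded by $C_{s,k}\|U(t)\|_{\dot H^s}$ provided $\|U(t)\|_{\dot H^{s_0}}<r$. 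Combining all the contributions gives $\|\partial_t^{k+1}U(t)\|_{\dot H^{s-\frac32(k+1)}}\leq \bar C_{s,k+1}\|U(t)\|_{\dot H^s}$, which closes the induction.

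The main obstacle, which is however of bookkeeping nature, is keeping track of the loss of $\tfrac32$ spatial derivatives at every induction step while checking that the tame estimates \eqref{piove} propagate correctly through time-differentiated paradifferential operators and smoothing remainders with $U$-dependent coefficients: the subtle point is that one must invoke the inductive hypothesis at the spatial regularity $s-\tfrac32 j$ for several $j\leq k$, so a preliminary reduction showing that, if \eqref{aslong1} holds at one spatial index $s$, it holds at every $s'\geq s_0$ (by replaying the argument at any higher regularity) should be recorded at the outset. The equivalence of the norms $\|U(t)\|_{K,s}$ and $\|U(t)\|_{\dot H^s}$ then follows by summing \eqref{aslong1} over $0\leq k\leq K$, with the reverse inequality being trivial.
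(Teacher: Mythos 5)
Your proposal is correct and follows essentially the same route as the paper: induction on $k$, using the equation \eqref{complexo} written as $\pa_t U=\Opbw{A(U;x,\xi)}U+R(U)U$ to express $\pa_t^k U$ as $\pa_t^{k-1}$ of the right-hand side, and then the tame estimates \eqref{piovespect}, \eqref{piove} together with the inductive hypothesis to bound it by $\|U\|_{k-1,s}\lesssim \|U(t)\|_{\dot H^s}$. The only remark is that your explicit Leibniz bookkeeping and the ``preliminary reduction'' to all regularities $s'\geq s_0$ are unnecessary: the estimates built into the classes $\Sigma\Gamma^{3/2}_{k-1,0,0}[r,N]$ and $\Sigma\cR^{-\vr}_{k-1,0,1}[r,N]$ already control $\pa_t^{k-1}$ of the operator applied to $U$, and they involve only the norms at the indices $s$ and $s_0$, both supplied by the inductive hypothesis at the same $s$ and at $s_0$ (the latter guaranteeing the smallness condition, which is why the threshold $\bar r(s)$ is taken as a minimum over $k\leq K$).
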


\begin{proof}
We argue by induction proving that for any $0\leq k\leq K$, there are $s_0, \bar r_k>0$ such that if $U \in  B_{s_0}^0(I;\bar r_k)\cap C_{*\R}^{0}(I;\dot{H}^{s}(\T, \C^2))$ solves \eqref{complexo},  then  $U\in C^k_{*\R}(I;\dot H^s(\T;\C^2))$ with estimate \eqref{aslong1}.
For $k=0$ the estimate \eqref{aslong1} is trivial. 
Then assume \eqref{aslong1} holds true for $0,\dots, k-1\leq K-1$.  
Next we  write \eqref{complexo} as
$
\pa_t U = \Opbw{A(U;x;\xi)}U + R(U)U
$
where, by Lemma \ref{LemCompl}, the smoothing operator $R(U)$ is in 
$ \Sigma\mathcal{R}^{-\varrho}_{k-1,0,1}[r,N]\otimes\mathcal{M}_2(\C) $  and 
the matrix of symbols
$$
A(U;x,\xi):=J_c A_\frac32(U;x) \omega(\xi)+ \frac{\gamma}{2} \frac{\tG(\xi)}{\ii \xi} \mathds{1} +
 J_c \big[ A_1(U;x,\xi)  +  A_{\frac12} (U;x,\xi)+A^{(2)}_0(U;x,\xi)] 
 $$ 
belongs to $ \Sigma\Gamma_{k-1,0,0}^{\frac32}[r,N] \otimes\mathcal{M}_2(\C) $ and so,
 by the fourth bullet after Definition \ref{smoothoperatormaps}, 
   $\Opbw{ A(U;x,\xi)}$ belongs to $ \Sigma\mS^{\frac32}_{k-1,0,0}[r,N]\otimes \cM_2(\C)$.
Let $s_0:= s_0(k)>0$ given in Definitions \ref{Def:Maps}, \ref{smoothoperatormaps}. By the inductive estimate \eqref{aslong1} up to $k-1$, we have that 
$  U\in B_{s_0}^{k-1}(I, \bar C_{s_0,k}' r) $ with 
$  \bar C_{s_0,k}':= \sum_{j=0}^{k-1} \bar C_{s_0,j} $. 
   Then, for any $ s\geq s_0$, there is $\bar r_k:=\bar r(s,k)>0$ such that if $0< r< \bar r_k$ 
the operator $\Opbw{ A(U;x,\xi)}$ fulfills estimate below \eqref{piovespect} (with $K' =0$, $m = \frac32$), and $R(U)$ estimate below
\eqref{piove} (with $K'=0$, $m=0$) so that
\begin{align}
\|\pa_t^k U\|_{\dot{H}^{s-\frac32k}} &\leq \| \pa_t^{k-1}(\Opbw{ A(U;x,\xi)}U) \|_{\dot{H}^{s-\frac32(k-1)-\frac32}}+ \| \pa_t^{k-1}(R(U)U)\|_{\dot{H}^{s-\frac32(k-1)-\frac32}}\notag\\
&\leq C_{s,k}\|U\|_{k-1,s} = C_{s,k} \sum_{ j=0}^{ k-1} \| \pa_t^{j} U\|_{\dot H^{s-\frac32j}}\leq \bar C_{s,k} \|U\|_{\dot H^s}\label{induttivaboh} 
\end{align}
by the inductive hypothesis \eqref{aslong1} for $j=0, \dots, k-1$ and setting $ \bar C_{s,k}:= C_{s,k} \sum_{j=0}^{k-1}\bar C_{s,j}$. This proves
\eqref{aslong1} at step $ k $. 
 We finally fix  $ \bar r(s):= \min(\bar r_1, \dots,\bar r_K)$ proving the lemma.
\end{proof}

\subsection*{Proof of Theorem \ref{teo1}} 
We deal only with the case $N  \in \N$, since the cubic energy estimate 
\eqref{eeN}
in case $N = 0$ follows directly from Proposition \ref{teoredu1} (see also  Remark \ref{rem:ee0}), yielding the local  time of existence of order $\e^{-1}$.

So from now on we consider $N \in \N$. For any value of the gravity $ g > 0 $, depth $ \tth \in (0,+\infty] $ and vorticity $ \gamma \in \R $, 
let $ {\cal K} \subset (0,+\infty) $ be the zero measure set 
defined in Theorem \ref{nonresfin0}. 
Assume that the surface tension coefficient $ \kappa $ belongs  to the complementary set $ (0,+\infty) \setminus {\cal K} $.  
 Let $\underline \varrho>0 $ be the constant given by Proposition \ref{birkfinalone}.  
 
{\it \begin{itemize} 
\item {\bf Choice of the parameters:}
From now on we fix in Proposition \ref{birkfinalone} the parameters
\be\label{parametri1}
  \vr:= \underline \varrho \, ,\quad
 K:= \underline K'(\vr) \in \N\, ,
\ee
where $\underline K'(\vr)$ is defined in Proposition \ref{teoredu1}.
Thus Proposition \ref{birkfinalone} provides
$ \underline s_0 > 0 $ and for any
\be \label{parametri2}
s\geq \underline s_0 \,  \quad \text{we fix} \quad  0<r< \min( \underline r_0(s), \bar r(s)) \, , 
\ee
where $ \underline r_0(s) > 0  $  is given by  Proposition \ref{birkfinalone} and $ \bar r(s)>0$ is given by Lemma \ref{equiv-tempo}. Let 
 $U(t) $ be a solution of system  \eqref{complexo} in  $B_{\underline s_0}^K(I;r)\cap C_{*\R}^{K}(I;\dot{H}^{s}(\T, \C^2))$. \end{itemize}}
 
 In order to prove Theorem \ref{teo1} we have to provide energy estimates of the 
 non-homogenous ``vector field"  in \eqref{final:eq}:
\be\label{XVFN}
X_{\geq N+2} (U,Z) := 
\vect{X^+_{\geq N+2} (U,Z) }{ 
\ov {X^+_{\geq N+2} (U,Z)}} := 
\vOpbw{- \ii (\tm_{\frac32})_{>N}(U;t,\xi)}Z+ R_{>N}(U;t)U \, . 
\ee
The following lemma  holds since the imaginary part of the $x$--independent symbol $(\tm_{\frac32})_{>N}(U;t,\xi)$ has order zero and because, with the choice of $ \vr=\underline \varrho$ in \eqref{parametri1}, the remainder $R_{>N}(U;t)$ in \eqref{XVFN} belongs to $\mR^{0}_{K,\underline{K}',N+1}[r] \otimes\mathcal{M}_2(\C) $.
\begin{lemma}\label{ene2}
{\bf (Energy estimate)}
The  non-homogenous vector field  $X_{\geq N+2} (U,Z)$ defined in \eqref{XVFN}, where $(\tm_{\frac32})_{>N}$ and  $R_{>N}(U;t)$ are defined in Proposition \ref{birkfinalone} with parameters given in \eqref{parametri1}, \eqref{parametri2}, 
satisfies, for any $ t \in I $, the energy estimate
\be\label{stimFF}
\Re \int_\T     |D|^{s} \,  X_{\geq N+2}^+ (U,Z) \cdot 
\overline{|D|^s z} \, \di x   \leq C_{s,K} \| Z \|_{\dot H^s}^{N+3} \, .
\ee
\end{lemma}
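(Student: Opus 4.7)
The plan is to split $X_{\geq N+2}(U,Z) = P(U,Z) + Q(U)$ with $P := \vOpbw{-\ii (\tm_{\frac32})_{>N}(U;t,\xi)}Z$ and $Q := R_{>N}(U;t)U$, and bound the contribution of each to the energy integral separately. Throughout I would use the norm equivalences $\|U\|_{K,\sigma} \sim \|U\|_{\dot H^\sigma}$ from Lemma \ref{equiv-tempo} (valid here since $K = \underline K'$ by the choice \eqref{parametri1}) and $\|U\|_{\dot H^\sigma} \sim \|Z\|_{\dot H^\sigma}$ from \eqref{equivalenzan}, applied at $\sigma = s$ and $\sigma = s_0$, to convert $U$-norms into $Z$-norms at the end.

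For the paradifferential contribution $P$, the key structural point is that $(\tm_{\frac32})_{>N}(U;t,\xi)$ is $x$-independent, so (away from the irrelevant low-frequency cutoff in the quantization) $\Opbw{(\tm_{\frac32})_{>N}(U;t,\xi)}$ is essentially a Fourier multiplier and commutes with $|D|^s$. Decomposing $(\tm_{\frac32})_{>N} = a_R + \ii a_I$ with $a_R \in \Gamma^{\frac32}_{K,\underline K',N+1}[r]$ and $a_I \in \Gamma^0_{K,\underline K',N+1}[r]$, the operator $\Opbw{-\ii a_R}$ is the Weyl quantization of a purely imaginary symbol, hence skew-adjoint with respect to the complex $L^2$ pairing by \eqref{A1b}; its contribution to the real part of the energy identity therefore vanishes identically. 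The remaining piece $\Opbw{a_I}$ is of order $0$, and $\|a_I\|_{L^\infty_{x,\xi}} \leq C\|U\|^{N+1}_{K,s_0}$ by \eqref{nonhomosymbo}, so its contribution is bounded by $C\|U\|^{N+1}_{K,s_0}\|z\|^2_{\dot H^s}$.

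For the smoothing contribution $Q$, the choice $\varrho = \underline \varrho$ in \eqref{parametri1} places $R_{>N}(U;t)$ in $\cM^0_{K,\underline K',N+1}[r] \otimes \cM_2(\C)$. Estimate \eqref{piove} applied with $k=0$, $V=U$, $m=0$, $p=N+1$ yields the tame bound $\|R_{>N}(U;t)U\|_{\dot H^s} \leq C\|U\|^{N+1}_{K,s_0}\|U\|_{K,s}$, and Cauchy--Schwarz on the energy integral produces a contribution bounded by $C\|U\|^{N+1}_{K,s_0}\|U\|_{K,s}\|z\|_{\dot H^s}$.

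Combining the two bounds and performing the norm conversions described above, using $\|Z\|_{\dot H^{s_0}} \leq \|Z\|_{\dot H^s}$ for $s \geq s_0$, everything collapses to $C_{s,K}\|Z\|^{N+3}_{\dot H^s}$, as claimed. The only delicate point — a potential loss of $3/2$ derivatives from the top-order piece of the paradifferential symbol — is neutralized by its skew-adjointness. This is the structural payoff of having reduced, in Proposition \ref{teoredu1}, the positive-order part of the symbol to be real-valued and $x$-independent, so that $|D|^s$ commutes through it and $-\ii$ times a real Fourier multiplier is automatically anti-self-adjoint.
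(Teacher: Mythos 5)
Your proposal is correct and follows essentially the same route as the paper: the same splitting into the paradifferential and smoothing parts, the same use of the $x$-independence of $(\tm_{\frac32})_{>N}$ to commute with $|D|^s$ and cancel the real, order-$\frac32$ part (the paper phrases this as the identity extracting $\Im(\tm_{\frac32})_{>N}$, you phrase it as skew-adjointness via \eqref{A1b}), the tame bound \eqref{piove} for $R_{>N}(U;t)U$, and the norm equivalences of Lemma \ref{equiv-tempo} and \eqref{equivalenzan} to close with $\|Z\|_{\dot H^s}^{N+3}$.
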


\begin{proof}
Since $ (\tm_{\frac32})_{>N}(U;t,\xi)$ is $x$--independent 
and has imaginary part $ \Im (\tm_{\frac32})_{>N}$ in $ \Gamma_{K,\underline{K}',N+1}^0[r]$
(cfr. Proposition \ref{birkfinalone})
then  $\opbw\big(-\ii ( \tm_{\frac32})_{>N} \big)$ 
commutes with the derivatives 
$ |D|^s$ and, by \eqref{piovespect} (with $k=0$, $m=0$, $s=0$ and $K=\underline K'$), 
\be\label{noveundici0}
\begin{aligned}
 &\Re \int_\T |D|^{s}\Opbw{-\ii ( \tm_{\frac32})_{>N}(U;t,\xi)}z \cdot |D|^{s} \ov z \, \di x\\
 &=\Re  \int_\T  \Opbw{ \Im ( \tm_{\frac32})_{>N}(U;t,\xi)  } |D|^{s} z \cdot |D|^{s}\bar z  \, \di x  \lesssim \| U\|_{\underline{K}',s_0}^{N+1} \| Z \|_{\dot H^s}^2 \\
 &  \stackrel{ \text{Lemma \ref{equiv-tempo}}, \eqref{equivalenzan}} {\lesssim_{s,K}}
 \| Z \|_{\dot H^s}^{N+3} \, . 
 \end{aligned}
\ee
We consider now the contribution coming from the
smoothing operator $R_{>N}(U;t)$ in $\mR^{0}_{K,\underline{K}',N+1}[r] \otimes\mathcal{M}_2(\C) $.
Using \eqref{piove} (with $k=0$, $m=0$), Lemma \ref{equiv-tempo} and \eqref{equivalenzan}  we get  
\be \label{novedieci0}
 \Re \int_\T |D|^{s}\big(R_{>N}(U;t)U\big)^+ \cdot |D|^{s} \ov z \, \di x \lesssim_{s}  
\| U\|_{\underline{K}',s_0}^{N+1} \| U\|_{\underline{K}',s} \| Z \|_{\dot H^s}  \lesssim_{s,K} \| Z \|_{\dot H^s}^{N+3} \, . 
\ee
The estimate \eqref{stimFF} follows by \eqref{noveundici0} and \eqref{novedieci0}. 
\end{proof}

We  now  prove the following key bootstrap result. 
By time reversibility we may, without loss of generality, consider  only positive times $t>0$.

\begin{proposition}{\bf (Bootstrap)} \label{proapriori}
For any $ s\geq \underline s_0$  there exist $c_0:= c_0(s,K)\in (0,1)$,  such that for any solution  $U(t)\in C^K_{*\R}(I; \dot H^{s}(\T;\C^2))$ of \eqref{complexo} fulfilling, for some $0< \e_1 < \min (\underline r_0(s), \bar r(s)) $,
\be\label{apriorih}
{\| U(0)\|}_{\dot H^s} \leq c_0 \e_1 \, , \qquad \sup_{t\in[0,T]} 
\sum_{k=0}^K {\| \pa_t^k U(t) \|}_{\dot H^{s- \frac32 k}} \leq \e_1 \, ,\qquad T \leq
 c_0 \e_1^{-N-1} \,,
\ee 
 then we have the improved bound
\be\label{aprioric}
\sup_{t\in[0,T]} \sum_{k=0}^K {\| \pa_t^k U(t) \|}_{\dot H^{s- \frac32 k}} \leq \dfrac{\e_1}{2} \, .
\ee 
\end{proposition}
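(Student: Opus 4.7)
The strategy is to conjugate $U(t)$ via the normal form reduction of Propositions \ref{prop:Z} and \ref{birkfinalone} into a variable $Z(t)$ solving the Hamiltonian Birkhoff normal form \eqref{final:eq}, then perform an energy estimate in $\dot H^s$ that exploits the preservation of super-actions by the super-action preserving Hamiltonian $H^{(\tSAP)}$, and finally translate the resulting differential inequality back to $U$ via the norm equivalences.

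First I would set $Z_0 := (\uno + R(\Phi(U)))\Phi(U)$ as in Proposition \ref{prop:Z} and $Z := \mF_{\tnf}(Z_0)$ as in Proposition \ref{birkfinalone}, so that $Z(t)$ solves \eqref{final:eq} and, by \eqref{equivalenzan},
$$
C^{-1}\,\|U(t)\|_{\dot H^s}\leq \|Z(t)\|_{\dot H^s}\leq C\,\|U(t)\|_{\dot H^s}\qquad \forall\, t\in[0,T].
$$
Differentiating $\|Z(t)\|_{\dot H^s}^2 = \sum_{n \in \N} n^{2s}\, J_n(Z(t))$ in time using \eqref{final:eq}, the contribution of the pluri-homogeneous piece $-\ii \vOmega(D) Z + J_c \nabla H^{(\tSAP)}_{\frac32}(Z) + J_c \nabla H^{(\tSAP)}_{-\vr}(Z) = J_c \nabla H^{(\tSAP)}(Z)$ vanishes term-by-term by Lemma \ref{lemmatronco} (which in turn uses Lemma \ref{lemma:Poissonbra} to ensure $\{J_n, H^{(\tSAP)}\}=0$ for every $n \in \N$). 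The only surviving contribution comes from the non-homogeneous vector field $X_{\geq N+2}(U, Z)$ of \eqref{XVFN}, which is exactly controlled by Lemma \ref{ene2}, yielding
$$
\frac{\di}{\di t}\|Z(t)\|^2_{\dot H^s}\leq C_{s,K}\,\|Z(t)\|^{N+3}_{\dot H^s}\qquad \forall\, t\in [0,T].
$$

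Integrating over $[0,t]$ with $t \leq T \leq c_0 \e_1^{-N-1}$, and using that the a priori bound \eqref{apriorih} together with Lemma \ref{equiv-tempo} and the norm equivalence above gives $\|Z(\tau)\|_{\dot H^s} \leq C \e_1$ on $[0,T]$, while the initial bound in \eqref{apriorih} gives $\|Z(0)\|_{\dot H^s} \leq C c_0 \e_1$, I would obtain
$$
\|Z(t)\|^2_{\dot H^s} \leq C^2 c_0^2\, \e_1^2 + C_{s,K}\, (C\e_1)^{N+3}\, T \leq \big(C^2 c_0^2 + C_{s,K} C^{N+3} c_0 \big)\, \e_1^2.
$$
Choosing $c_0 := c_0(s,K)>0$ sufficiently small makes the right-hand side smaller than $\e_1^2/(2 C \bar C_{s,K}')^2$ where $\bar C_{s,K}':= \sum_{k=0}^{K} \bar C_{s,k}$ is the constant of Lemma \ref{equiv-tempo}; by the norm equivalence this gives $\|U(t)\|_{\dot H^s}\leq \e_1/(2 \bar C_{s,K}')$, and Lemma \ref{equiv-tempo} upgrades this to the improved bound \eqref{aprioric}. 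The main delicate point will be carefully justifying, in the energy identity for $\|Z(t)\|^2_{\dot H^s}$, that the super-action preserving Hamiltonian vector field contributes exactly zero at the level of Sobolev norms: Lemma \ref{lemmatronco} provides this for genuine solutions of the truncated system \eqref{troncata}, and the same conclusion must be obtained pointwise-in-time for $Z(t)$ by differentiating each summand $n^{2s} J_n(Z(t))$ separately (using $\{J_n, H^{(\tSAP)}\}=0$) and handling the exchange of the time derivative with the infinite series $\sum_n n^{2s} J_n(Z)$ via a standard truncation/limit argument justified by the $\dot H^s$-regularity of $Z(t)$.
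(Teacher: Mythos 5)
Your proposal is correct and follows essentially the same route as the paper: conjugate $U$ to the normal form variable $Z$ via Propositions \ref{prop:Z} and \ref{birkfinalone}, combine Lemma \ref{lemmatronco} (conservation of super-actions) with Lemma \ref{ene2} to get the energy inequality $\frac{\di}{\di t}\|Z\|_{\dot H^s}^2\lesssim \|Z\|_{\dot H^s}^{N+3}$, integrate over $[0,T]$ with $T\leq c_0\e_1^{-N-1}$, and transfer back to $U$ using \eqref{equivalenzan} and Lemma \ref{equiv-tempo}, choosing $c_0$ small. The only cosmetic difference is that you phrase the vanishing of the Hamiltonian contribution by differentiating the series $\sum_n n^{2s}J_n(Z)$ directly, whereas the paper packages exactly this in Lemma \ref{lemmatronco} together with the estimate \eqref{stimaEW}.
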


\begin{proof}
By \eqref{apriorih} we have  that $U \in B^K_s(I;\e_1)$ with $I:= [-T, T]$. 
By Proposition \ref{birkfinalone} (applied with  $r\leadsto \e_1<\underline r_0(s)$), the variable 
$$
Z(t):=\mF_{\tnf}(Z_0(t)),\quad Z_0(t):=  \big( \uno+ R( \bB(U(t);t)U(t)) \big) \bB(U(t);t)U(t) \, ,
 $$
where $\bB(U;t)$ is defined in Proposition \ref{teoredu1} and the smoothing operator $R( \cdot ) $  is defined in \eqref{zetone},
 solves \eqref{final:eq} and has a Sobolev norm equivalent to that of $U(t)$, see \eqref{equivalenzan}. 
 Lemmata  \ref{lemmatronco} and \ref{ene2} (using also $\varepsilon_1 < \min( \underline r_0(s), \bar r(s))$) imply that the solution $Z(t)$ of system \eqref{final:eq} satisfies  the energy estimate
\be\label{stimaEW}
\frac{\di}{\di t} \| Z(t)\|_{\dot H^s}^2 \leq C_{s,K} \| Z \|_{\dot H^s}^{N+3}
\ee
and therefore
for all $ 0 \leq t \leq T $, by \eqref{equivalenzan}, 
\begin{align}
\notag
{\|U(t)\|}_{\dot{H}^{s}}^{2} \leq C_{s,K}  {\|Z(t)\|}^{2}_{\dot{H}^{s}}  
&\stackrel{\eqref{stimaEW}}{\leq}  C_{s,K} \left(  {\| Z(0) \|}^{2}_{\dot{H}^s} 
+  \int_0^t {\| Z(\tau)\|}^{N+3}_{\dot{H}^s} \, \di \tau \right) \\
\label{eeN}
& \stackrel{\eqref{equivalenzan}} \leq C_{s,K}'{\|U(0)\|}^{2}_{\dot{H}^{s}} + C_{s,K}' \int_{0}^{t} \|U(\tau)\|^{N+3}_{\dot{H}^{s}} \, \di \tau \, . 
\end{align}
Then, by the a priori assumption \eqref{apriorih}
we deduce that, for all $0\leq t\leq T\leq  c_0\e_1^{-(N+1)}$,
\be
\label{aprioriEne12}
{\|U(t)\|}^{2}_{\dot{H}^{s}} \leq C_{s,K}' c^{2}_0 \e_1^{2} + C_{s,K}' \, {T} \, \e_1^{N+3} 
\leq  \e_1^{2}(C_{s,K}' c_0^2+C_{s,K}' c_0) \, .
\ee
The desired conclusion \eqref{aprioric}
on the norms $C^k_t  H^{s-\frac32k}_x$
follows by Lemma \ref{equiv-tempo}, \eqref{aprioriEne12},   
choosing $c_0$  small enough depending on $s$ and $ K$. 
\end{proof}

\noindent
{\bf Proof of Theorem \ref{teo1} concluded.} 
\\[1mm]
{\em Step 1: Local existence.} 
By the  local existence theory, there exist 
$ r_{{\rm loc}}, s_{{\rm loc}} >0$ such that for any $s \geq  s_{{\rm loc}}$ there are $T_{\rm loc}>0, C_{\rm loc}\geq1$ such that, any initial datum $(\eta_0, \psi_0)\in H_0^{s+\frac{1}{4}}(\mathbb{T})\times {\dot H}^{s-\frac{1}{4}}$ with 
$$
\|\eta_0\|_{H_0^{s+\frac14}}+\|\psi_0\|_{\dot{H}^{s-\frac14}}\leq r_{{\rm loc}}
$$
 there exists a unique classical solution $(\eta(t), \psi(t))$ in 
$C^0\bigl([-T_{\rm loc},T_{\rm loc}],H_0^{s+\frac14}(\mathbb{T})\times {\dot H}^{s-\frac14}(\mathbb{T})\bigr)$ of \eqref{eq:etapsi} satisfying the initial condition $\eta(0) = \eta_0, \psi(0)= \psi_0$ and 
\begin{equation}\label{smallness_loc}
\sup_{t\in [-T_{\rm loc},T_{\rm loc}]}\big( 
 \|\eta(t)\|_{H_0^{s+\frac14}}+\|\psi(t)\|_{\dot{H}^{s-\frac14}}\big)\leq C_{\rm loc} 
 \big( \| \eta_0 \|_{H^{s+\frac14}_0}+  \|\psi_0\|_{{\dot H}^{s-\frac14}} \big) \, . 
\end{equation}

\begin{remark}
The local existence result can be derived  arguing as in  \cite{FI1,BMM} (which have  the very same  paradifferential functional setting developed in this paper). 
One could also extend the proof of  \cite{ABZduke} to the case of constant vorticity.
\end{remark}
\noindent
{\em Step 2: Complex variables.}
System \eqref{complexo} and the water waves equations \eqref{eq:etapsi} are equivalent under the linear change of variables 
\be \label{etainU}
U= \vect{u}{\bar u}
=  \mM^{-1} \circ \cW^{-1} \vect{\eta}{\psi}, \quad u 
= \frac{1}{\sqrt 2} M(D)^{-1} \eta + \frac{\im}{\sqrt 2} M(D) \zeta \, , 
\quad \zeta:= \psi - \frac{\gamma}{2} \pa_x^{-1} \eta \, , 
\ee 
defined in \eqref{Whalen}, \eqref{defMM-1}.
In view of \eqref{etainU} we have the equivalence of the norms: for some $ \tC:= \tC(\mathtt{h},\gamma, \kappa)\geq 1$, any 
$ t$ 
\be\label{improved.est} 
\tC^{-1} \|  U(t)\|_{\dot H^{s}}\leq \norm{ \eta(t)}_{H_0^{s+\frac14 }} + \norm{ \psi(t)}_{\dot H^{s-\frac14 }} \leq \tC \|  U(t)\|_{\dot H^{s}} \, .
\ee
{\em Step 3: Bootstrap argument.}  
Consider the local solution of \eqref{eq:etapsi} with initial datum $(\eta_0,\psi_0)$ satisfying 
\be\label{datoinizialepiccolo}
\norm{ \eta_0}_{H_0^{s+\frac14}} + \norm{ \psi_0}_{\dot H^{s-\frac14 }} \leq \e \leq \e_0 
\ee
where 
\be \label{sceltafinale}
 \e_0:= \min\Big(\frac{\underline r_0(s)} {\breve C_{s,K}},  \frac{\bar r(s)}{\breve C_{s,K}},  r_{{\rm loc}}\Big) \, , \quad 
 \breve C_{s,K}:= \max \Big( \frac{\tC}{c_0(s,K)}, K \bar C_{s,K} \tC C_{{\rm loc}} \Big) \, . 
\ee
By \eqref{improved.est}, \eqref{datoinizialepiccolo}, \eqref{sceltafinale}, 
Lemma \ref{equiv-tempo} (with $ r\leadsto \tC C_{{\rm loc}} \varepsilon_0 < \bar r(s)$) and 
\eqref{smallness_loc} we deduce that 
\be\label{conditobene}
\| U(0)\|_{\dot H^s} \leq \tC \e, \quad \| \pa_t^k U (t) \|_{\dot H^{s-\frac32k}} \leq \bar {C}_{s,K} \tC C_{{\rm loc}} \e \, , \quad \forall  0<t< T_{{\rm loc}} \, , \  \forall k=0,\dots ,K \, .
\ee
By \eqref{conditobene} and 
\eqref{sceltafinale},  the solution $ U(t)$ of \eqref{complexo}  satisfies, for any $ 0<\e<\e_0 $,  the smallness condition 
$$
{\| U(0)\|}_{\dot H^s} \leq c_0 \e_1 \, , \qquad \sup_{t\in[0,T_{\rm loc}]} 
\sum_{k=0}^K {\| \pa_t^k U(t) \|}_{\dot H^{s- \frac32 k}} \leq \e_1  \quad
\text{with} \quad
\varepsilon_1:=  \breve C_{s,K} \e \, , 
$$
which is \eqref{apriorih}  with  $ T = T_{{\rm loc}} $.
Proposition \ref{proapriori} and a  standard bootstrap argument imply that  
the maximal time   $ T_{\rm max} \geq T_{{\rm loc}} $ 
of existence of the solution $ U(t) $   is larger than $ T_\e := c_0 \e_1^{-N-1}$ and 
$ \| U (t) \|_{\dot H^{s}} \leq  \e_1 $ for any $ 0 \leq t < T_\e  $.   
By \eqref{etainU} this proves that the solution
$ (\eta, \psi )(t)  = \cW {\cal M} U (t) $  of the water waves equations \eqref{eq:etapsi} satisfies
\eqref{timeexi} and \eqref{boundsolN} 
with $c:= c_0 \breve C_{s,K}^{-N-1}$ and $C:= \tC\breve C_{s,K}$.

\appendix

\section{Conjugation lemmata}

We collect in this Appendix important results   used along the paper about how paradifferential equations are conjugated under the flow of an unbounded Fourier multiplier
(Lemma \ref{lem:conj.fou})
and an approximate flow generated by a smoothing vector field (Lemma \ref{NormFormMap0}).   
\smallskip

The following result about the approximate inverse is a consequence of Lemma \ref{inversoapp}. 

\begin{lemma}\label{cor.app.inv}
Assume \eqref{Z.M0} with $U $ in $ B^K_{s_0,\R}(I;r)$, 
$K, K' \in \N_0$ with $K' \leq K$, $r >0$.
 Let $p, N \in \N $, $p \leq N$,  $m \geq 0$, and consider 
\be\label{W.change}
W := \Phi(Z)  = Z + S(Z;t)Z 
\ee
where 
\be \label{c1c2M}
 S(Z;t) \in \begin{cases}
 \Sigma\cS^m_{K,K',p}[r,N] \otimes \cM_2(\C) \qquad \qquad 
 \text{if} \quad  \bM_0(U;t)  = \id \, , \\
  \Sigma\cS^m_{K,0,p}[\breve r,N] \otimes \cM_2(\C) \, , \, \forall \breve r > 0 \quad \text{otherwise} \, . 
\end{cases}
\ee
Then we may write 
\be\label{W.change2}
Z = \Psi_{\leq N}(W) + M_{>N}(U;t)U  \quad 
\text{with} \quad \Psi_{\leq N}(W) := W +  \breve S_{\leq N}(W)W \, , 
\ee
where 
\begin{itemize}
\item 
$ \breve S_{\leq N}(W) $ is a matrix of pluri-homogeneous spectrally localized maps 
in $  \Sigma_p^N \wt \cS^{m(N-p +1)}_q \otimes \cM_2(\C)$; 
 \item 
$M_{>N}(U;t)  $ is a matrix of  non-homogeneous operators in 
 $\cM^{m(N-p +2)}_{K, K',N+1}[r] \otimes \cM_2(\C) $.
\end{itemize}
\end{lemma}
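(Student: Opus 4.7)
\textbf{Proof proposal for Lemma \ref{cor.app.inv}.}

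The plan is to reduce to the purely homogeneous setting already handled by Lemma \ref{inversoapp} and then absorb the non-homogeneous tail into the remainder $M_{>N}(U;t)U$. First I would split $S(Z;t) = S_{\leq N}(Z) + S_{>N}(Z;t)$, with $S_{\leq N}(Z) := \cP_{\leq N}[S(Z;t)]$ in $\Sigma_p^N \wt\cS_q^m \otimes \cM_2(\C)$ and $S_{>N}(Z;t)$ non-homogeneous (in $\cS^m_{K,K',N+1}[r]\otimes \cM_2(\C)$ or $\cS^m_{K,0,N+1}[\breve r]\otimes \cM_2(\C)$ depending on \eqref{c1c2M}), and set $\Phi_{\leq N}(Z) := Z + S_{\leq N}(Z)Z$, so that $W = \Phi_{\leq N}(Z) + S_{>N}(Z;t)Z$. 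Then I would apply Lemma \ref{inversoapp}-(i) to $\Phi_{\leq N}$ to obtain an approximate inverse $\Psi_{\leq N}(W) = W + \breve S_{\leq N}(W)W$ with $\breve S_{\leq N} \in \Sigma_p^N \wt\cS_q^{m(N-p+1)}\otimes \cM_2(\C)$ and satisfying \eqref{omogeneaaaa2}, so that in particular $\Psi_{\leq N}(\Phi_{\leq N}(Z)) = Z + M''_{>N}(Z)Z$ with $M''_{>N} \in \Sigma_{N+1}\wt\cS_q^{m(N-p+2)} \otimes \cM_2(\C)$; by Lemma \ref{mappabonetta} this may be viewed as a non-homogeneous $m(N-p+2)$-operator in $\cM^{m(N-p+2)}_{K,0,N+1}[\breve r] \otimes \cM_2(\C)$.

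Next, I would compute $\Psi_{\leq N}(W) - Z$ by expanding $\breve S_{\leq N}(W)W$ via multilinearity around $W = \Phi_{\leq N}(Z) + S_{>N}(Z;t)Z$. The pluri-homogeneous piece exactly gives $\Psi_{\leq N}(\Phi_{\leq N}(Z)) - Z = M''_{>N}(Z)Z$, while every other contribution contains at least one factor of the non-homogeneous $S_{>N}(Z;t)Z$. These mixed terms are handled by Proposition \ref{composizioniTOTALIs}-(i),(ii) (the spectrally localized maps $\breve S_{\leq N}$ are stable under composition with the non-homogeneous $S_{>N}$), together with Proposition \ref{composizioniTOTALI}-(i),(ii), producing altogether a non-homogeneous operator $M_{>N}(Z;t) \in \cM_{K,K',N+1}^{m(N-p+2)}[r]\otimes \cM_2(\C)$ (or its $K'=0$ variant) such that
\[
Z = \Psi_{\leq N}(W) + M_{>N}(Z;t)Z \, .
\]

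Finally, to express the remainder in terms of $U$ rather than $Z$, I would substitute $Z = \bM_0(U;t)U$ and invoke Proposition \ref{composizioniTOTALI}-(iii) (applied with the $0$-operator $\bM_0(U;t)$) to conclude that $M_{>N}(\bM_0(U;t)U;t)$ belongs to $\cM_{K,K',N+1}^{m(N-p+2)}[r]\otimes\cM_2(\C)$; composing with $\bM_0(U;t)$ one more time by Proposition \ref{composizioniTOTALI}-(i) yields the desired operator $M_{>N}(U;t) \in \cM^{m(N-p+2)}_{K,K',N+1}[r]\otimes \cM_2(\C)$ such that $Z = \Psi_{\leq N}(W) + M_{>N}(U;t)U$.

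The main bookkeeping obstacle is tracking the two different classes in \eqref{c1c2M}: when $\bM_0 = \id$ we stay in the $K'$-class at every step, while in the general case the substitution $Z = \bM_0(U;t)U$ forces us into $K'=0$ classes (consistent with the statement's allowance of $\breve r > 0$). The order loss $m(N-p+2)$ is sharp and arises naturally because the recursive formula \eqref{recu} for $\breve S_{\leq N}$ compounds the loss $m$ at each of the at most $N-p+1$ steps, and one further composition is needed when evaluating $\Psi_{\leq N}\circ \Phi_{\leq N}$; once this counting is laid out, the remaining verifications are direct applications of the composition rules already established in Propositions \ref{composizioniTOTALI} and \ref{composizioniTOTALIs}.
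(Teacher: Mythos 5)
Your proposal is correct and follows essentially the same route as the paper: split off the pluri-homogeneous part $\Phi_{\leq N}$, take its approximate inverse from Lemma \ref{inversoapp}, expand $\Psi_{\leq N}$ at $W=\Phi_{\leq N}(Z)+S_{>N}(Z;t)Z$, and push the substitution $Z=\bM_0(U;t)U$ through Proposition \ref{composizioniTOTALI}-(iii),(i). The only cosmetic difference is that the paper organizes the tail contribution via a Taylor integral remainder estimated with Proposition \ref{composizioniTOTALIs}-(v), whereas you expand by multilinearity and invoke Proposition \ref{composizioniTOTALI}-(ii) — equivalent bookkeeping.
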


\begin{proof}
By   Lemma \ref{inversoapp} there exists 
an approximate inverse up to homogeneity $N$ of
 the pluri-homogeneous nonlinear map  (obtained by \eqref{W.change})
$$
\Phi_{\leq N}(Z):= Z + \cP_{\leq N}[S(Z;t)] Z
$$ 
  having the form 
  $$
  \Psi_{\leq N}(W) = W + \breve S_{\leq N}(W)W \quad \text{with} \quad
   \breve S_{\leq N}(W) = 
   \sum_{q=p}^N \breve S_q (W) \, ,  \quad  
  \breve S_q (W) \in  \wt \cS^{m(N-p +1)}_q \otimes \cM_2(\C) 
 \, . 
  $$ 
Applying $ \Psi_{\leq N}$ to \eqref{W.change}, writing $ 
\Phi(Z) = \Phi_{\leq N}(Z) +S_{>N}(Z;t)Z $ 
with  
$S_{>N}(Z;t) := S(Z;t)- \cP_{\leq N}[S(Z;t)] $,
we get 
    \begin{align}
    \notag
 &   \Psi_{\leq N}(W)     =  \Psi_{\leq N}\big(\Phi_{\leq N}(Z) +S_{>N}(Z;t)Z \big) \\
 \notag
 & = \Psi_{\leq N}\big(\Phi_{\leq N}(Z) \big)+
\underbrace{\int_0^1 \di_W \Psi_{\leq N}\left( \Phi_{\leq N}(Z) + \tau S_{>N}(Z;t)Z\right) [S_{>N}(Z;t)Z] \di \tau}_{ =: S''_{>N}(Z;t)Z}  \\
& \stackrel{\eqref{invmappe}}{ = }
 Z + S'_{>N}(Z)Z +  S''_{>N}(Z;t)Z   \label{ZWinlem}
     \end{align}
  where  $S'_{>N}(Z)\in  \Sigma_{N+1}  \wtcS_q^{m(N-p+2)}\otimes \mM_2 (\C) $
  by Lemma  \ref{inversoapp}-($i$) 
  and,  according to  \eqref{c1c2M}, by Proposition \ref{composizioniTOTALIs}-($v$)
\be\label{S''N}
  S''_{>N}(Z;t) 
  \in 
  \begin{cases}
   \cS^{m(N-p +2)}_{K,K',N+1}[r] \otimes \cM_2(\C)  \qquad \qquad  \ \ 
 \text{if} \quad  \bM_0(U;t)  = \id \, , \\
   \cS^{m(N-p +2)}_{K,0,N+1}[\breve r] \otimes \cM_2(\C) \, , \, \forall \breve r > 0
   \quad \text{otherwise} 
   \, . 
  \end{cases}
\ee
Finally we substitute $Z= \bM_0(U;t)U$ 
where $ \bM_0(U;t) $ is in $ \cM^0_{K,K',0}[r]\otimes \mM_2(\C) $
(cfr. \eqref{Z.M0}) in the non--homogeneous term  $S'_{>N}(Z)Z $ and $ S''_{>N}(Z;t)Z$ 
in \eqref{ZWinlem}-\eqref{S''N}  and using $(iii)$ and $(i)$ Proposition \ref{composizioniTOTALI}  we 
deduce \eqref{W.change2} and that 
$M_{>N}(U;t) \in \cM^{m(N-p +2)}_{K, K',N+1}[r] \otimes \cM_2(\C) $.
\end{proof}

We provide a lemma concerning how  paradifferential and smoothing operators change by substituting in the `internal" variables a close to the identity map.

\begin{lemma} \label{sost}
Assume $ W= \bM_0(U;t) U$ with $ \bM_0(U;t) \in \mM_{K,K',0}^0[r] \otimes \mM_2(\C)$, $U \in B_{s_0, \R}^{K}(I;r)$ for some $r,s_0>0$ and  $ 0\leq K'\leq K$.
 Let $p, N\in \N$ with $p\leq N$, $m\in \R$  and 
consider a nonlinear map 
\be\label{Z.W1} 
 Z=\Psi_{\leq N}(W)+M_{>N}(U;t)U \quad \text{where} \quad  \Psi_{\leq N}(W)= W+ M_{\leq N}(W)W 
\ee
 with  
 
 $\bullet$
 $  M_{\leq N}(W)$ is a matrix of pluri-homogeneous $m$-operators   in $\Sigma_{p}^N\wt \mM_q^{m} \otimes \mM_2(\C)$;
 
 $\bullet$
 $M_{>N}(U;t)$ is a matrix of non homogeneous $m$-operators in  $\mM_{K,K',N+1}^{m}[r] \otimes \mM_2(\C) $.
 
 \noindent Then 
\begin{itemize}
\item[$(i)$] ({\bf Symbols}) 
if $ a_{\leq N}(Z;\xi)$ is a pluri-homogeneous real-valued symbol, independent of $x$, in $ \Sigma_{2}^N \wt \Gamma_q^{m'}$, $m'\in \R$, then 
\be
\begin{aligned} 
&\Opbw{a_{\leq N}(Z ;\xi)}_{|Z=\Psi_{\leq N}(W)+M_{>N}(U;t)U}\\
\label{sostbony}&= \Opbw{a^+_{\leq N}(W;\xi) + a^+_{>N}(U;t,\xi)} + R^+_{\leq N}(W) +R_{>N}^+(U;t) \, ,
\end{aligned}
\ee
where\\
\noindent
$\bullet$ 
$a^+_{\leq N}(W;\xi)$ is a pluri-homogeneous real-valued symbol independent of $x$ in  $\Sigma_{2}^N \wt \Gamma^{m'}_q $ such that 
\be \label{invariatosym}
\mP_{\leq p+1}(a^+_{\leq N}(W;\xi))=\mP_{\leq p+1}(a_{\leq N}(W;\xi)) \, ;
\ee
\noindent
$\bullet$
$a^+_{>N}(U;t,\xi)$ is a non-homogeneous real valued symbol independent of $x$ in $\Gamma^{m'}_{K,K',N+1}[r]$;\\
\noindent
$\bullet$  $ R^+_{\leq N}(W)$ is a pluri-homogeneous smoothing operator in  $ \Sigma_{p+2}^N \wtcR^{-\vr}_q \otimes \mM_2(\C) $ for any $\varrho \geq0$;\\
\noindent
$\bullet$ $R_{>N}^+(U;t)$  is a non-homogeneous smoothing operator  in $\mR^{-\vr}_{K,K',N+1}[r]  $ for any $\varrho \geq 0$.
\item[$(ii)$]({\bf Smoothing operators}) If $ R_{\leq N}(Z)$ is a pluri-homogeneous smoothing operator in $ \Sigma_{1}^N \wtcR^{-\vr}_q $, for some $\vr\geq0$,  then 
\be\label{sostsmoo}
\begin{aligned}
(&R_{\leq N}(Z)Z)_{|Z=\Psi_{\leq N}(W)+M_{>N}(U;t)U}= R^+_{\leq N}(W)W + R_{>N}^+(U;t)U \, ,\\
&R_{\leq N}(Z)_{|Z=\Psi_{\leq N}(W)+M_{>N}(U;t)U}= R'_{\leq N}(W) + R_{>N}'(U;t)
\end{aligned}
\ee
where \\
$\bullet$ $R^+_{\leq N}(W)$ and $R'_{\leq N}(W)$ are pluri-homogeneous smoothing operators in $ \Sigma_{1}^N \wtcR^{-\vr+(N+1)\max(m,0)}_q $ such that 
\be \label{invariatosmo}
\mP_{\leq p}(R^+_{\leq N}(W))=\mP_{\leq p}(R'_{\leq N}(W))=\mP_{\leq p}(R_{\leq N}(W)) \, ;
\ee
$\bullet$ $R_{>N}^+(U;t)$ and $R_{>N}'(U;t)$ are non-homogeneous smoothing operators  in $\mR^{-\vr+(N+1)\max(m,0)}_{K,K',N+1}[r]$.  
\end{itemize}
If  $\vr\leq  (N+1)\max(m,0)$ we regard  $ R^+_{\leq N}$ and $R_{>N}^+$ as  operators in $ \Sigma_1^N \wtcM^{-\vr+ (N+1)\max(m,0)}_q$ and respectively $\mM^{-\vr+(N+1)\max(m,0)}_{K,K',N+1}[r]$.
\end{lemma}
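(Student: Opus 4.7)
\textbf{Proof proposal for Lemma \ref{sost}.} The plan is to prove both items by multilinear expansion in $Z$ followed by systematic application of the composition results in Proposition~\ref{composizioniTOTALI}. Writing $Z = W + W'$ with $W' := M_{\leq N}(W)W + M_{>N}(U;t)U$, I will expand each homogeneous piece $a_q$ of $a_{\leq N}$ (viewed as a symmetric $q$-linear form in $Z$) and each $R_q(Z,\ldots,Z)[Z]$ (viewed as a $(q+1)$-linear form) as a sum over subsets $S$ of slots in which $Z$ has been replaced by $W'$; further splitting $W'$ into its two pieces. The subset $S=\emptyset$ reproduces $a_{\leq N}(W;\xi)$, respectively $R_{\leq N}(W)W$, and gives the leading contribution identifying $a^+_{\leq N}$ and $R^+_{\leq N}$ to leading order.

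For part~$(i)$, on the subsets $S\neq\emptyset$ that involve only $M_{\leq N}(W)W$ factors, I will invoke Proposition~\ref{composizioniTOTALI}-$(iv)$ with $c=a_q$ and internal operators equal to either $\uno$ or $M_{\leq N}(W)$: this rewrites $\opbw$ of the composed symbol as a paradifferential operator with an honest symbol $b^S(W;\xi) \in \wt\Gamma^{m'}_{q+|S|p}$ (still $x$-independent by Remark~\ref{rem:symbol} because all the Fourier-momenta of the composed terms sum to zero, and real-valued because the coefficients of $a_q$ and $M_{\leq N}$ satisfy \eqref{realsim}, \eqref{M.realtoreal}), plus a smoothing remainder of arbitrary order $-\varrho$ by Proposition~\ref{composizioniTOTALI}-$(iv)$. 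The subsets in which some slot receives a factor of $M_{>N}(U;t)U$ are handled via Proposition~\ref{composizioniTOTALI}-$(iii)$--$(iv)$ and contribute only to the non-homogeneous symbol $a^+_{>N}(U;t,\xi) \in \Gamma^{m'}_{K,K',N+1}[r]$ and non-homogeneous smoothing remainder $R^+_{>N}(U;t) \in \cR^{-\varrho}_{K,K',N+1}[r]$, because the presence of $M_{>N}$ already pushes the vanishing degree to $N+1$. Summing over $S$ and over $q=2,\ldots,N$ yields \eqref{sostbony}. The key homogeneity bookkeeping is that every $|S|\geq 1$ contribution increases the $W$-degree by at least $|S|\cdot p$, so the lowest possible correction occurs at degree $(q-1)+(p+1) = q+p \geq p+2$, proving \eqref{invariatosym}.

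For part~$(ii)$ the same multilinear expansion is applied to $R_q(Z,\ldots,Z)[Z]$, but now using Proposition~\ref{composizioniTOTALI}-$(ii)$ to identify, for each nonempty $S$ contributing only $M_{\leq N}(W)W$ factors, the resulting expression as a homogeneous $m$-operator of $W$-degree $q+|S|\cdot p$ and order at most $-\varrho + |S|\max(m,0) \leq -\varrho + (N+1)\max(m,0)$ (iterating Proposition~\ref{composizioniTOTALI}-$(i)$ gives the cumulative $(N+1)\max(m,0)$ loss announced in the statement); this is classified as smoothing when $-\varrho + (N+1)\max(m,0)<0$ and as an $m$-operator otherwise, as stated. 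The subsets containing an $M_{>N}(U;t)U$ factor end up in the non-homogeneous parts $R^+_{>N}(U;t)U$ or $R'_{>N}(U;t)$ after invoking Proposition~\ref{composizioniTOTALI}-$(iii)$ to re-express any remaining $U$ as $\bM_0(U;t)^{-1}$ applied (approximately) to $W$ is not needed: we simply keep $U$ and observe that the composed operator falls in $\mM^{-\varrho+(N+1)\max(m,0)}_{K,K',N+1}[r]$. Preservation \eqref{invariatosmo} follows because the smallest correction to $R_{\leq N}(W)$ as an operator arises from substituting one of the $q$ internal slots of $R_q$ by $M_{\leq N}(W)W$, raising the operator degree from $q$ to $q+p \geq p+1$; the second identity in \eqref{sostsmoo} is obtained by the same argument with the substitution performed only in internal slots.

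The main obstacle I expect to face is the simultaneous tracking of four invariants throughout the multilinear expansion: $x$-independence of the output symbols (which must be derived from the momentum-conservation property in the Fourier representations of $a_q$, $M_{\leq N}$ and $M_{>N}$), reality of the output symbols (from \eqref{realsim}, \eqref{M.realtoreal}), the order/degree bookkeeping that gives the sharp threshold $p+1$ in \eqref{invariatosym} and $p$ in \eqref{invariatosmo}, and the $(N+1)\max(m,0)$-derivative loss in $(ii)$ which requires an iterated application of Proposition~\ref{composizioniTOTALI}-$(i)$ and a uniform estimate across all the $\leq 2^{N+2}$ multilinear pieces. The algebraic machinery is standard once set up, but the combinatorics of the case analysis — separating pluri-homogeneous contributions of degree $\leq N$ from those of degree $>N$, and further distinguishing the smoothing versus $m$-operator behaviour in part~$(ii)$ — will require careful organisation to ensure all terms are correctly allocated to $a^+_{\leq N}$, $a^+_{>N}$, $R^+_{\leq N}$, $R^+_{>N}$ and $R'_{\leq N}$, $R'_{>N}$.
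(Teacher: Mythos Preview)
Your proposal is correct and follows essentially the same route as the paper: multilinear expansion of $a_q(Z,\ldots,Z;\xi)$ and $R_q(Z,\ldots,Z)Z$ after writing $Z=W+M_{\leq N}(W)W+M_{>N}(U;t)U$, then applying Proposition~\ref{composizioniTOTALI}-$(iv)$ for part~$(i)$ and Proposition~\ref{composizioniTOTALI}-$(ii)$ for part~$(ii)$, with the homogeneity bookkeeping giving the thresholds $p+1$ and $p$. The paper organizes the expansion slightly differently, summing over tuples $(\ell_1,\ldots,\ell_q)\in\{0,p,\ldots,N\}^q$ rather than over subsets $S$, and explicitly separates the case where all substitutions are pluri-homogeneous but the total degree exceeds $N$ (these terms are cast as non-homogeneous in $U$ via $W=\bM_0(U;t)U$ and Proposition~\ref{composizioniTOTALI}-$(iii)$); you acknowledge this separation but should make the use of $\bM_0$ explicit there. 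One simplification: your concern about deriving $x$-independence via momentum conservation is unnecessary, since $a_q(\,\cdot\,;\xi)$ is $x$-independent by hypothesis, so any internal substitution trivially preserves it.
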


\begin{remark}
The previous lemma is stated for $x$ independent symbols (since it is used in this case) 
but it holds also for a general symbol. 
\end{remark}

\begin{proof} {\bf Proof of $(i)$:} We expand by multilinearity the operator in \eqref{sostbony}.
We denote the homogeneous components of $M_{\leq N}(W)$ in \eqref{Z.W1} as 
$ M_\ell(W) := \mP_\ell( M_{\leq N}(W))$ for $\ell=p, \dots, N$ and $M_0(W):=\uno$.
 Note that 
 $$ 
 (M_{\ell}(W)W)_{|W=\bM_0(U;t)U}=M^{(\bM_0)}_{\ell}(U;t)U
 $$ 
 where  
 $ M^{(\bM_0)}_{\ell}(U;t):=M_{\ell}(\bM_0(U;t)U)\bM_0(U;t)$ 
 belongs to $ \mM^m_{K,K',\ell}[r]\otimes \cM_2(\C)$ for  $ \ell\in \{0,p,\dots, N\} $
 thanks to $(i)$ and $(ii)$ of 
 Proposition \ref{composizioniTOTALI}. 
 Then by multilinearity decompose the operator in \eqref{sostbony} as 
\begin{align}
&\Opbw{a_{\leq N}(Z;\x)}_{|Z=\Psi_{\leq N}(W) + M_{>N}(U;t)U}\notag\\
&= \sum_{\ta=2}^N \sum_{q=2}^N\sum_{\substack{ \ell_1,\dots, \ell_q\in \{ 0, p, \dots , N\}\\\ell_1+\dots+\ell_q+q=\ta} }\Opbw{a_q(Z_1, \dots,Z_q; \xi)}_{|Z_1=M_{\ell_1}(W)W, \dots, Z_q= M_{\ell_q}(W)W}\label{omosomma}\\
&+ \sum_{q=2}^N\sum_{\substack{\ell_1,\dots, \ell_q \in \{0,p,\dots N\}\\\ell_1+\dots+\ell_q+q\geq N+1} }\Opbw{a_q(Z_1, \dots, Z_q; \xi)}_{|Z_1= M^{(\bM_0)}_{\ell_1}(U;t)U, \dots, Z_q=M^{(\bM_0)}_{\ell_q}(U;t)U}\label{nonomosomma}\\
 &+ \sum_{q=2}^N \, \sum_{n = 0}^{q-1} \,  
\!\!\sum_{\ell_1,\dots, \ell_n \in \{0,p,\dots N\} } 
\binom{q}{n} \, \Opbw{a_q(Z_1, \dots,Z_q;  \xi)}_{\big|
 \begin{smallmatrix} 
&Z_1= M^{(\bM_0)}_{\ell_1}(U;t)U, \dots, Z_n=M^{(\bM_0)}_{\ell_n}(U;t)U \\
&Z_{n+1}= \dots = Z_q= M_{>N}(U;t)U
\end{smallmatrix}
}.\label{nonomosomma2}
\end{align}
By $(iv)$ of  Proposition \ref{composizioniTOTALI} we have that
\be\label{omosomma10}
\eqref{omosomma} = \Opbw{a_{\leq N}^+(W)} + R^+_{\leq N}(W)
\quad \text{with} \quad 
R_{\leq N}^+(W)\in \Sigma_2^N \wtcR^{-\vr}_q 	\, ,    \  \forall \varrho \geq 0 \, , 
\ee
where $a_{\leq N}^+(W)= \sum_{\ta=2}^N a^+_{\ta}(W;\xi)$ and 
\be
 \quad a^+_{\ta}(W;\xi):= \sum_{q=2}^N\sum_{\substack{ \ell_1,\dots, \ell_q\in \{ 0, p, \dots , N\}\\\ell_1+\dots+\ell_q+q=\ta} }a_q(M_{\ell_1}(W)W, \dots, M_{\ell_q}(W)W; \xi)\label{appiu}
\ee
belongs to $\wt \Gamma^{m'}_\ta$.
 For $ \ta \leq p+1$ the sum in  \eqref{omosomma} and 
 \eqref{appiu} reduces to the indices $ q=\ta$, $ \ell_1=\dots=\ell_\ta=0$. As a consequence  $a^+_{\ta}(W;\xi)= a_{\ta}(W;\xi)$ for $\ta=2,\dots,p+1$, proving \eqref{invariatosym}.
For the same reason the remainder $R^+_{\leq N}(W)$ in \eqref{omosomma10} actually belongs to $\Sigma_{p+2}^N \wtcR^{-\vr}_q$.

Now we consider the non-homogeneous terms which arise from lines \eqref{nonomosomma} and \eqref{nonomosomma2}. Thanks to $(iv)$ of Proposition  \ref{composizioniTOTALI} we have that 
$$
\eqref{nonomosomma}+ \eqref{nonomosomma2}= \Opbw{a^+_{>N}(U;t,\xi)}+ R^+_{>N}(U;t)
$$
where $R_{>N}^+(U;t)$ is a
 smoothing operator in $\mR^{-\vr}_{K,K',N+1}[r]$ for any $\varrho\geq 0 $,  and 
$a^+_{>N}(U;t,\xi)$
is a non--homogeneous symbol  in $ \Gamma^{m'}_{K,K',N+1}[r]$ which is real valued and $x$-independent as well as $a_{\leq N}$.

\noindent {\bf Proof of $(ii)$:}  Proceeding in similarly to  $(i)$ we expand the left hand side of  \eqref{sostsmoo} as 

 \begin{align}
&(R_{\leq N}(Z)Z)_{|Z=\Psi_{\leq N}(W)+M_{>N}(U;t)U}\notag\\
&= \sum_{\ta=1}^N \sum_{q=1}^N\sum_{\substack{ \ell_1,\dots, \ell_{q+1}\in \{ 0, p, \dots , N\}\\\ell_1+\dots+\ell_{q+1}+q=\ta} }R_q(M_{\ell_1}(W)W, \dots, M_{\ell_q}(W)W)M_{\ell_{q+1}}(W)W\label{omosommasmo}\\
&+ \sum_{q=1}^N\sum_{\substack{\ell_1,\dots, \ell_q \in \{0,p,\dots N\}\\\ell_1+\dots+\ell_q+q\geq N+1} }R_q(M^{(\bM_0)}_{\ell_1}(U;t)U, \dots, M^{(\bM_0)}_{\ell_q}(U;t)U)M^{(\bM_0)}_{\ell_{q+1}}(U;t)U\label{nonomosommasmo}\\
 &+ \sum_{q=1}^N \, \sum_{n = 0}^{q-1} \,  
\sum_{\ell_1,\dots, \ell_{n+1} \in \{0,p,\dots N\} } 
\binom{q}{n} \, R_q( Z_1, \ldots, Z_q)M^{(\bM_0)}_{\ell_{n+1}}(U;t)U_{\big|
 \begin{smallmatrix} 
&Z_1= M^{(\bM_0)}_{\ell_1}(U;t)U, \dots, Z_n=M^{(\bM_0)}_{\ell_n}(U;t)U \\
&Z_{n+1}= \dots = Z_q= M_{>N}(U;t)U
\end{smallmatrix}
}\\
 &+ \sum_{q=1}^N \,  \sum_{n = 0}^{q-1} \,  
\sum_{\ell_1,\dots, \ell_n \in \{0,p,\dots N\} } 
\binom{q}{n} \, R_q( Z_1, \ldots, Z_q)M_{>N}(U;t)U_{\big|
 \begin{smallmatrix} 
&Z_1= M^{(\bM_0)}_{\ell_1}(U;t)U, \dots, Z_n=M^{(\bM_0)}_{\ell_n}(U;t)U \\
&Z_{n+1}= \dots = Z_q= M_{>N}(U;t)U
\end{smallmatrix}
} .
\label{nonomosommasmo2}
\end{align}
Thanks to $(ii)$ of Proposition \ref{composizioniTOTALI} (with $m\leadsto -\vr$, $m_\ell \leadsto m$), the term in \eqref{omosommasmo} can be written as $R^+_{\leq N}(W)W$ where $ R^+_{\leq N}(W)$ is a pluri--homogeneous smoothing operator in $ \Sigma_1^N \wtcR^{-\vr+ (N+1)\max(m,0)}_q$, moreover $ R^+_{\leq N}(W)= \sum_{\ta=1}^N R_\ta^+(W)$ with 
\be\label{appiusmo}
R_\ta^+(W):=\sum_{q=1}^N\sum_{\substack{ \ell_1,\dots, \ell_{q+1}\in \{ 0, p, \dots , N\}\\\ell_1+\dots+\ell_{q+1}+q=\ta} }R_q(M_{\ell_1}(W)W, \dots, M_{\ell_q}(W)W)M_{\ell_{q+1}}(W) \, .
\ee
For $ \ta \leq p$ the sum in  \eqref{appiusmo} reduces to the  indices $ q=\ta$, $ \ell_1=\dots=\ell_{\ta+1}=0$. As a consequence   $R^+_{\ta}(W)= R_{\ta}(W)$ for $\ta=1,\dots,p$, proving \eqref{invariatosmo}.
 Applying again Proposition \ref{composizioniTOTALI} $(ii)$  we get that the terms in \eqref{nonomosommasmo}--\eqref{nonomosommasmo2} can be written as $ R_{>N}^+(U;t)U$ where $  R_{>N}^+(U;t)$ is a non--homogenous smoothing operator in $ \mR^{-\vr+(N+1)\max(m,0)}_{K,K',N+1}[r]$. This concludes the proof of the first identity in \eqref{sostsmoo}. The second one follows with the same analysis, without the need of substitute the last variable. 
\end{proof}

\paragraph{Conjugation lemmata.}
The following 
conjugation Lemmata \ref{lem:conj.fou} and \ref{NormFormMap0} 
are used in the nonlinear Hamiltonian Birkhoff normal form reduction performed in Section \ref{sec:birk}.

\smallskip

The following hypothesis shall be assumed in both Lemmata \ref{lem:conj.fou} and \ref{NormFormMap0}:
\\[1mm]
{\em {\bf Assumption (A)}: \label{A} \,
Assume  $Z := \bM_0(U;t)U $ where $\bM_0(U;t) \in  \cM^0_{K,K',0}[r] \otimes \mM_2 (\C)$, $U \in B^K_{s_0, \R}(I;r)$ for some $r,s_0>0$ and $0\leq K' \leq K$. 
 Let  $ N \in \N$  and assume that $Z$ solves the system
\begin{align}\label{Z.conj1}
\pa_t Z = - \ii \vOmega(D) Z +\vOpbw{ \ii a_{\leq N}(Z;\x)+ \ii a_{>N}(U;t,\xi)}Z + R_{\leq N}(Z)Z+ R_{>N}(U;t)U  
\end{align}
where $\vOmega(D) $ is the diagonal matrix of Fourier multiplier operators defined in \eqref{diaglin} and 
\begin{itemize}
\item $a_{\leq N} (Z;\x) $ is a real valued   pluri-homogenous 
symbol, independent of $x$, in $ \Sigma_2^N \wt \Gamma^{\frac32}_q$; 
\item 
$a_{>N} (U;t,\xi) $ is a non-homogenous symbol, independent of $x$, in $ \Gamma^{\frac32}_{K,K',N+1}[r]$ with imaginary part 
${\rm Im} \, a_{>N} (U;t,\xi) $ in $  \Gamma^{0}_{K,K',N+1}[r]$;
\item 
 $R_{\leq N} (Z)  $ is a real-to-real matrix of pluri-homogeneous  smoothing operators in $  \Sigma_1^N \wtcR^{-\vr}_q
\otimes \mM_2 (\C)  $; 
\item 
$R_{>N} (U;t) $ is a real-to-real  matrix of
non-homogeneous smoothing operators 
in $ \mR^{-\vr}_{K,K',N+1}[r] \otimes \mM_2 (\C) $.
\end{itemize}
}

\vspace{.5em}

We also write system \eqref{Z.conj1} in the form  
\be \label{Z.conj2}
\pa_t Z = -\ii \vOmega(D)Z+ M_{\leq N}(Z) Z+ M_{>N}(U;t)U 
\ee
where    
$M_{\leq N}(Z) $ are $ \frac32 $-operators in 
$ \Sigma_{1}^N \wtcM_q^{\frac32} \otimes  \mM_2 (\C) $ and  $M_{>N}(U;t)
$ is in $ \mM^{\frac32}_{K,K',N+1}[r] \otimes \mM_2 (\C) $ by the fourth remark below Definition \ref{smoothoperatormaps}.

\begin{lemma}[{\bf Conjugation under the flow of a Fourier multiplier}]\label{lem:conj.fou}
Assume {\bf (A)} at page \pageref{A}.
 Let $ g_p(Z;\xi)$ be a $p$--homogeneous real symbol independent of $x$ in $ \wt \Gamma^{\frac32}_p$,  $p \geq 2$, 
that we expand as 
\be \label{symtens}
g_p(Z;\x) = 
\sum_{
(\vec{\jmath}_p,  \vec{\sigma}_p) \in \fT_p
} G_{\vec{\jmath}_p}^{\vec{\sigma}_p}(\xi)  z_{\vec{\jmath}_p}^{\vec{\sigma}_p} \, , \quad \bar{G_{\vec{\jmath}_p}^{-\vec{\sigma}_p}}(\xi)= G_{\vec{\jmath}_p}^{\vec{\sigma}_p}(\xi)\in \C  
\ee
and denote by $ \cG_{g_p}(Z):=\cG_{g_p}^1(Z) $  the time $1$-flow defined in  \eqref{FouFlow} generated by $\vOpbw{\im g_p(Z;\xi)}$.
If $Z(t)$ solves system \eqref{Z.conj1}, then the variable 
\be \label{new:foumu}
W:= \cG_{g_p} (Z) Z
\ee 
solves the system 
\be\label{Z.conj10}
\pa_t W =   - \ii \vOmega(D) W +\vOpbw{ \ii a_{\leq N}^+(W;\x)+ \ii a_{>N}^+(U;t,\xi)}W  + 
 R_{\leq N}^+(W)W+ R^+_{>N}(U;t)U  
\ee
where 
\begin{itemize}
\item  $ a^+_{\leq N} (W;\x) $ is a real valued pluri-homogenous symbol, independent of $x$,  in  $\Sigma_2^N \wt \Gamma^{\frac32}_q $, with components
\be\label{ditigi0}
\begin{aligned}
& \cP_{\leq p-1} [a^+_{\leq N}(W;\xi)] =  \cP_{\leq p-1}[a_{\leq N}(W;\xi)] \, , \\
& \cP_{ p} \left[a^+_{\leq N}(W;\xi)\right]  = \cP_{ p} \left[a_{\leq N}(W;\xi) \right]
  + g^+_p(W;\xi)
  \end{aligned}
\ee
where $g^+_p(W;\xi) \in  \wt \Gamma^{\frac32}_p $ is the real, $x$-independent symbol 
\be \label{ditigi}
g^+_p(W;\xi):=\sum_{(\vec{\jmath}_p,  \vec{\sigma}_p) \in \fT_p}-\ii \big(\vec{\sigma}_{p}\cdot \Omega_{\vec{\jmath}_p}(\kappa) \big) G_{\vec{\jmath}_p}^{\vec{\sigma}_p}(\xi)  w_{\vec{\jmath}_p}^{\vec{\sigma}_p} ; \ee 
\item $a^+_{>N} (U;t,\xi) 
$ is a non-homogeneous symbol, independent of $x$, in $ \Gamma^{\frac32}_{K,K',N+1}[r]$ with imaginary part  ${\rm Im} \, a_{>N}^+ (U;t,\xi) $ 
belonging to $ \Gamma^{0}_{K,K',N+1}[r]$;
\item
$R^+_{\leq N}(W) $ is a real-to-real matrix of pluri--homogeneous  smoothing operators in $ \Sigma_1^N \wtcR^{-\vr + c(N,p)}_q \otimes \mM_2 (\C)  $ for some $c(N,p) >0$ (depending only on $N,p$) and fulfilling 
\be\label{ditigi2}
\cP_{\leq p} [R^+_{\leq N}(W)] =  \cP_{\leq p}[R_{\leq N}(W)] \, ;
\ee
\item 
$R^+_{>N}(U;t)$ is a real-to-real matrix of non--homogeneous smoothing operators in $ \mR^{-\vr+c(N,p)}_{K,K',N+1}[r] \otimes \mM_2 (\C) $.
\end{itemize}
\end{lemma}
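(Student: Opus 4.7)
The plan is to differentiate the change of variable \eqref{new:foumu} in time, substitute the equation \eqref{Z.conj1} satisfied by $Z$, and then systematically replace $Z$ by $W$ using an approximate inverse, carefully tracking the homogeneity at which each operation introduces corrections. First I would write
\[
\pa_tW = \di_Z[\cG_{g_p}(Z)Z][\pa_tZ] = \cG_{g_p}(Z)\pa_tZ + \bigl(\di_Z\cG_{g_p}(Z)[\pa_tZ]\bigr)Z .
\]
The key structural fact is that since $g_p(Z;\xi)$ is $x$-independent, both $\vOpbw{\im g_p(Z;\xi)}$ and $\vOmega(D)$ act as Fourier multipliers in $x$ (with $Z$ as a parameter), hence they commute, and consequently $\cG_{g_p}(Z)(-\im\vOmega(D)) = -\im\vOmega(D)\cG_{g_p}(Z)$. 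Applied to $Z$ this immediately produces the unchanged linear part $-\im\vOmega(D)W$ in \eqref{Z.conj10}, with no commutator correction. Similarly, $\vOpbw{\im a_{\leq N}(Z;\xi)}$, being a Fourier multiplier in $x$, commutes with $\cG_{g_p}(Z)$, so $\cG_{g_p}(Z)\vOpbw{\im a_{\leq N}}Z = \vOpbw{\im a_{\leq N}(Z;\xi)}W$.

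The new $p$-homogeneous correction $g_p^+$ will arise entirely from the second piece $\bigl(\di_Z\cG_{g_p}(Z)[-\im\vOmega(D)Z]\bigr)Z$. Using the Lie expansion of the flow provided by Lemma \ref{flussoconst}, the leading-order contribution of $\di_Z\cG_{g_p}(Z)[\hat Z]$ is $\vOpbw{\im\di_Z g_p(Z;\xi)[\hat Z]}$, and differentiating the Fourier series \eqref{symtens} together with the identity $(-\im\vOmega(D)Z)^{\sigma}_j = -\im\sigma\Omega_j(\kappa)\,z^{\sigma}_j$ produces exactly the symbol $g_p^+(Z;\xi)$ displayed in \eqref{ditigi}. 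Since $Z$ and $W$ coincide up to homogeneity $p-1$, replacing $Z$ by $W$ in this leading contribution only affects the $\geq 2p-1$ homogeneous part, and \eqref{ditigi0} follows.

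Next I would expand in powers of homogeneity every remaining term by combining three tools: the Lie expansion $\cG_{g_p}(Z) = \id + \vOpbw{\im g_p(Z;\xi)} + \cdots$ of Lemma \ref{flussoconst}$(iii)$, Lemma \ref{cor.app.inv} (applied with $m=3/2$) to write $Z = W + \breve S_{\leq N}(W)W + M_{>N}(U;t)U$ as an approximate inverse of $W = \cG_{g_p}(Z)Z$, and Lemma \ref{sost} to substitute this expression inside the symbol $a_{\leq N}(Z;\xi)$ and the smoothing remainder $R_{\leq N}(Z)$. Proposition \ref{composizioniTOTALIs}$(i)$ together with the spectral localization of $\cG_{g_p}(Z)$ ensures that $\cG_{g_p}(Z)R_{\leq N}(Z)Z$ is smoothing, equal to $R_{\leq N}(Z)Z$ plus corrections of homogeneity $\geq p+1$; this gives \eqref{ditigi2}. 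The contribution $\bigl(\di_Z\cG_{g_p}(Z)[\text{rest}]\bigr)Z$ involving the paradifferential, smoothing and non-homogeneous remainders in $\pa_tZ$ only produces terms of homogeneity $\geq p+1$ in the symbol and $\geq p+1$ in the smoothing operators, hence absorbed into $a^+_{\geq p+1}$ and $R^+_{\leq N}$; non-homogeneous pieces land in $a^+_{>N}$ and $R^+_{>N}$ thanks to the boundedness of $\cG_{g_p}^{\pm 1}$ in $\mS_{K,0,0}^0[r]\otimes\cM_2(\C)$. Reality and $x$-independence of $a_{\leq N}^+$ follow from the reality of $g_p$ (so that $\vOpbw{\im g_p}$ is linearly Hamiltonian and $\cG_{g_p}$ is linearly symplectic and real-to-real) together with the symmetry \eqref{symtens} of its Fourier coefficients, which force $g_p^+$ to satisfy \eqref{realsim}.

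The main obstacle is the bookkeeping required to verify \eqref{ditigi0} and \eqref{ditigi2} with the precise homogeneity thresholds advertised: one must check that every single step (chain rule, Lie expansion, approximate inversion, $Z \mapsto W$ substitution inside symbols and smoothing operators) leaves the components of strictly smaller homogeneity unchanged, so that the only genuinely new contribution at degree $p$ is the one identified above. The constant $c(N,p)$ reflects the cumulative loss of derivatives in these compositions, controlled by Proposition \ref{composizioniTOTALIs} and Lemma \ref{sost}; the preservation of the property $\Im a^+_{>N}\in\Gamma^0_{K,K',N+1}[r]$ follows from Lemma \ref{sost}$(i)$ once one observes that the substitution of $Z$ by $W$ inside a real-valued pluri-homogeneous symbol produces only real-valued corrections of the same order, while the non-homogeneous imaginary contributions already had the required lower order in the original equation.
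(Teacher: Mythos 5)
Your proposal is correct and follows essentially the same route as the paper: commutation of $\cG_{g_p}(Z)$ with Fourier multipliers for the linear and $x$-independent paradifferential terms, the approximate inverse of Lemma \ref{cor.app.inv} plus the substitution Lemma \ref{sost} and the composition results to pass from $Z$ to $W$, and the identification of $g_p^+$ from the linear part $-\ii\vOmega(D)Z$ of $\pa_tZ$ inserted into the time derivative of the flow (the paper writes this term exactly as $(\pa_t\cG_{g_p}(Z))\cG_{g_p}(Z)^{-1}W=\vOpbw{\ii\,\pa_t g_p(Z;\xi)}W$ and then uses the equation for $Z$, which is the same computation as your $\di_Z\cG_{g_p}(Z)[\pa_tZ]$ expansion). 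The only minor imprecision is the order in Lemma \ref{cor.app.inv}: since $\cG_{g_p}(Z)-\uno$ lies in $\Sigma\,\cS^{\frac32(N+1)}_{K,0,p}$ by Lemma \ref{flussoconst}$(iii)$, the approximate inverse is applied with $m=\tfrac32(N+1)$ rather than $\tfrac32$, which only affects the unspecified constant $c(N,p)$.
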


\begin{proof}
Since $Z(t)$ solves \eqref{Z.conj1} then differentiating \eqref{new:foumu} we get 
\begin{align}
\pa_t W  & =   \cG_{g_p}(Z)\big[-\ii  \vOmega(D)+\vOpbw{\ii a_{\leq N}(Z;\x)+ \ii a_{>N}(U;t,\xi)}\big] \cG_{g_p}(Z)^{-1}W\label{paraspazio}\\
& \  \ + \cG_{g_p}(Z) \, R_{\leq N}(Z) Z + \cG_{g_p}(Z) R_{>N}(U;t)U\label{smoospazio}\\
& \ \ + (\pa_t \cG_{g_p}(Z)) \cG_{g_p}(Z)^{-1}W\label{paratempo}.
\end{align}
We now compute \eqref{paraspazio}--\eqref{paratempo} separately. As  $\cG_{g_p}(Z)$ is the time $1$-flow of a Fourier multiplier it commutes with every Fourier multiplier, and   \eqref{paraspazio} is equal to 
\be\label{primaparaspazio}
 \eqref{paraspazio}=-\ii  \vOmega(D)W+\vOpbw{\ii a_{\leq N}(Z;\x)+ \ii a_{>N}(U;t,\xi)}W \, . 
\ee
Now we write the symbol $a_{\leq N}(Z;\xi)$ in  terms  of $W$. 
By Lemma \ref{flussoconst}  $(iii)$ we have that 
  $ \cG_{g_p}(Z)- \uno $ is a matrix of spectrally localized maps in $  \Sigma\cS^{\frac32(N+1)}_{ K,0,p}[ \breve r,N] \otimes
  \cM_2(\C)$ for any $ \breve r > 0 $. 
  By Assumption {\bf (A)} we have $Z=\bM_0(U;t)U$
  with $ \bM_0(U;t) \in  \cM^0_{K,K',0}[r] \otimes \mM_2 (\C) $.
   Lemma \ref{cor.app.inv} (with $ S(Z;t)= \cG_{g_p}(Z)-\uno $) provides 
an approximate inverse of $W = \cG_{g_p}(Z)Z$ of the form 
\be \label{ZWin}
Z=  \Psi_{\leq N}(W)+  M_{>N}(U;t)U, \quad
\Psi_{\leq N}(W):= W  + \breve S_{\leq N}(W)W
\ee
where $\breve S_{\leq N}(W) $ is a  matrix of spectrally localized maps in  
$ \Sigma_{p}^N \wt\cS_q^{\frac32 (N+1) (N-p+1)} \otimes\cM_2(\C)  $ and   $M_{>N}(U;t) 
$ is  in 
$ \mM^{\frac32 (N+1) (N-p+2)}_{K,K',N+1}[r] \otimes \cM_2(\C) $.
The map \eqref{ZWin} has the form \eqref{Z.W1}, so by  Lemma \ref{sost} $(i)$  
(with $ m' = 3/2$) we obtain
\begin{align}
& \vOpbw{\ii a_{\leq N}(Z;\xi)}_{|Z= \Psi_{\leq N}(W)+  M_{>N}(U;t)U}\notag \\
&= \vOpbw{\ii a'_{\leq N}(W;\xi) +\ii a'_{>N}(U; t, \xi)} + R'_{\leq N}(W)  + R_{>N}'(U;t) \ ,\label{conj.fou10}
\end{align}
where \\
\noindent
$\bullet$ $a'_{\leq N}(W;\xi) $ is a real valued pluri-homogenous symbol independent of $x$ in 
$ \Sigma_{2}^N \wt \Gamma^\frac32_q$ with 
\be\label{preciaa}
\mP_{\leq p+1} (a'_{\leq N}(W;\xi)) = \mP_{\leq p+1} (a_{\leq N}(W;\xi)) \,  ; 
\ee
\noindent
$\bullet$ $a'_{>N}(U;t,\xi) $ is a non-homogenous real valued 
symbol independent of $x$ in $ \Gamma_{K,K',N+1}^\frac32[r]$; \\ 
\noindent
$\bullet$ $R'_{\leq N}(W) $ 
is a real-to-real matrix of pluri-homogeneous smoothing operators in $  \Sigma_{p+2}^N \wtcR^{-\vr}_q \otimes \cM_2(\C) $
 for any $ \varrho \geq 0 $;\\
\noindent
$\bullet$ $R'_{>N}(U;t) $ is a real-to-real matrix of non-homogeneous smoothing operators in $\cR^{-\varrho}_{K, K', N+1}[r] \otimes \cM_2(\C)$ for any $ \varrho \geq 0 $.

We now consider  the terms in  \eqref{smoospazio}.
Since $ \cG_{g_p}(Z)-\uno $ belongs to $ \Sigma \mS^{\frac32(N+1)}_{K,0,p}[r,N]
\otimes \mM_2(\C) $  by  Lemma  \ref{flussoconst}, 
 Proposition \ref{composizioniTOTALIs}-$(i)$ implies that 
  \be \label{nuovissima}
 \cG_{g_p}(Z) R_{\leq N}(Z)Z=R'_{\leq N}(Z)Z +R'_{> N}(Z;t)Z 
 \ee
 where $R'_{\leq N}(Z) $ is  in $ \Sigma_{1}^N \wtcR^{-\vr+\frac32(N+1)}_q \otimes \mM_2(\C) $ with
\be\label{lem.conj.R} 
 \cP_{\leq p}[R_{\leq N}'(Z)] = \cP_{\leq p}[R_{\leq N}(Z)] 
 \ee
 and 
 $R'_{>N}(Z;t)$ is 
 in $ \mR^{-\vr+\frac32(N+1)}_{K,0,N+1}[\breve r] \otimes \cM_2(\C) $ for any $ \breve r>0$.
 We now substitute $Z= \Psi_{\leq N}(W)+ M_{>N}(U;t)U$ (cfr. \eqref{ZWin}) in the homogeneous components of \eqref{nuovissima} and $ Z=\bM_0(U;t)U$ in the non-homogeneous ones of \eqref{nuovissima} and \eqref{smoospazio}. 
 So using Lemma \ref{sost} (with $ m\leadsto \tfrac32(N+1)(N-p+2)$, $\vr\leadsto \vr- \tfrac32(N+1)$), 
  $(i)$ and $(iii)$  of  Proposition \ref{composizioniTOTALI}, $(i)$ and $(iv)$ of Proposition \ref{composizioniTOTALIs} we get 
 \begin{align}
 \eqref{smoospazio} & = (R'_{\leq N}(Z)Z)_{| Z=\Psi_{\leq N}(W)+M_{>N}(U;t)U} +R'_{> N}(\bM_0(U;t)U;t)\bM_0(U;t)U \notag \\
 & \quad +\cG_{g_p}(\bM_0(U;t)U) R_{>N}(U;t)U 
 =R''_{\leq N}(W)W +R''_{> N}(U;t)U \label{conj.fou11}\, , 
 \end{align}
 where\\
 \noindent
 $\bullet$ $ R_{\leq N}''(W)$ is a pluri-homogeneous smoothing operator in 
 $\Sigma_1^N \wt\cR_q^{-\varrho + c(N,p) } \otimes \cM_2(\C)$ with  $c(N,p) = \frac32 (N+1) + \frac32 (N+1)^2 (N-p+2)$ and 
\be
\cP_{\leq p}[R_{\leq N}''(W)]\stackrel{\eqref{invariatosmo}}{=} \cP_{\leq p}[R'_{\leq N}(W)] \stackrel{\eqref{lem.conj.R}}{=}\cP_{\leq p}[R_{\leq N}(W)]  \ ;\label{chainR}
\ee
 \noindent
 $\bullet$ $ R_{>N}''(U;t)$ is a non--homogeneous smoothing operator in 
 $\cR^{-\varrho + c(N,p) }_{K, K', N+1}[r] \otimes \cM_2(\C)$.\\
 We finally consider the last term \eqref{paratempo}. Using that $ \cG_{g_p} (Z)$ commutes with every Fourier multiplier we get 
 \be \label{patgp}
 \eqref{paratempo}= \vOpbw{ \ii \pa_t g_p(Z;\xi)}W \, .
 \ee
So, using \eqref{Z.conj2}, \eqref{symtens} and the identity $( -\ii \vOmega(D)Z)_{j}^\sigma= -\ii \sigma  \Omega_j(\kappa) z_j^\sigma$, we obtain 
   \begin{align}
  & \pa_t g_p(Z;\xi)    = \ \sum_{a=1}^p   g_p(\overbrace{Z,\dots, Z}^{a-\text{times}}, -\ii \vOmega(D)Z ,Z, \dots , Z;\xi)+ p\,   g_p(M_{\leq N}(Z)Z,Z,\dots, Z;\xi)\label{ftdtgp}\\
   & 	\qquad\qquad \qquad+p\,  g_p(M_{>N}(U;t)U,Z,\dots, Z;\xi)_{|Z=\bM_0(U;t)U}\notag \\
   &= g^+_p(Z;\xi) +  p\,   g_p(\mP_{\leq N-p}(M_{\leq N})(Z)Z,Z,\dots, Z;\xi)\notag\\
   &\quad + p\, g_p(\mP_{> N-p}(M_{\leq N})(Z)Z,Z,\dots, Z;\xi)_{|Z=\bM_0(U;t)U}+p\,  g_p(M_{>N}(U;t)U,Z,\dots, Z;\xi)_{|Z=\bM_0(U;t)U}\notag\\
    &=g^+_p(Z;\xi)+g'_{\geq p+1}(Z;\xi)+ g'_{> N}(U;t,\xi)   \notag 
   \end{align}
where $g^+_p$ is the real valued symbol in $ \wt \Gamma^{\frac32}_p$ in \eqref{ditigi}, the real valued pluri-homogeneous symbol $g'_{\geq p+1}$ is in $ \Sigma_{p+1}^N \wt \Gamma^{\frac32}_q$ thanks to $(iv)$ of Proposition \ref{composizioniTOTALI} and
the real valued  
non-homogeneous symbol $g'_{> N}$ is in $\Gamma^{\frac32}_{K,K',N+1}[r]$ using also $(ii)$ of Proposition \ref{composizioniTOTALI}.
Then by \eqref{patgp}, \eqref{ftdtgp} and using the second part of $(iv)$ of  Proposition \ref{composizioniTOTALI}
 we obtain 
\be
\eqref{paratempo}
    =  \vOpbw{\ii g^+_p(Z;\xi)+\ii g'_{\geq p+1}(Z;\xi)+ \ii g'_{> N}(U;t,\xi)} W 
   + R'_{\geq p+1}(Z) W
+  
 R'_{> N}(U;t)U  
\ee
where 
$ R'_{\geq p+1} $   is a matrix of pluri-homogeneous smoothing operators in $ \Sigma_{p+1}^N \wtcR^{-\vr}_q
\otimes \cM_2(\C) $ and
$R'_{>N}(U;t) $ belongs to $ \cR^{-\varrho}_{K,K', N+1}[r] \otimes \cM_2(\C)$.
Then we substitute the variable $Z$ in \eqref{paratempo} using \eqref{ZWin} and Lemma \ref{sost}, to obtain 
\be 
 \eqref{paratempo}= \vOpbw{\ii  g^+_p(W;\xi)+ \ii g^+_{\geq p+1}(W;\xi)+\ii  g^+_{>N}(U;t,\xi)}W+ R'''_{\leq N}(W)W+ R'''_{>N}(U;t)U \label{conj.fou12}
\ee
where \\
\noindent
$\bullet$ $g^+_p(W;\xi)$ is the homogeneous symbol in \eqref{ditigi};\\
\noindent
$\bullet$ $g^+_{\geq (p+1)}(W;\xi) $ is a pluri-homogeneous real valued symbol in $ \Sigma_{p +1}^N \wt \Gamma^\frac32_q$;\\
\noindent
$\bullet$  $g^+_{>N}(U;t,\xi) $ is a non-homogeneous real valued symbol in $ \Gamma^\frac32_{K,K',N+1}[r]$;\\
\noindent
$\bullet$ $R'''_{\leq N}(W) $ is a matrix of pluri-homogeneous operators in $ \Sigma_{p+1}^N \wtcR^{-\vr + c(N,p)}_q \otimes \cM_2(\C) $; 

\noindent
$\bullet$  $ R'''_{>N}(U;t) $ is a matrix of smoothing operators in
$ \mR^{-\vr+c(N,p)}_{K,K',N+1}[r] \otimes \cM_2(\C) $.

In conclusion, combining \eqref{primaparaspazio}, \eqref{conj.fou10}, \eqref{chainR} and \eqref{conj.fou12} we deduce  \eqref{Z.conj10} 
with $a^+_{\leq N}= a'_{\leq N}+ g_{p}^+ + g_{\geq p+1}^+ $ and $a^+_{>N}:= a_{>N}+ a'_{>N}+ g^+_{>N}$ which  has imaginary part equal to the one of $ a_{>N}$ belonging to $ \Gamma^{0}_{K,K',N+1}[r]$. 
Moreover \eqref{ditigi0} follows by \eqref{preciaa} and \eqref{ditigi2} by  \eqref{conj.fou11}.
\end{proof}

The following lemma describes how a system is conjugated under 
 a smoothing perturbation of the identity.

\begin{lemma}[{\bf Conjugation under a smoothing perturbation of the identity}]\label{NormFormMap0}
Assume {\bf (A)} at page \pageref{A}. 
Let $F_{\leq N}(Z) $ be a real-to-real matrix of pluri-homogeneous smoothing operators 
in $  \Sigma_p^N \wt \cR_q^{-\varrho'} \otimes \cM_2(\C) $ for some $ \varrho' \geq 0 $.
If $Z(t)$ solves \eqref{Z.conj1} 
 then the variable 
\be\label{W.Z1} 
W:= \mF_{\leq N}(Z):= Z + F_{\leq N}(Z)Z
\ee
solves 
\be\label{normalformsmoo}
\pa_tW  = -\ii \vOmega(D) W +\vOpbw{ \ii a^+_{\leq N}(W;\x)+ \ii a^+_{>N}(U;t,\xi)}W + R^+_{\leq N}(W)W+ R^+_{>N}(U;t)U
\ee
where 
\begin{itemize}
\item  
$ a^+_{\leq N} (W;\x) $ is a real valued pluri-homogenous symbol, independent of $x$,  in  $\Sigma_2^N \wt \Gamma^{\frac32}_q $, with components 
\be\label{ditigi1}
\cP_{\leq p+1} [a^+_{\leq N}(W;\xi)] =  \cP_{\leq p+1}[a_{\leq N}(W;\xi)] \, ;
\ee
\item  $a^+_{>N} (U;t,\xi) 
$ is a non-homogeneous symbol, independent of $x$, in $ \Gamma^{\frac32}_{K,K',N+1}[r]$ with imaginary part 
${\rm Im} \, a_{>N}^+ (U;t,\xi) $ belonging to $ \Gamma^{0}_{K,K',N+1}[r]$;
\item 
$R^+_{\leq N}(W) $ is a real-to-real matrix of 
pluri--homogeneous  smoothing operators in 
$ \Sigma_1^N \wtcR^{-\vr_*}_q \otimes \mM_2 (\C)  $,  
$\varrho_* := \min{(\varrho, \varrho'-\frac32)}$ ($\vr\geq0$ is the smoothing order in Assumption {\bf (A)} at page \pageref{A}), with components 
\be\label{ditigi20}
\cP_{\leq p-1} [R^+_{\leq N}(W)] =  \cP_{\leq p-1}[R_{\leq N}(W)] \ , 
\ee
and, denoting $F_p(W) := \cP_p(F_{\leq N}(W))$ in $ \wt \cR^{-\varrho'}_p \otimes \cM_2(\C)$, one has 
\be\label{ditigi22}
\cP_{ p} [R^+_{\leq N}(W)]  = \cP_p[R_{\leq N}(W)] +\di_W \big(F_{p}(W)W\big)[ -\ii  \vOmega (D) ] + \ii   \vOmega (D)F_{p}(W);
\ee
\item 
$R^+_{>N}(U;t)$ is a real-to-real matrix of non--homogeneous smoothing operators in $ \mR^{-\vr_*}_{K,K',N+1}[r] \otimes \mM_2 (\C) $.
\end{itemize}
In addition, if  $\cF_{\leq N}(Z)$ in \eqref{W.Z1} is the   approximate time $1$-flow  (given by Lemma \ref{extistencetruflow})  of a vector field 
$G_p(Z)Z$, where $G_p(Z)\in \wt \mR^{-\vr'}_p \otimes \mM_2 (\C)$ has Fourier expansion 
\be \label{gipi}
\big(G_p(Z)Z\big)^\sigma_k=
\!\!\!\!\!\!\!\!\!\!\!\!\!\!\!\!\!\!
\sum_{
(\vec {\jmath}_{p+1}, k,  \vec{\sigma}_{p+1}, - \sigma) \in \fT_{p+2}
} 
\!\!\!\!\!\!\!\!\!\!\!\!\!\!\!\!\!\!
G_{\vec \jmath_{p+1},k}^{\vec{\sigma}_{p+1},\sigma}  z_{\vec{\jmath}_{p+1}}^{\vec{\sigma}_{p+1}} \, ,
\ee
then \eqref{ditigi22} reduces to 
\be\label{ditigi30}
\cP_{ p} [R^+_{\leq N}(W)] =  \cP_{p}[R_{\leq N}(W)]  + G_p^+(W)
\ee
where $G_p^+(W)\in  \wt \cR^{-\varrho'+\frac32}_p \otimes \cM_2(\C)$  is the smoothing operator with Fourier expansion 
\be\label{ditigi40}
\begin{aligned}
&(G_p^+(W)W)^\sigma_k= 
\!\!\!\!\!\!\!\!\!\!\!\!\!\!\!\!\!\!
\sum_{
(\vec {\jmath}_{p+1}, k,  \vec{\sigma}_{p+1}, - \sigma) \in \fT_{p+2}
} 
\!\!\!\!\!\!\!\!\!\!\!\!\!\!\!\!\!\!
   -\ii \big(  \vec{\sigma}_{p+1}\cdot  \Omega_{\vec{\jmath}_{p+1}}(\kappa)- \sigma \Omega_{k}(\kappa)\big)G_{ \vec{\jmath}_{p+1},k}^{\vec{\sigma}_{p+1},\sigma} \, w_{\vec{\jmath}_{p+1}}^{\vec{\sigma}_{p+1}} \,.
\end{aligned}
\ee
\end{lemma}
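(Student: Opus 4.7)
\textbf{Proof plan for Lemma \ref{NormFormMap0}.} My strategy is to differentiate the change of variable \eqref{W.Z1} in time, substitute the equation \eqref{Z.conj1} for $Z$, and then rewrite everything in the variable $W$ using an approximate inversion together with Lemma \ref{sost}. First I compute
\[
\pa_t W \, = \, \bigl(\id + \di_Z(F_{\leq N}(Z)Z)\bigr)[\pa_t Z] \, = \, \pa_t Z \, + \, \di_Z(F_{\leq N}(Z)Z)[\pa_t Z]
\]
and substitute on the right-hand side the equation \eqref{Z.conj1} (equivalently written as \eqref{Z.conj2}). The crucial algebraic manipulation is to isolate the unbounded Fourier multiplier $-\ii\vec\Omega(D)$: writing
\[
-\ii\vec\Omega(D)Z \, = \, -\ii\vec\Omega(D)W \, + \, \ii\vec\Omega(D) F_{\leq N}(Z)Z,
\]
the equation for $W$ takes the form
\[
\pa_t W \, = \, -\ii\vec\Omega(D)W \, + \, \bigl[\ii\vec\Omega(D)F_{\leq N}(Z)Z + \di_Z(F_{\leq N}(Z)Z)[-\ii\vec\Omega(D)Z]\bigr] \, + \, (\text{paradiff. and smoothing in }Z).
\]
The bracketed term is where the correction to the smoothing remainder described by \eqref{ditigi22} is born, since at leading order (homogeneity $p$) the substitution $Z\rightsquigarrow W$ is harmless.

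Next I apply Lemma \ref{cor.app.inv} to produce an approximate inverse $Z = W + \breve S_{\leq N}(W) W + M_{>N}(U;t)U$ up to homogeneity $N$, where $\breve S_{\leq N}$ is again a smoothing operator in $\Sigma_p^N \wt\cR^{-\varrho'}_q\otimes\cM_2(\C)$ (since $F_{\leq N}$ itself is smoothing) and $M_{>N}$ is a non-homogeneous remainder in $U$. I then feed this into Lemma \ref{sost}: item $(i)$ converts $\opbw(\ii a_{\leq N}(Z;\xi))Z + \opbw(\ii a_{>N}(U;t,\xi))Z$ into a system in the $W$-variable whose paradifferential symbol coincides with the old one up to homogeneity $p+1$ (because $a_{\leq N}$ starts at homogeneity $2$ and $F_{\leq N}$ at homogeneity $p$, so any nontrivial substitution costs at least $p$ extra homogeneity degrees), which yields \eqref{ditigi1}; item $(ii)$ converts the smoothing term $R_{\leq N}(Z)Z + R_{>N}(U;t)U$ analogously, preserving the homogeneous components up to degree $p-1$ for the same reason, which proves the first assertion \eqref{ditigi20}. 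The second correction $\di_Z(F_{\leq N}(Z)Z)[\opbw(\ii a_{\leq N})Z + R_{\leq N}(Z)Z]$ produces only contributions of homogeneity $\geq p+1$ (smoothing, by Proposition \ref{composizioniTOTALIs}-$(v)$ and Lemma \ref{aggiunto}), and thus does not disturb \eqref{ditigi20}.

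For the precise identity \eqref{ditigi22} at degree $p$, I extract the $p$-homogeneous piece of the bracket $\ii\vec\Omega(D)F_{\leq N}(Z)Z + \di_Z(F_{\leq N}(Z)Z)[-\ii\vec\Omega(D)Z]$, which because $F_p$ is the leading term coincides with $\ii\vec\Omega(D)F_p(Z)Z + \di_Z(F_p(Z)Z)[-\ii\vec\Omega(D)Z]$; and then replace $Z$ by $W$ modulo homogeneities $\geq p+1$. The resulting operator loses at most $3/2$ derivatives (since $\vec\Omega(D)$ has order $3/2$) and is therefore in $\wt\cR^{-\varrho'+3/2}_p\otimes\cM_2(\C)$, which gives $\varrho_* = \min(\varrho,\varrho'-3/2)$.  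For the flow case, formula \eqref{flowexp} gives $F_p(W) = G_p(W)$, and a direct Fourier computation using expansion \eqref{gipi}:
\[
\bigl(\ii\vec\Omega(D)G_p(W)W + \di_W(G_p(W)W)[-\ii\vec\Omega(D)W]\bigr)^\sigma_k \, = \, \sum \ii\bigl(\sigma\Omega_k - \vec\sigma_{p+1}\cdot\Omega_{\vec{\jmath}_{p+1}}\bigr)(\kappa)\, G^{\vec\sigma_{p+1},\sigma}_{\vec\jmath_{p+1},k}\, w^{\vec\sigma_{p+1}}_{\vec\jmath_{p+1}},
\]
which is exactly the expansion \eqref{ditigi40}, proving \eqref{ditigi30}.

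The main technical obstacle I foresee is the bookkeeping of homogeneities that justifies \eqref{ditigi1} and \eqref{ditigi20}: one must verify that every term generated either by applying $\di_Z(F_{\leq N}(Z)Z)$ to the right-hand side of \eqref{Z.conj1}, or by substituting $Z$ by the approximate inverse of $W$ inside the symbols/remainders, has homogeneity strictly above the threshold it is allowed to modify. The key observations are that $F_{\leq N}$ vanishes at order $p$ in $Z$, that $a_{\leq N}$ vanishes at order $2$, and that $R_{\leq N}$ vanishes at order $1$, so multilinearity plus Propositions \ref{composizioniTOTALI} and \ref{composizioniTOTALIs} immediately pin down the minimal homogeneity of each new contribution. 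Everything else — the tracking of the real-to-real structure, the fact that $\Im a_{>N}^+ \in \Gamma^0_{K,K',N+1}[r]$ (which comes from the fact that the bracket involving $\vec\Omega(D)$ produces real-valued symbol corrections plus smoothing), and the control of the non-homogeneous $U$-dependent remainders — is a routine consequence of Lemmata \ref{cor.app.inv}, \ref{sost} and the composition calculus already developed.
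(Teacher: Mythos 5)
Your proposal is correct and follows essentially the same route as the paper's proof: differentiate \eqref{W.Z1}, substitute \eqref{Z.conj1}, isolate $-\ii\vOmega(D)W$ so that the degree-$p$ correction $\ii\vOmega(D)F_p(W)+\di_W\big(F_p(W)W\big)[-\ii\vOmega(D)]$ is the only new contribution at homogeneity $p$ (losing $\tfrac32$ derivatives, whence $\varrho_*=\min(\varrho,\varrho'-\tfrac32)$), then pass to the $W$-variable via an approximate inverse and Lemma \ref{sost}, with the flow case handled by \eqref{flowexp} and the Fourier identity $(-\ii\vOmega(D)Z)^\sigma_j=-\ii\sigma\Omega_j z_j^\sigma$, exactly as in the paper. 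The only slip is a citation: since $F_{\leq N}(Z)$ is a smoothing, not a spectrally localized, perturbation of the identity, the approximate inverse should be taken from Lemma \ref{inversoapp}-$(ii)$ (which yields a smoothing $\breve F_{\leq N}$), not from Lemma \ref{cor.app.inv}; likewise the higher-homogeneity compositions are controlled by Propositions \ref{composizioniTOTALI} and \ref{composizioniTOTALIs}-$(i)$ rather than by Lemma \ref{aggiunto}.
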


\begin{proof}
Since $Z(t)$ solves \eqref{Z.conj1} then differentiating \eqref{W.Z1} we get
\begin{align}
\label{W.Z2}
& \pa_t W  =   - \ii \vOmega(D) Z +\vOpbw{ \ii a_{\leq N}(Z;\x)+ \ii a_{>N}(U;t,\xi)}Z \\
\label{W.Z3}
& \qquad \quad + R_{\leq N}(Z)Z +  \di_Z (F_{\leq N}(Z)Z)[- \im \vOmega(D) Z ] \\
\label{W.Z4}
& \qquad  \quad + \di_Z (F_{\leq N}(Z)Z) [\vOpbw{ \ii a_{\leq N}(Z;\x)}Z] 
+ \di_Z (F_{\leq N}(Z)Z) [R_{\leq N}(Z)Z] 
\\
\label{W.Z5}
&+  R_{>N}(U;t)U +  \di_Z (F_{\leq N}(Z)Z)\vOpbw{  \ii a_{>N}(U;t,\xi)}Z 
+  \di_Z (F_{\leq N}(Z)Z)[ R_{>N}(U;t)U] \, . 
\end{align}
Note that, 
by the first remark below Definition \ref{Def:Maps}, $\di_Z (F_{\leq N}(Z)Z)$ are pluri-homogeneous  smoothing operators in $\Sigma_{p}^N \wtcR^{-\vr'}_q\otimes \mM_2(\C)$.
We proceed to analyze the various lines.
In  \eqref{W.Z2}  we substitute $Z = W - F_{\leq N}(Z)Z$ and use Proposition \ref{composizioniTOTALIs} $(i)$ to get
\be
\begin{aligned}
\eqref{W.Z2} = &
- \ii \vOmega(D) W +\vOpbw{ \ii a_{\leq N}(Z;\x)+ \ii a_{>N}(U;t,\xi)}W
\\
& + \im  \vOmega(D) F_p(Z)Z + R_{\geq p+1}(Z)Z + R_{>N}(U;t)U
\end{aligned}
\label{W.Z5.1}
\ee
with 
smoothing operators 
$R_{\geq p+1 }(Z) $ in $ \Sigma_{p+1}^N\wt \cR_q^{-\varrho' + \frac32} \otimes \cM_2(\C)$
and 
$R_{> N}'(U;t)  $ in $ \cR^{-\varrho'+\frac32}_{K,K',N+1}[r] \otimes \cM_2(\C) $.
Note that to obtain \eqref{W.Z5.1} we also substituted  $Z = \bM_0(U;t)U$
 with $\bM_0(U;t) \in \cM^0_{K,K',0}[r] \otimes \cM_2(\C)$ in the smoothing operators of homogeneity $\geq N+1$ 
using also $(i)$--$(ii)$ of Proposition \ref{composizioniTOTALI}.
From now on we will do this consistently.

We consider now
lines \eqref{W.Z3},  \eqref{W.Z4}. Using Proposition 
\ref{composizioniTOTALI} $(i)$--$(ii)$  and  Proposition 
\ref{composizioniTOTALIs} $(i)$ 
 we get 
\be 
\begin{aligned}
\eqref{W.Z3} + \eqref{W.Z4}  & =  
 R_{\leq p-1}(Z)Z 
+ R_p(Z)Z +  \di_Z \big(F_{p}(Z)Z\big)[ -\ii  \vOmega (D) Z]
\\
&  \quad + R_{\geq p+1}'(Z)Z 
+ R_{> N+1}'(U;t) U 
\end{aligned}
\label{W.Z5.345}
\ee
with 
$$
R_{\leq p-1}(Z) := \cP_{\leq p-1}[R_{\leq N}(Z)] \, , \quad 
R_{ p}(Z) := \cP_{p}[R_{\leq N}(Z)] \, ,
$$
and 
smoothing operators  
$R_{\geq p+1}'(Z) $ in $ \Sigma_{p+1}^N \wt \cR^{-\varrho_*}_q \otimes \cM_2(\C)$ and 
$R_{> N+1}'(U;t)  $ in $ \cR^{-\varrho_*}_{K,K',N+1}[r] \otimes \cM_2(\C) $, where $\varrho_*:= \min(\varrho, \varrho' - \frac32)$.

Next consider \eqref{W.Z5}. Substituting $Z = \bM_0(U;t)U$ and using  Proposition 
\ref{composizioniTOTALI} $(i)$--$(ii)$ we get
\be
\eqref{W.Z5} = R_{>N}''(U;t) U \label{W.Z7}  \quad \text{with} \quad 
R_{> N}''(U;t)  \in \cR^{-\varrho_*}_{K,K',N+1}[r] \otimes \cM_2(\C) \, . 
\ee
Collecting \eqref{W.Z5.1}, \eqref{W.Z5.345} and \eqref{W.Z7} we have obtained that
\eqref{W.Z2}-\eqref{W.Z5} is the system
\begin{align}
\notag
\pa_t W  = &  - \ii \vOmega(D) W +\vOpbw{ \ii a_{\leq N}(Z;\x)+ \ii a_{>N}(U;t,\xi)}W \\
\notag
&+ R_{\leq p-1}(Z)Z  + R_{p}(Z)   Z
+\di_Z \big(F_{p}(Z)Z\big)[ -\ii  \vOmega (D) Z]
+ \im  \vOmega(D) F_p(Z)Z  \\
& + R_{\geq p+1}'''(Z)Z + R_{>N}'''(U;t)U \label{W.Z9}
\end{align}
with  smoothing operators 
$R_{\geq p+1}'''(Z)  $ in $ \Sigma_{p+1}^N \wt \cR^{-\varrho_*}_q \otimes \cM_2(\C)$ and 
  $R_{> N+1}'''(U;t)  $ in $  \cR^{-\varrho_*}_{K,K',N+1}[r] \otimes \cM_2(\C) $. 
Finally we replace the 
variable $Z$ with the variable $W$ in \eqref{W.Z9} by means of an approximate inverse
of  $ W = \mF_{\leq N}(Z)$ in \eqref{W.Z1}. 
Lemma \ref{inversoapp} implies the existence of 
an approximate inverse 
$$
\Phi_{\leq N}(W) :=  W  + \breve F_{\leq N}(W) W \, , 
\quad  
\breve F_{\leq N}(W) \in  \Sigma_{p}^N \wt \cR^{-\varrho'}_{q} \otimes \cM_2(\C) \,  
$$ 
of the map $ \mF_{\leq N}(Z)$ in \eqref{W.Z1}. 
 Then, applying $\Phi_{\leq N}$ to  \eqref{W.Z1},  we get $Z = \Phi_{\leq N}(W)+ \wt R_{>N}(Z)Z$ where $\wt R_{>N}(Z)$ belongs to  $ \Sigma_{N+1} \wtcR^{-\vr'}_q \otimes \mM_2(\C)$, 
 and substituting  $Z= \bM_0(U;t)U$ in the pluri-homogeneous high--homogeneity term  $\wt R_{>N}(Z)Z$ and using $(ii)$ of Proposition \ref{composizioniTOTALI} we get 
\be\label{W.Z10}
Z = \Phi_{\leq N}(W) + \breve R_{>N}(U;t)U  \quad \text{where}\quad \breve R_{> N}(U;t)\in \cR^{-\varrho'}_{K,K',N+1}[r] \otimes \cM_2(\C) \, .
\ee
Finally we substitute  \eqref{W.Z10} in \eqref{W.Z9}
and, using Lemma \ref{sost} $(ii)$, we deduce 
\begin{align*}
\pa_t W  = &  - \ii \vOmega(D) W +\vOpbw{ \ii a_{\leq N}^+(W;\x)+ \ii a_{>N}^+(U;t,\xi)}W \\
&+ R_{\leq p-1}(W)W  + R_{p}(W)   W
+\di_W \big(F_{p}(W)W \big)[ -\ii  \vOmega (D) W]
+ \im  \vOmega(D) F_p(W)W  \\
& + R_{\geq p+1}^+(W)W + R_{>N}^+(U;t)U
\end{align*}
which gives \eqref{normalformsmoo} and the properties below. 
Note that
$a_{>N}^+(U;t,\xi)$ is given by the old non-homogeneous symbol $a_{>N}(U;t,\xi)$ and a purely real correction coming from formula \eqref{sostbony}. Hence  the imaginary part 
${\rm Im }\, a_{>N}^+(U;t,\xi) ={\rm Im }\, a_{>N}(U;t,\xi) $ belongs to  $ \Gamma^0_{K,K',N+1}[r]$.

Let us prove the  last part of the lemma. By \eqref{flowexp} and since $G_p(Z)$ is $\tau$-independent,   $F_p(Z)= G_p(Z)$. Then the correction term in 
\eqref{ditigi22}  is  $G_p^+(W):=\di_W \big(G_{p}(W)W\big)[ -\ii  \vOmega (D) ] + \ii   \vOmega (D)G_{p}(W)$, which has the Fourier expansion  \eqref{ditigi40} by \eqref{gipi} and  
the identity $( -\ii \vOmega(D)Z)_{j}^\sigma= -\ii \sigma  \Omega_j(\kappa) z_j^\sigma$.
 Note that the smoothing properties of $G_p^+(W)W$ can also be  directly verified by the characterization of Lemma  \ref{carasmoofou}.
\end{proof}

\section{Non--resonance conditions} \label{sec:non-res}

The goal of this section is to prove that the linear 
frequencies  $ \vec{\Omega}(\kappa) $, defined in \eqref{defOmegakappa}
and \eqref{omegonej}, 
satisfy, for any value of the  
gravity $ g $, vorticity $ \gamma $ and depth $  \tth $, 
the following non-resonance properties, except a zero measure set of 
surface tension coefficients $ \kappa  $.  
 
 \begin{theorem}\label{nonresfin0} 
{\bf (Non-resonance)} Let $ M \in \N $. For any $ g >0 $, $ \tth \in (0,+\infty]$ and $ \gamma \in \R $, there exists a zero measure set $ {\mathcal K} \subset (0,+\infty) $ 
such that, for any compact interval $[\kappa_1,\kappa_2] \subset (0,+\infty) $
there is  $ \tau > 0 $ and, for any $ \kappa \in [\kappa_1,\kappa_2] \setminus {\cal K} $
the following holds: there is a positive constant $ \nu > 0 $ such that  
for any multi-index  
$ (\alpha, \beta) \in \N_0^{\Z\setminus\{0\}}\times \N_0^{\Z\setminus\{0\}}$ 
of length  $|\alpha + \beta| \leq M$,  
which is not super action preserving 
(cfr. Definition \ref{def:SAPindex}),  it results
\begin{equation}
\label{verad0}
|\vec{\Omega}(\kappa) \cdot(\alpha - \beta)| > \frac{\nu }{\Big(\max\limits_{j\in \textup{supp}(\alpha \cup \beta)}  |j|\Big)^\tau}  
\end{equation}
where $\textup{supp}(\alpha \cup \beta):=\{ j\in \Z\setminus\{0\}\colon  \alpha_{j} +\beta_{j}\not=0\} $.
\end{theorem}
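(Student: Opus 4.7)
\medskip

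\noindent
\textbf{Plan of proof.}
The strategy is to reduce the quantitative estimate \eqref{verad0} to a two-step argument: (i) a qualitative non-vanishing property of the real analytic function $\kappa \mapsto \vec\Omega(\kappa)\cdot(\alpha-\beta)$ on $(0,+\infty)$ for each non-SAP multi-index, and (ii) a measure estimate on sublevel sets of (sub)analytic functions, for which I would cite Delort-Szeftel \cite{DelS} as indicated in the paper. First, decompose the resonance function using the parity of the frequencies. Since $\omega_j(\kappa)$ is even in $j$ while the vorticity correction $\tfrac{\gamma}{2}\tG(j)/j$ is odd, writing $c_n:=(\alpha_n+\alpha_{-n})-(\beta_n+\beta_{-n})$ and $d_n:=(\alpha_n-\alpha_{-n})-(\beta_n-\beta_{-n})$ one obtains
\[
\vec\Omega(\kappa)\cdot(\alpha-\beta)=\sum_{n>0} c_n\,\omega_n(\kappa)+\frac{\gamma}{2}\sum_{n>0} d_n\,\frac{\tG(n)}{n}.
\]
The second sum is $\kappa$-independent, and $(\alpha,\beta)$ being non-SAP means, by Definition~\ref{def:SAPindex}, that some $c_{n_0}\neq 0$. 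Hence the whole problem reduces to showing that
\[
f_{\vec c}(\kappa):=\sum_{n>0} c_n\,\omega_n(\kappa)+C_{\alpha,\beta}
\]
is a non-identically-zero real analytic function of $\kappa$ whenever at least one $c_n$ does not vanish.

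\medskip

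\noindent
The core observation is that each $\omega_n(\kappa)=\sqrt{P_n(\kappa)}$, where $P_n(\kappa)=\tG(n)(g+\kappa n^2+\tfrac{\gamma^2}{4}\tG(n)/n^2)$ is an affine polynomial in $\kappa$ with positive leading coefficient $\tG(n)n^2$, and these leading coefficients are pairwise distinct for distinct $n\in\N$. By a standard field-extension argument (the field $\C(\kappa)(\sqrt{P_1},\ldots,\sqrt{P_N})$ is a $2^N$-dimensional Galois extension of $\C(\kappa)$ with Galois group $(\Z/2\Z)^N$ acting by independent sign changes), the functions $\{1\}\cup\{\omega_n(\kappa)\}_{n\geq 1}$ are linearly independent over $\C(\kappa)$, hence \emph{a fortiori} over $\C$. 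This immediately implies $f_{\vec c}(\kappa)\not\equiv 0$, and in fact shows that $f_{\vec c}$ is not constant unless all $c_n=0$. Thus for each fixed non-SAP index $(\alpha,\beta)$ with $|\alpha+\beta|\leq M$, the function $f_{\vec c}$ is a non-trivial real analytic, actually subanalytic, function on $(0,+\infty)$.

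\medskip

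\noindent
At this point I would invoke (a Salem-type or) Delort-Szeftel sublevel estimate: for each compact $[\kappa_1,\kappa_2]$ there exist $C_{\alpha,\beta}>0$ and $r_{\alpha,\beta}\in\N$ such that
\[
\mathrm{meas}\bigl\{\kappa\in[\kappa_1,\kappa_2]:|f_{\vec c}(\kappa)|<\eta\bigr\}\leq C_{\alpha,\beta}\,\eta^{1/r_{\alpha,\beta}}\qquad\forall\eta>0.
\]
The main obstacle, and the point that requires real work, is obtaining uniform quantitative control of $C_{\alpha,\beta}$ and $r_{\alpha,\beta}$ in terms of $N_{\max}:=\max_{j\in\mathrm{supp}(\alpha\cup\beta)}|j|$: one needs bounds of the form $C_{\alpha,\beta}\leq C\,N_{\max}^{\mu}$ and $r_{\alpha,\beta}\leq r(M)$, depending only on $M$. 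This is precisely the content of Theorem 5.1 of \cite{DelS} applied to the finite-dimensional subanalytic family parametrized by the $c_n$'s, exploiting that $|\alpha+\beta|\leq M$ forces at most $M$ nonzero coefficients $c_n$ and that the polynomials $P_n$ depend polynomially on $n$.

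\medskip

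\noindent
With this uniform estimate in hand, I would define for $\nu>0$, $\tau>0$ to be chosen,
\[
\mathcal K_{\alpha,\beta}(\nu,\tau):=\bigl\{\kappa\in[\kappa_1,\kappa_2]:|\vec\Omega(\kappa)\cdot(\alpha-\beta)|<\nu\,N_{\max}^{-\tau}\bigr\}
\]
and estimate $\mathrm{meas}(\mathcal K_{\alpha,\beta})\leq C\,N_{\max}^{\mu}\,\nu^{1/r(M)}\,N_{\max}^{-\tau/r(M)}$. Summing over the non-SAP multi-indices with $|\alpha+\beta|\leq M$ and fixed $N_{\max}=L$, whose cardinality is $O(L^{M})$ by \eqref{cardNab}, one gets a bound $\sum_L C L^{M+\mu-\tau/r(M)}\nu^{1/r(M)}$; choosing $\tau=\tau(M)$ sufficiently large ensures convergence, and the total measure vanishes as $\nu\downarrow 0$. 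Finally $\mathcal K:=\bigcap_{\nu>0}\bigcup_{\alpha,\beta}\mathcal K_{\alpha,\beta}(\nu,\tau)$, intersected over a countable exhaustion by compacts $[\kappa_1,\kappa_2]$, has Lebesgue measure zero, and \eqref{verad0} holds on its complement. The only genuinely delicate point in the whole argument is the quantification of the Delort-Szeftel constants in terms of $N_{\max}$ and $M$; everything else (parity decomposition, Galois-theoretic linear independence of square roots, Borel-Cantelli style summation) is routine once that uniform estimate is available.
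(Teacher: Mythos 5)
Your overall architecture (parity decomposition isolating a nonzero coefficient $c_{n_0}$ because $(\alpha,\beta)$ is not SAP, a qualitative non-vanishing statement in $\kappa$, a Delort--Szeftel sublevel estimate, and a Borel--Cantelli summation over indices with $\tau$ large) is the same as the paper's, and your Galois-theoretic proof that $\{1\}\cup\{\omega_n(\kappa)\}_n$ are linearly independent over $\C(\kappa)$ is a legitimate alternative to the paper's verification of non-degeneracy. However, there is a genuine gap exactly at the point you flag as ``the main obstacle'': you assert that the uniform control of the sublevel constants in terms of $N_{\max}$ ``is precisely the content of Theorem 5.1 of \cite{DelS} applied to the finite-dimensional subanalytic family parametrized by the $c_n$'s''. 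This is not correct as stated. Theorem \ref{DS} requires the parameter to range over a \emph{compact} ball, and the source of non-uniformity is not the bounded integer coefficients $c_n$ (of which there are at most $M$, each of size $\leq M$) but the \emph{unbounded frequencies} $n_1<\dots<n_\tA$: for each choice of support you get a different analytic function of $\kappa$, and nothing in your argument prevents the sublevel constants and exponents from degenerating as the $n_a$ grow. A per-index Łojasiewicz-type bound plus the Galois non-vanishing gives constants $C_{\alpha,\beta}$, $r_{\alpha,\beta}$ with no control in $N_{\max}$, and then the summation in $L$ does not close.

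The missing idea, which is where the paper does its real work (Propositions \ref{p:nrg} and \ref{p:nrg2}), is a compactification of the frequency dependence: one rescales the resonance function by $x_0^3$ with $x_0(\vec n)=(\sum_a n_a)^{-1}$, $x_a(\vec n)=x_0\sqrt{n_a}$ (and $t_a=\sqrt{\tanh(\tth n_a)}$ in finite depth), so that $f_{\vec c}(x,\kappa)=\sum_a c_a\sqrt{\kappa x_a^6+g x_a^2x_0^4+\tfrac{\gamma^2}{4}x_0^6}+\tfrac{\gamma}{2}c_0x_0^3$ becomes a single continuous subanalytic function on a fixed compact ball $X\times\cI$, with the degeneration (large or coalescing frequencies) measured by an explicit real analytic weight $\rho(x)$ satisfying $|\rho(x(\vec n))|\gtrsim(\sum_a n_a)^{-\tau_1}$ at the integer points. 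Only then does Theorem \ref{DS} deliver constants $\alpha_0,\delta,C$ and an exponent independent of the frequencies, with the loss appearing as the harmless factor $|\rho(x(\vec n))|^{N\delta}$, which both makes the series over $\vec n$ converge (for $N$ large) and produces the polynomial loss $N_{\max}^{-\tau}$ in \eqref{verad0}. Moreover, hypothesis (H2) must then be verified for \emph{all real} parameter values with $\rho(x)\neq 0$, not just at integer frequencies, so your Galois argument (which uses that the $P_n$ are affine polynomials with pairwise distinct roots indexed by integers) would have to be reworked on this compactified family, where the non-proportionality of the radicands is exactly what the zero set of $\rho$ encodes; the paper does this via a Vandermonde determinant of $\kappa$-derivatives at $\kappa=0$. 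Without the compactification and the explicit $\rho$, the step you defer to \cite{DelS} does not follow, and the proof is incomplete at its decisive point.
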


Theorem \ref{nonresfin0}  extends 
Proposition 8.1 in \cite{BD}, which is valid only in the irrotational case 
$ \gamma = 0 $ and in finite depth. 
Theorem \ref{nonresfin0}  follows by the next result
where we fix a 
compact interval of surface tension  coefficients putting $\cK:= \cap_{\nu >0} \cK_{\nu}$.

\begin{proposition}\label{nonresfin}
Let  $M \in \N$ and 
fix   a compact interval $ \cI := [\kappa_1,\kappa_2]$ with $ 0 < \kappa_1 < \kappa_2 $. 
Then there exist $\nu_M, \tau,  \delta >0 $  such that for any $\nu \in (0, \nu_M)$, 
there is a set $\cK_{\nu} \subset \cI$ of measure $O(\nu^\delta)$ 
such that for any  $\kappa \in \cI \setminus \cK_{\nu} $ the following holds: 
for any multi-index $ (\alpha, \beta) \in \N_0^{\Z\setminus\{0\}}\times \N_0^{\Z\setminus\{0\}}$ 
of length  $|\alpha + \beta| \leq M $,    
which is not super action preserving (cfr. Definition \ref{def:SAPindex}),  
one has
\begin{equation}
\label{veradis}
|\vec{\Omega}(\kappa) \cdot(\alpha - \beta)| > \frac{\nu}{\Big(\max\limits_{j\in \textup{supp}(\alpha \cup \beta)}  |j|\Big)^\tau} \, . 
\end{equation}
\end{proposition}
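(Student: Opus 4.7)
The plan is to follow the strategy introduced by Bambusi--Delort--Grebert--Szeftel and implemented in \cite{BD} for the irrotational case $\gamma=0$, based on the quantitative sublevel measure estimate for subanalytic functions, Theorem 5.1 of \cite{DelS}. First I would reduce the expression $\vec\Omega(\kappa)\cdot(\alpha-\beta)$ to a pencil of analytic functions of $\kappa$. Using the parity $\omega_j(\kappa) = \omega_{-j}(\kappa)$ (clear from \eqref{omegonejin} since $\omega_j$ depends on $j$ only through $\tG(j)$ and $j^2$) and the fact that the vorticity shift $\frac{\gamma}{2}\tG(j)/j$ does not depend on $\kappa$, regrouping modes $\pm n$ yields, for every multi-index of length $\leq M$ with $N := \max_{j\in\textup{supp}(\alpha\cup\beta)}|j|$,
\begin{equation*}
 f_{\alpha,\beta}(\kappa) := \vec\Omega(\kappa)\cdot(\alpha-\beta) = \sum_{n=1}^N c_n\, \omega_n(\kappa) + C_{\alpha,\beta},
\end{equation*}
where $c_n := (\alpha_n+\alpha_{-n})-(\beta_n+\beta_{-n}) \in \Z$ and $C_{\alpha,\beta}$ is $\kappa$-independent. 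By Definition \ref{def:SAPindex}, $(\alpha,\beta)$ is not super-action preserving if and only if the integer tuple $(c_n)_{n=1}^N$ is not identically zero.

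The central algebraic step is to show that $f_{\alpha,\beta}$ is a nonzero real-analytic function on $\cI$ whenever $(\alpha,\beta)$ is not super-action preserving. Writing $\omega_n(\kappa) = \sqrt{a_n \kappa + b_n}$ with $a_n := n^2\tG(n) > 0$ and $b_n := \tG(n)\bigl(g + \tfrac{\gamma^2 \tG(n)}{4n^2}\bigr) > 0$, each $\omega_n$ extends analytically to $\kappa \in (-b_n/a_n, +\infty)$ with a square-root branch point at $-b_n/a_n = -\bigl(g/n^2 + \gamma^2\tG(n)/(4n^4)\bigr)$. A direct computation shows that the map $n \mapsto -b_n/a_n$ is strictly monotonic in both finite and infinite depth, so the $N$ branch points are pairwise distinct. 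If $\sum_n c_n \omega_n(\kappa) \equiv -C_{\alpha,\beta}$ on $\cI$, analytic continuation together with the fact that at each branch point only the corresponding $\omega_n$ fails to be analytic forces all $c_n = 0$, contradicting the assumption. Hence $f_{\alpha,\beta}$ is a nonzero real-analytic function on $\cI$.

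Finally I would apply Theorem 5.1 of \cite{DelS} to the family $\{f_{\alpha,\beta}\}$. Since $f_{\alpha,\beta}$ lies in the subanalytic algebra spanned by $\omega_1, \ldots, \omega_N$ with integer coefficients bounded by $M$ in absolute value, the cited result produces constants $C_M > 0$ and $\delta \in (0,1)$, depending only on $M$ and $\cI$ (not on $N$ or on $(\alpha,\beta)$ individually), such that
\begin{equation*}
 \meas\bigl\{\kappa \in \cI : |f_{\alpha,\beta}(\kappa)| < \nu N^{-\tau}\bigr\} \leq C_M \bigl(\nu N^{-\tau}\bigr)^\delta.
\end{equation*}
The number of multi-indices of length $\leq M$ supported in $[-N,N]$ is $\lesssim N^M$, so choosing $\tau$ with $\tau\delta > M+2$ and summing over all $(\alpha,\beta)$ and all $N \geq 1$ defines an exceptional set $\cK_\nu \subset \cI$ of measure $O(\nu^\delta)$ outside of which \eqref{veradis} holds. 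The main obstacle is this very last point: extracting a \L ojasiewicz-type exponent $\delta$ that is uniform across the infinite family as $N \to \infty$. Naive bounds of Lagrange interpolation type degrade with $N$, so it is essential to invoke the subanalytic machinery of \cite{DelS}; carefully checking that the whole family $\{f_{\alpha,\beta}\}$ is subsumed by a single globally subanalytic parametrization over the compact interval $\cI$ is the delicate technical point that absorbs most of the work.
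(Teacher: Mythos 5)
Your reduction to $f_{\alpha,\beta}(\kappa)=\sum_n c_n\,\omega_n(\kappa)+C_{\alpha,\beta}$ and your qualitative non-degeneracy argument are fine: regrouping $\pm n$ and isolating the $\kappa$-independent vorticity contribution is exactly the paper's first step, and your branch-point/analytic-continuation proof that $f_{\alpha,\beta}\not\equiv 0$ when some $c_n\neq 0$ is a legitimate (and arguably slicker) alternative to the paper's Vandermonde computation of $\partial_\kappa^l f$ at $\kappa=0$. The genuine gap is in the final, quantitative step. Theorem 5.1 of \cite{DelS} (Theorem \ref{DS}) is a statement about a \emph{single} continuous subanalytic function $f(x,y)$ on a \emph{compact} product $X\times Y$, and the measure estimate it yields concerns sublevels of the form $\{y:|f(x,y)|\leq\alpha|\rho(x)|^N\}$, with the threshold tied to a weight $\rho(x)$ on the compact parameter ball. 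It does not, as you assert, produce a \L ojasiewicz exponent $\delta$ and a constant $C_M$ for the sublevel $\{\kappa:|f_{\alpha,\beta}(\kappa)|<\nu N^{-\tau}\}$ that are uniform over the infinite family indexed by unbounded frequencies $n_1<\dots<n_\tA$; obtaining that uniformity is precisely the content of the proposition, and you acknowledge it as "the delicate technical point" without supplying it. Since the subsequent counting and summation over $N$ rest entirely on that unproved uniform estimate, the argument as written does not close.

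What the paper does to fill exactly this hole is to compactify the frequency parameters: it sets $x_0(\vec n)=1/\sum_a n_a$, $x_a(\vec n)=x_0\sqrt{n_a}$ (and, in finite depth, $t_a=\sqrt{\tanh(\tth n_a)}$), so that for each of the \emph{finitely many} coefficient vectors $\vec c$ with $|\vec c\,|_\infty\leq M$ one gets a single subanalytic function $f_{\vec c}(x,\kappa)$ (resp.\ $f_{\vec c,\vec d}(x,t,\kappa)$) on a fixed compact ball times $\cI$, together with an explicit analytic weight $\rho$ vanishing exactly where the construction degenerates. Two further points are then unavoidable and are absent from your proposal: (i) hypotheses (H1)--(H2) must be verified for \emph{all} $x$ in the ball with $\rho(x)\neq 0$, not only at the special points $x(\vec n)$ coming from integers, which is why the paper needs a monotonicity/Vandermonde argument valid at arbitrary real parameters (your branch-point argument, tied to the specific functions $\omega_n(\kappa)$, does not directly cover these interpolated values); and (ii) one needs two-sided polynomial bounds $(\sum n_a)^{-\tau_1}\lesssim|\rho(x(\vec n))|\lesssim(\sum n_a)^{-1}$ to convert the threshold $\alpha|\rho(x)|^{N}$ of Theorem \ref{DS} into the desired $\nu\,(\max|j|)^{-\tau}$ lower bound and to make the measures summable over $\vec n$. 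Without reconstructing this compactified parametrization (or an equivalent device), the uniform-in-$N$ sublevel estimate you invoke has no justification, so the proof is incomplete at its central step.
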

The proof makes use of  
Delort-Szeftel Theorem 5.1 of \cite{DelS} about measure estimates 
for sublevels of subanalytic functions, whose statement is the following.

\begin{theorem}[Delort-Szeftel]
\label{DS}
Let $X$ be a closed ball $\bar{B(0,r_0)} \subset \R^d$ 
and $Y$ a compact interval of $\R$.
Let $f\colon X \times Y \to \R$ be a continuous subanalytic function, $\rho: X \to \R$ a real analytic function, $\rho \not\equiv 0$. Assume
\begin{itemize}
\item[(H1)] $f$ is real analytic on $\{ x \in X \colon \rho(x) \neq 0 \} \times Y$;
\item[(H2)] for all $x_0\in X$ with $\rho(x_0) \neq 0$, the equation $f(x_0,y) =0$ has only finitely many solutions $y \in Y$. 
\end{itemize}
Then there are $N_0 \in \N$, $\alpha_0> 0 $, $ \delta > 0 $ , $ C  > 0 $, such that for any $\alpha \in (0, \alpha_0]$, any $ N \in \N$, $N \geq N_0$, any $x$ with $\rho(x) \neq 0$,
\begin{equation*}
\meas \big\{ y \in Y \colon \ \ \ |f(x,y)|\leq \alpha |\rho(x)|^N \big\} \leq C \alpha^\delta \, |\rho(x)|^{N\delta} \, . 
\end{equation*}
\end{theorem}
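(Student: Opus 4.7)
The plan is to prove this via the structure theory of subanalytic functions, reducing the global statement to a family of local estimates where a preparation theorem applies. The argument splits into three parts: a subanalytic cell decomposition of $X\times Y$ adapted to $f$ and $\rho$, a preparation of $f(x,\cdot)$ that makes its zeros explicit as subanalytic sections, and a Łojasiewicz-type bound that converts the cube-like sublevel condition into a measure estimate with the power $|\rho(x)|^{N\delta}$.

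First I would apply a subanalytic cell decomposition (Hironaka–Hardt, or the refinement of Parusiński–Lion–Rolin) to $X\times Y$, compatible with the zero sets of $f$ and of $\rho\circ\pi_X$ and with the projection $\pi\colon X\times Y\to X$. This partitions $X\times Y$ into finitely many subanalytic cells on which $f$ is real-analytic, and on which the fibres of $\pi$ are intervals or points with subanalytic endpoints. Since the measure bound is additive over cells, it suffices to prove the estimate on a single top-dimensional cell $C$ projecting onto a subanalytic piece $X_C\subset X$ with fibres $Y_C(x)\subset Y$.

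Second, on each such cell I would invoke a subanalytic preparation theorem to write
\[
f(x,y) \;=\; U(x,y)\,\rho(x)^{a}\prod_{j=1}^{r}\bigl(y-y_j(x)\bigr)^{k_j},
\]
where $U$ is subanalytic and bounded above and below by positive constants on the closure of $C$, the $y_j(x)$ are subanalytic sections collecting the zeros of $f(x,\cdot)$ in $Y_C(x)$, the $k_j\in\N$ are bounded multiplicities, and $a\in\Z$ records the order of vanishing of $f$ along $\{\rho=0\}$. Hypothesis (H2) together with the o-minimal nature of subanalytic structures guarantees that $r$ and $\sum k_j$ are bounded uniformly in $x$, and (H1) together with the compactness of $\overline{C}$ gives the uniform two-sided bound on $U$. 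With this factorisation, the set $\{y\in Y_C(x): |f(x,y)|\le \alpha|\rho(x)|^N\}$ is contained in the union
\[
\bigcup_{j=1}^{r}\Bigl\{y:\;|y-y_j(x)|\le \bigl(C\alpha\,|\rho(x)|^{N-a}\bigr)^{1/k_j}\Bigr\},
\]
whose Lebesgue measure is $\lesssim \alpha^{1/K}|\rho(x)|^{(N-a)/K}$ with $K:=\max k_j$. Choosing $N_0>a$ and $\delta=1/K$ and summing over the finitely many cells yields the claimed bound, with constants inherited from the preparation and from the uniform estimate on $U$.

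The main obstacle will be the preparation step rather than the elementary measure estimate that follows it. Cell decomposition is standard, but producing a globally valid prepared form on each cell requires that the $y_j(x)$ be genuine subanalytic sections (not merely Puiseux branches of algebraic curves), that the sign and size of $U$ be controlled uniformly, and that the vanishing order $a$ of $f$ along $\{\rho=0\}$ be constant on the cell. Obtaining all three simultaneously is exactly the content of Parusiński's preparation theorem and, when needed, may force further refinement of the cell decomposition; one must then verify that the exponents $k_j$, $a$ remain bounded under these refinements so that $\delta$ and $N_0$ can be chosen uniformly. A secondary technical point is that the cells are not closed, so the two-sided bound on $U$ on $\overline{C}$ must be transferred to $C$ by a compactness argument, using that $X\times Y$ is compact and that (H1) fails only on the subanalytic set $\{\rho=0\}\times Y$, which has already been separated off by the cell decomposition.
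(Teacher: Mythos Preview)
The paper does not prove Theorem~\ref{DS}; it quotes the statement verbatim from Delort--Szeftel \cite{DelS} (their Theorem~5.1) and uses it as a black box to derive Propositions~\ref{p:nrg} and~\ref{p:nrg2}. So there is no ``paper's own proof'' to compare against.

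Your sketch is nonetheless the right shape for how this result is actually established in \cite{DelS}: subanalytic cell decomposition adapted to the zero loci, followed by a preparation/factorisation step that isolates the roots $y_j(x)$ as subanalytic sections, followed by the elementary one-variable measure bound. Two small remarks. First, your containment
\[
\Bigl\{y:\prod_j|y-y_j(x)|^{k_j}\le M\Bigr\}\subset\bigcup_j\Bigl\{|y-y_j(x)|\le M^{1/k_j}\Bigr\}
\]
is not quite right: from $\prod_j|y-y_j|^{k_j}\le M$ one only gets $\min_j|y-y_j|\le M^{1/\sum k_j}$, so the radius in the union should be $M^{1/\sum k_j}$ and $\delta=1/\sum_jk_j$, not $1/\max k_j$. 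This is harmless for the final statement. Second, the exponent $a$ coming out of the preparation theorem in the subanalytic category is in general a nonnegative rational, not an integer; this affects only the choice of $N_0$. With these corrections the argument goes through, and the obstacle you correctly identified --- uniformity of $r$, $k_j$, $a$ and the two-sided bound on the unit $U$ across cells --- is exactly what the o-minimal finiteness and compactness in Parusi\'nski's preparation deliver.
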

We shall first prove Proposition \ref{nonresfin} for  deep water,  
in 
Section \ref{deepNR}, and then, for any 
finite depth,   in Section \ref{FNR}.

\subsection{Deep-water case}\label{deepNR}

In the deep water case $ \mathtt h=+\infty $, 
by \eqref{omegonej} and \eqref{Gxi}, 
the linear frequencies are 
\begin{equation}
\label{omega_j_inf_dep}
\Omega_j(\kappa) = \omega_j(\kappa) +  \frac{\gamma}{2}\, \sgn(j) \, , \quad \omega_j(\kappa)=\sqrt{(\kappa j^2 +g) |j| 
+ \frac{\gamma^2}{4} } \, . 
\end{equation}
In this case Proposition \ref{nonresfin} is a consequence of the following  result.  

\begin{proposition}
\label{p:nrg}
Let  $\cI = [\kappa_1,\kappa_2] $ and  
consider two  integers $\tA, \tM\in \N$. 
Then there exist $\alpha_0, \tau , \delta >0$ (depending on $\tA, \tM$) 
such that for any $\alpha \in (0, \alpha_0)$, there is a 
set $\cK_\alpha \subset \cI$ of measure $O(\alpha^\delta)$,   
such that  for any $\kappa \in \cI \setminus\cK_\alpha$ the following holds: 
for any 
$ 1 \leq n_1 <  \ldots < n_\tA $,  
any 
$\vec c:=( c_0, c_1, \ldots, c_\tA) \in\Z\times (\Z \setminus\{0\})^{\tA}$, with  
$| \vec c \, |_\infty := \max_{a=0,\ldots, \tA} |c_a| \leq \tM $,  one has
\begin{equation}
\label{nrg}
\Big| {\sum_{a=1}^\tA c_a \, \omega_{n_a}(\kappa) + \frac{\gamma}{2}c_0} \Big|  > \frac{\alpha }{(\sum_{a=1}^\tA n_a)^{\tau}}.
\end{equation}
\end{proposition}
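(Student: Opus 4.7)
My plan is to apply the Delort--Szeftel sublevel estimate (Theorem~\ref{DS}) to a rescaled version of the left-hand side of \eqref{nrg}, turning the integer indices $n_1,\ldots,n_\tA$ into continuous parameters. Setting $N := n_\tA$, $t_a := n_a/N \in (0,1]$ and $\epsilon := 1/N$, the identity
\[
\omega_{n_a}(\kappa) = N^{3/2} t_a^{3/2} \sqrt{\kappa + g\epsilon^2/t_a^2 + \gamma^2 \epsilon^3/(4 t_a^3)}
\]
reduces \eqref{nrg} to a lower bound for the subanalytic function
\[
F_{\vec c}(t,\epsilon;\kappa) := \sum_{a=1}^{\tA} c_a\, t_a^{3/2} \sqrt{\kappa + g\epsilon^2/t_a^2 + \gamma^2 \epsilon^3/(4 t_a^3)} + \tfrac{\gamma c_0}{2}\, \epsilon^{3/2}
\]
on $[0,1]^\tA \times [0,1] \times \cI$, real analytic where $t_a,\epsilon>0$, such that $f_{\vec c,\vec n}(\kappa) = N^{3/2} F_{\vec c}(n/N, 1/N; \kappa)$, placing us in the setup of Theorem~\ref{DS} with $x=(t,\epsilon)$ and $y=\kappa$.

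The central step is to verify hypothesis (H2): produce a real analytic $\rho_{\vec c}(t,\epsilon)\not\equiv 0$ such that $\rho_{\vec c}(t_0,\epsilon_0)\neq 0$ implies $\kappa\mapsto F_{\vec c}(t_0,\epsilon_0;\kappa)$ has only finitely many zeros on $\cI$. Since $F_{\vec c}(t_0,\epsilon_0;\cdot)$ is an algebraic function of $\kappa$, it is enough to verify it is not identically zero. Taylor-expanding in $\epsilon$,
\[
F_{\vec c}(t,\epsilon;\kappa) = \sqrt{\kappa}\sum_a c_a t_a^{3/2} + \tfrac{\gamma c_0}{2}\epsilon^{3/2} + \tfrac{g \epsilon^2}{2\sqrt{\kappa}}\sum_a c_a t_a^{-1/2} + \tfrac{\gamma^2 \epsilon^3}{8\sqrt{\kappa}}\sum_a c_a t_a^{-3/2} + O(\epsilon^4),
\]
so each integer-power coefficient has the form $A_m(\kappa) \cdot \sum_a c_a t_a^{3/2-m}$ for $m=0,2,3,4,\ldots$ with $A_m(\kappa)\not\equiv 0$. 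Working in the variables $s_a := t_a^{1/2}$, the exponents $\{3,-1,-3,-5,\ldots\}$ in $s_a$ are distinct, so by the generalized Vandermonde argument for real exponents with distinct positive bases, not all polynomials $\sum_a c_a s_a^{3-2m}$ ($m=0,2,3,\ldots$) can vanish simultaneously when some $c_a\neq 0$; one may take $\rho_{\vec c}$ as a suitable product of these polynomials to guarantee non-degeneracy.

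With $\rho_{\vec c}$ chosen, applying Theorem~\ref{DS} with threshold $\alpha':=\alpha/((\sum_a n_a)^{\tau} N^{3/2}|\rho_{\vec c}|^{N_0})$ yields a measure estimate of order $C\alpha^\delta (\sum_a n_a)^{-\tau\delta}$ for the event $|f_{\vec c,\vec n}(\kappa)| \leq \alpha (\sum_a n_a)^{-\tau}$; notably the $|\rho_{\vec c}|$-dependence cancels in the final bound. Union-bounding over $\vec c$ with $|\vec c|_\infty \leq \tM$ (a finite set) and over ordered tuples $\vec n$ with prescribed sum $S$ (whose cardinality grows polynomially in $S$), then summing over $S\geq 1$, choosing $\tau$ sufficiently large in terms of $\tA,\tM,\delta$ produces a convergent series and total excluded measure $O(\alpha^\delta)$, defining $\cK_\alpha$.

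The main obstacle I anticipate is the precise construction of $\rho_{\vec c}$: in the irrotational setting of \cite{BD}, only integer powers of $\epsilon$ appear in the Taylor expansion of the analogous $F_{\vec c}$, and the leading coefficient $\sum c_a t_a^{3/2}$ (or one of its integer-order successors) can be used directly. Here, the vorticity term $(\gamma c_0/2)\epsilon^{3/2}$ introduces a $\kappa$-independent contribution at the half-integer $\epsilon$-order; one must bookkeep integer and half-integer orders simultaneously to rule out that $F_{\vec c}(t_0,\epsilon_0;\cdot)$ could vanish identically even when the individual integer-order leading coefficients degenerate at specific discrete evaluation points, and ensure a choice of $\rho_{\vec c}$ that works uniformly over all admissible $(\vec c,\vec n)$.
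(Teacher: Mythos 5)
Your rescaling and the plan to run Theorem~\ref{DS} are in the same spirit as the paper's proof, but the step you yourself flag as the main obstacle — the verification of (H2) — is exactly where the argument has a genuine gap. Hypothesis (H2) asks that for each \emph{fixed} point $(t_0,\epsilon_0)$ with $\rho(t_0,\epsilon_0)\neq 0$ the map $\kappa\mapsto F_{\vec c}(t_0,\epsilon_0;\kappa)$ have finitely many zeros on $\cI$. Your Taylor expansion in $\epsilon$ about $\epsilon=0$ only shows that $F_{\vec c}$ is not identically zero \emph{jointly} in $(t,\epsilon,\kappa)$; it says nothing about the $\kappa$-dependence at a fixed $\epsilon_0=1/N>0$. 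Moreover your candidate $\rho_{\vec c}$, a product of the polynomials $\sum_a c_a s_a^{3-2m}$, depends on $\vec c$ and genuinely vanishes at admissible evaluation points: for $\tA=2$, $(n_1,n_2)=(1,4)$, $(c_1,c_2)=(8,-1)$ one has $\sum_a c_a n_a^{3/2}=8-8=0$, so $\rho_{\vec c}(t(\vec n))=0$, Theorem~\ref{DS} gives no sublevel estimate there, and yet the proposition must still deliver \eqref{nrg} for that pair $(\vec n,\vec c)$ once $\kappa\notin\cK_\alpha$ (replacing the product by a sum of squares does not help, because then $\rho_{\vec c}\neq0$ no longer implies the pointwise-in-$\kappa$ nondegeneracy that (H2) requires). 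There are also quantitative problems downstream: Theorem~\ref{DS} only controls sublevel sets of the form $\{|f|\leq\alpha|\rho(x)|^{N}\}$ with $\alpha\leq\alpha_0$ fixed, so your rescaled threshold $\alpha'$ with $|\rho_{\vec c}|^{-N_0}$ inserted is not legitimate, the final bound needs a \emph{lower} bound on $|\rho|$ at every evaluation point by a fixed negative power of $\sum_a n_a$ uniformly in $\vec c$, and the convergence of the union over $\vec n$ is obtained by taking the exponent $N$ in $|\rho|^N$ large (so that $N\delta>\tA$), not by enlarging $\tau$.

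The missing idea, which is how the paper proceeds, is to check (H2) by differentiating in $\kappa$ (at $\kappa=0$) rather than expanding in the small parameter: the vorticity term $\tfrac{\gamma}{2}c_0x_0^3$ is $\kappa$-independent and drops out, and the vanishing of $\partial_\kappa^l f_{\vec c}(x,\cdot)$ for $l=1,\dots,\tA$ forces a linear system whose matrix has a Vandermonde factorization in the quantities $\mu(x_a,x_0)=x_a^6/(g x_a^2x_0^4+\tfrac{\gamma^2}{4}x_0^6)$; strict monotonicity of $y\mapsto\mu(y,x_0)$ on $(0,\infty)$ makes the determinant nonzero as soon as the $|x_a|$ are nonzero and pairwise distinct, contradicting $c_a\neq0$ for $a\geq1$. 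This yields a nondegeneracy function $\rho(x)=\prod_a x_a\prod_{a<b}(x_a^2-x_b^2)$ that is \emph{independent of} $\vec c$, is automatically nonzero at every evaluation point because $n_1<\dots<n_\tA$ are distinct integers, and satisfies the quantitative two-sided bounds in $(\sum_a n_a)^{\pm}$ needed both to make the union bound summable and to convert the Delort--Szeftel threshold into \eqref{nrg}. Your scheme could be repaired by replacing the $\epsilon$-expansion with such a $\kappa$-derivative/Vandermonde argument in your coordinates, but as written the central step is absent.
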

Before proving Proposition \ref{p:nrg} we deduce as a corollary Proposition \ref{nonresfin}
when $ \mathtt h = + \infty $. 

\begin{proof}[{\bf Proof of Proposition \ref{nonresfin}}]
For any multi-index $ (\alpha, \beta) \in \N_0^{\Z\setminus\{0\}}\times \N_0^{\Z\setminus\{0\}} $ with length  $|\alpha + \beta| \leq M$, 
using that $ \omega_j(\kappa)$ in \eqref{omega_j_inf_dep} is even in $ j $, 
 we get
\begin{align}
\vec{\Omega}(\kappa) \cdot(\alpha - \beta)  & = \sum_{j \in \Z \setminus \{0\}} \omega_j( \kappa)(\alpha_j - \beta_j)  + \sum_{j \in \Z \setminus \{0\}} \frac{\gamma}{2} \sign(j) (\alpha_j - \beta_j) \notag \\
& = \sum_{n >0} \omega_n(\kappa)(\alpha_n + \alpha_{-n} - \beta_n - \beta_{-n}) +\frac{\gamma}{2}\sum_{n >0} (\alpha_n - \alpha_{-n} - \beta_n + \beta_{-n}) \notag \\
& = \sum_{n \in \mathfrak N(\alpha, \beta)} \omega_n(\kappa)(\alpha_n + \alpha_{-n} - \beta_n - \beta_{-n}) +\frac{\gamma}{2} c_0  \label{eq:nrdeep}
\end{align}
where 
 $ \mathfrak N(\alpha, \beta) $ is the set 
defined in \eqref{ennonegotico} and 
$ c_0:=    \sum_{n >0} \alpha_n - \alpha_{-n} - \beta_n + \beta_{-n} \in \Z $. 
Since $ (\alpha, \beta ) $ is not super action preserving (cfr. Definition \ref{def:SAPindex})
then   $\mathfrak N(\alpha, \beta)  $ is not empty. 
By \eqref{cardNab} the  cardinality 
$ \tA := |\mathfrak N(\alpha, \beta) | $ satisfies $ 1 \leq \tA \leq M $. 
Denoting  by $1 \leq n_1 <  \ldots < n_{\tA} $ 
the distinct elements of $\mathfrak N(\alpha, \beta)$, and the integer numbers
$$
c_a:= \alpha_{n_a} +\alpha_{-n_a} - \beta_{n_a} - \beta_{-n_a} \in \Z \setminus \{0\} \, , \quad
\forall  a = 1, \ldots, \tA \, , 
$$
we deduce by \eqref{eq:nrdeep}
that
$$
\vec{\Omega}(\kappa) \cdot(\alpha - \beta)
 = \sum_{a=1}^\tA \omega_{n_a}(\kappa) c_a + \frac{\gamma}{2} c_0 \, . 
$$
By the definition of $ \mathfrak{N}(\alpha,\beta)$, each  integer
$c_a \neq 0 $, $a=1, \ldots, \tA$,   and  $|c_a| \leq |\alpha| + |\beta| \leq M $.
Similarly $|c_0|\leq M$. 
Applying Proposition \ref{p:nrg} with $ \tM = M $ we deduce \eqref{veradis} with  $ \nu:= \frac{\alpha}{M^\tau}$.
\end{proof}
The rest of the section is devoted to the proof of Proposition \ref{p:nrg}.

\begin{proof}[{\bf Proof of Proposition \ref{p:nrg}}]
For any  $\vec n := (n_1, \ldots, n_\tA) \in \N^\tA $  with
$ 1 \leq n_1 <  \ldots < n_\tA $,  we denote  
\begin{equation}\label{x(n)}
x_0(\vec n) := \frac{1}{\sum_{a =1}^\tA n_a} \, , \qquad 
x_a(\vec n):= x_0(\vec n) \sqrt{ n_a} \, , \quad  \forall \,  a=1,\ldots, \tA \, . 
\end{equation}
Clearly
\begin{equation}
\label{x.estn}
0 < \tfrac{1}{\sum_{a =1}^\tA n_a} \leq 
x_a(\vec n)  \leq 1 \ , \qquad \forall\,   a = 0, \ldots, \tA 	\, . 
\end{equation}
If \eqref{nrg} holds, then multiplying it by $ x_0(\vec n)^3$, one gets that the inequalities 
\begin{equation}
\label{nrn}
 \Big|{\sum_{a=1}^\tA c_a \, 
 \sqrt{\kappa x_a^6  + g x_a^2 x_0^4 + \frac{\gamma^2}{4} x_0^6  }  
 + \frac{\gamma}{2}c_0 x_0^3 } \Big| \geq \alpha \, x_0^{\tau+3}  
\end{equation} 
hold at any $x_a = x_a(\vec n) $, $ a = 0, \ldots, \tA $, defined in \eqref{x(n)}.
This suggests to define  the function
\begin{equation} \label{lambdag}
  \lambda(y, x_0, \kappa) := \sqrt{\kappa y^6  + g y^2 x_0^4 + \frac{\gamma^2}{4} x_0^6 } \, ,
  \end{equation}
and, for $  \vec c =( c_0, c_1, \ldots, c_\tA) \in\Z\times (\Z \setminus\{0\})^{\tA} $,
\be\label{effecci}
f_{\vec c} \, \colon [-1, 1]^{\tA+1} \times \cI \to \R  \, , \quad 
f_{\vec c} \, (x,\kappa) := \sum_{a=1}^\tA c_a \, \lambda(x_a, x_0,   \kappa)  + \frac{\gamma}{2} c_0 x_0^3  
\ee
where $ x := (x_0, x_1, \ldots, x_\tA ) $. 

We estimate the sublevels of  $\kappa \mapsto f_{\vec c}(x,\kappa)$ using Theorem \ref{DS}. Let us verify its assumptions. The set  $X := [-1,1]^{\tA+1}$  is a   closed ball in $\R^{\tA+1}$.
The function $f_{\vec c}: X\times \cI \to \R$ is continuous and subanalytic.
Then we  define the non-zero real analytic function 
\be\label{defrho1}
\rho(x) := \prod_{a=0}^{\tA}
 x_a \, \prod_{1 \leq a<b \leq \tA }(x_a^2 - x_b^2) \, .
 \ee
We observe that $ \rho(x) $ evaluated  at  
$ x(\vec n) := (x_0(\vec n), \ldots,  x_\tA(\vec n) ) $, 
defined in \eqref{x(n)}, satisfies 
\begin{equation} \label{rho.estg}
\Big(\sum_{a=1}^\tA n_a \Big)^{- \tau_1 }
\leq  |{\rho(x(\vec n))} | \lesssim_\tA 
\Big(
\sum_{a=1}^\tA n_a\Big)^{-1}
\end{equation}
with $ \tau_1 :=  \tA  + 1  + 2\binom{\tA}{2}  $, 
as follows  by \eqref{x.estn}, \eqref{defrho1} 
and the assumption that the $n_a$'s are all distinct, thus 
$ |n_a - n_b| \geq 1 $, for any $ a \neq b $. 

We show now that  the assumptions (H1) and (H2)  of Theorem \ref{DS} hold true. 
 
 \noindent \underline{Verification of (H1).} 
If   $\rho(  x) \neq 0$ then, by \eqref{defrho1},  
\begin{equation} \label{rho_condg}
x_a \neq 0 \, , \  \forall \, 0 \leq a \leq \tA \, , 
\qquad \mbox{and} \qquad x_a^2 \neq x_b^2  \, ,  \  \forall \, 1 \leq a < b \leq \tA \, .
\end{equation} 
In particular on the set $\{ x \in X \colon \rho(x) \neq 0 \} \times Y$ the function 
$  \lambda(x_a,  x_0, \kappa) $ in \eqref{lambdag} in real analytic and thus the function 
 $f_{\vec c} (x,  \kappa) $ in \eqref{effecci} is real analytic. \\
  \noindent \underline{Verification of (H2).} The fact that, for any $ x \in X $ 
  such that $ \rho(x) \neq 0$, the {\it analytic} function $\kappa \mapsto f_{\vec c}(x,\kappa)$ 
possesses only a finite number of zeros on the interval 
$ \cI $, is 
a consequence  of the next lemma.  

\begin{lemma}
For any $ x \in X $ 
  such that $ \rho(x) \neq 0$, the 
  function $ \kappa \mapsto f_{\vec c}(x,\kappa)$ is not identically zero in $\cI$.
\end{lemma}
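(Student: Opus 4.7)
The plan is to argue by contradiction and to extract the obstruction to the identical vanishing of $f_{\vec c}(x,\cdot)$ from its asymptotic behaviour as $\kappa\to+\infty$. Since $\rho(x)\neq 0$ forces $x_0\neq 0$, $x_a\neq 0$ for $a=1,\ldots,\tA$, and $x_a^2\neq x_b^2$ for $1\leq a<b\leq\tA$, each radicand $\alpha_a\kappa+\beta_a$, with $\alpha_a:=x_a^6>0$ and $\beta_a:=g\, x_a^2\, x_0^4+\frac{\gamma^2}{4}x_0^6\geq 0$, is strictly positive for every $\kappa>0$; hence $\kappa\mapsto f_{\vec c}(x,\kappa)$ is real analytic on $(0,+\infty)$. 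If it vanished identically on $\cI$ then, by analytic continuation, it would vanish on all of $(0,+\infty)$, and in particular on the half-line $(\kappa_\star,+\infty)$ with $\kappa_\star:=\max_{a}\beta_a/\alpha_a$.

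For $\kappa>\kappa_\star$ each square root admits the absolutely convergent binomial expansion
\begin{equation*}
\lambda(x_a,x_0,\kappa)=\sqrt{\alpha_a\kappa+\beta_a}=\sum_{k\geq 0}\binom{1/2}{k}\,\alpha_a^{1/2-k}\,\beta_a^{k}\,\kappa^{1/2-k}.
\end{equation*}
Substituting into \eqref{effecci} and noting that the constant $\frac{\gamma}{2}c_0 x_0^3$ is the unique contribution of pure order $\kappa^0$ (the other monomials being $\kappa^{1/2-k}$ with $k\in\N_0$), the vanishing of $f_{\vec c}(x,\cdot)$ forces every coefficient of the resulting Laurent-type expansion to vanish, yielding
\begin{equation*}
\tfrac{\gamma}{2}c_0 x_0^3=0\qquad\mbox{and}\qquad \sum_{a=1}^{\tA} c_a\,\alpha_a^{1/2}\,\mu_a^{k}=0,\quad \forall\,k\in\N_0,
\end{equation*}
where $\mu_a:=\beta_a/\alpha_a=g x_0^4/x_a^4+\gamma^2 x_0^6/(4 x_a^6)$. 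Setting $d_a:=c_a\,|x_a|^3$, the second family of relations reads $\sum_a d_a\,\mu_a^{k}=0$ for every $k\in\N_0$; retaining $k=0,1,\ldots,\tA-1$ produces a square Vandermonde system in the unknowns $d_1,\ldots,d_\tA$ whose determinant is $\prod_{1\leq a<b\leq\tA}(\mu_b-\mu_a)$.

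The main obstacle is thus to show that the $\mu_a$ are pairwise distinct. Writing $s_a:=x_a^2>0$ and introducing $\phi(s):=g x_0^4\,s^{-2}+\frac{\gamma^2 x_0^6}{4}\,s^{-3}$, we have $\mu_a=\phi(s_a)$; since $g>0$ and $x_0\neq 0$, the derivative $\phi'(s)=-2g x_0^4 s^{-3}-\frac{3\gamma^2 x_0^6}{4}s^{-4}$ is strictly negative on $(0,+\infty)$ (the first summand being strictly negative, the second non-positive), so $\phi$ is strictly decreasing and therefore injective. The distinctness of the $s_a$, guaranteed by $\rho(x)\neq 0$, thus transfers to the distinctness of the $\mu_a$; the Vandermonde determinant is non-zero, and we conclude $d_a=0$, whence $c_a=0$ for every $a=1,\ldots,\tA$, contradicting $c_a\in\Z\setminus\{0\}$. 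We note that the identity $\frac{\gamma}{2}c_0 x_0^3=0$ is not needed to reach the contradiction, so the scheme works uniformly in $\gamma\in\R$, including the irrotational case $\gamma=0$.
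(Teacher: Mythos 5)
Your proof is correct, but it runs along a genuinely different (if structurally parallel) route from the paper's. The paper argues at the opposite end of the $\kappa$-axis: it extends $\kappa\mapsto f_{\vec c}(x,\kappa)$ analytically past $\kappa=0$, computes all derivatives $\partial_\kappa^l f_{\vec c}(x,\kappa)\vert_{\kappa=0}$, and obtains a Vandermonde system in the quantities $\mu(x_a,x_0)=x_a^6/(g x_a^2x_0^4+\frac{\gamma^2}{4}x_0^6)$, whose pairwise distinctness follows from strict monotonicity of $y\mapsto\mu(y,x_0)$ on $(0,+\infty)$ together with $x_a^2\neq x_b^2$. You instead expand at $\kappa=+\infty$ via the binomial series and land on a Vandermonde system in $\mu_a=\beta_a/\alpha_a$, which is exactly the reciprocal of the paper's quantity, with the same injectivity input ($\phi$ strictly decreasing, $s_a=x_a^2$ distinct). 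What your version buys: you never need to enlarge the domain of analyticity to reach $\kappa=0$ (you stay inside $(0,+\infty)$, where positivity of the radicands is immediate from $g>0$ and $x_a,x_0\neq0$), and the half-integer exponents $\kappa^{1/2-k}$ automatically decouple the constant $\frac{\gamma}{2}c_0x_0^3$, which — as in the paper, where only $l=1,\dots,\tA$ are used — is not needed for the contradiction. What the paper's version buys is that the derivative computation is slightly more self-contained (no appeal to term-by-term identification in a generalized power series); in your write-up that identification step deserves one more line, e.g. substitute $t=\kappa^{-1/2}$ so that $t\,f_{\vec c}(x,t^{-2})=\sum_{k\geq0}A_kt^{2k}+Ct$ is a genuine power series vanishing on an interval, whence all coefficients vanish by the identity theorem; since $\binom{1/2}{k}\neq0$ for every $k$, this gives $\sum_a c_a\alpha_a^{1/2}\mu_a^k=0$ for all $k$, exactly as you use. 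With that minor elaboration the argument is complete.
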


\begin{proof}  
We argue by contradiction, assuming that there 
  exists $  x= (  x_a)_{0 \leq a \leq \tA}\in X$ with $\rho(x) \neq 0$
   such that $f_{\vec c}(  x, \kappa) = 0 $ for any  $\kappa$ in the interval $\cI $.
Then
the function $ \kappa \mapsto f_{\vec c}(  x,\kappa) $ 
is identically zero also on the larger domain 
of analyticity $(-\frac{\gamma^2}{4} x_0^6, + \infty) $.
Note that $ x_0^2 > 0 $ because $\rho(x) \neq 0 $, cfr.  \eqref{rho_condg}.   
 In particular, for any  $ l \in \N $,  all the derivatives 
 $ \partial_\kappa^l f_{\vec{c}}(  x, \kappa) \equiv 0 $ are zero 
 in the interval   $ (-\frac{\gamma^2}{4} x_0^6, + \infty)$.   
 
 Now we compute such derivatives at $ \kappa = 0 $ by differentiating \eqref{effecci}. 
The derivatives of the function $ \lambda(y, x_0, \kappa) $ defined in \eqref{lambdag} are
given by, for suitable constants $ C_l \neq 0 $, 
  \begin{equation*}
  \partial_\kappa^l  \lambda(y, x_0, \kappa)  = C_l y^{6l} \, \lambda(y, x_0, \kappa)^{1-2l} \, , \quad
  \forall l \in \N \, .
  \end{equation*}
Thus we obtain
  \begin{equation}\label{formadermu}
 \partial_\kappa^l  \lambda(y, x_0, \kappa) \vert_{\kappa = 0}
 = C_l \, \mu(y,x_0)^l \, \lambda(y, x_0, 0) \qquad 
 \text{where} \qquad 
 \mu(y,x_0):= \frac{y^6}{g y^2 x_0^4 + \frac{\gamma^2}{4}x_0^6} \, ,
  \end{equation}
and, recalling  \eqref{effecci},  
$$
\partial_\kappa^l f_{\vec c}(x,\kappa) \vert_{\kappa = 0} 
= C_l \sum_{a=1}^\tA c_a \mu(x_a,x_0)^l  \, \lambda(x_a, x_0, 0) \, , \quad 
\forall l \in \N \, .
 $$
As a consequence, the conditions 
 $\partial_\kappa^l f_{\vec c} 	\,  (  x,\kappa) \vert_{\kappa = 0} = 0$ for any $l=1, \ldots, \tA $,  imply that 
\be\label{Axc0}
 A(  x) \vec {{\mathfrak c}} = 0  
\ee
where  $ A(x) $ is the $ \tA \times \tA $-matrix 
$$
A( x):= \begin{pmatrix}
 \mu(x_1, x_0)  \lambda(  x_1, x_0, 0) &  
 \cdots & \mu(x_\tA, x_0) \lambda(  x_\tA, x_0,  0) \\
 \mu(x_1, x_0)^2 \lambda(  x_1,  x_0, 0) & 
 \cdots & \mu(x_\tA, x_0)^2 \lambda(  x_\tA,  x_0, 0) \\
 \vdots 
 & \ddots & \vdots  \\
 \mu(x_1, x_0)^{\tA} \lambda(  x_1, x_0, 0) & 
  \ldots  & \mu(x_\tA, x_0)^\tA \lambda(  x_\tA, x_0, 0)  
 \end{pmatrix}  \quad \text{and} \quad
{\vec {\mathfrak c}} := \begin{bmatrix} c_1\\ \vdots\\  c_\tA\end{bmatrix}  
\in  (\Z \setminus\{0\})^{\tA} \, . 
 $$ 
Since the vector $ \vec {{\mathfrak c}} \neq 0 $, we deduce by \eqref{Axc0} that 
the matrix  $ A ( x ) $ has zero determinant. 
On the other hand, by the multi-linearity of the determinant,   
\begin{align}
\det A(  x) & = \prod_{a=1}^\tA \mu(x_a, x_0) \lambda(  x_a, x_0, 0) 
\det \begin{pmatrix}
 1  &   
 \cdots & 1 \\
 \mu(x_1, x_0)  & 
 \cdots & \mu(x_\tA, x_0) \\
 \vdots & 
 \ddots & \vdots  \\
 \mu(x_1, x_0)^{\tA-1} & 
 \ldots  & \mu(x_\tA, x_0)^{\tA-1}  
 \end{pmatrix}  \notag \\
 & = \prod_{a=1}^\tA \mu(x_a, x_0) \lambda(  x_a, x_0, 0)  \, \prod_{1 \leq a< b\leq \tA} (\mu(x_a, x_0) - \mu(x_b, x_0))  \label{detA}
\end{align}
by  a Vandermonde determinant. 
The condition  $\rho(  x) \neq 0$ 
implies, by \eqref{rho_condg},  \eqref{formadermu} and \eqref{lambdag}, that
 $$ 
 \prod_{a=1}^\tA \mu(x_a, x_0) \lambda(x_a, x_0, 0)  \neq 0 \, , 
 $$  
 and,  
in view of \eqref{detA}, the determinant 
 $\det A(x) = 0$ if and only if $\mu(x_a,x_0) = \mu(x_b,x_0)$ for some $1 \leq a< b \leq \tA$.
Since the  function $y \to \mu(y,x_0)$ in  \eqref{formadermu} is even and strictly monotone on the two intervals $ ( 0,+\infty)$ and $(-\infty, 0)$, it follows that 
$$
\mu(x_a,x_0) = \mu(x_b,x_0) \quad \Rightarrow \quad |x_a| = |x_b| \, .
$$
This contradicts $\rho(x) \neq 0$, see \eqref{rho_condg}. The lemma is proved. 
\end{proof}

We have  verified assumptions (H1) and (H2) of Theorem \ref{DS}.
We thus conclude that 
there are $N_0 \in \N$, $\alpha_0, \delta, C  >0$, such that for any $\alpha \in (0, \alpha_0]$, any $ N \in \N$, $N \geq N_0$, any $x \in X$ with $\rho(x) \neq 0$,
\begin{equation} \label{meas2g}
\meas \big\{ \kappa \in \cI \colon \ \ \ |f_{\vec c}(x,\kappa)|\leq \alpha |\rho(x)|^N \big\} \leq C \alpha^\delta \, |\rho(x)|^{N\delta} \, . 
\end{equation}
For any $\vec n = (n_1, \ldots, n_\tA) \in \N^\tA $ with $ 1 \leq n_1 < \ldots < n_\tA $,  we consider the set
\be\label{badni}
\cB_{\vec c,  \vec n}(\alpha, N) := \big\lbrace
 \kappa \in \cI \colon \ \ \ |f_{\vec c}(x(\vec n),\kappa)|\leq \alpha |\rho(x(\vec n))|^N 
\big\rbrace
\ee
where $x(\vec n) = (x_a(\vec n))_{a=1, \ldots,\tA} \in X $ is defined in \eqref{x(n)}.
By \eqref{rho.estg} we get  
$\rho(x(\vec n)) \neq 0 $. Then \eqref{meas2g} yields
\begin{equation}
 \label{measGg}
 \meas \ \cB_{\vec c,  \vec n}(\alpha, N) \leq C \alpha^\delta |\rho(x(\vec n))|^{N\delta} . 
 \end{equation} 
Consider  the set 
\be\label{defKNA1}
\cK(\alpha, N) :=  
\bigcup_{
              \substack{ \vec{n}=(n_1, \ldots, n_\tA) \in \N^{\tA},\, 1 \leq n_1 < \ldots < n_\tA   \\ 
                       \vec c \in \Z\times (\Z \setminus \{0\})^{\tA}, \, |\vec c \, |_{\infty} \leq \tM } 
         }  
  \cB_{\vec c, \vec n}(\alpha, N)  \subset \cI  \, .
\ee
By \eqref{measGg} and  \eqref{rho.estg} it results 
 \begin{equation} \label{measK1g}
 \meas \ \cK(\alpha, N)  \leq C(\tA, \tM) \alpha^\delta \sum_{n_1, \ldots, n_\tA  \in \N} 
\frac{1}{(\sum_{a=1}^\tA n_a)^{N\delta}}  \leq C'(\tA, \tM) \alpha^\delta 
 \end{equation}
 for some finite constant $ C'(\tA, \tM) < + \infty $,   provided 
$  N \delta > \tA $. We fix 
$$ 
\underline N := [ \tA \delta^{-1}] + 1  \qquad \text{and} \qquad 
{\cal K}_\alpha := {\cal K} (\alpha, \underline N) 
 $$ 
 whose measure satisfies 
$ |{\cal K}_\alpha| \lesssim_{\tA, \tM} \alpha^\delta $ by \eqref{measK1g}.  
For any $\kappa \in \cI \setminus {\cal K}_\alpha $, for any 
$ \vec n = (n_1, \ldots, n_\tA) $ with  $ 1 \leq n_1 <  \ldots < n_\tA $, 
 for any  $ \vec c \in \Z\times (\Z \setminus \{0\})^{\tA} $ with 
 $ |\vec c \, |_{\infty} \leq \tM $, 
 one has, by 
 \eqref{defKNA1} and \eqref{badni}, 
\begin{equation}
\label{f1g}
|{f_{\vec c}(x(\vec n), \kappa)}| 
>  \alpha |\rho(x(\vec n))|^{\underline N} \stackrel{\eqref{rho.estg}}{\geq } 
\frac{ \alpha }{\left(\sum_{a=1}^\tA n_a\right)^{  
\tau_1 \underline{N} }} \, . 
\end{equation}
Recalling  the definition of $ f_{\vec c}$ in \eqref{effecci}, \eqref{lambdag} and $ x_0(\vec n)$ in \eqref{x(n)}, 
the lower bound \eqref{f1g} 
implies  \eqref{nrg} with $ \tau := \tau_1 \underline{N} - 3 $, cfr. \eqref{nrn}. 
\end{proof}

\subsection{Finite depth case}\label{FNR}

We consider now the finite depth case $0 <  \tth <+\infty$ where the frequencies are, by  \eqref{omegonej} and \eqref{Gxi}, 
\begin{equation} \label{omega_j_fin_dep}
\Omega_j(\kappa) = \omega_j(\kappa) +  \frac{\gamma}{2}\, \tanh(\tth j) \, , \quad \omega_j(\kappa)=\sqrt{j\tanh(\tth j)\Big(\kappa j^2 +g
+ \frac{\gamma^2}{4}\frac{\tanh(\tth j)}{j}\Big)  } \, .
\end{equation}
In this case Proposition \ref{nonresfin} is a consequence of the following  result.   
\begin{proposition}
\label{p:nrg2}
Let  $\cI = [\kappa_1,\kappa_2]$ and 
consider $\tA, \tM\in \N$ and $ \tB \in \N_0 $. 
Then there exist $ \alpha_0, \tau , \delta > 0 $ (depending on $\tA,\tB, \tM$) such that for any $\alpha \in (0, \alpha_0)$, there is a set $\cK_\alpha \subset \cI$ 
of measure $O(\alpha^\delta)$,  
 such that  for any $\kappa \in \cI \setminus\cK_\alpha$ the following holds:
for any $ 1 \leq n_1 \leq \ldots \leq n_\tA $
any $ \vec m= (m_1,\dots, m_\tB) \in \N^\tB$,  any 
$\vec c:=(c_1, \ldots, c_\tA) \in  (\Z \setminus\{0\})^{\tA}$ with  
$|\vec c \, |_\infty \leq \tM$  and 
$ \vec d=(d_1,\dots, d_\tB)\in (\Z\setminus\{0\})^\tB$ with $|\vec d \, |_{\infty}\leq \tM$, 
one has
\begin{equation}
\label{nrg2}
\Big| \sum_{a=1}^\tA c_a \, \omega_{n_a}(\kappa) 
+ \frac{\gamma}{2}\sum_{b=1}^\tB d_b \tanh(\tth  m_b) \Big|  
> \frac{\alpha }{(\sum_{a=1}^\tA n_a+\sum_{b=1}^\tB m_b)^{\tau}} \, .
\end{equation}
If $ \tB = 0 $, by definition, the sums in \eqref{nrg2} in the index 
$ b $   are empty and the vectors $ \vec m, \vec d $ are not present. 
\end{proposition}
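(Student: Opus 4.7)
The plan is to adapt the Delort--Szeftel strategy of Proposition~\ref{p:nrg}, now incorporating the factors $\tanh(\tth n_a)$ and $\tanh(\tth m_b)$ as additional bounded parameters. After combining indices with equal $n_a$'s (and with equal $m_b$'s), so that without loss of generality $n_1 < \ldots < n_\tA$, I would rescale by
\[ \Sigma := \sum_{a=1}^{\tA} n_a + \sum_{b=1}^{\tB} m_b,\quad x_0 := \tfrac{1}{\Sigma},\quad x_a := x_0 \sqrt{n_a},\quad t_a := \tanh(\tth n_a),\quad u_b := \tanh(\tth m_b), \]
all lying in $[0,1]$. A direct computation from \eqref{omega_j_fin_dep} gives $x_0^3 \omega_{n_a}(\kappa) = \lambda(x_a, x_0, t_a, \kappa)$ with $\lambda(y,x_0,t,\kappa) := \sqrt{\kappa y^6 t + g x_0^4 y^2 t + \tfrac{\gamma^2}{4} x_0^6 t^2}$, so multiplying the expression in \eqref{nrg2} by $x_0^3$ yields the continuous subanalytic function
\[ f_{\vec c, \vec d}(x, t, u, \kappa) := \sum_{a=1}^\tA c_a\, \lambda(x_a, x_0, t_a, \kappa) + \frac{\gamma}{2}x_0^3 \sum_{b=1}^\tB d_b u_b \]
defined on $X \times \cI$ with $X := [-1,1]^{\tA+1}\times [0,1]^{\tA+\tB}$.

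I would then apply Theorem~\ref{DS} with the real analytic function
\[ \rho(x,t):= x_0 \prod_{a=1}^\tA x_a t_a \prod_{1\leq a<b\leq\tA} Q_{ab}(x,t),\qquad Q_{ab}:= g x_a^2 x_b^2(x_a^4-x_b^4) - \tfrac{\gamma^2}{4}x_0^2(x_b^6 t_a - x_a^6 t_b). \]
On $\{\rho\neq 0\}$, the radicand of $\lambda$ is strictly positive for $\kappa \in \cI$, yielding hypothesis (H1). For (H2), assuming $\kappa\mapsto f_{\vec c,\vec d}(x,t,u,\cdot)$ vanishes identically at some $(x,t,u)$ with $\rho(x,t)\neq 0$, then since the $u$-dependent summand is $\kappa$-independent one has, for any $l\geq 1$,
\[ \partial_\kappa^l f_{\vec c,\vec d}\big|_{\kappa=0} = C_l\sum_{a=1}^\tA c_a\,\mu(x_a,x_0,t_a)^l\, \lambda(x_a,x_0,t_a,0) = 0,\qquad \mu(y,x_0,t) := \frac{y^6}{gx_0^4 y^2 + \tfrac{\gamma^2}{4}x_0^6 t}, \]
and the Vandermonde argument of Proposition~\ref{p:nrg} forces $\mu(x_a,x_0,t_a) = \mu(x_b,x_0,t_b)$ for some $a\neq b$, contradicting $Q_{ab}(x,t)\neq 0$.

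It then remains to bound $|\rho(x(\vec n), t(\vec n))|$ from below at the integer points. A direct substitution gives $Q_{ab}(x(\vec n), t(\vec n)) = x_0^8\bigl[g\, n_a n_b(n_a^2-n_b^2) + \tfrac{\gamma^2}{4}(n_a^3 t_b - n_b^3 t_a)\bigr]$: for $n_a > n_b \geq 1$, concavity of $\tanh$ on $(0,\infty)$ (which, together with $\tanh(0)=0$, implies $\tanh(\tth y)/y$ is decreasing) gives $t_a/t_b \leq n_a/n_b < n_a^3/n_b^3$, so both bracket terms are strictly positive and $|Q_{ab}(x(\vec n), t(\vec n))|\geq 2g x_0^8$. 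Combined with the trivial bounds $x_a \geq x_0$ and $t_a \geq \tanh(\tth)$, this yields a polynomial lower bound $|\rho(x(\vec n),t(\vec n))|\gtrsim_{\tA,g,\tth}\Sigma^{-\tau_1}$ with $\tau_1 = 1+\tA+8\binom{\tA}{2}$. Summing the Theorem~\ref{DS} measure estimates over all $\vec n, \vec m$ and over $\vec c, \vec d$ with $|\vec c\,|_\infty, |\vec d\,|_\infty\leq \tM$, with $N$ large enough to ensure summability, produces $\cK_\alpha$ of measure $O(\alpha^\delta)$ outside of which \eqref{nrg2} holds (the case $\tB=0$ follows by dropping the $u$-dependent term throughout). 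The main obstacle will be the choice of $Q_{ab}$: it must simultaneously encode the Vandermonde non-degeneracy needed for (H2) and admit a polynomial-in-$\Sigma$ lower bound at integer points, the latter crucially exploiting the concavity of $\tanh$.
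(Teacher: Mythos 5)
Your proposal is correct and follows essentially the same route as the paper's proof: the rescaling $x_0=1/\Sigma$, $x_a=x_0\sqrt{n_a}$ together with the $\tanh$-values as auxiliary bounded variables, an application of the Delort--Szeftel theorem with a function $\rho$ whose resultant-type factors $Q_{ab}$ coincide (up to the normalization $\tanh$ versus $\sqrt{\tanh}$ and a power of $x_0$) with the brackets in the paper's $\rho$, the same Vandermonde argument at $\kappa=0$ for hypothesis (H2), and the same summation of the sublevel-measure estimates over $\vec n,\vec m,\vec c,\vec d$. The only point where you argue differently is the lower bound for $|Q_{ab}|$ at the integer points: you use concavity of $\tanh$ (so that $\tanh(\tth y)/y$ decreases and both terms in the bracket have the same sign), whereas the paper bounds from below the derivative of $y^{3}/(gy+\tfrac{\gamma^{2}}{4}\tanh(\tth y))$ via $3\tanh y-y\,{\rm sech}^{2}y>0$; both yield the needed polynomial bound, so this is an equivalent, slightly more elementary variant (note also that, exactly as in the paper's own proof, the argument really treats strictly increasing $n_a$ -- your preliminary ``combining'' of equal indices can produce vanishing coefficients, in which degenerate case \eqref{nrg2} itself may fail, so the hypothesis should be read with strict inequalities).
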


Before proving Proposition \ref{p:nrg2} we deduce  
Proposition \ref{nonresfin}
in the case of finite depth $ \tth $. 

\begin{proof}[{\bf Proof of Proposition \ref{nonresfin}}]
Let $ (\alpha, \beta) \in \N_0^{\Z\setminus\{0\}}\times \N_0^{\Z\setminus\{0\}} $ be a 
multi-index  with length  $ | \alpha + \beta | \leq M $,
which is not super action preserving (cfr. Definition \ref{def:SAPindex}).  
Denote 
by $ 1 \leq n_1 < \ldots <  n_{\tA}$ the elements 
of $\mathfrak N(\alpha, \beta) $
defined in \eqref{ennonegotico}, and let $\tA := |\mathfrak N(\alpha, \beta) | \geq 1 $.
We also consider the set 
$ \mathfrak{M}(\alpha,\beta):=\{ n\in \N \colon\alpha_n - \alpha_{-n} - \beta_n + \beta_{-n}\not=0\} $, which could be empty.
We denote by $ \tB:= | \mathfrak{M}(\alpha,\beta)|$ its cardinality, which could be zero,  
and   $ 1 \leq m_1 < \dots < m_\tB $ its distinct elements, if any.  
 By  \eqref{cardNab} it results $1 \leq \tA \leq M $ and $ 0 \leq \tB \leq M$ (arguing as for \eqref{cardNab}).
By \eqref{omega_j_fin_dep} and  
since  $ \omega_j(\kappa)$ is even in $ j $,  
we get 
\begin{align}
\vec{\Omega}(\kappa) \cdot(\alpha - \beta)  & = \sum_{j \in \Z \setminus \{0\}} \omega_j( \kappa)(\alpha_j - \beta_j)  + \sum_{j \in \Z \setminus \{0\}} \frac{\gamma}{2} \tanh( \tth j) (\alpha_j - \beta_j) \notag \\
& = \sum_{n >0} \omega_n(\kappa)(\alpha_n + \alpha_{-n} - \beta_n - \beta_{-n}) +\frac{\gamma}{2}\sum_{n >0}\tanh( \tth n) (\alpha_n - \alpha_{-n} - \beta_n + \beta_{-n}) \notag \\
& = \sum_{n \in \mathfrak{N}(\alpha,\beta)} 
\omega_n(\kappa)(\alpha_n + \alpha_{-n} - \beta_n - \beta_{-n}) + 
\frac{\gamma}{2}
\sum_{n \in \mathfrak{M}(\alpha,\beta)}\tanh( \tth n) (\alpha_n - \alpha_{-n} - \beta_n + \beta_{-n}) \notag \\ 
& =  \sum_{a=1}^\tA \omega_{n_a}(\kappa) c_a 
+ \frac{\gamma}{2} \sum_{b=1}^\tB d_b\tanh(\tth m_b) \label{eq:a1M}
\end{align}
having defined
$$
c_a := \alpha_{n_a} +\alpha_{-n_a} - \beta_{n_a} - \beta_{-n_a} \, , \qquad 
d_b :=    \alpha_{m_b} - \alpha_{-m_{b}} - \beta_{m_{b}} + \beta_{-m_{b}} \, .
$$
By the definition of $ \mathfrak{N}(\alpha,\beta)$, 
each $c_a \in \Z\setminus \{0\}$
and  $|c_a| \leq |\alpha| + |\beta| \leq M $ for any  $a=1, \ldots, \tA $. 
If $ \mathfrak{M}(\alpha,\beta) $ is empty then $ \tB = 0 $, and the second  sum in 
\eqref{eq:a1M} in the index $ b $ is not present. On the other hand, if  
$ \tB \geq 1 $, 
 by the definition of 
$ \mathfrak{M}(\alpha,\beta)$, each $ d_b\in \Z\setminus \{0\}$ and  
$|d_b|\leq |\alpha| + |\beta| \leq M$ for any $ b=1,\dots, \tB $. 
Applying in both cases 
Proposition \ref{p:nrg2} with $ \tM = M $ we deduce \eqref{veradis} 
with $ \nu:= \frac{\alpha}{(2M)^\tau}$.
\end{proof}

\begin{proof}[{\bf Proof of Proposition \ref{p:nrg2}}]
We write the proof in the case $ \tB \geq 1 $. 
In the case $ \tB = 0 $ the same argument works.
For any $ \vec n := (n_1, \ldots, n_\tA) \in \N^\tA $
with $ 1 \leq n_1 < \ldots < n_\tA $ 
and $ \vec m := (m_1,\dots, m_\tB) \in  \N^\tB $ we define  
\begin{equation}
\label{x(n)t(n)}
\begin{aligned}
&x_0(\vec n,\vec m ) := \frac{1}{\sum_{a =1}^\tA n_a+\sum_{b =1}^\tB m_b} \,, 
\qquad x_a(\vec n,\vec m):= x_0(\vec n,\vec m) \sqrt{ n_a} \, , \ \forall \,  a=1,\ldots, \tA \, \\ 
&t_a(\vec n):= \sqrt{\tanh(\tth n_a)} \, ,   \ \ \forall \, a=1,\ldots, \tA \, , \quad
t_{\tA+b}(\vec m):= \sqrt{\tanh( \tth m_b)} \, , \ \ \forall \,  b=1,\dots,\tB \, .
\end{aligned}
\end{equation}
 Clearly
\begin{equation}
\label{xt.estn2}
\begin{aligned}
0 < \tfrac{1}{\sum_{a =1}^\tA n_a+\sum_{b =1}^\tB m_b} \leq x_a(\vec n,\vec m )  \leq 1 \, , \qquad &\sqrt{\tanh(\tth )}
\leq t_a(\vec n)  \leq 1 \, ,  \qquad \forall a = 0, \ldots, \tA \, ,  \\ 
& \sqrt{\tanh(\tth)}\leq t_{\tA+b}(\vec m)  \leq 1 \, , \quad \forall \, b= 1, \ldots, \tB \, . 
\end{aligned}
\end{equation}
If \eqref{nrg2} holds, then multiplying it by $x_0(\vec n,\vec m)^3$, 
one gets, recalling \eqref{omega_j_fin_dep},  that the inequalities 
\begin{equation}\label{nrn1}
 \Big| {\sum_{a=1}^\tA c_at_a \, \sqrt{\kappa x_a^6  + g x_a^2 x_0^4 + \frac{\gamma^2}{4} t_a^2 x_0^6  }  + \frac{\gamma}{2}\sum_{b=1}^\tB d_bt_{\tA+b}^2 x_0^3 } \Big| \geq \alpha \, x_0^{\tau+3}  
\end{equation}
 hold at any $x_0=x_0(\vec n, \vec m )$, $x_a = x_a(\vec n,\vec m )$, $t_a=t_a(\vec n)$, $a=1,\dots , \tA$ and $ t_{\tA+b}=t_{\tA+b}(\vec m)$, $ b=1, \dots, \tB$, defined in \eqref{x(n)t(n)}.
This suggests to define the function
\begin{equation} \label{lambdagh}
  \lambda(y, s, x_0, \kappa) := 
  \sqrt{\kappa y^6  + g y^2 x_0^4 + \frac{\gamma^2}{4} s^2x_0^6 } \, , 
  \end{equation}
and, for $ \vec c:=(c_1, \ldots, c_\tA)  \in  (\Z \setminus\{0\})^{\tA} $
 and  $\vec d = (d_1,\dots, d_\tB)  \in (\Z\setminus\{0\})^\tB$,  
\be\label{effeccid}
f_{\vec c,\vec d} \, \colon [-1, 1]^{2\tA+\tB+1} \times \cI \to \R \, , \quad
f_{\vec c,\vec d} \, (x,t,\kappa) := \sum_{a=1}^\tA c_a t_a \, \lambda(x_a, t_a, x_0, \kappa)  + \frac{\gamma}{2} \sum_{b=1}^\tB d_b t_{\tA+b}^2 x_0^3 \, ,  
\ee
with variables 
$ x = (x_0, \ldots, x_{\tA})$ and $ t = (t_1, \ldots, t_{\tA+\tB}) $.  

We estimate the sublevels of  $\kappa \mapsto f_{\vec c,\vec d} \, (x,t, \kappa)$ using Theorem \ref{DS}.
The set  $X := [-1,1]^{2\tA+\tB+1}$  is a   closed ball of $\R^{2\tA+\tB+1}$.
The function $f_{\vec c, \vec d}: X\times \cI \to \R$ is continuous and subanalytic.
Then we define the non-zero real analytic function 
\be\label{defrhoh}
\rho(x,t) := x_0\prod_{a=1}^{\tA}
 x_a t_a \, \prod_{1 \leq a<b \leq \tA } 
 \Big[ \big(g x_a^2 x_0^4 + \frac{\gamma^2}{4}t_a^2 x_0^6 \big)x_b^6- \big(g x_b^2 x_0^4 + \frac{\gamma^2}{4}t_b^2 x_0^6 \big)x_a^6 \Big] \, . 
\ee
We observe the following lemma. 
\begin{lemma}
There exist positive constants $ c(\tA) := c(\tA,g,\gamma,h)$, 
$ C(\tA) := C(\tA,g,\gamma) > 0 $ such that, for any 
\be\label{defxtnm}
x(\vec n, \vec m ) := (x_a(\vec n, \vec m))_{a=0, \ldots,\tA} \, , \quad 
 t(\vec n, \vec m):= (t_1(\vec n), \dots, t_\tA(\vec n), t_{\tA+1}(\vec m), \dots,
  t_{\tA+\tB}(\vec m)) \, , 
\ee
defined by \eqref{x(n)t(n)}, it results 
\begin{equation}
\label{rho.estgh}
c(\tA){\Big(\sum_{a=1}^\tA n_a+\sum_{b=1}^\tB m_b\Big)^{-\tau_2}}
\leq  |{\rho(x(\vec n,\vec m ), t(\vec n,\vec m))} |
\leq C(\tA) \Big(\sum_{a=1}^\tA n_a+\sum_{b=1}^\tB m_b \Big)^{-1}
\end{equation}
with $ \tau_2 := \tA +1 + 12\binom{\tA}{2} $. 
\end{lemma}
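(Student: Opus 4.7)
The strategy is a direct computation: substitute the explicit values \eqref{x(n)t(n)} into \eqref{defrhoh}, factor out common powers of $S := \sum_{a=1}^{\tA} n_a + \sum_{b=1}^{\tB} m_b$, and reduce the required bounds to a one-dimensional monotonicity statement for an explicit function of the integer indices.

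First, I would expand the prefactor. Using $x_0 = S^{-1}$, $x_a = S^{-1}\sqrt{n_a}$ and $t_a^2 = \tanh(\tth n_a)$, I get
\begin{equation*}
x_0 \prod_{a=1}^{\tA} x_a t_a \;=\; \frac{1}{S^{\tA+1}} \prod_{a=1}^{\tA} \sqrt{n_a}\,\sqrt{\tanh(\tth n_a)} \, ,
\end{equation*}
whose modulus satisfies $(\tanh(\tth))^{\tA/2}\,S^{-(\tA+1)} \leq \big|x_0 \prod x_a t_a\big| \leq S^{-1-\tA/2}$, since $1 \leq \sqrt{n_a} \leq \sqrt{S}$ and $\sqrt{\tanh(\tth)} \leq t_a \leq 1$. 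Next, for each pair $1 \leq a < b \leq \tA$, setting $\alpha_n := g n + \tfrac{\gamma^2}{4}\tanh(\tth n)$, I would compute
\begin{equation*}
\Delta_{ab} := \bigl(g x_a^2 x_0^4 + \tfrac{\gamma^2}{4} t_a^2 x_0^6\bigr) x_b^6 - \bigl(g x_b^2 x_0^4 + \tfrac{\gamma^2}{4} t_b^2 x_0^6\bigr) x_a^6 \;=\; \frac{1}{S^{12}}\bigl(\alpha_{n_a} n_b^3 - \alpha_{n_b} n_a^3\bigr) \, .
\end{equation*}

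The heart of the proof is the lower bound on $\Delta_{ab}$. I would write
\begin{equation*}
\alpha_{n_a} n_b^3 - \alpha_{n_b} n_a^3 \;=\; g \,n_a n_b\,(n_b^2 - n_a^2) \,+\, \tfrac{\gamma^2}{4}\bigl(n_b^3 \tanh(\tth n_a) - n_a^3 \tanh(\tth n_b)\bigr) \, ,
\end{equation*}
and observe that, since the map $n \mapsto n^{-3}\tanh(\tth n)$ is strictly decreasing on $(0,+\infty)$, the second summand is \emph{non-negative} whenever $n_a < n_b$. Consequently, using only the gravity term and $n_b \geq n_a + 1$, one finds $\alpha_{n_a} n_b^3 - \alpha_{n_b} n_a^3 \geq g\, n_a n_b(n_a + n_b) \geq g$, hence $\Delta_{ab} \geq g\,S^{-12}$. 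For the upper bound, a trivial estimate $\alpha_n \leq (g + \tfrac{\gamma^2}{4}) n \leq C(g,\gamma) S$ yields $|\Delta_{ab}| \leq C(g,\gamma)\,S^{-8}$. Multiplying these estimates over the $\binom{\tA}{2}$ pairs and combining with the bounds on the prefactor gives
\begin{equation*}
c(\tA)\, S^{-\tA-1-12\binom{\tA}{2}} \;\leq\; |\rho(x(\vec n,\vec m),t(\vec n,\vec m))| \;\leq\; C(\tA)\, S^{-1 - \tA/2 - 8\binom{\tA}{2}} \;\leq\; C(\tA)\, S^{-1} \, ,
\end{equation*}
which is exactly \eqref{rho.estgh} with $\tau_2 = \tA + 1 + 12\binom{\tA}{2}$.

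The point I expect to be most delicate is the sign analysis of the capillarity-vorticity term $n_b^3 \tanh(\tth n_a) - n_a^3 \tanh(\tth n_b)$: a priori this quantity could be negative and cancel part of the gravity contribution. The resolution is the observation that $n^{-3}\tanh(\tth n)$ is strictly decreasing, so that this term reinforces (rather than fights) the gravity lower bound; this is the finite-depth analogue of the strict monotonicity of $\mu(y,x_0)$ used in the deep-water proof in Section \ref{deepNR}, and it is precisely what makes the ``Vandermonde-type'' factor $\Delta_{ab}$ in \eqref{defrhoh} the right choice of analytic function $\rho$ for applying Theorem \ref{DS}.
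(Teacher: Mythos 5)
Your proof is correct and follows essentially the same route as the paper: substitute the explicit values \eqref{x(n)t(n)} into \eqref{defrhoh}, pull out the powers of $S=\sum_a n_a+\sum_b m_b$, and bound each pairwise factor from below using the distinctness $n_b-n_a\geq 1$. The only difference is how that pairwise lower bound is obtained: the paper factors the bracket as $x_0^{12}\,\alpha_{n_a}\alpha_{n_b}\,\big|\tfrac{n_a^3}{\alpha_{n_a}}-\tfrac{n_b^3}{\alpha_{n_b}}\big|$ (with $\alpha_n=gn+\tfrac{\gamma^2}{4}\tanh(\tth n)$) and applies the mean value theorem to $y\mapsto y^3/\alpha(y)$, whose derivative is bounded below by a positive constant; you instead split additively into a gravity term and a vorticity term and discard the latter after checking it is non-negative, via the monotonicity of $y\mapsto y^{-3}\tanh(\tth y)$. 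Both mechanisms rest on the same elementary fact, the positivity of $3\tanh(u)-u\,{\rm sech}^2(u)$ for $u>0$, and your version is marginally more elementary (no quotient-derivative computation), at the cost of a lower-bound constant that happens not to see $\gamma$ — which is of course allowed by the statement.
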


\begin{proof}
The upper bound \eqref{rho.estgh}  directly follows by  \eqref{xt.estn2}. 
The lower bound \eqref{rho.estgh} follows by  \eqref{xt.estn2} and
the fact that, since $ 1 \leq n_1 < \ldots < n_\tA  $ are all distinct, 
\begin{align*}
& \Big|(g x_a^2 x_0^4 + \frac{\gamma^2}{4}t_a^2 x_0^6)x_b^6- (g x_b^2 x_0^4 + \frac{\gamma^2}{4}t_b^2 x_0^6))x_a^6 \Big| \notag \\
& = 
x_0^{12}(\vec n, \vec m) (g n_a + \frac{\gamma^2}{4}\tanh(\tth n_a) )(g n_b + \frac{\gamma^2}{4}\tanh(\tth n_b))
\Big|\frac{n_a^3}{g n_a + 
\frac{\gamma^2}{4}\tanh(\tth n_a)}-\frac{n_b^3}{g n_b + \frac{\gamma^2}{4}\tanh(\tth n_b)}\Big|
\notag \\
&\geq x_0(\vec n, \vec m)^{12} \, g^2 \,  
\min_{y\geq 1}\Big(\frac{\di}{\di y} \frac{y^3}{g y + \frac{\gamma^2}{4}\tanh(\tth y)}\Big)  | n_a- n_b| \geq c \, x_0(\vec n, \vec m)^{12} 
\end{align*}
for some constant $ c > 0 $, having used  
in the last passage that,  for any $ y\geq 1$, $ g>0$, $ \gamma \in \R$ and $ \tth>0 $, 
\begin{align*}
\frac{\di}{\di y} \frac{y^3}{g y + \frac{\gamma^2}{4}\tanh(\tth y)}& = 
\frac{4 y^2 (8 g y + 3 \gamma^2 \tanh(\tth y) -  \gamma^2 \tth y \, {\rm sech}^2(\tth y) )}{(4 g y + \gamma^2 \tanh(\tth y))^2}\\
\geq & \frac{2 y^2}{ 16 g^2 y^2 + \gamma^4}\big(8 g+ \gamma^2( 3  \tanh(\tth y)- \tth y \, {\rm sech}^2(\tth y))\big)
\geq  \frac{2}{16 g^2  + \gamma^4}  8 g
\end{align*}
using that 
the function $ 3  \tanh(y)- y \, {\rm sech}^2( y)$ is 
 positive for  $ y > 0 $.
\end{proof}

We show now that  the assumptions (H1) and (H2)  of Theorem \ref{DS} hold true. 
 
 \noindent \underline{Verification of (H1).} 
 By \eqref{defrhoh}, if $\rho(x,t) \neq 0$ then, by \eqref{defrhoh},  
\be\label{condrhod0h}
 x_a \neq 0 \, , \ \forall  a=0, \ldots, \tA \,   , \quad  t_a\not=0 \, ,  \ \forall  a=1, \ldots, \tA \, . 
\ee 
 In particular on the set $\{ (x,t) \in X \colon \rho(x,t) \neq 0 \} \times Y$ the function $f_{\vec c,\vec d}$ in \eqref{effeccid} is real analytic. 
 \\
 \noindent \underline{Verification of (H2).} 
For any $  (x,t) $ such that 
   $\rho(x,t) \neq 0$, 
   the analytic function $\kappa \mapsto f_{\vec c,\vec d} \, (x,t,\kappa)$ 
   possesses only a finite number of zeros as a  
  consequence of the next lemma. 
   
   \begin{lemma}
   For any $ (x,t)$ 
  such that $ \rho(x,t) \neq 0$,
   the analytic
     function $\kappa \mapsto f_{\vec c,\vec d} \, (x,t,\kappa)$ is not identically zero in $\cI$.
   \end{lemma}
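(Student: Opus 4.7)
The plan is to mimic the deep-water argument, exploiting the fact that the second sum in the definition of $f_{\vec c,\vec d}$ (carrying the $d_b$'s and depending on $t_{\tA+b}$) is independent of $\kappa$, so it disappears after differentiation in $\kappa$. Concretely, I would argue by contradiction: assume there exists $(x,t)\in X$ with $\rho(x,t)\neq 0$ such that $\kappa\mapsto f_{\vec c,\vec d}(x,t,\kappa)$ is identically zero on $\cI$. Since $\rho(x,t)\neq 0$, by \eqref{condrhod0h} we have $x_a\neq 0$ for $a=0,\dots,\tA$ and $t_a\neq 0$ for $a=1,\dots,\tA$, which implies that the argument under the square root defining $\lambda(x_a,t_a,x_0,\kappa)$ is strictly positive at $\kappa=0$ (here one uses $g>0$). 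Hence $f_{\vec c,\vec d}(x,t,\kappa)$ is real analytic on an open interval of $\kappa$ containing both $\cI$ and $0$, so by analytic continuation $\partial_\kappa^\ell f_{\vec c,\vec d}(x,t,\kappa)|_{\kappa=0}=0$ for every $\ell\in\N$.

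Next I would compute these derivatives. The $d_b$-term contributes nothing, and arguing as for \eqref{formadermu}, there are non-zero constants $C_\ell$ such that
\[
\partial_\kappa^\ell\lambda(y,s,x_0,\kappa)\big|_{\kappa=0}= C_\ell\,\mu(y,s,x_0)^\ell\,\lambda(y,s,x_0,0),\qquad \mu(y,s,x_0):=\frac{y^6}{g y^2 x_0^4+\frac{\gamma^2}{4}s^2 x_0^6}.
\]
Therefore, abbreviating $\mu_a:=\mu(x_a,t_a,x_0)$, $\lambda_a:=\lambda(x_a,t_a,x_0,0)$, the vanishing of $\partial_\kappa^\ell f_{\vec c,\vec d}|_{\kappa=0}$ for $\ell=1,\dots,\tA$ amounts to the linear system $A(x,t)\vec{\mathfrak c}=0$ with
\[
A(x,t):=\bigl(\mu_a^\ell\,t_a\,\lambda_a\bigr)_{1\leq \ell,a\leq \tA}\in\R^{\tA\times\tA},\qquad \vec{\mathfrak c}:=(c_1,\dots,c_\tA)^\top\in(\Z\setminus\{0\})^\tA.
\]
Since $\vec{\mathfrak c}\neq 0$, we must have $\det A(x,t)=0$, and by a Vandermonde computation
\[
\det A(x,t)=\prod_{a=1}^{\tA}\bigl(t_a\,\mu_a\,\lambda_a\bigr)\,\prod_{1\leq a<b\leq\tA}(\mu_a-\mu_b).
\]

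Finally, under the hypothesis $\rho(x,t)\neq 0$ we have $x_0,x_a,t_a\neq 0$, so $\lambda_a>0$ and $\mu_a\neq 0$; hence $\det A(x,t)=0$ forces $\mu_a=\mu_b$ for some $a\neq b$. Clearing denominators this is exactly
\[
\bigl(g x_a^2 x_0^4+\tfrac{\gamma^2}{4}t_a^2 x_0^6\bigr)x_b^6-\bigl(g x_b^2 x_0^4+\tfrac{\gamma^2}{4}t_b^2 x_0^6\bigr)x_a^6=0,
\]
which is precisely one of the factors in the product defining $\rho(x,t)$ in \eqref{defrhoh}. This contradicts $\rho(x,t)\neq 0$ and concludes the proof. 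The main conceptual point — and the reason $\rho$ was designed as in \eqref{defrhoh} — is this last step: in the finite-depth case $\mu$ depends on the additional variable $s=t_a$, so the obstruction to $\mu_a=\mu_b$ is no longer just $|x_a|\neq|x_b|$ (as in the deep water Vandermonde argument) but the mixed polynomial factor above, which is exactly what has been absorbed into $\rho$; once this is recognized the rest of the argument is a direct transcription of the deep-water case, and I do not anticipate genuine obstacles beyond bookkeeping.
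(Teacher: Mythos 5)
Your proposal is correct and follows essentially the same route as the paper: contradiction, analytic continuation of the identically-zero function to a neighbourhood of $\kappa=0$, the derivative formula for $\lambda$ giving powers of $\mu(x_a,t_a,x_0)$, a Vandermonde determinant, and the vanishing of the mixed polynomial factor absorbed into $\rho$. The only cosmetic differences are that you place the factor $t_a$ in the matrix entries rather than in the vector $\vec r=(c_at_a)_a$ as the paper does, and you justify analyticity at $\kappa=0$ via $gx_a^2x_0^4>0$ instead of the paper's $\delta$; both are equivalent.
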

   
   \begin{proof} 
  Assume by contradiction that there exists 
  $$ 
   (x,t)\in X \, , \  x=(x_a)_{0 \leq a \leq \tA}\, , \ 
  t= (t_a)_{0 \leq a \leq \tA+\tB} \quad \text{with} \quad \rho(x,t) \neq 0 \quad
  \text{such \ that} \quad 
  f_{\vec c, \vec d} \, (  x,t, \kappa) = 0  
  $$
for any  $\kappa$ in the interval $\cI $.
Then, by analyticity, the function $\kappa \mapsto f_{\vec c, \vec d} \, (x, t, \kappa)$  is identically zero also on the larger interval $(-\delta, + \infty)$ where 
$ \delta:=\min_{1\leq a\leq \tA}t_a^2\frac{\gamma^2}{4} x_0^6$.  
Note that $\delta >0$ as $t_a^2$, $ x_0 ^6 > 0 $ by \eqref{condrhod0h}. 
  In particular, for any $ l \in \N $, all the derivatives 
  $\partial_\kappa^l f_{\vec{c}, \vec d} \, (x, t, \kappa) \equiv 0 $ are zero in the interval  
  $\kappa \in (-\delta, + \infty)$.
  
  We now compute  such derivatives at $\kappa = 0$ differentiating \eqref{effeccid}. 
The derivatives of 
the function $\lambda(y, s, x_0, \kappa)$ defined in \eqref{lambdagh}
are given by, for suitable constants $ C_l \neq 0 $,  
  \begin{equation*}
  \partial_\kappa^l  \lambda(y,s, x_0, \kappa)  = C_l y^{6l} \, \lambda(y, s, x_0, \kappa)^{1-2l} \, , 
\quad \forall l \in \N \, .
  \end{equation*}
Thus we obtain
\be\label{defmus}
 \partial_\kappa^l  \lambda(y,s, x_0, \kappa) \vert_{\kappa = 0}
 = C_l \, \mu(y,s,x_0)^l \, \lambda(y, s, x_0, 0)  \quad 
 \text{where} \quad 
 \mu(y,s, x_0):= \frac{y^6}{g y^2 x_0^4 + \frac{\gamma^2}{4}s^2x_0^6} \, ,
\ee
and, 
recalling \eqref{effeccid}, 
$$
\partial_\kappa^l f_{\vec c, \vec d } \, (x,t,\kappa) \vert_{\kappa = 0}
 = C_l \sum_{a=1}^\tA c_a t_a 
 \mu(x_a,t_a,x_0)^l  \, \lambda(x_a,t_a, x_0, 0) \, , \quad \forall l \in \N \, .
$$
As a consequence, the conditions 
 $\partial_\kappa^l f_{\vec c, \vec d} \, (  x,t,\kappa) \vert_{\kappa = 0} = 0$ for any 
 $l=1, \ldots, \tA$ imply that 
 \begin{equation}
 \label{van1gh}
 A(x,t) \vec r = 0
 \end{equation}
where $ A(x,t) $ is the $ \tA \times \tA $ matrix 
$$
A(x,t):= \begin{pmatrix}
 \mu(x_1,t_1, x_0)  \lambda(x_1,t_1,  x_0, 0) & 
 \cdots & \mu(x_\tA,t_\tA, x_0) \lambda(  x_\tA,t_\tA,  x_0, 0) \\
 \mu(x_1,t_1, x_0)^2 \lambda(  x_1,t_1,  x_0,0) & 
 \cdots & \mu(x_\tA,t_\tA, x_0)^2 \lambda(  x_\tA,t_\tA, x_0,0) \\
 \vdots & 
 \ddots & \vdots  \\
 \mu(x_1,t_1, x_0)^{\tA} \lambda(  x_1,t_1, x_0, 0) &  
 \ldots  & \mu(x_\tA,t_\tA, x_0)^\tA \lambda(x_\tA,t_\tA, x_0, 0)  
 \end{pmatrix} 
 $$
 and $\vec r $ is the vector
  $$
  \vec r:= \begin{bmatrix}r_1\\ \vdots\\  r_\tA\end{bmatrix} \, ,
  \quad r_a:= c_a t_a \in \Z \setminus \{0\} \, ,  \ \forall \, a=1,\dots, \tA \, , 
$$
because by assumption each $ c_a \neq 0 $ and \eqref{condrhod0h} holds. 
Since $\vec r\neq 0 $, we deduce by \eqref{van1gh} 
that the matrix  $A(x,t)$ has zero determinant. On the other hand, by the 
multilinearity of the determinant,  
\begin{align}
\det A(x,t)&  = \prod_{a=1}^\tA \mu(x_a,t_a,x_0) \lambda(  x_a,t_a,x_0, 0) 
\det \begin{pmatrix}
 1  & 
 \cdots & 1 \\
 \mu(x_1,t_1,x_0)  &  
 \cdots & \mu(x_\tA,t_\tA,x_0) \\
 \vdots & 
  \ddots & \vdots  \\
 \mu(x_1,t_1,x_0)^{\tA-1} & 
  \ldots  & \mu(x_\tA,t_\tA,x_0)^{\tA-1}  
 \end{pmatrix} \notag  \\
 & = \prod_{a=1}^\tA \mu(x_a, t_a,x_0) \lambda(  x_a, t_a,x_0, 0)  \, \prod_{1 \leq a< b\leq \tA} (\mu(x_a, t_a,x_0) - \mu(x_b,t_b,x_0)) \, . \label{svidet2h}
\end{align}
The condition $ \rho(x,t) \neq 0$ implies, by \eqref{condrhod0h}, \eqref{defmus} and 
\eqref{lambdagh}, 
that 
$$ 
\prod_{a=1}^\tA \mu(x_a,t_a,x_0) \lambda(x_a,t_a,x_0,0)  \neq 0 \, , 
$$
 and, in view of
\eqref{svidet2h}, the determinant  
 $\det A(x,t) = 0 $ if only if $\mu(x_a,t_a,x_0) = \mu(x_b,t_b,x_0)$ 
 for some $1 \leq a< b \leq \tA$.
By the definition of the function $(y,s,x_0) \to \mu(y,s,x_0)$ in \eqref{defmus} it follows that 
$$
\mu(x_a,t_a,x_0) = \mu(x_b,t_b,x_0) \quad \Rightarrow  \quad 
\big( g x_a^2 x_0^4 + \frac{\gamma^2}{4}t_a^2 x_0^6 \big) x_b^6 - 
\big( g x_b^2 x_0^4 + \frac{\gamma^2}{4}t_b^2 x_0^6 \big)x_a^6=0 \, . 
$$
In view of \eqref{defrhoh} this contradicts $\rho(x,t) \neq 0$.
\end{proof}

We have  verified assumptions (H1) and (H2) of Theorem \ref{DS}.
We thus conclude that 
there are $N_0 \in \N$, $\alpha_0, \delta, C  >0$, such that for any $\alpha \in (0, \alpha_0]$, any $ N \in \N$, $N \geq N_0$, any $(x,t) \in X$ with $\rho(x,t) \neq 0$,
\begin{equation}
\label{meas2gh}
\meas \big\{ \kappa \in \cI \colon \ \ \ |f_{\vec c,\vec d} \, (x,t, \kappa)|\leq \alpha |\rho(x,t)|^N 
\big\} \leq C \alpha^\delta \, |\rho(x,t)|^{N\delta} \, . 
\end{equation}
For $\vec n = (n_1, \ldots, n_\tA) \in \N^\tA$ with $ 1 \leq n_1 < \ldots < n_\tA $ and 
 $\vec m= (m_1, \ldots, m_\tB) \in \N^\tB$ we consider the set
\be\label{defBnmh}
\cB_{\vec c,\vec d ,\vec m ,\vec n}(\alpha, N) := \left\lbrace
 \kappa \in \cI \colon \ \ \ |f_{\vec c, \vec d} \, (x(\vec n,\vec m ),t(\vec n, \vec m),\kappa)|\leq \alpha |\rho(x(\vec n,\vec m),t(\vec n, \vec m))|^N 
\right\rbrace
\ee
where $x(\vec n, \vec m ) $
and  $ t(\vec n, \vec m) $
are defined in \eqref{defxtnm}. 
By \eqref{rho.estgh} 
we deduce that  $\rho(x(\vec n, \vec m),t(\vec n,\vec m)) \neq 0$, and 
\eqref{meas2gh} implies that 
\begin{equation}
 \label{measGgh}
 \meas \, \cB_{\vec c,\vec d ,\vec m ,\vec n}(\alpha, N) \leq C \alpha^\delta |\rho(x(\vec n,\vec m), t(\vec n, \vec m))|^{N\delta} \, . 
 \end{equation} 
Consider the set 
\begin{equation}\label{defKhf}
\cK(\alpha, N) :=  \bigcup_{
              \substack{ \vec{n}=(n_1, \ldots, n_\tA) \in \N^{\tA},\, 1 \leq n_1 < \ldots < n_{\tA}  \\ 
              \vec m= ( m_1, \dots m_\tB)  \in \N^{\tB}\\
                        \vec c \in (\Z \setminus \{0\})^{\tA}, \, |\vec c|_{\infty} \leq \tM \\
                        \vec d \in (\Z \setminus \{0\})^{\tB}, |\vec d|_{\infty} \leq \tM}}
\cB_{\vec c,\vec d ,\vec m ,\vec n}(\alpha, N) \subset \cI  \, .
 \end{equation}
By \eqref{measGgh} and 
    \eqref{rho.estgh} one gets
 \begin{equation} \label{measK1gh}
 \meas \,
\cK(\alpha, N)  \leq 
C(\tA, \tM) \alpha^\delta \sum_{\substack{n_1, \ldots, n_\tA \in \N \\ m_1, \dots, m_\tB \in \N }} 
\frac{1}{(\sum_{a=1}^\tA n_a+\sum_{b=1}^\tB m_b)^{N\delta}}  \leq C'(\tA, \tM) \alpha^\delta 
 \end{equation}
 for some finite constant $ C'(\tA, \tM) < + \infty $, provided 
$ N \delta > \tA+\tB $. 
 We fix 
 $$ 
 \underline{N} := [ (\tA + \tB) \delta^{-1} ] + 1 \quad \text{and define}  
 \quad
  {\cal K}_\alpha := \cK(\alpha, \underline{N})  
  $$  
  whose measure satisfies
 $ | {\cal K}_\alpha| = O(\alpha^\delta ) $, in view of \eqref{measK1gh}.
  
In conclusion, 
for any $\kappa \in \cI \setminus \cK_ \alpha$, for any  
$ \vec n = (n_1, \ldots, n_\tA) $ with 
 $ 1 \leq n_1 < \ldots < n_\tA  $ and $ \vec m \in \N^\tB $,
 any 
$  \vec c \in (\Z \setminus \{0\})^{\tA} $ with $ |\vec c \, |_{\infty} \leq \tM $ 
   and $  \vec d \, \in (\Z \setminus \{0\})^{\tB}$ with $ |\vec d \, |_{\infty} \leq \tM $,   
 it results,
 by \eqref{defKhf}  and \eqref{defBnmh}, that
\be\label{f1gh}
|{f_{\vec c, \vec d} \, (x(\vec n, \vec m ),t(\vec n, \vec m),  \kappa)}| 
 >  \alpha |\rho(x(\vec n, \vec m), t(\vec n, \vec m))|^{\underline{N}}  \stackrel{\eqref{rho.estgh}}{\geq } 
\frac{c(\tA) \alpha }{\left(\sum_{a=1}^\tA n_a+\sum_{b=1}^\tB m_b\right)^{\tau_2 \underline{N} }} \, . 
\ee
Recalling the definition of 
$ f_{\vec c,\vec d} $ in \eqref{effeccid} and $ x_0(\vec n,\vec m) $ in \eqref{x(n)t(n)}, 
the lower bound \eqref{f1gh} 
implies \eqref{nrg2} with $ \tau := \tau_2  \underline{N} - 3 $ (cfr. \eqref{nrn1}) 
and re-denoting $ \alpha c(\tA) \leadsto \alpha $.
\end{proof}

{\footnotesize 
}

 Supported by PRIN 2020XB3EFL, {\it Hamiltonian and Dispersive PDEs}.

\end{document}